\newcommand{\Fr}{{\rm Fr}}
\newcommand{\cl}{{cl}}
\newcommand{\er}{{\Diamond}}
\newcommand{\E}{{\mathbb E}}
\newcommand{\Exp}{{\rm Exp}}
\newcommand{\orr}{{\mathfrak o}}
\newcommand{\Br}{{\rm Br}}
\newcommand{\s}{{\mathfrak s}}
\newcommand{\f}{{\mathfrak f}}
\newcommand{\lm}{\Lambda^{\rm max}\,}
\newcommand{\TY}{{\Ti Y}}
\newcommand{\Tx}{{\Ti x}}
\newcommand{\Ham}{{\rm Ham}}
\newcommand{\ul}{\underline}
\newcommand{\uLa}{{\ul{\La}}}
\newcommand{\Obj}{{\rm Obj}}
\newcommand{\proj}{{\rm proj}}
\newcommand{\Stab}{{\rm Stab}}
\newcommand{\Ti}{\widetilde}
\newcommand{\Tnu}{{\widetilde{\nu}}}
\newcommand{\TC}{{\widetilde{C}}}
\newcommand{\Tmu}{{\widetilde{\mu}}}
\newcommand{\TGa}{{\widetilde{\Ga}}}
\newcommand{\pr}{{\rm pr}}
\newcommand{\bB}{{\bf B}}
\newcommand{\bb}{{\bf b}}
\newcommand{\bC}{{\bf C}}
\newcommand{\bG}{{\bf G}}
\newcommand{\bM}{{\bf M}}
\newcommand{\bV}{{\bf V}}
\newcommand{\bX}{{\bf X}}
\newcommand{\bZ}{{\bf Z}}
   \newcommand{\ubE}{{\und \bE}}
  \newcommand{\ubB}{{\und \bB}}
  \newcommand{\upr}{{\und {\rm pr}}}
  \newcommand{\us}{{\und s}}
 \newcommand{\un}{{\underline{n}}}
 \newcommand{\ux}{{\underline{x}}}
  \newcommand{\upi}{{\underline{\pi}}}
    \newcommand{\urho}{{\underline{\rho}}}
  \newcommand{\uA}{{\underline{A}}}
  \newcommand{\uU}{{\underline{U}}}
    \newcommand{\ubK}{{\underline{\bK}}}
   \newcommand{\uV}{{\underline{V}}}
  \newcommand{\uW}{{\underline{W}}}
   \newcommand{\uphi}{{\underline{\phi}}}
      \newcommand{\upsi}{{\underline{\psi}}}
\newcommand{\uKk}{{\underline{\Kk}}}
\newcommand{\lub}{{\rm l.u.b.}}
\newcommand{\rT}{{\rm T}}
\newcommand{\rd}{{\rm d}}
\newcommand{\oo}{{\mathfrak o}}
\newcommand{\Mor}{{\rm Mor}}
\newcommand{\Tphi}{{\Tilde \phi}}
\newcommand{\ind}{{\rm ind\,}}
\newcommand{\und}{\underline}
\renewcommand{\Hat}{\widehat}
\newcommand{\nodes}{{\it  nodes}}
\newcommand{\PSL}{{\rm PSL}}
\newcommand{\pbar}{{\ov {\p}_J}}
\newcommand{\Cc}{{\mathcal C}}
\newcommand{\Dd}{{\mathcal D}}
\newcommand{\Kk}{{\mathcal K}}
\newcommand{\Ff}{{\mathcal F}}
\newcommand{\forget}{{\it forget}}
\newcommand{\im}{{\rm im\,}}
\newcommand{\less}{{\smallsetminus}}
\newcommand{\supp}{{\rm supp\,}}
\newcommand{\TU}{{\Tilde U}}
\newcommand{\p}{{\partial}}
\newcommand{\al}{{\alpha}}
\newcommand{\be}{{\beta}}
\newcommand{\om}{{\omega}}
\newcommand{\eps}{{\varepsilon}}
\newcommand{\de}{{\delta}}
\newcommand{\De}{{\Delta}}
\newcommand{\ga}{{\gamma}}
\newcommand{\Ga}{{\Gamma}}
\newcommand{\io}{{\iota}}
\newcommand{\la}{{\lambda}}
\newcommand{\La}{{\Lambda}}
\newcommand{\si}{{\sigma}}
\newcommand{\Si}{{\Sigma}}
\newcommand{\Aa}{{\mathcal A}}
\newcommand{\Uu}{{\mathcal U}}
\newcommand{\Bb}{{\mathcal B}}
\newcommand{\Ww}{{\mathcal W}}
\newcommand{\Mm}{{\mathcal M}}
\newcommand{\Ss}{{\mathcal S}}
\newcommand{\Tt}{{\mathcal T}}
\newcommand{\oMm}{{\overline {\Mm}}}
\newcommand{\ov}{\overline}
\newcommand{\id}{{\rm id}}
\renewcommand{\Tilde}{\widetilde}
\newcommand{\TV}{{\Tilde V}}
\newcommand{\TJ}{{\Tilde J}}
\newcommand{\Tsi}{{\Tilde \si}}
\newcommand{\Ee}{{\mathcal E}}
\newcommand{\Ii}{{\mathcal I}}
\newcommand{\Qq}{{\mathcal Q}}
\newcommand{\Vv}{{\mathcal V}}
\newcommand{\N}{{\mathbb N}}
\newcommand{\Q}{{\mathbb Q}}
\newcommand{\R}{{\mathbb R}}
\newcommand{\C}{{\mathbb C}}
\newcommand{\Z}{{\mathbb Z}}
\newcommand{\CP}{{\mathbb CP}}
\newcommand{\Hom}{{\rm Hom}}
\newcommand{\Map}{{\rm Map}}
\newcommand{\Nn}{{\mathcal N}}
\newcommand{\Pp}{{\mathcal P}}
\newcommand{\Hh}{{\mathcal H}}
\newcommand{\codim}{{\rm codim\,}}
\newcommand{\ev}{{\rm ev}}
\newcommand{\SSS}{{\smallskip}}
\newcommand{\bE}{{\bf E}}
\newcommand{\bK}{{\bf K}}
\newcommand{\bz}{{\bf z}}
\newcommand{\bn}{{\bf n}}
\newcommand{\ba}{{\bf a}}
\newcommand{\bP}{{\bf P}}
\newcommand{\bw}{{\bf w}}
\newtheorem{theorem}{Theorem}[subsection]
\newtheorem{thm}[theorem]{Theorem}
\newtheorem{cor}[theorem]{Corollary}
\newtheorem{lemma}[theorem]{Lemma}
\newtheorem{proposition}[theorem]{Proposition}
\newtheorem{prop}[theorem]{Proposition}
\newtheorem{defn}[theorem]{Definition}
\newtheorem{example}[theorem]{Example}
\newtheorem{rmk}[theorem]{Remark}
\numberwithin{figure}{subsection}
\numberwithin{equation}{subsection}
\newcommand{\MS}{{\medskip}}
\newcommand{\NI}{{\noindent}}
   \newcounter{qcounter}
\newenvironment{enumilist}
   { \begin{list} {(\roman{qcounter})\;}{\usecounter{qcounter}
     \setlength{\itemsep}{.5ex} \setlength{\leftmargin}{4.2ex} } }
   { \end{list} }
\newenvironment{itemlist}
   { \begin{list} {$\bullet$}
         {  \setlength{\itemsep}{.5ex} \setlength{\leftmargin}{2.5ex} } }
   { \end{list} }
\newcommand\quotient[2]{
        \mathchoice
            {
                \text{\raise1ex\hbox{$#1$}\Big/\lower1ex\hbox{$#2$}}%
            }
            {
                #1\,/\,#2
            }
            {
                #1\,/\,#2
            }
            {
                #1\,/\,#2
            }
    }
\newcommand\quot[2]{
                \text{\raise1ex\hbox{$#1$}/\lower1ex\hbox{$\scriptstyle#2$}}
  }
\newcommand\qu[2]{
                \text{\raise.8ex\hbox{$\scriptstyle#1\!$}/\lower.8ex\hbox{$\!\scriptstyle#2$}}
  }
\newcommand\ql[2]{
                \text{\lower.6ex\hbox{$\scriptstyle#1\!$}$\backslash$ \raise.6ex\hbox{$\!\scriptstyle#2$}}
  }
\newcommand\qq[2]{
                \text{\raise.8ex\hbox{$#1\!$}/\lower.8ex\hbox{$#2$}}
}
 \title{
 Notes on Kuranishi atlases}
 \author{Dusa McDuff}
 \address{Department of Mathematics,
 Barnard College, Columbia University}
\email{dusa@math.columbia.edu}
\thanks{partially supported by NSF grant DMS 1308669}
\date{May 17, 2015, minor revisions September 2016}
\begin{document}

\maketitle

\tableofcontents
\section{Introduction}\label{s:intro}

These notes aim to explain a joint project with Katrin Wehrheim \cite{MW1,MW2,MWiso,MWgw} that  uses finite dimensional reductions to  construct
a virtual fundamental class (VFC) for 
Gromov--Witten moduli spaces $X$ 
of closed genus zero curves.   Here $X$ is  a compactified moduli space of $J$-holomorphic stable maps, and so can locally 
 be described as the zero set of a Fredholm operator on the space of sections of a bundle over a nodal Riemann surface,
modulo the action of a Lie group that acts with finite stabilizers. 
Thus in the best case scenario $X$ would be a compact finite dimensional orbifold, which
carries a natural orientation 
and so has a fundamental class.  
However, in practice, $X$ usually has a more complicated structure since the operator is not transverse to zero.
Intuitively the fundamental class is therefore the zero set of a suitable perturbation of the Fredholm operator: all the difficulty in
constructing it lies in finding a suitable framework in which to build this perturbation.

Many of the possible approaches to this question are discussed  in \cite{MW2}.  
For example, in the polyfold approach Hofer--Wysocki--Zehnder develop a radically new analytic framework in order to build a suitable ambient space that contains all the relevant objects.  It is then  relatively easy to perturb the section to get a transverse zero set.  In contrast, we use  finite dimensional reduction which  constructs a local model (or chart) for small open subsets $F\subset X$ via traditional analysis, and then builds a suitable ambient space from these local charts using 
some relatively nontrivial point set topology.\footnote
{
There seems to be a law of conservation of difficulty:  in each approach the intrinsic difficulties in the problem
appear  in different guise, and  are solved either using more analysis as in \cite{HWZ1,HWZ2,HWZ3,HWZ}, or more topology as here,
or more sheaf theory as in \cite{Pard}.}
Our method is based on work by Fukaya--Ono~\cite{FO} and Fukaya--Oh--Ohta--Ono~\cite{FOOO};
see also \cite{FOOO12}.  
However we have  reformulated their ideas in order both to build the virtual neighbourhood of $X$ and to
clarify the formal structures underlying the 
construction.  We  make explicit  all important choices (of tamings, shrinkings and reductions), 
thus creating tools with which to give an explicit proof 
 that the virtual class $[X]^{vir}_\Kk$ is independent of these choices.

We start from the idea that there is a good local model for a sufficiently small open subset  $F$ of 
$X$,  that we call a {\bf basic chart} $\bK$ with {\bf footprint}
$F\subset X$. Since typically there is no direct map from one basic chart to another we relate them via {\bf  transition data}
given by \lq\lq transition (or sum) charts" and coordinate changes. 
 A {\bf Kuranishi atlas} $\Kk$  is made from a finite covering family of basic charts together with suitable transition data.

The papers~\cite{MW1,MW2,MWiso,MWgw}  prove the following theorems.
\MS

\NI {\bf Theorem A.}\,\,{\it  Let $(M^{2n},\om,J)$ be a $2n$-dimensional symplectic manifold with tame almost complex structure $J$, let $\oMm_{g,k}(A,J)$ be the  compact space of 
 nodal $J$-holomorphic genus $g$ stable maps in class $A$ with $k$ marked points 
 modulo reparametrization, and let $d= 2n(1-g) + 2c_1(A) + 2k + 6(g-1)$.
Then $X: = \oMm_{g,k}(A,J)$ has an oriented,  $d$-dimensional, 
weak SS Kuranishi atlas $\Kk$ that is well defined modulo oriented concordance (i.e. cobordism over $ [0,1]\times X$).}

\MS

\NI {\bf Theorem B.}\,\,{\it
Let $\Kk$ be an oriented,  $d$-dimensional, 
weak, SS Kuranishi atlas on a compact metrizable space $X$. Then $\Kk$ determines a cobordism class of oriented, compact weighted branched topological manifolds, and an element 
$[X]^{vir}_\Kk$ in the \v{C}ech homology group $\check{H}_d(X;\Q)$.
Both depend only on the oriented cobordism class of $\Kk$.}
\MS

\begin{itemlist}\item
 If the curves in $X$ have trivial isotropy and have smooth (i.e. non nodal) domains, we  construct the invariant 
as an oriented cobordism class of compact smooth manifolds, and then take an appropriate inverse limit to get the \v{C}ech homology class.\footnote
{
We could get an integral  class if we used Steenrod homology as developed in Milnor~\cite{Mil}.  However, as noted in \cite[Remark~8.2.4]{MW2},
integral \v{C}ech homology does not even satisfy the basic axioms of a homology theory. See \cite{IP} for an illuminating discussion of these different homology theories.}
 If there is nontrivial isotropy  we construct 
it as an oriented cobordism class of weighted branched (smooth) manifolds $(Z_\Kk, \La_\Kk)$, 
that is the realization of an explicit   weighted nonsingular branched (wnb) groupoid; see Definition~3.4.5 ff.
This abstract scheme applies equally well whether or not the  isotropy acts effectively.  
It also applies in the nodal case, though in this situation one needs a gluing theorem in order to
build the charts.
\item  Cobordism classes of oriented weighted branched manifolds contain more topological information than the fundamental class. For example, Pontriagin numbers are cobordism invariants; see \cite[Remark~4.7]{Morb}.
\item One aim  of our project 
is to construct the virtual moduli cycle using  traditional tools as far as  possible, and in particular to prove Theorem A using the approach to gluing in \cite{JHOL}.  This  provides continuity of the gluing map  as the gluing parameter $\ba$ converges to zero, but gives no control over derivatives with respect to 
$\ba$ near $\ba=0$.
With this approach,
 the charts are only weakly stratified smooth (abbreviated SS), i.e. they are topological spaces 
 that are unions of even dimensional, smooth strata.
 As we explain in \S\ref{ss:SS}, this
introduces various complications into the arguments, and specially into the construction of perturbation sections for 
 Kuranishi atlases of dimension $>1$.\footnote
 {
In the Gromov--Witten case all lower strata have codimension $\ge 2$, which means that 
in most situations one can avoid these complications by cutting down dimensions by intersecting with appropriate cycles; see \S\ref{ss:var}.}  
On the positive side it means that there is no need to change the usual smooth structure of  Deligne-Mumford space 
or of the moduli spaces $X$ of $J$-holomorphic curves by choosing a gluing profile, which is the  approach both of Fukaya et al and Hofer--Wysocki--Zehnder. 
This part of the project is not yet complete.  Hence in these notes we will either restrict to
the case $d=0$ or will
assume the existence of a gluing theorem
that provides at least $\Cc^1$ control.  
\item  The construction in Theorem~A of the atlas $\Kk$ on  $\oMm_{g,k}(A,J)$ 
extends in a natural way to a construction of a cobordism atlas on the moduli space $\bigcup_{t\in [0,1]}\oMm_{g,k}(A,J_t)$ for paths $(J_t)_{t\in [0,1]}$  of $\om$-tame almost complex structures.  Hence the weighted branched manifolds that represent 
 $[\Mm_{g,k}(A,J_\al)]^{vir}_\Kk$  for $\al = 0,1$ are oriented cobordant.  
 It follows that all  invariants constructed using the class 
 $[\Mm_{g,k}(A,J)]^{vir}_\Kk$ are independent of the choice of $\om$-tame $J$; see Remark~\ref{rmk:Jindep}.  
\end{itemlist}

We begin by developing the abstract theory of Kuranishi atlases, that is on proving theorem~B for smooth  atlases.
The first two sections of these notes  give precise  statements of the main definitions and 
results from \cite{MW1,MW2,MWiso}, and sketches of the 
most important proofs.  For simplicity we first discuss
 the smooth case with trivial isotropy and then  the case  of
nontrivial isotropy, where we allow arbitrary, even noneffective, actions of finite groups. 
We end Section~\ref{s:iso} with  some notes on the stratified smooth (SS) case.

The rest of these notes are more informal, explaining how the theory can be used in practice. 
Section~\ref{s:GW}  outlines the construction of atlases for genus zero Gromov--Witten moduli spaces,
explaining the setup  but omitting most analytic details
needed for a full proof of Theorem~A.
Some of these can be found in \cite{MW2}, though gluing will be treated in \cite{MWgw}.  See also Castellano~\cite{Castell1,Castell2} that completes 
the construction of a $\Cc^1$-gluing theorem in the genus zero case, and also establishes validity of the Kontsevich--Manin  axioms in this case. 
We restrict to  genus zero  here since in this case the relevant Deligne--Mumford space $\oMm_{0,k}$ 
can be understood simply in terms of cross ratios,
which makes the Cauchy--Riemann equation much easier to write down explicitly. The
 argument should  adapt without difficulty  to the higher genus case.  For example, one could no doubt
 use the approach 
in  Pardon~\cite{Pard}, where atlases that are
quite similar  to ours are constructed in all genera.  In \cite{MWgw} we will explain a construction that works for all genera, 
but that relies on polyfold theory.
 In \S\ref{ss:var} we formulate the notion of a GW atlas, aiming to get a better handle 
 on the uniqueness properties of the atlases that we construct.

Section~\ref{sec:ex} discusses some examples. We begin with the case of  atlases with trivial obstruction spaces, 
whose underlying space $X$ is therefore an orbifold, explaining some results in
\cite{Morb}.
We show that in this case the theory is equivalent to the standard way of thinking of an orbifold as 
 the realization of an ep groupoid; see  Moerdijk~\cite{Moe}. 
   \MS
 
\NI {\bf Proposition C.}  {\it  Each compact orbifold 
has a Kuranishi atlas with trivial obstruction spaces. 
Moreover, there is a bijective correspondence between commensurability classes of 
such Kuranishi atlases and Morita equivalence classes of ep groupoids.}
\MS

The proof constructs from each groupoid representative $\bG$ of $X$ an atlas 
that maps to an explicitly describable subcategory (i.e. submonoid) of $\bG$ with realization $X$.
Thus, this atlas is a simpler model for the groupoid that captures all  essential information.
 
We then discuss some examples from  Gromov--Witten theory. 
In particular, we show in \S\ref{ss:nontriv} that if the space $X$ of equivalence classes of stable maps is a 
compact orbifold with obstruction orbibundle $E$ then the method in \S\ref{s:GW}  builds a GW
 atlas $\Kk$ such that $[X]^{vir}_\Kk$ is simply the Euler class of $E$.  
Secondly,  in \S\ref{ss:S1}
we  use Kuranishi atlases to prove a result claimed in \cite{Mcq} about the
vanishing of certain
 two point GW invariants  of the product manifold $S^2\times M$.  This result is a crucial step in establishing
 the properties of the Seidel representation of $\pi_1(\Ham(M))$ in the quantum homology of $M$, 
 where $\Ham(M)$ is the Hamiltonian group of $(M,\om)$; see  Tseng--Wang~\cite{TW} for a different approach to this question
  using Kuranishi structures.

Finally, in Section~\ref{s:order} we discuss some modifications of the basic definitions that are useful when considering products.
The point here is that the product of two Kuranishi atlases is not an atlas in the sense of our original definition. On
the other hand, there are many geometric situations (such as in Floer theory) in which one wants to use induction 
to build
 a family of atlases on 
spaces  where the boundary of one space $X$ is the product of two or more spaces 
 that, via an inductive process, are already provided with atlases.  We explain how to do this in our current framework, 
by generalizing the order structure underlying our notion of atlas. Some details of the adjustments  needed to deal with 
 Hamiltonian Floer theory may be found in Remark~\ref{rmk:HamFT}.
 
\subsection{Outline of the main ideas.}
As explained above,
a {\bf Kuranishi atlas} $\Kk$  is made from a finite covering family of basic charts, that are related to each other
 via  transition charts and coordinate changes.   
Our first aim is to unite all these charts into a {\bf category} $\bB_\Kk$, akin to the \'etale proper  groupoids\footnote
{
The definition of a topological category is given at the beginning of \S\ref{ss:K2}, while
ep groupoids are defined at the beginning of \S\ref{ss:orb} with examples  in Example~\ref{ex:foot}. Besides fitting in well with the notion of an orbifold as an ep groupoid, we will see that this categorical language provides a succinct way to describe the appropriate equivalences  between the different charts in the atls $\Kk$. }
 often used to model orbifolds.  If we ignore questions of smoothness, 
the space of objects $\Obj_{\bB_\Kk}$ of such a topological category  is the disjoint union $\bigsqcup_I U_I$ 
of smooth manifolds of different dimensions. There are at most a finite number of morphisms between any two points.  Therefore the space $|\Kk|$ obtained by quotienting  $\Obj_{\bB_\Kk}$  by the equivalence relation generated by the morphisms  looks something like an orbifold.  In fact, in good cases this space, called the {\bf virtual neighbourhood of $X$},  is a finite union of (non disjoint) orbifolds; see  Remark~\ref{rmk:Kk}. 
This ambient space  $|\Kk|$  supports a \lq\lq bundle"  $|\pr|: |\bE_\Kk|\to |\Kk|$ with canonical section $|\s|:  |\Kk|\to  |\bE_\Kk|$. The latter is the finite dimensional remnant of the original Fredholm operator, and its zero set can be canonically identified with a copy $\io_\Kk(X)$ of $X$.   Hence the idea is that the virtual moduli cycle $[X]^{vir}_\Kk$  should be represented by the zero set of a perturbed  (multi)section $\s+\nu$ that is chosen to be transverse to zero.

\begin{example}\rm   
If $X$ were a manifold, we could take each basic chart to be an open subset $F_i\subset X$, while the \lq\lq transition chart" relating $F_i$ to $F_j$ would simply be the intersection $F_{ij}: = F_i\cap F_j$.
In this case the atlas $\Kk$ would consist of a finite open covering $(F_i)_{i=1,\dots,N}$ of $X$ together with  the collection of nonempty intersections $\bigl(F_I: = \cap_{i\in I}F_i\bigr)$ 
related to each other by the obvious inclusions.
%
As we will see, one advantage of the categorical framework is that it gives a succinct way of expressing the compatibility conditions between all spaces and maps of interest.
\hfill$\er$
\end{example}

The needed abstract structure 
is easiest to understand if we assume that there are no nodal curves and that all isotropy groups are trivial.
Therefore we begin in  \S\ref{s:noiso} by considering smooth atlases with trivial isotropy.  We consider nontrivial  isotropy in \S\ref{s:iso},  briefly discussing the 
modifications needed for nodal case in 
\S\ref{ss:SS}.  
\MS

We now outline the  main steps in the construction of $[X]^{vir}_\Kk$.
\begin{itemlist}\item
The first difficulty in realizing this idea is that in practice one cannot actually construct atlases; instead one constructs a weak atlas, which is like an atlas except that one has less control of the domains of the charts and coordinate changes.
But a weak atlas does not even define a  category, let alone one 
 whose realization $|\bB_\Kk|= \Obj_{\Bb_\Kk}/\!\!\sim \; =: |\Kk|$ has good topological properties. For example,  we would like $|\Kk|$ to be Hausdorff and (in order to make local constructions possible) for the projection $\pi_\Kk: U_I\to |\Kk| $ to be a homeomorphism to its image.
 
In \S\ref{ss:K2} we formulate the {\bf taming conditions} for a weak atlas.  Our main results are:
\begin{itemize}\item[-]Proposition~\ref{prop:Khomeo}, which shows that the realization of a tame atlas 
 has these good  topological properties, and 
\item[-] Proposition~\ref{prop:proper1}, which shows that every  weak smooth atlas can be tamed.
\end{itemize}
We prove these results in \S\ref{ss:tatlas}  for  {\bf filtered topological atlases}, 
in order that they also apply  to the case of smooth atlases with nontrivial isotropy.  
Here,  filtration is a generalization of the notion of additivity that is already built into the definition of an atlas.
As explained in Remark~\ref{rmk:add1}, some version of this condition is 
 crucial here.
(See \S\ref{s:order} where this notion of additivity is weakened to a notion that is compatible with products.)  
 \item 
The taming procedure gives us two categories $\bB_{\Kk}$ and $\bE_{\Kk}$ with a projection functor $\pr: \bE_{\Kk}\to \bB_{\Kk}$
(the \lq\lq obstruction bundle")  and section functor $\s: \bB_{\Kk}\to \bE_{\Kk}$ (defined by the Cauchy--Riemann operator).  However the category has too many morphisms (i.e. the 
 chart domains 
 overlap too much) for us to be able to construct a perturbation functor $\nu: \bB_{\Kk}\to \bE_{\Kk}$ such that $\s+\nu$
 is transverse to zero (written $\s+\nu\pitchfork 0$).  
We therefore  pass to a full subcategory 
$\bB_{\Kk}|_{\Vv}$ of $\bB_{\Kk}$ with objects $\Vv: = \bigsqcup V_I$ that does support suitable functors $\nu: 
\bB_{\Kk}|_{\Vv}\to \bE_{\Kk}|_{\Vv}$; see 
Definition~\ref{def:vicin}.  This subcategory $\bB_{\Kk}|_{\Vv}$ is called a {\bf reduction} of $\Kk$. Its realization   $ |\bB_{\Kk}|_{\Vv}|$ injects
continuously  onto
the subspace $\pi_\Kk(\Vv)\subset |\Kk|$.
Constructing it is similar to passing from the covering of a triangulated space by the stars of its vertices to the covering by the stars of its first barycentric subdivision.   It is the analog of a \lq\lq good coordinate system"  (now called a \lq\lq dimensionally graded system" in \cite{TF}) in the theory of Kuranishi structures.
 
\item We next define the notion of a reduced perturbation  $\nu: \bB_{\Kk}|_\Vv\to \bE_{\Kk}|_\Vv$ of $\s$ (see  Definition~\ref{def:sect}),
and show that, if $\nu$ is precompact in a suitable sense, the realization  $\big|(\s|_\Vv+\nu)^{-1}(0)\big|$ of the
 zero set is compact.  
 The intricate construction of $\nu$ is one of the most difficult parts of the general theory.  To explain the ideas, we give a fairly detailed description in
 Proposition~\ref{prop:ext},  though still do not do quite enough 
 for a complete proof.   
 In the trivial isotropy case the zero set  is a closed submanifold of $|\Kk|$ lying in the precompact \lq\lq neighbourhood"\footnote
{
In fact, $\io_\Kk(X)$ does {\it not} have a compact neighbourhood in $|\Kk|$; 
as explained in Remark~\ref{rmk:piVv} we should think of  $\pi_\Kk(\Vv)$ 
as the closest we can come to  a compact neighbourhood of $\io_\Kk(X)$.}   
$\pi_\Kk(\Vv) \subset |\Kk|$ of $\io_\Kk(X)$.  
The final step is to
construct the fundamental class $[X]^{vir}_\Kk$
 from this zero set.  We define this class to  lie in rational \v Cech homology rather than the more familiar singular  theory,  because the former  theory has the needed continuity properties under inverse limits.\footnote
 {
 One could obtain an integral fundamental class in the Steenrod homology developed in \cite{Mil}.  However, rational \v Cech homology is simply the homology theory dual to   rational  \v Cech cohomology, and so is easier to understand; see \cite[Remark~8.2.4]{MW2}. Also, 
 one must use rational coefficients if there is nontrivial isotropy.}
 
\item As we will see in \S\ref{s:iso} the above ideas adapt readily to the case of nontrivial isotropy via the notion of the {\bf intermediate category}, which is a filtered topological category and so can be tamed by the results in \S\ref{ss:tatlas}.
Although the needed perturbation $\nu$ is multivalued when considered as a section of $|\bE_\Kk|\to |\Kk|$, it is the realization of a {\it single valued} map  $\nu: \Vv\to \bE_\Kk|_\Vv$, which satisfies some compatibility conditions between charts but is {\it not} a functor.\footnote
{
However $\nu$ is a functor from the \lq\lq pruned" category  $\bB_\Kk|_{\Vv}^{\less \Ga}$ to $\bE_\Kk|_\Vv$, where  $\bB_\Kk|_{\Vv}^{\less \Ga}$ is a (non-full!) subcategory of $\bB_\Kk|_{\Vv}$ obtained by discarding appropriate morphisms.} 
This allows us to give
 a very explicit description of the zero set
  $(\s|_\Vv+  \nu)^{-1}(0)$, which forms an \'etale (but nonproper)  weighted nonsingular groupoid in a  natural and functorial way; see Proposition~\ref{prop:zero}.
\item
Of course, to obtain a fundamental class one also needs to discuss  orientations, and in order to prove uniqueness
of this class one also needs to set up an adequate  cobordism theory.   Cobordisms are discussed briefly in  \S\ref{ss:K2} (see Definition~\ref{def:CKS} ff.) and orientations in \S\ref{ss:iso2} (also see Definition~\ref{def:or}).
\end{itemlist}

\begin{rmk}\rm  (i)  
Note that although the cobordism relation is all one needs when proving the uniqueness of $[X]^{vir}_\Kk$ since this is just a homology class, 
it does not seem to be the \lq\lq correct" relation, in the sense that rather different moduli problems might well give rise to 
cobordant atlases.      
The construction in \S\ref{ss:GW} for an atlas on a fixed GW moduli spaces $X$ builds an atlas whose  {\bf commensurability class}
 (see Definition~\ref{def:commen}) 
is independent of all choices. 
However, the construction  involves the use of some geometric procedures (formalized in Definition~\ref{def:GWA}
as the notion of a GW atlas) that have no abstract description.   Therefore commensurability 
is probably not the optimal relation either, though (as shown by Proposition C) it is optimal if all obstruction spaces vanish.
It may well be that Joyce's notion of a Kuranishi space~\cite{Joyce} best captures  the Fredholm index condition on $X$, at least in the smooth context; see also Yang~\cite{Yang}.
The aim of our work is not to tackle such an abstract problem, but to develop a complete and explicit theory that can be used in practice to construct and calculate GW invariants. 
\MS

\NI (ii) Pardon's very interesting approach to the construction of the GW virtual fundamental 
class in~\cite{Pard} uses atlases that have many of the
 features of the theory presented here.   In particular, his notion of implicit atlas (developed on the basis of \cite{MW}) includes  transition charts and coordinate changes  
that are essentially the same as ours.  However he avoids making choices by considering 
{\it all} charts, and he avoids the taming problems
we encounter,  firstly by considering {\it all} solutions to  the given 
equation with specified domain, and secondly by using a different more algebraic way 
to define the VFC (via a version of sheaf theory and some homological algebra) 
that does not involve considering the quotient space $|\Kk|$. Further, instead of using a reduction in order to thin out the footprint covering  $(F_I)_{I\in \Ii_\Kk}$  sufficiently to construct a perturbation section $\nu$, he relates the charts corresponding to different indices $I\subset J$  via a construction he calls \lq\lq deformation to the normal cone".
$\hfill\er$  
 \end{rmk}

\begin{rmk}\label{rmk:FOOOa} \rm  (i) As already said, our approach grew out of the work of Fukaya--Ono in \cite{FO} and 
Fukaya--Oh--Ohta--Ono in \cite{FOOO}.  In particular we use essentially the same Fredholm analysis that is implicit in these papers 
(though we make it much more explicit).  Similarly, our construction of charts is closely related to theirs, 
but more explicit and more global.  The main technical differences are the following:
\begin{itemlist}\item[-] We use
 \lq\lq Fredholm stabilization" (see Remark~\ref{rmk:stab1} in \S\ref{ss:GW}~(VII)) 
in order to avoid some genericity issues with the obstruction bundles. 
\item[-] We make a more explicit choice of the labeling of the added marked points, giving more control over the isotropy groups and permitting
the
construction of transition charts $\bK_I$ with  footprint equal to the whole intersection $F_I$.
\item[-] The development in \cite{FO,FOOO,TF} goes straight from a Kuranishi structure to a \lq\lq good coordinate system"  or DGS (in our language, accomplishing taming and reduction in one step), thus apparently bypassing some of the topological questions
involving in constructing the category $\bB_\Kk$ and virtual neighbourhood $|\Kk|$. 
\end{itemlist}
Our approach does involve developing more basic topology.  One advantage  
 is that we can  exhibit the virtual fundamental class $[X]^{vir}_\Kk$ as an element  
 of the homology of $X$ as stated in Theorem~A, rather than simply as a cobordism class of 
 \lq\lq regularized moduli spaces" ${\oMm}\,\!^\nu$ with a well defined 
 image under an appropriate extension of the natural  map
$X: = \oMm_{0,k}(M,A,J) \stackrel{{\rm stab}\times {\rm eval}}\longrightarrow \oMm_{0,k}\times M^k$.     This more precise definition of $[X]^{vir}_\Kk$
 informs the very different approach of Ionel--Parker~\cite{IoP}. See Remark~\ref{rmk:FOOO1}
  for further discussion of the relation between our work and that of Fukaya et al. Note finally that the
 new version of the Kuranishi structure approach explained in Tehrani--Fukaya~\cite{TF} would also permit the definition of $[X]^{vir}_\Kk$ as an element in the homology of $X$ via their notion of \lq\lq thickening".
  \MS
  
  \NI (ii)  Our ideas were first explained in the 2012 preprint~\cite{MW} which constructed the fundamental cycle for
smooth atlases with trivial isotropy.  This paper has now been rewritten and separated into the  two papers  \cite{MW1,MW2}, the first of which deals the topological questions in more generality than \cite{MW}, and the second of which constructs the fundamental cycle in the trivial isotropy case  (including a discussion of orientations).  A third paper \cite{MWiso}  extends these results to the case of smooth atlases with nontrivial isotropy.  The lectures \cite{McL} give an overview of the whole construction.
\hfill$\er$
\end{rmk}

\MS

\section{Kuranishi atlases with trivial isotropy}\label{s:noiso}

Throughout these notes $X$ is assumed to be a compact and metrizable space.
Further in this chapter we assume (usually without explicit mention) that the isotropy is trivial. 
The proof of Theorem~B in this case is completed at the end of \S\ref{ss:red}.
 For the general case see \S\ref{s:iso}. 

\subsection{Smooth Kuranishi charts, coordinate changes and atlases}\label{ss:K1}

This section gives basic definitions. 

\begin{defn}\label{def:chart}
Let $F\subset X$ be a nonempty open subset.
A {\bf (smooth) Kuranishi chart} for $X$ with {\bf footprint} $F$ (and trivial isotropy)  is a tuple $\bK = (U,E,s,\psi)$ consisting of
\begin{itemize}
\item
the {\bf domain} $U$, which is 
a smooth $k$-dimensional manifold;\footnote
{
We assume throughout that manifolds are second countable and without boundary, unless explicit mention is made to the contrary.}
\item
the {\bf obstruction space} $E$, which is a finite dimensional real vector space;
\item
the {\bf section} $\s:U\to U\times E, x\mapsto (x,s(x))$ which is given by a
smooth map $s: U\to E$;
 \item
the {\bf footprint map} $\psi : s^{-1}(0) \to X$, which is a homeomorphism to the footprint ${\psi(s^{-1}(0))=F}$, which is an open subset of $X$.
\end{itemize}
The {\bf dimension} of $\bK$ is $\dim \bK: = \dim U-\dim E$.
\end{defn}

\begin{figure}[htbp] 
   \centering
   \includegraphics[width=3in]{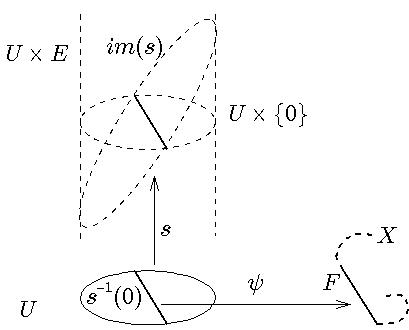}
   \caption{A Kuranishi chart}
   \label{fig:1}
\end{figure}

\begin{defn}\label{def:map} A {\bf map}  $\Hat\Phi : \bK \to \bK'$  between Kuranishi charts is a pair $(\phi,\Hat\phi)$ 
consisting of an embedding $\phi :U \to U'$ and a linear injection $\Hat \phi :E \to E'$ such that
\begin{enumerate}
\item the embedding restricts to $\phi|_{s^{-1}(0)}=\psi'^{-1} \circ\psi : s^{-1}(0) \to s'^{-1}(0)$, the transition map induced from the footprints in $X$;
\item
the embedding intertwines the sections, $s' \circ \phi  = \Hat\phi \circ s$, on the entire domain $U$.
\end{enumerate}
That is, the following diagrams commute:
\begin{equation}
 \begin{array} {ccc}
{U\times E}& \stackrel{\phi\times \Hat\phi} \longrightarrow &
{U'\times E'} 
\phantom{\int_Quark}  \\
\phantom{sp} \uparrow {\s}&&\uparrow {\s'} \phantom{spac}\\
\phantom{s}{U} & \stackrel{\phi} \longrightarrow &{U'} \phantom{spacei}
\end{array}
\qquad
 \begin{array} {ccc}
{s^{-1}(0)} & \stackrel{\phi} \longrightarrow &{s'^{-1}(0)} \phantom{\int_Quark} \\
\phantom{spa} \downarrow{\psi}&&\downarrow{\psi'} \phantom{space} \\
\phantom{s}{X} & \stackrel{{\rm id}} \longrightarrow &{X}. \phantom{spaceiiii}
\end{array}
\end{equation}
\end{defn}

The dimension of the obstruction space $E$ typically varies as the footprint $F\subset X$ changes.
Indeed, the maps $\phi, \Hat\phi$ need not be surjective.  However, as we will see in Definition \ref{def:change}, the maps allowed as coordinate changes are carefully controlled in the normal direction.

\begin{defn} \label{def:restr}
Let $\bK$ be a Kuranishi chart and $F'\subset F$
an open subset of the footprint.
A {\bf restriction of $\bK$ to $\mathbf{\emph F\,'}$} is a Kuranishi chart of the form
$$
\bK' = \bK|_{U'} := \bigl(\, U' \,,\, E'=E \,,\, s'=s|_{U'} \,,\, \psi'=\psi|_{s'^{-1}(0)}\, \bigr)
$$
given by a choice of open subset $U'\subset U$ of the domain such that $U'\cap s^{-1}(0)=\psi^{-1}(F')$.
In particular, $\bK'$ has footprint $\psi'(s'^{-1}(0))=F'$.
\end{defn}

By \cite[Lemma~5.1.6]{MW2}, we may restrict to any open subset of the footprint.
If moreover $F'\sqsubset F$ is precompact, then $U'$ can be chosen to be precompact in $U$, written $U'\sqsubset U$.

The next step is to construct a coordinate change $\Hat\Phi_{IJ}: \bK_I\to \bK_J$ between two charts with 
nested footprints $F_I\supset F_J$.   For simplicity
 we will formulate the definition in the situation 
that is relevant to Kuranishi atlases. That is, we suppose that a finite set of 
Kuranishi charts $(\bK_i)_{i\in \{1,\dots, N\}}$ is given such that for each  $I\subset \{1,\dots, N\}$
with $F_I: = \bigcap_{i\in I} F_i \ne \emptyset$ we have another Kuranishi chart 
$\bK_I$ (called a {\bf transition (or sum) chart}) with 
\begin{eqnarray}
\label{eq:sum0} 
\mbox{ obstruction space } E_I = {\textstyle \prod_{i\in I }}E_i, & \mbox{ and } & 
\mbox{ footprint } F_I: = {\textstyle \bigcap_{i\in I}} F_i. 
\end{eqnarray}

\begin{rmk}\label{rmk:sum}\rm
Since we assume in an atlas that $$
\dim U_I - \dim E_I =:\dim \bK_I = \dim \bK_i = \dim U_i - \dim E_i,\quad \forall i\in I,
$$ 
in general  the domain of the transition chart $U_I$ has dimension strictly larger than $\dim U_i$ for $i\in I$.  
Further, $U_I$ usually cannot be built in some topological way  from the $U_i$ (e.g. by taking products).  
Indeed in the Gromov--Witten situation $U_I$ consists (very roughly speaking) of 
pairs $((e_i)_{i\in I}, u)$ where $u$ is  a solution to an equation of the form $\pbar u = \sum_{i\in I} \la (e_i)$, 
and so cannot be made directly from the $U_i$, which for each $i$ consists of pairs $(e_i,u)$ of solutions to 
the individual equation $\pbar u = \la(e_i)$.
  Note also that we choose the obstruction spaces $E_i$ to cover the cokernel of the linearization of $\pbar$ at the points in $U_i$.  Thus each domain $U_i$ is a manifold 
that is cut out transversally by the equation.  Since the function $s_I:U_I\to E_I$  is the finite dimensional reduction of $\pbar$, 
for each $I\subset K$ the derivative  $\rd_v  s_K$ at a point $v\in \im \phi_{IK}$ has kernel contained in $\rT_v(\im \phi_{IK})$ and
cokernel that is covered by $\Hat\phi_{IK}(E_I)$.  This explains the index condition in Definition~\ref{def:change} 
and Remark~\ref{rmk:tbc} below. See  \S\ref{ss:GW}(VI) for more details.
$\hfill\er$   \end{rmk}

When $I\subset J$ we  write $\Hat\phi: = \Hat\phi_{IJ}: E_I\to E_J$ for  the natural inclusion, omitting it
where no confusion is possible.\footnote
{Note that 
the assumption $E_I = \prod_{i\in I }E_i$ means that the
 family is {\bf additive} in the sense of \cite[Definition~6.1.5]{MW2}.   Therefore all the atlases that we now consider
  are additive, and 
 for simplicity we no longer mention this condition explicitly. We discuss a  weakened version in \S\ref{s:order}. }

\begin{figure}[htbp] 
   \centering
   \includegraphics[width=4in]{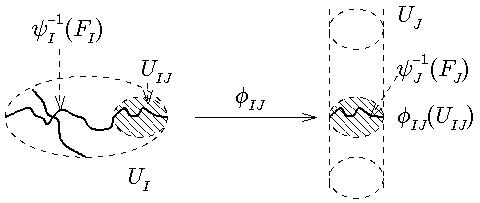}
   \caption{A coordinate change in which $\dim U_{J} =\dim U_I+ 1$.
   Both $U_{IJ}$ and its image $\phi_{IJ}(U_{IJ})$ are shaded.}
   \label{fig:2}
\end{figure}

\begin{defn}\label{def:change}
For  $I\subset J$, let
 $\bK_I$ and $\bK_J$ be Kuranishi charts as above, with domains $U_I, U_J$ and footprints $F_I\supset F_J$.
A {\bf coordinate change} from $\bK_I$ to $\bK_J$ with {\bf domain} $U_{IJ}$ is a map
$\Hat\Phi: \bK_I|_{U_{IJ}}\to \bK_J$, which satisfies the {\bf index condition} in (i),(ii) below, and whose domain 
is an open subset $U_{IJ}\subset U_I$ such that
$\psi_I(s_I^{-1}(0)\cap U_{IJ}) = F_J$.
\begin{enumerate}
\item
The embedding $\phi:U_{IJ}\to U_J$ underlying the map $\Hat\Phi$ identifies the kernels,
$$
\rd_u\phi \bigl(\ker\rd_u s_I \bigr) =  \ker\rd_{\phi(u)} s_J    \qquad \forall u\in U_{IJ};
$$
\item
the linear embedding $\Hat\phi:E_I\to E_J$ given by the map $\Hat\Phi$ identifies the cokernels,
$$
\forall u\in U_{IJ} : \qquad
E_I = \im\rd_u s_I \oplus C_{u,I}  \quad \Longrightarrow \quad E_J = \im \rd_{\phi(u)} s_J \oplus \Hat\phi(C_{u,I}).
$$
\end{enumerate}
\end{defn}

\begin{rmk}\label{rmk:tbc}\rm
We show in \cite[Lemma~5.2.5]{MW2} 
that the index condition is equivalent to the {\bf tangent bundle condition}, which requires isomorphisms for all $v=\phi(u)\in\phi(U_{IJ})$, 
\begin{equation}\label{tbc}
\rd_v s_J : \;\quotient{\rT_v U_J}{\rd_u\phi (\rT_u U_I)} \;\stackrel{\cong}\longrightarrow \; \quotient{E_J}{\Hat\phi(E_I)},
\end{equation}
or equivalently at all (suppressed) base points as above
\begin{equation}\label{inftame}
E_J=\im\rd s_J + \im\Hat\phi_{IJ} \qquad\text{and}\qquad
\im\rd s_J \cap \im\Hat\phi_{IJ} = \Hat\phi_{IJ}(\im\rd s_I).
\end{equation}
Moreover, the index condition implies that $\phi(U_{IJ})$ is an open subset of $s_J^{-1}(\Hat\phi(E_I))$,
and that the charts $\bK_I, \bK_J$ have the same dimension.
$\hfill\er$  
\end{rmk}

\begin{defn}\label{def:Kfamily}
Let $X$ be a compact metrizable space.
\begin{itemlist}
\item
A {\bf covering family of basic charts} for $X$ is a finite collection $(\bK_i)_{i=1,\ldots,N}$ of Kuranishi charts for $X$ whose footprints cover $X=\bigcup_{i=1}^N F_i$.
\item
{\bf Transition data} for a covering family $(\bK_i)_{i=1,\ldots,N}$ is a collection of Kuranishi charts $(\bK_J)_{J\in\Ii_\Kk,|J|\ge 2}$ and coordinate changes $(\Hat\Phi_{I J})_{I,J\in\Ii_\Kk, I\subsetneq J}$ as follows:
\begin{enumerate}
\item
$\Ii_\Kk$ denotes the set of 
nonempty
subsets $I\subset\{1,\ldots,N\}$ for which the intersection of footprints is nonempty,
$$
\Ii_\Kk: = \big\{ \emptyset\ne I\subset \{1,\ldots,N\} \; : \: F_I:= \; {\textstyle \bigcap_{i\in I}} F_i  \;\neq \; \emptyset \bigr\}\;;
$$
\item
$\bK_J$ is a Kuranishi chart for $X$ with footprint $F_J=\bigcap_{i\in J}F_i$ for each $J\in\Ii_\Kk$ with $|J|\ge 2$, and for one element sets $J=\{i\}$ we denote $\bK_{\{i\}}:=\bK_i$;
\item
$\Hat\Phi_{I J}$ is a coordinate change $\bK_{I} \to \bK_{J}$ for every $I,J\in\Ii_\Kk$ with $I\subsetneq J$.
\end{enumerate}
\end{itemlist}
 \end{defn}

According to Definition~\ref{def:change} the domain $U_{IJ}$  of $\Hat\Phi_{I J}$ is part of the
transition data for a covering family.
 Further, this data 
 automatically satisfies a cocycle condition on the zero sets since, due to the footprint maps to $X$, we have for $I\subset J \subset K$:
$$
\phi_{J K}\circ \phi_{I J}
= \psi_K^{-1}\circ\psi_J\circ\psi_J^{-1}\circ\psi_I
= \psi_K^{-1}\circ\psi_I
= \phi_{I K}
\qquad \text{on}\; s_I^{-1}(0)\cap U_{IK} .
$$
Further, the composite maps $\phi_{J K}\circ \phi_{I J}, \Hat\phi_{J K}\circ \Hat\phi_{I J} = \Hat\phi_{IK}$ automatically satisfy the
intertwining relations in Definition~\ref{def:map}.  Hence one can always define a composite coordinate change $\Hat\Phi_{JK}\circ \Hat\Phi_{IJ}$ from $\bK_I$ to $\bK_K$ with domain $U_{IJ}\cap \phi_{IJ}^{-1}(U_{JK})$. 
 (For details, see \cite[Lemma~5.2.7]{MW2} and \cite[Lemma~2.2.5]{MW1}.)  But 
 in general this domain  may have little relation to the domain $U_{IK}$ of $\phi_{IK}$, apart from the fact that these two sets have the same intersection
with the zero set $s_I^{-1}(0)$.
Since there is no natural ambient topological space into which the entire domains of the Kuranishi charts map, the cocycle condition on the complement of the zero sets has to be added as an axiom. There are three natural notions of cocycle condition with varying requirements on the domains of the coordinate changes.

\begin{defn}  \label{def:cocycle}
Let $\Kk=(\bK_I,\Hat\Phi_{I J})_{I,J\in\Ii_\Kk, I\subsetneq J}$ be a tuple of basic charts and transition data. Then for any $I,J,K\in\Ii_K$ with  $I\subsetneq J \subsetneq K$ we define the composed coordinate change $\Hat\Phi_{J K}\circ \Hat\Phi_{I J} : \bK_{I}  \to \bK_{K}$ as above with domain $\phi_{IJ}^{-1}(U_{JK})\subset U_{IJ}$.
We say that the triple of coordinate changes
$\Hat\Phi_{I J}, \Hat\Phi_{J K}, \Hat\Phi_{I K}$ satisfies the
\begin{itemlist}
\item {\bf weak cocycle condition}
if $\Hat\Phi_{J K}\circ \Hat\Phi_{I J} \approx \Hat\Phi_{I K}$, i.e.\ the coordinate changes are equal on the overlap; in particular if
\begin{equation*}
\qquad
\phi_{J K}\circ \phi_{I J} = \phi_{I K}
\qquad \text{on}\;\;
\phi_{IJ}^{-1}(U_{JK}) \cap U_{IK} ;
\end{equation*}
\item {\bf cocycle condition}
if $\Hat\Phi_{J K}\circ \Hat\Phi_{I J} \subset \Hat\Phi_{I K}$, i.e.\  $\Hat\Phi_{I K}$ extends the composed coordinate change; in particular if
\begin{equation}\label{eq:cocycle}
\qquad
\phi_{J K}\circ \phi_{I J} = \phi_{I K}
\qquad \text{on}\;\;
\phi_{IJ}^{-1}(U_{JK}) \subset U_{IK} ;
\end{equation}
\item {\bf strong cocycle condition}
if $\Hat\Phi_{J K}\circ \Hat\Phi_{I J} = \Hat\Phi_{I K}$ are equal as coordinate changes; in particular if
\begin{equation}\label{strong cocycle}
\qquad
\phi_{J K}\circ \phi_{I J} = \phi_{I K}
\qquad \text{on}\; \;
\phi_{IJ}^{-1}(U_{JK}) = U_{IK} .
\end{equation}
\end{itemlist}
 \end{defn}

 The following diagram of sets and maps  between them might be useful in decoding the cocycle conditions.
 \[
  \xymatrix 
  @R= 2pc
  {\phi_{IJ}^{-1}(U_{JK})\cap   U_{IJ}\ar@{^{(}->}[d]\ar@{->}[r]^{\quad\qquad\phi_{IJ}\;\;} & \;\;  U_{JK} \ar@{->}[r]^{\;\;\;\phi_{JK}}  & U_K\\
 U_I  \ar@{<-^{)}}[r] & 
\quad U_{IK}\ar@{->}[ur]_{\phi_{IK}}.
 }
 \]
The relevant distinction between  these versions of the cocycle condition is that the weak  condition can be achieved in practice by constructions of finite dimensional reductions for holomorphic curve moduli spaces, whereas the strong 
condition is needed for our construction of a virtual moduli cycle  from perturbations of the sections in the Kuranishi charts.
The cocycle condition is an intermediate notion which is too strong to be constructed in practice and too weak to induce a VMC, but it does allow us to formulate Kuranishi atlases categorically. This in turn gives rise, via a topological realization of a category, to a virtual neighbourhood of $X$ into which all Kuranishi domains map.

\begin{defn}\label{def:Ku}
A {\bf weak  Kuranishi atlas of dimension $\mathbf d$} on a compact metrizable space
$X$ is a tuple
$$
\Kk=\bigl(\bK_I,\Hat\Phi_{I J}\bigr)_{I, J\in\Ii_\Kk, I\subsetneq J}
$$
consisting of a covering family of basic charts $(\bK_i)_{i=1,\ldots,N}$ of dimension $d$
and transition data $(\bK_J)_{|J|\ge 2}$, $(\Hat\Phi_{I J})_{I\subsetneq J}$ for $(\bK_i)$ as in Definition~\ref{def:Kfamily}, that satisfy the {\it  weak cocycle condition} $\Hat\Phi_{J K}\circ \Hat\Phi_{I J} \approx\Hat\Phi_{I K}$ for every triple
$I,J,K\in\Ii_K$ with $I\subsetneq J \subsetneq K$. A weak  Kuranishi atlas  $\Kk$  is called a {\bf  Kuranishi atlas} if it satisfies the cocycle condition of \eqref{eq:cocycle}.
\end{defn}

\begin{rmk}\label{rmk:smoothatl}\rm  Very similar definitions apply if the isotropy groups are nontrivial, or if $X$ is stratified (for example, it consists of nodal $J$-holomorphic curves).  In the former case we must modify the coordinate changes (see  Definition~\ref{def:change2}), while in the latter case the domains of the charts  are stratified smooth (SS) spaces, which means that we must develop an adequate theory of SS maps.  In \S\ref{ss:tatlas} we introduce a  notion of {\bf topological atlas} that will
provide a common context for the topological constructions.  Therefore, for clarity we will sometimes call the atlases 
of Definition~\ref{def:Ku} {\bf smooth} and {\bf with trivial isotropy}.
$\hfill\er$  
\end{rmk}

\subsection{The Kuranishi category and virtual neighbourhood $|\Kk|$}\label{ss:K2}
After defining the Kuranishi category $\bB_\Kk$ of a Kuranishi atlas $\Kk$ and the associated realization $|\Kk|$, we 
state the main results about the topological space $|\Kk|$, giving all the relevant definitions. Most proofs are deferred to 
\S\ref{ss:tatlas}  where they are carried out in the broader context of topological atlases.
\MS

It is useful to think of the domains and obstruction spaces of a Kuranishi atlas as forming the following categories.
Recall that a {\bf topological category} $\bC$ is a small category (i.e. the
 collections  $\Obj_\bC, \Mor_{\bC}$ of objects and  morphisms
are sets) in which $\Obj_\bC$ and $ \Mor_{\bC}$ are provided with topologies in such a way that 
all structural maps such as source and target maps $s,t:\Mor_\Kk\to \Obj_\Kk$, as well as composition and inverse are 
continuous.  If $\Kk$ is a smooth atlas as defined in Definition~\ref{def:Ku}, then the spaces $\Obj_\bC$ and $ \Mor_{\bC}$ 
are  disjoint unions of smooth finite dimensional manifolds of varying dimensions, and the structural maps are smooth embeddings, but  later we also consider topological atlases. As we will see, the language of categories is a good way to describe how a space (such as $|\Kk|$) is built from simpler pieces.

\begin{defn}\label{def:catKu}
Given a Kuranishi atlas $\Kk$ we define its {\bf domain category} $\bB_\Kk$ to consist of
the space of objects\footnote{
When forming categories such as $\bB_\Kk$, we take always the space of objects 
to be the disjoint union of the domains 
$U_I$, even if we happen to have defined the sets $U_I$ 
as subsets of some larger space such as $\R^2$ 
or a space of maps as in the Gromov--Witten case.
Similarly, the morphism space is a disjoint union of the $U_{IJ}$ even though $U_{IJ}\subset U_I$ for all $J\supset I$.}
$$
\Obj_{\bB_\Kk}:= \bigsqcup_{I\in \Ii_\Kk} U_I \ = \ \bigl\{ (I,x) \,\big|\, I\in\Ii_\Kk, x\in U_I \bigr\}
$$
and the space of morphisms
$$
\Mor_{\bB_\Kk}:= \bigsqcup_{I,J\in \Ii_\Kk, I\subset J} U_{IJ} \ = \ \bigl\{ (I,J,x) \,\big|\, I,J\in\Ii_\Kk, I\subset J, x\in U_{IJ} \bigr\}.
$$
Here we denote $U_{II}:= U_I$ for $I=J$, and for $I\subsetneq J$ use
the domain $U_{IJ}\subset U_I$ of the restriction $\bK_I|_{U_{IJ}}$ to $F_J$
that is part of the coordinate change $\Hat\Phi_{IJ} : \bK_I|_{U_{IJ}}\to \bK_J$.

Source and target of these morphisms are given by
$$
(I,J,x)\in\Mor_{\bB_\Kk}\bigl((I,x),(J,\phi_{IJ}(x))\bigr),
$$
where $\phi_{IJ}: U_{IJ}\to U_J$ is the  embedding given by $\Hat\Phi_{I J}$, and we denote $\phi_{II}:={\rm id}_{U_I}$.
Composition\footnote
{
Note that this is written in the categorical ordering.}
 is defined by
$$
\bigl(I,J,x\bigr)\circ  \bigl(J,K,y\bigr) 
:= \bigl(I,K,x\bigr)
$$
for any $I\subset J \subset K$ and $x\in U_{IJ}, y\in  U_{JK}$ such that $\phi_{IJ}(x)=y$.

The {\bf obstruction category} $\bE_\Kk$ is defined in complete analogy to $\bB_\Kk$ to consist of
the spaces of objects $\Obj_{\bE_\Kk}:=\bigsqcup_{I\in\Ii_\Kk} U_I\times E_I$ and morphisms
$$
\Mor_{\bE_\Kk}: = \bigl\{ (I,J,x,e) \,\big|\, I,J\in\Ii_\Kk, I\subset J,  x\in U_{IJ}, e\in E_I \bigr\}.
$$
\end{defn}

We may also express the further parts of a Kuranishi atlas in categorical terms:

\begin{itemlist}
\item
The obstruction category $\bE_\Kk$ is a bundle over $\bB_\Kk$ in the sense that there is a functor
$\pr_\Kk:\bE_\Kk\to\bB_\Kk$ that is given on objects and morphisms by projection $(I,x,e)\mapsto (I,x)$ and $(I,J,x,e)\mapsto(I,J,x)$ with locally trivial fiber $E_I$.
\item
The sections $s_I$ induce a smooth
section of this bundle, i.e.\ a functor $\s_\Kk:\bB_\Kk\to \bE_\Kk$ which acts smoothly
on the spaces of objects and morphisms, and whose composite with the projection
$\pr_\Kk: \bE_\Kk \to \bB_\Kk$ is the identity. More precisely, it is given by $(I,x)\mapsto (I,x,s_I(x))$ on objects and by $(I,J,x)\mapsto (I,J,x,s_I(x))$ on morphisms.
\item
The zero sets of the sections $\bigsqcup_{I\in\Ii_\Kk} \{I\}\times s_I^{-1}(0)\subset\Obj_{\bB_\Kk}$ form a very special strictly full subcategory $\s_\Kk^{-1}(0)$ of $\bB_\Kk$. Namely, $\bB_\Kk$ splits into the subcategory $\s_\Kk^{-1}(0)$ and its complement (given by the full subcategory with objects  $\{ (I,x) \,|\, s_I(x)\ne 0 \}$) in the sense that there are no morphisms of $\bB_\Kk$ between the two underlying sets of objects.
(This holds by \eqref{eq:zeroIJ} below.)
\item
The footprint maps $\psi_I$ give rise to a surjective functor $\psi_\Kk: \s_\Kk^{-1}(0) \to \bX$ 
to the category $\bX$ with object space $X$ and trivial morphism space, i.e. consisting only of identity maps.
It is given by $(I,x)\mapsto \psi_I(x)$ on objects and by $(I,J,x)\mapsto {\rm id}_{\psi_I(x)}$ on morphisms.
\end{itemlist}

We denote the  {\bf  topological realization of the category} $\bB_\Kk$ by $|\bB_\Kk|$, often abbreviated to $|\Kk|$.  This is the space formed as the quotient of 
$\Obj_{\bB_\Kk}= \bigsqcup_{I} U_I$  by the equivalence relation generated by the morphisms, and is given the quotient topology. 
Thus, for example, 
the realization of the category $\bX$ is the space $X$ itself.
Since $\s_\Kk$ is a functor, the equivalence relation on $\Obj_{\bB_\Kk}$ preserves the zero sets.
More precisely, the fact that the morphisms in $\bB_\Kk$ intertwine the zero sets $s_I^{-1}(0)$ and the footprint maps implies that
\begin{equation}\label{eq:zeroIJ}
 (I,x)\sim (J,y), s_I(x) = 0\;\;\Longrightarrow\;\; s_J(y) = 0, \psi_I(x) = \psi_J(y).
\end{equation}
Hence  $\s_\Kk^{-1}(0)$ can be considered as a full subcategory of $\bB_\Kk$, and this inclusion induces
a natural continuous bijection
from
the realization $|\s_\Kk^{-1}(0)|$ of the subcategory $\s_\Kk^{-1}(0)$ (with its quotient topology)
to
the zero set $|\s_\Kk|^{-1}(0)\subset |\Kk|$ of the function $|\s_\Kk|: |\Kk|\to |\bE_\Kk|$ (with the subspace topology).
As in \cite[Lemma~2.4.2]{MW1}, one can prove directly from the definitions that the inverse is continuous.

\begin{lemma}\label{le:ioK} The inverse of the footprint maps $\psi_I^{-1}: F_I\to U_I$ 
fit together to give an injective map
\begin{equation}\label{eq:ioK}
\io_\Kk: X\to |\s_\Kk|^{-1}(0)\subset |\Kk|
\end{equation}
that  is a homeomorphism to its image $|\s_\Kk|^{-1}(0)$.  
\end{lemma}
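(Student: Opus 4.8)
The plan is to construct $\io_\Kk$ explicitly from the inverses of the footprint maps, check it is well defined and injective, and then verify continuity in both directions.

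\emph{Construction and well-definedness.} For $x \in X$, choose any $i$ with $x \in F_i$ and set $\io_\Kk(x) := \pi_\Kk\bigl((i, \psi_i^{-1}(x))\bigr)$, where $\pi_\Kk: \Obj_{\bB_\Kk}\to |\Kk|$ is the quotient map. To see this is independent of the choice, suppose $x \in F_i \cap F_j$, so $\{i,j\}\subset I := \{i,j\}\in\Ii_\Kk$. The coordinate changes $\Hat\Phi_{\{i\}I}$ and $\Hat\Phi_{\{j\}I}$ restrict on zero sets to $\phi_{\{i\}I} = \psi_I^{-1}\circ\psi_i$ and $\phi_{\{j\}I} = \psi_I^{-1}\circ\psi_j$ (Definition~\ref{def:change} combined with Definition~\ref{def:map}(i)). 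Hence the morphisms $(\{i\}, I, \psi_i^{-1}(x))$ and $(\{j\}, I, \psi_j^{-1}(x))$ in $\bB_\Kk$ both identify $(i,\psi_i^{-1}(x))$ and $(j,\psi_j^{-1}(x))$ with the single object $(I, \psi_I^{-1}(x))$, so all three represent the same point of $|\Kk|$. This shows $\io_\Kk$ is well defined and moreover that its image lies in $|\s_\Kk|^{-1}(0)$, since each $\psi_i^{-1}(x)\in s_i^{-1}(0)$ and the equivalence relation preserves zero sets by \eqref{eq:zeroIJ}. Surjectivity onto $|\s_\Kk|^{-1}(0)$: any point there is represented by some $(I, y)$ with $s_I(y)=0$, and then $\psi_I(y)\in F_I\subset F_i$ for $i\in I$, so by the cocycle-on-zero-sets computation already in the text the object $(I,y)$ is equivalent to $(i, \psi_i^{-1}(\psi_I(y)))$, i.e.\ $(I,y)\sim \io_\Kk(\psi_I(y))$ up to the identification $|\s_\Kk^{-1}(0)|\cong|\s_\Kk|^{-1}(0)$.

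\emph{Injectivity.} Suppose $\io_\Kk(x) = \io_\Kk(x')$. Using \eqref{eq:zeroIJ}, if representatives $(i,\psi_i^{-1}(x))$ and $(i',\psi_{i'}^{-1}(x'))$ are equivalent in $\bB_\Kk$ with both in zero sets, then the footprint values agree: $\psi_i(\psi_i^{-1}(x)) = \psi_{i'}(\psi_{i'}^{-1}(x'))$, i.e.\ $x = x'$. (One should trace through a chain of morphisms realizing the equivalence and apply the second conclusion of \eqref{eq:zeroIJ} inductively along the chain.)

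\emph{Continuity.} The map $\io_\Kk$ is continuous on each $F_i$ because it is the composite $F_i \xrightarrow{\psi_i^{-1}} U_i \xrightarrow{\pi_\Kk} |\Kk|$ of continuous maps (the footprint map $\psi_i$ is a homeomorphism onto $F_i$, and $\pi_\Kk$ is continuous by definition of the quotient topology); since the open sets $F_i$ cover $X$, $\io_\Kk$ is continuous. For continuity of the inverse, observe that on the subcategory $\s_\Kk^{-1}(0)$ the quotient map sends each $\{i\}\times s_i^{-1}(0)$ homeomorphically into $|\s_\Kk^{-1}(0)|$, and the composite $s_i^{-1}(0) \xrightarrow{\psi_i} F_i$ is a homeomorphism; one checks these patch to give a continuous inverse $|\s_\Kk^{-1}(0)| \to X$, exactly as in \cite[Lemma~2.4.2]{MW1}, and then composes with the continuous bijection $|\s_\Kk^{-1}(0)|\to|\s_\Kk|^{-1}(0)$ (whose inverse is continuous, as already noted in the text). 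The main obstacle is the last point — verifying that the inverse is continuous as a map out of $|\Kk|$ — since a priori the quotient topology on $|\Kk|$ could fail to be Hausdorff and $\pi_\Kk$ need not be injective on any single $U_I$; one must argue carefully that the relevant saturated open sets separate points of $\io_\Kk(X)$, which is where the structure of the morphisms (each being a restriction/coordinate change, so an open embedding) and \eqref{eq:zeroIJ} are used. This is precisely the content of the cited \cite[Lemma~2.4.2]{MW1}, so the proof amounts to adapting that argument and invoking the already-stated continuity of the inverse of $|\s_\Kk^{-1}(0)|\to|\s_\Kk|^{-1}(0)$.
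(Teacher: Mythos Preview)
Your proposal is correct and follows essentially the same approach as the paper, which does not spell out a proof but sets up the ingredients in the surrounding text (the functor $\psi_\Kk$, the property \eqref{eq:zeroIJ}, and the identification $|\s_\Kk^{-1}(0)|\cong|\s_\Kk|^{-1}(0)$) and refers to \cite[Lemma~2.4.2]{MW1} for the continuity of the inverse. Your write-up simply makes explicit the well-definedness, injectivity, and surjectivity checks that the paper leaves implicit, and then, like the paper, defers the one genuinely delicate point to the cited reference.
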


Thus the zero set $|\s_\Kk|^{-1}(0)\subset |\Kk|$ has the expected topology.
However, as is shown by Example~\ref{ex:Khomeo} below, 
the topology on $|\Kk|$  itself can be very wild; it is not in general Hausdorff and the natural maps $\pi_\Kk: U_I\to |\Kk|$ 
need not be injective, let alone homeomorphisms to their images.  Further, even though the isotropy is trivial,
 the fibers of the projection $|\pr|:|\bE_\Kk|\to |\Kk|$ need not be vector spaces. 
In order to remedy these problems we introduce the notions of {\bf tameness, shrinking,  metrizability, and cobordism} for weak atlases and prove the following result.

\begin{thm}[see  Theorem~6.3.9 in  \cite{MW2}]\label{thm:K}
Let $\Kk$ be a  weak Kuranishi atlas (with trivial isotropy) on a compact metrizable space $X$.
Then an appropriate shrinking of $\Kk$ provides a  metrizable tame Kuranishi atlas $\Kk'$ with domains 
$(U'_I\subset U_I)_{I\in\Ii_{\Kk'}}$ such that the realizations $|\Kk'|$ and $|\bE_{\Kk'}|$ are Hausdorff in the quotient topology.
In addition, for each $I\in \Ii_{\Kk'} = \Ii_\Kk$ the projection maps $\pi_{\Kk'}: U_I'\to |\Kk'|$ and
$\pi_{\Kk'}:U'_I\times E_I\to |\bE_{\Kk'}|$ are homeomorphisms onto their images and fit into a commutative diagram
\[
\xymatrix
{
U_I'\times E_I   \ar@{->}[d] \ar@{->}[r]^{\pi_{\Kk'}}    &   |\bE_{\Kk'}| \ar@{->}[d]^{|\pr_{\Kk'}|}   \\
U_I' \ar@{->}[r]^{\pi_{\Kk'}}  &|\Kk'|.
}
\]
where the horizontal maps intertwine the vector space structure on $E_I$ with a vector space structure on the fibers of $|\pr_{\Kk'}|$.

Moreover, any two such shrinkings are cobordant by a metrizable tame Kuranishi cobordism whose realization also has the above Hausdorff, homeomorphism, and linearity properties.
\end{thm}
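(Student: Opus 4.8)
The plan is to deduce Theorem~\ref{thm:K} from the two principal results advertised at the start of this subsection, namely Proposition~\ref{prop:proper1} (every weak smooth atlas can be tamed) and Proposition~\ref{prop:Khomeo} (the realization of a tame atlas has the stated topological properties), both of which will be established in \S\ref{ss:tatlas} in the generality of filtered topological atlases. Working at that level of generality is exactly what is needed: the same point-set arguments then apply verbatim to the obstruction category $\bE_\Kk$ and, later, to cobordism atlases. So the real work is in the lemmas of \S\ref{ss:tatlas}; what is left here is to assemble them and to track metrizability.

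First I would carry out the \emph{shrinking and taming} step. Since $\Ii_\Kk$ is finite and partially ordered by inclusion, one shrinks the domains $U_I$ to precompact open subsets $U_I'\sqsubset U_I$ by an induction over $\Ii_\Kk$, at each stage using compactness of $X$ and the fact that the footprints $(F_I)$ form a nested covering in order to keep the footprints covering while arranging: (a) the cocycle condition \eqref{eq:cocycle} is promoted to the strong form \eqref{strong cocycle} on the shrunk domains $U_{IJ}'$, so that $\Kk'$ genuinely defines the category $\bB_{\Kk'}$; and (b) the overlap-control property that for all $I,J$ with $F_I\cap F_J\neq\emptyset$ one has $\pi_\Kk(U_I')\cap \pi_\Kk(U_J')\subset \pi_\Kk(U_{I\cup J}')$, while this intersection is empty otherwise. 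Property (b) is where the filtration (the generalization of additivity discussed in Remark~\ref{rmk:add1}) is essential, and it is precisely what forces any zigzag of morphisms identifying two points of a single $U_I'$ to collapse to the identity. Metrizability is then obtained either by a further shrinking or by patching the manifold metrics on the $U_I'$ into an admissible metric on $|\Kk'|$ compatible with the quotient structure; the precompactness built in at the taming stage is what makes this patching converge.

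Granted a metrizable tame atlas $\Kk'$, the homeomorphism statements are the content of Proposition~\ref{prop:Khomeo}: Hausdorffness of $|\Kk'|$ follows by separating non-equivalent points with the admissible metric, using precompactness of the $\ov{U_I'}$; injectivity of $\pi_{\Kk'}$ on each $U_I'$ follows from (b); and openness of $\pi_{\Kk'}$ onto its image follows from tameness of the coordinate-change domains. Applying the same package to the filtered topological atlas $\bE_\Kk$ yields the analogous statements for $|\bE_{\Kk'}|$ and the commuting square, with the vector space structure on the fibers of $|\pr_{\Kk'}|$ inherited from the linear inclusions $\Hat\phi_{IJ}:E_I\hookrightarrow E_J$, which are exactly the transition maps of $\bE_\Kk$; Lemma~\ref{le:ioK} then locates the zero set $\io_\Kk(X)$ inside this good ambient space.

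Finally, for the cobordism statement, given two metrizable tame shrinkings $\Kk_0',\Kk_1'$ of the same weak atlas $\Kk$, I would build a weak Kuranishi cobordism on $[0,1]\times X$ out of products $[0,1]\times\bK_I$ with collared domains that restrict to $\Kk_0'$ near $t=0$ and to $\Kk_1'$ near $t=1$, and then invoke the cobordism versions of Propositions~\ref{prop:proper1} and~\ref{prop:Khomeo} (again available since a Kuranishi cobordism is a filtered topological atlas) to tame and metrize it while leaving the already-tame collar ends unchanged. The resulting metrizable tame Kuranishi cobordism witnesses that $\Kk_0'$ and $\Kk_1'$ are cobordant, and its realization inherits the Hausdorff, homeomorphism, and linearity properties for the same reasons as above. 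The main obstacle throughout is the inductive shrinking that must simultaneously preserve the covering, upgrade the weak cocycle condition to the strong one, and enforce the overlap-control property (b); a secondary difficulty is making the collar construction in the cobordism step compatible with tameness, so that the taming procedure does not disturb the prescribed ends.
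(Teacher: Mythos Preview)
Your high-level plan --- assemble the theorem from Proposition~\ref{prop:proper1} (taming) and Proposition~\ref{prop:Khomeo} (Hausdorff/homeomorphism) --- matches the paper. But you have the dependency between Hausdorffness and metrizability backwards, and this creates a real gap.

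In the paper, tameness alone yields Hausdorffness and the homeomorphism property; no metric is used. The key input you do not mention is Lemma~\ref{le:Ku2}: the tameness identities \eqref{eq:tame1}, \eqref{eq:tame2} collapse the equivalence relation on $\Obj_{\bB_\Kk}$ to a two-step chain $(I,x)\preceq(I\cup J,z)\succeq(J,y)$ (equivalently $(I,x)\succeq(I\cap J,w)\preceq(J,y)$). From this one checks directly that the relation is closed --- hence $|\Kk'|$ is Hausdorff by a Bourbaki quotient argument --- and that each $\pi_{\Kk'}|_{U_I'}$ is open onto its image. Metrizability comes only \emph{afterwards}, and not by patching manifold metrics: Lemma~\ref{le:metriz} takes a \emph{preshrunk} tame shrinking $\Kk_{sh}\sqsubset\Kk'\sqsubset\Kk$, embeds $|\Kk_{sh}|$ into the compact closure $\ov{\io(|\Kk_{sh}|)}\subset|\Kk'|$, and invokes Urysohn --- which needs $|\Kk'|$ Hausdorff as input. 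So your route (construct the metric first, then deduce Hausdorffness from it) is circular as written; you have given no independent construction of the admissible metric.

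A smaller point: your conditions (a) and (b) are \emph{consequences} of tameness, not what the shrinking induction is built to enforce. The induction in Proposition~\ref{prop:proper1} works on the coordinate-change domains $U_{IJ}'$ themselves, achieving \eqref{eq:tame1} and \eqref{eq:tame2} level by level (Step~A via Lemma~\ref{le:set}, Step~B by excising $\s_J^{-1}(\E_{IJ})\less\phi_{IJ}(U_{IJ}^{(k)})$); the strong cocycle condition and your overlap property then fall out via Lemma~\ref{le:Ku2}. For the cobordism statement, the paper packages the relative taming with prescribed boundary data as Proposition~\ref{prop:cobord2}.
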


We give most of the details of the proof in the next section \S\ref{ss:tatlas}.  In fact, we will  prove a more general version 
 that will be relevant when we come to consider  smooth atlases with nontrivial isotropy.  Thus, 
we will introduce a notion of {\bf topological atlas} and discuss what   taming means in that context.
In the rest of this section, 
we explain the new notions in the  smooth context, stating more precise versions 
of the above theorem for this case.

\begin{defn}\label{def:tame}
A weak Kuranishi atlas is said to be {\bf tame} if 
for all $I,J,K\in\Ii_\Kk$ we have\footnote
{ In \eqref{eq:tame2} below we write $s_J^{-1}\bigl(\Hat\phi_{IJ}(E_I)\bigr)$ instead of $s_J^{-1}(E_I)$ for clarity; but we usually use the shorter notation, identifying  the subspace $\{(e_j)_{j\in J} \ | \ e_j=0 \ \forall j\in J\less I\}\subset E_J$ 
with $E_I$ when $I\subset J$. 
}
\begin{align}\label{eq:tame1}
U_{IJ}\cap U_{IK}&\;=\; U_{I (J\cup K)}\qquad\qquad\;\;\;\;\,\qquad\forall I\subset J,K ;\\
\label{eq:tame2}
\phi_{IJ}(U_{IK}) &\;=\; U_{JK}\cap s_J^{-1}\bigl(\Hat\phi_{IJ}(E_I)\bigr) \qquad\forall I\subset J\subset K.
\end{align}
Here we allow equalities, using the notation $U_{II}:=U_I$ and $\phi_{II}:={\rm Id}_{U_I}$.
Further, to allow for the possibility that $J\cup K\notin\Ii_\Kk$, we define
$U_{IL}:=\emptyset$ for $L\subset \{1,\ldots,N\}$ with $L\notin \Ii_\Kk$.
Therefore \eqref{eq:tame1} includes the condition
$$
U_{IJ}\cap U_{IK}\ne \emptyset
\quad \Longrightarrow \quad F_J\cap F_K \ne \emptyset  \qquad \bigl( \quad \Longleftrightarrow\quad
J\cup K\in \Ii_\Kk \quad\bigr).
$$
\end{defn}

The notion of tameness generalizes the identities $F_J\cap F_K=F_{J\cup K}$ and $\psi_J^{-1}(F_{K}) = U_{JK}\cap s_J^{-1}(0_J)$ between the footprints and zero sets, which we can include into \eqref{eq:tame1} and \eqref{eq:tame2} as the case $I = \emptyset$, by using the notation 
\begin{equation}\label{eq:empty}
U_{\emptyset J}: = F_J,\qquad \phi_{\emptyset J}:=\psi_J^{-1}.
\end{equation}
Indeed, the first tameness condition \eqref{eq:tame1} extends the identity for intersections of footprints -- which is equivalent to $\psi_I^{-1}(F_J)\cap \psi_I^{-1}(F_K) = \psi_I^{-1}(F_{J\cup K})$ for all $I\subset J,K$ -- to the domains of the transition maps in $U_I$. 
In particular, with $J\subset K$ it implies nesting of the domains of the transition maps,
\begin{equation}\label{eq:tame4}
U_{IK}\subset U_{IJ} \qquad\forall I\subset J \subset K.
\end{equation}
(This in turn generalizes the $I=\emptyset$ case $F_K\subset F_J$ for $J \subset K$.)
The second tameness condition \eqref{eq:tame2} extends the relation between footprints and zero sets -- equivalent to $\phi_{IJ}(\psi_I^{-1}(F_K)) = U_{JK}\cap s_J^{-1}(0)$ for all $I\subset J$ --
to a relation between domains of transition maps and preimages  of corresponding subbundles by 
the appropriate section $s_\bullet$\ .
In particular, with $J=K$ it controls the image of the transition maps,
generalizing the $I=\emptyset$ case $\psi_J^{-1}(F_J) =  s_J^{-1}(0)$ to
\begin{equation}\label{eq:tame3}
\im\phi_{IJ}:= \phi_{IJ}(U_{IJ}) =  s_J^{-1}(E_I) \qquad\forall I\subset J.
\end{equation}
It follows that that  the image of each transition map $\phi_{IJ}$ is a closed subset of the Kuranishi domain $U_J$.
Further,  if $I\subset J\subset K$ we may  combine conditions \eqref{eq:tame2} and \eqref{eq:tame3} to obtain
the identity $\phi_{IJ}(U_{IK}) = U_{JK}\cap s_J^{-1}(E_I) = U_{JK}\cap (\im \phi_{IJ})$,  
 which pulls back via 
$\phi_{IJ}^{-1}$ to $U_{IK} = U_{IJ} \cap \phi_{IJ}^{-1}(U_{JK})$.  This proves the first part of the following proposition.

\begin{prop}\label{prop:tame0} Let $\Kk$ be a tame weak Kuranishi atlas (with trivial isotropy). Then the following holds.
\begin{enumerate} 
\item $\Kk$
 satisfies the strong cocycle condition; in particular it is a Kuranishi atlas.
\item Both
 $|\Kk|$ and  $|\bE_\Kk|$ are Hausdorff, and for each $I\in\Ii_\Kk$ the quotient maps $\pi_{\Kk}|_{U_I}:U_I\to |\Kk|$ and $\pi_{\Kk}|_{U_I\times E_I}:U_I\times E_I\to |\bE_\Kk|$ are homeomorphisms onto their image.
\item There is a unique linear structure on the fibers of  $|\pr_{\Kk}|: |\bE_\Kk| \to |\Kk|$ such that for every $I\in\Ii_\Kk$ the embedding $\pi_{\Kk} : U_I\times E_I \to  |\bE_{\Kk}|$ is linear on the fibers.
\end{enumerate}
\end{prop}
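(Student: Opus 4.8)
The plan is to deduce each of the three assertions from the two tameness identities \eqref{eq:tame1}, \eqref{eq:tame2} together with the consequences already extracted in the paragraphs preceding the proposition, in particular \eqref{eq:tame3}, \eqref{eq:tame4}, and the identity $U_{IK} = U_{IJ}\cap\phi_{IJ}^{-1}(U_{JK})$ which has just been proved. For part (i), the strong cocycle condition \eqref{strong cocycle} requires that for $I\subsetneq J\subsetneq K$ one has $\phi_{JK}\circ\phi_{IJ}=\phi_{IK}$ on $\phi_{IJ}^{-1}(U_{JK})=U_{IK}$; the domain equality is exactly the identity just established, and equality of the maps off the zero set follows from the cocycle condition for the coordinate changes \eqref{eq:cocycle} (which is part of the hypothesis that $\Kk$ is a weak Kuranishi atlas — actually we only have $\approx$, so here I would invoke that a tame atlas satisfies \eqref{eq:cocycle}, using \eqref{eq:tame2} to upgrade $\approx$ to genuine equality on the now-pinned-down common domain, as is done in \cite{MW2}). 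Since the strong cocycle condition in particular implies the cocycle condition, $\Kk$ is a Kuranishi atlas.

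For part (ii), the morphisms of $\bB_\Kk$ go from $(I,x)$ to $(J,\phi_{IJ}(x))$ with $I\subset J$, so the equivalence relation on $\Obj_{\bB_\Kk}=\bigsqcup_I U_I$ is generated by these one-directional identifications. The key point for injectivity of $\pi_\Kk|_{U_I}$ is that two points $x,x'\in U_I$ can be equivalent only through a zigzag of such morphisms, and \eqref{eq:tame3} (the image of $\phi_{IJ}$ is the closed set $s_J^{-1}(E_I)$) together with the strong cocycle condition forces any such zigzag to descend back to $U_I$ via an honest composite, so that $x\sim x'$ implies $x=x'$. For the Hausdorff property of $|\Kk|$ and the homeomorphism statement, I would follow the strategy announced in the text: these are special cases of the general statements for \emph{filtered topological atlases} proved in \S\ref{ss:tatlas} (Proposition~\ref{prop:Khomeo}), so strictly speaking I would simply cite that, noting that a smooth tame atlas is in particular a tame topological atlas. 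The same argument applied to $\bE_\Kk$, whose structural maps are the same ones tensored with the linear inclusions $\Hat\phi_{IJ}:E_I\to E_J$, gives the corresponding statements for $|\bE_\Kk|$.

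For part (iii), the fiber of $|\pr_\Kk|$ over a point $\ov x=\pi_\Kk(I,x)\in|\Kk|$ is the quotient of $\bigsqcup_{\{(J,y)\,:\,(J,y)\sim(I,x)\}} E_J$ by the identifications induced by the morphisms, each of which acts by the \emph{linear injection} $\Hat\phi_{JK}$. By (ii), all the objects $(J,y)$ equivalent to $(I,x)$ with $J\supseteq I$ receive a morphism \emph{from} $(I,x)$ (using $U_{IK}=s_K^{-1}(E_I)\ni$ preimage of $y$, via \eqref{eq:tame3}), and conversely; one then checks that the colimit of this diagram of vector spaces and injective linear maps is itself a vector space, with each $E_J\to(\text{fiber})$ linear and injective, and that the two charts' linear structures agree on overlaps precisely because the transition maps $\Hat\phi_{JK}$ are linear. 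This defines the linear structure and its uniqueness is forced by the requirement that each $\pi_\Kk:U_I\times E_I\to|\bE_\Kk|$ be fiberwise linear, since these embeddings cover all of $|\bE_\Kk|$.

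I expect the main obstacle to be part (ii): pinning down exactly why the equivalence relation generated by the morphisms does not identify distinct points of a single $U_I$ (failure of this is precisely the pathology of Example~\ref{ex:Khomeo}), and why $|\Kk|$ becomes Hausdorff after taming. This is the heart of Proposition~\ref{prop:Khomeo} and genuinely uses both \eqref{eq:tame1} and \eqref{eq:tame2} in an essential way — \eqref{eq:tame2} to control how the images $\phi_{IJ}(U_{IK})$ sit inside $U_{JK}$ as full preimages of subbundles, and \eqref{eq:tame1} to control intersections of chart domains, the latter being the reflection of the additivity/filtration hypothesis flagged in Remark~\ref{rmk:add1}. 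Rather than reproduce that argument here I would defer it to \S\ref{ss:tatlas}, where it is carried out in the broader topological setting, and in this section give only parts (i) and (iii) in detail plus the reduction of (ii) to Proposition~\ref{prop:Khomeo}.
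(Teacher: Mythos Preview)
Your overall structure matches the paper's: part~(i) is the identity $U_{IK}=U_{IJ}\cap\phi_{IJ}^{-1}(U_{JK})$ established just before the proposition (once the domains coincide, the weak cocycle condition on the overlap \emph{is} the strong one, so your parenthetical hesitation is unnecessary), and part~(ii) is deferred to Proposition~\ref{prop:Khomeo} in \S\ref{ss:tatlas}, exactly as the paper does. One small correction: Example~\ref{ex:Khomeo} is a \emph{tame} atlas where $\pi_\Kk|_{U_I}$ \emph{is} injective; the pathology there is failure of metrizability and local compactness, not of injectivity.

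Your argument for part~(iii) has a genuine gap. You describe the fiber over $\ov x$ as a colimit of the $E_J$ over all $(J,y)\sim(I,x)$, and assert this colimit is a vector space because the transition maps $\Hat\phi_{JK}$ are linear injections. But a colimit of vector spaces along linear injections is only guaranteed to be a vector space if the indexing diagram is \emph{directed}, and you have not established this. The paper's argument (in the proof of Proposition~\ref{prop:tame1}) supplies exactly this missing piece: for $p\in|\Kk|$ one defines $I_p:=\bigcup\{I:\,p\in\pi_\Kk(U_I)\}$ and uses Lemma~\ref{le:Ku2}(a) to show that $p\in\pi_\Kk(U_I)\cap\pi_\Kk(U_J)$ forces $I\cup J\in\Ii_\Kk$, hence $I_p\in\Ii_\Kk$ is a \emph{maximal} index with $p\in\pi_\Kk(U_{I_p})$. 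The fiber is then simply $E_{I_p}$ (via the bijection $\pi_\Kk:\{x_p\}\times E_{I_p}\to|\pr_\Kk|^{-1}(p)$, injective by Lemma~\ref{le:Ku2}(b)), and every other $E_I\to(\text{fiber})$ factors through $\Hat\phi_{II_p}$. Without this maximal-element step your colimit description does not yield a well-defined finite-dimensional linear structure.
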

\begin{proof}  See Proposition~\ref{prop:tame1}.
\end{proof}

Thus the quotient topology on the realization $|\Kk|$ of a tame atlas is reasonably  well behaved.  Nevertheless it is almost never metrizable: indeed if there is a coordinate change $\Hat\Phi_{IJ}: \bK_I\to \bK_J$ with $\dim U_I <\dim U_J$  and such that the subset $F_J$ is not closed in $F_I$ then  
 for each $x_I\in \psi_I^{-1}\bigl(F_I\cap (\ov F_J\less F_J)\bigr)\subset U_I$  
the point  $\pi_\Kk(x_I)\in |\Kk|$ does {\it not} have a countable neighbourhood base in the quotient topology; cf. Example~\ref{ex:Khomeo} below.
 
 \begin{defn}[Definition~6.1.14 in \cite{MW2}]\label{def:metric}  
A Kuranishi atlas $\Kk$ is called {\bf metrizable} if there is a bounded metric $d$ on the set $|\Kk|$ such that for each $I\in \Ii_\Kk$ the pullback metric $d_I:=(\pi_\Kk|_{U_I})^*d$ on $U_I$ induces the given topology on  $U_I$.
In this situation we call $d$ an {\bf admissible metric} on $|\Kk|$. 
A {\bf metric Kuranishi atlas} is a pair $(\Kk,d)$ consisting of a metrizable Kuranishi atlas and a choice of  admissible metric $d$.
\end{defn}

\begin{rmk}\rm
We will use this metric  on $|\Kk|$ when constructing the perturbation section $\nu$ in order to 
 control  its domain and size and hence ensure that the perturbed zero set  has compact realization.
 \hfill$\er$
\end{rmk}

Before stating the existence result, it is convenient to introduce the further notion of a {\bf shrinking.}
We write $V'\sqsubset V$ to denote that $V'$ is precompact in $V$, i.e. the closure (written $\ov{V'}$ or $cl_V(V')$) of $V'$ in $V$ is compact.

\begin{defn}\label{def:shr0}
Let $(F_i)_{i=1,\ldots,N}$ be an open cover of a compact space $X$.  We say that $(F_i')_{i=1,\ldots,N}$ is a {\bf shrinking} of $(F_i)$ if $F_i'\sqsubset F_i$ are
precompact open subsets, 
which cover $X= \bigcup_{i=1,\ldots,N} F'_i$, and are such that for all subsets $I\subset \{1,\ldots,N\}$ we have
\begin{equation} \label{same FI}
F_I: = {\textstyle\bigcap_{i\in I}} F_i \;\ne\; \emptyset
\qquad\Longrightarrow\qquad
F'_I: = {\textstyle\bigcap_{i\in I}} F'_i \;\ne\; \emptyset .
\end{equation}
\end{defn}

\begin{defn}\label{def:shr}
Let $\Kk=\bigl(\bK_I,\Hat\Phi_{I J})_{I, J\in\Ii_\Kk, I\subsetneq J}$ be a weak Kuranishi atlas.   We say that a weak Kuranishi atlas $\Kk'=(\bK_I',\Hat\Phi_{I J}')_{I, J\in\Ii_{\Kk'}, I\subsetneq J}$ is a {\bf shrinking} of $\Kk$, and write
$\Kk'\sqsubset \Kk$,~if
\begin{enumerate}
\item  the footprint cover $(F_i')_{i=1,\ldots,N'}$ is a 
shrinking of the cover $(F_i)_{i=1,\ldots,N}$,
in particular the numbers $N=N'$ of basic charts agree, and so do the index sets $\Ii_{\Kk'} = \Ii_\Kk$;
\item
for each $I\in\Ii_\Kk$ the chart $\bK'_I$ is the restriction of $\bK_I$ to a precompact domain $U_I'\subset U_I$
as in Definition \ref{def:restr};
\item
for each $I,J\in\Ii_\Kk$ with $I\subsetneq J$ the coordinate change $\Hat\Phi_{IJ}'$ is the restriction of $\Hat\Phi_{IJ}$  to the open subset $U'_{IJ}: =  \phi_{IJ}^{-1}(U'_J)\cap U'_I$
(see  \cite[Lemma~5.2.6]{MW2}).
\end{enumerate}
\end{defn}

 In order to construct metric tame Kuranishi atlases, we will find it
useful to consider  tame shrinkings $\Kk_{sh}$ of a
weak Kuranishi atlas $\Kk$ that are obtained as shrinkings of an intermediate tame shrinking $\Kk'$ of $\Kk$.
For short we will call such $\Kk_{sh}$ a 
{\bf preshrunk tame shrinking} of $\Kk$ and write $\Kk_{sh}\sqsubset \Kk'\sqsubset \Kk$.

\begin{prop}  \label{prop:proper}
Every weak Kuranishi atlas $\Kk$ has a shrinking $\Kk_{sh}$ that is a tame Kuranishi atlas -- for short called a {\bf tame shrinking}.  
Moreover if $\Kk_{sh}$ is preshrunk with $\Kk_{sh}\sqsubset \Kk'\sqsubset \Kk$ where $\Kk'$ is also tame, then the following holds:
\begin{itemize}\item
the induced map $|\Kk_{sh}|\to |\Kk'|$ is a continuous injection;
\item the atlas $\Kk_{sh}$ is metrizable;
\item we may choose the metric $d$ so that the metric topology on $(|\Kk_{sh}|,d)$ equals its 
topology as a subspace of $|\Kk'|$ with the quotient topology.
\end{itemize}
\end{prop}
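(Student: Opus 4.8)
The plan is to prove the three assertions in turn, using throughout the tameness relations \eqref{eq:tame1}--\eqref{eq:tame3}, the strong cocycle condition (Proposition~\ref{prop:tame0}(i)), and the Hausdorff and local‑homeomorphism properties of realizations of tame atlases (Proposition~\ref{prop:tame0}(ii)). The first step is to construct a tame shrinking. For $I\subsetneq J$ introduce the ``bad set'' $B_{IJ}:=s_J^{-1}\bigl(\Hat\phi_{IJ}(E_I)\bigr)\less\im\phi_{IJ}$, and similar bad sets measuring the failure of \eqref{eq:tame1} and \eqref{eq:tame2}. By Remark~\ref{rmk:tbc}, $\im\phi_{IJ}$ is open in $s_J^{-1}(\Hat\phi_{IJ}(E_I))$ and contains $s_J^{-1}(0)$, so $B_{IJ}$ is closed in that subspace and its closure in $U_J$ is disjoint from $s_J^{-1}(0)$; the analogous statement holds for the other bad sets. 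Hence one can choose a shrinking $(F_i')$ of the footprint cover together with open domains $U_I'\subset U_I$ with $U_I'\cap s_I^{-1}(0)=\psi_I^{-1}(F_I')$ that avoid all the finitely many bad sets, so that \eqref{eq:tame1}--\eqref{eq:tame3} hold. Since deleting a closed set disjoint from a zero set alters the images $\im\phi_{JK}$ and so reopens bad sets for larger index sets, this must be carried out as an induction over $\Ii_\Kk$ in a fixed order (e.g.\ decreasing $|I|$), which is the delicate part. Finally one shrinks once more to precompact domains; a shrinking of a tame atlas is again tame (a direct check from \eqref{eq:tame1}--\eqref{eq:tame3} and the strong cocycle condition), and taking $\Kk_{sh}$ to be such a precompact shrinking of an intermediate tame shrinking $\Kk'$ yields a preshrunk tame shrinking $\Kk_{sh}\sqsubset\Kk'\sqsubset\Kk$.

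Next I would prove that the induced map $f\colon|\Kk_{sh}|\to|\Kk'|$ is a continuous injection. Continuity is immediate, since the inclusions $U_I^{sh}\hookrightarrow U_I'$ intertwine the morphisms (coordinate changes of $\Kk_{sh}$ are restrictions of those of $\Kk'$) and hence descend to a continuous map of realizations carrying the quotient topology. For injectivity I would first record the \emph{structure lemma} for tame atlases (cf.\ \cite[Lemma~2.4.2]{MW1}): in a tame atlas $(I,x)\sim(J,y)$ iff $I\cup J\in\Ii_\Kk$, $x\in U_{I(I\cup J)}$, $y\in U_{J(I\cup J)}$ and $\phi_{I(I\cup J)}(x)=\phi_{J(I\cup J)}(y)$ — obtained by reducing a chain of morphisms, merging two ``up'' steps via \eqref{eq:tame1} and converting ``down--up'' into ``up--down'' via \eqref{eq:tame2} and the strong cocycle condition, then pulling the common target down to $U_{I\cup J}$. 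Now take $(I,x)\sim_{\Kk'}(J,y)$ with $x\in U_I^{sh}$, $y\in U_J^{sh}$, and set $K:=I\cup J$, $L:=I\cap J$, $z:=\phi'_{IK}(x)=\phi'_{JK}(y)\in U_K'$; I claim $z\in U_K^{sh}$. If $L\in\{I,J\}$ then $z$ equals $x$ or $y$. If $L=\emptyset$ then $s_K(z)\in E_I\cap E_J=\{0\}$, so $s_I(x)=0$, hence $\psi_I(x)\in F_I^{sh}$ and likewise $\psi_J(y)\in F_J^{sh}$; these agree by \eqref{eq:zeroIJ}, so $\psi_K(z)\in F_I^{sh}\cap F_J^{sh}=F_K^{sh}$ and $z\in U_K^{sh}$. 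If $L$ is a proper subset of both $I$ and $J$ then $s_K(z)\in E_L$ forces $s_I(x)\in E_L$, so by \eqref{eq:tame3} for $\Kk'$ and for $\Kk_{sh}$ one may write $x=\phi'_{LI}(a)$, $y=\phi'_{LJ}(b)$ with $a\in U^{sh}_{LI}$, $b\in U^{sh}_{LJ}$; the strong cocycle condition gives $z=\phi'_{LK}(a)=\phi'_{LK}(b)$, so $a=b$, and then \eqref{eq:tame1} for $\Kk_{sh}$ yields $a\in U^{sh}_{LI}\cap U^{sh}_{LJ}=U^{sh}_{LK}$, whence $z=\phi'_{LK}(a)\in U_K^{sh}$. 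In every case $z\in U_K^{sh}$, so $x\in U^{sh}_{IK}$, $y\in U^{sh}_{JK}$ and the path $(I,x)\to(K,z)\leftarrow(J,y)$ lies in $\Kk_{sh}$; by the structure lemma for $\Kk_{sh}$ this gives $(I,x)\sim_{\Kk_{sh}}(J,y)$, i.e.\ $f$ is injective.

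For the remaining two bullets I would first show that $Y:=f(|\Kk_{sh}|)=\pi_{\Kk'}\bigl(\bigsqcup_I U^{sh}_I\bigr)$ has compact closure in $|\Kk'|$: each $\ov{U^{sh}_I}$ is compact and contained in $U_I'$, so $\pi_{\Kk'}(\ov{U^{sh}_I})$ is compact and $Y\subseteq\bigcup_I\pi_{\Kk'}(\ov{U^{sh}_I})=:C$, which is compact, hence closed in the Hausdorff space $|\Kk'|$. Since $\bigsqcup_I\ov{U^{sh}_I}$ is compact metrizable and $C$ is its Hausdorff continuous image, $C$ — hence its closed subspace $\ov Y$, hence $Y$ with the subspace topology — is metrizable. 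It then remains to check that $f$ is a homeomorphism onto $Y$, equivalently (as $|\Kk_{sh}|$ is Hausdorff by Proposition~\ref{prop:tame0}(ii)) that $f$ is a closed map onto $Y$; I would argue this sequentially: a sequence in $Y$ convergent in $\ov Y$ lifts, after passing to a subsequence and using finiteness of $\Ii_\Kk$, to $x_n\in U^{sh}_I$ convergent in $\ov{U^{sh}_I}\subset U_I'$, and — via the tameness relations and the footprint identities, distinguishing whether the limit lies in $U^{sh}_I$ or is realized in another shrunk domain — the classes $[(I,x_n)]_{\Kk_{sh}}$ converge in $|\Kk_{sh}|$ to $f^{-1}$ of the limit. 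Granting this, transport a metric on $Y$ inducing the subspace topology back along $f$: it is bounded (as $Y\subseteq\ov Y$ is relatively compact) and admissible, since $\pi_{\Kk'}|_{U_I'}$ and hence its restriction to $U^{sh}_I$ is a homeomorphism onto its image (Proposition~\ref{prop:tame0}(ii)); and by construction the metric topology on $|\Kk_{sh}|$ is exactly the subspace topology from $|\Kk'|$, giving the third bullet.

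I expect the main obstacles to be, first, the inductive construction in Step~1 — ordering $\Ii_\Kk$ so that enforcing \eqref{eq:tame1}--\eqref{eq:tame3} for one index set does not reopen them for others while keeping the shrunk footprints a cover of $X$ — and second, the closedness of $f$ in the last step, i.e.\ the point‑set verification that the quotient topology on $|\Kk_{sh}|$ is not strictly finer than the topology inherited from $|\Kk'|$ (equivalently, that convergent sequences in $Y$ are captured by limits in $|\Kk_{sh}|$). By comparison the injectivity of $f$, though not formal, reduces after the structure lemma to the finite case analysis above.
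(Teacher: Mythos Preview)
Your injectivity argument (Step~2) is correct, and your metrizability strategy is essentially the paper's. But there is one genuine error and one unnecessary detour.

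\textbf{The error in Step~1.} Your claim that ``a shrinking of a tame atlas is again tame (a direct check)'' is false. Here is a counterexample to \eqref{eq:tame2}: take $U_1=\R^2$, $U_{12}=\R^3$, $U_{123}=\R^4$ with $E_1=\R$, $E_{12}=\R^2$, $E_{123}=\R^3$, sections $s_I$ given by projection to the last coordinates, and $\phi_{IJ}$ the obvious inclusions; this is tame. Shrink to $U'_1=(-3,3)^2$, $U'_{12}=(-2,2)\times(-\tfrac12,\tfrac12)\times(-1,1)$, $U'_{123}=(-1,1)\times(-2,2)^3$ (with compatible zero-slices). Then $\phi_{1,12}(U'_{1,123})=(-1,1)\times(-2,2)\times\{0\}$ while $U'_{12,123}\cap s_{12}^{-1}(E_1)=(-1,1)\times(-\tfrac12,\tfrac12)\times\{0\}$, so \eqref{eq:tame2} fails. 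The paper therefore does \emph{not} shrink a tame atlas naively to obtain the preshrunk $\Kk_{sh}$; it applies the full taming construction (Proposition~\ref{prop:proper1}) a second time. Relatedly, your ``bad sets'' outline for the first taming is too schematic: the paper's induction runs over \emph{increasing} $|I|$, and the key step (Step~A) rests on a nontrivial combinatorial lemma (Lemma~\ref{le:set}) producing open sets $U_K\subset W_K$ with $U_J\cap U_K=U_{J\cup K}$ and prescribed trace on a closed subset --- this is not achieved by deleting bad sets.

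\textbf{Step~2: a simpler route.} Your $I\cup J$ argument works, but the paper's proof (Lemma~\ref{le:metriz}(i)) goes via $I\cap J$ using Lemma~\ref{le:Ku2}(a)(iii): if $w\in U_{I\cap J}$ has $\phi'_{(I\cap J)I}(w)=x\in U_I^{sh}$, then $x\in s_I^{-1}(E_{I\cap J})\cap U_I^{sh}=\phi_{(I\cap J)I}(U^{sh}_{(I\cap J)I})$ by \eqref{eq:tame3} for $\Kk_{sh}$, so $w\in U^{sh}_{(I\cap J)I}$; similarly for $J$. This avoids your case analysis on $L$ entirely.

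\textbf{Step~3: the homeomorphism is not needed.} You attempt to show that $f:|\Kk_{sh}|\to Y$ is a homeomorphism (quotient topology on the left), but the proposition only asks that the \emph{metric} topology equals the subspace topology --- and this is automatic once $d$ is transported along the bijection $f$ from a metric inducing the subspace topology on $Y$. Admissibility of $d$ follows because the composite $U_I^{sh}\hookrightarrow U_I'\xrightarrow{\pi_{\Kk'}}|\Kk'|$ is a homeomorphism onto its image (Proposition~\ref{prop:tame0}(ii)), so the pullback of $d$ to $U_I^{sh}$ gives the manifold topology. Your sequential closedness argument is therefore unnecessary (and, as you suspect, incomplete: the quotient topology on $|\Kk_{sh}|$ need not agree with the subspace topology --- cf.\ Example~\ref{ex:Khomeo}).
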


 Propositions~\ref{prop:tame0} and \ref{prop:proper} contain all the
results in the first part of Theorem~\ref{thm:K}, and are proved in \S\ref{ss:tatlas}. 
The next important  concept used in Theorem~\ref{thm:K} is that of cobordism.
We develop an appropriate theory of {\bf topological cobordism Kuranishi atlases} in \cite[\S4]{MW1}
over spaces $Y$ with collared boundary; for the smooth theory see \cite[\S6.2]{MW2}.
To give its flavor, we now quote a few key definitions for smooth atlases, where for  simplicity we 
 restrict to the case of {\bf concordances}, i.e. cobordisms over $ [0,1]\times X$.

 \begin{defn} \label{def:CKS}
\begin{itemlist}
\item
Let $\bK^\al=(U^\al,E^\al,s^\al,\psi^\al)$ be a Kuranishi chart on $X$, and let $A\subset[0,1]$ be a relatively open interval. Then we define the {\bf product chart} for $ [0,1]\times  X$ with footprint $A \times  F_I^\al$ as
$$
A \times  \bK^\al :=\bigl(A \times  U^\al, E^\al, s^\al\circ{\rm pr}_{U^\al}  , \id_{A} \times   \psi^\al \bigr) .
$$
\item
A {\bf Kuranishi chart with collared boundary} on $[0,1]\times X$ is a tuple $\bK = (U,E,s ,\psi)$ as in Definition~\ref{def:chart}, with the following variations:
\begin{enumerate}
\item
The footprint $F =\psi (s ^{-1}(0))\subset [0,1]\times X$ intersects the boundary $\{0,1\}\times X$.
\item
The domain $U $ is a smooth manifold whose boundary splits into two parts $\partial U  = \partial^0 U  \sqcup \partial^1 U $ such that $\partial^\al U $ is nonempty iff $F $ intersects $\{\al\}\times X$.
\item
If $\partial^\al U \neq\emptyset$ then there is a relatively open neighbourhood $A^\al\subset [0,1]$ of $\al$ and an embedding $\iota^\al:
A^\al \times \partial^\al U  \hookrightarrow  U $ onto a neighbourhood of $\partial^\al U \subset U $ such that
$$
\bigl( \{\al\}\times  \partial^\al U \,,\, E \,,\, s \circ \iota^\al  \,,\, \psi \circ \iota^\al  \, \bigr)
\; = \; A^\al\times \partial^\al\bK  
$$
is the product of $A^\al$ with a Kuranishi chart $\partial^\al\bK $ for $X$ 
(called the {\bf restriction of $\bK$ to the boundary})
with footprint $F^\al\subset X$ such that $(A^\al\times X) \cap F  = A^\al\times F$.
\end{enumerate}
\end{itemlist}
\end{defn}

\begin{defn} \label{def:Ccc}
\begin{itemlist}
\item
Let $\bK _I,\bK _J$ be Kuranishi charts on  $[0,1]\times X$ 
such that only $\bK _I$ or both
$\bK _I,\bK _J$ have collared boundary.
Then a {\bf coordinate change with collared boundary} $\Hat\Phi_{IJ} :\bK _I\to\bK _J$ is a tuple $\Hat\Phi_{IJ}  = (U_{IJ} ,\phi_{IJ} ,\Hat\phi_{IJ} )$ of domain and embeddings as in Definition~\ref{def:change}, with the following boundary variations and collar form requirement:
\begin{enumerate}
\item
The domain is a relatively open subset $U _{IJ}\subset U _I$ with boundary components
$\partial^\al U _{IJ}:= U _{IJ} \cap \partial^\al U _I$;
\item
If  
$F_J\cap \{\al\}\times X \ne \emptyset$ for  
$\al = 0$ or $1$ (so that $F_I\cap  \{\al\}\times X \ne \emptyset$), 
there
is a relatively open neighbourhood $B^\al\subset [0,1]$
of $\al$
such that
\begin{align*}
(\iota_I^\al)^{-1}(U _{IJ})
\cap \bigl(B^\al\times \partial^\al U _I \bigr)
&\;=\; B^\al \times \partial^\al U _{IJ} , \\
(\iota_J^\al)^{-1}(\im\phi _{IJ})
\cap \bigl( B^\al \times \partial^\al U _J \bigr)
&\;=\;
 \phi _{IJ}( B^\al \times \partial^\al U _{IJ}),
\end{align*}
and
$$
\bigl( \,
 B^\al \times \partial^\al U _{IJ} \,,\, (\iota_J^\al)^{-1}\circ \phi _{IJ} \circ \iota_I^\al  \,,\,  \Hat\phi _{IJ} \, \bigr)
\;=\; {\rm id}_{B^\al} \times \partial^\al\Hat\Phi_{IJ} ,
$$
where $\partial^\al\Hat\Phi_{IJ} : \partial^\al\bK _I \to \partial^\al\bK _J$ is a coordinate change.
\item
If $\p^\al F_J= \emptyset$ but $\p^\al F_I\ne \emptyset$ for  $\al = 0$ or $1$
there is a neighbourhood $B^\al\subset [0,1]$
of $\al$
such that
$$
U _{IJ}\cap \iota_I^\al \bigl(B^\al \times \partial^\al U _I \bigr) = \emptyset.
$$
\end{enumerate}
\item
For any coordinate change with collared boundary $\Hat\Phi_{IJ} $ on $[0,1]\times X$ 
we call the uniquely determined coordinate changes $\p^\al \Hat\Phi_{IJ}$ for  $X$ 
the {\bf restrictions of $\Hat\Phi_{IJ} $ to the boundary} for $\al=0,1$.
\end{itemlist}
\end{defn}

\begin{defn}\label{def:CKS2}
A {\bf (weak) Kuranishi cobordism} on $[0,1]\times X$ 
is a tuple
$$
\Kk  = \bigl( \bK_{I} , \Hat\Phi_{IJ} \bigr)_{I,J\in \Ii_{\Kk}}
$$
of basic charts and transition data as in Definition~\ref{def:Ku} 
with the following boundary variations and collar form requirements:
\begin{itemlist}
\item
The charts of 
$\Kk$  are either Kuranishi charts with collared boundary or standard Kuranishi charts whose footprints are precompactly contained in  $(0,1)\times X$.
\item
The coordinate changes $\Hat\Phi_{IJ}: \bK_{I} \to \bK_{J}$ 
are either standard coordinate changes on $(0,1)\times X$ 
between pairs of standard charts, or coordinate changes with collared boundary between 
pairs of charts, of which at least the first has collared boundary.
\end{itemlist}
Moreover, we call 
$\Kk$ {\bf \ tame} if it satisfies the tameness conditions of Definition~\ref{def:tame}.
\end{defn}

 The essential feature of our definitions is that the charts are now manifolds with collared boundary,  and 
 that we require 
  compatibility of this collar structure with coordinate changes and all other structures, such as metrics.
In particular,  a {\bf metric  Kuranishi concordance}\footnote
{
Concordance is called deformation equivalence in \cite{TF}.}
 $(\Kk,d)$  from $\Kk^0$ to $\Kk^1$
 is a metric atlas $(\Kk,d)$ over $[0,1]\times X$ that for  $\al = 0,1$ restricts to the atlas $\Kk^\al=:\p^\al \Kk$ on $X$,
and near each boundary has an {\it isometric} identification with the product $ A^\al_\eps\times \Kk^\al$ where $A^0_\eps = [0,\eps), A^1_\eps = (1-\eps,1]$.
Here is the main existence result.

\begin{prop}\label{prop:cobord2}
Let $\Kk$ 
be a  weak Kuranishi 
concordance  
on $[0,1]\times X$, 
and let 
$\Kk^0_{sh}, \Kk^1_{sh}$ be preshrunk tame shrinkings of $\p^0\Kk$ 
and $\p^1\Kk$ with admissible metrics $d^\al$.
Then there is a preshrunk tame shrinking $\Kk_{sh}$ of $\Kk$  
that provides a metric tame Kuranishi 
concordance 
from $\Kk^0_{sh}$ to $\Kk^1_{sh}$.  
Further, we may choose the metric on $\Kk_{sh}$ so that for $\al=0,1$  it restricts to $d^\al$ on $\p^\al \Kk_{sh}$.
\end{prop}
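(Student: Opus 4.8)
The plan is to combine the existence of tame shrinkings from Proposition~\ref{prop:proper} with the collar-compatibility requirements by a relative version of that construction. First I would analyze the boundary restrictions: a weak Kuranishi concordance $\Kk$ on $[0,1]\times X$ has $\p^0\Kk$ and $\p^1\Kk$ weak Kuranishi atlases on $X$, and by hypothesis we are handed preshrunk tame shrinkings $\Kk^\al_{sh}\sqsubset (\p^\al\Kk)'\sqsubset \p^\al\Kk$ with admissible metrics $d^\al$. The key point is that the collar structure of every chart $\bK_I$ and every coordinate change $\Hat\Phi_{IJ}$ in $\Kk$ identifies a neighbourhood of $\p^\al U_I$ in $U_I$ with $A^\al\times \p^\al U_I$, compatibly with sections, footprint maps, and coordinate changes (Definitions~\ref{def:CKS}, \ref{def:Ccc}, \ref{def:CKS2}). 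So any shrinking of $\p^\al\Kk$ determines, via the collar embeddings $\iota_I^\al$, a germ of a shrinking of $\Kk$ near $\{\al\}\times X$; concretely, if $\p^\al U_I$ is shrunk to $W_I^\al\sqsubset \p^\al U_I$ then one shrinks $U_I$ near the boundary to contain $A^\al_\eps\times W_I^\al$ for a suitable smaller collar parameter $\eps$.

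Next I would run the proof of Proposition~\ref{prop:proper} in the cobordism category of topological/smooth atlases over $[0,1]\times X$, but starting the inductive shrinking procedure from data that already agrees with the prescribed collared shrinkings near the two ends. That is: first extend $\Kk^0_{sh}$ and $\Kk^1_{sh}$ to collared shrinkings of $\Kk$ on neighbourhoods $[0,\eps)\times X$ and $(1-\eps,1]\times X$ using the collar structure; then shrink the interior charts and coordinate changes, keeping the already-chosen collar pieces fixed, to achieve the tameness identities \eqref{eq:tame1} and \eqref{eq:tame2}. The shrinking procedure of Proposition~\ref{prop:proper} is local in nature (it chooses precompact open subsets footprint-by-footprint and index-by-index), so it can be carried out relative to a closed collar region on which the atlas is already tame and of product form; one only needs that the footprint cover of $[0,1]\times X$ can be shrunk relative to the already-shrunk footprint covers near $\{0,1\}\times X$, which holds because those collared footprints are precompact and the tameness condition \eqref{same FI} on nonempty intersections is an open condition. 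This yields a tame shrinking $\Kk_{sh}$ of $\Kk$ whose restriction to $[0,\eps)\times X$ is $A^0_\eps\times \Kk^0_{sh}$ and to $(1-\eps,1]\times X$ is $A^1_\eps\times \Kk^1_{sh}$, hence $\p^\al\Kk_{sh}=\Kk^\al_{sh}$.

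For the metric, I would first use Proposition~\ref{prop:proper} (metrizability of preshrunk tame shrinkings) to get \emph{some} admissible metric $\tilde d$ on $|\Kk_{sh}|$. Near the two ends $|\Kk_{sh}|$ is, by construction, homeomorphic to $A^\al_\eps\times |\Kk^\al_{sh}|$, and on $|\Kk^\al_{sh}|$ we have the given metric $d^\al$; so on a smaller collar $A^\al_{\eps'}$ the product metric $(ds)^2\oplus (d^\al)^2$ is admissible. The final step is to patch: choose a partition-of-unity-type interpolation in the $[0,1]$ variable between the product metrics near the ends and $\tilde d$ in the interior, taking care that the interpolation is \emph{exactly} the product metric on an even smaller collar (so the concordance is isometrically collared in the sense required before Proposition~\ref{prop:cobord2}) and that the interpolated metric still restricts on each $U_I$ to a metric inducing the given topology — this is automatic since a convex combination of metrics, each inducing the correct topology on $U_I$, again induces that topology.

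The main obstacle I anticipate is the relative shrinking step: ensuring that one can carry out the inductive tame-shrinking of Proposition~\ref{prop:proper} over $[0,1]\times X$ \emph{without disturbing} the prescribed collared pieces near $\{0,1\}\times X$, while still achieving the tameness identities globally. The subtlety is that the tameness conditions \eqref{eq:tame1}--\eqref{eq:tame2} are conditions relating $U_{IJ}, U_{IK}, U_{I(J\cup K)}$ across the \emph{whole} domain, including the collar region, so one must check that the fixed collared data, being itself tame (as $\Kk^\al_{sh}$ is tame and products of tame atlases with intervals are tame), does not obstruct completing the shrinking in the interior — equivalently, that the "partially shrunk" atlas with the ends already fixed still admits a tame shrinking relative to those ends. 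This is exactly the kind of relative statement that the proof of Proposition~\ref{prop:proper} in \cite[\S6.2]{MW2} and \cite[\S4]{MW1} is designed to handle, so I would cite and adapt that argument rather than redo it; the remaining bookkeeping (collar parameters, matching $\Ii_{\Kk_{sh}}=\Ii_\Kk$, checking the metric interpolation) is routine.
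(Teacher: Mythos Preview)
Your approach to the tame shrinking part is essentially the paper's: the paper says the proof is ``a fairly routine generalization of that of Proposition~\ref{prop:proper}'' and defers details to \cite[Proposition~4.2.4]{MW1}; your outline of the relative shrinking construction (extend the boundary shrinkings into the collars, then run the inductive taming in the interior keeping the collars fixed) matches this.

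However, there is a genuine gap in your treatment of the metric. You propose a ``partition-of-unity-type interpolation in the $[0,1]$ variable'' between the product collar metrics and an interior metric $\tilde d$, and justify admissibility by saying ``a convex combination of metrics, each inducing the correct topology on $U_I$, again induces that topology.'' But this conflates two different things. A \emph{fixed} convex combination $\lambda d_0 + (1-\lambda) d_1$ of two metrics is indeed a metric; a \emph{position-dependent} interpolation $d(p,q) = \rho(p,q)\, d_0(p,q) + (1-\rho(p,q))\, d_1(p,q)$ need not satisfy the triangle inequality and so is generally not a metric at all. Your notation $(ds)^2\oplus (d^\al)^2$ also suggests Riemannian thinking, where partition-of-unity patching is routine; but the admissible metrics of Definition~\ref{def:metric} are distance functions on the non-manifold space $|\Kk|$. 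The paper explicitly warns, immediately after stating this proposition, that ``it turns out to be surprisingly hard to interpolate between two given metrics in this way. (Note that we are \emph{not} considering Riemannian metrics.)'' Producing a metric on $|\Kk_{sh}|$ with the prescribed boundary restrictions requires a more delicate construction (carried out in \cite[Proposition~4.2.4]{MW1}), not the naive interpolation you sketch.
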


The proof is a fairly routine generalization of that of Proposition~\ref{prop:proper}, although it
turns out to be  surprisingly hard to  interpolate between two given  metrics in this way. (Note that we are {\it not} considering Riemannian metrics.) The necessary details are given in 
\cite[Proposition~4.2.4]{MW1}.  An analogous statement  is also true for Kuranishi cobordisms.

A final basic ingredient  of the proof of Theorem~B is that of {\bf orientation}. 
When the atlas $\Kk$ is smooth  with trivial isotropy we show in \S\ref{ss:orient} below that there is
 a real line bundle
 $|\La| \to |\Kk|$ whose pull back $\pi_\Kk^*(|\La|)|_{U_I}$ restricts on each domain $U_I$ to the orientation bundle
 $ \lm \rT U_I\otimes  (\lm E_I)^*$  of $U_I\times E_I$.  (Here, $\lm V$ denotes the maximal exterior power of
 the vector space $V$.)

 \begin{defn}\label{def:or}  We 
 define an orientation of $\Kk$ to be a nonvanishing section $\si$ of  $|\La| \to |\Kk|$.
 \end{defn}   
 
 The pullback of $\si$ to 
 $\pi_\Kk^*(|\La|)|_{U_I}$  then determines an orientation of $U_I\times E_I$, that is preserved by the coordinate changes 
 because the bundle $|\La|$ is defined over $|\Kk|$, and as we show in Lemma~\ref{le:locorient1} induces  compatible orientations
 on the zero sets of a perturbed transverse section.


We next give a simple example of a tame smooth atlas whose realization is neither metrizable nor locally compact.

\begin{example}
[Failure of metrizability and local compactness]  \label{ex:Khomeo}
\rm
For simplicity we will give an example with noncompact $X = \R$. (A similar example can be constructed with $X = S^1$.)
We construct a Kuranishi atlas $\Kk$ on $X$ with two basic charts, 
$\bK_1 = (U_1=\R, E_1=\{0\}, s=0,\psi_1=\id)$ and
$$
\bK_2 = \bigl(U_2=(0,\infty)\times \R,\ E_2=\R, \ s_2(x,y)= y,\ \psi_2(x,y)= x\bigr),
$$
one transition chart $\bK_{12} = \bK_2|_{U_{12}}$ with domain $U_{12} := U_2$, and the coordinate changes $\Hat\Phi_{i,12}$ induced by the natural embeddings of the domains $U_{1,12} := (0,\infty)\hookrightarrow (0,\infty)\times\{0\}$ and $U_{2,12} := U_2\hookrightarrow U_2$.
Then as a set $|\Kk| = \bigl(U_1\sqcup U_2\sqcup U_{12}\bigr)/\!\!\sim$
can be identified with $\bigl(\R\times\{0\}\bigr) \cup \bigl( (0,\infty)\times\R\bigr) \subset \R^2$.
However, the quotient topology at $(0,0)\in|\Kk|$ is strictly stronger than the subspace topology.
That is, for any open set $O\subset\R^2$  the induced subset $O\cap|\Kk|\subset|\Kk|$ is open, but some open subsets of $|\Kk|$ cannot be represented in this way.
In fact,  
for any $\eps>0$ and continuous function $f:(0,\eps)\to (0,\infty)$,
the set
$$
U_{f,\eps} \, :=\; \bigl\{ [x] \,\big|\, x\in U_1, |x|< \eps \}  \;\cup\; \bigl\{ [(x,y)] \,\big|\, (x,y)\in U_2,  |x|< \eps , |y|<f(x)\} \;\subset\; |\Kk|
$$
is open in the quotient topology.  It is shown in \cite[Example~2.4.5]{MW1} that  
 these sets form an (uncountable)  basis for the 
neighbourhoods of 
$[(0,0)]$ in the quotient topology. 

Notice that this atlas $\Kk$ is tame.  
Therefore taming by itself does not give a quotient with manageable  topology.  On the other hand, the only bad point $|\Kk|$ is $(0,0)$.
Indeed, according to  Proposition~\ref{prop:proper} 
 the realization of  any shrinking $\Kk'$ of $\Kk$ injects into $|\Kk|$ and  is metrizable when given the  corresponding subspace topology.  
For example, we could take $U_1': = (-\infty, 2)\sqsubset U_1$, $U_2': = (1,\infty)\times \R\sqsubset U_2$ and $U_{12}': = U_2'$.
\hfill$\er$ \end{example}

\begin{rmk}\label{rmk:poset}\rm (i)
The categories $\bB_\Kk, \bE_\Kk$ have many similarities with the \'etale\footnote
{
For relevant definitions see \S\ref{ss:orb}.}
categories used to model orbifolds i.e. the spaces of objects and morphisms are smooth manifolds and all structural maps (such as the source map, composition and so on) are  smooth. Moreover,   all sets of morphisms in $\bB_\Kk$ or $\bE_\Kk$ between fixed objects are finite.
However, because there could be coordinate changes  $\phi_{IJ}:U_{IJ}\to U_J$ with $\dim U_{IJ} < \dim U_J$, the target map  $t:\Mor_{\bB_\Kk}\to \Obj_{\bB_\Kk}$ is not in general a local diffeomorphism, although it is locally injective.
 Moreover, one cannot in general complete $\bB_\Kk$ to a groupoid by adding inverses and compositions, while keeping the property that the morphism space is a union of smooth manifolds. 
The problem here is that the inclusion of inverses of the coordinate changes, and their compositions, may yield singular spaces of morphisms. Indeed, coordinate changes $\bK_I\to\bK_K$ and $\bK_J\to\bK_K$ with the same target chart are given by embeddings $\phi_{IK}:U_{IK}\to U_K$ and $\phi_{JK}:U_{JK}\to U_K$, whose images may not intersect transversely (for example, often their intersection is contained only in the zero set $s_K^{-1}(0)$); yet this intersection would be a component of the space of morphisms from $U_{I}$ to $U_{J}$.
We show in Proposition~\ref{prop:groupcomp} below that in
 the special case when all obstruction spaces $E_I$ are trivial, i.e.
$E_I=\{0\}$, 
one can adjoin these compositions and inverses to $\bB_\Kk$, obtaining an  \'etale  proper groupoid whose realization is an orbifold.
\MS

\NI(ii)  When, as here, the isotropy groups are trivial, one can think of the category $\bB_\Kk$ as a topological poset; in other words the relation $\le$ on $\Obj_{\Bb_\Kk}$ defined by setting 
$$
(I,x)\le (J,y) 
\Longleftrightarrow \Mor_{\bB_\Kk}\bigl(I,x), (J,y)\bigr)\ne \emptyset
$$
 is a partial order.  As we will see in Lemma~\ref{le:Ku2}, the taming conditions impose further regularity on this order relation.
Indeed part (a) shows that if two elements  $(J,x), (K,y)$  have  a lower bound (i.e. an element $(I,w)$ that is less than or equal to them both), then they have both a  unique greatest lower bound and a unique least upper bound.
The notion of reduction (see Definition~\ref{def:vicin}) further simplifies this partial order; indeed at this point one could reduce to a family of subsets with partial ordering given by the integer lengths $|I|$; cf. the notion of dimensionally graded system (DGS) in \cite{TF}.
$\hfill\er$
\end{rmk}

\subsection{Tame topological atlases}\label{ss:tatlas}

This section explains the proof of a generalization of Theorem~\ref{thm:K}.
The main results are Proposition~\ref{prop:Khomeo}, that establishes the main
properties of tame atlases and Proposition~\ref{prop:proper1}, that constructs them.
Here we explain the key points of the proof. 
 Further details and
analogous results for cobordisms may be found in \cite{MW1}. 

We begin by defining  topological Kuranishi charts and atlases.

\begin{defn}\label{def:tchart}
A {\bf topological Kuranishi chart} for $X$ with open footprint $F\subset X$ is a tuple $\bK = (U,\E,0,\s,\psi)$ consisting of
\begin{itemlist}
\item
the {\bf domain} $U$, which is a separable, locally compact metric space;
\item
the {\bf obstruction ``bundle''} $\pr :\E \to U$, which is a continuous map between 
separable, locally compact metric spaces; 
\item 
the {\bf zero section} 
$0: U \to \E$, which is a continuous map with $\pr \circ 0 = \id_{U}$;
\item
the {\bf section} $\s: U\to \E$, which is a continuous map with $\pr \circ \s = \id_{U}$;
\item
the {\bf footprint map} $\psi : \s^{-1}(0) \to X$, which is a homeomorphism 
between the {\bf zero set} $\s^{-1}(0):=
\s^{-1}(\im 0)=\{x\in U \,|\, \s(x)=0(x)\}$ and 
the {\bf footprint} ${\psi(\s^{-1}(0))=F}$.
\end{itemlist}
\end{defn}

\begin{rmk}\label{rmk:intermed}\rm   
Because finite dimensional manifolds are separable, locally compact, and metrizable, each
smooth Kuranishi chart with trivial isotropy
induces a topological Kuranishi chart with $\E:=U\times E$, $0(x):=(x,0)$, and $\s(x)=(x,s(x))$.
As we will see in \S\ref{s:iso}, a smooth local chart near a point with nontrivial isotropy
is a tuple $(U,E,\Ga,s,\psi)$ where $U,E$ are as in Definition~\ref{def:chart},  $\Ga$ is a finite group 
that  acts on $U$ and $E$, the function $s:U\to E$ is equivariant, and the footprint map $\psi: s^{-1}(0)\to  X$ induces a homeomorphism from $\qu{s^{-1}(0)}{\Ga}$ onto an open subset of $X$.  
Such a tuple gives rise to a topological chart with domain $\qu{U}{\Ga}$ and bundle
$\pr: \E: = \qu{U\times E}{\Ga}\to \qu{U}{\Ga}$, where $\Ga$ acts diagonally on 
$U\times E$.  Note that the fibers of this bundle at points $x\in \qu{U}{\Ga}$ with nontrivial stabilizer $\Stab_x$ are quotients $\qu{E}{\Stab_x}$, and hence in general do not have a linear structure.
Nevertheless there is a well defined zero section  as well as a section $\s:U\to \E$ induced by $s:U\to E$.
$\hfill\er$
\end{rmk}

Although one can define restrictions as before, one must adapt  the notion of 
coordinate change as follows.

\begin{defn}\label{def:tchange}
Let $\bK_I$ and $\bK_J$ be topological Kuranishi charts such that $F_J\subset F_I$.
A {\bf topological coordinate change} from $\bK_I$ to $\bK_J$ is a map $\Hat\Phi: \bK_I|_{U_{IJ}}\to \bK_J$ defined on a restriction of $\bK_I$ to $F_J$. More precisely:  
\begin{itemlist}
\item
The {\bf domain} of the coordinate change is an open subset $U_{IJ}\subset U_I$ such that
$\s_I^{-1}(0_I)\cap U_{IJ} = \psi_I^{-1}(F_J)$.
\item 
The {\bf map} of the coordinate change is a topological embedding (i.e.\ homeomorphism to its image) 
$\Hat\Phi: \E_I|_{U_{IJ}}:=\pr_I^{-1}(U_{IJ}) \to \E_J$ with the following properties.
\begin{enumerate}
\item
It is a bundle map, i.e.\ we have $\pr_J \circ \Hat\Phi = \phi \circ\pr_I |_{\pr_I^{-1}(U_{IJ})}$ for a topological embedding $\phi: U_{IJ}\to U_J$, and it is linear in the sense that $0_J \circ \phi  = \Hat\Phi \circ 0_I|_{U_{IJ}}$.
\item
It intertwines the sections, i.e. $\s_J \circ \phi  = \Hat\Phi \circ \s_I|_{U_{IJ}}$.
\item
It restricts to the transition map induced from the footprints in $X$, i.e.
$$
\phi|_{\psi_I^{-1}(F_I\cap F_J)}=\psi_J^{-1} \circ\psi_I : U_{IJ}\cap \s_I^{-1}(0_I) \to \s_J^{-1}(0_J).
$$
\end{enumerate}
\end{itemlist}

In particular, the following diagrams commute:
\begin{align*} 
& \qquad\quad\;\;
 \begin{array} {ccc}
{\E_I|_{U_{IJ}}}& \stackrel{\Hat\Phi} \longrightarrow &
{\E_J} 
\phantom{\int_Quark}  \\
\phantom{sp} \downarrow {\pr_I}&&\downarrow {\pr_J} \phantom{spac}\\
\phantom{s}{U_{IJ}} & \stackrel{\phi} \longrightarrow &{U_J} \phantom{spacei}
\end{array} 
\qquad
 \begin{array} {ccc}
{\E_I|_{U_{IJ}}}& \stackrel{\Hat\Phi} \longrightarrow &
{\E_J} 
\phantom{\int_Quark}  \\
\phantom{sp} \uparrow {0_I}&&\uparrow {0_J} \phantom{spac}\\
\phantom{s}{U_{IJ}} & \stackrel{\phi} \longrightarrow &{U_J} \phantom{spacei}
\end{array}
 \notag \\
& \qquad\qquad
 \begin{array} {ccc}
{\E_I|_{U_{IJ}}}& \stackrel{\Hat\Phi} \longrightarrow &
{\E_J} 
\phantom{\int_Quark}  \\
\phantom{sp} \uparrow {\s_I}&&\uparrow {\s_J} \phantom{spac}\\
\phantom{s}{U_{IJ}} & \stackrel{\phi} \longrightarrow &{U_J} \phantom{spacei}
\end{array} 
\qquad
 \begin{array} {ccc}
{U_{IJ}\cap \s_I^{-1}(0_I)} & \stackrel{\phi} \longrightarrow &{\s_J^{-1}(0_J)} \phantom{\int_Quark} \\
\phantom{spa} \downarrow{\psi_I}&&\downarrow{\psi_J} \phantom{space} \\
\phantom{s}{X} & \stackrel{{\rm Id}} \longrightarrow &{X}. \phantom{spaceiiii}
\end{array}
\end{align*}
\end{defn}


\begin{rmk} \rm
The above definition does not provide charts with enough structure 
to be able to formulate any equivalent to the index condition.  Indeed, because
$\pr:\E_I\to U_I$ is not assumed to be a vector bundle, there is as yet no notion to replace the quotient bundle
$U_J\times\bigl( \qu{E_J}{\Hat\phi_{IJ}(E_I)}\bigr)$ that appears in \eqref{tbc}.  We will formulate a weaker but 
adequate replacement for this condition when we introduce the notion of  filtration; see
Definition~\ref{def:Ku3} and Lemma~\ref{le:Ku3} below.
\hfill$\er$
\end{rmk}

It is immediate that smooth coordinate changes between smooth charts satisfy the above conditions.
Moreover, one can define 
restriction and composition of coordinate changes as before, as well as the various versions of the cocycle condition.

\begin{defn}\label{def:tKu}
A {\bf (weak) topological Kuranishi atlas}
on a compact metrizable space
$X$ is a tuple
$$
\Kk=\bigl(\bK_I,\Hat\Phi_{I J}\bigr)_{I, J\in\Ii_\Kk, I\subsetneq J}
$$
of a covering family of basic charts $(\bK_i)_{i=1,\ldots,N}$ 
and transition data $(\bK_J)_{|J|\ge 2}$, $(\Hat\Phi_{I J})_{I\subsetneq J}$ for $(\bK_i)$ as in Definition~\ref{def:Kfamily}, 
that consists of topological Kuranishi charts and topological coordinate changes
satisfying the (weak) cocycle condition.
\end{defn}

Because $\Hat\Phi$ is a bundle map,  a weak atlas  is an atlas if and only if  
$U_{IJ}\cap \phi_{IJ}^{-1}(U_{JK})\subset
U_{IK}$ for all $I\subsetneq J\subsetneq K$.
Further, if $\Kk$ is a topological atlas  we can form the topological categories
 $\bB_\Kk, \bE_\Kk$ as before, and then assemble the maps $\pr_I, 0_I,\s_I$ into functors denoted
 $\pr_\Kk, 0_\Kk, \s_\Kk$.
In particular,
 the functor ${\rm pr}_\Kk:\bE_\Kk\to\bB_\Kk$ induces a continuous map
$$
|{\rm pr}_\Kk|:|\bE_\Kk| \to |\Kk|,
$$
which we call the {\bf obstruction bundle} of $\Kk$, although its fibers generally do not have the structure of a vector space.
Similarly,
 the functors $0_\Kk$ and $\s_\Kk$ induce continuous maps
$$
|0_\Kk| : \; |\Kk| \to |\bE_\Kk| , \qquad\qquad |\s_\Kk|:|\Kk|\to |\bE_\Kk|,
$$
that  are sections in the sense that $|\pr_\Kk|\circ|\s_\Kk| =  |\pr_\Kk|\circ |0_\Kk|= {\rm id}_{|\Kk|}$.
Moreover,  the realization of the full subcategory $\s_\Kk^{-1}(0_\Kk)$ is homeomorphic to the zero section
$ |\s_\Kk|^{-1}(|0_\Kk|)\subset |\Kk|$ and
 the analog of Lemma~\ref{le:ioK} holds.
In other words, the footprint functor $\psi_\Kk: \s_\Kk^{-1}(0_\Kk) \to \bX$ descends to a homeomorphism $|\psi_\Kk| :  |\s_\Kk|^{-1}(|0_\Kk|) \to X$ with inverse  given by
$$
\io_{\Kk}:= |\psi_\Kk|^{-1} : \; X\;\longrightarrow\;  |\s_\Kk|^{-1}(|0_\Kk|) \;\subset\; |\Kk|, \qquad
p \;\mapsto\; [(I,\psi_I^{-1}(p))] ,
$$
where $[(I,\psi_I^{-1}(p))]$ is independent of  
the choice of $I\in\Ii_\Kk$ with $p\in F_I$.
\MS

Notice that the above definitions contain no analog of the additivity condition
that we imposed in the smooth case, namely  that 
for each $I\in \Ii_\Kk$ the obstruction space $E_I$ for $\bK_I$
is the product  $\prod_{i\in I} E_i$. As will become clearer below, this condition is needed in order 
to construct shrinkings that satisfy the second
 tameness equation $\phi_{IJ}(U_{IK}) \;=\; U_{JK}\cap s_J^{-1}\bigl(\Hat\phi_{IJ}(E_I)\bigr)$,
 which in turn is a key ingredient of the proof of Proposition~\ref{prop:tame0}.
The topological analog of additivity is the following filtration condition.

\begin{defn}\label{def:Ku3}  
Let $\Kk$ be a  weak topological Kuranishi atlas.  We say that $\Kk$ is {\bf filtered} if 
it is equipped with a {\bf filtration}, that is a tuple of closed subsets $\E_{IJ}\subset \E_J$ for each $I,J\in \Ii_\Kk$ with $I\subset J$, that satisfy the following conditions:

\begin{enumerate}
\item $\E_{JJ}= \E_J$ and $\E_{\emptyset J} = \im 0_J$ for all $J\in\Ii_\Kk$;
\item 
$\Hat\Phi_{JK}(\E_{IJ}) = \E_{IK}\cap \pr_K^{-1}(\im \phi_{JK})$ for all $I,J,K\in\Ii_\Kk$ with
$I\subset J\subsetneq K$;
\item  $\E_{IJ}\cap \E_{HJ} = \E_{(I\cap H)J}$ for all $I,H,J\in\Ii_\Kk$ with $I, H \subset J$;
\item  $\im \phi_{IJ}$ is an open subset of $\s_J^{-1}(\E_{IJ})$
for all $I,J\in \Ii_\Kk$ with $I\subsetneq J$.
\end{enumerate}
\end{defn}

\begin{rmk}\rm  
Applying condition (ii) above to any triple $I=I \subsetneq J$ for $J\in\Ii_\Kk$ gives
\begin{equation}\label{eq:filt}
\Hat\Phi_{IJ}(\E_I)=\E_{IJ}\cap \pr_J^{-1}(\im\phi_{IJ}).
\end{equation}
In particular, by the compatibility of coordinate changes $\s_J\circ\phi_{IJ}=\Hat\Phi_{IJ}\circ\s_I|_{U_{IJ}}$
we obtain
$$
\s_J\bigl(\phi_{IJ}(U_{IJ})\bigr) = \Hat\Phi_{IJ}(\s_I(U_{IJ})) \subset  \Hat\Phi_{IJ}(\E_I) \subset \E_{IJ}.
$$
In other words, the first three conditions above imply 
the inclusion $\phi_{IJ}(U_{IJ}) \subset \s_J^{-1}(\E_{IJ})$.
Condition (iv) strengthens this 
by saying the image is open.  
The proof of the next lemma shows that this can be viewed as topological version of the index condition.
\hfill$\er$
\end{rmk}

\begin{lemma} \label{le:Ku3}
\begin{itemlist}
\item[\rm (a)] 
Any weak smooth Kuranishi atlas is filtered with filtration $\E_{IJ}: = U_J\times \Hat\phi_{IJ}(E_I)$, using the conventions $E_\emptyset:=\{0\}$ and $\Hat\phi_{JJ}:=\id_{E_J}$. 
\item[\rm (b)]
For any filtration $(\E_{IJ})_{I\subset J}$ on a weak topological Kuranishi atlas $\Kk$ we have for any $H,I,J\in\Ii_\Kk$
\begin{align} \label{eq:CIJ}
 H,I\subset J
\quad\Longrightarrow\quad &
\s_J^{-1}(\E_{IJ}) \cap \s_J^{-1}(\E_{HJ}) = \s_J^{-1}(\E_{(I\cap H)J}), 
\end{align}
in particular
\begin{align}\label{eq:CIJ0}
H\cap I=\emptyset \quad\Longrightarrow\quad & \s_J^{-1}(\E_{IJ}) \cap \s_J^{-1}(\E_{HJ}) = \s_J^{-1}(0_J).
\end{align}
\end{itemlist} 
\end{lemma}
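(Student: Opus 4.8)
The plan is to handle the two parts separately. For part (a) I would verify the four axioms of Definition~\ref{def:Ku3} for the candidate filtration $\E_{IJ}:=U_J\times\Hat\phi_{IJ}(E_I)$ (viewing $\E_J=U_J\times E_J$), while part (b) will be an immediate formal consequence of axioms (i) and (iii). Axiom (i) is immediate from the conventions $E_\emptyset=\{0\}$, $\Hat\phi_{JJ}=\id_{E_J}$: indeed $\E_{JJ}=U_J\times E_J=\E_J$ and $\E_{\emptyset J}=U_J\times\{0\}=\im 0_J$. For axiom (ii), the key point is that in the smooth setting $\Hat\Phi_{JK}$ acts on its domain $\pr_J^{-1}(U_{JK})=U_{JK}\times E_J$ by $(x,e)\mapsto(\phi_{JK}(x),\Hat\phi_{JK}(e))$; so, after first restricting $\E_{IJ}$ to $\pr_J^{-1}(U_{JK})$, its image under $\Hat\Phi_{JK}$ is $\phi_{JK}(U_{JK})\times\Hat\phi_{JK}(\Hat\phi_{IJ}(E_I))$, which by the cocycle identity $\Hat\phi_{JK}\circ\Hat\phi_{IJ}=\Hat\phi_{IK}$ equals $\im\phi_{JK}\times\Hat\phi_{IK}(E_I)=\E_{IK}\cap\pr_K^{-1}(\im\phi_{JK})$.

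For axiom (iii) I would use the additivity hypothesis $E_J=\prod_{j\in J}E_j$, under which $\Hat\phi_{IJ}(E_I)$ is exactly the coordinate subspace $\{(e_j)_{j\in J}:e_j=0\ \forall j\in J\less I\}$ of $E_J$; intersecting coordinate subspaces then gives $\Hat\phi_{IJ}(E_I)\cap\Hat\phi_{HJ}(E_H)=\Hat\phi_{(I\cap H)J}(E_{I\cap H})$, and multiplying through by $U_J$ yields $\E_{IJ}\cap\E_{HJ}=\E_{(I\cap H)J}$. For axiom (iv) I would observe that $\s_J^{-1}(\E_{IJ})=\{x\in U_J:s_J(x)\in\Hat\phi_{IJ}(E_I)\}=s_J^{-1}(\Hat\phi_{IJ}(E_I))$, and then cite the last sentence of Remark~\ref{rmk:tbc}, which states precisely that the index condition forces $\phi_{IJ}(U_{IJ})=\im\phi_{IJ}$ to be an open subset of $s_J^{-1}(\Hat\phi_{IJ}(E_I))$.

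For part (b) the argument is purely set-theoretic: since $f^{-1}(A)\cap f^{-1}(B)=f^{-1}(A\cap B)$ for any map $f$ and any subsets $A,B$ of its target, applying $\s_J^{-1}$ to axiom (iii) gives $\s_J^{-1}(\E_{IJ})\cap\s_J^{-1}(\E_{HJ})=\s_J^{-1}(\E_{IJ}\cap\E_{HJ})=\s_J^{-1}(\E_{(I\cap H)J})$, which is \eqref{eq:CIJ}; and when $H\cap I=\emptyset$, axiom (i) gives $\E_{(I\cap H)J}=\E_{\emptyset J}=\im 0_J$, so the right-hand side becomes $\s_J^{-1}(\im 0_J)$, which is by definition $\s_J^{-1}(0_J)$, giving \eqref{eq:CIJ0}. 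I do not anticipate a real obstacle here: the only step carrying genuine content is axiom (iv) of part (a), and even there the analytic work has already been done in Remark~\ref{rmk:tbc}. The points to be careful about are purely bookkeeping — restricting $\E_{IJ}$ to the domain $\pr_J^{-1}(U_{JK})$ before applying $\Hat\Phi_{JK}$ in (ii), and recognizing in (iii) that the relevant intersection is the honest intersection of coordinate subspaces, so that additivity is genuinely needed (otherwise $\Hat\phi_{IJ}(E_I)$ and $\Hat\phi_{HJ}(E_H)$ need not meet exactly in $\Hat\phi_{(I\cap H)J}(E_{I\cap H})$).
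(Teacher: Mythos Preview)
Your proposal is correct and follows essentially the same route as the paper's proof: axioms (i)--(iii) are checked exactly as you describe (with the same use of additivity for (iii) and the same unwinding of $\Hat\Phi_{JK}=\phi_{JK}\times\Hat\phi_{JK}$ together with the weak cocycle identity for (ii)), axiom (iv) is delegated to Remark~\ref{rmk:tbc}, and part (b) is obtained by pulling back axiom (iii) along $\s_J$ and invoking (i) when $H\cap I=\emptyset$. The only small omission is that Definition~\ref{def:Ku3} also requires each $\E_{IJ}$ to be closed in $\E_J$; the paper notes this first, and it is immediate since $\Hat\phi_{IJ}(E_I)$ is a linear subspace of $E_J$.
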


\begin{proof}    
To check (a), first note that $U_J\times\Hat\phi_{IJ}(E_I)\subset U_J\times E_J$ is closed since $U_J\subset U_J$ and $\Hat\phi_{IJ}(E_I)\subset E_J$ are closed.
Property (i) in Definition~\ref{def:Ku3} holds by definition, 
while property (iii) holds because $\Hat\phi_{IJ}(E_I)\cap \Hat\phi_{HJ}(E_H) =
\Hat\phi_{(I\cap H)J}(E_{I\cap H})$.
Since
 $\Hat\Phi_{JK}=\phi_{JK}\times\Hat\phi_{JK}$,
we can use   the weak cocycle condition to derive   property (ii) as follows: 
\begin{align*}
\Hat\Phi_{JK}\bigl(U_J\times \Hat\phi_{IJ}(E_I)\bigr) \;
&= \; \im\phi_{JK} \times \Hat\phi_{JK}\bigl(\Hat\phi_{IJ}(E_I)\bigr) \;=\;  \im\phi_{JK} \times \Hat\phi_{IK}(E_I) \\
&= \; \bigl(U_K\times \Hat\phi_{IK}(E_I)\bigr) \cap \bigl( \im \phi_{JK} \times E_K \bigr) \;=\; \E_{IK}\cap (\pr_K^{-1}(\im \phi_{JK})) .
\end{align*}
Finally, property (iv) follows from the index condition for smooth coordinate changes; see  Remark~\ref{rmk:tbc}.
The displayed statements in (b) all follow from applying $\s_J$ to the defining property (ii), and making use of (i) in case $H\cap I=\emptyset$.
\end{proof}

The definition of tameness given in the smooth case in Definition~\ref{def:tame}
readily extends to  topological atlases; in fact the only change is that
the set $s_J^{-1}(\Hat\phi_{IJ}(E_I))$ in \eqref{eq:tame2} is replaced by
$\s_J^{-1}(\E_{IJ})$.

\begin{example}\label{ex:nonfilt}\rm
The prototypical example of a smooth atlas that 
is not filtered is one that has two basic charts $\bK_1, \bK_2$ with overlapping (but distinct)  
footprints, sections $\s_1,\s_2$ whose zero sets $\s_1^{-1}(0), \s_2^{-1}(0)$ have empty interiors,  
and the {\it same} obstruction space $E_i: = E\ne \{0\}$,  which we understand to mean
 that their transition chart also has obstruction space $E_{12} = E$, so that the linear maps  $\Hat\phi_{i(12)} $ are isomorphisms.   Then the index condition for smooth atlases implies that 
 $\phi_{i(12)}(U_{i(12)})$ is an open submanifold of $U_{12}=\s_{12}^{-1}(E) =  \s_{12}^{-1}\bigl(\im(\Hat\phi_{i(12}))\bigr)$
 that contains $\s_{12}^{-1}(0)$.  
 Further filtration condition (i) implies that
 $\E_i = U_i\times E$ for $i=1,2$, which implies by (ii) that $\E_{i(12)}$ contains  $\im \phi_{i(12)} \times E$. 
But then  (iii) implies that  
$$
\bigl(\im \phi_{1(12)} \times E\bigr)\cap \bigl(\im \phi_{2(12)} \times E\bigr) \subset \E_{1(12)}\cap  \E_{2(12)} = \E_{\emptyset (12)} = \s_{12}^{-1}(0),
$$
which means that $\s_{12}^{-1}(0)$ is the intersection of two open subsets of $U_{12}$ and hence is open.
Therefore  $\phi_{1(12)}^{-1}\bigl(\s_{12}^{-1}(0)\bigr) = \psi_i^{-1}(F_{12})$ is also open in $U_i$ which contradicts the choice of section $\s_i$.

We discuss in \S\ref{s:order} how to deal with situations (for example that of products) where the filtration
 assumption does not hold. 
For example, in the situation above it is possible to
redefine the transition chart  so that its obstruction space is the sum of two copies of $E$; see  method 2 in \S\ref{ss:nontriv} and the proof of Proposition~\ref{prop:hybrid1}.
$\hfill\er$
\end{example}

\begin{defn}\label{def:ttame}
A weak
topological Kuranishi atlas is {\bf tame} if \eqref{eq:tame1} holds, i.e.
$U_{IJ}\cap U_{IK}\;=\; U_{I (J\cup K)}$ for $I\subset J\subset K$, and it is 
equipped with a filtration $(\E_{IJ})_{I\subset J}$
such that for all $I,J,K\in \Ii_\Kk$
\begin{align}
\label{eq:ttame2}
\phi_{IJ}(U_{IK}) &\;=\; U_{JK}\cap 
\s_J^{-1}(\E_{IJ})
 \qquad\forall I\subset J\subset K.
\end{align}
\end{defn}

\begin{rmk}\label{rmk:tamef}\rm
The filtration conditions (i) -- (iv) work together with the two tameness identities \eqref{eq:tame1} and \eqref{eq:ttame2} in a rather subtle way.  
If conditions (i)--(iii) hold as well as the tameness identities, then
we can use 
the identity  
$$ 
\phi_{IK}(U_{IK})\cap \phi_{JK}(U_{JK}) =  \phi_{(I\cap J) K}(U_{(I\cap J)K})
$$
of \eqref{eq:tamer} to simplify the equivalence relation. 
This is the key tool in the proof that tame atlases have well behaved realizations.
On the other hand, as we see in the discussion following \eqref{eq:stepB},
condition (iv) for filtrations is just strong enough to allow the inductive construction 
of a  tame shrinking of  an atlas that also satisfies filtration conditions (i)--(iii).
$\hfill\er$
\end{rmk}

We now turn to the proof of Proposition~\ref{prop:tame0} which lists the good properties of tame atlases.
We first  prove part (ii)  for topological  atlases in Proposition~\ref{prop:Khomeo},  and part (iii) in
Proposition~\ref{prop:tame1}. 
The key fact is that in the tame case  the equivalence relation on $\Obj_{\bB_\Kk}$ simplifies significantly.
Let $\preceq$ denote the partial order on $\Obj_{\bB_\Kk}$ given by
$$
(I,x)\preceq(J,y) \quad \Longleftrightarrow \quad \Mor_{\bB_\Kk}((I,x),(J,y))\neq\emptyset .
$$
That is, we have $(I,x)\preceq (J,y)$ iff $x\in U_{IJ}$ and $y=\phi_{IJ}(x)$.
Since morphisms in $\bB_\Kk$ are closed under composition,
  $(I,x)\sim (J,y)$ iff there are elements $(I_j,x_j)$ such that 
\begin{align}\label{eq:ch}
&(I,x) = (I_0,x_0)\preceq  (I_1,x_1) \succeq (I_2,x_2) \preceq  \ldots \succeq (I_k, x_k)=(J,y).
\end{align}
For any $I,J\in\Ii_\Kk$ and subset $S_I\subset U_I$ we denote the subset of points in $U_J$ that are equivalent to a point in $S_I$ by
\begin{align}\label{eq:epsJ}
&\eps_J(S_I) \,:=\; \pi_\Kk^{-1}(\pi_\Kk(S_I)) \cap U_J \;=\;
\bigl\{y\in U_J  \,\big|\, \exists\, x\in S_I : (I,x)\sim (J,y) \bigr\} \;\subset\; U_J.
\end{align}
The next result is adapted from Lemma~3.2.3 in \cite{MW1}.

\begin{lemma} \label{le:Ku2}
Let $\Kk$ be a tame topological Kuranishi atlas.
\begin{enumerate}
\item [(a)] 
For $(I,x),(J,y)\in\Obj_{\bB_\Kk}$ the following are equivalent.
\begin{enumerate}
\item[(i)] $(I,x)\sim (J,y)$;
\item[(ii)] $(I,x)\preceq (I\cup J,z)\succeq (J,y)$ for some $z\in U_{I\cup J}$  (in particular $I\cup J\in \Ii_\Kk$);
\item[(iii)]  either
$(I,x)\succeq (I\cap J,w) \preceq (J,y)$ for some $w\in U_{I\cap J}$ (in particular $\emptyset\ne I\cap J\in \Ii_\Kk$), or $I\cap J=\emptyset$, $\s_I(x)=0_I(x),  \s_J(y) = 0_J(y)$, and $\psi_I(x)=\psi_J(y)$.
\end{enumerate}
\item[(b)]
$\pi_\Kk:U_I \to |\Kk|$ is injective for each $I\in\Ii_\Kk$, that is $(I,x)\sim (I,y)$ implies $x=y$
In particular, the elements $z$ and $w$ in (a) are automatically unique.
\item[(c)] 
For any $I,J\in\Ii_\Kk$ 
 and $S_I\subset U_I$ we have
\begin{align*}
\eps_J(S_I)  \;:=\; U_J\cap \pi_\Kk^{-1}\bigl(\pi_\Kk(S_I)\bigr)
&\;=\; \phi_{J(I\cup J)}^{-1}\bigl( \phi_{I (I\cup J)}(U_{I(I\cup J)}\cap S_I) \bigr) \\
&\;=\; \phi_{(I\cap J) J}\bigl( U_{(I\cap J)I} \cap \phi_{(I\cap J)I}^{-1}(S_I) \bigr) ,
\end{align*}
%
where in case $I\cap J=\emptyset$ the right hand side is $\eps_J(S_I) = \psi_J^{-1}\bigl( \psi_I(\s_I^{-1}(0_I)\cap S_I) \bigr)$,
consistently with \eqref{eq:empty}.  
\end{enumerate}
\end{lemma}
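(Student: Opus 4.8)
The plan is to reduce all three parts to one ``join'' lemma for the preorder $\preceq$ on $\Obj_{\bB_\Kk}$, and then run a short equivalence‑relation argument. Throughout I would use the standard consequences of tameness: the nesting $U_{IK}\subseteq U_{IJ}$ for $I\subseteq J\subseteq K$ (from \eqref{eq:tame1}); the identity $\im\phi_{IK}=\s_K^{-1}(\E_{IK})$ (the topological form of \eqref{eq:tame3}, obtained from \eqref{eq:ttame2} with $J=K$); the strong cocycle condition $\phi_{JK}\circ\phi_{IJ}=\phi_{IK}$ on $U_{IK}=\phi_{IJ}^{-1}(U_{JK})$ for $I\subseteq J\subseteq K$ (Proposition~\ref{prop:tame0}(i), or re‑derived by combining \eqref{eq:ttame2} with $\im\phi_{IJ}=\s_J^{-1}(\E_{IJ})$ and pulling back along the injective map $\phi_{IJ}$); the transitivity of $\preceq$ (composition of morphisms in $\bB_\Kk$); the fact that $\Ii_\Kk$ is closed under taking subsets (as $F_I\supseteq F_K$ when $I\subseteq K$); the filtration identities \eqref{eq:CIJ}--\eqref{eq:CIJ0}; and the filtration monotonicity $\E_{IK}\subseteq\E_{(I\cup J)K}$ for $I\subseteq I\cup J\subseteq K$, which is immediate from filtration condition~(iii) since $I\cap(I\cup J)=I$. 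The \emph{Join Lemma} I would establish first is: if $(I,x)\preceq(K,z)$ and $(J,y)\preceq(K,z)$ with $I,J\subseteq K$, then $I\cup J\in\Ii_\Kk$ and there is a unique $z^{*}\in U_{I\cup J}$ with $(I,x)\preceq(I\cup J,z^{*})\preceq(K,z)$ and $(J,y)\preceq(I\cup J,z^{*})$. Proof idea: $z=\phi_{IK}(x)\in\s_K^{-1}(\E_{IK})\subseteq\s_K^{-1}(\E_{(I\cup J)K})=\im\phi_{(I\cup J)K}$, so $z^{*}:=\phi_{(I\cup J)K}^{-1}(z)$ is well defined (uniqueness by injectivity) and $(I\cup J,z^{*})\preceq(K,z)$; since $x\in U_{IK}\subseteq U_{I(I\cup J)}$ and $\phi_{I(I\cup J)}(x)\in U_{(I\cup J)K}$ by the pull‑back identity, the strong cocycle condition gives $\phi_{(I\cup J)K}(\phi_{I(I\cup J)}(x))=\phi_{IK}(x)=z=\phi_{(I\cup J)K}(z^{*})$, hence $\phi_{I(I\cup J)}(x)=z^{*}$, i.e.\ $(I,x)\preceq(I\cup J,z^{*})$; the same computation with $J$ in place of $I$ gives $(J,y)\preceq(I\cup J,z^{*})$.

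Next I would prove (a)(i)$\Leftrightarrow$(ii) and (b) together. Define $(I,x)\sim'(J,y)$ to mean that there is $z\in U_{I\cup J}$ with $(I,x)\preceq(I\cup J,z)\succeq(J,y)$. Then $\sim'$ is reflexive (take $z=x\in U_{II}$), symmetric, and contains $\preceq$ (if $I\subseteq J$, use $I\cup J=J$ and $z=y$). The point is that $\sim'$ is \emph{transitive}: given $(I,x)\sim'(J,y)\sim'(L,v)$ with witnesses $z\in U_{I\cup J}$ and $z'\in U_{J\cup L}$, the Join Lemma applied to the two upper bounds $(I\cup J,z)$, $(J\cup L,z')$ of $(J,y)$ produces $z_1\in U_{I\cup J\cup L}$ lying above both; transitivity of $\preceq$ then gives $(I,x)\preceq(I\cup J\cup L,z_1)\succeq(L,v)$, and a second application of the Join Lemma to this pair yields $z''\in U_{I\cup L}$ with $(I,x)\preceq(I\cup L,z'')\succeq(L,v)$, i.e.\ $(I,x)\sim'(L,v)$. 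Hence $\sim'$ is an equivalence relation containing $\preceq$, so $\sim\subseteq\sim'$; the reverse inclusion is trivial, so $\sim=\sim'$, which is exactly (i)$\Leftrightarrow$(ii). Part (b) then drops out: $(I,x)\sim(I,y)$ gives $z\in U_{II}=U_I$ with $x=\phi_{II}(x)=z=\phi_{II}(y)=y$; and the asserted uniqueness of $z$ (resp.\ of $w$) follows since $z=\phi_{I(I\cup J)}(x)$ is determined by $x$ (resp.\ $w$ is the unique $\phi_{(I\cap J)I}$‑preimage of $x$).

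For the remaining equivalence I would argue (i)$\Leftrightarrow$(ii)$\Rightarrow$(iii)$\Rightarrow$(i). The implication (iii)$\Rightarrow$(i) is clear in the first alternative; in the second one sets $p:=\psi_I(x)=\psi_J(y)\in F_I\cap F_J=F_{I\cup J}$ and $z:=\psi_{I\cup J}^{-1}(p)$, and checks from Definition~\ref{def:tchange} that $x\in\psi_I^{-1}(F_{I\cup J})\subseteq U_{I(I\cup J)}$ with $\phi_{I(I\cup J)}(x)=z$, and likewise for $y$, producing (ii). For (ii)$\Rightarrow$(iii): with $z$ as in (ii), $z\in\im\phi_{I(I\cup J)}\cap\im\phi_{J(I\cup J)}=\s_{I\cup J}^{-1}(\E_{I(I\cup J)})\cap\s_{I\cup J}^{-1}(\E_{J(I\cup J)})=\s_{I\cup J}^{-1}(\E_{(I\cap J)(I\cup J)})$ by \eqref{eq:CIJ}; if $I\cap J\ne\emptyset$ this set equals $\im\phi_{(I\cap J)(I\cup J)}$, so $z=\phi_{(I\cap J)(I\cup J)}(w)$ for a unique $w\in U_{(I\cap J)(I\cup J)}$, and nesting together with the strong cocycle condition and injectivity of $\phi_{I(I\cup J)}$ (resp.\ $\phi_{J(I\cup J)}$) gives $\phi_{(I\cap J)I}(w)=x$ and $\phi_{(I\cap J)J}(w)=y$, the first alternative; if $I\cap J=\emptyset$ then $\E_{(I\cap J)(I\cup J)}=\im 0_{I\cup J}$, so $z$ lies in the zero set, and since $\Hat\Phi_{I(I\cup J)}$ is an injective bundle map intertwining sections, zero sections and footprints we obtain $\s_I(x)=0_I(x)$, $\s_J(y)=0_J(y)$ and $\psi_I(x)=\psi_{I\cup J}(z)=\psi_J(y)$, the second alternative. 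Finally (c) is just the set‑level translation of (a): by (a)(ii) together with (b), $(I,x)\sim(J,y)$ iff $x\in U_{I(I\cup J)}$, $y\in U_{J(I\cup J)}$ and $\phi_{I(I\cup J)}(x)=\phi_{J(I\cup J)}(y)$, which unwinds to the first displayed formula for $\eps_J(S_I)$; using (a)(iii) instead, together with $U_{(I\cap J)I}\cap U_{(I\cap J)J}=U_{(I\cap J)(I\cup J)}$ from \eqref{eq:tame1}, gives the second formula, with the case $I\cap J=\emptyset$ reducing to $\psi_J^{-1}(\psi_I(\s_I^{-1}(0_I)\cap S_I))$ as in \eqref{eq:empty}; the degenerate cases $I\cup J\notin\Ii_\Kk$ are consistent because then the relevant domains are empty by the conventions of Definition~\ref{def:tame}.

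The substantive content is entirely in the Join Lemma and the transitivity of $\sim'$ deduced from it; the rest is bookkeeping with the tameness and filtration axioms. The transitivity argument is where the lattice‑like structure of the poset (cf.\ Remark~\ref{rmk:poset}(ii)) is genuinely used — one must pass up to the join $I\cup J\cup L$ and back down — and the Join Lemma itself hinges on the filtration monotonicity $\E_{IK}\subseteq\E_{(I\cup J)K}$ (from filtration condition~(iii)) combined with the strong cocycle condition supplied by tameness; so the main obstacle is really to verify that these two ingredients suffice, i.e.\ that the filtration axioms are strong enough to make the ``join'' construction work uniformly in all the index overlaps.
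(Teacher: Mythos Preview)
Your Join Lemma is correct, but it is not the lemma you actually need for the transitivity of $\sim'$. The Join Lemma takes a \emph{peak} configuration $(I,x)\preceq(K,z)\succeq(J,y)$ and interpolates a lower peak at $I\cup J$. In your transitivity step, however, the configuration at $(J,y)$ is a \emph{valley}: you have $(I\cup J,z)\succeq(J,y)\preceq(J\cup L,z')$, two points with a common \emph{lower} bound, and you want a common upper bound $z_1\in U_{I\cup J\cup L}$. The Join Lemma cannot be applied here---its hypothesis requires a common upper bound, which is exactly what you are trying to produce.

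What you need is the dual statement: if $(A,p)\succeq(H,w)\preceq(B,q)$ then there is $r\in U_{A\cup B}$ with $(A,p)\preceq(A\cup B,r)\succeq(B,q)$. This is precisely where the \emph{first} tameness identity \eqref{eq:tame1} enters: from $w\in U_{HA}\cap U_{HB}=U_{H(A\cup B)}$ one sets $r:=\phi_{H(A\cup B)}(w)$, and then \eqref{eq:ttame2} together with the strong cocycle condition give $p\in U_{A(A\cup B)}$ with $\phi_{A(A\cup B)}(p)=r$, and similarly for $q$. Notice that your Join Lemma uses only the filtration monotonicity $\E_{IK}\subseteq\E_{(I\cup J)K}$ and the image identity $\im\phi_{(I\cup J)K}=\s_K^{-1}(\E_{(I\cup J)K})$; it never invokes \eqref{eq:tame1}, and without \eqref{eq:tame1} there is no mechanism to raise a valley to a peak.

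The paper establishes both implications directly (peak $\Rightarrow$ valley at $I\cap J$, and valley $\Rightarrow$ peak at $I\cup J$) and then collapses an arbitrary zig-zag chain. Your alternative strategy of proving $\sim'$ transitive is perfectly clean, but it requires both halves: once you add the valley-to-peak lemma above, your argument goes through exactly as written---the valley-raising produces $z_1\in U_{I\cup J\cup L}$, and then your Join Lemma lowers the resulting peak from $I\cup J\cup L$ down to $I\cup L$. The remaining parts (b), (ii)$\Rightarrow$(iii), (iii)$\Rightarrow$(i), and (c) are handled correctly.
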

\begin{proof}  
The key step in the proof of (a)  is to show that  
 the taming conditions imply 
the equivalence of (ii) and (iii). In fact we need slightly more than this:  
the existence of a chain  $(I,x)\preceq (K,z) \succeq (J,y)$ implies (iii), while the existence of a chain 
$(I,x)\succeq (H,w) \preceq (J,y)$ implies (ii).

To prove these claims, suppose first that there is a chain $(I,x)\succeq (H,w) \preceq (J,y)$.
Then $w\in U_{HI}\cap U_{HJ} = U_{H(I\cup J)}$ by \eqref{eq:tame1}.  But then 
$$
x=\phi_{HI}(w) \in \phi_{HI}(U_{H(I\cup J)}) = U_{I(I\cup J)}\cap \s_I^{-1}(\E_{IH})
$$
by \eqref{eq:ttame2}, so that $\phi_{I(I\cup J)} (x)$ is defined.  Moreover,
$$
z: = \phi_{I(I\cup J)} (x) = \phi_{I(I\cup J)}\circ \phi_{HI}(w) = \phi_{H(I\cup J)}(w)\in U_{I\cup J},
$$ 
by the cocycle condition.
A similar argument shows that $z=\phi_{H(I\cup J)}(w) = \phi_{J(I\cup J)} (y)$.  Hence (ii) holds.  
Secondly, if there is a chain $(I,x)\preceq (K,z) \succeq (J,y)$
 then $$
 z \in \phi_{IK}(U_{IK})\cap \phi_{JK}(U_{JK}) 
 =\s_K^{-1}(\E_{IK})\cap \s_K^{-1}(\E_{JK}) = \s_K^{-1}(\E_{(I\cap J)K}) 
  $$
 by 
tameness \eqref{eq:ttame2} and the filtration condition \eqref{eq:CIJ}.
Since $\E_{\emptyset K} = \s_K^{-1}(0_K)$ by definition,
%
%
%
if $\s_K(z)\ne 0_K(z)$ we must have $I\cap J \ne \emptyset$. Further, whenever $I\cap J \ne \emptyset$ we have
\begin{equation}\label{eq:tamer}
z \in \phi_{IK}(U_{IK})\cap \phi_{JK}(U_{JK}) =\s_K^{-1}(\im(\Hat\phi_{(I\cap J)K})) =  \phi_{(I\cap J) K}(U_{(I\cap J)K}),
\end{equation}
which implies the  existence of suitable $w\in U_{(I\cap J)K}$. 
  On the other hand, if 
$I\cap J = \emptyset$ then
$\s_K(z)= 0_K(z)$ and the equivalence of (ii) and (iii) follows because as in \eqref{eq:zeroIJ}
$$
 (I,x)\sim (J,y), \s_I(x) = 0_I(x)\;\;\Longrightarrow\;\; \s_J(y) = 0_J(y), \psi_I(x) = \psi_J(y).
 $$
%

 Using this together with the fact that  
by composing morphisms one can get rid of consecutive occurrences of $\preceq$ and of $\succeq$, one shows that
if $\s_I(x)\ne 0_I(x)$ 
any chain as in \eqref{eq:ch} can be simplified 
 either to  $(I,x)\preceq (K,z) \succeq (J,y)$ or to $(I,x)\succeq (H,w) \preceq (J,y)$.  Applying the results once more, 
 one reduces to the case when $K = I\cup J$ and $H= I\cap J$.  
 When $\s_I(x)= 0_I(x)$, a similar argument shows that (i) is equivalent to (ii), and hence also to (iii).
 This proves (a).  Statement (b) then holds by applying (i) with $I=J$.    
  Finally, the formulas for $\eps_J(S_I)$ follow from the equivalent definitions of $\sim$ in (a).  
 \end{proof}

The above lemma is the basis for the proof of the following version of Proposition~\ref{prop:tame0}~(ii), taken from \cite[Proposition~3.1.13]{MW1}.

\begin{prop}\label{prop:Khomeo}
Suppose that the topological Kuranishi atlas $\Kk$ is tame. Then $|\Kk|$ and  $|\bE_\Kk|$ are Hausdorff, and for each $I\in\Ii_\Kk$ the quotient maps $\pi_{\Kk}|_{U_I}:U_I\to |\Kk|$ and $\pi_{\Kk}|_{U_I\times E_I}:U_I\times E_I\to |\bE_\Kk|$ are homeomorphisms onto their image.
\end{prop}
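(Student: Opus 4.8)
The whole argument hinges on Lemma~\ref{le:Ku2}: it replaces the equivalence relation $\sim$ on $\Obj_{\bB_\Kk}$ by the two length-two relations through $U_{I\cup J}$ and $U_{I\cap J}$, and it describes the saturation $\eps_J(S_I)$ of a subset $S_I\subseteq U_I$ explicitly as an image of $S_I$ under compositions of coordinate changes. I would use it together with the tameness identities \eqref{eq:tame1}, \eqref{eq:ttame2} and the filtration conditions, and in particular their consequences that for $I\subseteq J$ the map $\phi_{IJ}$ is a homeomorphism of $U_{IJ}$ onto the \emph{closed} subset $\s_J^{-1}(\E_{IJ})\subseteq U_J$, and that images of different coordinate changes into a common chart intersect cleanly, $\phi_{IK}(U_{IK})\cap\phi_{JK}(U_{JK})=\phi_{(I\cap J)K}(U_{(I\cap J)K})$ as in \eqref{eq:tamer}.

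\textbf{Homeomorphism onto the image.} Fix $I\in\Ii_\Kk$. The map $\pi_\Kk|_{U_I}$ is injective by Lemma~\ref{le:Ku2}(b) and continuous because $|\Kk|$ carries the quotient topology, so the only thing to prove is that it is open onto its image. By the usual criterion this reduces to the following: given $x_0\in U_I$ and an open neighbourhood $W$ of $x_0$ in $U_I$, I must produce an \emph{open and saturated} set $\widehat W\subseteq\Obj_{\bB_\Kk}$ with $(I,x_0)\in\widehat W$ and $\widehat W\cap U_I\subseteq W$; granting this, $\pi_\Kk(\widehat W)$ is open in $|\Kk|$ and $\pi_\Kk(\widehat W)\cap\pi_\Kk(U_I)=\pi_\Kk(\widehat W\cap U_I)\subseteq\pi_\Kk(W)$, so $\pi_\Kk(W)$ is a neighbourhood of $\pi_\Kk(x_0)$ in $\pi_\Kk(U_I)$. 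To construct $\widehat W$ I would use local compactness of $U_I$ to shrink $W$ to a precompact neighbourhood of $x_0$; observe that the $\sim$-class of $(I,x_0)$ meets only finitely many charts $U_K$, say in points $x_K$ with $K$ in a finite set $\mathcal K\ni I$; then choose precompact open neighbourhoods $W_K\ni x_K$ (with $W_I\subseteq W$) so small that, using continuity of the coordinate changes and the formulas of Lemma~\ref{le:Ku2}(c), $\eps_{K'}(W_K)\subseteq W_{K'}$ for all $K,K'\in\mathcal K$, and so small that $\eps_K(W_{K'})=\emptyset$ for all $K\notin\mathcal K$ — the latter being possible because, $\im\phi_{K'(K\cup K')}$ being closed in $U_{K\cup K'}$, a point of $U_{K'}$ that is not $\sim$-related into $U_K$ has a whole neighbourhood with the same property. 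Finally one assembles $\widehat W$ out of the $W_K$ by deleting from each chart the closure of the set of points $\sim$-related to some complement $U_K\setminus W_K$; the tameness identities \eqref{eq:tame1}, \eqref{eq:ttame2} and the clean-intersection property \eqref{eq:tamer} are exactly what makes the resulting set simultaneously open and saturated.

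\textbf{Hausdorffness.} For $[p]\ne[q]$ with representatives $p\in U_I$, $q\in U_J$, the $\sim$-classes $C_p,C_q$ are finite and disjoint. I would choose, for each chart meeting $C_p$ (resp.\ $C_q$), a small precompact open neighbourhood of the corresponding point, taken small enough (exactly as above) that the saturations into charts not meeting that class are empty and, over any chart meeting both classes, the two saturations land in disjoint neighbourhoods of the two distinct points of that chart; this yields disjoint saturated open sets separating $[p]$ and $[q]$, with openness obtained by the same thickening. The statements for $|\bE_\Kk|$ and for the maps $\pi_\Kk|_{\E_I}$ then follow verbatim, since the obstruction category $\bE_\Kk\to\bB_\Kk$ has fibrewise exactly the same combinatorial and equivalence structure, so the analogue of Lemma~\ref{le:Ku2}, and hence both arguments above, apply to it unchanged.

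\textbf{Main obstacle.} The delicate point is precisely the construction of the saturated \emph{open} sets $\widehat W$ (and of the separating sets in the Hausdorff step). As Example~\ref{ex:Khomeo} shows, the naive saturation of an open set need not be open: it typically acquires lower-dimensional ``spikes'' in the other charts, so one has to enlarge it transversally there while keeping it saturated. Making this compatible enlargement work — controlling which points of the neighbouring charts one is forced to include, and checking that the thickened set remains both open and a union of $\sim$-classes — is where the tameness identities \eqref{eq:tame1}, \eqref{eq:ttame2} and \eqref{eq:tamer}, together with Lemma~\ref{le:Ku2}, do the essential work, and it is the part I expect to require genuine care rather than routine checking.
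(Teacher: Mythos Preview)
There is a concrete gap in your construction. You claim that for $K\notin\mathcal K$ one can shrink $W_{K'}$ so that $\eps_K(W_{K'})=\emptyset$, on the grounds that a point of $U_{K'}$ not $\sim$-related into $U_K$ has a whole neighbourhood with the same property. This is false. By Lemma~\ref{le:Ku2}(c) the set of $x\in U_{K'}$ that \emph{are} $\sim$-related into $U_K$ is $U_{K'(K\cup K')}\cap\phi_{K'(K\cup K')}^{-1}\bigl(\im\phi_{K(K\cup K')}\bigr)$; tameness makes $\im\phi_{K(K\cup K')}$ closed in $U_{K\cup K'}$, so this set is closed in the \emph{open} subset $U_{K'(K\cup K')}\subset U_{K'}$, but not in $U_{K'}$. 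Example~\ref{ex:Khomeo} exhibits this directly: the point $0\in U_1$ has $\mathcal K=\{\{1\}\}$, yet every neighbourhood of $0$ meets $U_{1,12}=(0,\infty)$ and therefore saturates nontrivially into $U_{12}$. So your construction cannot be confined to charts in $\mathcal K$, and your fallback (``delete from each chart the closure of points $\sim$-related to some $U_K\setminus W_K$'') does not repair it: you have defined no $W_K$ for $K\notin\mathcal K$, and no actual thickening procedure is specified. The Hausdorffness argument inherits the same defect.

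The paper avoids pointwise constructions altogether. For Hausdorffness it shows the relation is \emph{closed} by a sequential argument---given $(I,x^\nu)\sim(J,y^\nu)$, the bridging points $w^\nu\in U_{I\cap J}$ from Lemma~\ref{le:Ku2}(a) converge because $\im\phi_{(I\cap J)I}=\s_I^{-1}(\E_{(I\cap J)I})$ is closed in $U_I$---and then invokes a general-topology result (quotients of $\sigma$-compact Hausdorff spaces by closed relations are Hausdorff). For openness onto the image it works globally with the set $S_I$, not with a point: it defines $\Qq:=\bigl(\io_\Kk(X)\cup\bigcup_{H\subset I}\pi_\Kk(U_H)\bigr)\setminus\pi_\Kk(S_I)$ and checks that $\pi_\Kk^{-1}(\Qq)\cap U_J$ is closed for every $J$, so that $|\Kk|\setminus\Qq$ is the desired open set. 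The key trick is that the union runs only over $H\subset I$: then for $H\subset I\cap J$ the relevant saturations $\eps_J(U_H\setminus\phi_{HI}^{-1}(S_I))$ are $\phi_{HJ}$-images of relatively closed sets, and $\phi_{HJ}$ has closed image in $U_J$, so one obtains genuinely closed (not merely relatively closed) subsets---precisely circumventing the open-vs-closed mismatch that sinks your approach.
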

\begin{proof} We prove the claims about $|\Kk|$.  The arguments for $|\bE_\Kk|$ are similar,
based on the analog of Lemma~\ref{le:Ku2} for the category $\bE_\Kk$.

To see that $|\Kk|$ is Hausdorff, note first
that the equivalence relation on $O: =  \Obj_{\bB_\Kk}=\bigsqcup_{I\in \Ii_\Kk} U_I$ is closed, i.e.  the subset 
$$
R: = \bigl\{\bigl((I,x), (J,y)\bigr) \ | \ (I,x)\sim (J,y)\bigr\}\subset O\times O
$$
is closed.   Since $\Ii_\Kk$ is finite and $O\times O$ is the disjoint union of the metrizable sets $U_I\times U_J$, this will follow if we show that for all pairs $I,J$ and all convergence sequences $x^\nu\to x^\infty$ in $U_I$, $y^\nu\to y^\infty$ in $U_J$ with $(I,x^\nu)\sim (J,y^\nu)$ for all $\nu$, we have $(I,x^\infty)\sim (J,y^\infty)$. 
For that purpose denote $H:=I\cap J$.  If $H=\emptyset$, then all the points $x^\nu, y^\nu$ lie in the appropriate
 zero set and the result is immediate.  Hence suppose that $H\ne\emptyset$.  Then
by Lemma~\ref{le:Ku2}(a) there is a sequence $w^\nu\in U_H$ such that $x^\nu = \phi_{HI}(w^\nu)$ and $y^\nu = \phi_{HJ}(w^\nu)$.
Now it follows from the tameness condition \eqref{eq:tame3} that $x^\infty$ lies in the
relatively closed subset $\phi_{HI}(U_{HI})=\s_I^{-1}(\E_{HI})\subset U_I$, and since $\phi_{HI}$ is a homeomorphism to its image we deduce convergence $w^\nu\to w^\infty\in U_{HI}$ to a preimage of $x^\infty=\phi_{HI}(w^\infty)$.  Then by continuity of the transition map we obtain $\phi_{HJ}(w^\infty) = y^\infty$, so that $(I, x^\infty)\sim (J, y^\infty)$ as claimed.  
Thus $R$ is closed.  Hence $|\Kk|$ is Hausdorff by
 \cite{Bourb} (see  Exercise 19, \S10, Chapter 1), which states
 that if  a Hausdorff  space $O$ can be written as the union $\bigcup_{k\ge 1} O_k$ of precompact open subsets
  $O_k$ with $\ov{O_k}\subset O_{k+1}$ for all $k$,  then
  its  quotient by a closed relation  is Hausdorff.    

To show that $\pi_\Kk|_{U_I}$ is a homeomorphism onto its image, first recall that it is injective by Lemma \ref{le:Ku2}~(c).
It is moreover continuous since $|\Kk|$ is equipped with the quotient topology. Hence it remains to show that $\pi_\Kk|_{U_I}$ is an open map to its image, i.e.\ for a given open subset $S_I\subset U_I$ we must find an open subset $\Ww\subset |\Kk|$ such that $\Ww\cap \pi_{\Kk}(U_I) = \pi_\Kk(S_I)$. 
Equivalently, we set $\Ww:=|\Kk|\less\Qq$ and construct the complement 
$$
\Qq := \bigl(\io_\Kk(X) \cup {\textstyle \bigcup_{H\subset I}} \pi_\Kk(U_H)\bigr) \less \pi_\Kk(S_I)\;\subset\; |\Kk| .
$$
With that the intersection identity follows from $\Qq\cap \pi_{\Kk}(U_I) = \pi_\Kk(U_I)\less \pi_\Kk(S_I)$, so it remains to show that $U_J\cap \pi_\Kk^{-1}(\Qq)$ is closed for each $J$.
In case $J\subset I$ we have $U_J\cap \pi_\Kk^{-1}(\Qq) = U_J\less \eps_J(S_I)$,
which is closed iff $\eps_J(S_I)$ is open.
Indeed, Lemma~\ref{le:Ku2}~(d) gives $\eps_J(S_I)=\phi_{JI}^{-1}(S_I)\subset U_J$, and this is open since $S_I\subset U_I$ and hence $S_I\cap U_{IJ}\subset U_{IJ}$ is open and $\phi_{JI}$ is continuous.

In case $J\not\subset I$ 
to show that $U_J\cap \pi_\Kk^{-1}(\Qq)$ is closed, we 
express it 
as the union of 
$Q^0_J:=U_J\cap \pi_\Kk^{-1}(\io_\Kk(X)\less \pi_\Kk(S_I))$ with the union over $H\subset I$ of 
\begin{align*}
Q_{JH} \,:=\;
U_J\cap \pi_\Kk^{-1} \bigl( \pi_\Kk(U_H)\less \pi_\Kk(S_I)\bigr) 
& = U_J\cap \pi_\Kk^{-1} \bigl( \pi_\Kk(U_H\less \eps_H(S_I))\bigr) \\
& = \eps_J\bigl(U_H\less \phi_{HI}^{-1}(S_I)\bigr) \;=\; \eps_J(C_H) .
\end{align*}
Note here that $C_H:=U_H\less \phi_{HI}^{-1}(S_I) \subset U_H$ is closed since as above 
$\phi_{HI}^{-1}(S_I) \subset U_H$ is open.
We moreover claim that this union can be simplified to
\begin{equation}\label{qqquark}
U_J\cap \pi_\Kk^{-1}(\Qq) \; = \; Q^0_J \; \cup \bigcup_{H\subset I\cap J} \eps_J(C_H) .
\end{equation}
Indeed, in case $H\cap J= \emptyset$ we have
$\eps_J(C_H) \subset  Q^0_J$ since Lemma~\ref{le:Ku2}~(d) gives 
\begin{align*}
\eps_J\bigl(U_H\less \phi_{HI}^{-1}(S_I)\bigr) 
&= \psi_J^{-1}\bigl(\psi_{H} \bigl(\s_H^{-1}(0_H)\less \phi_{HI}^{-1}(S_I) \bigr)\bigr) \\
&= \psi_J^{-1}\bigl(F_H \less \bigl(F_I \cap \pi_\Kk(S_I) \bigr)\bigr)
\subset U_J\cap \pi_\Kk^{-1}(\io_\Kk(X)\less \pi_\Kk(S_I)) = Q^0_J.
\end{align*}
For $H\not\subset J$ with $H\cap J\neq \emptyset$ 
we have $\eps_J(C_H)\subset \eps_J(C_{H\cap J})$ since
Lemma~\ref{le:Ku2}~(d) gives 
$$
\eps_J\bigl(U_H\less \phi_{HI}^{-1}(S_I)\bigr) 
= \phi_{(H\cap J)J}\bigl(
U_{(H\cap J)J} \cap
\phi_{(H\cap J)H}^{-1} (U_H\less \phi_{HI}^{-1}(S_I))\bigr) 
\subset
U_J \cap \pi_\Kk^{-1}( C_{H\cap J} ),
$$
where $\phi_{(H\cap J)H}^{-1}\bigl( U_H\less \phi_{HI}^{-1}(S_I) \bigr) \subset  U_{H\cap J}\less \phi_{(H\cap J)I}^{-1}(S_I) = C_{H\cap J}$ by the cocycle condition.
This confirms \eqref{qqquark}.

It remains to show that $Q^0_J$ and $\eps_J(C_H)$ for $H\subset I\cap J$ are closed.
For the latter, Lemma~\ref{le:Ku2}~(d) gives 
$\eps_J(U_H\less \phi_{HI}^{-1}(S_I))=\phi_{HJ}(U_{HJ}\cap C_H)$,
which is closed in $U_J$ since closedness of $C_H\subset U_H$ implies relative closedness of $U_{HJ}\cap C_H \subset U_{HJ}$, and $\phi_{HJ}: U_{HJ} \to U_J$ is a homeomorphism onto a closed subset of $U_J$ by \eqref{eq:tame3}.
Finally,
$$
Q^0_J = 
U_J\cap \pi_\Kk^{-1}(\io_\Kk(X)\less \pi_\Kk(S_I))  \;= \; 
s_J^{-1}(0_J) \less \psi_J^{-1}\bigl( F_J \cap \psi_I(\s_I^{-1}(0_I)\cap S_I)\bigr)
$$
is closed in $s_J^{-1}(0_J)$ and hence in $U_J$, since both $F_J$ and $\psi_I(\s_I^{-1}(0_I)\cap S_I)$ are open in $X$, and $\psi_J:s_J^{-1}(0_J)\to X$ is continuous.
Thus $U_J\cap \pi_\Kk^{-1}(\Qq)$ is closed, as claimed, which finishes the proof of the homeomorphism property of $\pi_\Kk|_{U_I}$.
\end{proof}

\begin{cor}\label{cor:Khomeo}  If $\Kk$ is tame, then for every $I\in \Ii_\Kk$ we have
$$
\ov{\pi_\Kk(U_I)}\;\; \subset \;\;\io(X)\cup{\textstyle  \bigcup_{H\subset I} }\pi_\Kk(U_H).$$
\end{cor}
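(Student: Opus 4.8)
The plan is to show that the set
$$
A \;:=\; \io_\Kk(X)\;\cup\;\bigcup_{H\subset I}\pi_\Kk(U_H)
$$
is closed in $|\Kk|$. Since $\pi_\Kk(U_I)\subset A$ (take $H=I$), closedness of $A$ immediately gives $\ov{\pi_\Kk(U_I)}\subset A$, which is the asserted inclusion. In fact the needed closedness is already implicit in the proof of Proposition~\ref{prop:Khomeo}: running that argument with the open subset $S_I\subset U_I$ taken to be empty produces exactly the conclusion that the set $\Qq=A$ is closed. Below I indicate how to extract this directly.

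By the definition of the quotient topology on $|\Kk|$, it suffices to check that $\pi_\Kk^{-1}(A)\cap U_J$ is closed in $U_J$ for each $J\in\Ii_\Kk$. Unravelling the definitions, using $\pi_\Kk^{-1}(\io_\Kk(X))\cap U_J=\s_J^{-1}(0_J)$ together with \eqref{eq:epsJ}, one obtains
$$
\pi_\Kk^{-1}(A)\cap U_J \;=\; \s_J^{-1}(0_J)\;\cup\;\bigcup_{H\subset I}\eps_J(U_H).
$$
If $J\subset I$ this set is all of $U_J$ and there is nothing to prove, so assume $J\not\subset I$. Now I would invoke the formulas for $\eps_J(\cdot)$ from Lemma~\ref{le:Ku2} to simplify each term: when $H\cap J=\emptyset$ the lemma gives $\eps_J(U_H)=\psi_J^{-1}(F_H)\subset\s_J^{-1}(0_J)$, so these contribute nothing new; when $H\cap J\ne\emptyset$ but $H\not\subset J$, the computation leading to \eqref{qqquark} shows $\eps_J(U_H)\subset\eps_J(U_{H\cap J})$ with $H\cap J\subset I\cap J$, so it is enough to control the terms with $H\subset I\cap J$; and for such $H$ (so that $H\cup J=J$), Lemma~\ref{le:Ku2} gives $\eps_J(U_H)=\phi_{HJ}(U_{HJ})=\im\phi_{HJ}$. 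Hence
$$
\pi_\Kk^{-1}(A)\cap U_J \;=\; \s_J^{-1}(0_J)\;\cup\;\bigcup_{H\subset I\cap J}\im\phi_{HJ}.
$$

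This is a finite union of closed subsets of $U_J$: the zero set $\s_J^{-1}(0_J)$ is the equalizer of the continuous maps $\s_J,0_J\colon U_J\to\E_J$ into the metrizable, hence Hausdorff, space $\E_J$, and each image $\im\phi_{HJ}$ is closed in $U_J$ by the tameness identity \eqref{eq:tame3} (in the form $\im\phi_{HJ}=\s_J^{-1}(\E_{HJ})$). Therefore $\pi_\Kk^{-1}(A)\cap U_J$ is closed, $A$ is closed, and the corollary follows. The only delicate point — and the step I expect to require the most care — is the bookkeeping in the previous paragraph: verifying that the $\eps_J(U_H)$ for which $H\cap J$ is empty, or a proper nonempty subset of $H$, really are absorbed into $\s_J^{-1}(0_J)$, respectively into the terms indexed by subsets of $I\cap J$, so that only the ``clean'' pieces $\im\phi_{HJ}$ with $H\subset I\cap J$ survive. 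This is precisely the combinatorial manipulation already carried out in the proof of Proposition~\ref{prop:Khomeo}, and it is where the tameness hypotheses \eqref{eq:tame1} and \eqref{eq:ttame2} (via Lemma~\ref{le:Ku2}) are essential; without tameness the sets $\pi_\Kk(U_H)$ need not have closures controlled in this way.
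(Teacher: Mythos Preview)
Your proposal is correct and takes essentially the same approach as the paper: the paper's proof simply observes that the set $\Qq'=\io(X)\cup\bigcup_{H\subset I}\pi_\Kk(U_H)$ is precisely the set $\Qq$ from the proof of Proposition~\ref{prop:Khomeo} in the special case $S_I=\emptyset$, and hence is closed. You have recognized this and then helpfully unpacked the argument; your reduction of the $\eps_J(U_H)$ terms to those with $H\subset I\cap J$ via Lemma~\ref{le:Ku2}(a) is in fact slightly cleaner than the paper's explicit computations.
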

\begin{proof}  It suffices to check that $\Qq': = \io(X)\cup \bigcup_{H\subset I} \pi_\Kk(U_H)$ is closed in $|\Kk|$.
But $\Qq'$ is the set $\Qq$ of the above proof in the case when $S=\emptyset$.
\end{proof}

\begin{prop}\label{prop:tame1}
 Proposition~\ref{prop:tame0} holds.
 \end{prop}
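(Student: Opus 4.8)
My plan is to verify the three parts of Proposition~\ref{prop:tame0} in turn for a tame weak (smooth) Kuranishi atlas $\Kk$ with trivial isotropy; parts (i) and (ii) are essentially already available, and the real content is part (iii).

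\emph{Part (i).} For $I\subset J\subset K$, the discussion preceding Proposition~\ref{prop:tame0} combines the tameness identities \eqref{eq:tame2} and \eqref{eq:tame3} into $\phi_{IJ}(U_{IK})=U_{JK}\cap\im\phi_{IJ}$ and, after applying $\phi_{IJ}^{-1}$, into $U_{IK}=U_{IJ}\cap\phi_{IJ}^{-1}(U_{JK})=\phi_{IJ}^{-1}(U_{JK})$ (the last equality because $\phi_{IJ}$ has domain $U_{IJ}$). This is exactly the domain equality required by the strong cocycle condition of Definition~\ref{def:cocycle}. The weak cocycle condition, which is part of the definition of a weak atlas, says $\phi_{JK}\circ\phi_{IJ}$ and $\phi_{IK}$ agree on the overlap $\phi_{IJ}^{-1}(U_{JK})\cap U_{IK}$ of their domains; by the above this overlap is all of $U_{IK}$. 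The identity $\Hat\phi_{JK}\circ\Hat\phi_{IJ}=\Hat\phi_{IK}$ and the intertwining conditions of Definition~\ref{def:map} hold automatically (see the remarks after Definition~\ref{def:Kfamily}). Hence $\Hat\Phi_{JK}\circ\Hat\Phi_{IJ}=\Hat\Phi_{IK}$, so $\Kk$ satisfies the strong cocycle condition and in particular is a Kuranishi atlas.

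\emph{Part (ii).} A smooth weak Kuranishi atlas with trivial isotropy induces a topological Kuranishi atlas (Remark~\ref{rmk:intermed}), which by Lemma~\ref{le:Ku3}(a) carries the filtration $\E_{IJ}=U_J\times\Hat\phi_{IJ}(E_I)$. Since $\s_J^{-1}(\E_{IJ})=s_J^{-1}(\Hat\phi_{IJ}(E_I))$ for this filtration, tameness of $\Kk$ in the sense of Definition~\ref{def:tame} is literally tameness of the induced topological atlas in the sense of Definition~\ref{def:ttame}. The realizations $|\Kk|$, $|\bE_\Kk|$ and the maps $\pi_\Kk$ are the same whether computed from the smooth or the topological category, so Proposition~\ref{prop:Khomeo} (already proved) gives that $|\Kk|$ and $|\bE_\Kk|$ are Hausdorff and that $\pi_\Kk|_{U_I}$ and $\pi_\Kk|_{U_I\times E_I}$ are homeomorphisms onto their images.

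\emph{Part (iii).} Fix $p\in|\Kk|$ and put $\Ii(p):=\{I\in\Ii_\Kk : p\in\pi_\Kk(U_I)\}$; for $I\in\Ii(p)$ write $x_I:=(\pi_\Kk|_{U_I})^{-1}(p)$, well defined by part (ii). By Lemma~\ref{le:Ku2}(a), whenever $I,J\in\Ii(p)$ there is $z\in U_{I\cup J}$ with $(I,x_I)\preceq(I\cup J,z)\succeq(J,x_J)$, so $I\cup J\in\Ii(p)$ and $z=x_{I\cup J}$; since $\Ii(p)$ is a finite nonempty family of subsets closed under unions it has a largest element $I^{*}$, and one checks $x_{I^{*}}=\phi_{KI^{*}}(x_K)$ for every $K\in\Ii(p)$. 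I claim $\pi_\Kk|_{\{x_{I^{*}}\}\times E_{I^{*}}}\colon E_{I^{*}}\to|\pr_\Kk|^{-1}(p)$ is a bijection. It is injective, being a restriction of the injection $\pi_\Kk|_{U_{I^{*}}\times E_{I^{*}}}$ of part (ii), and its image lies in $|\pr_\Kk|^{-1}(p)$ since $|\pr_\Kk|\circ\pi_\Kk=\pi_\Kk\circ\pr_\Kk$. For surjectivity, any point of $|\pr_\Kk|^{-1}(p)$ is the class of some $(K,y,e)\in\Obj_{\bE_\Kk}$ with $\pi_\Kk(y)=p$, hence $K\in\Ii(p)$, $y=x_K$, and the morphism $(K,I^{*},x_K,e)$ of $\bE_\Kk$ identifies it with the class of $(I^{*},x_{I^{*}},\Hat\phi_{KI^{*}}(e))$. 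Transporting the vector space structure of $E_{I^{*}}$ along this bijection defines a linear structure on $|\pr_\Kk|^{-1}(p)$ involving no choices. For any $I\in\Ii(p)$, the morphism $(I,I^{*},x_I,\cdot)$ of $\bE_\Kk$ gives $\pi_\Kk|_{\{x_I\}\times E_I}=\pi_\Kk|_{\{x_{I^{*}}\}\times E_{I^{*}}}\circ\Hat\phi_{II^{*}}$, which is linear because $\Hat\phi_{II^{*}}$ is; this is the asserted compatibility with all embeddings $\pi_\Kk|_{U_I\times E_I}$. Finally, any linear structure on $|\pr_\Kk|^{-1}(p)$ for which every $\pi_\Kk|_{\{x_I\}\times E_I}$ is linear makes the bijection $\pi_\Kk|_{\{x_{I^{*}}\}\times E_{I^{*}}}$ linear and hence coincides with the one above, giving uniqueness.

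The one genuinely delicate ingredient is the identification, in part (iii), of each fiber of $|\pr_\Kk|$ with the ``top'' obstruction space $E_{I^{*}}$ among its representatives, and that rests entirely on Lemma~\ref{le:Ku2}(a); so the real obstacle has already been surmounted in proving that lemma. The remainder is bookkeeping with the category $\bE_\Kk$ and the linear injections $\Hat\phi_{IJ}$, together with the facts assembled for parts (i) and (ii).
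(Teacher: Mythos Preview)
Your proof is correct and follows essentially the same approach as the paper: part (i) is the argument already given before the statement of Proposition~\ref{prop:tame0}; part (ii) is reduced to Proposition~\ref{prop:Khomeo} via Lemma~\ref{le:Ku3}; and part (iii) identifies the fiber $|\pr_\Kk|^{-1}(p)$ with $E_{I^{*}}$ for the maximal index $I^{*}$ containing $p$, using Lemma~\ref{le:Ku2}(a) exactly as the paper does (where the paper writes $I_p$ for your $I^{*}$). The only difference is cosmetic: you introduce the set $\Ii(p)$ and argue it is closed under unions, whereas the paper defines $I_p$ directly as the union and checks $I_p\in\Ii_\Kk$.
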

 \begin{proof}
 We proved part~(i) just before stating  Proposition~\ref{prop:tame0}, and  part (ii) follows from
Proposition~\ref{prop:Khomeo}
because by Lemma~\ref{le:Ku3} every tame smooth  atlas may also be considered as a tame topological  atlas.
  Thus it remains to prove (iii), which asserts that in the smooth case the fibers of
$|\pr_{\Kk}|: |\bE_\Kk| \to |\Kk|$ have a  unique linear structure such that for every $I\in\Ii_\Kk$ the embedding $\pi_{\Kk} : U_I\times E_I \to  |\bE_{\Kk}|$ is linear on the fibers.

To this end, for fixed $p\in |\Kk|$
denote the union of index sets for which $p\in \pi_\Kk(U_I)$ by
$$
I_p:= \bigcup_{I\in\Ii_\Kk, p\in \pi_\Kk(U_I)}  I
\qquad \subset \{1,\ldots,N\} .
$$
To see that $I_p\in\Ii_\Kk$ we repeatedly use the observation that Lemma~\ref{le:Ku2}~(a) implies
$$
p \in \pi_\Kk(U_I)\cap\pi_\Kk(U_J) \quad\Rightarrow\quad (I,\pi_\Kk^{-1}(p)\cap U_I)\sim (J,\pi_\Kk^{-1}(p)\cap U_J) \quad\Rightarrow\quad I\cup J \in \Ii_\Kk .
$$
Moreover, $x_p:=\pi_\Kk^{-1}(p)\cap U_{I_p}$ is unique by Lemma~\ref{le:Ku2}~(c).
Next, any element in the fiber $[I,x,e]\in |\pr_{\Kk}|^{-1}(p)$ is represented  by some vector over $(I,x)\in \pi_\Kk^{-1}(p)$, so we have $I\subset I_p$ and $\phi_{I I_p}(x)=x_p$, and hence
$(I,x, e)\sim (I_p,x_p,\Hat\phi_{I I_p}(e))$. Thus $\pi_\Kk:\{x_p\}\times E_{I_p}\to |\pr_\Kk|^{-1}(p)$ is surjective, and by Lemma~\ref{le:Ku2}~(c) also injective.
Thus the requirement of linearity for this bijection induces a unique linear structure on the fiber $|\pr_\Kk|^{-1}(p)$.
To see that this is compatible with the injections $\pi_\Kk:\{x\}\times E_{I}\to |\pr_\Kk|^{-1}(p)$ for $(I,x)\sim(I_p,x_p)$ note again that $I\subset I_p$ since $I_p$ was defined to be maximal, and hence by Lemma~\ref{le:Ku2}~(b)~(ii) the embedding factors as $\pi_\Kk|_{\{x\}\times E_{I}} = \pi_\Kk|_{\{x_p\}\times E_{I_p}} \circ \Hat\phi_{I I_p}$, where $\Hat\phi_{I I_p}$ is linear by definition of coordinate changes. Thus $\pi_\Kk|_{\{x\}\times E_{I}}$ is linear as well.
\end{proof}

\begin{rmk}\label{rmk:Kk}\rm (i) The above construction gives a rather nice picture of the virtual neighbourhood  $|\Kk|$
for a tame  topological 
atlas.  By Corollary~\ref{cor:Khomeo}, it is a union of sets $\pi_\Kk(U_I)$, each of which 
has  frontier
$\ov{\pi_\Kk(U_I)}\less \pi_\Kk(U_I)$ contained  in the union of the zero set $\io(X)$ with the sets $\bigcup_{H\subset I} \pi_\Kk(U_H)$. 
A pairwise  intersection $\pi_\Kk(U_I)\cap \pi_\Kk(U_J)$ is nonempty only if 
the corresponding footprint intersection $F_I\cap F_J = F_{I\cup J}$ is nonempty, in which case Lemma~\ref{le:Ku2}~(a)
implies that 
$\pi_\Kk(U_I)\cap \pi_\Kk(U_J)\subset \pi_\Kk(U_{I\cup J}) $.  If also $I\cap J\ne \emptyset$, then   $\pi_\Kk(U_I)\cap \pi_\Kk(U_J)$  may be identified with  the subset 
$\pi_\Kk\bigl(\s_{I\cup J}^{-1}(\E_{(I\cap J)(I\cup J)})\bigr)$ of $\pi_\Kk(U_{I\cup J}) $. However, if $I\cap J = \emptyset$ then these two sets intersect only  along the zero set $\io_\Kk(X)$,
where $\io_\Kk$ is as in \eqref{eq:ioK}.
In the smooth case, each set  
$\pi_\Kk(U_I)$ is a homeomorphic image of a manifold, and it is easy to see from the index condition that
if $I\cap J\ne \emptyset$ the intersection of
 $\pi_\Kk(U_I)$ with $\pi_\Kk(U_J)$  is  transverse when considered inside  the \lq\lq submanifold"
$\pi_\Kk(U_{I\cup J})$. 
  For example, the images of the
  domains of two basic charts $\pi_\Kk(U_1)$ and $\pi_\Kk(U_2)$ will in general intersect nontransversally in their common footprint $\io_\Kk(F_{12})$, while the two transition domains $\pi_\Kk(U_{12})$ and $\pi_\Kk(U_{23})$
intersect transversally in the submanifold $\pi_\Kk(U_{2})\cap \pi_\Kk(U_{123})$ of $\pi_\Kk(U_{123})$.
\MS

\NI (ii)
The effect of the taming condition is to reduce the equivalence relation to a two step process:
$(I,x)\sim (J,y)$ iff we can write 
$(I,x)\preceq (I\cup J,z) \succeq (J,y)$, or equivalently (if $s_I(x)\ne 0$)
$(I,x)\succeq (I\cap J,w) \preceq (J,y)$.    The reduction process described in \S\ref{ss:red} below
will simplify the equivalence relation even further to a single step.
In fact, this process discards all the elements in $U_I\less V_I$, for suitable choice of open sets $V_I\subset U_I$, 
so that when $x\in V_I, y\in V_J$ we have $(I,x)\sim (J,y)$  only if 
$(I,x)\preceq  (J,y)$ or $(I,x) \succeq (J,y)$.\MS

\NI (iii)  See  \cite[Example 2.4.3]{MW1}  for a (non tame) atlas for which the map $\pi_\Kk$ is not injective on $U_I$.\footnote
{
In this example the atlas is not additive.  However   it can easily be made additive by replacing the charts $(U_i)_{1\le i\le 3}$ described there  by charts
$(U_{i4})_{1\le i\le 3}$ for a suitable chart $\bK_4$ with $E_4 = \R$ and where $E_i = \{0\}$. }
It is also easy to construct non tame examples where the argument in Proposition~\ref{prop:tame1} fails, so that some 
fibers of $|\pr|: |\bE_\Kk|\to  |\Kk|$ are not  vector spaces.
$\hfill\er$  \end{rmk}

To complete the proof of the first part of Theorem~\ref{thm:K} it remains
to  explain how to construct a metrizable tame shrinking of  a weak atlas.  
To accomplish this,
we again  work in the context of topological atlases in order to be able to apply the argument 
when the isotropy is nontrivial.  Note that the definition of 
a shrinking given in
Definition~\ref{def:shr} makes sense  in this context without change. Moreover, since smooth atlases are automatically filtered
by Lemma~\ref{le:Ku3}, we will suppose that the initial weak atlas $\Kk$  is filtered.

We define the filtration $(\E_{IJ}')_{I\subset J}$ on a shrinking $\Kk'$  by restriction 
$\E_{IJ}': = \E_{IJ}\cap \E_J'$ in the obvious way.  Note that
any shrinking of a  weak Kuranishi atlas preserves the weak cocycle condition (since the latter only requires equality on overlaps). 
Moreover, a shrinking  is determined by the choice of the domains $U'_I\sqsubset U_I$ of the transition charts (since condition (iii) then specifies the domains of the coordinate changes), and so can be considered as the restriction of $\Kk$ to 
the subset
$\bigsqcup_{I\in\Ii_\Kk} U_I'\subset\Obj_{\bB_\Kk}$.
However, for a shrinking to satisfy a stronger form of the cocycle condition (such as tameness) the  domains $U'_{IJ}:= \phi_{IJ}^{-1}(U'_J)\cap U'_I$ of the coordinate changes must satisfy appropriate compatibility conditions, so that the domains $U_I'$ can no longer be chosen independently of each other.
Since the relevant conditions are expressed in terms of the $U_{IJ}'$, we next show  that the construction of a 
tame shrinking  can be achieved by iterative choice of these sets $U_{IJ}'$.

The following result
is proved in \cite[\S3.3]{MW1}.   

\begin{prop}  \label{prop:proper1}
Every filtered weak topological Kuranishi atlas $\Kk$ has a tame shrinking $\Kk'$.
\end{prop}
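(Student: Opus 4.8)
\emph{Overview and first two steps.} The plan is to produce the tame shrinking in three stages. Since $X$ is compact metrizable and $(F_i)_{i=1,\dots,N}$ is a finite open cover, it admits shrinkings in the sense of Definition~\ref{def:shr0}; iterating once, I would fix covers $F_i''\sqsubset F_i'\sqsubset F_i$ of $X$, each satisfying the nonemptiness condition \eqref{same FI}, so that $F_I''\sqsubset F_I'$ is nonempty and precompact in $F_I$ for every $I\in\Ii_\Kk$ (the two nested levels leave the room needed for the induction in Stage~3). Next, using Definition~\ref{def:restr} and the remark after it, I would choose for each $I$ a precompact restriction $U_I^{0}\sqsubset U_I$ of $\bK_I$ with $U_I^{0}\cap \s_I^{-1}(0_I)=\psi_I^{-1}(F_I')$, obtaining a shrinking $\Kk^{0}\sqsubset\Kk$ with induced domains $U_{IJ}^{0}:=\phi_{IJ}^{-1}(U_J^{0})\cap U_I^{0}$ and restricted filtration $\E_{IJ}^{0}:=\E_{IJ}\cap\E_J^{0}$ (in the sense of Definition~\ref{def:Ku3}). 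On the zero sets the tameness identities \eqref{eq:tame1} and \eqref{eq:ttame2} already hold --- there they reduce to the footprint identities $F_J\cap F_K=F_{J\cup K}$ and to the relation between footprints and zero sets recorded via \eqref{eq:empty} and \eqref{eq:tame3} --- so the content of the proposition is to propagate them off the zero sets.

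\emph{Stage 3 (the main step).} I would correct $\Kk^{0}$ to a tame shrinking by shrinking the chart domains one at a time. Fix a linear order on $\Ii_\Kk$ refining reverse inclusion (larger index sets first) and process the index sets in this order. When the current set is $I$, the domains $U_K'$ for all $K$ preceding $I$ have been chosen, hence every derived domain $U_{KL}'$ with $I\notin\{K,L\}$ is already fixed and, by induction, every instance of \eqref{eq:tame1} and \eqref{eq:ttame2} whose triple does not have $I$ as smallest index already holds. I would then shrink $U_I^{0}$ to an open $U_I'\sqsubset U_I^{0}$, keeping $U_I'\cap \s_I^{-1}(0_I)=\psi_I^{-1}(F_I'')$, chosen so that --- writing $U_{IJ}':=\phi_{IJ}^{-1}(U_J')\cap U_I'$ --- the remaining instances hold:
\begin{itemlist}
\item[(A)] $U_{IJ}'\cap U_{IK}'=U_{I(J\cup K)}'$ for all $J,K\supsetneq I$; here filtration condition~(iii), together with the control on images of transition maps, lets one first secure the inclusion $U_{I(J\cup K)}'\subseteq U_{IJ}'\cap U_{IK}'$ and then shrink $U_I'$ to remove the excess (the three sets agree along $\s_I^{-1}(0_I)$, and the ``strong'' form of the identity would follow from the strong cocycle condition, which is not assumed);
\item[(B)] $\phi_{IJ}(U_{IK}')=U_{JK}'\cap \s_J^{-1}(\E_{IJ})$ for all $J,K$ with $I\subsetneq J\subseteq K$; the inclusion ``$\subseteq$'' always holds since coordinate changes intertwine sections and respect the filtration, cf.\ \eqref{eq:filt}, while for ``$\supseteq$'' the crucial point is filtration condition~(iv), which makes $\im\phi_{IJ}$ open in $\s_J^{-1}(\E_{IJ})$, so that the trace of the already-fixed $U_{JK}'$ on $\s_J^{-1}(\E_{IJ})$ pulls back under $\phi_{IJ}$ to an open subset of $U_{IK}$ that one arranges to be $U_{IK}'$ by the choice of $U_I'$.
\end{itemlist}
One then checks that a single $U_I'$ can meet all of these constraints simultaneously --- via (A)--(B) and the filtration each amounts to requiring $U_I'$ to lie between a fixed open set and $\psi_I^{-1}(F_I'')$ --- and that shrinking $U_I'$ leaves every identity not involving $I$ as smallest index intact. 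When the induction terminates, the resulting $\Kk'\sqsubset\Kk$ satisfies \eqref{eq:tame1} and \eqref{eq:ttame2} for all triples and is therefore tame, and $U_I'\sqsubset U_I$ is precompact by Stage~2.

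\emph{Main obstacle.} The hard part is Stage~3: verifying that the induction closes up, i.e.\ that the shrinkings of $U_I'$ forced by~(B) for the various triples $I\subsetneq J\subseteq K$ are mutually compatible, compatible with~(A), and do not destroy the identities already secured. This is precisely where the exact form of the filtration conditions enters --- (iii) for \eqref{eq:tame1}, and, as flagged in Remark~\ref{rmk:tamef}, (iv) for \eqref{eq:ttame2} --- and it is why a non-filtered atlas such as the one in Example~\ref{ex:nonfilt} need not admit a tame shrinking at all. The full bookkeeping, together with the analogous construction for cobordism atlases, is carried out in \cite[\S3.3]{MW1}.
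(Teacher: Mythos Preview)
Your induction direction is the problem. You process larger index sets first and then, when the turn of $I$ comes, you try to choose a single $U_I'$ so that both (A) and (B) hold. But in (B) the right-hand side $U_{JK}'\cap \s_J^{-1}(\E_{IJ})$ is already frozen (both $J,K\supsetneq I$ precede $I$ in your order), while the left-hand side $\phi_{IJ}(U_{IK}')$ is constrained to lie inside $\im\phi_{IJ}$. Filtration condition~(iv) only tells you $\im\phi_{IJ}$ is \emph{open} in $\s_J^{-1}(\E_{IJ})$, not that it contains $U_{JK}'\cap\s_J^{-1}(\E_{IJ})$; in a weak atlas there is no reason why every point of the latter should be hit by $\phi_{IJ}$. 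So no choice of $U_I'$ can force equality, and your description of (B) as ``requiring $U_I'$ to lie between a fixed open set and $\psi_I^{-1}(F_I'')$'' is simply not what the equality in \eqref{eq:ttame2} demands.

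The paper runs the induction in the opposite direction, over increasing $|I|$, and --- crucially --- at each step it adjusts \emph{both} levels: Step~A chooses the $U_{IK}^{(k)}$ for $|I|=k$ (via a nontrivial set-theoretic lemma producing a compatible family with prescribed pairwise intersections, which is what handles your (A)), and then Step~B goes back and \emph{shrinks} the higher-level sets $U_{JK}^{(k-1)}$ with $|J|>k$ by cutting out $\s_J^{-1}(\E_{IJ})\setminus\phi_{IJ}(U_{IJ}^{(k)})$, i.e.\ exactly the obstruction identified above. This backward modification of already-chosen larger sets is what makes the equality in (B) achievable, and it is incompatible with your once-and-for-all, large-to-small scheme. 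A secondary issue: in your setup $\phi_{IJ}(U_{IK}')$ need not even be defined, since in a weak atlas $U_{IK}'\subset U_{IK}$ is not known to lie in $U_{IJ}$; the paper's Step~A arranges the required nesting $U_{IK}^{(k)}\subset U_{IJ}^{(k)}$ as part of the construction.
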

\begin{proof}[Sketch of proof]
Since $X$ is compact and metrizable and the footprint open cover $(F_i)$ is finite, it has a shrinking $(F_i')$ in the sense of Definition~\ref{def:shr0}.
In particular we can ensure that $F_I'\ne \emptyset$ whenever $F_I\ne \emptyset$ by choosing $\de>0$ so that every nonempty $F_I$ contains some ball $B_\de(x_I)$ and then choosing the $F_i'$ to contain $B_{\de/2}(x_I)$ for each $I\ni i$ (i.e.\ $F_I\subset F_i$).
Then we obtain $F'_I\neq\emptyset$ for all $I\in \Ii_\Kk$ since $B_{\de/2}(x_I)\subset 
\bigcap_{i\in I} F_i' =F_I'$.

In another preliminary step, we find precompact open subsets $U_I^{(0)}\sqsubset U_I$ and open sets $U_{IJ}^{(0)}\subset U_{IJ}\cap U_I^{(0)}$ for all $I,J\in\Ii_\Kk$ such that 
\begin{equation}\label{eq:U(0)}
U_I^{(0)}\cap \s_I^{-1}(0) = \psi_I^{-1}(F_I'),\qquad U_{IJ}^{(0)}\cap \s_I^{-1}(0) = \psi_I^{-1}(F'_I\cap F_J').
\end{equation}
Here we choose any suitable  $U_I^{(0)}$ (which is possible by \cite[Lemma~2.1.4]{MW1}), and 
 then define the  
$U_{IJ}^{(0)}$ by restriction:
\begin{equation}\label{eq:UIJ(0)}
U_{IJ}^{(0)}:= U_I^{(0)}\cap \phi_{IJ}^{-1}( U_J^{(0)})\subset U_{IJ}.
\end{equation}
We  then construct  
the required shrinking 
$\Kk'$ by choosing possibly smaller domains  $U_I'\subset U_I^{(0)}$ and $U_{IJ}'\subset U_{IJ}^{(0)}$ with  the same footprints $F_I'$.
We  also arrange $U_{IJ}' = U_I'\cap \phi_{IJ}^{-1}(U_J')$, so that $\Kk'$ is a shrinking of the original $\Kk$.  
Therefore we just need to make sure that $\Kk'$  satisfies the tameness conditions~\eqref{eq:tame1}  and~\eqref{eq:tame2}.

We construct the domains $U_I', U_{IJ}'$ by a finite iteration, starting with $U_I^{(0)}, U_{IJ}^{(0)}$.
Here we streamline the notation by setting $U_{I}^{(k)}:=U_{II}^{(k)}$ and extend the notation to all pairs of nonempty subsets $I\subset J\subset\{1,\ldots,N\}$ by setting $U_{IJ}^{(k)}=\emptyset$ if $J\notin\Ii_\Kk$. (Note that $J\in\Ii_\Kk$ and $I\subset J$ implies $I\in\Ii_\Kk$.)
Then in the $k$-th step we  construct open subsets $U_{IJ}^{(k)}\subset U_{IJ}^{(k-1)}$ for all $I\subset J\subset\{1,\ldots,N\}$
so that the following holds.
\begin{enumerate}
\item
The zero set conditions $U_{IJ}^{(k)}\cap \s_I^{-1}(0) = \psi_I^{-1}(F_J')$ hold for all $I\subset J$.
\item
The first tameness condition \eqref{eq:tame1} holds for all $I\subset J,K$ with $|I|\le k$, that is
$$
U_{IJ}^{(k)}\cap U_{IK}^{(k)}= U_{I (J\cup K)}^{(k)} .
$$
In particular, we have $U_{IK}^{(k)} \subset U_{IJ}^{(k)}$ for $I\subset J \subset K$ with $|I|\le k$.
\item
The second tameness condition \eqref{eq:tame2} holds for all $I\subset J\subset K$ with $|I|\le k$, that is
$$
\phi_{IJ}(U_{IK}^{(k)}) = U_{JK}^{(k)}\cap \s_J^{-1}(\E_{IJ}) .
$$
In particular we have $\phi_{IJ}(U_{IJ}^{(k)}) = U_{J}^{(k)}\cap \s_J^{-1}(\E_{IJ})$ for all $I\subset J$ with $|I|\le k$.
\end{enumerate}
In other words, we need the tameness conditions to hold up to level $k$.

The above choice of the domains $U_{IJ}^{(0)}$ completes the $0$-th step since conditions (ii) (iii) are vacuous. Now suppose that the $(k-1)$-th step is complete for some $k\geq 1$.
We then define $U_{IJ}^{(k)}:=U_{IJ}^{(k-1)}$ for all $I\subset J$ with $|I|\leq k-1$. For $|I|=k$ we also set $U_{II}^{(k)}:=U_{II}^{(k-1)}$.
This ensures that (i) and (ii) continue to hold for $|I|<k$. In order to preserve (iii) for triples $H\subset I\subset J$ with $|H|<k$ we then require that the intersection $U_{IJ}^{(k)}\cap \s_I^{-1}(\E_{HI})= U_{IJ}^{(k-1)}\cap \s_I^{-1}(\E_{HI})$ is fixed.
In case
$H=\emptyset$, this is condition (i), and since $U_{IJ}^{(k)}\subset U_{IJ}^{(k-1)}$ it can generally be phrased as inclusion (i$'$) below.
With that it remains to construct the open sets $U_{IJ}^{(k)}\subset U_{IJ}^{(k-1)}$ as follows.
\begin{itemize}
\item[(i$'$)]
For all $H\subsetneq I\subset J$ with $|H|<k$ and $|I|\geq k$ we have $U_{IJ}^{(k-1)}\cap \s_I^{-1}(\E_{HI}) \subset U_{IJ}^{(k)}$. Here we include $H=\emptyset$, in which case the condition says that
$U_{IJ}^{(k-1)}\cap \s_I^{-1}(0)\subset U_{IJ}^{(k)}$ (which implies $U_{IJ}^{(k)}\cap \s_I^{-1}(0) = \psi_I^{-1}(F_J')$, as explained above).
\item[(ii$'$)]
For all $I\subset J,K$ with $|I|= k$ we have
$U_{IJ}^{(k)}\cap U_{IK}^{(k)}= U_{I (J\cup K)}^{(k)}$.
\item[(iii$'$)]
For all $I\subsetneq J\subset K$ with $|I|=k$ we have
$\phi_{IJ}(U_{IK}^{(k)}) = U_{JK}^{(k)}\cap \s_J^{-1}(\E_{IJ})$.
\end{itemize}
The construction is then completed in two steps.  

\MS\NI
{\bf Step A}\; constructs $U_{IK}^{(k)}$ for $|I|=k$ and $I\subsetneq K$ satisfying (i$'$),(ii$'$) and

\begin{itemize}
\item[(iii$''$)]
$U_{IK}^{(k)} \subset \phi_{IJ}^{-1}(U_{JK}^{(k-1)})$
for all $I\subsetneq J\subset K$ .
\end{itemize}
{\bf Step B} constructs $U_{JK}^{(k)}$ for $|J|>k$ and $J\subset K$ satisfying (i$'$) and (iii$'$).
\MS

Step A assumes the existence of suitable sets $U_{IJ}^{(k-1)}$ for all $I\subset J$ and uses the following nontrivial result to show that the required sets 
$U_{IJ}^{(k)}$ with $|I|=k$ exist.

\begin{lemma}[Lemma~3.3.6 in \cite{MW1}] \label{le:set}
Let $U$ be a locally compact  metric space, $U'\sqsubset U$ a precompact open set, and $Z\subset U'$ a relatively closed subset. Suppose we are given a finite collection of relatively open subsets $Z_i\subset Z$ for $i=1,\ldots,N$ and open subsets $W_K\subset U'$ with
$$
W_K\cap Z= Z_K: = {\textstyle\bigcap_{i\in K}} Z_i
$$
 for all index sets $K\subset\{1,\ldots,N\}$.
Then there exist open subsets $U_K\subset W_K$ with $U_K\cap Z=Z_K$ and
$U_J\cap U_K = U_{J\cup K}$ for all $J,K\subset\{1,\ldots,N\}$. 
\end{lemma}
 
  Step B then completes the inductive step for $|I|=k$, modifying the sets $U_{JK}^{(k-1)}$ by removing the extra parts that contradict  (iii$'$). In other words we define
\begin{equation}\label{eq:stepB}\textstyle
U_{JK}^{(k)}\,:=\; U_{JK}^{(k-1)}\less \bigcup_{I\subset J, |I|= k}
 \bigl( \s_J^{-1}(\E_{IJ})\less \phi_{IJ}(U^{(k)}_{IJ}) \bigr) .
\end{equation}
Note that $U_{JK}^{(k)}$ is open since  the sets $ \s_J^{-1}(\E_{IJ})\less \phi_{IJ}(U^{(k)}_{IJ})$ that we remove are closed,
which holds because
$\phi_{IJ}$ is a homeomorphism to its image which is open in $\s_J^{-1}(\E_{IJ})$ by  condition (iv) of Definition~\ref{def:Ku3}.

 We illustrate how Step A works by explaining the construction of the level $1$ sets $U^{(1)}_{IJ}$ with $|I| = 1$.   
 Fix $I = \{i_0\}$, and 
 choose an arbitrary order for the set $$
 \{i \ \big| \ i\ne i_0, \{i_0,i\}\in \Ii\} \cong \{1,\dots, N'\}.
 $$  Then the index sets $K\in \Ii$ containing $\{i_0\}$ as a proper subset are in one-to-one correspondence with nonempty index sets $K'\subset \{1,\dots,N'\}$ via $K = \{i_0\}\cup K'$. 
Apply
%
%
%
%
%
Lemma~\ref{le:set} to the metric space $U:=U_{i_0}$, its precompact open subset $U':=U^{(0)}_{i_0}$, the relatively closed subset $Z := U^{(0)}_{i_0} \cap \s_{i_0}^{-1}(0_{i_0}) = \psi_{i_0}^{-1}(F_{i_0}')$ of $U'$, the relatively open subsets $Z_i := \psi_{i_0}^{-1}(F_{i_0}'\cap F'_i) \subset Z$ for all $i\in \{1,\dots, N'\}$, and the open subsets
$$
\textstyle
W_{K'}: = \bigcap_{\{i_0\}\subset J\subset K} \bigl( \phi_{i_0J}^{-1} (U^{(0)}_{JK}) \cap U^{(0)}_{i_0J} \bigr),
$$
where $K = \{i_0\}\cup K'$ as above, and we write $U_{i_0J}: = U_{\{i_0\}J}$. 
With these choices, the assumptions of  Lemma~\ref{le:set} are satisfied when $K,N$ is replaced by $K', N'$. 
 In particular, \eqref{eq:U(0)} implies that
$$ \textstyle
W_{K'}\cap Z =  U^{(0)}_{i_0 K}\cap  \s_{i_0}^{-1}(0_{i_0}) = \psi_{i_0}^{-1}(F_{i_0}'\cap F'_K)
=  \psi_{i_0}^{-1}(F_{i_0}'\cap  \bigcap_{i\in K'} F'_i) 
= \bigcap_{i\in K'} Z_i =: Z_{K'}.
$$
Thus the lemma provides open subsets 
$U^{(1)}_{i_0 K}:= U_{K'} \subset W_{K'}= U^{(0)}_{i_0 K}$
which satisfy the conditions (i$'$), (ii$'$)  and (iii$''$) as follows:

\begin{itemlist}
\item[(i$'$)] 
$U_{i_0 J}^{(0)}\cap  \s_{i_0}^{-1}(0_{i_0}) \subset U_{i_0 J}^{(1)}$ for all $J\supset \{i_0\}$ 
translates to 
 $U_{i_0 J}^{(0)}\cap  Z  \subset U_{J'}$, which holds because $U_{i_0 J}^{(0)}=W_{J'}$
and $W_{J'} \cap  Z  = Z_{J'} = U_{J'}\cap Z \subset U_{J'}$;

\item[(ii$'$)] 
follows from the second property in Lemma~\ref{le:set};

\item[(iii$''$)]  $U_{i_0 K}^{(1)} \subset \phi_{i_0 J}^{-1}(U_{JK}^{(0)})$ for $\{i_0\}\subsetneq J\subset K$ translates to $U_{K'} \subset \phi_{i_0 J}^{-1}(U_{JK}^{(0)})$, which holds because
\eqref{eq:UIJ(0)} and the definition of $W_{K'}$ as an intersection
imply
 $$
 \phi_{i_0 J}^{-1}(U_{JK}^{(0)}) 
 = \phi_{i_0 J}^{-1}\bigl( U_J^{(0)}\cap \phi_{JK}^{-1}( U_K^{(0)}) \bigr)
 =  \phi_{i_0 J}^{-1}( U_J^{(0)}) \cap \phi_{i_0 K}^{-1}( U_K^{(0)}) 
 \supset W_{K'} , 
 $$
 while $U_{K'}\subset W_{K'}$ by construction.  
 \end{itemlist}

If $k=2$ the argument is similar.  Given $I$ wth $|I|=2$ we apply Lemma~\ref{le:set}  with  $U = U_I$ and $U' = U_I^{(1)}$, but now defining $Z, Z_i$ in terms of the level $1$  sets $U_{HK}^{(1)}$ where $|H|=1$.  Note that the images of these sets in $U_I$ lie in $\s_I^{-1}(\E_{HI})$ for some $H\subsetneq I$.
Thus we define 
$$
Z := \textstyle
\bigcup_{H\subsetneq I} \bigl( U_{II}^{(1)} \cap  \s_I^{-1}(\E_{HI}) \bigr)  
\;=\; \bigcup_{H\subsetneq I} \phi_{HI}\bigl(U_{HI}^{(1)}\bigr)
\;\subset\; U'
$$
and then put $Z_i := U^{(1)}_{I (I\cup\{i\})} \cap Z$ for  $i\in\{1,\ldots,N\}\less I$.
Finally, in order to achieve property (iii$'$) we again define $W_{K'}$ to be an intersection, namely, if $K:= I\cup K'$ we set
$$
\textstyle
W_{K'}: = \bigcap_{I\subset J\subset K} \bigl( \phi_{IJ}^{-1} (U^{(1)}_{JK}) \cap U^{(1)}_{IJ} \bigr).
$$
  For further details, see \cite[\S3.3]{MW1}.\end{proof}

 \begin{rmk}\label{rmk:add1}\rm 
 As we  explained in Remark~\ref{rmk:tamef}~(i), the existence of the filtration on the initial weak atlas $\Kk$
 is a crucial ingredient of this inductive proof.  As a concrete example of this, 
 consider a  smooth weak atlas  
that contains just three charts $\bK_1, \bK_2$ and $\bK_{12}$ each with obstruction space $E$ so that $\Hat\phi_{i(12)} = \id$ for $i=1,2$.
 Then when $k=1$ we must construct sets $U_{i (12)}^{(1)}$ for $i=1,2$ that both satisfy
 $\phi_{i(12)}\bigl(U_{i(12)}^{(1)}\bigr) = U_{12}^{(1)}\cap \s_{12}^{-1}(E) =U_{12}^{(1)} $.  Hence the choices of the two level one sets 
$U_{1 (12)}^{(1)}$ and $U_{2 (12)}^{(1)}$  are not independent.
 In an additive situation\footnote
 {
 i.e. one where $E_I = \prod_{i\in I} E_i$}, one can only have $E_1 = E_{12} = E$ if $E_2 = \{0\}$. In this case we still need 
 $\phi_{1(12)}\bigl(U_{1(12)}^{(1)}\bigr) = U_{12}^{(1)}$.  However, the condition for $i=2$ is  $\phi_{2(12)}\bigl(U_{2(12)}^{(1)}\bigr) =
  \s_2^{-1}(0)$, which has been arranged at level $0$.  
$\hfill\er$   \end{rmk}  

Finally we establish the metrizability of preshrunk shrinkings.

\begin{lemma}\label{le:metriz} 
For every tame shrinking $\Kk'$ of a tame topological  atlas $\Kk$ 
\begin{itemize} \item[(i)] the map $|\Kk'| \to |\Kk|$ of quotient spaces is injective and continuous, and
\item[(ii)]  $\Kk'$  is metrizable, with a metric that induces the subspace topology.
\end{itemize}
\end{lemma}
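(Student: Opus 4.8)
The plan is to prove (i) by a direct chase from Lemma~\ref{le:Ku2}, and then (ii) by combining (i) with the good topology of tame atlases from Proposition~\ref{prop:Khomeo}.

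\emph{Part (i).} A shrinking keeps the index set $\Ii_{\Kk'}=\Ii_\Kk$ and restricts every coordinate change, so the inclusions $U'_I\hookrightarrow U_I$ assemble into a functor $\bB_{\Kk'}\to\bB_\Kk$, hence into a map $\iota\colon|\Kk'|\to|\Kk|$ which is continuous by the universal property of the quotient topology. For injectivity, suppose $(I,x)\sim_\Kk(J,y)$ with $x\in U'_I$, $y\in U'_J$, and apply Lemma~\ref{le:Ku2}(a) to the tame atlas $\Kk$. If $I\cap J\neq\emptyset$ there is $w\in U_{I\cap J}$ with $x=\phi_{(I\cap J)I}(w)$ and $y=\phi_{(I\cap J)J}(w)$; since coordinate changes intertwine the sections, $\s_I(x)=\Hat\Phi_{(I\cap J)I}\bigl(\s_{I\cap J}(w)\bigr)\in\Hat\Phi_{(I\cap J)I}(\E_{I\cap J})\subset\E_{(I\cap J)I}$ by \eqref{eq:filt}, so $x\in U'_I\cap\s_I^{-1}(\E_{(I\cap J)I})$. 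As $\Kk'$ is tame, the case $K=J$ of the second tameness identity \eqref{eq:ttame2}, applied to $\Kk'$ with its restricted filtration $\E'_{HL}:=\E_{HL}\cap\E'_L$, reads $\phi_{(I\cap J)I}(U'_{(I\cap J)I})=U'_I\cap\s_I^{-1}(\E_{(I\cap J)I})$; since $\phi_{(I\cap J)I}$ is injective this forces $w\in U'_{(I\cap J)I}$, and symmetrically $w\in U'_{(I\cap J)J}$, whence $(I,x)\sim_{\Kk'}(J,y)$ via the lower bound $(I\cap J,w)$. If $I\cap J=\emptyset$, then $x,y$ lie in the zero sets with $\psi_I(x)=\psi_J(y)$; since each $\bK'_I$ is the restriction of $\bK_I$ to $F'_I$ we get $\psi_I(x)\in F'_I$ and $\psi_J(y)\in F'_J$, so $\psi_I(x)=\psi_J(y)\in F'_I\cap F'_J=F'_{I\cup J}\neq\emptyset$; hence $I\cup J\in\Ii_{\Kk'}$ and $z:=(\psi'_{I\cup J})^{-1}(\psi_I(x))\in U'_{I\cup J}$ witnesses $(I,x)\preceq_{\Kk'}(I\cup J,z)\succeq_{\Kk'}(J,y)$. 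In all cases $\iota$ is injective.

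\emph{Part (ii).} By Proposition~\ref{prop:Khomeo} (applied to the tame atlas $\Kk'$), $|\Kk'|$ is Hausdorff and each $\pi_{\Kk'}|_{U'_I}$ is a homeomorphism onto its image. To construct an admissible metric I would fix a bounded metric $\rho$ on $\Obj_{\bB_{\Kk'}}=\bigsqcup_I U'_I$ that restricts to a bounded metric on each $U'_I$ and places distinct components at distance $1$, and take the associated quotient pseudometric $d(p,q):=\inf\sum_k\rho(a_k,b_k)$, the infimum over finite chains in $\Obj_{\bB_{\Kk'}}$ running from a preimage of $p$ to a preimage of $q$ and jumping freely between equivalent points; this makes sense because each point of $|\Kk'|$ has at most one preimage in each $U'_I$ by Lemma~\ref{le:Ku2}(b). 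Then $d$ is a bounded pseudometric whose topology is no finer than the quotient topology, and the content is to show that $d$ separates points and that its topology is not strictly coarser. Both follow from the precompactness $U'_I\sqsubset U_I$ together with the two-step description of $\sim$ in Lemma~\ref{le:Ku2}(a): precompactness makes the finitely many relevant transition maps uniformly continuous with relatively compact images $\ov{\phi_{IJ}(U'_{IJ})}$, so a chain cannot escape to infinity and can be collapsed, using Lemma~\ref{le:Ku2}(a), to a single elementary relation; this bounds $d(p,q)$ below by a distance measured inside a chart and shows that every quotient-open set is $d$-open. Granting this, the pullback $(\pi_{\Kk'}|_{U'_I})^*d$ induces the given topology on $U'_I$ for each $I$, so $d$ is admissible in the sense of Definition~\ref{def:metric}; and the same compactness control shows $\iota$ is a homeomorphism onto its image, so $d$ also induces the subspace topology inherited from $|\Kk|$.

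\emph{Main obstacle.} The real work is the topological core of Part (ii): matching the quotient topology on $|\Kk'|$ with the a priori coarser subspace topology from $|\Kk|$ — equivalently, showing the chain pseudometric is a genuine metric inducing the quotient topology, or that $\iota$ is an embedding. The subtlety is that $\pi_\Kk(U_I)$ need not be open in $|\Kk|$, so being a homeomorphism chart-by-chart does not formally yield global openness; one must exploit precompactness to control closures and the uniform continuity of transition maps, in the spirit of the closed-relation/exhaustion argument used to prove Proposition~\ref{prop:Khomeo}.
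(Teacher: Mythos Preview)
Your Part~(i) is essentially the paper's argument: apply Lemma~\ref{le:Ku2}(a) in $\Kk$ to find $w\in U_{I\cap J}$, then use the tameness identity $\phi_{(I\cap J)I}(U'_{(I\cap J)I})=U'_I\cap\s_I^{-1}(\E_{(I\cap J)I})$ for $\Kk'$ to conclude $w\in U'_{(I\cap J)I}$, and symmetrically for $J$. Your $I\cap J=\emptyset$ case is more explicit than the paper's (which just invokes the injection $\io_{\Kk'}:X\to|\Kk'|$), but equivalent.

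Part~(ii) has a genuine gap, and also a conceptual misstep. You frame the obstacle as ``matching the quotient topology on $|\Kk'|$ with the subspace topology from $|\Kk|$'', but these are \emph{not} equal in general: the quotient topology on $|\Kk'|$ is typically non-metrizable (the argument after Proposition~\ref{prop:tame0} applies verbatim to $\Kk'$ whenever some $F'_J$ is not closed in $F'_I$ with $\dim U_I<\dim U_J$), so your chain pseudometric cannot induce it and $\iota$ is not an embedding of the quotient topology. The lemma only claims an admissible metric inducing the coarser subspace topology. Even setting this aside, your chain-collapse sketch gives no lower bound on $\rho$-length: Lemma~\ref{le:Ku2}(a) rearranges equivalences but says nothing metric.

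The paper's route is much shorter and sidesteps all of this. Since each $U'_I\sqsubset U_I$ and $\Ii_\Kk$ is finite, $\iota(|\Kk'|)$ has compact closure in the Hausdorff space $|\Kk|$: it lies in the finite union $\bigcup_I\pi_\Kk(\ov{U'_I})$. This closure is regular (compact Hausdorff) and second countable (each $\pi_\Kk(\ov{U'_I})$ is, since $\pi_\Kk|_{U_I}$ is a homeomorphism onto its image by Proposition~\ref{prop:Khomeo}), hence metrizable by Urysohn. Restricting that metric to $\iota(|\Kk'|)$ gives, by construction, a metric inducing the subspace topology; it is admissible because the composite $U'_I\hookrightarrow U_I\xrightarrow{\pi_\Kk}|\Kk|$ is a homeomorphism onto its image. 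No chain estimates are needed.
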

\begin{proof}  
We write $U_I, U_{IJ}$ for the domains of the charts and coordinate changes of $\Kk$ and $U_I', U_{IJ}'$  for those of $\Kk'$, so that $U_I'\subset U_I, U_{IJ}'\subset U_{IJ}$ for all $I,J\in \Ii_\Kk = \Ii_{\Kk'}$.
Suppose that $\pi_\Kk(I,x) = \pi_\Kk(J,y)$ where $x\in U_I', y\in U_J'$.  Then we must show that $\pi_{\Kk'}(I,x) = \pi_{\Kk'}(J,y)$.  Since $\Kk$ is tame, 
Lemma~\ref{le:Ku2}~(a) implies that
if $I\cap J\ne \emptyset$
 there is $w\in U_{I\cap J}$ such that
$\phi_{(I\cap J)I} (w)$ is defined and equal to $x$.  Hence $x\in \s_I^{-1}(\E_{I\cap J})\cap U_I' =\phi_{(I\cap J)I} (U'_{(I\cap J)I})$ by the tameness equation \eqref{eq:tame3}  for $\Kk'$.
Therefore $w\in U'_{(I\cap J)I}$. 
Similarly, because $\phi_{(I\cap J)J} (w)$ is defined and equal to $y$, we have $w\in U'_{(I\cap J)J}$.
Then by definition of $\pi_{\Kk'}$ we deduce
 $\pi_{\Kk'}(I,x) = \pi_{\Kk'}(I\cap J,w) =\pi_{\Kk'}(J,y)$.
 On the other hand, if $I\cap J\ne \emptyset$ then 
 Lemma~\ref{le:Ku2}~(a) implies that $\psi_I(x) = \psi_J(y)$ so that 
  $\pi_{\Kk'}(I,x) = \pi_{\Kk'}(J,y)$ by the injectivity of $\io_\Kk: X\to |\Kk|$ (see   Lemma~\ref{le:ioK}).
 
To prove (ii), notice that 
because  $U_I'$ is precompact in $U_I$ for each $I$,  the closure $\ov{\io(|\Kk'|)}$ of $\io(|\Kk'|)$ in $|\Kk|$ is compact.  
It follows that  $\ov{\io(|\Kk'|)}$ (with the subspace topology) is regular  (i.e.\ points and closed sets have disjoint neighbourhoods) 
and second countable, hence metrizable by Urysohn's metrization theorem. (For  details of these arguments see  \cite[Proposition~3.3.8]{MW1}.)
Therefore one may obtain the desired metric on 
 $|\Kk'|$ by restriction from $\ov{\io(|\Kk'|)}$. 
 \end{proof}

\begin{cor}\label{cor:proper}  Proposition~\ref{prop:proper} holds.
\end{cor}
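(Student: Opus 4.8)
The plan is to assemble Corollary~\ref{cor:proper} from results already established, namely Lemma~\ref{le:Ku3}, Proposition~\ref{prop:proper1}, Proposition~\ref{prop:tame0}, and Lemma~\ref{le:metriz}. First I would observe that a weak smooth Kuranishi atlas $\Kk$ with trivial isotropy is in particular a weak topological Kuranishi atlas (Remark~\ref{rmk:intermed}) that is \emph{filtered}: by Lemma~\ref{le:Ku3}~(a) it carries the canonical filtration $\E_{IJ}:=U_J\times\Hat\phi_{IJ}(E_I)$. Hence Proposition~\ref{prop:proper1} applies and yields a tame shrinking $\Kk_{sh}$. Since a shrinking restricts each chart to a precompact open subdomain (Definitions~\ref{def:restr} and~\ref{def:shr}), $\Kk_{sh}$ is again a weak smooth atlas, and since the canonical filtration is precisely the filtration for which topological tameness agrees with smooth tameness in the sense of Definition~\ref{def:tame}, $\Kk_{sh}$ is a tame weak smooth atlas. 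By Proposition~\ref{prop:tame0}~(i) it then satisfies the strong cocycle condition, hence is a tame Kuranishi atlas. To produce \emph{preshrunk} tame shrinkings I would invoke Proposition~\ref{prop:proper1} twice: once on $\Kk$ to obtain a tame shrinking $\Kk'$, which remains filtered (the filtration restricting to its domains), and once on $\Kk'$ to obtain $\Kk_{sh}\sqsubset\Kk'\sqsubset\Kk$. This gives the first sentence of Proposition~\ref{prop:proper}.

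For the ``moreover'' clause, suppose $\Kk_{sh}$ is preshrunk with $\Kk_{sh}\sqsubset\Kk'\sqsubset\Kk$ and $\Kk'$ tame. Then $\Kk_{sh}$ is by hypothesis a tame shrinking of the tame topological atlas $\Kk'$ (one checks that the coordinate-change domains of $\Kk_{sh}$ viewed as a shrinking of $\Kk$ and viewed as a shrinking of $\Kk'$ coincide, using $U_{I,sh}\subset U_I'$: both equal $\phi_{IJ}^{-1}(U_{J,sh})\cap U_{I,sh}$). Now Lemma~\ref{le:metriz} applies with the roles of $(\Kk,\Kk')$ there played by $(\Kk',\Kk_{sh})$: part~(i) gives the continuous injection $|\Kk_{sh}|\to|\Kk'|$, which is the first bullet; and part~(ii) gives that $\Kk_{sh}$ is metrizable and that the metric may be chosen to induce the topology of $|\Kk_{sh}|$ as a subspace of $|\Kk'|$ with its quotient topology, which are the second and third bullets.

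I do not expect a real obstacle. Once Proposition~\ref{prop:proper1}, Proposition~\ref{prop:tame0}, and Lemma~\ref{le:metriz} are available, the only thing to verify is the smooth/topological bookkeeping — that Lemma~\ref{le:Ku3}~(a)'s canonical filtration identifies ``tame smooth atlas'' with ``tame topological atlas'', and that a preshrunk shrinking of $\Kk$ through an intermediate tame $\Kk'$ is genuinely a shrinking of $\Kk'$ — and both are immediate from the definitions. The substantive content of Proposition~\ref{prop:proper} resides entirely in the cited results.
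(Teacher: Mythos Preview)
Your proposal is correct and follows essentially the same route as the paper: apply Proposition~\ref{prop:proper1} twice (once to $\Kk$, once to the resulting tame $\Kk'$) to produce a preshrunk tame shrinking, then invoke Lemma~\ref{le:metriz} with $(\Kk',\Kk_{sh})$ in place of $(\Kk,\Kk')$ for the three bulleted properties. The paper's proof additionally spells out why the metric provided by Lemma~\ref{le:metriz} pulls back to the given topology on each $U_I$ --- namely because the composite $U_I\to|\Kk_{sh}|\hookrightarrow|\Kk'|$ factors through $\pi_{\Kk'}|_{U_I'}$, which is a homeomorphism to its image by Proposition~\ref{prop:Khomeo} --- but this is exactly the content of ``$\Kk_{sh}$ is metrizable'' in Lemma~\ref{le:metriz}(ii), so your citation covers it.
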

\begin{proof}  By Proposition~\ref{prop:proper1} each weak atlas has a tame shrinking $\Kk'$
with domains $U_I'$. 
 Now apply Proposition~\ref{prop:proper1} again to obtain a further tame shrinking
$\Kk_{sh}$ of $\Kk'$. with domains $U_I\sqsubset U_I'$.  The metric on $\Kk_{sh}$ provided  by Lemma~\ref{le:metriz} 
pulls back to a metric on $U_I$  that is compatible with the  topology on $U_I$
because the  inclusion $U_I\stackrel{\pi_{\Kk_{sh}} }\longrightarrow   |\Kk_{sh}|\subset |\Kk'|$
also factors as $U_I\to U_I' \stackrel{\pi_{\Kk'} }\longrightarrow |\Kk'|$ and hence is a homomorphism to its image when this is 
  topologized as a subspace of  $|\Kk'|$.
Hence $\Kk_{sh}$ is metrizable in the sense of
Definition~\ref{def:metric}.  
The other assertions of Proposition~\ref{prop:proper}   hold by Lemma~\ref{le:metriz}).
\end{proof}

The above corollary, together with the results in \S\ref{ss:tatlas}, establish the key parts of the first set of statements in Theorem~\ref{thm:K}.
The rest of this theorem concerns cobordisms. The proofs are fairly straightforward generalizations of those given above; for details see \cite{MW1}.

Finally, in order to work in  \S\ref{s:order}
with  atlases that are not filtered, we introduce the  notion of a good atlas as follows.

\begin{defn}\label{def:good}  A topological atlas $\Kk = \bigl(\bK_I, \Hat\Phi_{IJ}\bigr)_{I\subset J, I,J\in \Ii_\Kk}$ is said to be 
{\bf good}  if
\begin{itemize} \item[(i)] the realization $|\Kk|$ of $\Kk$ is Hausdorff in the quotient topology;
\item[(ii)] for all $I\in \Ii_\Kk$ the map $\pi_\Kk: U_I\to |\Kk|$ is a homeomorphism to its image;
\item[(iii)] $\Kk$ is metrizable, i.e. there is a metric $d$ on $|\Kk|$ 
whose pullback by $\pi_\Kk$ induces the given topology on $U_I$ for each $I\in \Ii_\Kk$;
\item[(iv)] for all $I\subset J$ with $I,J\in \Ii_\Kk$ the image $\phi_{IJ}(U_{IJ})$ is a closed subset of $U_J$.
\end{itemize}
\end{defn}

\begin{lemma}\label{le:good} 
 Every preshrunk tame topological atlas is good.
\end{lemma}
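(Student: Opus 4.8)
The plan is to verify, for a preshrunk tame shrinking $\Kk_{sh}\sqsubset\Kk'\sqsubset\Kk$ (with $\Kk_{sh}$ and $\Kk'$ both tame and $\Kk$ a weak topological atlas), the four conditions of Definition~\ref{def:good}. The key observation is that conditions (i), (ii) and (iv) already hold for \emph{any} tame topological Kuranishi atlas, hence for $\Kk_{sh}$ itself, while only the metrizability condition (iii) requires the precompactness of the domains — which is precisely what the intermediate tame atlas $\Kk'$ is there to supply.

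First I would read off (i) and (ii) directly from Proposition~\ref{prop:Khomeo} applied to the tame topological atlas $\Kk_{sh}$: it gives that $|\Kk_{sh}|$ is Hausdorff in the quotient topology and that each $\pi_{\Kk_{sh}}|_{U_I}\colon U_I\to|\Kk_{sh}|$ is a homeomorphism onto its image. For (iv) I would use the second tameness identity \eqref{eq:ttame2} for $\Kk_{sh}$ with $K=J$ (recalling the convention $U_{JJ}=U_J$), which yields $\phi_{IJ}(U_{IJ})=\s_J^{-1}(\E_{IJ})$, the topological form of \eqref{eq:tame3}; since the members $\E_{IJ}\subset\E_J$ of a filtration are closed by Definition~\ref{def:Ku3} and $\s_J$ is continuous, this is a closed subset of $U_J$ (trivially so when $I=J$). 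For (iii) I would invoke Lemma~\ref{le:metriz} with $\Kk'$ playing the role of the ambient tame atlas and $\Kk_{sh}$ that of its tame shrinking: part~(ii) of the lemma states exactly that $\Kk_{sh}$ is metrizable in the sense of Definition~\ref{def:metric}, i.e.\ there is a metric $d$ on $|\Kk_{sh}|$ whose pullback by each $\pi_{\Kk_{sh}}|_{U_I}$ induces the given topology on $U_I$.

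I do not expect any real obstacle, since the statement is essentially a bookkeeping assembly of Proposition~\ref{prop:Khomeo} and Lemma~\ref{le:metriz}; the one substantive input, hidden inside Lemma~\ref{le:metriz}, is the use of precompactness of the domains of $\Kk_{sh}$ in those of $\Kk'$ to make $\ov{\io(|\Kk_{sh}|)}\subset|\Kk'|$ compact and hence metrizable via Urysohn's theorem. This is also what forces the hypothesis to be ``preshrunk'' rather than merely ``tame'': one needs a genuinely tame atlas containing $\Kk_{sh}$ as a precompact shrinking, and the original weak atlas $\Kk$ will not serve. If pressed, the only spot deserving a word of care is that the Urysohn metric on the compact space $\ov{\io(|\Kk_{sh}|)}$ restricts to one whose pullback to each $U_I$ recovers the original topology, which is immediate from (ii) because $\pi_{\Kk_{sh}}|_{U_I}$ is then a homeomorphism onto its image carrying the subspace topology of that metric.
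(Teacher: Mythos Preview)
Your proposal is correct and follows exactly the paper's own argument: Proposition~\ref{prop:Khomeo} gives (i) and (ii), the tameness identity \eqref{eq:ttame2} with $K=J$ gives (iv), and Lemma~\ref{le:metriz} gives (iii). Your added remark on why the \emph{preshrunk} hypothesis is needed for the metrizability step is accurate and makes explicit what the paper's terse proof leaves implicit.
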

\begin{proof}  Proposition~\ref{prop:Khomeo}  shows
that every tame topological atlas satisfies  conditions (i) and (ii) above, while
 Lemma~\ref{le:metriz} shows that (iii) holds.  Further  
 (iv) holds by the tameness condition \eqref{eq:ttame2} with $J=K$.   
\end{proof}

\begin{rmk}\rm  
 Once we have constructed a well behaved virtual neighbourhood $|\Kk|$,
condition~(iv) in Definition~\ref{def:Ku3} is all that we need of the filtration/tameness conditions in order to construct the VFC.
It will be used in the proof that the zero set of  a perturbed section is  compact; cf
Proposition~\ref{prop:zeroS0} below.  $\hfill\er$ 
\end{rmk}

\subsection{Reductions and the construction of perturbation sections}\label{ss:red}

In this section,  
we explain how to 
construct the  virtual moduli cycle $[X]^{vir}_\Kk$ for  an oriented smooth good atlas (with trivial isotropy).
 The discussion has three main parts.
 \begin{itemlist}\item   We first develop the notion of a {\bf reduction $\Vv$ of a good topological atlas}.
 This gives us a subcategory $\bB_\Kk|_\Vv$  of $\bB_\Kk$ whose footprints  cover $X$ and 
 with the property that two of its objects $(I,x), (J,y)$ are equivalent if and only if either $(I,x)\preceq (J,y)$ or $(I,x)\succeq (J,y)$.  Comparing with Lemma~\ref{le:Ku2}, we see
 that the equivalence relation $\sim_\Kk$ becomes significantly simpler when restricted to $\bB_\Kk|_\Vv$.
 \item  Secondly, we define 
the notion of 
a {\bf perturbation section} $\nu:\bB_\Kk|_\Vv\to \bE_\Kk|_\Vv$ of a reduction of a smooth atlas (with trivial isotropy), and 
in Proposition~\ref{prop:zeroS0}
give conditions under which 
 the local zero sets of $\s|_\Vv + \nu$ 
 fit together to form a closed oriented manifold $|\bZ^\nu|$.  
\item   We then  explain how to construct suitable perturbation sections $\nu$.  Although
the conditions formulated in Definition~\ref{def:a-e} are rather intricate, they give a great deal of control
over  $\nu$, and in particular over the zero set of $\s|_\Vv + \nu$.  This will allow us both to
construct cobordisms between admissible perturbations as in Proposition~\ref{prop:ext2} and to adapt the construction to the case of nontrivial isotropy as explained at the end of \S\ref{ss:iso2}.
 \item Finally we outline the proof of Theorem~B in the smooth case with trivial isotropy, showing how to define
 $[X]^{vir}_\Kk$ from the zero sets $|\bZ^\nu|$.
 \end{itemlist}
 
We begin by explaining why it is necessary to \lq\lq reduce" the atlas $\Kk$. 
The cover of $X$ by the footprints $(F_I)_{I\in \Ii_\Kk}$ of all the Kuranishi charts
(both the basic charts and those that are part of the transitional data) is closed under intersection. This makes it easy to express compatibility of the charts, since the overlap of footprints of any two charts $\bK_I$ and $\bK_J$ is the footprint of a third  chart $\bK_{I\cup J}$.
However, this yields so many compatibility conditions that a construction of compatible perturbations in the Kuranishi charts may not be possible. For example, a choice of perturbation (with values in $E_I$) in the chart $\bK_I$ also fixes the perturbation in each chart $\bK_J$ over $\phi_{J (I\cup J)}^{-1}\bigl( \im \phi_{I (I\cup J)}\bigr) \subset U_J$, whenever $I\cup J\subset\Ii_\Kk$.  
Since we do not assume transversality of the coordinate changes, this subset of $U_J$ need not be a submanifold,
\footnote
{
As explained in Remark~\ref{rmk:Kk}, it will be a submanifold if $I\cap J\ne \emptyset$, but usually not otherwise.
}
and hence the perturbation may not extend smoothly to a map from $U_J$ to $E_J$.  Moreover, for such an extension to exist at all, the 
pushforward of the perturbation to $U_{I\cup J}$ 
would have to take values in the  intersection  $$
\Hat\phi_{I(I\cup J)}(E_I) \cap 
\Hat\phi_{J(I\cup J)}(E_J) \;\;\subset \;\; \Hat\phi_{(I\cap J)(I\cup J)} (E_{I\cap J}),
$$ a very restrictive condition.  In fact if $I\cap J = \emptyset$, this would mean that the perturbation would have to vanish over $F_{I\cup J}$.
We will avoid these difficulties, and also make a first step towards compactness, by reducing the domains of the Kuranishi charts to precompact subsets $V_I\sqsubset U_I$ such that all compatibility conditions between $\bK_I|_{V_I}$ and $\bK_J|_{V_J}$ are given by direct coordinate changes $\Hat\Phi_{IJ}$ or $\Hat\Phi_{JI}$.
As explained more fully in \cite{MW1} the reduction process is analogous to replacing the star cover of a simplicial set by the star cover of its first barycentric subdivision; also see  Figure~\ref{fig:3}.

\begin{figure}
   \centering
   \includegraphics[width=4in]{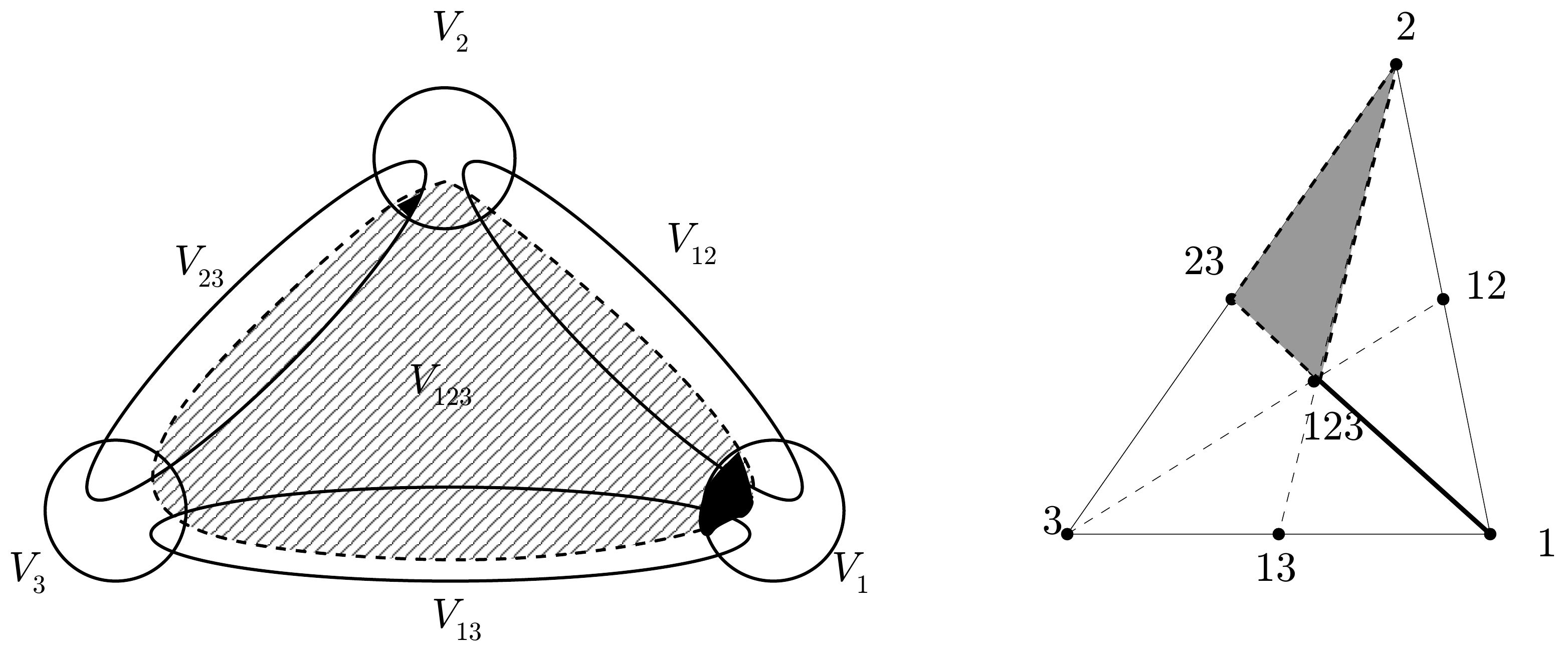}
     \caption{
The right diagram shows the
first barycentric subdivision of the triangle with vertices $1,2,3$.
It has three new vertices labelled $ij$ at the barycenters of the three edges and one vertex labelled $123$ at the barycenter of the triangle.
The left is a schematic picture of the cover $(V_I)$ by the stars  of the   vertices of this barycentric subdivision. 
The black sets are examples of multiple intersections of the new cover, which correspond to the simplices in the barycentric subdivision. E.g.\ $V_2\cap V_{23}\cap V_{123}$ corresponds to the triangle with vertices $2, 23, 123$, whereas $V_1\cap V_{123}$ corresponds to the edge between $1$ and $123$. 
The cover $(V_I)$  has the same intersection properties as the reduction of the original cover $U_1,U_2,U_3$, where $U_i$ is the
 star  of the  vertex $v_i$.}  
\label{fig:3}
\end{figure}

In the following we work with good atlases (with properties as spelled out  in Definition~\ref{def:good}).  
Since all tame atlases are good,  for the present purposes we could equally well work with tame atlases.

\begin{defn}\label{def:vicin}  Let $\Kk$ be a good topological atlas. 
A {\bf reduction} of 
$\Kk$ is an open subset $\Vv=\bigsqcup_{I\in \Ii_\Kk} V_I \subset \Obj_{\bB_\Kk}$ i.e.\ a tuple of (possibly empty) open subsets $V_I\subset U_I$, satisfying the following conditions:
\begin{enumerate}
\item
$V_I\sqsubset U_I $ for all $I\in\Ii_\Kk$, and if $V_I\ne \emptyset$ then $V_I\cap \s_I^{-1}(0)\ne \emptyset$;
\item
if $\pi_\Kk(\ov{V_I})\cap \pi_\Kk(\ov{V_J})\ne \emptyset$ then
$I\subset J$ or $J\subset I$;
\item
the zero set $\iota_\Kk(X)=|\s_\Kk|^{-1}(0)$ is contained in 
$
\pi_\Kk(\Vv) \;=\; {\textstyle{\bigcup}_{I\in \Ii_\Kk}  }\;\pi_\Kk(V_I).
$
\end{enumerate}
Given a reduction $\Vv$, we define the {\bf reduced domain category} $\bB_\Kk|_\Vv$ and the {\bf reduced obstruction category} $\bE_\Kk|_\Vv$ to be the full subcategories of $\bB_\Kk$ and $\bE_\Kk$ with objects $\bigsqcup_{I\in \Ii_\Kk} V_I$ resp.\ $\bigsqcup_{I\in \Ii_\Kk} V_I\times E_I$, and denote by $\s|_\Vv:\bB_\Kk|_\Vv\to \bE_\Kk|_\Vv$ the section given by restriction of 
$\s_\Kk$. 
\end{defn}

There is a similar notion of good cobordism.
It follows easily from condition (ii) above that the realization $|\bB_\Kk|_\Vv|$  of the subcategory $\bB_\Kk|_\Vv$  (i.e. its object space modulo the equivalence relation generated by its morphisms) injects into $ |\bB_\Kk|=:|\Kk|$.
There is a related notion of cobordism reduction (see  \cite[\S5.1]{MW1}), which is just as one would imagine, keeping in mind that all sets have product form near the boundary.

Here is the main existence result.  It is proved in \cite[\S5]{MW1} by first constructing a reduction of the footprint cover (a process well understood in algebraic topology) as in Lemma~\ref{le:cov0} below, and then extending this suitably.

\begin{prop}\label{prop:cov2} 
The following statements hold.
\begin{itemize}
\item[(a)]
Every good topological Kuranishi atlas $\Kk$ has a reduction $\Vv$.
\item[(b)] 
Every good topological Kuranishi cobordism $\Kk^{[0,1]}$ has a cobordism reduction $\Vv^{[0,1]}$.
\item [(c)]  
Let $\Vv^0,\Vv^1$ be reductions of a good topological Kuranishi atlas $\Kk$. Then there exists a cobordism reduction $\Vv$ of $[0,1]\times \Kk$ such that $\p^\al\Vv = \Vv^\al$ for $\al = 0,1$. 
\end{itemize}
\end{prop}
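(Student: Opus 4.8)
The plan is to deduce this from a relative version of the construction behind part~(b), applied to the product cobordism $[0,1]\times\Kk$, carrying the prescribed boundary data $\Vv^0,\Vv^1$ through the argument. (Note that part~(b) alone is not enough: it produces \emph{some} cobordism reduction of $[0,1]\times\Kk$, but with no control over its restrictions to the ends.) First I would check that $[0,1]\times\Kk$ — the cobordism whose charts are the product charts $[0,1]\times\bK_I$ and whose coordinate changes are $\id_{[0,1]}\times\Hat\Phi_{IJ}$, with footprints $[0,1]\times F_I$ and the trivial collar structure — is a good topological Kuranishi cobordism in the sense of Definitions~\ref{def:CKS2}, \ref{def:good}. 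This is routine: $\Obj$, $\Mor$ are $[0,1]$ times those of $\Kk$, so $|[0,1]\times\Kk|\cong[0,1]\times|\Kk|$, and Hausdorffness, metrizability (take an admissible metric on $|\Kk|$ and multiply by the standard one on $[0,1]$, so that it automatically has product — hence collar — form), the homeomorphism property of $\pi$, and closedness of the images $\phi_{IJ}(U_{IJ})$ all pass from $\Kk$ to the product; the restrictions to the boundary are $\p^0(\,[0,1]\times\Kk\,)=\p^1(\,[0,1]\times\Kk\,)=\Kk$.

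Next, observe that on a collar neighbourhood of the boundary we already have canonical partial data: the products $[0,\eps)\times\Vv^0$ and $(1-\eps,1]\times\Vv^1$ satisfy conditions (i)--(iii) of Definition~\ref{def:vicin} simply because $\Vv^0$ and $\Vv^1$ do, and they are manifestly in collar form. The task is therefore to extend this partial reduction, defined near $\{0,1\}\times X$, to a cobordism reduction $\Vv=\bigsqcup_{I\in\Ii_\Kk}V_I^{[0,1]}$ of all of $[0,1]\times\Kk$ — i.e.\ to fill in over the interior while keeping collar form near the ends and preserving the reduction axioms globally. This is a relative form of part~(b), and I would run the same two-step scheme used there. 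First, pass to the footprint cover: apply the relative analogue of Lemma~\ref{le:cov0} to the finite open cover $([0,1]\times F_i)$ of the compact metrizable space $[0,1]\times X$ to obtain open sets $\Gg_i\subset[0,1]\times X$ that restrict on the outer collars to the products of the footprint reductions of $\Vv^0$, $\Vv^1$ and satisfy, writing $\Gg_I:=\bigcap_{i\in I}\Gg_i$, the nesting property $\ov{\Gg_I}\cap\ov{\Gg_J}\neq\emptyset\Rightarrow I\subset J$ or $J\subset I$. Second, thicken each $\Gg_i$ to an open set $V_I^{[0,1]}\subset[0,1]\times U_I$ with $V_I^{[0,1]}\cap\s_I^{-1}(0)=\psi_I^{-1}(\Gg_i)$, using $\delta$-neighbourhoods with respect to a collar-form admissible metric on $|[0,1]\times\Kk|$, and shrink $\delta$ until the closures $\pi(\ov{V_I^{[0,1]}})$ retain the nesting of condition~(ii); the outer-collar data is kept fixed throughout.

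The main obstacle is the interplay between the collar-matching requirement and condition~(ii) near the seams $t=\eps$, $t=1-\eps$: on $t\le\eps$ the sets $V_I^{[0,1]}$ are forced to equal $[0,\eps]\times V_I^0$, so there is no freedom to shrink them there, and one must ensure that the two independently prescribed reductions $\Vv^0$, $\Vv^1$ can be joined through the interior without any pair $V_I^{[0,1]},V_J^{[0,1]}$ with $I,J$ incomparable ever acquiring a forbidden overlap $\pi(\ov{V_I^{[0,1]}})\cap\pi(\ov{V_J^{[0,1]}})\neq\emptyset$. The way to handle this is to do the footprint step with extra care: choose $(\Gg_i)$ so that near each seam it already refines the corresponding product footprint reduction finely enough, and then let the thickening size $\delta(t)$ be taken to shrink continuously to $0$ as $t\to\eps^+$ and as $t\to(1-\eps)^-$, so that the thickened sets never escape the room left by $\Vv^0$ resp.\ $\Vv^1$; on the interior interval $[2\eps,1-2\eps]$ one is free to use any reduction of $\Kk$ at all, so no conflict arises there. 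With such a choice, conditions (i) and (iii) are preserved by the thickening exactly as in part~(b), (ii) holds by the choice of $\delta$, the collar form near $\{0,1\}\times X$ is built in, and $\p^\al\Vv=\Vv^\al$ for $\al=0,1$ by construction. By Lemma~\ref{le:good} one could equally run this for tame rather than merely good atlases, and the analogous statement for genuine collared cobordisms (not just concordances) is proved in the same manner; the details of the relative Lemma~\ref{le:cov0} and of the continuous control of $\delta$ are the only places real work is needed, and both are carried out in \cite[\S5]{MW1}.
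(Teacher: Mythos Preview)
Your overall strategy for (c) --- reduce the footprint cover of $[0,1]\times X$ relative to the given boundary data, then thicken into the chart domains while preserving collar form --- is the right one, and is what the paper intends when it says the other parts have proofs similar to the sketch given for (a). However, two specific steps in your execution are wrong.

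First, the footprint reduction $(Z_I)_{I\in\Ii_\Kk}$ produced by Lemma~\ref{le:cov0} is \emph{not} of the form $\Gg_I=\bigcap_{i\in I}\Gg_i$ for some shrinking $(\Gg_i)$ of the basic footprints. Such intersections can never satisfy the separation condition: if $F_{12}\ne\emptyset$ then $\Gg_1\cap\Gg_2\supset\Gg_{12}$, which must be nonempty for the reduced sets to still cover, yet $\{1\}$ and $\{2\}$ are incomparable. The sets $Z_I$ are built directly by induction over $|I|$ (see \eqref{eq:ZGI}) and are in general unrelated to intersections of basic sets; the relative version needed for (c) runs the same induction with collared input.

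Second, letting the thickening width $\de(t)\to 0$ as $t\to\eps^+$ does not produce an open set. On the collar you have $V_I^{[0,1]}\cap([0,\eps]\times U_I)=[0,\eps]\times V_I^0$, but for $t$ just past $\eps$ the $t$-slice shrinks to a $\de(t)$-neighbourhood of the zero locus; any point $(\eps,x)$ with $x\in V_I^0\less s_I^{-1}(0)$ then fails to be interior. The paper's thickening step (sketched for (a)) avoids this by a subtractive rather than limiting procedure: choose arbitrary precompact open $W_I\sqsubset U_I$ with the prescribed zero-set trace, then \emph{remove} closed neighbourhoods $\Nn(Y_{IJ})$ of the bad overlaps $Y_{IJ}:=\ov{W_I}\cap\pi_\Kk^{-1}(\pi_\Kk(\ov{W_J}))$ for $J\notin\Cc(I)$, using that $Y_{IJ}\cap\psi_I^{-1}(\ov{Z_I})=\emptyset$ (see \eqref{eq:QIVI}). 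In the relative setting one simply takes the initial $W_I$ to already have collar form extending the given $V_I^\al$, and performs the subtraction only over the interior; no continuous control of a width parameter is needed.
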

\begin{proof}[Sketch  proof of (a)] 
Define 
$$
\Cc(I):=\{J\in \Ii_\Kk \,|\, I\subset J  \;\text{or}\; J\subset I \},
$$
We show below in Lemma~\ref{le:cov0} that  the footprint cover $(F_I)_{I\in \Ii_\Kk}$ has a reduction 
$(Z_I)_{I\in \Ii_\Kk}$, i.e. there is an open cover of $X$ such that $Z_I\sqsubset F_I$ and $\ov{Z_I}\cap \ov{Z_J}\ne \emptyset\Rightarrow J\in \Cc(I)$.
  For each $I$, choose an open subset $W_I\sqsubset U_I$ such that 
  $$
W_I\cap \s_I^{-1}(0_I) = \psi_I^{-1}(Z_I), \quad \ov{W_I}\cap \s_I^{-1}(0_I) = \psi_I^{-1}(\ov{Z_I}),
$$
and for each $J\notin \Cc(I)$ define
$$
Y_{IJ} \,:= \ov{W_I}\cap \pi_\Kk^{-1}(\pi_\Kk(\ov{W_J}))
\; = \; 
U_I
\cap \pi_\Kk^{-1}\bigl(\pi_\Kk(\ov{W_I})\cap \pi_\Kk(\ov{W_J})\bigr).
$$
Then $Y_{IJ}$  is compact by the  homeomorphism property of $\pi_\Kk$. Further, 
If $J\notin\Cc(I)$, then using $\psi_I^{-1}(\ov{Z_I}) =\s_I^{-1}(0_I)\cap \ov{W_I}$ 
and the notation $\eps_I$ from \eqref{eq:epsJ}
 we have
 \begin{align*}
\psi_I^{-1}(\ov{Z_I}) \cap Y_{IJ}&\;\subset
\;\psi_I^{-1}(\ov{Z_I})\cap \s_I^{-1}(0_I)\cap 
\eps_I(\ov{W_J}) \\
&\;=\; \psi_I^{-1}(\ov{Z_I}) \cap \eps_I\bigl(\s_J^{-1}(0_J) \cap \ov{W_J} \bigr)\\
&\;= \; \psi_I^{-1}(\ov{Z_I})\cap \eps_I\bigl(\psi_J^{-1}(\ov{Z_J})\bigr)
\;= \; \psi_I^{-1}(\ov{Z_I}) \cap \psi_I^{-1}(\ov{Z_J})\;=\; \emptyset,
\end{align*}
where the first equality holds because $\s$ is compatible with the coordinate changes.
Because the $Y_{IJ}$  are compact,  we can find closed neighbourhoods 
$\Nn(Y_{IJ})\subset U_I$ of $Y_{IJ}$
for each $J\notin\Cc(I)$ such that  
$$
\Nn(Y_{IJ})\cap \psi_I^{-1}(\ov{Z_I})= \emptyset.
$$
It is now straightforward to check  that the sets 
\begin{equation}\label{eq:QIVI}
V_I := W_I \;\less {\textstyle\bigcup_{J\notin \Cc(I)}} \Nn(Y_{IJ}).
\end{equation}
form the required reduction. The other parts of the proposition have similar proofs.
\end{proof}

\begin{example}\label{ex:Vv} \rm  (i) A reduction of the atlas $\Kk$ in Example~\ref{ex:Khomeo} has three sets $V_1, V_2, V_{12}$ that cover
the zero set and have the property that $\pi_\Kk(\ov V_1)\cap \pi_\Kk(\ov V_2) = \emptyset$.  For instance, we can take $V_1 = (-\infty, 2)\sqsubset U_1, V_2 = (3,\infty)\times \R\sqsubset U_2$ and $V_{12} = (1,3)\times \R\subset U_{12} (= U_2)$.
\MS

\NI (ii)  It is clear from the above proof that we can construct the reduction $(V_I)_{I\in \Ii_\Kk}$ in such a way that
there is a coordinate change between $V_I$ and $V_J$ only if the corresponding footprints have nonempty intersection  $Z_I\cap Z_J$.  (After choosing the $Z_I$, simply replace $\Cc(I)$ in the above proof by the set $\Cc_Z(I) = \{J: Z_I\cap Z_J\ne \emptyset\}$.)     For example,  if $X = \{p_1,p_2,p_3\}$ and $\Kk$ has two basic charts $U_1, U_2$ with footprints  
$F_1= \{p_1,p_3\}, F_2= \{p_2,p_3\}$, there is one transition chart $\bK_{12}$ with footprint $\{p_3\}$, and we may take
$Z_1 = \{p_1\}, Z_2 = \{p_2\}, Z_{12} = \{p_3\}$.  We can then choose $\Vv$ so that the sets $\bigl(\pi_\Kk(V_I)\bigr)_{I\in \Ii_\Kk}$ are disjoint, so that the only morphisms in the category $\bB_\Kk|_\Vv$ are identity morphisms.
 In other words $V_I\cap \phi^{-1}_{IJ}(V_J) = \emptyset$ for all $I\subset J$, so that the compatibility condition 
between  $\nu_I, \nu_J$ in Definition~\ref{def:sect} is automatically satisfied. \hfill$\er$
\end{example}

\begin{rmk}\label{rmk:piVv} \rm  (i) The  key property of a reduction $\Vv$ is that
 if any intersection $\pi_\Kk(\ov V_I)\cap \pi_\Kk(\ov V_J)$ is nonempty then  $I\subset J$ or $J\subset I$ so that 
 the induced equivalence relation on $\bigsqcup V_I$ has just one step. 
  Contrast this with the equivalence relation for  a tame atlas which, as explained in Remark~\ref{rmk:Kk}~(ii), 
  is given by a two step process.
Another consequence in the smooth case is that the intersection $\pi_\Kk(V_I)\cap \pi_\Kk(V_J)$ is  the image of a submanifold;
see  Remark~\ref{rmk:Kk}~(i).   \MS

\NI (ii)  Another important property is that the inclusion of categories $\bB_\Kk|_\Vv\to \bB_\Kk$ induces an injection on their realizations,
 in other words the equivalence relation on $\bigsqcup_I V_I = \Obj_{\bB_\Kk|_\Vv}$ induced by the
 morphisms in $\bB_\Kk|_\Vv$ coincides with that induced by the morphisms in $\bB_\Kk$. 
(To see that this is not true for arbitrary full
 subcategories of $\bB_\Kk$ consider the full subcategory with objects $\bigcup_i U_i$
which only has identity morphisms.)
It follows that  the quotient topology on $|\Vv|: = \qu{\bigcup V_I}{\sim_\Vv}$ induced by the
morphisms in $\bB_\Kk|_\Vv$ coincides with  that  induced by the morphisms in $\bB_\Kk$, so that $|\Vv|$ 
can be identified with its image $\pi_\Kk(\Vv)$ in $|\Kk|$ with the quotient topology.
Further, because $\Vv$ is precompact in  $ \Obj_{\bB_\Kk}$ one can show that the relative topology 
on $\pi_\Kk(\Vv)$ as a subset of $|\Kk|$ equals its metric topology with respect to any admissible metric on $|\Kk|$; see \cite[Lemma~5.2.5]{MW1}.

\MS

\NI (iii)
   Subsets of $|\Kk|$ of the form $\pi_\Kk(\Vv)$ contain the zero set $\io_\Kk(X)$ and are the analog in the virtual neighbourhood $|\Kk|$  of  \lq\lq precompact neighbourhoods of the zero set".  Since $|\Kk|$ is not locally compact even in the metric topology
(see  Example~\ref{ex:Khomeo}), there are no compact neighbourhoods of the zero set.  
On the other hand, because $V_I\sqsubset U_I$, the subset $\pi_\Kk(\Vv)$ is precompact in $|\Kk|$, and is \lq\lq open" to the extent that it is the image by $\pi_\Kk$ of the open set $\bigsqcup_I V_I$. 
(But, of course, it is almost never actually  open in $|\Kk|$.)  We can interpret Figure~\ref{fig:3} as a 
schematic picture of 
the subsets $\pi_\Kk(V_I)$ in $|\Kk|$, though it is not very accurate since the dimensions of the $V_I$ change. 
$\hfill\er$  \end{rmk}

We now introduce the notion of a perturbation from \cite[\S7.2]{MW2}.  For the rest of this subsection we assume that our atlas is smooth as well as good in the sense of Definition~\ref{def:good}.

\begin{defn}\label{def:sect}   Let $\Kk$ be a good smooth  atlas with trivial isotropy. 
A {\bf reduced perturbation} of $\Kk$ is a smooth functor $\nu:\bB_\Kk|_\Vv\to\bE_\Kk|_\Vv$ between the reduced domain and obstruction categories of 
some reduction $\Vv$ of $\Kk$, 
such that $\pr_\Kk\circ\nu$ is the identity functor. 
That is, $\nu=(\nu_I)_{I\in\Ii_\Kk}$ is given by a family of smooth maps $\nu_I: V_I\to E_I$ such that for each $I\subsetneq J$ we have a commuting diagram
\begin{equation}\label{eq:comp}
\xymatrix{
 V_I\cap \phi_{IJ}^{-1}(V_J)   \ar@{->}[d]_{\phi_{IJ}} \ar@{->}[r]^{\qquad\nu_I}    &  E_I \ar@{->}[d]^{\Hat\phi_{IJ}}   \\
V_J \ar@{->}[r]^{\nu_J}  & E_J.
}
\end{equation}
We say that a reduced perturbation $\nu$ is  {\bf admissible}  if 
\begin{equation}\label{eq:admiss}
\rd_y \nu_J(\rT_y V_J) \subset\im\Hat\phi_{IJ} \qquad \forall \; I\subsetneq J, \;y\in V_J\cap\phi_{IJ}(V_I\cap U_{IJ}) .
\end{equation}
\end{defn}

Each reduced perturbation  $\nu:\bB_\Kk|_\Vv\to\bE_\Kk|_\Vv$ induces a continuous map $|\nu|: |\Vv|\to |\bE_\Kk|_\Vv|$ such that $|\pr_\Kk|\circ |\nu| = \id$, where $|\pr_\Kk|$ is as in Theorem~\ref{thm:K}.
Each such map has the further property that $|\nu|\big|_{\pi_\Kk(V_I)}$ takes values in $\pi_\Kk(U_I\times E_I)$.  
Note that the zero section $0_\Kk$, given by $U_I\to 0 \in E_I$, restricts to an admissible perturbation $0_\Vv:\bB_\Kk|_\Vv\to\bE_\Kk|_\Vv$ in the sense of the above definition. 
Similarly, the canonical section $\s:= \s_\Kk$ of the Kuranishi atlas restricts to a perturbation $\s|_\Vv: \bB_\Kk|_\Vv\to\bE_\Kk|_\Vv$ of any reduction.
However, the canonical section is generally not admissible. In fact, it follows from the index condition
that  for all $y\in V_J\cap \phi_{IJ}(V_I\cap U_{IJ})$ the map 
$$
{\rm pr}_{E_I}^\perp\circ \rd_y s_J \,: \;\;  \quotient{\rT_y U_J} {\rT_y (\phi_{IJ}(U_{IJ}))} \; \longrightarrow \; \quotient{E_J}{\Hat\phi_{IJ}(E_I)} 
$$
is an isomorphism, while for an admissible section it is identically zero.
Thus for any reduction $\Vv$ and admissible perturbation $\nu$, the sum 
\begin{equation}\label{eq:nuindex}
\s|_\Vv+\nu:=(s_I|_{V_I}+\nu_I)_{I\in\Ii_\Kk} \,:\; \bB_\Kk|_\Vv \;\to\; \bE_\Kk|_\Vv
\end{equation}
is a reduced section that satisfies the index condition. \MS

Here are some further definitions. 
\begin{itemlist}
\item We say that two reductions $\Cc, \Vv$ are {\bf nested} and write  $\Cc\sqsubset \Vv$ if
$C_I\sqsubset V_I$ for all $I\in \Ii_\Kk$.   One can show that any two such pairs 
$\Cc_0\sqsubset \Vv_0, \Cc_1\sqsubset \Vv_1$ are cobordant via a nested cobordism  $\Cc^{01}\sqsubset \Vv^{01}$.

\item  A perturbation $\nu:\bB_\Kk|_\Vv\to\bE_\Kk|_\Vv$ is called 
{\bf precompact} if there is a nested  reduction $\Cc\sqsubset \Vv$   such that
\begin{equation}\label{eq:C}
\pi_\Kk\bigl((\s|_\Vv + \nu)^{-1}(0)\bigr)
\;\subset\; \pi_\Kk(\Cc).
\end{equation}
\item
It is called {\bf transverse} if  for all $z\in V_I\cap (s_I|_{V_I}+\nu_I)^{-1}(0)$ 
  the map $\rd_z(s_I+\nu_I): \rT U_I\to E_I$ is surjective.
\end{itemlist}
It is not hard to see if $\nu$ is admissible, transversality of the local sections 
$\nu_I$  is preserved under coordinate changes.  
More precisely,  if $z\in V_I\cap U_{IJ}$ and $w\in V_J$ are such that $\phi_{IJ}(z) = w$,
then $z$ is a transverse zero of $s_I|_{V_I}+\nu_I$ if and only if 
$w$ is a transverse zero of $s_J|_{V_J}+\nu_J$.    
(This holds because the sections $s_I|_{V_I}+\nu_I$ satisfy the index condition; see 
 \eqref{eq:nuindex} above.)
Moreover,  if $\Kk$ is oriented then the local orientation of the zero set at $z\in U_I$ defined as in
 Lemma~\ref{le:locorient1} is taken by $\rd_z\phi_{IJ}$ to the orientation at $w$.

Here is the main result about the zero sets, from \cite[Proposition~7.2.8]{MW2}.
In it, we show that the local zero sets 
 $Z_I =  (s_I|_{V_I} + \nu_I)^{-1}(0)$  can be assembled into 
a  nonsingular  ep groupoid\footnote{
For the definition of ep groupoid see the beginning of \S\ref{ss:orb}; nonsingular means 
that there is at most one morphism between any two points.}
 $\bZ^\nu$ whose realization
is a closed oriented manifold $|\bZ^\nu|$ that injects into $|\Kk|$.

\begin{prop} \label{prop:zeroS0}
Let $\Kk$ be a good smooth $d$-dimensional Kuranishi atlas with trivial isotropy and  reduction $ \Vv\sqsubset \Kk$, and 
suppose that $\nu: \bB_\Kk|_\Vv \to \bE_\Kk|_\Vv$ is a precompact, admissible,  transverse perturbation.
Then $|\bZ^\nu| = |(\s|_\Vv+ \nu)^{-1}(0)|$ is a smooth closed $d$-dimensional manifold, whose 
 quotient topology agrees with the subspace topology on $|(\s|_\Vv+ \nu)^{-1}(0)|\subset|\Kk|$.

Moreover, if $\Kk$ is oriented, so is $|\bZ^\nu|$.
\end{prop}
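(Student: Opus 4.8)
The plan is to work locally first and then glue. For each $I \in \Ii_\Kk$ with $V_I \neq \emptyset$, the perturbation $\nu_I$ is transverse, so $Z_I := (s_I|_{V_I}+\nu_I)^{-1}(0) \subset V_I$ is a smooth submanifold of dimension $\dim U_I - \dim E_I = d$. The key structural input is that on a reduction the equivalence relation has only a single step: by Definition~\ref{def:vicin}(ii), if $\pi_\Kk(\ov{V_I}) \cap \pi_\Kk(\ov{V_J}) \neq \emptyset$ then $I \subset J$ or $J \subset I$. Hence the only morphisms in $\bB_\Kk|_\Vv$ between $Z_I$ and $Z_J$ (for $I \subsetneq J$) are restrictions of $\phi_{IJ}$. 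One first checks that $\phi_{IJ}$ restricts to an \emph{open} embedding $Z_I \cap U_{IJ} \hookrightarrow Z_J$: this follows from the index condition satisfied by $s_\bullet|_\Vv + \nu$ (see \eqref{eq:nuindex}), which forces $\phi_{IJ}(U_{IJ})$ to be, near such a point, exactly $s_J^{-1}(\im\Hat\phi_{IJ})$, together with the fact (noted after \eqref{eq:nuindex}) that transversality is preserved under coordinate changes. Since $\phi_{IJ}$ is already a homeomorphism onto a closed subset of $U_J$ by tameness/goodness, and $Z_I$ is cut out transversally, its image in $Z_J$ is open in $Z_J$. Thus the $Z_I$ with these partial coordinate changes form a nonsingular étale groupoid $\bZ^\nu$, whose étale structure maps are local diffeomorphisms between $d$-manifolds.

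Next I would identify the realization $|\bZ^\nu|$ with the zero set $|\s_\Vv + \nu|^{-1}(0) \subset |\Kk|$. By Remark~\ref{rmk:piVv}(ii) the inclusion $\bB_\Kk|_\Vv \hookrightarrow \bB_\Kk$ induces an injection on realizations, and the quotient topology on $|\Vv|$ agrees with the subspace topology from $|\Kk|$; restricting to zero sets gives that $|\bZ^\nu|$ is (homeomorphic to) $|\s_\Vv+\nu|^{-1}(0)$ with the subspace topology. That this space is a topological manifold is local: near a point $p = \pi_\Kk(x)$ with $x \in Z_I$, pick the maximal index set $I_p$ as in the proof of Proposition~\ref{prop:tame1} (using Lemma~\ref{le:Ku2}(a) to see $I_p \in \Ii_\Kk$ and $x_p := \pi_\Kk^{-1}(p)\cap U_{I_p}$ unique), and note that every nearby point of $|\bZ^\nu|$ is represented uniquely by a point of $Z_{I_p}$ near $x_p$; since $\pi_\Kk|_{U_{I_p}}$ is a homeomorphism onto its image, a neighbourhood of $p$ in $|\bZ^\nu|$ is homeomorphic to a neighbourhood of $x_p$ in the $d$-manifold $Z_{I_p}$. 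The transition between two such charts $Z_{I_p}$ and $Z_{I_q}$ on overlapping neighbourhoods is, by the single-step property, a composition of the smooth open embeddings $\phi_{\bullet\bullet}$ and their inverses, hence a diffeomorphism; this gives $|\bZ^\nu|$ a smooth $d$-dimensional manifold structure whose charts are the $\pi_\Kk|_{Z_I}$. Compactness of $|\bZ^\nu|$ uses precompactness of $\nu$: the nested reduction $\Cc \sqsubset \Vv$ with $\pi_\Kk((\s|_\Vv+\nu)^{-1}(0)) \subset \pi_\Kk(\Cc)$ confines the zero set to a precompact subset, and closedness inside that compact set follows from condition (iv) of Definition~\ref{def:good} as indicated in the remark after Lemma~\ref{le:good}; closedness (no boundary) is automatic since each $Z_I$ is cut out transversally in a boundaryless manifold.

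Finally, for the orientation statement: an orientation of $\Kk$ is a nonvanishing section $\si$ of the line bundle $|\La| \to |\Kk|$, whose pullback $\pi_\Kk^*(|\La|)|_{U_I}$ is the bundle $\lm \rT U_I \otimes (\lm E_I)^*$ (see Definition~\ref{def:or} and the surrounding discussion). At a zero $z \in Z_I$ of $s_I|_{V_I}+\nu_I$, transversality gives a short exact sequence $0 \to \rT_z Z_I \to \rT_z U_I \xrightarrow{\rd_z(s_I+\nu_I)} E_I \to 0$, so a choice of orientation of the fiber $\lm\rT_z U_I \otimes (\lm E_I)^*$ determines an orientation of $\rT_z Z_I$; this is precisely the local orientation of Lemma~\ref{le:locorient1}. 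One then checks, exactly as recorded in the paragraph before Proposition~\ref{prop:zeroS0}, that the coordinate changes $\phi_{IJ}$ carry this local orientation at $z \in Z_I \cap U_{IJ}$ to that at $\phi_{IJ}(z) \in Z_J$ — the point being that $\Hat\phi_{IJ}(C_{z,I}) \oplus \im\Hat\phi_{IJ}$ reconstructs $E_J$ compatibly with the index condition, and $|\La|$ is globally defined over $|\Kk|$ so $\si$ restricts consistently. Hence the local orientations are compatible and assemble into an orientation of $|\bZ^\nu|$.

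I expect the main obstacle to be the smooth-structure step: verifying carefully that the overlap maps $\phi_{IJ}|_{Z_I \cap U_{IJ}} : Z_I \cap U_{IJ} \to Z_J$ are \emph{open} embeddings of $d$-manifolds (not merely injective immersions), which is where the index condition \eqref{eq:nuindex} for $\s|_\Vv + \nu$ does its essential work, and then assembling the resulting atlas coherently using the single-step equivalence relation of the reduction. Once that is in hand, the compactness, Hausdorffness and orientation assertions are comparatively routine applications of the results already established in \S\ref{ss:tatlas} and \S\ref{ss:red}.
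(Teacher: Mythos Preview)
Your overall architecture matches the paper's: build a nonsingular \'etale groupoid $\bZ^\nu$ from the local zero sets $Z_I$ and the coordinate changes, check Hausdorffness via the continuous injection into $|\Kk|$, establish compactness, and deduce the topology statement. The orientation argument is also the same as the paper's.

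However, you have inverted the difficulty. The smooth-structure step you flag as the ``main obstacle'' is dispatched by the paper in two sentences: once the $\phi_{IJ}|_{Z_I\cap U_{IJ}}$ are local diffeomorphisms (which follows immediately from the index condition \eqref{eq:nuindex}) and the realization is Hausdorff (via the injection into $|\Kk|$), the quotient of a nonsingular \'etale groupoid is a manifold. Your maximal-index $I_p$ charts work too, but are more laborious than necessary.

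The genuine content of the proof is the \emph{compactness} argument, which you wave at. Saying that $\pi_\Kk(\Cc)$ ``confines the zero set to a precompact subset'' and that ``closedness follows from condition (iv)'' is not enough: you must show that any sequence $(p_k)\subset|\bZ^\nu|$ has a subsequence converging \emph{in} $|\bZ^\nu|$. The paper does this by passing to lifts $x_k\in Z_I$ for fixed $I$, then using $\pi_\Kk(x_k)\in\pi_\Kk(\Cc)$ to find $J$ with either $x_k\in\phi_{IJ}^{-1}(C_J)$ (case $I\subset J$) or $x_k\in\phi_{JI}(C_J\cap U_{JI})$ (case $J\subset I$). In the first case push forward to $\ov{C_J}\sqsubset V_J$ and extract a limit in $Z_J$. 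In the second case you need precisely condition~(iv) of Definition~\ref{def:good}: since $\phi_{JI}(U_{JI})\subset U_I$ is closed and $V_I\sqsubset U_I$, the limit $x_\infty$ lies in $\phi_{JI}(U_{JI})$, and then $\phi_{JI}^{-1}(x_\infty)\in\ov{C_J}\subset V_J$ gives the desired limit in $Z_J$. This case analysis is where condition~(iv) does its work; without spelling it out, the compactness claim is unproven.

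Finally, the order of your topology argument is backwards. Remark~\ref{rmk:piVv}(ii) does not directly assert that the quotient topology on $|\Vv|$ agrees with the subspace topology from $|\Kk|$, and restricting such a statement to the zero set is not immediate. The paper instead first proves $|\bZ^\nu|$ compact in the quotient topology, and then uses the standard fact that a continuous bijection from a compact space to a Hausdorff space is a homeomorphism to conclude that the quotient and subspace topologies on $|\bZ^\nu|$ coincide. You should reorganize accordingly.
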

\begin{proof}   Since $\nu$ is transverse, each local zero set $Z_I: = (s_I|_{V_I} + \nu_I)^{-1}(0)$ is a smooth manifold. 
Hence one may construct a  groupoid $\bZ^\nu$ with objects $\sqcup_I Z_I$ and morphisms 
generated  by  the restriction of the coordinate changes $\phi_{IJ}$ and their inverses. 
Since these are diffeomorphisms and there are a finite number of coordinate patches, 
the realization $|\bZ^\nu|$ (with the quotient topology)  is second countable.  The space $|\bZ^\nu|$ is also Hausdorff because its inclusion into the Hausdorff space
$|\Kk|$  is continuous.  Therefore it is a  smooth $d$-dimensional  manifold.

To check the compactness of $|\bZ^\nu|$,  it suffices to check sequential compactness.
To this end, consider a sequence $(p_k)_{k\in\N}\subset |\bZ^\nu|$. In the following we will index all subsequences by $k\in\N$ as well.
By finiteness of $\Ii_\Kk$ there is a subsequence of $(p_k)$ that has lifts in $(s_I|_{V_I}+\nu_I)^{-1}(0)$ for a fixed $I\in\Ii_\Kk$. 
In fact, by \eqref{eq:C}, and using the language of equation~\eqref{eq:epsJ}, the subsequence lies in
$$
V_I \cap \pi_\Kk^{-1}\bigl(\pi_\Kk(\Cc)\bigr)   \;=\;  V_I \cap {\textstyle \bigcup_{J\in\Ii_\Kk}} \eps_I(C_J)
\;\subset\; U_I .
$$
Here $\eps_I(C_J)=\emptyset$ unless $I\subset J$ or $J\subset I$, due to the intersection property (ii) of Definition~\ref{def:vicin} and the inclusion $C_J\subset V_J$.
So we can choose another subsequence and lifts $(x_k)_{k\in\N}\subset V_I$ with $\pi_\Kk(x_k)=p_k$ such that either 
$$
(x_k)_{k\in\N}\subset V_I \cap \phi_{IJ}^{-1}(C_J)
\qquad\text{or}\qquad 
(x_k)_{k\in\N}\subset V_I \cap \phi_{JI}(C_J\cap U_{JI})
$$ 
for some fixed  $J$ satisfying either $I\subset J$ or  $J\subset I$.
In the first case, compatibility of the perturbations \eqref{eq:comp} implies that there are other lifts
$\phi_{IJ}(x_k)\in (s_J|_{V_J}+\nu_J)^{-1}(0)\cap C_J$, which by precompactness $\ov{C_J}\sqsubset V_J$ have a convergent subsequence $\phi_{IJ}(x_k)\to y_\infty \in (s_J|_{V_J}+\nu_J)^{-1}(0)$.
Thus we have found a limit point in the perturbed zero set 
$p_k = \pi_\Kk(\phi_{IJ}(x_k)) \to \pi_\Kk(y_\infty) \in |\bZ^\nu|$,
as required for sequential compactness.

On the other hand, if $J\subset I$ we use the relative closedness of $\phi_{JI}(U_{JI})\subset U_I$ (which holds by condition (iii) for good atlases)
and the precompactness $V_I\sqsubset U_I$ to find a convergent subsequence 
$x_k\to x_\infty \in \ov{V_I} \cap \phi_{JI}(U_{JI})$.
Since $\phi_{JI}$ is a homeomorphism to its image, this implies convergence of the preimages
$y_k:= \phi_{JI}^{-1}(x_k) \to \phi_{JI}^{-1}(x_\infty) =: y_\infty \in U_{JI}$.
By construction and compatibility of the perturbations \eqref{eq:comp}, this subsequence $(y_k)$ of lifts of $\pi_\Kk(y_k)=p_k$ moreover lies in $(s_J|_{V_J}+\nu_J)^{-1}(0)\cap C_J$.
Now precompactness of $C_J\sqsubset V_J$ implies $y_\infty\in V_J$, and continuity of the section implies $y_\infty\in (s_J|_{V_J}+\nu_J)^{-1}(0)$. Thus we again have a limit point 
$p_k = \pi_\Kk(y_k) \to \pi_\Kk(y_\infty) \in |\bZ^\nu|$.
This proves that the perturbed zero set $|\bZ^\nu|$ is sequentially compact, and hence compact. 
Therefore it is a closed manifold, as claimed. 

So far we have considered $|\bZ^\nu|$ with the quotient topology.
By construction there is a continuous injection $|\bZ^\nu|\to |\Kk|$.  Because $|\Kk|$ is Hausdorff and 
$|\bZ^\nu|$ is compact, this map is a homomorphism to its image, when this is given  the subspace topology.
This proves the last claim in the first paragraph.  Finally note that 
 if $\Kk$ is oriented, we may orient each $Z_I$ as in 
Lemma~\ref{le:locorient1}. The fact that the orientation is defined by a bundle over $|\Kk|$ implies that its restriction to the $Z_I$ is preserved by coordinate changes.  Hence $\bZ$ is oriented if $\Kk$ is.
\end{proof}


There are similar result for cobordisms. In particular, if $\Kk$ is an oriented cobordism (or concordance) from $\Kk^0$ to $ \Kk^1$, 
then for appropriate perturbations  $\nu$ the zero set   $|\bZ^\nu|$ provides an oriented cobordism (or concordance) between 
the zero set of $\nu|_{\Kk^0}$ and that of  $\nu|_{\Kk^1}$.
\MS

\NI {\bf Construction of the perturbation $\nu$:}
The main remaining problem is to construct suitable perturbations  $\nu$.  
 Even though the constructions are rather intricate, the statements of the main results in Propositions \ref{prop:ext} and \ref{prop:ext2} below are very precise. Moreover, our language gives us great control over all aspects of the construction, 
 so that it can be 
 easily adapted  to other situations.  
 
 In order  that the construction apply to the case with isotropy, we will begin by
 working with a good topological atlas.  By definition, we may therefore
choose an {\bf admissible metric} $d$ on $|\Kk|$  as 
in Definition~\ref{def:good}, i.e. a metric whose pullback  $d_I$ to each domain $U_I$ is compatible with its topology.  
We denote
 the $\de$-neighbourhoods of subsets $Q\subset |\Kk|$ resp.\ $A\subset U_I$ for $\de>0$ by
\begin{align*}
B_\de(Q) &\,:=\; \bigl\{w\in |\Kk|\ | \ \exists q\in Q : d(w,q)<\de \bigr\}, \\
B^I_\de(A) &\,:=\; \bigl\{x\in U_I\ | \ \exists a\in A : d_I(x,a)<\de \bigr\}.
\end{align*}
Note that $\phi_{IJ}( B^I_\de(A)) = B^J_\de\bigl(\phi_{IJ}(A)\bigr)$ because
all coordinate changes are isometries.  Similarly $U_I\cap \pi_\Kk^{-1}(B_\de(Q)) = B^I_\de\bigl(U_I\cap \pi_\Kk^{-1}(Q)\bigr)$.
Further, we define a decreasing sequence of nested reductions  $\Vv^k: = (V_I^k)_{I\in \Ii_\Kk}\sqsupset \Vv^{k+1}: = (V_I^{k+1})_{I\in \Ii_\Kk} $, where 
\begin{equation}\label{eq:VIk}
V_I^k \,:=\; B^I_{2^{-k}\de}(V_I) 
\;\sqsubset\; U_I \qquad \text{for} \; k \geq 0,
\end{equation}
and $\de>0$ is chosen so that the following disjointness condition holds when $I\not\subset J, J\not\subset I$:
\begin{equation}\label{desep}
B_\de\bigl(\pi_\Kk(V^k_I)\bigr) \cap B_\de\bigl(\pi_\Kk(V^k_J)\bigr) \subset  
B_{\de + 2^{-k}\de}\bigl(\pi_\Kk(V_I)\bigr) \cap B_{\de + 2^{-k}\de}\bigl(\pi_\Kk(V_J)\bigr) = \emptyset .
\end{equation}
This implies that when $I\subsetneq J$,
\begin{align}\label{eq:N}\notag
V^k_I \cap \pi_\Kk^{-1}(\pi_\Kk(V^k_J))&\;=\;V^k_I  \cap \phi_{IJ}^{-1}(V^k_J)  ,\\
V^k_J \cap \pi_\Kk^{-1}(\pi_\Kk(V^k_I))&\;=\;V^k_J \cap \phi_{IJ}(V^k_I \cap U_{IJ})  \;=:\, N^k_{JI} 
\end{align}
for the sets on which we will require compatibility of the perturbations $\nu_I$ and $\nu_J$.
Similarly, we have precompact inclusions for any $k'>k\geq 0$
\begin{equation} \label{preinc}
N^{k'}_{JI} \;=\;V^{k'}_J \cap  \phi_{IJ}(V^{k'}_I\cap U_{IJ}) \;\sqsubset\; V^k_J \cap \phi_{IJ}(V^k_I\cap U_{IJ})  \;=\;N^k_{JI} .
\end{equation}
We abbreviate
$$
N^k_J \, := \;{\textstyle \bigcup_{J\supsetneq I}} N^k_{JI} \;\subset\; V^k_J ,
$$  
and call the union $N^{|J|}_J$ the {\bf core} of $V^{|J|}_J$, since it is the part of this set on which we will prescribe $\nu_J$ by  compatibility with the $\nu_I$ for $I\subsetneq J$.  (When $|J|=k+1$ we also consider various {\bf enlargements of the core}
of the form $N^{k+\la}_J\supset N^{|J|}_J$ where $\la\in (0,1)$.) 
\begin{figure}[htbp] 
   \centering
   \includegraphics[width=3in]{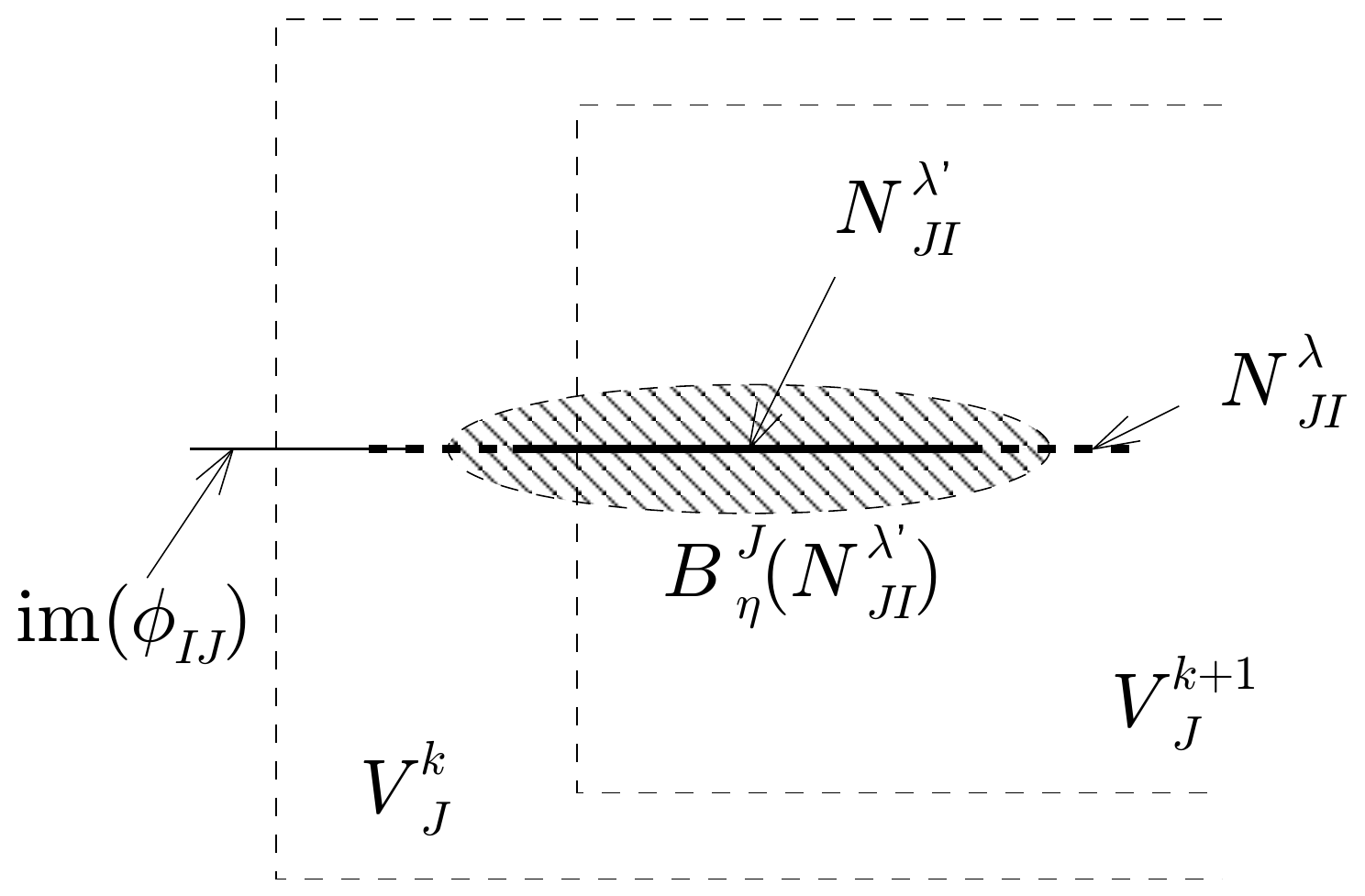} 
 \caption{
This figure illustrates the nested sets $V^{k+1}_J \sqsubset V^k_k$ and 
$ N^{\la'}_{JI}\sqsubset N^{\la}_{JI}\subset \im(\phi_{IJ})\cap V^k_J$ for $k+1>\la'=k+\frac 34 > \la =k+\frac 12>k$, the shaded neighbourhood $B^J_{\eta}(N^{\la'}_{JI})$ for $\eta = 2^{-\la}\eta_0$, and the inclusion given by \eqref{eq:useful}.
}
   \label{fig:4}
\end{figure}

In the iterative argument we work with quarter integers between $0$ and 
$M: = \max_{I\in\Ii_\Kk} |I|  ,
$
and need to introduce another constant $\eta_0>0$ that controls the intersection with $\im\phi_{IJ}=\phi_{IJ}(U_{IJ})$ for all $I\subsetneq J$ as in Figure~\ref{fig:4},
\begin{equation}\label{eq:useful} 
\im \phi_{IJ} \;\cap\;  B^J_{2^{-k-\frac 12}\eta_0} \bigl( N_{JI}^{k+\frac 34} \bigr) \;\subset\; N^{k+\frac 12}_{JI} 
\qquad \forall \; k\in  \{0,1,\ldots,M\}.
\end{equation}
Since $\phi_{IJ}$ is an isometric embedding, this inclusion holds whenever 
$2^{-k-\frac 12}\eta_0 + 2^{-k-\frac 34}\de \leq 2^{-k-\frac 12}\de$ 
for all $k$. To minimize the number of choices in the construction of perturbations, we may thus simply fix $\eta_0$ in terms of $\de$ by
\begin{equation}\label{eq:eta0}
\eta_0 \,:=\; (1 -  2^{-\frac 14} ) \de.
\end{equation} 
Then we also have $2^{-k}\eta_0 + 2^{-k-\frac 12}\de <  2^{-k}\de$,
which provides the inclusions
\begin{equation}\label{eq:fantastic}
B^I_{\eta_k}\Bigl(\;\ov{V_I^{k+\frac 12}}\;\Bigr) \;\sqsubset\; V_I^k \qquad\text{for} \;\; k\geq 0, \; \eta_k:=2^{-k}\eta_0 .
\end{equation}
We then define 
 constants $\de_\Vv>0$ and $\si(\de,\Vv,\Cc)>0$ that depend only on the indicated data as follows.

\begin{defn}  \label{def:admiss0}
Given a reduction $\Vv$ of a good topological  atlas  $(\Kk,d)$, we define $\de_\Vv>0$ to be the maximal constant 
such that any $\de< \de_\Vv$ satisfies 
\begin{align}
\label{eq:de1}
B_{2\de}(V_I)\sqsubset U_I\qquad &\forall I\in\Ii_\Kk , \\
\label{eq:dedisj}
B_{2\de}(\pi_\Kk(\ov{V_I}))\cap B_{2\de}(\pi_\Kk(\ov{V_J})) \neq \emptyset &\qquad \Longrightarrow \qquad I\subset J \;\text{or} \; J\subset I .
\end{align}
Further, given a nested reduction $\Cc\sqsubset\Vv$ of  $(\Kk,d)$ and $0<\de<\de_\Vv$, we define
$$
\eta_0: = (1 -  2^{-\frac 14} ) \de, \qquad 
\eta_{|J|-\frac 12} :=  2^{-|J|+\frac 12} \eta_0
$$
 and
\begin{equation}\label{eq:side}
\si(\de,\Vv,\Cc) \,:=\; \min_{J\in\Ii_\Kk}
\inf \Bigl\{
\; \bigl\| s_J(x) \bigr\| \;\Big| \;
x\in \ov{V^{|J|}_J} \;\less\; \Bigl( \Ti C_J \cup {\textstyle \bigcup_{I\subsetneq J}} B^J_{\eta_{|J|-\frac 12}}\bigl(N^{|J|-\frac14}_{JI}\bigr) \Bigr) \Bigr\},  
\end{equation}
where
$$
\Ti C_J \,:=\;   {\textstyle \bigcup_{K\supset J}} \, \phi_{JK}^{-1}(C_K)   \;\subset\; U_J  .
$$
\end{defn}

\begin{lemma}
 $\si(\de,\Vv,\Cc)>0$.
 \end{lemma}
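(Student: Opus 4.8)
Since $\Ii_\Kk$ is finite, the plan is to show that for each fixed $J\in\Ii_\Kk$ the infimum defining the $J$-th term of $\si(\de,\Vv,\Cc)$ is positive. Write
\[
S_J \;:=\; \ov{V^{|J|}_J}\;\less\;\Bigl(\Ti C_J \;\cup\; {\textstyle\bigcup_{I\subsetneq J}}\,B^J_{\eta_{|J|-\frac12}}\bigl(N^{|J|-\frac14}_{JI}\bigr)\Bigr),
\]
so that the $J$-th term is $\inf_{x\in S_J}\|s_J(x)\|$. First I would note that $S_J$ is compact: the set $\ov{V^{|J|}_J}$ is compact because $V^{|J|}_J=B^J_{2^{-|J|}\de}(V_J)\sqsubset U_J$ by \eqref{eq:VIk}, while the set that is deleted is open in $U_J$ --- indeed $\Ti C_J=\bigcup_{K\supseteq J}\phi_{JK}^{-1}(C_K)$ is a union of sets open in $U_J$ (using $\phi_{JJ}=\id_{U_J}$ and that each $U_{JK}$ is open in $U_J$), and each $B^J_\eta(\cdot)$ with $\eta>0$ is open in the admissible metric. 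As $\|s_J\|$ is continuous, $\inf_{S_J}\|s_J\|$ is either $+\infty$ (if $S_J=\emptyset$) or is attained, so it is enough to prove that $s_J$ has no zero on $S_J$, i.e.\ that
\[
s_J^{-1}(0)\cap\ov{V^{|J|}_J}\;\subset\;\Ti C_J \;\cup\; {\textstyle\bigcup_{I\subsetneq J}}\,B^J_{\eta_{|J|-\frac12}}\bigl(N^{|J|-\frac14}_{JI}\bigr).
\]

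So fix $x\in s_J^{-1}(0)\cap\ov{V^{|J|}_J}$. Then $\pi_\Kk(x)=\io_\Kk(\psi_J(x))\in\io_\Kk(X)$, and by condition (iii) of Definition~\ref{def:vicin} applied to the reduction $\Cc$ we have $\io_\Kk(X)\subset\bigcup_{I\in\Ii_\Kk}\pi_\Kk(C_I)$, so there are $I\in\Ii_\Kk$ and $y\in C_I$ with $\pi_\Kk(y)=\pi_\Kk(x)$; by \eqref{eq:zeroIJ} then $s_I(y)=0$ and $\psi_I(y)=\psi_J(x)=:p$. Next I would check that $I,J$ are comparable: since $y\in C_I\subset V_I\subset\ov{V_I}$ and $x\in\ov{V^{|J|}_J}\subset B^J_{2\de}(V_J)$ (the last inclusion because $2^{-|J|}\de<2\de$), the common point $\pi_\Kk(x)=\pi_\Kk(y)$ lies in $B_{2\de}\bigl(\pi_\Kk(\ov{V_I})\bigr)\cap B_{2\de}\bigl(\pi_\Kk(\ov{V_J})\bigr)$ --- for the second factor one uses the identity $U_J\cap\pi_\Kk^{-1}(B_{2\de}(Q))=B^J_{2\de}(U_J\cap\pi_\Kk^{-1}(Q))$ with $Q=\pi_\Kk(\ov{V_J})$. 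Since $\de<\de_\Vv$, condition \eqref{eq:dedisj} forces $I\subset J$ or $J\subset I$.

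I would then treat the two cases, using that every coordinate change restricts to $\psi_\bullet^{-1}\circ\psi_\bullet$ on zero sets and that $\psi_I^{-1}(F_J)=s_I^{-1}(0)\cap U_{IJ}$ for $I\subsetneq J$ (and the analogue with $I,J$ exchanged). If $J\subset I$ then $p=\psi_I(y)\in F_I\subset F_J$, so $x\in s_J^{-1}(0)$ with $\psi_J(x)=p\in F_I$ gives $x\in\psi_J^{-1}(F_I)=s_J^{-1}(0)\cap U_{JI}$ and $\phi_{JI}(x)=\psi_I^{-1}(p)=y\in C_I$; hence $x\in\phi_{JI}^{-1}(C_I)\subset\Ti C_J$, which also covers $I=J$ via $\phi_{JJ}=\id$. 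If instead $I\subsetneq J$, the symmetric argument gives $y\in s_I^{-1}(0)\cap U_{IJ}$ and $\phi_{IJ}(y)=\psi_J^{-1}(p)=x$; since $y\in C_I\subset V_I\subset V^{|J|-\frac14}_I$ and $x\in\ov{V^{|J|}_J}\subset V^{|J|-\frac14}_J$ (because $2^{-|J|}\de<2^{-|J|+\frac14}\de$), we get $x=\phi_{IJ}(y)\in V^{|J|-\frac14}_J\cap\phi_{IJ}\bigl(V^{|J|-\frac14}_I\cap U_{IJ}\bigr)=N^{|J|-\frac14}_{JI}\subset B^J_{\eta_{|J|-\frac12}}\bigl(N^{|J|-\frac14}_{JI}\bigr)$, a term in the union since $I\subsetneq J$. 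In both cases $x$ lies in the deleted set, which is the required contradiction; hence $\inf_{S_J}\|s_J\|>0$, and $\si(\de,\Vv,\Cc)=\min_{J\in\Ii_\Kk}\inf_{S_J}\|s_J\|>0$.

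The proof involves no hard analysis: the one point requiring care is that the radii are chosen with just enough slack for this argument to close --- a zero of $s_J$ lying in the \emph{enlarged} set $\ov{V^{|J|}_J}$ must still be pushed \emph{into} the deleted core $N^{|J|-\frac14}_{JI}$ (whose exponent $|J|-\frac14$ is smaller than $|J|$ precisely to leave this room) or into $\Ti C_J$, rather than landing just outside. This is guaranteed by the elementary inequalities $2^{-|J|}\de<2^{-|J|+\frac14}\de<2\de$ together with the inclusion $\io_\Kk(X)\subset\pi_\Kk(\Cc)$ and the separation property \eqref{eq:dedisj}; I expect no estimate beyond tracking these to be needed.
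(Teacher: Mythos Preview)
Your proof is correct and follows essentially the same approach as the paper's sketch: both show that $s_J^{-1}(0)\cap\ov{V^{|J|}_J}$ is contained in the deleted set $\Ti C_J\cup\bigcup_{I\subsetneq J}N^{|J|-\frac14}_{JI}$, then conclude by compactness and continuity. Your argument is more explicit---you verify compactness of $S_J$, and you obtain the comparability of $I,J$ directly from the defining separation property \eqref{eq:dedisj} of $\de_\Vv$, whereas the paper phrases this via the reduction property of the enlargement $\Vv^1$---but the underlying mechanism is the same.
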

 \begin{proof}[Sketch of proof.]  Because $\Cc\sqsubset \Vv\sqsubset \Vv^1$ are nested reductions, $\pi_\Kk(\Cc)$ contains the zero set $\io_\Kk(X)$, and also
 $$
 \pi_\Kk(V_J^1)\cap  \pi_\Kk(\Cc) \subset \bigcup_{I\subsetneq J\mbox{ or } J\subset I} \pi_\Kk(C_I).
 $$
Hence the definition of $\Ti C_J$ implies that
$$
 \ov{V^{|J|}_J}\cap s_J^{-1}(0)\; \subset\; \ov{V^{|J|}_J}\cap \pi_\Kk^{-1}(\pi_\Kk(\Cc))\; \subset \;
 {\textstyle \Ti C_J \;  \cup\;   \bigcup_{I\subsetneq J} \, \phi_{IJ}(C_I). }
 $$
 But when $I\subsetneq J$ we have $C_I\subset V_I\subset V_I^k$ for all $k$, so that $ \phi_{IJ}(C_I)$ is a subset of the core $N_J^k$ for all $k$.
 Hence  $\si(\de,\Vv,\Cc)$ is the minimum of 
 $\bigl\| s_J(x) \bigr\|$ as $x\in  \ov{V^{|J|}_J}$ ranges over the complement of a neighbourhood of the zero set of 
 $s_I$, and so is strictly positive. 
\end{proof}

Now, let us suppose that $\Kk$ is smooth, good (e.g. metric and tame) and, as always in this section,  with trivial isotropy.
Here is a slightly shortened version of  \cite[Definition~7.3.5]{MW2}.
 
 \begin{defn}  \label{def:a-e}
Given a nested reduction $\Cc\sqsubset\Vv$ of a smooth good Kuranishi atlas 
$(\Kk,d)$ and constants $0<\de<\de_\Vv$ and $0<\si\le\si(\de,\Vv,\Cc)$, we say that a 
 perturbation $\nu$ of $s_\Kk|_\Vv$ is {\bf $(\Vv,\Cc,\de,\si)$-adapted} if the perturbations $\nu_I:V_I\to E_I$ extend to perturbations over ${V^{|I|}_I}$ (also denoted $\nu_I$) so that the following conditions hold for every $k=1,\ldots, M$ where $\eta_k$ is as in \eqref{eq:fantastic}.
\begin{itemize}
\item[a)]
The perturbations are compatible on $\bigcup_{|I|\leq k} {V^k_I}$, that is
$$
\qquad
\nu_I \circ \phi_{HI} |_{{V^k_H}\cap \phi_{HI}^{-1}({V^k_I})} \;=\; \Hat\phi_{HI} \circ \nu_H |_{{V^k_H}\cap \phi_{HI}^{-1}({V^k_I})} 
\qquad \text{for all} \; H\subsetneq I , |I|\leq k .
$$
\item[b)]
The perturbed sections are transverse, that is $(s_I|_{{V^k_I}} + \nu_I) \pitchfork 0$ for each $|I|\leq k$.
\item[c)]
The perturbations are {\it strongly admissible} with radius $\eta_k$, that is for all $H\subsetneq I$ and $|I|\le k$ we have
$$
\qquad
\nu_I( B^I_{\eta_k}(N^{k}_{IH})\bigr) \;\subset\; \Hat\phi_{HI}(E_H) 
\qquad
\text{with}\;\;
N^k_{IH} = V^k_I \cap \phi_{HI}(V^k_H\cap U_{HI}) .
$$
\item[d)]  
The perturbed zero sets are contained in $\pi_\Kk^{-1}\bigl(\pi_\Kk(\Cc)\bigr)$; more precisely
 $s_I + \nu_I \neq 0$ on ${V^k_I} \less  \pi_\Kk^{-1}\bigl(\pi_\Kk(\Cc)\bigr)$.
\item[e)]
The perturbations are small, that is $\sup_{x\in {V^k_I}} \| \nu_I (x) \| < \si$
for $|I|\leq k$. 
\end{itemize}
\end{defn}

The above conditions are more than needed to ensure that
every $(\Vv,\Cc,\de,\si)$-adapted  perturbation $\nu$ of $\s_\Kk|_\Vv$ is an admissible, transverse perturbation with
 $$
 \pi_\Kk( (\s+\nu)^{-1}(0))\subset\pi_\Kk(\Cc).
 $$ 
Note that the role of the  strong admissibility condition above is to allow us   to deduce d) from e).
 In fact, the definition of $\si$ in \eqref{eq:side} and condition e) imply that the zero set of $s_I|_{V_I^{|I|}} + \nu_I$ 
 must either lie in $\TC_I$ and hence project to $\pi_\Kk(\Cc)$ or lie in some neighbourhood
$\bigcup_{H\subsetneq I} B^I_{\eta}\bigl(N^k_{IH}\bigr)$ of the
 enlarged core.  But the strong admissibility condition c) implies that near the core $N_{IH}$
 the components in $I\less H$ of $s_{I} + \nu_I$ equal those of $s_I$ and hence only vanish on the core itself.
   It follows that any zero of $s_I+\nu_I$ that lies near  the  core 
must in fact lie in the core 
 and hence project to 
     $\pi_\Kk(\Cc)$ by the inductive nature of the construction. 
 
We now outline the argument that such perturbations $\nu$ exist. For full details see
  \cite[Proposition~7.3.7]{MW2}.
 
  \begin{proposition}\label{prop:ext}
Let $(\Kk,d)$ be a smooth good  Kuranishi atlas with nested reduction $\Cc \sqsubset \Vv$.
Then for any $0<\de<\de_\Vv$ and $0<\si\le\si(\de,\Vv,\Cc)$ there exists a $(\Vv,\Cc,\de,\si)$-adapted perturbation $\nu$ of 
$\s_\Kk|_{\Vv}$.  
\end{proposition}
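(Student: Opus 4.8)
\emph{Proof strategy.} The plan is to build the local perturbations $\nu_I$ by induction on the cardinality $k=|I|$, running $k=1,\dots,M:=\max_{I\in\Ii_\Kk}|I|$, so that after the $k$-th stage conditions (a)--(e) of Definition~\ref{def:a-e} hold at every level $\le k$. At the $k$-th stage one is given $\nu_H\colon V_H^{|H|}\to E_H$ for all $|H|<k$ (satisfying (a)--(e) up to level $k-1$) and must produce $\nu_I\colon V_I^{|I|}=V_I^k\to E_I$ for every $I$ with $|I|=k$. For $|H|<k$ conditions (a)--(e) \emph{at level $k$} are then automatic from their validity at level $k-1$, since one only has to restrict attention to the smaller sets $V_H^k\subset V_H^{k-1}$, the smaller cores, and the smaller radii $\eta_k<\eta_{k-1}$ of \eqref{eq:fantastic}. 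For the base case $k=1$ there is no nonempty $H\subsetneq\{i\}$, so (a) and (c) are vacuous; I would take $\nu_i\colon V_i^1\to E_i$ generic in a sufficiently ample finite-dimensional family of $E_i$-valued bump functions on the precompact set $\ov{V_i^1}$ with $\sup\|\nu_i\|<\si$, so that $s_i+\nu_i\pitchfork 0$ by the parametric transversality theorem, giving (b) and (e); and since $\si\le\si(\de,\Vv,\Cc)$ and the core of $\{i\}$ is empty, \eqref{eq:side} forces $(s_i+\nu_i)^{-1}(0)\cap\ov{V_i^1}$ into $\TC_i$, which $\pi_\Kk$ maps into $\pi_\Kk(\Cc)$ because the $\phi_{iK}$ descend to inclusions in $|\Kk|$, giving (d).

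For the inductive step, fix $I$ with $|I|=k$ and proceed in three moves. \textbf{(1) Prescription on the core.} On each piece $N^k_{IH}=V_I^k\cap\phi_{HI}(V_H^k\cap U_{HI})$ with $H\subsetneq I$, set $\nu_I:=\Hat\phi_{HI}\circ\nu_H\circ\phi_{HI}^{-1}$. This is consistent on overlaps: if $x\in N^k_{IH_1}\cap N^k_{IH_2}$ then $\pi_\Kk(V_{H_1}^k)\cap\pi_\Kk(V_{H_2}^k)\neq\emptyset$, so by the disjointness condition \eqref{eq:dedisj} one may take $H_1\subset H_2$, and then tameness (so that $\phi_{H_1H_2}(y_1)=y_2$ where $x=\phi_{H_iI}(y_i)$) together with the cocycle relation $\Hat\phi_{H_1I}=\Hat\phi_{H_2I}\circ\Hat\phi_{H_1H_2}$ and the compatibility (a) of $\nu_{H_1}$ with $\nu_{H_2}$ show the two prescriptions agree. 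Normalising the norm on $E_I=\prod_{i\in I}E_i$ so that every $\Hat\phi_{HI}$ is isometric, (e) at level $k-1$ gives $\|\nu_I\|<\si$ on the core. \textbf{(2) Extension near the core.} Since $\phi_{HI}(U_{HI})=s_I^{-1}(\im\Hat\phi_{HI})$ is a closed submanifold of $U_I$ on whose normal bundle $ds_I$ followed by the projection $E_I\to E_I/\Hat\phi_{HI}(E_H)$ is an isomorphism (Remark~\ref{rmk:tbc}), one fixes tubular neighbourhoods and extends $\nu_I$ off $\phi_{HI}(U_{HI})$ to be $\Hat\phi_{HI}(E_H)$-valued and constant in the normal fibres, done consistently over the nested family $\{\phi_{HI}(U_{HI})\}_{H\subsetneq I}$. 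Using the intertwining $s_I\circ\phi_{HI}=\Hat\phi_{HI}\circ s_H$, the index condition, openness of transversality, and compactness of $\ov{V_I^k}$, this produces a \emph{uniform} neighbourhood of the core --- which can be arranged to contain the enlargements $B^I_{\eta_{|I|-1/2}}(N^{|I|-1/4}_{IH})$ appearing in \eqref{eq:side} --- on which $s_I+\nu_I$ is automatically transverse and strongly admissible with the required radii; this secures (a), (b), (c) there. \textbf{(3) Extension over the rest.} Extend $\nu_I$ smoothly over all of $V_I^k$ by cutting off to $0$ outside that neighbourhood, then add a generic small perturbation supported in $V_I^k$ minus a slightly smaller neighbourhood of the core; by parametric transversality a generic such addition makes $s_I+\nu_I\pitchfork 0$ on all of $V_I^k$, while its smallness keeps $\|\nu_I\|<\si$, so (b) and (e) hold at level $k$. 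Finally (d) at level $k$ follows from (c), (e) and \eqref{eq:side} exactly as explained after Definition~\ref{def:a-e}: any zero of $s_I+\nu_I$ in $\ov{V_I^k}$ lies in $\TC_I\cup\bigcup_{H\subsetneq I}B^I_{\eta_{|I|-1/2}}(N^{|I|-1/4}_{IH})$; on the latter the components of $s_I+\nu_I$ along $I\setminus H$ equal those of $s_I$ by strong admissibility, so the zero must lie on $N_{IH}$ itself (whence, by move (1) and the inductive hypothesis, it projects into $\pi_\Kk(\Cc)$), and $\pi_\Kk(\TC_I)\subset\pi_\Kk(\Cc)$ as before. This completes the induction.

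The step I expect to be the main obstacle is move (2): producing, simultaneously and compatibly for \emph{all} $H\subsetneq I$, an $\Hat\phi_{HI}(E_H)$-valued extension of the core data over a neighbourhood of $N_I$ on which transversality of $s_I+\nu_I$ comes for free, while respecting the hierarchy of strata $\phi_{HI}(U_{HI})$. This is precisely where the additivity $E_I=\prod_{i\in I}E_i$ and the index (tangent bundle) condition are used essentially, and where the bookkeeping with the quarter-integer levels, the nested cores $N^{k+\la}_I$, the annular cutoff regions, and the constants $\eta_0=(1-2^{-1/4})\de$, $\eta_k=2^{-k}\eta_0$ must be set up so that everything fits inside the constraint $\de<\de_\Vv$; the complete details are in \cite[Proposition~7.3.7]{MW2}.
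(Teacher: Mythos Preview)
Your proposal is correct and follows essentially the same approach as the paper's sketch: an outer induction on $|I|$, with each new $\nu_I$ built as (prescription on the core by compatibility) $+$ (extension near the core that is constant in normal directions and hence strongly admissible) $+$ (small generic perturbation for transversality away from the core), and the zero-set control (d) deduced from (c), (e) and the definition of $\si$. The one point where the paper is more explicit than your move (2) is that the ``done consistently over the nested family $\{\phi_{HI}(U_{HI})\}_{H\subsetneq I}$'' is achieved by extending each component $\mu_I^j$ of the core prescription separately (so additivity $E_I=\prod_i E_i$ is used componentwise) via an inner iteration over increasing sets $W_\ell$ built from the core pieces $N^{k+\frac12}_{IH}$ with $|H|\le\ell$; you correctly flag this as the main obstacle and defer to \cite[Proposition~7.3.7]{MW2}.
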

\begin{proof}[Sketch of proof]
  The construction is by an inductive process that 
constructs the required perturbations $\nu_I$ on sets larger than $V_I$.  Namely, this proposition constructs functions
$\nu_I: V^{|I|}_I\to E_I$ by an iteration over $k=0,\ldots,M= \max_{I\in\Ii_\Kk} |I|$, where in step $k$ we will define $\nu_I : V^k_I \to E_I$ for all $|I| = k$ that, together with the $\nu_I|_{V^k_I}$ for $|I|<k$ obtained by restriction from earlier steps, satisfy conditions a)-e) of Definition~\ref{def:a-e}.  Restriction to $V_I\subset V^{|I|}_I$ then yields a $(\Vv,\Cc,\de,\si)$-adapted perturbation $\nu$ of $s_\Kk|_\Vv$.   A key point in the construction is that because the different sets $V^{|I|}_I, |I|=k,$ are disjoint (by \eqref{desep}),
at the $k$th step 
the needed functions $\nu_I$ can be constructed independently of each other.

 Assume inductively that suitable $\nu_I :V^{|I|}_I\to E_I$
have been found for $|I|\le k$, and consider the construction of $\nu_J$ for some $J$ with $|J|=k+1$. We construct $\nu_J$ as a sum $\Tnu_J + \nu_{\pitchfork}$ where 
\begin{itemize}\item[-] $\Tnu_J|_{N^{k+1}_J}=\mu_J|_{N^{k+1}_J}$ where
 $\mu_J: N^{k}_J\to E_J$ is defined on 
the enlarged core $N_J^k$ by the compatibility conditions, and 
\item[-]   $\nu_{\pitchfork}$ is a final perturbation chosen so as to achieve transversality.
\end{itemize}
We construct the extension $\Tnu_J$ by extending each component $\mu_J^j, j\in J,$ of $\mu_J$ separately.  
In turn, we construct each $\mu_J^j$ by an elaborate  iterative process over the increasing family of sets $W_\ell,1\le \ell\le k,$ defined in
equation~(7.3.25) of \cite{MW2}.  Here $W_\ell$ is a carefully chosen neighbourhood of
the part $\bigcup_{|H|\le \ell} N_{JH}^{k+\frac 12}$ of the enlarged core $ N_{J}^{k+\frac 12}$ defined by sets $H$ 
with $|H|\le \ell$. In particular, when $\ell = k$ we define
\begin{equation}\label{eq:WJ}
W_k: = B^J_{\eta_{k+\frac 12}}(N^{k+\frac 12}_J) = : W_J\;\subset \ov{W_J} \subset V_I^k,
\end{equation}
where the last inclusion follows from ~\eqref{eq:fantastic}.
 Thus, omitting $J$ from the notation, we need to construct for $1\le \ell \le k$ functions $\Tmu^j_\ell:W_\ell\to E_j$
that extend  $\mu_J^j$ and satisfy certain vanishing and size conditions that will guarantee (a-e).   
Again simplifying by omitting $j$ from the notation, it turns out to suffice  
to construct the extension $\Tmu_\ell$ on a certain set $B'_\ell$, that is a union of disjoint
sets $B'_L$, one for each $L\subset J$ with $ |L|=\ell$. For each $L$, we
 localize the latter extension problem, reducing it to the construction of extensions $\Tmu_z$ near each point $z$ 
 in the set $B_L'$  of \cite[Equation~(7.3.27)]{MW2}, that we then sum up using a partition of unity.
 In most cases we can choose 
$\Tmu_z $ either to be zero or to be given by the compatibility conditions.  In fact,  the only case in which this extension is nontrivial is when $z$ is in the enlarged core, more precisely
the case $z\in \ov{N^{k+\frac 12}_{JL}}$.   In that case we  define $\Ti\mu_z$ by extending $\mu_J^j|_{B^J_{r_z}(z)\cap N^k_{JL}}$ to be \lq\lq constant in the normal directions" as described in the third bullet point of the construction.

When these extensions have all been constructed for $k\le\ell$ and $j\in J$, we define
\begin{equation}\label{eq:Tmu}
\Ti\nu_J:= \beta \cdot \bigl( {\textstyle\sum_{j\in J}} \, \Ti\mu^j_k\bigr),
\end{equation}
where $\beta:U_J \to [0,1]$ is a smooth cutoff function with $\beta|_{N^{k+\frac 12}_J}\equiv 1$ and $\supp\beta\subset 
W_J$,
 so that $\Ti\nu_J$ extends trivially to $U_J\less W_J$.
Here are some important points.
\begin{itemlist}
\item[(A)] By \eqref{eq:fantastic} the constants $\eta_k$ are chosen so that  $W_J\cap N^{k}_J $  is compactly contained in  $N^{k + \frac 14}_J $.  Further, \eqref{eq:eta0} implies that $B^J_{\eta_{k+\frac 12}}(N^{k+\frac 34}_{JI}\cap N^k_{JI}) = N^{k+\frac 12}_{JI}$.
Together with  tameness this gives
$$
s_J^{-1}(E_I)\cap  B^J_{\eta_{k+\frac 12}}(N^{k+\frac 34}_{JI})\subset N^{k+\frac 12}_{JI}
$$
 which means that at each 
point of the closure 
$cl\bigl(B^J_{\eta_{k+\frac 12}}(N^{k+\frac 12}_{JI})\bigr)\less N^k_{JI}$
at least one component of $(s_J^j)_{j\in J\less I} $ is nonzero.  
This  gives control over  zero sets as in (C), (D) below, for all sufficiently small perturbations $\Tnu_J$ that are  admissible as  in (B).  
\item[(B)] The following  strong admissibility condition holds: if $I\subsetneq J$ and $j\in J\less I$ then
 $\Tmu_J^j=0$ on $B^J_{\eta_{k+\frac 12}}(N_{JI}^{k+\frac 12})\subset W_J$  and on $N_{JI}^{k}$.
\item[(C)] The section $s_J + \Ti\nu_J$  is transverse to $0$ on 
$B: = B^J_{\eta_{k+\frac 12}}(N^{k+\frac 34}_J)\subset W_J$.
(This holds because by (A),(B) all zeros in $B$ must lie in the subset $N_J^{k+\frac 12}$ of $B$; but here the perturbation is pulled back from $\nu_I, I\subsetneq J,$ so that zeros are transverse by the inductive hypothesis.)
\item[(D)]  For any perturbation $\Tnu_J$ with support in $W_J$, $\|\Tnu_J\|< \si$, and satisfying the admissibility condition (B), we have 
  $$V^{k+1}_J\cap (s_J + \Ti\nu_J)^{-1}(0)\subset N^{k + \frac 14}_J \cup(\TC_J\less \ov {W_J}).$$
\end{itemlist}
The set $B$ in (C) above compactly contains the neighbourhood $B': = B^J_{\eta_{k+1}}(N_{J}^{k+1})$ of the core 
$N: = N_J^{|J|}$ on which compatibility requires $\nu_J|_N = \mu_J|_N = \Tnu_J|_N$.

At this stage conditions a), c), d), e) hold, so that we only need work to achieve transversality b) while keeping 
$\nu_{\pitchfork}$  so small that e) and hence d)  remain true.    The difficulty here is to keep control of the zero set since it is not a compact subset of $V^{k+1}_J$ though of course it is compact in $\ov{V^{k+1}_J}$.
We first choose a relatively open 
neighbourhood $W$ of $\ov{V_J^{k+1}}\less B$ in $\ov{V_J^{k+1}}$ so that
$$
(s_J + \Ti\nu_J)^{-1}(0)\cap W\subset O: =  \ov{V_J^{k+1}}\cap \TC_J,\quad \ov{B'} \cap W = \emptyset.
$$
This  is possible by the zero set condition  (D) and the fact $B'\sqsubset B\subset W_J$.  
By (C),  transversality holds in $B$ and hence outside the compact subset $\ov{V_J^{k+1}}\less B\subset W$.
Hence there is an open 
precompact subset $P\sqsubset W$  (the complement of a slight shrinking of $B$)
so that transversality holds on $W\less P$.    Finally we choose  
$\nu_{\pitchfork}$ to be a very small smooth function with support in $P$ and values in $E_J$ so that 
the $s_J + \Ti\nu_J + \nu_{\pitchfork}$ is  transverse on all of $W$ and hence on all $V^{k+1}_J$,
 and is such that
$(s_J + \Ti\nu_J + \nu_{\pitchfork})^{-1}(0) \subset O\subset \TC_J$.
This completes the inductive step, and hence the construction of $\nu$.
For more details see \cite{MW2}.
\end{proof}

 To show that different choices lead to cobordant zero sets we also need  a relative version of this construction. 
The relevant constant $\si_{\rm rel}(\de,\Vv,\Cc)$ depends on the given data, and in particular on the constants 
$\si(\de,\Vv^\al,\Cc^\al), \al=0,1$ that occur in Proposition~\ref{prop:ext}.  (See \cite[Definition~7.3.8]{MW2} for a precise formula.)

\begin{proposition}\label{prop:ext2}
Let $(\Kk,d)$ be a metric tame Kuranishi cobordism with nested cobordism reduction $\Cc\sqsubset\Vv$, 
let $0<\de<\min\{\eps,\de_\Vv\}$, where $\eps$ is the collar width of $(\Kk,d)$
and the reductions $\Cc,\Vv$. 
Then we have $\si_{\rm rel}(\de,\Vv,\Cc)>0$ and the following holds.
\begin{enumerate}
\item
Given any 
$0<\si\le \si_{\rm rel}(\de,\Vv,\Cc)$, there exists an admissible, precompact, transverse cobordism perturbation $\nu$ of $\s_\Kk|_\Vv$ with $\pi_\Kk\bigl((\s_\Kk|_\Vv+\nu)^{-1}(0)\bigr)\subset \pi_\Kk(\Cc)$, whose restrictions $\nu|_{\p^\al\Vv}$  for $\al=0,1$ are $(\p^\al\Vv,\p^\al\Cc,\de,\si)$-adapted 
perturbations of $\s_{\p^\al\Kk}|_{\p^\al\Vv}$.
\item
Given any perturbations $\nu^\al$ of $\s_{\p^\al\Kk}|_{\p^\al\Vv}$ for $\al=0,1$ that are $(\p^\al\Vv,\p^\al\Cc,\de,\si)$-adapted
with $\si\le \si_{\rm rel}(\de,\Vv,\Cc)$, the perturbation $\nu$ of $\s_\Kk|_\Vv$ in (i) can be constructed to have boundary values $\nu|_{\p^\al\Vv}=\nu^\al$ for $\al=0,1$.
\end{enumerate}
\end{proposition}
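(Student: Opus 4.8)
The plan is to run the inductive construction of Proposition~\ref{prop:ext} over the Kuranishi cobordism $(\Kk,d)$, using the collar (product) structure near $\p^\al\Kk$ to reconcile it with the prescribed boundary data. First I would pin down the constant: by the same argument as in the Lemma immediately preceding Proposition~\ref{prop:ext}, the quantity $\si(\de,\Vv,\Cc)$ built from the sections $s_J$ of the cobordism atlas is positive, and since $0<\de<\eps$ the collar form of $(\Kk,d)$, $\Vv$ and $\Cc$ forces this quantity to agree, on the collars $A^\al_\eps\times\p^\al\Kk$, with $\si(\de,\p^\al\Vv,\p^\al\Cc)$. One then defines $\si_{\rm rel}(\de,\Vv,\Cc)$ to be the minimum of $\si(\de,\Vv,\Cc)$ and the two boundary constants $\si(\de,\p^\al\Vv,\p^\al\Cc)$ (the precise recipe is \cite[Definition~7.3.8]{MW2}), which is then positive. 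With this in hand, part~(i) follows from part~(ii): apply Proposition~\ref{prop:ext} to each boundary atlas $\p^\al\Kk$ with reduction $\p^\al\Cc\sqsubset\p^\al\Vv$ and the given $\de<\de_{\p^\al\Vv}$, $\si\le\si(\de,\p^\al\Vv,\p^\al\Cc)$ to obtain $(\p^\al\Vv,\p^\al\Cc,\de,\si)$-adapted perturbations $\nu^\al$, and feed them into~(ii).

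For part~(ii) I would redo the induction producing $\nu_J:V_J^{|J|}\to E_J$ over $k=0,\ldots,M=\max_{I\in\Ii_\Kk}|I|$, modified near the boundary as follows. Because $\de<\eps$, the collar identification $\iota^\al$ carries a neighbourhood of $\p^\al V_J^{|J|}$ to a product $A^\al\times\p^\al V_J^{|J|}$ over which the core $N^k_J$, all enlargements $N^{k+\la}_J$, and the cutoff support $W_J$ are in product form. On that region the compatibility conditions~(a) of Definition~\ref{def:a-e} applied to the already-built components, together with the demand $\p^\al\nu_J=\nu^\al_J$, force $\nu_J=\id_{A^\al}\times\nu^\al_J$; this is consistent, and by hypothesis on $\nu^\al$ it is transverse and of norm $<\si$ there, so the boundary behaviour of $\nu_J$ is determined and already has all the properties that (a)--(e) require. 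One then extends inward by exactly the scheme of Proposition~\ref{prop:ext}: the extension $\Tnu_J=\beta\cdot\bigl(\sum_{j\in J}\Ti\mu^j_k\bigr)$ assembled from local pieces that are zero, compatibility-determined, or ``constant in the normal directions'' near the enlarged core, choosing the partition of unity and the cutoff $\beta$ in product form near $\p^\al\Kk$ so that $\Tnu_J$ restricts to $\id\times\nu^\al_J$ on the collar. As in the absolute case, $\si\le\si_{\rm rel}$ together with the strong admissibility built into the construction (points~(A)--(D) in the proof of Proposition~\ref{prop:ext}) confines the zero set of $s_J+\Tnu_J$ on $V^{k+1}_J$ to $N^{k+\frac14}_J\cup(\TC_J\less\ov{W_J})$, and transversality already holds near the enlarged core and hence on the whole collar.

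The final transversality step is where the collar must be respected with care: one chooses $\nu_\pitchfork$ small, supported in a precompact subset $P$ of the complement of $B=B^J_{\eta_{k+\frac12}}(N^{k+\frac34}_J)$, so that $s_J+\Tnu_J+\nu_\pitchfork$ is transverse on all of $V^{k+1}_J$ while $(s_J+\Tnu_J+\nu_\pitchfork)^{-1}(0)\subset\TC_J$. Since transversality already holds on the collar (by the inductive hypothesis for $\nu^\al$ and the product structure), $P$ may be taken disjoint from the collar, so $\nu_\pitchfork$, and hence $\nu_J=\Tnu_J+\nu_\pitchfork$, keeps its product form near $\p^\al\Kk$. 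This preserves (a)--(e) at level $k+1$ and the prescribed boundary values, completing the induction; restricting the resulting $\nu_I$ to $V_I$ gives the cobordism perturbation $\nu$, which is admissible, transverse and precompact with $\pi_\Kk\bigl((\s_\Kk|_\Vv+\nu)^{-1}(0)\bigr)\subset\pi_\Kk(\Cc)$ by the same verification as in Proposition~\ref{prop:ext}, and restricts to $\nu^\al$ on $\p^\al\Vv$ by construction.

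The main obstacle is exactly the compatibility of the interpolation with the product structure: one must arrange that the local extensions, the partition of unity, the cutoff $\beta$, and above all the final transversality perturbation $\nu_\pitchfork$ can all be chosen in product form on the collar while still achieving transversality on all of $V^{k+1}_J$ and keeping $\|\nu_J\|<\si$. This is possible only because $\de<\eps$ forces every relevant neighbourhood near $\p^\al\Kk$ to lie inside the collar, and because the given $\nu^\al$ is already transverse there, so no additional perturbation is needed on the collar; the interpolation constant $\si_{\rm rel}$ must be shrunk, relative to $\si(\de,\p^\al\Vv,\p^\al\Cc)$, precisely so that the zero-set estimates of (A)--(D) hold uniformly across the collar.
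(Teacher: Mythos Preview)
Your proposal is correct and is precisely the approach the paper has in mind: the paper's own proof is the single sentence ``This is proved by making minor modifications in the construction given above,'' and you have spelled out exactly what those modifications are---running the induction of Proposition~\ref{prop:ext} while keeping every ingredient (the local extensions, cutoffs, partition of unity, and crucially $\nu_\pitchfork$) in product form on the collar, which is feasible because $\de<\eps$ and because the prescribed $\nu^\al$ are already transverse there. Your identification of the main obstacle (arranging $\nu_\pitchfork$ to have support disjoint from the collar) and of why $\si_{\rm rel}$ must dominate the boundary constants is on target.
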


This is proved by making minor modifications in the construction given above.
If $\Kk$ is  a metric tame Kuranishi atlas  with nested reduction $\Cc\sqsubset \Vv$,  then
 a perturbation $\nu$ of $\s_\Kk|_\Vv$ is said to be {\bf strongly $(\Vv,\Cc)$-adapted} if 
 it is $(\Vv,\Cc,\de,\si)$-adapted for some $\de<\de_\Vv$ and $\si < \si_{\rm rel}(\de,[0,1]\times\Vv,[0,1]\times\Cc)$ where the 
 constant $\si_{\rm rel}(\de,\Vv,\Cc)$ is defined with respect to the product cobordism $[0,1]\times \Kk$ and reductions 
 $[0,1]\times \Cc\sqsubset  [0,1]\times \Vv$. 
 Note that the constant $\si_{\rm rel}(\de,[0,1]\times\Vv,[0,1]\times\Cc)$ in this definition depends only on $\Kk,\Vv,\Cc$ and the  choice of $\de$. 
 Using
 Proposition~\ref{prop:ext2} one can then show the following.\footnote
 {
 This corollary is not explicitly stated in \cite{MW2} but can be extracted from the proof given there
 that the VFC is independent of choices; see \cite[Remark~7.3.9]{MW2}.}
 
\begin{cor}\label{cor:ext2} Given any two strongly $(\Vv,\Cc)$-adapted perturbations $\nu^0, \nu^1$ of $\Kk$, there is 
an admissible, precompact, transverse cobordism perturbation $\nu$ of $\s_{[0,1]\times \Kk}|_{[0,1]\times \Vv}$ that restricts to
$\nu^\al$ on $\Kk \equiv\{\al\}\times  \Kk$ for $\al = 0,1$.
\end{cor}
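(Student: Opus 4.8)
The plan is to deduce this from Proposition~\ref{prop:ext2} applied to the product cobordism $[0,1]\times\Kk$ with the product reduction data $[0,1]\times\Cc\sqsubset [0,1]\times\Vv$. First I would recall that by hypothesis each $\nu^\al$ is strongly $(\Vv,\Cc)$-adapted, so there are constants $\de^\al<\de_\Vv$ and $\si^\al<\si_{\rm rel}(\de^\al,[0,1]\times\Vv,[0,1]\times\Cc)$ for which $\nu^\al$ is $(\Vv,\Cc,\de^\al,\si^\al)$-adapted; here the relevant $\si_{\rm rel}$ is the one attached to the product cobordism $[0,1]\times\Kk$, which by the remark before the corollary depends only on $\Kk,\Vv,\Cc$ and the choice of $\de$. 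The first real step is therefore a bookkeeping one: replace $\de^0,\de^1$ by a common $\de:=\min\{\de^0,\de^1\}$ and $\si^0,\si^1$ by a common $\si:=\min\{\si^0,\si^1\}$, and check that being $(\Vv,\Cc,\de',\si')$-adapted persists when $\de'$ and $\si'$ are decreased. This monotonicity is essentially immediate from inspecting conditions a)--e) of Definition~\ref{def:a-e}: shrinking $\si$ only tightens conditions d) and e), while shrinking $\de$ shrinks the nested sets $V_I^k$ and the radii $\eta_k$, so a)--c) continue to hold on the smaller sets (one restricts the already-constructed $\nu_I$). The one point needing a word of care is that $\si(\de,\Vv,\Cc)$ is not monotone in $\de$ in an obvious way, but since $\si$ was chosen below $\si_{\rm rel}$ and Proposition~\ref{prop:ext2} only requires $\si\le\si_{\rm rel}(\de,\cdot,\cdot)$ for the chosen $\de$, it suffices to know $\si\le\si_{\rm rel}(\de,[0,1]\times\Vv,[0,1]\times\Cc)$ for the common $\de$, which holds after possibly shrinking $\de$ further since $\de\mapsto\si_{\rm rel}$ can be controlled (or, more simply, one fixes $\de$ first and then both $\si^\al$ below the corresponding $\si_{\rm rel}$, which is how "strongly adapted" should be read uniformly).

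With a common pair $(\de,\si)$ in hand, the second step is to set up the product: the product cobordism $[0,1]\times\Kk$ is a metric tame Kuranishi cobordism (Definition~\ref{def:CKS2} and the isometric collar structure), $[0,1]\times\Cc\sqsubset[0,1]\times\Vv$ is a nested cobordism reduction with collar width $\eps=1$ (or any positive number), and $\p^\al\big([0,1]\times\Kk\big)=\Kk$, $\p^\al\big([0,1]\times\Vv\big)=\Vv$, $\p^\al\big([0,1]\times\Cc\big)=\Cc$ for $\al=0,1$. Then I would invoke Proposition~\ref{prop:ext2}(ii) with this cobordism and these reductions, with the boundary data $\nu^0,\nu^1$, which are $(\p^\al(\,\cdot\,),\p^\al(\,\cdot\,),\de,\si)$-adapted by construction and satisfy $\si\le\si_{\rm rel}(\de,[0,1]\times\Vv,[0,1]\times\Cc)$ by our choice of $\si$. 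The proposition then produces an admissible, precompact, transverse cobordism perturbation $\nu$ of $\s_{[0,1]\times\Kk}|_{[0,1]\times\Vv}$ with $\pi_\Kk\big((\s|_{[0,1]\times\Vv}+\nu)^{-1}(0)\big)\subset\pi_\Kk([0,1]\times\Cc)$ and with $\nu|_{\p^\al([0,1]\times\Vv)}=\nu^\al$ for $\al=0,1$. Under the identification $\p^\al([0,1]\times\Kk)\equiv\{\al\}\times\Kk\equiv\Kk$, this is exactly the asserted perturbation.

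The step I expect to be the only genuine obstacle is the uniformization of constants in the first paragraph: one must be sure that the meaning of "strongly $(\Vv,\Cc)$-adapted" can be taken with a single $\de$ (independent of $\al$) and then with $\si$ below the single associated $\si_{\rm rel}(\de,[0,1]\times\Vv,[0,1]\times\Cc)$, and that passing from the possibly different $(\de^\al,\si^\al)$ to a common smaller pair does not destroy adaptedness of $\nu^0$ or $\nu^1$. This is why I would isolate the monotonicity lemma for Definition~\ref{def:a-e} explicitly. Everything after that is a direct application of Proposition~\ref{prop:ext2} to a product cobordism, so the remaining verification — that the product data satisfy the hypotheses of that proposition and that its output restricts correctly on the two ends — is routine and I would only sketch it.
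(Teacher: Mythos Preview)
Your plan is correct and matches the paper's approach: the corollary is obtained by applying Proposition~\ref{prop:ext2}(ii) to the product cobordism $[0,1]\times\Kk$ with the product nested reduction $[0,1]\times\Cc\sqsubset[0,1]\times\Vv$, and the paper itself says no more than this (indeed the footnote remarks that the statement is extracted from the uniqueness proof in \cite{MW2}). You have been more explicit than the paper in isolating the one genuine technical point, namely the need to pass from the possibly different pairs $(\de^\al,\si^\al)$ to a common $(\de,\si)$, and your monotonicity argument for conditions a)--e) under shrinking $\de$ is the right way to handle it; the notion of ``strongly adapted'' was set up precisely so that this step goes through.
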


\MS

\NI {\bf Sketch proof of Theorem~B.}   By Theorem~\ref{thm:K}, 
each weak Kuranishi atlas has a tame shrinking $\Kk$ that is unique up to concordance, and by
Proposition~\ref{prop:cov2} $\Kk$ has a reduction $\Vv$ that is also unique up to concordance.  
 Proposition~\ref{prop:ext} shows that there is a corresponding admissible, precompact, transverse perturbation $\nu$, such that the corresponding zero set 
is an oriented manifold $|\bZ^\nu|$ by Proposition~\ref{prop:zeroS0}.   Moreover, there is a natural inclusion $\io_Z: |\bZ^\nu|\to |\Vv|\subset |\Kk|$, and using
Corollary~\ref{cor:ext2} one can show that if $\nu$ is $(\Vv,\Cc)$-strongly adapted for some $\Cc\sqsubset \Vv$  the oriented bordism class of this inclusion is independent of choices.
By construction $|\Vv|$ lies in a $\de$-neighbourhood of the zero set $|\s|^{-1}(0) = \io_\Kk(X)$ of $|\Kk|$.  Hence, 
we may obtain a well defined element in the   rational
\v{C}ech homology group $\check{H}_d(X;\Q)$  by taking   an appropriate inverse limit as $\de\to 0$.  
This is the basic idea of the proof.  For (many) more details see
 \cite[\S8.2]{MW2}.

\section{Kuranishi atlases with nontrivial isotropy}\label{s:iso}

 The main change in this case
is that  the domains of the charts are no longer smooth manifolds, 
but rather group quotients $(U,\Ga)$ where $\Ga$ is a finite group acting on $U$.  
We will begin by assuming that $U$ is smooth, considering more general domains in \S\ref{ss:SS}.
Our definitions are chosen so that the  quotient of a Kuranishi chart 
$\bK$ with isotropy group $\Ga$ is
 a topological Kuranishi chart $\ubK$ (with trivial isotropy) that we call the {\bf intermediate} chart. 
 Similarly the quotient of a weak Kuranishi atlas by its finite isotropy groups is a weak topological Kuranishi atlas.\footnote
 {
 Note that we do not consider topological atlases with nontrivial isotropy.}  
  Hence we can apply the results of \S\ref{ss:tatlas} to tame the intermediate 
atlas, and then lift this to a taming of $\Kk$.

The other key new idea is that the coordinate changes $\bK_I\to \bK_J$ should no longer be given by inclusions $\phi_{IJ}$ of an open subset $U_{IJ}\subset U_I$ into $U_J$.
These inclusions exist on the intermediate level as $\uphi_{IJ}:\uU_{IJ}\to \uU_J$.  However, it is the inverse $$
\uphi_{IJ}^{-1}: 
\uphi_{IJ}(\uU_{IJ})\to \uU_{IJ}
$$
 that lifts to the charts themselves:
 there is a $\Ga_J$-invariant submanifold $\TU_{IJ}\subset s_J^{-1}(E_I)$ on which the kernel of the natural projection $\Ga_J\to \Ga_I$ acts freely with quotient homeomorphic to a $\Ga_I$-invariant  subset $U_{IJ}$ of $U_I$.  
This gives  a {\bf covering map} $\rho_{IJ}: \TU_{IJ}\to U_{IJ}\subset U_I$ that descends on the intermediate level to the inverse $\uphi_{IJ}^{-1}$ of $\uphi_{IJ}$.
In the Gromov--Witten setting, these maps $\rho_{IJ}$ occur very naturally as maps that forget certain sets of added marked points. (See the end of Lecture 2 in \cite{McL}, and \S\ref{s:GW} below.)  
Another important point is that the  coordinate changes preserve stabilizer  subgroups.  Hence the construction  equips
each point of $X$ with  a stabilizer  subgroup,  well defined up to isomorphism.
(In practice, one of course starts with a space $X$ whose points do have well defined 
stabilizer  subgroups, see for instance Example~\ref{ex:zorb} below.)

Most of the proofs are routine generalizations of those in \S\ref{s:noiso}; the only real difficulty is to 
make appropriate  definitions.  This section therefore consists mostly of notation and definitions.
The main reference  is \cite{MWiso}.    

\subsection{Kuranishi atlases}

\begin{defn}\label{def:gq}
A  {\bf group quotient} is a pair $(U,\Ga)$ consisting of a smooth manifold $U$ and a finite group $\Ga$ together with a smooth action $\Ga\times U \to U$.
We will denote the quotient space by
$$
\uU: = \qq{U}{\Ga},
$$
giving it the quotient topology, and write $\pi: U\to \uU$ for the associated projection.
Moreover, we denote the {\bf stabilizer} of each $x\in U$ by
$$
\Ga^x := \{\ga\in \Ga \,|\, \ga x = x \} \subset \Ga .
$$
\end{defn}

Both the basic and transition charts of Kuranishi atlases will be group quotients, related by coordinate changes that are composites of the following kinds of maps.

\begin{defn}\label{def:inject}
Let $(U,\Ga), (U',\Ga')$  be group quotients.
A {\bf group  embedding} $$
(\phi, \phi^\Ga): (U,\Ga)\to (U',
\Ga')
$$
is a smooth embedding $\phi: U\to U'$ with the following properties:
\begin{itemize}\item[-]  it is  equivariant with respect to an 
injective group homomorphism
$\phi^\Ga:\Ga\to \Ga'$;
\item[-] for each $x\in U$,
$\phi^\Ga$ induces an isomorphism between the stabilizer subgroup $\Ga^x\subset \Ga$ and the
stabilizer $\Ga^{\phi(x)}\subset \Ga'$ ;
\item[-]
 it induces an injection $\uphi:\uU\to \uU'$ on the quotient  spaces.  
\end{itemize}
\end{defn}

Note that if $\phi^\Ga:\Ga\to \Ga':=\Ga$  is surjective (and hence an isomorphism)  then
the condition on the stabilizers is automatically satisfied.   However,  in general, this condition is needed to avoid the situation where $\Ga'$ contains elements that act trivially on $U'$ and hence do not affect the quotient $\uU'$ and yet are not in the image of $\Ga$.

In a Kuranishi atlas we often consider embeddings $(\phi, \phi^\Ga): (U,\Ga)\to (U',\Ga)$ where $\dim U <\dim U'$ and $\phi^\Ga:\Ga\to \Ga':=\Ga$ is the identity map.   On the other hand,
group quotients of the same dimension are usually related
either by restriction or by coverings as follows.

\begin{defn}\label{def:grprestr}
Let $(U,\Ga)$ be a group quotient and $\uV\subset \uU$ an open subset.  Then the {\bf restriction of $(U,\Ga)$ to $\uV$} is the group quotient $(\pi^{-1}(\uV), \Ga)$.
\end{defn}

Note that the inclusion $\pi^{-1}(\uV)\to U$ induces an equidimensional group embedding $(\pi^{-1}(\uV), \Ga)\to (U,\Ga)$ that covers the inclusion $\uV\to\uU$.
The third kind of map that occurs in a coordinate change is a group covering.  This notion
is less routine; notice in particular the requirement in (ii) that $\ker \rho^\Ga$ act freely.
Further, 
the two domains $\TU, U$ will necessarily have the same dimension since they are related by a regular covering $\rho$.

\begin{defn}\label{def:cover}
Let $(U,\Ga)$  be a group quotient. A {\bf group covering} of $(U,\Ga)$ is a tuple 
$(\TU,\TGa,\rho,\rho^\Ga)$
 consisting of
\begin{enumilist}
\item[{\rm (i)}] a surjective group homomorphism $\rho^\Ga: \TGa\to \Ga$,
\item[{\rm (ii)}] a group quotient $(\TU,\TGa)$ where $\ker \rho^\Ga$ 
 acts freely,
\item[{\rm (iii)}]
a regular covering $\rho: \TU \to U$ that is the quotient map $\TU\to\qu{\TU}{\ker \rho^\Ga}$ composed with a diffeomorphism $\qu{\TU}{\ker \rho^\Ga}\cong U$ that is  equivariant with respect to the induced $\Ga = \im \rho^\Ga$ action on both spaces.
\end{enumilist}
Thus $\rho:\TU\to U$ is a principal $\ker \rho^\Ga$-bundle that is $\rho^\Ga$-equivariant.
We denote by $\urho: \ul{\TU}\to \uU$ the induced map on quotients.
\end{defn}

Here is an elementary but important lemma (\cite[Lemma~2.1.5]{MWiso}). 

 \begin{lemma}\label{le:vep}
Let $(U,\Ga)$ be a group quotient.
\begin{enumilist}
\item[{\rm (i)}]
The projection $\pi: U\to \uU$ is open and proper (i.e. the inverse image of a compact set is compact).
\item[{\rm (ii)}]
Every point $x\in U$ has a neighbourhood $U_x$ that is invariant under $\Ga^x$ and is such that inclusion $U_x\hookrightarrow U$ induces a homeomorphism from $\qu{U_x}{\Ga^x}$ to $\pi(U_x)$.
In particular, the inclusion $(U_x,\Ga^x)\to (U,\Ga)$ is a group embedding.
\item[{\rm (iii)}]
If $(\TU,\TGa,\rho,
\rho^\Ga)$ is a group covering of $(U,\Ga)$, then 
\begin{itemize}\item[-] for all $\Tx\in \TU$,  $\rho^\Ga$ induces an isomorphism $\Ga^{\Tx}\to \Ga^{\rho(\Tx)}$;
\item[-]  $\urho:   \ul{\TU}\to \uU$ is a homeomorphism.
\end{itemize}
\end{enumilist}
\end{lemma}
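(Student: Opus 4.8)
\textbf{Proof plan for Lemma~\ref{le:vep}.}

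The three parts are essentially standard facts about actions of finite groups and covering spaces, so the plan is to verify each directly from the definitions. For part (i), properness of $\pi:U\to\uU$ follows because $\Ga$ is finite: if $K\subset\uU$ is compact, then $\pi^{-1}(K)$ is closed (since $\pi$ is continuous), and one can cover $\pi^{-1}(K)$ by finitely many compact sets by pulling back a finite subcover of $K$ and using that each fiber $\pi^{-1}(\pi(x)) = \Ga x$ is finite; more cleanly, choose for each point of $K$ a precompact neighbourhood whose preimage is precompact (possible since $\pi$ is a quotient by a finite group acting by homeomorphisms), extract a finite subcover, and note $\pi^{-1}(K)$ is a closed subset of the resulting finite union of precompact sets. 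Openness of $\pi$ is immediate: for $O\subset U$ open, $\pi^{-1}(\pi(O)) = \bigcup_{\ga\in\Ga}\ga O$ is open, so $\pi(O)$ is open in the quotient topology.

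For part (ii), the key tool is the standard slice-type argument for finite group actions on manifolds. Given $x\in U$, the stabilizer $\Ga^x$ is finite, so the (finite) orbit $\Ga x$ can be separated: choose an open neighbourhood $O$ of $x$ with $\ga O\cap O=\emptyset$ for all $\ga\in\Ga\less\Ga^x$ (possible since $U$ is Hausdorff and $\Ga x$ is finite). Then set $U_x := \bigcap_{\ga\in\Ga^x}\ga O$, which is $\Ga^x$-invariant, contains $x$, and still satisfies $\ga U_x\cap U_x=\emptyset$ for $\ga\notin\Ga^x$. With this choice the natural map $\qu{U_x}{\Ga^x}\to\pi(U_x)$ is a continuous bijection; it is open because $\pi$ restricted to the saturated open set $\bigcup_{\ga\in\Ga}\ga U_x$ is open and the pieces $\ga U_x$ are disjoint, so it is a homeomorphism. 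The final sentence of (ii) is then just the observation that $\Ga^x$ acts on $U_x$, the inclusion $U_x\hookrightarrow U$ is equivariant with respect to the inclusion $\Ga^x\hookrightarrow\Ga$, this inclusion induces an isomorphism $\Ga^y\to\Ga^y$ on stabilizers of points $y\in U_x$ (stabilizers only shrink under passing to an invariant open set, and for $y\in U_x$ no new elements of $\Ga$ fix $y$ by the disjointness property), and the induced map on quotients is injective by what was just shown; hence it is a group embedding in the sense of Definition~\ref{def:inject}.

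For part (iii), let $(\TU,\TGa,\rho,\rho^\Ga)$ be a group covering. For the stabilizer statement, fix $\Tx\in\TU$ with $x:=\rho(\Tx)$. Since $\rho$ is $\rho^\Ga$-equivariant, $\rho^\Ga$ maps $\TGa^{\Tx}$ into $\Ga^x$; it is injective on $\TGa^{\Tx}$ because $\ker\rho^\Ga$ acts freely, so any element of $\TGa^{\Tx}\cap\ker\rho^\Ga$ fixes $\Tx$ and hence is trivial. For surjectivity: given $\ga\in\Ga^x$, pick $\Ti\ga\in\TGa$ with $\rho^\Ga(\Ti\ga)=\ga$; then $\rho(\Ti\ga\Tx)=\ga x=x$, so $\Ti\ga\Tx$ and $\Tx$ lie in the same $\ker\rho^\Ga$-fiber, i.e. $\Ti\ga\Tx=\ka\Tx$ for some $\ka\in\ker\rho^\Ga$, whence $\ka^{-1}\Ti\ga\in\TGa^{\Tx}$ and $\rho^\Ga(\ka^{-1}\Ti\ga)=\ga$. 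So $\rho^\Ga:\TGa^{\Tx}\to\Ga^x$ is an isomorphism. Finally, that $\urho:\ul{\TU}\to\uU$ is a homeomorphism follows from factoring $\ul{\TU} = \qu{\TU}{\TGa} \cong \qu{(\qu{\TU}{\ker\rho^\Ga})}{\,\Ga\,}$ and using the hypothesis (iii) of Definition~\ref{def:cover} that $\qu{\TU}{\ker\rho^\Ga}\cong U$ is a $\Ga$-equivariant diffeomorphism, which identifies the two quotients by $\Ga$; continuity and openness of $\urho$ in both directions then come from part (i) applied to the relevant projections.

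The arguments are all elementary; the only point requiring genuine care is part (ii), specifically producing the $\Ga^x$-invariant neighbourhood $U_x$ with the correct separation property so that the induced map on quotients is a homeomorphism rather than merely a continuous bijection — this is where one must use Hausdorffness of $U$ together with finiteness of the orbit, and it is the step I would write out most carefully.
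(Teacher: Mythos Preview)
Your proof is correct and follows essentially the same structure as the paper's sketch: parts (i) and (ii) are dismissed there as well known, and your slice construction for (ii) is the standard argument. The one small difference is in (iii): for surjectivity of $\rho^\Ga:\TGa^{\Tx}\to\Ga^x$ you give an explicit lifting argument (lift $\ga$ to $\Ti\ga$, then correct by $\ka\in\ker\rho^\Ga$), whereas the paper instead counts orders, observing that $(\rho^\Ga)^{-1}(\Ga^x)$ has order $|\Ga^x|\cdot|\ker\rho^\Ga|$ and acts transitively on the fiber $\rho^{-1}(x)$ of size $|\ker\rho^\Ga|$, so the stabilizer $\TGa^{\Tx}$ has order $|\Ga^x|$; and for the homeomorphism $\urho$ the paper cites (ii) rather than your direct double-quotient factoring. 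Both routes are short and equivalent; yours is slightly more constructive.
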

\begin{proof}[Sketch of proof] Parts (i) and  (ii) are well known.
To prove (iii) notice first that  because $\ker \rho^\Ga$ acts freely on $\TU$ we have
$\Ga^{\Tx}\cap \ker \rho^\Ga = \{\id\}$.  
 Hence $ \rho^\Ga$ induces an injection from $\Ga^{\Tx}$ onto a subgroup of $\Ga^{x}$, where $x: = \rho(\Tx)$, and
it  suffices to show that $\Ga^{\Tx}, \Ga^x$
have the same order.
But this holds because
 the subgroup $( \rho^\Ga)^{-1}(\Ga^x)$ has order equal to the product $|\Ga^x|\ |\ker \rho^\Ga|$
and acts transitively on the set $\rho^{-1}(x)$, which has  $ |\ker \rho^\Ga|$ elements.   
The last claim then follows from (ii). \end{proof}  

\begin{defn}\label{def:chart2}
A smooth Kuranishi chart for $X$ is a tuple $\bK = (U,  E, \Ga,s,\psi)$ made up of  
 \begin{itemlist}
 \item
the {\bf domain} $U$ which is a smooth finite dimensional manifold;
\item  a finite dimensional vector space $E$ called the {\bf obstruction space};
\item 
a finite {\bf isotropy group} $\Ga$ with a smooth action on $U$ and a linear action on $E$;
\item a smooth $\Ga$-equivariant function $s:U\to E$,
called the {\bf section};
 \item
a continuous map $\psi : s^{-1}(0) \to X$ that induces a homeomorphism
$$
\und{\psi}:\und{s^{-1}(0)}: = \qq{s^{-1}(0)}{\Ga} \;\to\; F
$$
with open image $F\subset X$, called the {\bf footprint} of the chart.
\end{itemlist}
The {\bf dimension} of $\bK$ is $\dim(\bK): =\dim U-\dim E$, and
we will say that the chart is 
\begin{itemlist}
\item {\bf minimal} if there is a point $x\in  s^{-1}(0)$ at which $\psi$ is injective, i.e.\ $x= \psi^{-1}(\psi(x))$, or equivalently $\Ga x = x$.
\end{itemlist}
 \end{defn}


In order to extend topological constructions from \S\ref{s:noiso} to the case of nontrivial isotropy, we will also consider the following notion of intermediate  Kuranishi charts which have trivial isotropy but less smooth structure.

\begin{defn}\label{def:quotlev}
We associate to each Kuranishi chart $\bK = (U, E,\Ga,s, \psi)$ the {\bf intermediate chart} $\und{\bK}: = (\und U, \und{U\times E}, \und {\s}, \und{\psi})$, where $\und{U\times E}$ is the quotient by the diagonal
action of $\Ga$ and $\und {\s}$ is the section of the bundle $\upr:\und{U\times E}\to \und{U}$ induced by $\s=\id_U\times s :U \to U\times E$.
 \end{defn}
 
 Thus, as we explained in Remark~\ref{rmk:intermed}, $\und{\bK}$  is a topological chart in the sense of Definition~\ref{def:tchart}.
%
We write $\pi: U\to \uU: = \qu{U}{\Ga}$ for the projection from the Kuranishi domain to the intermediate domain, and will distinguish everything connected to the intermediate charts by underlines. Moreover if a chart $\bK_I= 
(U_I, E_I,\Ga_I,s_I, \psi_I)$ has the label $I$, the corresponding projection is 
denoted $\pi_I: U_I\to \uU_I$.

We will find that many results (in particular the taming constructions) 
from \S\ref{s:noiso}  carry over to nontrivial isotropy via the intermediate charts, 
since precompact subsets of $\und U$ lift to 
precompact subsets of $U$ by Lemma~\ref{le:vep}~(i).
An important exception is the construction of perturbations which must be done on the smooth
 spaces $U$ rather than on their quotients $\und U$.

\begin{defn} \label{def:restr2}
Let $\bK = (U,E, \Ga, s,\psi)$ be a Kuranishi chart and $F'\subset F$
an open subset of its footprint.
A {\bf restriction of $\bK$ to $\mathbf{\emph F\,'}$} is a Kuranishi chart of the form
$$
\bK' = \bK|_{\uU'} := \bigl(U', E, \Ga, s'=s|_{U'} \,,\, \psi'=\psi|_{s'^{-1}(0)}\, \bigr), \qquad U'=\pi^{-1}(\uU')
$$
given by  a choice of open subset $\uU'\subset \uU$ such that $\uU'\cap \upsi^{-1}(F) = \upsi^{-1}(F')$.
We call $\uU'$ the {\bf intermediate domain} of the restriction and $U'$ its {\bf  domain}.
\end{defn}

Note that the restriction $\bK'$ in the above definition has footprint $\psi'(s'^{-1}(0))=F'$, and its domain group quotient
$(U',\Ga)$ is the restriction of $(U,\Ga)$ to $\uU'$ in the sense of Definition~\ref{def:grprestr}.

Moreover, because the restriction of a chart is determined by a subset of the intermediate domain $\und{U}$, all results about restrictions  are easy to generalize to the case of nontrivial isotropy.  
In particular the following result holds, where we use the notation  $\sqsubset$ to denote a  
precompact inclusion and ${\rm cl}_V(V')$ to denote the closure of a subset $V'\subset V$ in the relative topology of $V$.

\begin{lemma}\label{le:restr0}
Let $\bK$ be a Kuranishi chart. Then for any open subset $F'\subset F$ there  is a restriction $\bK'$ to $F'$ with  domain $U'$  such that ${\rm cl}_U(U')\cap s^{-1}(0) = \psi^{-1}({\rm cl}_X(F'))$.
Moreover, if $F'\sqsubset F$ is precompact, then $U'\sqsubset U$ can be chosen precompact. 
\end{lemma}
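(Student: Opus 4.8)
The plan is to reduce the whole statement to the trivial-isotropy (topological) setting via the intermediate chart, and then lift. Recall from Definition~\ref{def:restr2} that a restriction of $\bK=(U,E,\Ga,s,\psi)$ to $F'$ is by definition the choice of an open set $\uU'\subset\uU$ with $\uU'\cap\upsi^{-1}(F)=\upsi^{-1}(F')$, and that the resulting chart has domain $U'=\pi^{-1}(\uU')$ (and automatically footprint $F'$, by the remark following Definition~\ref{def:restr2}). But such a $\uU'$ is exactly the data defining a restriction to $F'$ of the intermediate chart $\ubK$, which by Remark~\ref{rmk:intermed} is a topological Kuranishi chart with trivial isotropy, whose domain $\uU$ is a separable, locally compact metric space and whose zero set is $\us^{-1}(0)=\qu{s^{-1}(0)}{\Ga}$, homeomorphic to $F$ via $\upsi$. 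So it suffices to (a) produce an open $\uU'\subset\uU$ with $\uU'\cap\us^{-1}(0)=\upsi^{-1}(F')$ and ${\rm cl}_{\uU}(\uU')\cap\us^{-1}(0)=\upsi^{-1}({\rm cl}_X(F'))$, precompact in $\uU$ when $F'\sqsubset F$, and then (b) transport these properties to $U'=\pi^{-1}(\uU')$.

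For (a) I would either quote the restriction statement for topological charts (the analogue of \cite[Lemma~5.1.6]{MW2}, cf.\ \cite[Lemma~2.1.4]{MW1}), or argue directly: set $Z:=\us^{-1}(0)$, which is closed in $\uU$, and $Z':=\upsi^{-1}(F')$, which is relatively open in $Z$ so that $Z\less Z'$ is closed in $\uU$; fix a metric on $\uU$ and choose continuous functions $g_1,g_2:\uU\to[0,1]$ with $g_1^{-1}(0)={\rm cl}_{\uU}(Z')$ and $g_2^{-1}(0)=Z\less Z'$; then put $\uU':=\{x\in\uU\mid g_2(x)>g_1(x)\}$. A short verification gives $Z'\subset\uU'$, $\uU'\cap Z=Z'$, and ${\rm cl}_{\uU}(\uU')\cap Z={\rm cl}_Z(Z')=\upsi^{-1}({\rm cl}_X(F'))$. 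When $F'\sqsubset F$ the set ${\rm cl}_Z(Z')$ is compact (since $\upsi$ is a homeomorphism and ${\rm cl}_X(F')\subset F$ is compact), so one may further intersect $\uU'$ with a precompact open neighbourhood of ${\rm cl}_Z(Z')$ in $\uU$ without disturbing the two identities, thereby arranging $\uU'\sqsubset\uU$.

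For (b) the essential inputs are that $\pi:U\to\uU$ is continuous, open and proper (Lemma~\ref{le:vep}~(i)). Openness and continuity of $\pi$ give ${\rm cl}_U(\pi^{-1}(\uU'))=\pi^{-1}({\rm cl}_{\uU}(\uU'))$; since $s^{-1}(0)=\pi^{-1}(\us^{-1}(0))$ and $\psi=\upsi\circ\pi|_{s^{-1}(0)}$, intersecting with $s^{-1}(0)$ and pushing down yields
\[
{\rm cl}_U(U')\cap s^{-1}(0)=\pi^{-1}\bigl({\rm cl}_{\uU}(\uU')\cap\us^{-1}(0)\bigr)=\pi^{-1}\bigl(\upsi^{-1}({\rm cl}_X(F'))\bigr)=\psi^{-1}({\rm cl}_X(F')),
\]
and the same computation with closures removed shows $U'\cap s^{-1}(0)=\psi^{-1}(F')$, so $\bK':=\bK|_{\uU'}$ is a genuine restriction of $\bK$ to $F'$. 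Finally, properness of $\pi$ converts $\uU'\sqsubset\uU$ into $U'\sqsubset U$: the set ${\rm cl}_U(U')=\pi^{-1}({\rm cl}_{\uU}(\uU'))$ is the preimage of a compact set, hence compact, and it contains $U'$.

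I expect the only delicate point to be item (a): arranging that ${\rm cl}_U(U')$ meets the zero set in exactly $\psi^{-1}({\rm cl}_X(F'))$ and not in some larger relatively closed subset of $s^{-1}(0)$, i.e.\ shrinking $\uU'$ away from the portion of the zero set lying over $F\less{\rm cl}_X(F')$. Everything else is a formal consequence of the properties of the quotient map $\pi$ recorded in Lemma~\ref{le:vep}~(i), exactly as in the trivial-isotropy case, which is why this section consists "mostly of notation and definitions."
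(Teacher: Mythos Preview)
Your proposal is correct and follows precisely the route the paper indicates: the paragraph preceding Lemma~\ref{le:restr0} explicitly says that restrictions are determined by subsets of the intermediate domain $\uU$ and hence all restriction results generalize easily from the trivial-isotropy case, and the paper gives no further proof. Your reduction to the intermediate chart, appeal to the topological-chart restriction lemma (\cite[Lemma~5.1.6]{MW2}/\cite[Lemma~2.1.4]{MW1}, or your direct $g_1,g_2$ argument), and lift via the continuity, openness and properness of $\pi$ from Lemma~\ref{le:vep}~(i) are exactly what is meant.
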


Most definitions in \S\ref{s:noiso} extend with only
minor changes to the case of nontrivial isotropy. However, the notion of coordinate change 
 needs to be generalized significantly to include a covering map. 
 We will again formulate the definition in the situation 
that is relevant to Kuranishi atlases. That is, we suppose that a finite set of 
Kuranishi charts $(\bK_i)_{i\in \{1,\dots, N\}}$ is given such that for each  $I\subset \{1,\dots, N\}$
with $F_I: = \bigcap_{i\in I} F_i \ne \emptyset$ we have another Kuranishi chart 
$\bK_I$ with\footnote
{
Generalizations of these conditions will be considered in \S\ref{s:order}.}
\begin{itemize}
\item[-] 
group $\Ga_I = \prod_{i\in I} \Ga_i$,
\item[-]  
obstruction space $E_I = \prod_{i\in I }E_i$ on which $\Ga_I$ acts with the product action,
 \item[-]  
footprint $F_I: = \bigcap_{i\in I} F_i$. 
\end{itemize}
Then for $I\subset J$ note that the natural inclusion $\Hat\phi_{IJ}: E_I\to E_J$  is equivariant with respect to the inclusion $\Ga_I \hookrightarrow \Ga_I\times \{\id\} \subset \Ga_J$, and we have a natural splitting $\Ga_J\cong\Ga_I\times \Ga_{J\less I}$, so that the complement $\Ga_{J\less I}$ of the inclusion
acts trivially on the image $\Hat\phi_{IJ}(E_I)\subset E_J$.

\begin{defn} \label{def:change2}  Given  $I\subset J\subset \{1,\dots, N\}$ let
 $\bK_I$ and $\bK_J$ be Kuranishi charts as above, so that $F_I\supset F_J$.
A {\bf coordinate change} $\Hat\Phi_{IJ}$
 from $\bK_I$ to $\bK_J$  consists of
\begin{itemize}
\item 
a choice of domain $\uU_{IJ}\subset\uU_I$ such that  $\bK_I|_{\uU_{IJ}}$ is a restriction  of $\bK_I$ to $F_{J}$,
\item the splitting $\Ga_J\cong\Ga_I\times \Ga_{J\less I}$ as above, and the induced inclusion $\Ga_I\hookrightarrow \Ga_J$
and projection $\rho_{IJ}^\Ga:\Ga_J\to \Ga_I$,
\item a $\Ga_J$-invariant submanifold 
$ \TU_{IJ}\subset U_J$ on which $\Ga_{J\less I}$ acts freely, and the induced $\Ga_J$-equivariant inclusion $\Tphi_{IJ}:\TU_{IJ}\to U_J$;
\item
a group covering $(\TU_{IJ},\Ga_J,\rho, \rho^\Ga)$ 
of $(U_{IJ}, \Ga_I)$, 
where $U_{IJ}: = \pi_I^{-1}(\uU_{IJ})\subset U_I$,
\item
the linear $\Ga_I$-equivariant injection $\Hat\phi_{IJ}:E_I\to E_J$ as above,
 \end{itemize}
such that the $\Ga_J$-equivariant inclusion $\Tphi:\TU_{IJ} \hookrightarrow U_J$ intertwines the sections and footprint maps,
\begin{equation}\label{eq:change2}
s_{J}\circ\Tphi_{IJ} = \Hat\phi_{IJ}\circ s_I\circ \rho \ \mbox{ on }  \TU_{IJ},\qquad
\psi_{J}\circ\Tphi_{IJ} = \psi_I\circ \rho_{IJ} \ \mbox{ on } \rho^{-1}(s_I^{-1}(0)).
\end{equation}
Moreover, we denote $s_{IJ}:=s_I\circ\rho_{IJ}:\Ti U_{IJ}\to E_I$ and require the {\bf index condition}:
\begin{enumerate}
\item
the embedding $\Tphi_{IJ}:\TU_{IJ}\hookrightarrow  U_J$ identifies the kernels,
$$
\rd_u\Tphi_{IJ} \bigl(\ker\rd_u s_{IJ} \bigr) =  \ker\rd_{\Tphi_{IJ}(u)} s_J    \qquad \forall u\in \TU_{IJ};
$$
\item
the linear embedding $\Hat\phi_{IJ}:E_{I}\to E_J$ identifies the cokernels,
$$
\forall u\in \TU_{IJ} : \qquad
E_{I} = \im\rd_u s_{IJ} \oplus C_{u,I}  \quad \Longrightarrow \quad E_J = \im \rd_{\Tphi_{IJ}(u)} s_J \oplus \Hat\phi(C_{u,I}).
$$
\end{enumerate}
The subset   $\uU_{IJ}\subset \uU_I$ is called the {\bf domain} of the coordinate change, while $\TU_{IJ}\subset U_J$ is its {\bf lifted domain}.
\end{defn}

\begin{rmk}\label{rmk:change} \rm
\begin{enumilist}
\item If the isotropy and covering $\rho_{IJ}=:\phi_{IJ}^{-1}$ are both trivial,
this definition  
agrees with that in \S\ref{ss:K1}
 with $\TU_{IJ}=\phi_{IJ}(U_{IJ})$.
Further, the index condition together with the condition that $\TU_{IJ}$ is a submanifold of $U_J$ implies that $\TU_{IJ}$ is 
an open subset of $s_J^{-1}(E_I)$.

\item
The following diagram of group embeddings and group coverings
is associated to each coordinate change:
\begin{equation}\label{eq:phiIJ}
\begin{array}{clcc}
&(\TU_{IJ},\Ga_J) 
& \stackrel {(\Tphi_{IJ}, \id)}{\longrightarrow } &(U_J,\Ga_J)
\vspace{.1in} \\
&\;\; \; \;\downarrow  (\rho_{IJ}, \rho_{IJ}^\Ga) &&\vspace{.1in} \\
(U_I,\Ga_I)\ \longleftarrow &(U_{IJ},\Ga_I)  &&
\end{array}
\end{equation}

\item Since $\urho_{IJ}: \und{\TU}_{IJ} \to \uU_{IJ}$ is a homeomorphism by Lemma~\ref{le:vep}~(iii), each coordinate change $(\phi_{IJ},\Hat\phi_{IJ},\rho_{IJ}): \bK_{I}|_{\uU_{IJ}}\to \bK_J$
induces an injective map 
$$
\und{\phi_{IJ}}:=\und{\Tphi_{IJ}}\circ  \urho_{IJ}^{-1} : \uU_{IJ}\to \uU_J
$$ 
on the domains of the intermediate charts.
In fact, there is an induced orbifold coordinate change $\und{\Hat\Phi_{IJ}} : \ubK_{I}|_{\uU_{IJ}}\to \ubK_J$ on the level of the intermediate charts, given by the bundle map $\und{\Hat\Phi_{IJ}} : \und{U_{IJ}\times E_I} \to \und{U_{J}\times E_J}$ which is induced by the multivalued map $(\Tphi_{IJ}\circ\rho_{IJ}^{-1})\times\Hat\phi_{IJ}$ and hence covers 
$\und{\Tphi_{IJ}} \circ  \urho_{IJ}^{-1}=:\uphi_{IJ}$.
This is a topological coordinate change as in Definition~\ref{def:tchange} that respects the orbibundle structure of the projection $\und{\pr}_I:  \und{U_{I}\times E_I} \to \uU_I$.
%

\item
Conversely, suppose 
we are given an orbifold coordinate change $\und{\Hat\Phi_{IJ}}: \ubK_{I} \to \ubK_J$ with domain ${\uU_{IJ}}$.
Then any coordinate change from $\bK_I$ to $\bK_J$ that induces $\und{\Hat\Phi_{IJ}}$ is determined by the $\Ga_J$-invariant set $\TU_{IJ}:=\pi_J^{-1}(\uphi(\uU_{IJ}))$ and a choice of $\Ga_I$-equivariant diffeomorphism between
$\qu{\TU_{IJ}}{\Ga_{J\less I}}$ and $U_{IJ} := \pi_I^{-1}(\uU_{IJ})$.
When constructing coordinate changes in the Gromov--Witten setting, we will see that there is a natural choice of this diffeomorphism since the covering maps $\rho_{IJ}$ are given by forgetting certain added marked points.

\item Because $\TU_{IJ}$ is defined to be a subset of $U_J$ it is sometimes convenient 
to think of an element $x\in 
\TU_{IJ}$ as an element in $U_J$, omitting the notation for the inclusion 
map $\Tphi_{IJ}:\TU_{IJ}\to U_J$.$\hfill\er$ 
\end{enumilist}
  \end{rmk}

The next step is to consider restrictions and composites of coordinate changes. Restrictions behave as before.
 Thus,
for $I\subset J$,
given any restrictions $\bK_I': = \bK_I|_{\uU_I'}$ and $\bK_J': = \bK_J|_{\uU_J'}$ 
whose footprints $F_I'\cap F_J'$ have nonempty intersection, and any coordinate change $\bK_I|_{\uU_{IJ}}\to \bK_J$, there is an induced {\bf restricted coordinate change} $\bK_I'|_{\uU_{IJ}'}\to \bK_J'$ for any subset $\uU_{IJ}'\subset \uU_{IJ}$ satisfying the conditions
\begin{equation}\label{eq:coordres}
 \uU_{IJ}'\subset \uU_I'\cap \uphi^{-1}(\uU_J'),
\qquad  \uU_{IJ}'\cap \us_I^{-1}(0) = \upsi_I^{-1}(F_I'\cap F_J').
\end{equation}
However, coordinate changes now do not directly compose due to the coverings involved. The induced coordinate changes on the intermediate charts still compose directly, but the analog of \cite[Lemma~2.2.5]{MW1} is the following.
The proof is routine.

\begin{lemma}\label{le:compos2} 
Let $I\subset J\subset K$ 
(so that automatically $F_I\cap F_K= F_J$)
and suppose that $\Hat\Phi_{IJ}: \bK_I\to \bK_J$ and  $\Hat\Phi_{JK}: \bK_J\to \bK_K$ are coordinate changes with domains ${\uU_{IJ}}$ and $\uU_{JK}$ respectively.
Then the following holds.
\begin{enumerate}
\item
The domain
$\uU_{IJK}:= \uU_{IJ}\cap \uphi_{IJ}^{-1}(\uU_{JK})\subset \uU_I$
defines a restriction $\bK_I|_{\uU_{IJK}}$ to $F_K$. 
\item
The composite $\rho_{IJK}: = \rho_{IJ}\circ \rho_{JK}:\TU_{IJK} \to U_{IJK}: =\pi_I^{-1}(\uU_{IJK})$ is defined on $\TU_{IJK}: = \pi_K^{-1}\bigl((\uphi_{JK}\circ\uphi_{IJ})(\uU_{IJK})\bigr)$ via the natural identification of $\rho_{JK}(\TU_{IJK})\subset U_J$ with a subset of $\TU_{IJ}$.
Together with the natural projection $\rho_{IK}^\Ga: \Ga_K\to \Ga_I$ with kernel $\Ga_{K\less I}$
(which factors $\rho_{IK}^\Ga= \rho_{IJ}^\Ga\circ\rho_{JK}^\Ga$),
this forms a group covering $(\TU_{IJK},\Ga_K,\rho_{IJK},\rho_{IK}^\Ga)$ of $(U_{IJK}, \Ga_I)$.
%
\item
The inclusion $\Tphi_{IJK}:\TU_{IJK}\hookrightarrow U_K$ together with 
the natural inclusion
$\Hat\phi_{IK}:E_I\to E_K$ (which factors $\Hat\phi_{IK}= \Hat\phi_{JK}\circ \Hat\phi_{IJ}$)
and $\rho_{IJK}$ satisfies  \eqref{eq:change2} and the index condition with respect to the indices $I,K$.
%
\end{enumerate}
Hence this defines a  {\bf composite coordinate change} $\Hat\Phi_{IJK}= (\Tphi_{IJK}, \Hat\phi_{IJK},\rho_{IJK})$ from
$\bK_I$ to  $\bK_K$ with domain $\uU_{IJK}$.
\end{lemma}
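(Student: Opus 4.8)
\textbf{Proof proposal for Lemma~\ref{le:compos2}.}

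The plan is to verify statements (i), (ii), (iii) in turn, checking at each stage that the data produced actually fits Definition~\ref{def:change2}. The key observation throughout is that everything has already been arranged on the intermediate level, where the coordinate changes $\und{\Hat\Phi_{IJ}}$, $\und{\Hat\Phi_{JK}}$ are ordinary topological coordinate changes (Remark~\ref{rmk:change}~(iii)) that compose directly by restriction, exactly as in the trivial-isotropy case of \S\ref{ss:K1}; so statement (i) is immediate once we note $\uU_{IJK}:=\uU_{IJ}\cap \uphi_{IJ}^{-1}(\uU_{JK})$ is open in $\uU_I$, contains $\upsi_I^{-1}(F_K)$ in its zero set (because $F_I\cap F_K=F_J$ forces the relevant footprint equalities), and hence defines a restriction $\bK_I|_{\uU_{IJK}}$ to $F_K$ by Lemma~\ref{le:restr0}.

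For (ii), I would first set $\TU_{IJK}:=\pi_K^{-1}\bigl((\uphi_{JK}\circ\uphi_{IJ})(\uU_{IJK})\bigr)\subset U_K$, which is $\Ga_K$-invariant since it is a $\pi_K$-preimage. The point requiring care is that $\rho_{JK}$ was defined as a map into $U_J$, not into $\TU_{IJ}$, so one must justify the ``natural identification'' of $\rho_{JK}(\TU_{IJK})$ with a subset of $\TU_{IJ}$: this holds because $\uphi_{JK}\circ\uphi_{IJ}$ maps $\uU_{IJK}$ into $\uphi_{JK}(\uU_{IJ}\cap\uphi_{IJ}^{-1}(\uU_{JK}))$, whose $\pi_J$-preimage lands in $\TU_{JK}$ and projects under $\rho_{JK}$ into $\pi_J^{-1}(\uU_{IJ})$, and the image is the $\Ga_J$-invariant submanifold $\TU_{IJ}\subset U_J$ by the zero-set/footprint compatibility \eqref{eq:change2}. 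Then $\rho_{IJK}:=\rho_{IJ}\circ\rho_{JK}$ is a well-defined smooth map $\TU_{IJK}\to U_{IJK}$; it is a regular covering because it is a composite of regular coverings, and it is $\ker\rho_{IK}^\Ga$-principal where $\rho_{IK}^\Ga:=\rho_{IJ}^\Ga\circ\rho_{JK}^\Ga$ has kernel $\Ga_{K\less I}$. One checks $\ker\rho_{IK}^\Ga=\Ga_{K\less I}$ acts freely on $\TU_{IJK}$ by decomposing $\Ga_{K\less I}$ compatibly with the splitting $\Ga_K\cong\Ga_I\times\Ga_{J\less I}\times\Ga_{K\less J}$ and using that $\Ga_{K\less J}$ acts freely on $\TU_{JK}$ while (the relevant copy of) $\Ga_{J\less I}$ acts freely on $\TU_{IJ}$; and $\rho_{IJK}$ descends to a diffeomorphism $\qu{\TU_{IJK}}{\Ga_{K\less I}}\cong U_{IJK}$ equivariant for the $\Ga_I$-action by pasting the two descent diffeomorphisms for $\rho_{IJ}$ and $\rho_{JK}$. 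This gives the group covering $(\TU_{IJK},\Ga_K,\rho_{IJK},\rho_{IK}^\Ga)$ of $(U_{IJK},\Ga_I)$.

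For (iii), the factorization $\Hat\phi_{IK}=\Hat\phi_{JK}\circ\Hat\phi_{IJ}$ of the linear injections is automatic from the product structure $E_K=\prod_{i\in K}E_i$, and $\Ga_I$-equivariance is inherited. The intertwining relations \eqref{eq:change2} with indices $I,K$ follow by composing the two given sets of relations: on $\TU_{IJK}$ we have $s_K\circ\Tphi_{IJK}=s_K\circ\Tphi_{JK}\circ(\text{incl})=\Hat\phi_{JK}\circ s_J\circ\rho_{JK}\circ(\text{incl})=\Hat\phi_{JK}\circ\Hat\phi_{IJ}\circ s_I\circ\rho_{IJ}\circ\rho_{JK}=\Hat\phi_{IK}\circ s_I\circ\rho_{IJK}$, and similarly for the footprint maps $\psi_\bullet$ on $\rho_{IJK}^{-1}(s_I^{-1}(0))$. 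The index condition is checked pointwise: at $u\in\TU_{IJK}$, writing $v=\Tphi_{JK}(\text{incl}(u))\in\TU_{JK}$ one applies the kernel identity for $\Hat\Phi_{JK}$ and then, after pulling back along $\rho_{JK}$, the kernel identity for $\Hat\Phi_{IJ}$, using $s_{IK}=s_{IJ}\circ\rho_{JK}$ so that $\ker\rd s_{IK}=\rd\rho_{JK}^{-1}(\ker\rd s_{IJ})$; the cokernel identity is handled by the analogous two-step argument, choosing a complement $C_{u,I}$ to $\im\rd_u s_{IK}$, pushing it by $\Hat\phi_{IJ}$ to a complement in $E_J$, and then by $\Hat\phi_{JK}$ to one in $E_K$. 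The main obstacle is precisely this bookkeeping in step (ii): keeping the three nested covering groups and their actions straight, and verifying that $\rho_{JK}$ really does map into $\TU_{IJ}\subset U_J$ rather than merely into $U_J$ — once that identification is pinned down via \eqref{eq:change2}, everything else is a routine composition argument, so I would present (ii) in full and merely indicate (iii) by citing the corresponding computations in \S\ref{ss:K1} together with the pullback formula $s_{IK}=s_{IJ}\circ\rho_{JK}$.
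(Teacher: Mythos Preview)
Your approach is correct and matches the paper's intent; the paper itself simply says ``The proof is routine'' and provides no argument, so your fleshing-out is a reasonable execution of what is left to the reader. Two small corrections in part~(ii): the containment $\rho_{JK}(\TU_{IJK})\subset\TU_{IJ}$ does not follow from the intertwining relations~\eqref{eq:change2} but rather from the fact that $\TU_{IJ}$ is $\Ga_J$-invariant, so that $\TU_{IJ}=\pi_J^{-1}(\uphi_{IJ}(\uU_{IJ}))$, together with the intermediate-level identity $\urho_{JK}=\uphi_{JK}^{-1}$ (your ``$\pi_J^{-1}(\uU_{IJ})$'' should read ``$\pi_J^{-1}(\uphi_{IJ}(\uU_{IJ}))$''); and the sentence ``it is a regular covering because it is a composite of regular coverings'' is false in general and should be dropped, since you immediately supply the correct argument via the freeness of the $\Ga_{K\less I}$-action.
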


\begin{rmk}\label{rmk:coord}\rm  
The induced 
orbifold
coordinate change $\und{\Hat\Phi}_{IJK}= (\uphi_{IJK},\und{\Hat\phi}_{IJK})$
between the intermediate charts $\ubK_I$ and $\ubK_K$ is the composite
$\und{\Hat\Phi}_{JK}\circ \und{\Hat\Phi}_{IJ}$ as considered in \S\ref{ss:K1}.
(For more detail, see \cite[Lemma~2.2.5]{MW1}.)
$\hfill\er$  \end{rmk}

With these notions of Kuranishi charts and coordinate changes in place, we can now directly extend the definition of Kuranishi atlas from \S\ref{ss:K1}. The notions of a covering family $(\bK_i)_{i=1,\ldots,N} $ of basic charts for $X$ and 
of transition data $$
(\bK_J)_{J\in\Ii_\Kk,|J|\ge 2}, (\Hat\Phi_{I J})_{I,J\in\Ii_\Kk, I\subsetneq J}
$$ are as before.
The cocycle conditions can now mostly be expressed in terms of the intermediate charts.

\begin{defn}  \label{def:cocycle2}
Let $\bK_\al$ for $\al = I,J,K$ be Kuranishi charts with $I\subset J\subset K$ and let $\Hat\Phi_{\al\be}:\bK_\al|_{\uU_{\al\be}}\to \bK_\be$ for $(\al,\be) \in \{(I,J), (J,K), (I,K)\}$ be coordinate changes. We say that this triple
$\Hat\Phi_{I J}, \Hat\Phi_{J K}, \Hat\Phi_{I K}$ satisfies the
\begin{itemlist} \item {\bf weak cocycle condition} if $\Hat\Phi_{J K}\circ \Hat\Phi_{I J} \approx \Hat\Phi_{I K}$ are equal on the overlap in the sense
\begin{align}\label{eq:wcocycle}
\rho_{IK} &= \rho_{IJ}\circ \rho_{JK} \quad \mbox{ on } \; \TU_{IK}\cap \rho_{JK}^{-1}(\TU_{IJ}\cap U_{JK});
\end{align}
\item {\bf cocycle condition}
if $\Hat\Phi_{J K}\circ \Hat\Phi_{I J} \subset \Hat\Phi_{I K}$, i.e.\  $\Hat\Phi_{I K}$ extends the composed coordinate change in the sense that \eqref{eq:wcocycle}  holds and
\begin{eqnarray}\label{eq:cocycle2}
& \uU_{IJ}\cap \uphi_{IJ}^{-1}(\uU_{JK})\subset  \uU_{IK};
\end{eqnarray}
\item {\bf strong cocycle condition}
if $\Hat\Phi_{J K}\circ \Hat\Phi_{I J} = \Hat\Phi_{I K}$ are equal as coordinate changes, that is if \eqref{eq:wcocycle} 
  holds and
\begin{eqnarray}\label{strong cocycle2}
& \uU_{IJ}\cap \uphi_{IJ}^{-1}(\uU_{JK})=  \uU_{IK}.
\end{eqnarray}
\end{itemlist}
 \end{defn}

As in  \cite[Lemma~2.2.13]{MWiso}, it is straightforward to check that 
the cocycle condition \eqref{eq:cocycle2} implies that 
\begin{equation}\label{eq:cocyclea}
\rho_{IK} \; =\; \rho_{IJ}\circ \rho_{JK} \quad \mbox{ on } \;  \rho_{JK}^{-1}(\TU_{IJ}\cap U_{JK})\subset \TU_{IK}.
\end{equation}
Similarly, condition \eqref{eq:wcocycle} implies
\begin{align}\label{eq:uwcocycle}
\uphi_{IK} &= \uphi_{JK} \circ \uphi_{IJ}\quad  \mbox{ on } \; \uU_{IK}\cap \bigl(\uU_{IJ}\cap \uphi_{IJ}^{-1}(\uU_{JK})\bigr).
\end{align}

Therefore we are led to the following definition.  It is identical to the original Definition~\ref{def:Ku} of an atlas, except that now we allow more general charts and coordinate changes.

\begin{defn}\label{def:Ku2}
A {\bf (weak) smooth Kuranishi atlas of dimension $\mathbf d$} on a compact metrizable space $X$ is a tuple
$$
\Kk=\bigl(\bK_I,\Hat\Phi_{I J}\bigr)_{I, J\in\Ii_\Kk, I\subsetneq J}
$$
consisting of a covering family of basic charts $(\bK_i)_{i=1,\ldots,N}$ of dimension $d$
and transition data $\bigl((\bK_J)_{|J|\ge 2}, (\Hat\Phi_{I J})_{I\subsetneq J}\bigr)$ for $(\bK_i)_{i=1,\ldots,N}$ that satisfies the (weak) cocycle condition for every triple $I,J,K\in\Ii_K$ with $I\subsetneq J \subsetneq K$.

The corresponding {\bf intermediate atlas} has charts $(\ubK_I)_{I\in \Ii_\Kk}$ and coordinate changes
$(\und{\Hat\Phi}_{IJ})_{I\subset J, I,J\in\Ii_\Kk}$.
 \end{defn}
 
 \begin{lemma}\label{le:interm}
The intermediate atlas $\uKk$ corresponding to a (weak) smooth Kuranishi atlas $\Kk$ is a (weak) filtered topological atlas with $\E_{IJ} = \und{U_J\times \Hat\phi_{IJ}(E_I)}$ for all $I\subset J$.
 \end{lemma}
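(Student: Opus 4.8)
The plan is to verify directly that the intermediate atlas $\uKk$ satisfies the definition of a weak filtered topological atlas (Definition~\ref{def:tKu} together with Definition~\ref{def:Ku3}). The individual pieces of this verification are almost all already in hand. First I would recall (from Remark~\ref{rmk:intermed} and Definition~\ref{def:quotlev}) that each intermediate chart $\ubK_I$ is a topological Kuranishi chart in the sense of Definition~\ref{def:tchart}: the domain $\uU_I = \qu{U_I}{\Ga_I}$ is separable, locally compact and metrizable because $U_I$ is a manifold and $\pi_I$ is a proper open map by Lemma~\ref{le:vep}(i), and the bundle $\upr_I : \und{U_I\times E_I}\to \uU_I$, zero section, section $\us_I$, and footprint map $\upsi_I$ are all as required. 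Next, by Remark~\ref{rmk:change}(iii) each coordinate change $\Hat\Phi_{IJ}$ of $\Kk$ induces a topological coordinate change $\und{\Hat\Phi}_{IJ} : \ubK_I|_{\uU_{IJ}}\to \ubK_J$ in the sense of Definition~\ref{def:tchange}, with underlying embedding $\uphi_{IJ} = \und{\Tphi}_{IJ}\circ \urho_{IJ}^{-1}$ (well-defined since $\urho_{IJ}$ is a homeomorphism by Lemma~\ref{le:vep}(iii)). The weak cocycle condition for $\uKk$ follows from \eqref{eq:uwcocycle}, which is exactly the statement that $\uphi_{JK}\circ\uphi_{IJ} \approx \uphi_{IK}$ on the overlap; and if $\Kk$ satisfies the cocycle condition then by \eqref{eq:cocycle2} the domain inclusion $\uU_{IJ}\cap\uphi_{IJ}^{-1}(\uU_{JK})\subset \uU_{IK}$ holds, which is precisely the extra condition that makes $\uKk$ an atlas rather than merely a weak atlas. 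This handles the "(weak) topological atlas" half of the claim.

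It then remains to check that $\uKk$ is filtered, with the asserted filtration $\E_{IJ} := \und{U_J\times \Hat\phi_{IJ}(E_I)}$ for $I\subset J$, by verifying conditions (i)--(iv) of Definition~\ref{def:Ku3}. Condition (i) is immediate: $\E_{JJ} = \und{U_J\times E_J} = \E_J$ and $\E_{\emptyset J} = \und{U_J\times \{0\}} = \im 0_J$ using the convention $E_\emptyset = \{0\}$, $\Hat\phi_{JJ} = \id$. For (iii), $\Hat\phi_{IJ}(E_I)\cap\Hat\phi_{HJ}(E_H) = \Hat\phi_{(I\cap H)J}(E_{I\cap H})$ (this is the additivity of the obstruction spaces $E_J = \prod_{j\in J} E_j$), and since $\Ga_J$ acts linearly and the intersection is $\Ga_J$-invariant, passing to quotients gives $\E_{IJ}\cap\E_{HJ} = \E_{(I\cap H)J}$. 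Closedness of each $\E_{IJ}$ in $\E_J$ follows because $U_J\times\Hat\phi_{IJ}(E_I)$ is closed and $\Ga_J$-invariant in $U_J\times E_J$ and the quotient map is closed (again Lemma~\ref{le:vep}(i)). Condition (ii), $\und{\Hat\Phi}_{JK}(\E_{IJ}) = \E_{IK}\cap\upr_K^{-1}(\im\uphi_{JK})$, is the quotient of the smooth-atlas identity
\begin{equation*}
\Tphi_{JK}\bigl(\rho_{JK}^{-1}(U_{IJ})\times \Hat\phi_{IJ}(E_I)\bigr) = \bigl(U_K\times\Hat\phi_{IK}(E_I)\bigr)\cap \pr_K^{-1}(\im\Tphi_{JK}),
\end{equation*}
which in turn comes from the weak cocycle condition $\Hat\phi_{IK} = \Hat\phi_{JK}\circ\Hat\phi_{IJ}$ together with the definitions of the covering and embedding data in Definition~\ref{def:change2}; one descends this to the intermediate level using Remark~\ref{rmk:change}(iii), exactly as in the trivial-isotropy computation of Lemma~\ref{le:Ku3}(a). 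Finally, condition (iv), that $\im\uphi_{IJ}$ is open in $\us_J^{-1}(\E_{IJ})$, follows from the index condition: by Remark~\ref{rmk:change}(i) the lifted domain $\TU_{IJ}$ is an open subset of $s_J^{-1}(\Hat\phi_{IJ}(E_I))\subset U_J$, hence $\pi_J(\TU_{IJ}) = \uphi_{IJ}(\uU_{IJ}) = \im\uphi_{IJ}$ is open in $\pi_J(s_J^{-1}(\Hat\phi_{IJ}(E_I))) = \us_J^{-1}(\E_{IJ})$ since $\pi_J$ is open.

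I do not expect any genuine obstacle here; the lemma is essentially bookkeeping, transporting the trivial-isotropy statement Lemma~\ref{le:Ku3}(a) through the quotient construction. The one place requiring a little care is condition (ii) of the filtration: one must be careful that the set $\TU_{IJK} = \rho_{JK}^{-1}(\TU_{IJ}\cap U_{JK})$ appearing in the composition Lemma~\ref{le:compos2} matches the set $\upr_K^{-1}(\im\uphi_{JK})\cap \E_{IK}$ after taking quotients, and that the weak cocycle identities \eqref{eq:wcocycle}, \eqref{eq:uwcocycle} are exactly what is needed — as opposed to the stronger domain-inclusion \eqref{eq:cocycle2} — since the lemma asserts $\uKk$ is filtered even when $\Kk$ is only a weak atlas. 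A secondary point worth stating explicitly is that all the quotient spaces $\uU_I$, $\und{U_I\times E_I}$, $\E_{IJ}$ are metrizable and locally compact: this is where Lemma~\ref{le:vep}(i),(ii) does the real work, and it is what licenses treating $\uKk$ within the topological-atlas framework of \S\ref{ss:tatlas} in the first place.
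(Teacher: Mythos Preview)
Your proposal is correct and follows essentially the same approach as the paper: the paper's proof is the single line ``This holds by the proof of Lemma~\ref{le:Ku3}(a),'' and you have simply unpacked that reference in detail, carrying the trivial-isotropy computation through the quotient by $\Ga_I$ using Lemma~\ref{le:vep} and Remark~\ref{rmk:change}. Your added care about descent (openness/closedness under $\pi_J$, and that only the weak cocycle condition is needed for filtration condition~(ii)) is exactly the content the paper leaves implicit.
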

 \begin{proof} This holds by  the proof of Lemma~\ref{le:Ku3} (a).
 \end{proof}

\begin{example}\label{ex:foot} \rm  (i)  We show in Proposition~\ref{prop:orb} that every compact smooth orbifold has an atlas.
As an example, consider the ``football"  $Y = S^2$ with two orbifold points, one at the north pole of order $2$ and one at the south pole of order $3$.
Take charts $(U_1, \Z_2)$, $(U_2, \Z_3)$ about north/south pole with  
$\uU_1\cap \uU_2=\uU_{\{12\}}=\uA$ an annulus
around the equator.
Let $A_i= \pi_i^{-1}(\uA)$ where $\pi_i:U_i\to \uU_i$ is the projection.\footnote
{
For simplicity, we here identify 
each $\uU_I$ with its image $F_I\subset Y$.
} 
Then the restriction of $(U_1, \Z_2)$ over $\uA$ is $(A_1, \Z_2)$,
whereas the restriction of $(U_2, \Z_3)$ over $\uA$ is $(A_2, \Z_3)$.
There is no direct relation between these restrictions because the coverings $A_1\to \uA$ and $A_2\to \uA$ are incompatible.  However, they do have a common free covering, namely the pullback defined by the diagram
\[
\xymatrix{
U_{12}   \ar@{->}[d] \ar@{->}[r]   & U_1 \ar@{->}[d]^{\pi_1}   \\
U_2 \ar@{->}[r]^{\pi_2}  & Y
}
\]
i.e. $U_{12} := \{(x,y)\in U_1\times U_2 \,|\, \pi_1(x) = \pi_2(y)\}
$
with group 
$\Ga_{12}: = \Ga_1\times \Ga_2$.  This defines  an atlas with two basic charts and one transition chart.
For a noneffective example with trivial obstruction spaces, see \cite[Example~3.6]{Morb}.

Note that this construction cannot be imitated for general orbifolds since the above fiber product $U_{12}$ is not smooth 
if there are any points in the overlap $\pi_1(U_1)\cap \pi_2(U_2)$ with nontrivial stabilizers.  However, it turns out that there is a very simple substitute construction for the domain of the transition chart.  Namely,  if the charts $(U_i,\Ga_i)$ inject into a groupoid representative for $Y$ then we can take $U_{12}$ to be the morphisms in this groupoid from $U_1$ to $U_2$; see  the proof of Proposition~\ref{prop:orb}.\MS

\NI (ii)  For another example that illustrates the effect of a noneffective group action, consider an atlas over $X = S^1$ with two basic charts, where
\begin{itemize}\item $\bK_1$ has $U_1=S^1, E_1 = \{0\}$, the group $ \Ga_1 = \Z_2$ acting trivially,
and   footprint map $\psi_1= \id: U_1\to S^1$, while 
\item $\bK_2$ has $U_2= S^1\times \R^2$,  $E_2=\R^2$, $s_2(x,e) = e$,
and $\Ga_2 = \Z_2$ where $\ga\ne \id $ acts on $U_2$ and $E_2$ by $(x,e)\mapsto (x,-e)$ and $  e\mapsto -e$, so that $\psi_2: s_2^{-1}(0) = S^1\times \{0\}\to S^1$ 
is  induced by the projection.
\end{itemize}
The transition chart $\bK_{12} $ has $E_{12} = \Hat\phi_{2,12}(E_2) =\R^2$ and 
$\Ga_{12} = \Z_2\times \Z_2$. We must have $U_{1,12} = U_1$ and if we take $U_{2,12} = U_2$ (there is some choice here), then
in order for $\Ga_i$ to act freely on $\TU_{j,12}$ for $i\ne j$,  we must take
 $U_{12} = \TU_{2,12} = \sqcup_{\be = \pm} (S^1\times \R^2)_\be$ to be two copies of
 $S^1\times \R^2$ that are interchanged by the action of either group $\Ga_i$. 
Thus 
 the group $\Ga_{12} = \Z_2\times \Z_2$ acts as follows:
\begin{align*} 
\mbox{ on } U_{12}  \mbox{ by } &\
(\ga_1,\ga_2)(x,e)_\be =\bigl( (x,\ga_2e)\bigr)_{\ga_1\ga_2(\be)} \; \mbox{ where } \; \ga_i(\be) = (-1)^{\ga_i}\be,\\
\mbox{ on } E_{12}  \mbox{ by }  &\ (\ga_1,\ga_2)(e) = \ga_2 e, 
\end{align*}
and the coordinate changes are given by
\begin{align*} 
&\  \TU_{1,12} = \{(x,0)_\be \ | \ x\in S^1\}, \quad \rho_{1,12}: (x,0)_\be \mapsto x,\\
 &\  \TU_{2,12} = U_{12},\quad \rho_{2,12}: (x,e)_\be\mapsto  (x,e).
\end{align*}
Notice that although the action of $\Ga_1$ is totally noneffective, the group $\Ga_{12}$ acts effectively on $U_{12}$.  Moreover the stabilizer
$\Ga_{12}^{\Tx}$ of the point $\Tx= (x,0)_\be \in \TU_{1,12}$ is the diagonal subgroup $(\ga,\ga)\in \Ga_{12}$.
Thus although $\rho^\Ga_{1,12}: \Ga_{12}\to \Ga_1$ does induce an isomorphism $\Ga^{\Tx}_{12} \to \Ga^x_1$ for all $\Tx\in \rho_{1,12}^{-1}(x)$ (cf.  Lemma~\ref{le:vep}~(iii)),
 it does not preserve the property of acting noneffectively.    See Figure~\ref{fig:7}  in Example~\ref{ex:foot2} for an illustration of the fundamental cycle of this atlas. \hfill$\er$
 \end{example}

\begin{rmk}\label{rmk:tamea}\rm   
 The definition of atlas requires that  each transition chart $\bK_I$ has  obstruction spaces and groups that
are products over $i\in I$. We will generalize the requirements on $E_I$ in \S\ref{s:order}.
Assuming for now that the obstruction spaces are products, we can generalize the requirements on the groups $\Ga_I$ as follows.
We assume given charts $(\bK_I)_{I,J\in \Ii_\Kk}$ whose domains are group quotients $(U_I,\Ga_I)$ 
with coordinate changes given by group coverings $(\rho_{IJ}, \rho^\Ga_{IJ}): \TU_{IJ}\to U_{IJ}$ 
and linear embeddings $\Hat\phi_{IJ}:E_I\to E_J$ as before, where 
$\rho_{IJ}^\Ga: \Ga_J\to \Ga_I$ is a family of split surjective homomorphisms
with splittings $\io^\Ga_{IJ}:\Ga_I\to \Ga_J$
such that
\begin{itemize}
\item  $\rho_{IJ}\circ \rho_{JK} = \rho_{IK}$, 
 $\rho^\Ga_{IJ}\circ \rho^\Ga_{JK} = \rho^\Ga_{IK}$  and $\io^\Ga_{JK}\circ \io^\Ga_{IJ}=\io^\Ga_{IK}$ whenever $I\subset J\subset K$;
 \item for all $I\subset J$ 
\item the  linear inclusion $\Hat\phi_{IJ}:E_I\to E_J$  is $\io^\Ga_{IJ}$-equivariant and 
 $\ker(\rho^\Ga_{IJ})$ acts trivially on its image;
\end{itemize}
We will call such  atlases {\bf tameable}.  They are  very natural when one considers products; cf. \S\ref{ss:hybrid}.
It is not hard to check that all the constructions in this section,  go through as before with this more general kind of atlas.
In particular, in the construction of $\Hat\bZ^\nu$ in Proposition~\ref{prop:zero} the group $\Ga_{I\less F}$  that appears in 
\eqref{eq:morZ0} should be replaced by $\ker \rho^\Ga_{FI}$.  
$\hfill\er$  \end{rmk}

\begin{rmk}\label{rmk:FOOO1}  \rm 
We now show how to obtain a Kuranishi structure in the sense of \cite{FOOO12} from a Kuranishi atlas in the case 
when the isotropy groups act effectively on the domains $U$, the only case they consider.
Recall that to define a Kuranishi structure one needs to specify a family of Kuranishi  charts $(\bK_p)_{p\in X}$  with 
footprints $F_p\ni p$, together with coordinate changes\footnote
{
For consistency with our conventions we write $\uphi_{qp}$ rather than $\uphi_{pq}$ here. 
}
 $(\uphi_{qp}: \ubK_q\to \ubK_p)_{q\in F_p}$ 
that are defined on the level of the intermediate atlas and satisfy the weak cocycle condition stated below.  Even though there are uncountably many charts $\bK_p$, Fukaya et al. construct them from a finite covering family (called a {\it primary family} in \cite{TF}) in much the same way that we  now describe.

\begin{itemlist} \item First choose a precompact \lq\lq shrinking" $\{ G_i \sqsubset F_i\}_{i=1,\dots,N}$ of  the footprints,
and for $p\in X$, define $I_p: =  \{i \,|\, p\in \ov G_i\}$.
\item For  $p\in X$ define $\bK_p$ by choosing a restriction of the  chart $\bK_{I_p}$ to 
$$
F_p:={\textstyle \bigl(\bigcap_{i\in I_p} F_i\bigr) \less \bigl(\bigcup_{j\notin I_p} \ov G_j\bigr).}
$$ 
More precisely, since $p\in F_p$, we choose a lift $\oo_p$ of $p$ to $U_{I_p}$, take $\Ga_p$  to be the stabilizer subgroup of $\oo_p$ (so that the chart is minimal as required by \cite{FOOO})
and take the domain $U_p\subset U_{I_p}$ to be a $\Ga_p$-invariant neighbourhood of $\oo_p$ such that the induced map $\qu{U_p}{\Ga_p}\to \qu{U_{I_p}}{\Ga_{I_p}}$ is injective.  (Suitable $(U_p,\Ga_p)$ exist by Lemma~\ref{le:vep}~(ii).)

\item 
For $q\in F_p$ we 
have $I_q\subset I_p$ since by construction $F_p\cap G_i=\emptyset$ for $i\not\in I_p$. 
So we obtain a coordinate change $\Hat\Phi_{qp}: \bK_q\to \bK_p$ from a suitable restriction of $\Hat\Phi_{I_q I_p}$ to a $\Ga^{x_q}_q$-invariant neighbourhood $U_{qp}\subset U_q$ of $x_q$.  
 More precisely, we choose $U_{qp}\subset U_q$ small enough so that the projection $\rho_{I_q I_p}: U_p\cap \TU_{I_q I_p}\to U_{I_q I_p}$ has a continuous section over $U_{qp}$.
We denote its image by $\TU_{qp}$ and thus obtain an embedding $\phi_{qp}:= \rho_{I_q I_p}^{-1} : U_{qp}\to \TU_{qp}\subset U_p \cap \TU_{I_q I_p}$. 
Since the projection $\rho_{I_q I_p}$ induces an isomorphism on stabilizer subgroups by 
 Lemma~\ref{le:vep}~(iii), 
  this is equivariant with respect to a suitable injective homomorphism $h_{qp}:\Ga_q\to \Ga_p$ and induces an injection $\uphi_{qp}: \uU_{qp}: = \qu{U_{qp}}{\Ga_q} \to \uU_{p}: = \qu{U_{p}}{\Ga_p}$.
Moreover, because the induced map $\uU_{p}= \qu{U_{p}}{\Ga_p}\to \uU_{I_p} = \qu{U_{I_p}}{\Ga_{I_p}}$ is a homeomorphism to its image by construction
and similarly for $q$,
we can identify the underlying map  $\uphi_{qp}$ of the coordinate change with a suitable restriction of $\uphi_{I_qI_p}$.
The coordinate change $\Hat\Phi_{qp} = (U_{qp},\uphi_{qp})$ is then given by the domain $U_{qp}$ and the restriction of 
 $\uphi_{I_qI_p}$ to $\uU_{qp}\subset \uU_q$.

The weak cocycle condition for $\Kk$ implies the  required compatibility condition, namely
 for all triples 
$p,q,r\in X$ with  $ q\in F_p$ and $r\in  \psi_q(U_{qp}\cap s_q^{-1}(0))\subset F_q\cap F_p$, the equality
$\uphi_{qp}\circ \uphi_{rq} = \uphi_{rp}$  holds  on the common domain
$\uphi_{rq}^{-1}(\uU_{qp})\cap \uU_{rp}$ of the maps in this equation.

\end{itemlist}

This process of passing to small charts loses information about isotropy, and
seems a little inefficient, in that one  needs to rebuild larger charts in order to get a \lq\lq good coordinate system".
  In \cite{FOOO12} this is done only on the level of the intermediate atlas, so that they need to assume
  the group actions are effective.\footnote
  {
  Although this is almost always true in the Gromov Witten setting and can always be arranged by adding marked points,
  there are a few situations where it does not hold; cf. Example~\ref{ex:zorb}.}
   Further, although it is not clear how relevant the extra information  contained in a Kuranishi atlas is to 
 the question of how to define Gromov--Witten invariants
 for closed curves in smooth manifolds,
 it might prove useful in other situations, for example  in the case of orbifold Gromov--Witten invariants, or in the 
 recent work of Fukaya et al \cite{FOOO15} where the authors consider a process
  that rebuilds a Kuranishi structure  from a coordinate system. 
One way of formulating the approach in \cite{FOOO12} is that they do 
    taming and reduction 
simultaneously.  It might be possible to do that in our context.  However,  it is actually very useful to have 
the intermediate object of a Kuranishi atlas since this captures 
all the needed information about the coordinate changes in the simplest possible form.  
$\hfill\er$  \end{rmk}

\begin{rmk}\rm
Although it seems that many choices are needed in order to construct a
Kuranishi atlas, this is somewhat deceptive.   For example, in the Gromov--Witten case  considered 
in Section~\ref{s:GW} below, 
the choices involved in the construction of a family of basic charts $(\bK_i)_{i=1,\dots, N}$ essentially induce the transition data as well.
Namely, for each $I\subset \{1,\dots,N\}$ such that $F_I: = \bigcap_{i\in I} F_i$ is nonempty, we will construct a ``transition chart" $\bK_I$ with group $\Ga_I: = \prod_{i\in I} \Ga_i$ and obstruction space\footnote
{
We will use the  stabilization process developed in \cite{MWgw} (see also \S4.1~(VI) below) that allows us to do this for {\it any} set of $E_i$;  there is no need for a transversality requirement such as Sum Condition II$'$ in \cite[Section~4.3]{MW2}.
}	
$E_I: = \prod_{i\in I}  E_i$. 
Moreover, each $E_i$ is a product of the form $E_i = \prod_{\ga\in \Ga_i}  (E_{0i})_\ga$ of copies of a vector space $E_{0i}$ that are permuted by the action of $\Ga_i$, and $\Ga_I$ acts on $E_I$ by the obvious product action.  

More precisely, each basic chart $\bK_i$  is constructed by adding a certain tuple $\bw_i$ of marked points\footnote
{
One difference between our construction and that in \cite{FOOO12,TF} is that here the added marked points $\bw$ are labelled, while in 
 \cite{FOOO12,TF} they are unordered.} 
 to the domains of the stable maps $[\Si_f,\bz,f]$, given by the preimages of a fixed hypersurface of $M$.
When seen on spaces of  equivalence classes of maps,  the action of $\Ga_i$ is easy to understand,\footnote
{
This point is explained in detail in \S\ref{ss:GW}~(IX), where we describe the action both on parametrized maps as in \eqref{eq:perm},\eqref{eq:permn} and on equivalence classes of maps as in the discussion after \eqref{eq:coordf3}. }
since it simply permutes this  set of marked points $\bw_i$.
Similarly, elements of the domains $U_J$ of the transition charts consist of 
certain maps $f:\Si\to M$ with the given marked points $\bz$ together with $|J|$ sets of added tuples of marked points $(\bw_j)_{j\in J}$,
each  taken by $f$ to certain hypersurfaces in $M$.
Each factor $\Ga_j$ of the group $\Ga_J$ acts by permuting the elements of the $j$-th tuple of points,  leaving the others alone.
Moreover, the covering map $\TU_{IJ}\to U_I$ simply forgets the extra tuples $(\bw_j)_{j\in J\less I}.$
Thus it is immediate from the construction that the group $\Ga_{J\less I}$ acts freely on the subset $\TU_{IJ}$ of $U_J$, and that the covering map is equivariant in the appropriate sense.
Further, when $I\subset J\subset K$ the compatibility condition
$\rho_{IK} = \rho_{IJ}\circ \rho_{JK}$ holds whenever both sides are defined.  
Therefore, just as in the case with trivial isotropy, once given the basic and transition charts, the only new choice needed to construct an atlas is that of the domains $\uU_{IJ}$  of the coordinate changes which are required to intersect the zero set $\us_I^{-1}(0)$ in 
 $\upsi_I^{-1}(F_J)$.
Since there is no obvious way  to
organize these choices 
so as to satisfy the cocycle condition,  concrete constructions will usually only satisfy a weak cocycle condition.
$\hfill\er$  \end{rmk}

\subsection{Categories and  tamings}\label{ss:iso2}

Just as in \S\ref{ss:K2}, 
we will associate to each
Kuranishi atlas $\Kk$ 
two topological categories $\bB_\Kk,\bE_\Kk$ together with functors
$$
\pr_\Kk:\bE_\Kk\to \bB_\Kk,\quad s_\Kk: \bB_\Kk\to \bE_\Kk,\quad
\psi_\Kk: s_\Kk^{-1}(0) \to \bX,
$$
where $\bX$ is the category with objects $X$ and only identity morphisms. 
Recall here that the morphism spaces will only be closed under composition (and thus generate an equivalence relation that defines the realization $|\Kk|$ as ambient space for $X$) if the cocycle condition holds. Thus for the following we assume that $\Kk$ is a Kuranishi atlas.  
Then, as before, the domain category $\bB_\Kk$ has objects
$$
\Obj_{\bB_\Kk}:= \bigsqcup_{I\in \Ii_\Kk} U_I \ = \ \bigl\{ (I,x) \,\big|\, I\in\Ii_\Kk, x\in U_I \bigr\},
$$
where we  identify $x\in U_I$ with $(I,x)\in \Obj_{\bB_\Kk}$.
The morphisms in $\bB_\Kk$ are composites of morphisms of the following two types.

\begin{itemize}
\item[(a)] 
For each $I\in\Ii_\Kk$ the action of $\Ga_I$ gives rise to morphisms between points in $U_I$. These form a space $U_I\times \Ga_I$ with source and target maps
\begin{align*}  
 s\times t: \quad U_I\times \Ga_I  \; & \longrightarrow \quad\;\; U_I\times U_I\qquad \subset\; \Obj_{\bB_\Kk}\times \Obj_{\bB_\Kk}\\ 
 ( x, \ga ) \;\;\, & \longmapsto \;\bigl((I,\ga^{-1}x), (I, x)\bigr),
\end{align*}
and inverses $(x,\ga)^{-1}=(\ga^{-1} x , \ga^{-1})$.
\item[(b)] \label{b morph}
For each $I\subsetneq J$ the coordinate change $\Hat\Phi_{IJ}$ gives rise to non-invertible morphisms from points in $U_I$ to points in $U_J$ given by the space $\TU_{IJ}$ with source and target maps
\begin{align*}  
 s\times t: \quad \TU_{IJ}  \; & \longrightarrow \qquad\quad\;\; U_I\times U_J\qquad\quad \subset\; 
 \Obj_{\bB_\Kk}\times \Obj_{\bB_\Kk}\\ 
 \ y \quad & \longmapsto \;\bigl((I,\rho_{IJ}(y)), (J,\Tphi_{IJ}(y))\bigr) .
\end{align*}
\end{itemize}

In order to determine the general morphisms in $\bB_\Kk$ we will unify types (a) and (b) by allowing $I=J$, in which case we interpret $U_{II}: = U_I,\ \TU_{IJ}: = U_J,\ \Tphi_{II} =  \rho_{II}: = \id$. 
Thus the morphisms of type (b) are described by their targets $\Tphi_{IJ}(y)\in U_J$ and the covering map $\rho_{IJ}$.
In comparison, recall that in \S\ref{ss:K1}, we have no morphisms of type (a) 
and the morphisms of type (b) are described by their source $x\in U_{IJ}\subset U_I$ and the embedding $\phi_{IJ}:U_{IJ}\to U_J$.
When the isotropy groups are all trivial, it makes no difference whether we use 
source or target since $\phi_{IJ}=\rho_{IJ}^{-1}$ and $\Tphi_{IJ} = \id_{U_J}$. The corresponding isomorphism of categories is given in Lemma~\ref{le:und} below.
For nontrivial isotropy, however, the only way to obtain a continuous description 
of the morphism spaces is to parametrize them essentially by  their targets as follows.

\begin{lemma}\label{le:morph}   
Let $\Kk$ be a Kuranishi atlas. 
Then the space of morphisms in $\bB_\Kk$  is the disjoint union
$$
\Mor_{\bB_\Kk} = \bigsqcup_{I\subset J} \TU_{IJ}\times \Ga_I
= \bigl\{ (I,J,y,\gamma) \,\big|\, I\subset J, y\in \TU_{IJ}, \gamma\in \Ga_I \bigr\},
$$
with source and target maps given by 
\begin{align}\label{eq:morph}  
 s\times t: \quad  \TU_{IJ}\times \Ga_I\; & \; \longrightarrow \qquad\quad U_I\times U_J\qquad\qquad  \subset \; \Obj_{\bB_\Kk}\times \Obj_{\bB_\Kk},\\ \notag
 \bigl(I,J,y,\ga\bigr) & \; \longmapsto\;  \bigl((I,\ga^{-1} \rho_{IJ}(y), \ (J,\Tphi_{IJ}(y))\bigr),
\end{align}
and composition given by 
\begin{equation}\label{eq:compos1}
   \Tphi_{IJ}(x) = \de^{-1}\rho_{JK}(y)\;\Longrightarrow \; \bigl(I,J,x,\ga\bigr) \circ \bigl(J,K,y,\de\bigr)=  \bigl(I,K, y, \rho^\Ga_{IJ}(\de) \ga\bigr).
\end{equation}
\end{lemma}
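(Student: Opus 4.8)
The plan is to verify directly that the data described in Lemma~\ref{le:morph} assembles into a well-defined topological category, by checking in turn: (1) that the claimed morphism space, with the given source and target maps, accounts for exactly the composites of type-(a) and type-(b) morphisms; (2) that composition~\eqref{eq:compos1} is well defined, associative, and has the expected identities and inverses where appropriate; and (3) that all structural maps are continuous. Throughout I would use the degenerate conventions $U_{II}:=U_I$, $\TU_{IJ}:=U_J$ when $I=J$, $\Tphi_{II}=\rho_{II}=\id$, so that type-(a) morphisms $U_I\times\Ga_I$ and type-(b) morphisms $\TU_{IJ}$ (with $\ga=\id$) are the $I=J$ and $\Ga_I=\{\id\}$ special cases of $\bigsqcup_{I\subset J}\TU_{IJ}\times\Ga_I$.

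First I would prove the identification of $\Mor_{\bB_\Kk}$. A general morphism is a composite $g_n\circ\cdots\circ g_1$ of morphisms of types (a),(b). Using the cocycle condition — more precisely, that $\Kk$ is a Kuranishi atlas so \eqref{eq:cocycle2} and its consequence \eqref{eq:cocyclea} hold, giving $\rho_{IK}=\rho_{IJ}\circ\rho_{JK}$ on the appropriate set, together with the compatibility of the $\Tphi$'s implicit in Lemma~\ref{le:compos2} — one can collapse any chain of type-(b) morphisms $\bK_I\to\bK_J\to\bK_K$ into a single morphism $\bK_I\to\bK_K$ described by a point of $\TU_{IK}$. The $\Ga$-actions (type (a)) can then be commuted past the coordinate-change morphisms using equivariance of $\rho_{IJ}$ with respect to $\rho_{IJ}^\Ga$ (this is exactly where the factor $\rho^\Ga_{IJ}(\de)$ in \eqref{eq:compos1} comes from): a type-(a) morphism in $U_J$ followed/preceded by a type-(b) morphism $\TU_{IJ}$ rewrites as a type-(b) morphism followed by a type-(a) morphism in $U_I$. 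Iterating, every composite reduces to the normal form $(I,J,y,\ga)$ with $I\subset J$, $y\in\TU_{IJ}$, $\ga\in\Ga_I$, whose source and target are given by \eqref{eq:morph}. Uniqueness of this normal form — that distinct tuples give distinct morphisms in the free-category-modulo-relations sense — follows because $\rho_{IJ}$ is a covering and the $\Ga_I$-action, so the source together with the index data $I\subsetneq J$ and the target $\Tphi_{IJ}(y)$ recover $(I,J,y,\ga)$ up to the stabilizer relations, which are precisely the type-(a) morphisms already incorporated.

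Next I would check that \eqref{eq:compos1} makes sense. Given $(I,J,x,\ga)$ and $(J,K,y,\de)$, composability means the target of the first equals the source of the second in $\Obj_{\bB_\Kk}$, i.e. $\Tphi_{IJ}(x)=\de^{-1}\rho_{JK}(y)\in U_J$; one then has $y\in\rho_{JK}^{-1}(\TU_{IJ}\cap U_{JK})$ after translating by $\de$, so by \eqref{eq:cocyclea} and Lemma~\ref{le:compos2} the composite $\rho_{IK}:=\rho_{IJ}\circ\rho_{JK}$ is defined at $y$, landing in $U_{IK}$ by the cocycle condition; the resulting tuple $(I,K,y,\rho^\Ga_{IJ}(\de)\ga)$ indeed has the correct source and target, using the chain rule $\rho^\Ga_{IK}=\rho^\Ga_{IJ}\circ\rho^\Ga_{JK}$, $\Hat\phi_{IK}=\Hat\phi_{JK}\circ\Hat\phi_{IJ}$, $\Tphi_{IK}=\Tphi_{JK}\circ\Tphi_{IJ}$ from Lemma~\ref{le:compos2}. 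Associativity is a direct three-fold computation with these same chain rules together with $\rho^\Ga_{IJ}(\de\de')=\rho^\Ga_{IJ}(\de)\rho^\Ga_{IJ}(\de')$; identities are $(I,I,x,\id)$; and type-(a) morphisms $(I,I,x,\ga)$ are invertible with inverse $(I,I,\ga^{-1}x,\ga^{-1})$, while type-(b) morphisms with $I\subsetneq J$ have no inverse since $\rho_{IJ}$ is generally not injective and $\dim U_{IJ}<\dim U_J$ is possible. Continuity of $s\times t$ and of composition follows since $\rho_{IJ}$, $\Tphi_{IJ}$ are smooth (hence continuous) and group multiplication and $\rho^\Ga_{IJ}$ are continuous on the discrete groups $\Ga_I$; the disjoint-union topology on $\Mor_{\bB_\Kk}=\bigsqcup_{I\subset J}\TU_{IJ}\times\Ga_I$ makes these piecewise-defined maps continuous. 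I expect the main obstacle to be step~(1): carefully proving that an arbitrary zig-zag of type-(a)/(b) morphisms collapses to the stated normal form and that this form is unique — this is where one must invoke the cocycle condition in the precise form \eqref{eq:cocyclea}, handle the bookkeeping of commuting $\Ga$-actions through covering maps, and keep track of which domains the various composites are actually defined on.
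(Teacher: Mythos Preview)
Your proposal is correct in outline and is essentially the natural argument; indeed, the paper itself states this lemma without proof (the verification is routine once the cocycle condition \eqref{eq:cocyclea} is in hand, and details are deferred to the reference \cite{MWiso}). The key steps you identify --- collapsing chains of type-(b) morphisms via the cocycle condition, commuting type-(a) morphisms past type-(b) morphisms via the $\rho_{IJ}^\Ga$-equivariance of $\rho_{IJ}$, and then verifying directly that \eqref{eq:compos1} is associative with the correct source/target --- are exactly what is needed.

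One small slip: your commutation statement has the direction reversed. A type-(b) morphism (from $U_I$ to $U_J$, parametrized by $y\in\TU_{IJ}$) followed by a type-(a) morphism $\de\in\Ga_J$ acting on $U_J$ should rewrite as a type-(a) morphism $\rho_{IJ}^\Ga(\de)\in\Ga_I$ acting on $U_I$ \emph{followed by} a type-(b) morphism (parametrized by $\de y\in\TU_{IJ}$), not the other way around. This is exactly the order in the normal form $(I,J,y,\ga)$, which is ``act by $\ga$ in $U_I$, then apply the coordinate change.'' Once this is corrected, your reduction to normal form goes through. You may also find it cleaner to dispense with the uniqueness discussion in step~(1): since the lemma explicitly \emph{defines} $\Mor_{\bB_\Kk}$ as the disjoint union $\bigsqcup\TU_{IJ}\times\Ga_I$, it suffices to (i) verify this carries a category structure via \eqref{eq:compos1}, and (ii) observe that every tuple $(I,J,y,\ga)$ factors as a type-(a) followed by a type-(b) morphism, so that (a) and (b) generate. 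No separate ``uniqueness of normal form'' argument is then required.
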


The {\bf intermediate Kuranishi category} $\ubB_\Kk$ is defined by the topological atlas $\uKk$ as in \S\ref{ss:tatlas}.
 It has 
$$
\Obj_{\ubB_\Kk} : = \bigsqcup_{I\in \Ii_\Kk} \uU_I,\quad 
\Mor_{\ubB_\Kk} : = \bigsqcup_{I,J\in \Ii_\Kk, I\subset J} \uU_{IJ}
$$
with  source and target maps 
$$
s\times t: \;\uU_{IJ} \to \uU_I\times \uU_J\subset \Obj_{\ubB_\Kk}\times \Obj_{\ubB_\Kk},\quad
(I,\ux)\mapsto \bigl( (I,\ux), (J,\uphi_{IJ}(\ux))\bigr).
$$
The identity maps $\uphi_{II}$ on $\uU_{II}=\uU_I$ are included, 
giving rise to the identity morphisms.

As before, we denote by $|\Kk|$ the {\bf realization} of the category $\bB_\Kk$, i.e.
 the topological space obtained as the quotient of  $\Obj_{\bB_\Kk}$ by the equivalence relation generated by the morphisms in $\bB_\Kk$.
The quotient map $\pi_\Kk: \Obj_{\bB_\Kk} \to |\Kk|, \; (I,x)\mapsto [I,x]$ now factors
through the intermediate category,
$$
\pi_\Kk: \Obj_{\bB_\Kk} \to \Obj_{\ubB_\Kk} \to  |\Kk|.
$$
In particular 
the two categories $\bB_\Kk$ and $\ubB_\Kk$ have the same realization. 
In the latter case, we denote the natural projection by  $\und\pi_{{\Kk}}:  \Obj_{\ubB_\Kk}\to |\Kk|$.
More precisely, we can formulate this as follows.

\begin{lemma}\label{le:und}   
Let $\Kk$ be a Kuranishi atlas. 
Then there is a 
functor $\pi_\Ga: \bB_\Kk\to \ubB_\Kk$ that is given on objects by the
quotient maps 
$\pi_I: U_I \to \uU_I$, 
and on morphisms by
$$
 \TU_{IJ}\times \Ga_I \; \to \;  \uU_{IJ}, \qquad
 \bigl(I,J, y, \ga\bigr) \; \mapsto \; \bigl(I,J,\ul{\rho_{IJ}(y)}\bigr) .
$$
If all isotropy groups $\Ga_I=\{\id\}$ are trivial, then $\pi_\Ga$ is an isomorphism of categories with identical object spaces, and $\ubB_\Kk$ is the category associated to the Kuranishi atlas in \S\ref{ss:K1}.

In general, the realization $|\Kk|$ of $\bB_\Kk$ can be identified as topological space (with the quotient topology) with that of $\ubB_\Kk$ via factoring the quotient map $\pi_\Kk=\ul\pi_\Kk \circ \pi_\Ga$ into the functor $\pi_\Ga$ given on objects  by quotienting by the group actions and the  projection $\upi_\Kk: \ubB_\Kk\to |\Kk|$, that can be considered as a functor to a topological category with only identity morphisms.
Moreover, $\pi_\Ga$ is proper, i.e.\ compact subsets of $\Obj_{\ubB_\Kk}$ have compact preimage in $\Obj_{\bB_\Kk}$.
\end{lemma}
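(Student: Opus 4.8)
\textbf{Plan of proof of Lemma~\ref{le:und}.}
The plan is to verify, in order, the three assertions: that the stated formulas define a functor $\pi_\Ga:\bB_\Kk\to\ubB_\Kk$; that this functor is an isomorphism of categories in the trivial-isotropy case and then agrees with the category of \S\ref{ss:K1}; and that in general $\pi_\Ga$ is proper and the induced map on realizations is a homeomorphism. First I would check that $\pi_\Ga$ is well defined on morphisms, i.e.\ that for $(I,J,y,\ga)\in\Mor_{\bB_\Kk}$ the element $\ul{\rho_{IJ}(y)}$ lies in $\uU_{IJ}$: this is immediate since $\rho_{IJ}:\TU_{IJ}\to U_{IJ}=\pi_I^{-1}(\uU_{IJ})$, so $\ul{\rho_{IJ}(y)}\in\uU_{IJ}$. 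Next I would check compatibility with source and target. On objects, the source of $(I,J,y,\ga)$ is $(I,\ga^{-1}\rho_{IJ}(y))$ which projects under $\pi_I$ to $\ul{\rho_{IJ}(y)}$ — the source of $\pi_\Ga(I,J,y,\ga)$ as required, since $\ga^{-1}\rho_{IJ}(y)$ and $\rho_{IJ}(y)$ lie in the same $\Ga_I$-orbit. On targets, the target of $(I,J,y,\ga)$ is $(J,\Tphi_{IJ}(y))$, projecting to $\ul{\Tphi_{IJ}(y)}=\uphi_{IJ}\bigl(\ul{\rho_{IJ}(y)}\bigr)$ by Remark~\ref{rmk:change}~(iii) (the definition $\uphi_{IJ}=\und{\Tphi_{IJ}}\circ\urho_{IJ}^{-1}$), which is exactly the target of the morphism $\ul{\rho_{IJ}(y)}\in\uU_{IJ}$ in $\ubB_\Kk$. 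Functoriality on composition then follows by applying $\rho_{IJ}^\Ga$ to \eqref{eq:compos1} and using the cocycle relation \eqref{eq:cocyclea} $\rho_{IK}=\rho_{IJ}\circ\rho_{JK}$ together with the fact that $\ker\rho_{JK}^\Ga$ acts along the fibers of $\rho_{JK}$; identity morphisms are preserved by construction. Continuity of $\pi_\Ga$ on objects and morphisms is clear since $\pi_I$ and $\rho_{IJ}$ are continuous and the morphism spaces are disjoint unions of the indicated pieces.

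For the trivial-isotropy claim I would observe that when all $\Ga_I=\{\id\}$ the morphism space $\bigsqcup_{I\subset J}\TU_{IJ}\times\Ga_I$ of $\bB_\Kk$ reduces to $\bigsqcup_{I\subset J}\TU_{IJ}$, and by Remark~\ref{rmk:change}~(i) the covering $\rho_{IJ}=\phi_{IJ}^{-1}$ is a homeomorphism $\TU_{IJ}=\phi_{IJ}(U_{IJ})\to U_{IJ}$, so $\pi_\Ga$ restricts to this homeomorphism on each morphism piece and to the identity $U_I\to\uU_I$ on objects; hence $\pi_\Ga$ is an isomorphism of topological categories. That $\ubB_\Kk$ then coincides with the category of Definition~\ref{def:catKu} is a matter of unwinding definitions: the source--target data $\uU_{IJ}\ni\ux\mapsto(\ux,\uphi_{IJ}(\ux))$ and the composition law are literally those of \S\ref{ss:K1} once one identifies $\uU_I$ with $U_I$ and $\uphi_{IJ}$ with $\phi_{IJ}$.

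For the general statement I would first note properness of $\pi_\Ga$: on objects $\pi_I:U_I\to\uU_I$ is proper by Lemma~\ref{le:vep}~(i), and since $\Ii_\Kk$ is finite the disjoint union $\pi_\Ga:\Obj_{\bB_\Kk}\to\Obj_{\ubB_\Kk}$ is proper. It remains to identify the realizations. The factorization $\pi_\Kk=\upi_\Kk\circ\pi_\Ga$ holds by construction, so we get a continuous surjection $|\bB_\Kk|\to|\ubB_\Kk|$; conversely the equivalence relation on $\Obj_{\bB_\Kk}$ generated by morphisms of types (a) and (b) visibly descends to and coincides with the equivalence relation on $\Obj_{\ubB_\Kk}$ generated by the $\uphi_{IJ}$ (type (a) collapses each $\Ga_I$-orbit, which is exactly the fiber of $\pi_I$; type (b) maps to the graph of $\uphi_{IJ}$ via $\rho_{IJ}$, which is surjective onto $\uU_{IJ}$). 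Hence the two quotient sets agree, and since $\Obj_{\ubB_\Kk}$ carries the quotient topology from $\Obj_{\bB_\Kk}$ under the proper (hence quotient) map $\pi_\Ga$, the quotient topologies on the two realizations agree as well. Finally $\upi_\Kk$ is interpreted as a functor to the topological category with object space $|\Kk|$ and only identity morphisms, as in \S\ref{ss:tatlas}.

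The main obstacle I anticipate is purely bookkeeping rather than conceptual: checking that the composition formula \eqref{eq:compos1} on $\bB_\Kk$ is compatible with composition of the $\uphi_{IJ}$ under $\pi_\Ga$, which forces one to keep careful track of where the group elements $\ga,\de$ act relative to the sources and targets and to invoke both the cocycle identity $\rho_{IK}=\rho_{IJ}\circ\rho_{JK}$ and the splitting $\rho_{IK}^\Ga=\rho_{IJ}^\Ga\circ\rho_{JK}^\Ga$ in the right order; the subtlety is that morphisms in $\bB_\Kk$ are parametrized by targets while the composition constraint $\Tphi_{IJ}(x)=\de^{-1}\rho_{JK}(y)$ mixes source and target data, so one must verify that the resulting morphism $(I,K,y,\rho_{IJ}^\Ga(\de)\ga)$ really does project to the composite $\uphi_{JK}\circ\uphi_{IJ}$ evaluated at the correct point.
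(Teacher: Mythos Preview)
Your proposal is correct and follows the natural direct-verification route; the paper in fact states this lemma without proof, treating it as a routine unwinding of definitions, so your detailed check of source/target compatibility, functoriality on composition via the cocycle relation $\rho_{IK}=\rho_{IJ}\circ\rho_{JK}$ and the equivariance of $\rho_{IJ}$, and the properness/openness of the $\pi_I$ from Lemma~\ref{le:vep}(i) is exactly what is implicitly being left to the reader. The one place worth tightening is your final ``obstacle'' paragraph: you should actually carry out the check rather than flag it, noting that since $\Tphi_{IJ}$ is the inclusion $\TU_{IJ}\hookrightarrow U_J$ (Remark~\ref{rmk:change}(v)) the composability constraint $\Tphi_{IJ}(x)=\de^{-1}\rho_{JK}(y)$ gives $\rho_{IJ}(x)=\rho_{IJ}^\Ga(\de)^{-1}\rho_{IK}(y)$ by equivariance, whence $\ul{\rho_{IJ}(x)}=\ul{\rho_{IK}(y)}$ as required.
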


There are similar obstruction space categories $\bE_\Kk$ and $\ubE_\Kk$.
For example,
$$
\Obj_{\bE_\Kk} = \bigsqcup_I U_I\times E_I,\qquad
\Mor_{\bE_\Kk} = \bigsqcup_{I\subset J}  \TU_{IJ}\times \Hat\phi_{IJ}(E_I)\times \Ga_I.
$$
The projections $\pr_I$ and sections $\s_I$ fit together to functors
$$
\pr_\Kk: \bE_\Kk\to \bB_\Kk,\; \s_\Kk:\bB_\Kk\to \bE_\Kk, \quad  \und{\pr}_\Kk: \ubE_\Kk\to \ubB_\Kk,
\;\und{\s}_\Kk:\ubB_\Kk\to \ubE_\Kk.
$$

\begin{prop} \label{prop:realization} 
Let $\Kk$ be a Kuranishi atlas. 
\begin{enumilist}
\item[{\rm (i)}]
The functors ${\rm pr}_\Kk:\bE_\Kk\to\bB_\Kk$  and $\und{\pr}_\Kk: \ubE_\Kk\to \ubB_\Kk$ induce the same continuous map
$$
|{\rm pr}_\Kk|:|\bE_\Kk| \to |\Kk|,
$$
which we call the {\bf obstruction bundle} of $\Kk$, although its fibers generally do not have the structure of a vector space.  However, it has a continuous zero section
$$
|0_\Kk| : \; |\Kk| \to |\bE_\Kk| , \quad [I,x] \mapsto [I,x,0] .
$$
\item[{\rm (ii)}]
The sections $\s_\Kk:\bB_\Kk\to \bE_\Kk$ and $\und{\s}_\Kk:\ubB_\Kk\to \ubE_\Kk$ descend to the same continuous section
$$
|\s_\Kk|:|\Kk|\to |\bE_\Kk| .
$$
Both of these are sections in the sense that
$|\pr_\Kk|\circ|\s_\Kk| =  |\pr_\Kk|\circ |0_\Kk|= {\rm id}_{|\Kk|}$.
\item[\rm (iii)]
There is a natural homeomorphism from the realization of the subcategory $\s_\Kk^{-1}(0)$
to the zero set of $|\s_\Kk|$, with the relative topology induced from $|\Kk|$,
$$
\bigr| \s_\Kk^{-1}(0)\bigr| \;=\; \quotient{\s_\Kk^{-1}(0)}{\sim_{\scriptscriptstyle \s_\Kk^{-1}(0)}}
\;\overset{\cong}{\longrightarrow}\;
|s_\Kk|^{-1}(0) \,:=\; \bigl\{[I,x] \,\big|\, s_I(x)=0  \bigr\}  \;\subset\; |\Kk| .
$$
\end{enumilist}
\end{prop}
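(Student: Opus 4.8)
The plan is to deduce everything from two things already available: that $\pr_\Kk$, $0_\Kk$, $\s_\Kk$ are continuous functors between the topological categories, and that by Lemma~\ref{le:und} (together with its evident counterpart for the obstruction categories) the group-quotient functor $\pi_\Ga$ identifies the realizations $|\bB_\Kk|=|\ubB_\Kk|=:|\Kk|$ and $|\bE_\Kk|=|\ubE_\Kk|$, and is proper. The only general principle used is that a continuous functor $F\colon \bC\to\bD$ descends to a continuous map $|F|\colon|\bC|\to|\bD|$: the composite $\Obj_\bC\to\Obj_\bD\to|\bD|$ is continuous and, because $F$ sends morphisms to morphisms, is constant on $\sim_\bC$-equivalence classes, so it factors continuously through the quotient $|\bC|$. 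Thus (i) and (ii) reduce to verifying the relevant functoriality, and (iii) reduces, after a formal ``splitting'' observation, to a single topological point about quotient versus subspace topologies.

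For (i): the assignment $(I,x)\mapsto(I,x,0)$ on objects and $(I,J,y,\ga)\mapsto(I,J,y,0,\ga)$ on morphisms is a functor $0_\Kk\colon\bB_\Kk\to\bE_\Kk$, since $\Hat\phi_{IJ}(0)=0$ and each $\Ga_I$-action fixes $0$; hence $|0_\Kk|$ is continuous, and it is a section because $\pr_\Kk\circ 0_\Kk=\id_{\bB_\Kk}$ as functors. That $\pr_\Kk$ and $\und{\pr}_\Kk$ induce \emph{the same} map is the identity $\und{\pr}_\Kk\circ\pi_\Ga^{\bE}=\pi_\Ga\circ\pr_\Kk$, checked directly on objects and morphisms ($\pi_\Ga$ forgets the $\Ga_I$-coordinate and $\rho_{IJ}$, and $\pi_\Ga^{\bE}$ does the same while carrying the $E_I$-factor along). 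That the fibres need not be vector spaces is already explained in Remark~\ref{rmk:intermed}. Part (ii) is identical in spirit: the intertwining relation \eqref{eq:change2} together with the $\Ga_I$-equivariance of $s_I$ says exactly that $(I,x)\mapsto(I,x,s_I(x))$, $(I,J,y,\ga)\mapsto(I,J,y,\Hat\phi_{IJ}(s_I\circ\rho_{IJ}(y)),\ga)$ defines a functor $\s_\Kk\colon\bB_\Kk\to\bE_\Kk$; it descends to a continuous $|\s_\Kk|$, coincides with the map induced by $\und{\s}_\Kk$ via $\pi_\Ga^{\bE}\circ\s_\Kk=\und{\s}_\Kk\circ\pi_\Ga$, and is a section since $\pr_\Kk\circ\s_\Kk=\id$. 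The identities $|\pr_\Kk|\circ|\s_\Kk|=|\pr_\Kk|\circ|0_\Kk|=\id_{|\Kk|}$ are then the images under $|\cdot|$ of these functor identities.

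For (iii) I would first record the \emph{splitting}: if $(I,J,y,\ga)$ is a morphism whose source $(I,\ga^{-1}\rho_{IJ}(y))$ lies in $s_I^{-1}(0)$, then by equivariance $s_I(\rho_{IJ}(y))=\ga\,s_I(\ga^{-1}\rho_{IJ}(y))=0$, hence $s_J(\Tphi_{IJ}(y))=\Hat\phi_{IJ}(s_I(\rho_{IJ}(y)))=0$ by \eqref{eq:change2}; the converse is equally immediate. So $\s_\Kk^{-1}(0)$ is a strictly full subcategory and $\bB_\Kk$ admits no morphisms between it and its complement, whence $\sim_{\bB_\Kk}$ restricted to $\Obj_{\s_\Kk^{-1}(0)}$ is generated by the morphisms of $\s_\Kk^{-1}(0)$ alone; consequently the inclusion induces a continuous bijection $|\s_\Kk^{-1}(0)|\to|\s_\Kk|^{-1}(0)\subset|\Kk|$. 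To see its inverse is continuous I would pass to the intermediate atlas: $\pi_\Ga$ restricts to a proper functor $\s_\Kk^{-1}(0)\to\und{\s}_\Kk^{-1}(0)$ which, exactly as in Lemma~\ref{le:und}, identifies the two realizations, so it suffices to treat the topological atlas $\uKk$. There one builds the inverse explicitly: by the compatibility $\psi_J\circ\Tphi_{IJ}=\psi_I\circ\rho_{IJ}$ and the cocycle condition, $p\mapsto[I,\upsi_I^{-1}(p)]$ is independent of the choice of $I$ with $p\in F_I$, and is continuous because each $\upsi_I^{-1}\colon F_I\to\und{s_I^{-1}(0)}$ is a homeomorphism by definition of a chart and $\{F_I\}$ is an open cover of $X$; one checks it is a two-sided inverse to the footprint descent $|\und{\s}_\Kk^{-1}(0)|\to X$, giving $|\und{\s}_\Kk^{-1}(0)|\cong X$, and hence $|\s_\Kk^{-1}(0)|\cong|\s_\Kk|^{-1}(0)$.

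\textbf{Main obstacle.} The formal parts of (i), (ii), the functoriality checks and the splitting are routine bookkeeping. The one genuine point is the last step of (iii): that the continuous bijection $|\s_\Kk^{-1}(0)|\to|\s_\Kk|^{-1}(0)$ has continuous inverse, i.e.\ that the quotient topology on the realization of the zero-set subcategory agrees with the subspace topology it inherits from $|\Kk|$. This is the analogue of \cite[Lemma~2.4.2]{MW1}; it cannot be gotten by pure functoriality and must be argued either directly from the definitions or, as above, by descending to the intermediate atlas and producing the explicit homeomorphism with $X$ from the footprint charts.
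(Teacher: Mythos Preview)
The paper gives no proof beyond ``The proof is not difficult,'' so there is little to compare against; your organization via functoriality of $\pr_\Kk,0_\Kk,\s_\Kk$ and the reduction to the intermediate atlas through Lemma~\ref{le:und} is exactly the intended route, and your treatment of (i), (ii), and the splitting in (iii) is correct.

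There is, however, a small but genuine gap in the final ``hence'' of (iii). You establish $|\und{\s}_\Kk^{-1}(0)|\cong X$ (whence $|\s_\Kk^{-1}(0)|\cong X$), but this alone does not give $|\s_\Kk^{-1}(0)|\cong|\s_\Kk|^{-1}(0)$: you have a continuous bijection $\iota$ from the former (quotient topology) to the latter (subspace topology), and knowing the domain is homeomorphic to $X$ says nothing about the target. What is missing is that $\io_\Kk\colon X\to|\s_\Kk|^{-1}(0)$ is \emph{open}, equivalently that $\io_\Kk^{-1}$ is continuous for the subspace topology --- this is the content of Lemma~\ref{le:ioK}, and since $|\Kk|$ need not be Hausdorff for an untamed atlas, no compact-to-Hausdorff shortcut is available. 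The direct argument (which is \cite[Lemma~2.4.2]{MW1}) is short: for $O\subset X$ open, the set $\Ti W:=\bigsqcup_I\bigl(U_I\less(s_I^{-1}(0)\less\psi_I^{-1}(O))\bigr)$ is open in $\Obj_{\bB_\Kk}$ and, by the splitting you already proved together with compatibility of the footprint maps, is saturated; hence $\pi_\Kk(\Ti W)$ is open in $|\Kk|$ and meets $|\s_\Kk|^{-1}(0)$ exactly in $\io_\Kk(O)$. You correctly flag this step as the main obstacle, but your proposed resolution via the footprint homeomorphism with $X$ supplies only one of the two directions needed.
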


The proof is not difficult.\MS

The next task is to establish the following  analog of Theorem~\ref{thm:K}.  Note that we say   that {\bf  $\Kk$ is  tame or metrizable} if its intermediate atlas $\uKk$ has this property.
In particular,  we require that the  metric $d$ on  $|\Kk|$ pulls back to a 
 metric $\und d_I:=(\upi_\Kk|_{\uU_I})^*d$ on $\uU_I=\qu{U_I}{\Ga_I}$ that induces the quotient topology. (When $\Ga_I\ne \id$ the pullback to $U_I$ cannot be a metric.)
Similarly, a {\bf shrinking of $\Kk$} is the pullback by the functor $\pi_\Ga$ in Lemma~\ref{le:und} of a shrinking of $\uKk$.  

\begin{thm}\label{thm:K2}
Let $\Kk$ be a weak Kuranishi atlas on a compact metrizable space $X$.
Then there is a metrizable tame shrinking $\Kk'$ of $\Kk$ with domains $(U'_I\subset U_I)_{I\in\Ii_{\Kk'}}$ such that the realizations $|\Kk'|$ and $|\bE_{\Kk'}|$ are Hausdorff in the quotient topology.
Further, for each $I\in \Ii_{\Kk'} = \Ii_\Kk$ the projection maps $\und \pi_{\Kk'}: \und U_I'\to |\Kk'|$ and $\und \pi_{\Kk'}:\und {U'_I\times E_I}\to |\bE_{\Kk'}|$ are homeomorphisms onto their images. In addition, these projections $\pi_{\Kk'} $ fit into a commutative diagram
$$
\begin{array}{ccc}
U_I'\times E_I & \stackrel{\pi_{\Kk'}}\longrightarrow & |\bE_{\Kk'}|  \quad \\
 \downarrow & & \;\; \downarrow \scriptstyle |\pr_{\Kk'}| \\
U_I' &
\stackrel{\pi_{\Kk'}} \longrightarrow  &|\Kk'| \quad
\end{array}
$$
where the horizontal maps intertwine 
the linear structure on the fibers of $U'_I\times E_I \to U'_I$ with the 
induced orbibundle structure on the fibers of $|\pr_{\Kk'}|$.
More precisely, for each $p\in |\Kk'|$ there is $I_p\in \Ii_{\Kk'}$ such that $p\in \pi_{\Kk'}(U_{I_p})$ and so that
the composite map
$$
\pi_{\Kk'}: \bigl(U_{I_p}'\cap  \pi_{\Kk'}^{-1}(p)\bigr)\times E_{I_p}  
 \;\to\;  |\pr_{\Kk'}|^{-1}(p)\subset |\bE_{\Kk'}|
$$
induces an isomorphism from the quotient $\qu{(U_{I_p}'\cap  \pi_{\Kk'}^{-1}(p))\times E_{I_p}}{\Ga_{I_p}}$ onto the fiber $|\pr_{\Kk'}|^{-1}(p)$.

Moreover, any two such shrinkings are cobordant by a metrizable tame Kuranishi cobordism that also has the above Hausdorff, homeomorphism, and linearity properties.
\end{thm}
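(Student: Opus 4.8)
\textbf{Proof strategy for Theorem~\ref{thm:K2}.}
The plan is to deduce everything from the trivial-isotropy case applied to the intermediate atlas $\uKk$, together with the properness of the functor $\pi_\Ga$ from Lemma~\ref{le:und}. First I would recall that by Lemma~\ref{le:interm} the intermediate atlas $\uKk$ of a weak smooth Kuranishi atlas $\Kk$ is a weak \emph{filtered topological} atlas, with the explicit filtration $\E_{IJ}=\und{U_J\times\Hat\phi_{IJ}(E_I)}$. Hence Proposition~\ref{prop:proper1} applies and gives a tame shrinking $\uKk'$ of $\uKk$; by Corollary~\ref{cor:proper} (= Proposition~\ref{prop:proper}) we may further pass to a preshrunk tame shrinking so that $\uKk'$ is metrizable, with an admissible metric $d$ on $|\uKk'|=|\Kk'|$ whose pullback $\und d_I=(\upi_{\Kk'}|_{\uU_I'})^*d$ induces the quotient topology on $\uU_I'=\qu{U_I'}{\Ga_I}$. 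By definition of shrinking of $\Kk$ (pullback of a shrinking of $\uKk$ by $\pi_\Ga$, as in the paragraph before the theorem), this produces the desired tame metrizable shrinking $\Kk'$, and $\Kk'$ is tame and metrizable \emph{by definition}, since these properties are defined via $\uKk'$.

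Next I would establish the topological conclusions. The Hausdorff property of $|\Kk'|$ and $|\bE_{\Kk'}|$ is immediate: $|\Kk'|=|\uKk'|$ and $|\bE_{\Kk'}|=|\ubE_{\Kk'}|$ as topological spaces (Lemma~\ref{le:und} and Proposition~\ref{prop:realization}), and Proposition~\ref{prop:Khomeo} gives Hausdorffness for the tame topological atlas $\uKk'$. For the homeomorphism statements, Proposition~\ref{prop:Khomeo} already gives that $\upi_{\Kk'}|_{\uU_I'}:\uU_I'\to|\Kk'|$ and $\upi_{\Kk'}|_{\uU_I'\times\E_I}$ are homeomorphisms onto their images; here one must check that the ``bundle'' $\ubE_{\Kk'}$ of the intermediate atlas is indeed $\uU_I'\times E_I$ modulo $\Ga_I$ in the appropriate fibered sense, i.e. that $\und{U_I'\times E_I}=\qu{U_I'\times E_I}{\Ga_I}$ with the diagonal action, which is exactly Definitions~\ref{def:quotlev} and \ref{def:chart2}. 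The commuting square is then just the quotient by $\Ga_I$ of the evident square $U_I'\times E_I\to U_I'$, pushed through $\pi_\Ga$.

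The linearity/orbibundle statement on fibers is where one must do a small amount of genuine work, paralleling the proof of Proposition~\ref{prop:tame1}(iii). For fixed $p\in|\Kk'|$ set $I_p:=\bigcup\{I\in\Ii_{\Kk'}: p\in\pi_{\Kk'}(U_I')\}$; using Lemma~\ref{le:Ku2}(a) for the tame topological atlas $\uKk'$ exactly as in Proposition~\ref{prop:tame1}, one shows $I_p\in\Ii_{\Kk'}$ and that $p\in\pi_{\Kk'}(U_{I_p}')$, and that every object over $p$ in $\bB_{\Kk'}$ maps under a type-(b) morphism $\TU_{I I_p}\times\Ga_I$ to a point of $U_{I_p}'$; applying $\Hat\phi_{I I_p}$ and the compatibility with sections, every element of the fiber $|\pr_{\Kk'}|^{-1}(p)$ is represented in $\{x_p\}\times E_{I_p}$ for a lift $x_p$ of $p$. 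Then, using Lemma~\ref{le:morph} to read off exactly which pairs in $U_{I_p}'\times E_{I_p}$ become identified over $p$ --- namely those differing by the $\Ga_{I_p}$-action combined with the covering maps, which over a single fiber reduces to the $\Ga_{I_p}$-action on $(U_{I_p}'\cap\pi_{\Kk'}^{-1}(p))\times E_{I_p}$ --- one concludes that $\pi_{\Kk'}$ induces a bijection from $\qu{(U_{I_p}'\cap\pi_{\Kk'}^{-1}(p))\times E_{I_p}}{\Ga_{I_p}}$ onto $|\pr_{\Kk'}|^{-1}(p)$; since $\Hat\phi_{I I_p}$ is linear $\io^\Ga$-equivariant, this transports the linear structure on $E_{I_p}$ to the claimed orbibundle structure, and it is independent of the choice of $x_p$ by the uniqueness in Lemma~\ref{le:Ku2}(b)(c). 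Finally, for the cobordism uniqueness clause I would invoke the concordance version of Theorem~\ref{thm:K} (the cobordism statements in \S\ref{ss:tatlas} and \cite{MW1}) applied to the intermediate atlas of a weak Kuranishi concordance between two such shrinkings --- existence of such a concordance comes from Proposition~\ref{prop:cobord2} applied on the intermediate level --- and then pull back by $\pi_\Ga$; the same fiberwise-linearity argument as above, now carried out over the collared $[0,1]\times X$, gives the orbibundle structure on the cobordism.

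\textbf{Main obstacle.} The routine-sounding parts (tameness, metrizability, Hausdorffness) really are routine once one has Lemma~\ref{le:interm}. The genuine point is the orbibundle statement: one must check that the equivalence relation generated by \emph{all} morphisms of $\bB_{\Kk'}$ (types (a) and (b), composed) restricts, over a single fiber $|\pr_{\Kk'}|^{-1}(p)$, to precisely the diagonal $\Ga_{I_p}$-action on $(U_{I_p}'\cap\pi_{\Kk'}^{-1}(p))\times E_{I_p}$ --- no more and no less. The ``no more'' direction needs the maximality of $I_p$ together with the tameness identity \eqref{eq:ttame2} and the filtration equation \eqref{eq:CIJ} (so that a chain connecting two points over $p$ can be compressed to a single type-(b) step into $U_{I_p}'$ followed by a type-(a) step), and the ``no less'' direction needs that $\ker\rho^\Ga_{I I_p}$ acts freely on $\TU_{I I_p}$ so that the covering maps do not collapse anything extra within a fiber. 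This is exactly the isotropy-sensitive refinement of Proposition~\ref{prop:tame1}(iii), and it is the step I would write out carefully; everything else is bookkeeping.
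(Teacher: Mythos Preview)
Your proposal is correct and follows essentially the same route as the paper: reduce everything to Theorem~\ref{thm:K} applied to the filtered topological intermediate atlas $\uKk$ (via Lemma~\ref{le:interm}), and handle the one genuinely new point --- the orbibundle structure on the fibers of $|\pr_{\Kk'}|$ --- by adapting the $I_p$-argument from Proposition~\ref{prop:tame1}. The paper's proof is a two-sentence pointer to exactly these ingredients; your write-up usefully unpacks the fiber argument, correctly identifying that the equivalence over a single $p$ collapses to the diagonal $\Ga_{I_p}$-action (rather than being trivial as in the trivial-isotropy case), with the ``no more'' direction coming from tameness/maximality of $I_p$ and the ``no less'' direction from freeness of $\ker\rho^\Ga_{I I_p}$ on $\TU_{I I_p}$.
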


\begin{proof}  This is an immediate consequence of Theorem~\ref{thm:K} applied to the intermediate atlas $\uKk$, which is a filtered topological atlas by Lemma~\ref{le:interm}.
The only nontrivial statement concerns the linear structure on the fibers of $|\pr_{\Kk'}|$. But this follows by arguing as in the proof of Proposition~\ref{prop:tame1}.
\end{proof}

\subsection{Orientations}\label{ss:orient}
This section develops  the notion of orientation. 
(See \cite[\S8.1]{MW2} for the  trivial isotropy case, and \cite[\S3.1]{MWiso} for  the general case.)
As in Definition~\ref{def:or}, we define an orientation of $\Kk$ 
to be a nonzero section of its determinant line bundle.   There are two possible definitions of this bundle, first as a bundle $\det(\Kk)$ over $\Kk$ that restricts on  each domain $U_I$ to the local orientation bundle $\lm TU_I\otimes (\lm E_I)^*$ of Definition~\ref{def:or},  and second as
a bundle $\det(\s_\Kk)$ whose restriction to $U_I$ is the standard determinant bundle 
of the (Fredholm) section $s_I$ (see  \cite[\S~A.2]{JHOL}). Although the local description of $\det(\s_\Kk)$ is more complicated because the dimension of $\ker s_I$ varies, its coordinate changes are more natural, being induced directly from
the chart coordinate changes $\Hat\Phi_{IJ}$ with no auxiliary choices, while those for
$\det(\Kk)$ do involve the sections $s_I$ as well as the choice of a normal bundle to $\TU_{IJ}$ in $U_J$.
  Thus we first prove that $\det(\s_\Kk)$ is a bundle,
and then show it is isomorphic to the bundle $\det(\Kk)$.  A similar argument shows that the restriction of 
$\det(\Kk)$ to $\Kk|_\Vv$ is isomorphic to $\det(\s_\Kk|_\Vv + \nu)$.  But if $(s_\Kk|_\Vv + \nu)\pitchfork 0$, an orientation
of $\det(\s_\Kk|_\Vv + \nu)$ induces one on the zero set $\bZ^\nu$; see Lemma~\ref{le:locorient1}.
We do not attempt to give full proofs here, but at least define all the maps involved.

We begin by defining the notion of a bundle over a smooth atlas.
Notice that in distinction to the \lq\lq obstruction bundle" with fibers $E_I$ over 
the domains $U_I$, we now assume that the fiber dimension is constant.

\begin{defn} \label{def:bundle}
A {\bf vector bundle} $\La=\bigl(\La_I,\Ti\phi_{IJ}\bigr)_{I,J\in\Ii_\Kk}$ {\bf over a weak Kuranishi atlas} $\Kk$ is a collection $(\La_I \to U_I)_{I\in \Ii_\Kk}$ of  vector bundles such that 
\begin{itemlist}\item  for each $I\in \Ii_\Kk$ the action of  $\Ga_I$ on $U_I$ lifts to $\La_I$;
\item  there is an action of $\Ga_J$ on the pullback bundle 
$\rho^*_{IJ}(\La_I|_{U_{IJ}})$ 
such that \begin{itemize}\item[-]  the projection $\rho^*_{IJ}(\La_I|_{U_{IJ}})\to\La_I|_{U_{IJ}}$  factors as the 
quotient  by the  action of $\Ga_{J\less I}$ followed by an isomorphism, and  \item[-]
 the inclusion $\Tphi_{IJ}: \TU_{IJ}\to U_J$ lifts to a  $\Ga_J$-equivariant embedding  \newline
$\Tphi_{IJ}^\La\;:\; \rho^*_{IJ}(\La_I|_{U_{IJ}})\to \La_J$.
\end{itemize}
\item  the weak cocycle condition holds 
$$
\Tilde \phi^\La_{IK} =  \Tilde \phi^\La_{JK}\circ \rho_{JK}^*( \Tilde \phi^\La_{IJ})\;\mbox{ on } \;\rho^{-1}_{JK}(\Tphi_{IJ}(\TU_{IJ}))\cap \TU_{IK}, \quad \forall \;I\subset J\subset K.
$$   
\end{itemlist}
A {\bf section} of a bundle $\La$ over $\Kk$ is a collection of smooth $\Ga_I$-equivariant sections $\si=\bigl( \si_I: U_I\to \La_I \bigr)_{I\in\Ii_\Kk}$ that are compatible with the pullbacks $\rho_{IJ}^*$ and bundle maps $\Ti\phi^\La_{IJ}$.
Thus there are commutative diagrams for each $I\subset J$,
\[
\xymatrix{
\La_I|_{U_{IJ}}    &   \ar@{->}[l]_{\rho_{IJ}}
  \rho_{IJ}^*(\La_I|_{U_{IJ}})   \ar@{->}[r]^{\;\;\;\;\;\;\Tilde\phi^\La_{IJ}}    &  \La_J  \\
U_{IJ} \ar@{->}[u]^{\si_I}       &  \ar@{->}[l]_{\rho_{IJ}}
  \TU_{IJ}  \ar@{->}[u]^{\rho_{IJ}^*(\si_I)}   \ar@{->}[r]^{\Tphi_{IJ}}  & U_J  \ar@{->}[u]_{\si_J} .
}
\]
%
An {\bf isomorphism} $\Psi: \La\to \La'$ between vector bundles over $\Kk$ is a collection
$(\Psi_I: \La_I\to \La'_I)_{I\in \Ii_\Kk}$ of $\Ga_I$-equivariant bundle isomorphisms covering the identity on $U_I$, that intertwine the transition maps,
i.e.\ $\Ti\phi^{\La'}_{IJ}\circ\rho^*_{IJ}(\Psi_I) = \Psi_J \circ \Ti \phi^\La_{IJ}|_{\TU_{IJ}}$
for all $I\subsetneq J$.
%
\end{defn}

\begin{rmk}\rm  (i) If $\Kk$ has trivial isotropy, this is just the definition in \cite{MW2}.  Since the smooth structure on $U_I$ is not used,
there is a corresponding notion for a bundle over the intermediate category $\ul{\Kk}$.  Further any bundle $\ul{\La} = 
(\ul{\La}_I, {\ul{\Tphi}}\,\!^\La_{IJ})_{I\in \Ii_\Kk}$ over $\ul{\Kk}$ pulls back to a bundle over $\Kk$ via the projections $\pi_I: U_I\to \uU_I$.
\MS

\NI (ii) There are similar definitions
for  bundles over a cobordism.
As always, the key point is that the bundle has a chosen  product structure (or collar)  near its boundary.  
We will not need this notion in this generality here since we will only consider the case of
a determinant bundle that is locally oriented and hence is 
 pulled back from a line bundle over  the intermediate (cobordism) atlas as described in Remark~\ref{rmk:preorient}.
Hence we can use the definitions and results from \cite{MW2} that apply to cobordism atlases with trivial  isotropy.
$\hfill\er$
\end{rmk}

The next task is to establish that each of the two forms of the determinant bundle gives rise to a line bundle over $\Kk$. 
Recall that  we denote the top exterior power of a real vector space   $V$ by $\La^{\max} V$.

We define the
{\bf determinant line of a linear map} $D:V\to W$  to be 
$$
\det(D):= \lm\ker D \otimes \bigl( \lm \bigl( \qu{W}{\im D} \bigr) \bigr)^*,
$$ 
and use the following conventions from \cite{MW2}; see the discussion after \cite[Definition~8.1.6]{MW2}.
Each isomorphism $F: Y \to Z$ between finite dimensional vector spaces induces an isomorphism
\begin{equation}\label{eq:laphi}
\La_F :\; \lm Y   \;\overset{\cong}{\longrightarrow}\; \lm Z , \qquad
y_1\wedge\ldots \wedge y_k \mapsto F(y_1)\wedge\ldots \wedge F(y_k) .
\end{equation}
For example, if $I\subsetneq J$ and  $\Tx\in \TU_{IJ}$ is such that $\rho_{IJ}(\Tx) = x$, 
then because $$
s_I\circ \rho_{IJ}=:s_{IJ}: \TU_{IJ}\to E_I
$$ 
the derivative $\rd_{\Tx}\rho_{IJ}:  \ker \rd s_{IJ}\to \ker \rd s_I$ induces a isomorphism 
\begin{equation}\label{eq:bunrho}
\La_{\rd_\Tx\rho_{IJ}}\otimes (\La_{\id})^*:\det(\rd_\Tx s_{IJ}) = \rho_{IJ}^*(\det \rd_x s_I) \to \det(\rd_x s_{I}).
\end{equation}
Further, it follows from 
the index condition in Definition~\ref{def:change} that  with $y:= \Tphi_{IJ}(\Tx)$ the map 
\begin{equation}\label{eq:bunIJ}
\La_{IJ}(\Tx): = \La_{\rd_\Tx\Tphi_{IJ}} \otimes 
\bigl(\La_{[\Hat\phi_{IJ}^{-1}]}\bigr)^*
\, :\; \det(\rd_\Tx s_{IJ}) \to \det(\rd_{y} s_J)
\end{equation}
is an isomorphism, induced by the isomorphisms $\rd\Tphi_{IJ}:\ker\rd s_{IJ}\to\ker\rd s_J$ and
$[\Hat\phi_{IJ}] : \qu{E_I}{\im\rd s_I}\to\qu{E_J}{\im\rd s_J}$.

\begin{lemma}\label{le:detsK}  There is a line bundle $\det(\s_\Kk) = \bigl(\det (\rd s_I)\bigr)_{I\in \Ii_\Kk}$  on $\Kk$ 
where 
$$
\det(\rd s_I):=\bigcup_{x\in U_I} \det(\rd_x s_I) \;\to\; U_I \qquad 
\text{for}\; I\in\Ii_\Kk, 
$$
and the isomorphisms $\Tphi_{IJ}^\La: = \La_{IJ}(\Tx)$ in \eqref{eq:bunIJ} for $I\subsetneq J$ and $\Tx\in \TU_{IJ}$.
\end{lemma}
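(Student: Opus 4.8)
The plan is to verify the three defining conditions of Definition~\ref{def:bundle} for the collection $\bigl(\det(\rd s_I),\Tphi_{IJ}^\La\bigr)_{I\subset J}$, where $\Tphi_{IJ}^\La:=\La_{IJ}(\Tx)$ fibrewise over $\Tx\in\TU_{IJ}$. First I would check that $\det(\rd s_I)\to U_I$ is a genuine smooth line bundle. This is the one place where an honest argument is needed: although $\dim\ker\rd_x s_I$ and $\dim\coker\rd_x s_I$ jump as $x$ varies, the standard stabilization trick for determinant lines of Fredholm maps (see \cite[\S A.2]{JHOL}) applies verbatim. Concretely, near any $x_0\in U_I$ one chooses a finite-dimensional subspace $W\subset E_I$ that, together with $\im\rd_{x_0}s_I$, spans $E_I$; then $x\mapsto\rd_x s_I$ composed with projection to $E_I/W$ has constant rank near $x_0$, so $\ker$ of the stabilized map and hence $\det(\rd_x s_I)\cong\La^{\max}\ker(\rd_x s_I\oplus 0_W)\otimes(\La^{\max}W)^*$ varies smoothly, giving local trivializations; the transition functions between two such stabilizations are smooth, so $\det(\rd s_I)$ is a line bundle. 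The $\Ga_I$-action on $U_I$ lifts to $\det(\rd s_I)$ because $s_I$ is $\Ga_I$-equivariant and the lift $\La_\ga$ is induced functorially from the derivative of the $\Ga_I$-action via \eqref{eq:laphi}.

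Next I would treat the pullback and the bundle map. For $I\subsetneq J$, the map $s_{IJ}=s_I\circ\rho_{IJ}:\TU_{IJ}\to E_I$ satisfies $\ker\rd_\Tx s_{IJ}=\rd_\Tx\rho_{IJ}^{-1}(\ker\rd_x s_I)$ and $\coker\rd_\Tx s_{IJ}=\coker\rd_x s_I$ since $\rho_{IJ}$ is a local diffeomorphism (being a covering), so \eqref{eq:bunrho} exhibits an isomorphism $\det(\rd s_{IJ})\cong\rho_{IJ}^*\bigl(\det(\rd s_I)|_{U_{IJ}}\bigr)$, and I would use this to \emph{identify} $\rho_{IJ}^*(\det(\rd s_I)|_{U_{IJ}})$ with $\det(\rd s_{IJ})$. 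The $\Ga_J$-action on $\TU_{IJ}$ (together with the $\io^\Ga_{IJ}$-equivariance of $\Hat\phi_{IJ}$ and the fact that $\Ga_{J\less I}=\ker\rho^\Ga_{IJ}$ acts trivially on $\Hat\phi_{IJ}(E_I)$) lifts to an action on $\det(\rd s_{IJ})$ whose quotient by $\Ga_{J\less I}$ is exactly $\det(\rd s_I)|_{U_{IJ}}$, as required. Then $\Tphi_{IJ}^\La=\La_{IJ}(\Tx)$ of \eqref{eq:bunIJ} is a fibrewise isomorphism onto $\det(\rd_{y}s_J)$ by the index condition of Definition~\ref{def:change2}; it is smooth because $\rd\Tphi_{IJ}$ and $[\Hat\phi_{IJ}]$ depend smoothly on $\Tx$, and it is $\Ga_J$-equivariant because $\Tphi_{IJ}$ is $\Ga_J$-equivariant and $\Hat\phi_{IJ}$ is $\io^\Ga_{IJ}$-equivariant.

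Finally I would verify the weak cocycle condition $\Tphi^\La_{IK}=\Tphi^\La_{JK}\circ\rho_{JK}^*(\Tphi^\La_{IJ})$ on the relevant overlap. Using the factorizations $\Tphi_{IK}=\Tphi_{JK}\circ\Tphi_{IJ}\circ(\text{identification})$, $\rho_{IK}=\rho_{IJ}\circ\rho_{JK}$, and $\Hat\phi_{IK}=\Hat\phi_{JK}\circ\Hat\phi_{IJ}$ from Lemma~\ref{le:compos2}, together with the functoriality $\La_{G\circ F}=\La_G\circ\La_F$ of the construction \eqref{eq:laphi}, this reduces to the chain rule $\rd(\Tphi_{JK}\circ\Tphi_{IJ})=\rd\Tphi_{JK}\circ\rd\Tphi_{IJ}$ on kernels and the corresponding composition $[\Hat\phi_{JK}]\circ[\Hat\phi_{IJ}]=[\Hat\phi_{IK}]$ on cokernels, both of which are immediate; the domains match by \eqref{eq:cocyclea}. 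The main obstacle is genuinely the first step --- establishing smoothness of $\det(\rd s_I)$ over a single domain despite the jumping kernel dimension --- but this is entirely standard Fredholm-stabilization and I would simply cite \cite[\S A.2]{JHOL}; everything after that is a bookkeeping check that the coordinate changes $\Tphi^\La_{IJ}$ are well defined, equivariant, and satisfy the cocycle relation.
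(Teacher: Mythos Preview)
Your proposal is correct and follows essentially the same route as the paper: verify the conditions of Definition~\ref{def:bundle} by (i) establishing that each $\det(\rd s_I)\to U_I$ is a smooth line bundle via the standard Fredholm stabilization, (ii) lifting the $\Ga_I$-action using equivariance of $s_I$, (iii) identifying the pullback $\rho_{IJ}^*(\det(\rd s_I))$ via \eqref{eq:bunrho} and checking $\Ga_J$-equivariance of $\La_{IJ}$, and (iv) deducing the weak cocycle condition from functoriality of $\La_F$. The paper simply cites \cite[Proposition~8.1.8]{MW2} for the smoothness of both $x\mapsto\det(\rd_x s_I)$ and $\Tx\mapsto\La_{IJ}(\Tx)$, and says the cocycle condition ``holds by functoriality'', whereas you spell out the stabilization and the chain-rule verification; one small point is that your justification ``$[\Hat\phi_{IJ}]$ depends smoothly on $\Tx$'' hides the same jumping-rank issue you handled for $\det(\rd s_I)$, so smoothness of $\La_{IJ}$ really requires the same stabilization argument applied in compatible local trivializations on both sides --- this is exactly what the cited proposition in \cite{MW2} does.
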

\begin{proof} 
We showed in \cite[Proposition~8.1.8]{MW2} that the assignments $x\mapsto  \det(\rd_x s_I)$ and $
\Tx\mapsto \La_{IJ}(\Tx)$ are smooth. Further, the weak cocycle condition holds by functoriality.
It remains to check the conditions in Definition~\ref{def:bundle} pertaining to the group actions. 
Since $\ga\in \Ga_I$ acts  by diffeomorphisms on $U_I$ and by linear transformations on $E_I$ and since $s_I$ is $\Ga_I$ equivariant,
the  action of $\ga\in \Ga_I$  on $U_I$ lifts to $\La_I$ by
$$
\La_{\rd_x \ga} \otimes 
\bigl(\La_{\ga^{-1}}\bigr)^*
\, :\; \det(\rd_x s_{I}) \to \det(\rd_{\ga x} s_I).
$$
Further, because the image $\Hat\phi_{IJ}(E_I)\subset E_J$ is invariant under the $\Ga_J$ action, there is a similar action of $\Ga_J$ on the pullback  $\rho_{IJ}^*(\det(\rd s_I))$.  Thus the required group actions exist.  The map $\rho^*_{IJ}(\La_I|_{U_{IJ}})\to\La_I|_{U_{IJ}}$ 
 is  the quotient by  $\Ga_{J\less I}$ followed by an isomorphism since this holds for $\rho_{IJ}$.  Moreover, 
 the natural lift of $\Tphi_{IJ}$  to 
$\Tphi_{IJ}^\La : = \La_{IJ}(y)$ is $\Ga_J$ equivariant because the isomorphisms
 $\rd\Tphi_{IJ}$ and $$
 \Tphi_{IJ} \times  [\Hat\phi_{IJ}]: \TU_{IJ}\times \qu{E_I}{\im \rd s_I} \to U_J \times \qu{E_J}{\im \rd s_J}
 $$
  are $\Ga_J$-equivariant.
  \end{proof}

The hard part of the above proof is to show that  the assignments $x\mapsto  \det(\rd_x s_I)$ and $
\Tx\mapsto \La_{IJ}(\Tx)$ are smooth.\footnote
{
The local argument is essentially the same as that in \cite[Thm.~A.2.2]{JHOL}, but we choose a different order for the added basis vectors in order to get consistency of signs.}
  For this one  constructs  local trivializations that 
use the contraction isomorphisms defined  in the next lemma.
These maps will also be used  in the  definition of the second version of the determinant bundle: see Proposition~\ref{prop:det2} below.

\begin{lemma}[\cite{MW2}~Lemma~8.1.7]
\label{lem:get}  Let $V,W$ be finite dimensional vector spaces.
Every linear map $F:V\to W$ together with an isomorphism $\phi:K\to \ker F$ induces an isomorphism
\begin{align}\label{Cfrak}
\mathfrak{C}^{\phi}_F \,:\; \lm V \otimes \bigl(\lm W \bigr)^* 
&\;\overset{\cong}{\longrightarrow}\;  \lm K \otimes \bigl(\lm \bigl( \qu{W}{F(V)}\bigr) \bigr)^*  
\end{align}
given by
\begin{align}
(v_1\wedge\dots v_n)\otimes(w_1\wedge\dots w_m)^* &\;\longmapsto\;
\bigl(\phi^{-1}(v_1)\wedge\dots \phi^{-1}(v_k)\bigr)\otimes \bigl( [w_1]\wedge\dots [w_{m-n+k}] \bigr)^* ,
\notag
\end{align}
where $v_1,\ldots,v_n$ is a basis for $V$ with ${\rm span}(v_1,\ldots,v_k)=\ker F$, and $w_1,\dots, w_m$ is a basis for $W$ whose last $n-k$ vectors are $w_{m-n+i}=F(v_i)$ for $i=k+1,\ldots,n$.

In particular, for every linear map $D:V\to W$ we may pick $\phi$ as the inclusion $K=\ker D\hookrightarrow V$ to obtain an isomorphism
\begin{equation}\label{eq:CD}
\mathfrak{C}_{D} \,:\;  \lm V \otimes \bigl(\lm W \bigr)^* \;\overset{\cong}{\longrightarrow}\;  \det(D) .
\end{equation} 
\end{lemma}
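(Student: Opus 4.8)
The statement to be proved is Lemma~\ref{lem:get}, the contraction isomorphism $\mathfrak{C}^{\phi}_F: \lm V \otimes (\lm W)^* \overset{\cong}{\to} \lm K \otimes (\lm(\qu{W}{F(V)}))^*$. The strategy is first to check that the displayed formula is independent of the choice of compatible bases $v_1,\dots,v_n$ and $w_1,\dots,w_m$, and then to exhibit an explicit inverse, which makes the isomorphism claim immediate. The special case \eqref{eq:CD} follows by taking $\phi$ to be the inclusion $\ker D\hookrightarrow V$, so no separate argument is needed there.

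\textbf{First step: well-definedness.} I would fix an adapted flag of data: a basis $v_1,\dots,v_n$ of $V$ whose first $k$ vectors span $K':=F(\ker\text{---})$, more precisely with $\mathrm{span}(v_1,\dots,v_k)=\ker F$, and a basis $w_1,\dots,w_m$ of $W$ whose last $n-k$ entries are the images $F(v_{k+1}),\dots,F(v_n)$ (these are linearly independent since $v_{k+1},\dots,v_n$ map isomorphically onto $F(V)$). The first $m-n+k$ vectors $w_1,\dots,w_{m-n+k}$ then project to a basis of $\qu{W}{F(V)}$. To check independence of choices one observes that changing $(v_i)$ by a matrix $A\in GL(V)$ preserving the flag $\ker F\subset V$ multiplies $v_1\wedge\dots\wedge v_n$ by $\det A$; this $\det A$ factors as $\det(A|_{\ker F})\cdot\det(A \text{ on } \qu{V}{\ker F})$ since $A$ is block-triangular, and the first factor is exactly the scaling of $\phi^{-1}(v_1)\wedge\dots\wedge\phi^{-1}(v_k)$ while the second factor, transported through $F$, is the scaling introduced on the $w$-side among $w_{m-n+1},\dots,w_m$ — but those do not appear in the output, so they must be compensated by the corresponding change in the $(w_i)$. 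The cleanest bookkeeping is to note that any allowed change of $(w_i)$ is block-triangular with respect to $F(V)\subset W$, so $w_1\wedge\dots\wedge w_m$ scales by $\det(B \text{ on }\qu{W}{F(V)})\cdot\det(B|_{F(V)})$, and the compatibility constraint $w_{m-n+i}=F(v_i)$ ties $\det(B|_{F(V)})$ to $\det(A \text{ on }\qu{V}{\ker F})$. Matching the two sides shows the formula descends to a well-defined linear map on $\lm V\otimes(\lm W)^*$.

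\textbf{Second step: the inverse.} Given $(\phi^{-1}(v_1)\wedge\dots\wedge\phi^{-1}(v_k))\otimes([w_1]\wedge\dots\wedge[w_{m-n+k}])^*$, one reconstructs a preimage by choosing lifts of $[w_1],\dots,[w_{m-n+k}]$ to $W$, adjoining $F(v_{k+1}),\dots,F(v_n)$ to complete a basis of $W$, and adjoining $v_{k+1},\dots,v_n$ (lifts of a basis of $\qu{V}{\ker F}$, chosen compatibly with $\phi$ on $K$) to complete a basis of $V$; then $(v_1\wedge\dots\wedge v_n)\otimes(w_1\wedge\dots\wedge w_m)^*$ maps forward to the given element by construction, and a repeat of the flag computation above shows this preimage is independent of the auxiliary lifts. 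Hence $\mathfrak{C}^\phi_F$ is bijective, and being linear it is an isomorphism. Finally \eqref{eq:CD} is the case $K=\ker D$, $\phi=\mathrm{incl}$, giving $\mathfrak{C}_D:\lm V\otimes(\lm W)^*\overset{\cong}{\to}\det(D)$.

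\textbf{Expected main obstacle.} The only real subtlety is the sign/determinant bookkeeping in the well-definedness step: one must track how block-triangular changes of basis on the $V$-side and $W$-side interact through the constraint $w_{m-n+i}=F(v_i)$, and verify that the $\det(A\text{ on }\qu{V}{\ker F})$ and $\det(B|_{F(V)})$ factors cancel precisely, leaving exactly the expected scaling $\det(A|_{\ker F})\cdot\det(B\text{ on }\qu{W}{F(V)})^{-1}$ on the output. This is routine multilinear algebra but is the place where a careless ordering of basis vectors (the footnote after Lemma~\ref{le:detsK} flags exactly this point) could introduce a spurious sign; I would handle it by fixing the ordering convention once and checking the two generating types of basis change (those supported in $\ker F$ versus those acting on a complement) separately.
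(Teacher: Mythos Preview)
The paper does not prove this lemma; it is quoted from \cite{MW2}, Lemma~8.1.7, and stated here without argument. So there is no in-paper proof to compare against, and I can only assess your proposal on its own merits.

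Your approach is correct, and your well-definedness computation is the entire content of the lemma. Both $\lm V \otimes (\lm W)^*$ and $\lm K \otimes \bigl(\lm(\qu{W}{F(V)})\bigr)^*$ are one-dimensional real lines, so once you have checked that the formula is independent of the admissible bases and visibly sends a generator to a generator, bijectivity is automatic. Your second step (building an explicit inverse) is therefore unnecessary, though harmless.

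Your determinant bookkeeping is also right in outline; to make it fully explicit: an admissible change of basis on $V$ is block upper-triangular with diagonal blocks $A_{11}\in GL(\ker F)$ and $A_{22}\in GL(\qu{V}{\ker F})$, while the constraint $w'_{m-n+i}=F(v'_i)$ forces the change on $W$ to be block lower-triangular with diagonal blocks $B_{11}\in GL(\qu{W}{F(V)})$ and $A_{22}$ again (transported through $F$) in the $F(V)$ slot. The input then scales by $\det A_{11}\cdot\det A_{22}\cdot(\det B_{11}\cdot\det A_{22})^{-1}=\det A_{11}\cdot(\det B_{11})^{-1}$, which is exactly the scaling on the output. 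The $\det A_{22}$ cancellation is precisely the point you flagged as the expected obstacle, and it goes through cleanly.
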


 Recall that the tangent bundle condition \eqref{tbc} implies that $\rd s_J$ restricts 
at all $y= \Tphi_{IJ}(\Tx)$\footnote
{
To keep notation similar  to that in \cite{MWiso} we always denote $y:=\Tphi_{IJ}(\Tx)$ where  $\Tx\in \TU_{IJ}$.
}
  to an isomorphism 
 $\qu{\rT_{y} U_J}{\rd_\Tx\Tphi_{IJ}(\rT_\Tx \TU_{IJ})}\overset{\cong}{\to} \qu{E_J}{\Hat\phi_{IJ}(E_I)}$. 
Therefore, if we choose a smooth normal bundle $$
N_{IJ} ={\textstyle \bigcup_{\Tx\in \TU_{IJ}}} N_{IJ,\Tx}
\subset \Tphi_{IJ}^*( \rT U_J)
$$
 to 
the submanifold 
$\im \Tphi_{IJ}
\subset U_J$,  
the subspaces 
$\rd_y s_J(N_{IJ,\Tx})$ 
form a smooth family of subspaces of $E_J$ that are complements  
to
$\Hat\phi_{IJ}(E_I)$.
Hence letting 
$\pr_{N_{IJ}}(\Tx) : E_J \to \rd_y s_J(N_{IJ,\Tx}) \subset E_J$
be the 
smooth family of 
projections with kernel  
$\Hat\phi_{IJ}(E_I)$,
we obtain 
a smooth family of 
linear maps
$$
F_\Tx \,:= \;
\pr_{N_{IJ}}(\Tx) 
\circ \rd_y s_J \,:\; \rT_y U_J \;\longrightarrow\; 
E_J
\qquad\text{for}\; \Tx\in \TU_{IJ}, \;\; y=\Tphi_{IJ}(\Tx)
$$
with images $\im F_\Tx=\rd_y s_J(N_{IJ,\Tx})$,
and isomorphisms to their kernel
$$
\phi_\Tx  \,:= \;  \rd_\Tx\Tphi_{IJ} \,:\; \rT_\Tx \TU_I \;\overset{\cong}{\longrightarrow}\;  \ker F_\Tx =  \rd_\Tx\Tphi_{IJ}(\rT_\Tx \TU_{IJ}) \;\subset\; \rT_y U_J .
$$
We may arrange that the maps $F_\Tx$ are $\Ga_J$-equivariant by  choosing the bundle $N_{IJ}$ to be $\Ga_J$-invariant, since
 $s_J$ is $\Ga_J$-equivariant, as is the subspace  $\Hat\phi_{IJ}(E_I)\subset E_J$ 
 (this time with trivial action of $\Ga_{J\less I}$).  
Then the isomorphism $\phi_{\Tx}$ is also $\Ga_J$-equivariant. 
By Lemma~\ref{lem:get} the maps  $F_\Tx, \phi_{\Tx}$ induce $\Ga_J$-equivariant isomorphisms 
$$
\mathfrak{C}^{\phi_\Tx}_{F_\Tx} \,:\; \lm \rT_{y} U_J \otimes \bigl(\lm E_J \bigr)^* 
 \;\overset{\cong}{\longrightarrow}\;  \lm \rT_\Tx \TU_{IJ} \otimes  \Bigl(\lm \Bigl(\qq{E_J}
{\im F_\Tx}
 \Bigr) \Bigr)^* .
$$
We may combine this with the dual of the  isomorphism $\lm \bigl(\qu{E_J}
{\rd_y s_J(N_{IJ,\Tx})}
\bigr) \cong \lm E_I$ induced 
via \eqref{eq:laphi} by $\pr^\perp_{
N_{IJ}}(\Tx) \circ\Hat\phi_{IJ} : E_I \to \qu{E_J}
{\rd_y s_J(N_{IJ,\Tx})}
$ to obtain $\Ga_J$-equivariant isomorphisms
\begin{align} \label{CIJ}
\mathfrak{C}_{IJ}(\Tx) \,: \;
 \lm \rT_{y} U_J \otimes \bigl(\lm E_J \bigr)^*  
\;\overset{\cong}{\longrightarrow}\;  \lm \rT_\Tx \TU_{IJ} \otimes \bigl(\lm E_I \bigr)^*  
\end{align}
given by $\mathfrak{C}_{IJ}(\Tx) :=  \bigl( \id_{ \lm \rT_\Tx \TU_{IJ}} \otimes 
\La_{(\pr^\perp_{
N_{IJ}}(\Tx)\circ\Hat\phi_{IJ})^{-1}}^*  \bigr) \circ \mathfrak{C}^{\phi_\Tx}_{F_\Tx}$, where again $\Ga_{J\less I}$ acts trivially on 
$E_I$.


\begin{prop}[\cite{MWiso}~Proposition~3.1.13]\label{prop:det2} Let $\Kk$ be a weak Kuranishi atlas.  Then there is a well defined line bundle $\det(\Kk)$ over $\Kk$ given by the line bundles $$
\La^\Kk_I := \lm \rT U_I\otimes \bigl(\lm E_I\bigr)^* \to U_I,\quad I\in\Ii_\Kk
$$
and the transition maps $\mathfrak{C}_{IJ}^{-1}: \rho_{IJ}^*\bigl(\La^\Kk_I |_{U_{IJ}}\bigr) \to \La^\Kk_J $ 
from \eqref{CIJ} for $I\subsetneq J$. 
In particular, the latter isomorphisms are independent of the choice of 
normal bundle $N_{IJ}$.

Furthermore, the contractions $\mathfrak{C}_{\rd s_I}: \La^\Kk_I \to \det(\rd s_I)$ from \eqref{eq:CD} with $D =\rd s_I$  define an isomorphism $\Psi^{\s_\Kk}:=\bigl(\mathfrak{C}_{\rd s_I}\bigr)_{I\in\Ii_\Kk}$ from $\det(\Kk)$ to $\det(\s_\Kk)$.
\end{prop}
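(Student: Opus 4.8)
\textbf{Plan of proof for Proposition~\ref{prop:det2}.}
The plan is to verify that the collection $(\La^\Kk_I)_{I\in\Ii_\Kk}$ together with the maps $\mathfrak C_{IJ}^{-1}$ satisfies all the conditions in Definition~\ref{def:bundle}, and then to check that the contractions $\mathfrak C_{\rd s_I}$ assemble into an isomorphism $\Psi^{\s_\Kk}$. The ingredients are almost all in place: Lemma~\ref{le:detsK} already shows $\det(\s_\Kk)$ is a line bundle over $\Kk$, and the equivariance and smoothness of the maps $\mathfrak C_{IJ}(\Tx)$ is established in the discussion preceding the proposition (with the $\Ga_J$-invariant normal bundle $N_{IJ}$). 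So the proof is mainly a matter of bookkeeping, and I would organize it as follows.

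First I would establish that $\mathfrak C_{IJ}^{-1}$ is independent of the choice of $\Ga_J$-invariant normal bundle $N_{IJ}$. The clean way to do this is to observe that the composite $\mathfrak C_{\rd s_J}\circ \mathfrak C_{IJ}^{-1}\circ \rho_{IJ}^*(\mathfrak C_{\rd s_I})^{-1}$ equals $\La_{IJ}(\Tx)$ of \eqref{eq:bunIJ}, which manifestly does not involve $N_{IJ}$; since $\mathfrak C_{\rd s_I}$ and $\mathfrak C_{\rd s_J}$ are also independent of $N_{IJ}$, so is $\mathfrak C_{IJ}^{-1}$. This identity is proved by a direct computation with the explicit bases in Lemma~\ref{lem:get}: one picks a basis of $\rT_yU_J$ adapted simultaneously to $\ker\rd s_J$, to $\rd_\Tx\Tphi_{IJ}(\rT_\Tx\TU_{IJ})$, and to the normal bundle $N_{IJ}$, and traces through the definitions of $\mathfrak C^{\phi_\Tx}_{F_\Tx}$ and $\mathfrak C_{\rd s_J}$; the $N_{IJ}$-dependent projection factors cancel. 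I would state this as the key computational lemma and only sketch the basis chase. Having this identity in hand, the fact that $\Psi^{\s_\Kk}=(\mathfrak C_{\rd s_I})_I$ intertwines the transition maps $\mathfrak C_{IJ}^{-1}$ of $\det(\Kk)$ with the transition maps $\Tphi^\La_{IJ}=\La_{IJ}$ of $\det(\s_\Kk)$ is immediate, and since each $\mathfrak C_{\rd s_I}$ is a fiberwise isomorphism covering $\id_{U_I}$, this gives the asserted isomorphism of line bundles over $\Kk$.

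Next I would check the remaining conditions of Definition~\ref{def:bundle} for $\det(\Kk)$ directly. Smoothness of $U_I\ni x\mapsto\La^\Kk_{I,x}$ and of $\Tx\mapsto\mathfrak C_{IJ}(\Tx)$ is quoted from the preamble to the proposition. The lift of the $\Ga_I$-action to $\La^\Kk_I$ is $\La_{\rd_x\ga}\otimes(\La_{\ga^{-1}})^*$ exactly as in the proof of Lemma~\ref{le:detsK}, using that $\Ga_I$ acts by diffeomorphisms on $U_I$ and linearly on $E_I$. The $\Ga_J$-action on $\rho_{IJ}^*(\La^\Kk_I|_{U_{IJ}})=\lm\rT\TU_{IJ}\otimes(\lm E_I)^*$ comes from the $\Ga_J$-action on $\TU_{IJ}$ and the $\Ga_J$-action on $E_I$ via the surjection $\rho^\Ga_{IJ}$ (with $\ker\rho^\Ga_{IJ}=\Ga_{J\less I}$ acting trivially on $E_I$ but freely on $\TU_{IJ}$); that the projection to $\La^\Kk_I|_{U_{IJ}}$ is the quotient by $\Ga_{J\less I}$ followed by an isomorphism holds because this is true for $\rho_{IJ}$ itself on the level of $\TU_{IJ}\to U_{IJ}$ and $E_I$ is fixed. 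The $\Ga_J$-equivariance of $\mathfrak C_{IJ}^{-1}$ is exactly the equivariance of $\mathfrak C_{IJ}(\Tx)$ noted before the proposition, using the $\Ga_J$-invariance of $N_{IJ}$. Finally the weak cocycle condition $\Ti\phi^\La_{IK}=\Ti\phi^\La_{JK}\circ\rho_{JK}^*(\Ti\phi^\La_{IJ})$ for $I\subset J\subset K$ follows from the corresponding (trivially verified, by functoriality of $\La_{(-)}$) cocycle identity for the $\La_{IJ}$ of $\det(\s_\Kk)$ — established in Lemma~\ref{le:detsK} — transported through the isomorphism $\Psi^{\s_\Kk}$ via the $N$-independence identity above; this is why proving the $N$-independence identity first is the efficient route.

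The main obstacle is the explicit sign/basis computation underlying the $N$-independence identity (equivalently, the compatibility of the two descriptions of the transition maps). This is the same delicate point flagged in the footnote referencing \cite[Thm.~A.2.2]{JHOL}: one must choose the order of the adjoined basis vectors in Lemma~\ref{lem:get} consistently so that the contraction isomorphisms $\mathfrak C^{\phi}_F$, the index-condition isomorphism $\La_{IJ}$, and the projection $\pr^\perp_{N_{IJ}}$ combine with the correct sign and the $N_{IJ}$-dependence genuinely cancels rather than merely cancelling up to sign. Everything else — equivariance, the group-action axioms, smoothness — is routine and can be disposed of by citing \cite{MW2} and the preamble. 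I would therefore present the proof as: (1) recall the two transition-map formulas and state the $N$-independence identity as a lemma, proving it by the adapted-basis chase; (2) deduce that $\det(\Kk)$ is a well-defined line bundle over $\Kk$ by checking the Definition~\ref{def:bundle} axioms, using (1) for the cocycle condition and Lemma~\ref{le:detsK}'s group-action arguments verbatim; (3) observe that $(\mathfrak C_{\rd s_I})_I$ is then tautologically an isomorphism $\det(\Kk)\to\det(\s_\Kk)$.
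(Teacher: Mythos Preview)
Your proposal is correct and follows essentially the same approach as the paper. The paper organizes it slightly differently---it first \emph{defines} preliminary transition maps $(\Ti\phi^\La_{IJ})':= \mathfrak{C}_{\rd s_J}^{-1} \circ \La_{IJ} \circ \mathfrak{C}_{\rd s_{IJ}}$ (which automatically yield a bundle isomorphic to $\det(\s_\Kk)$) and then shows these equal $\mathfrak{C}_{IJ}^{-1}$, whereas you state the same equality as your ``$N$-independence identity'' up front---but the substantive content is identical: both reduce everything to the commutativity of the lower square in diagram~\eqref{cclaim}, and both defer that basis chase to \cite[Proposition~8.1.12]{MW2} (noting that the pointwise nature of the statement lets one treat $\rho_{IJ}$ as the identity and hence reduce to the trivial-isotropy case).
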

\begin{proof}  
The first step is to show  that the maps $\mathfrak{C}_{\rd_x s_I}$ are isomorphisms for each $I$, and in particular that they vary smoothly with $x\in U_I$.  This holds just as before.  
Next, consider  the following diagram:
\begin{equation}\label{cclaim}
\xymatrix{
\La^\Kk_I  = \lm  \rT_x U_I \otimes \bigl( \lm E_I \bigr)^* 
 \ar@{->}[r]^{ \qquad\quad\qquad\mathfrak{C}_{\rd_x s_I}}  
 &
\det(\rd_x s_I)   \\
\rho_{IJ}^*\bigl(\La^\Kk_I\bigr) =  \lm  \rT_\Tx \TU_{IJ} \otimes \bigl( \lm E_I \bigr)^* 
 \ar@{->}[r]^{ \qquad\qquad\qquad\mathfrak{C}_{\rd_\Tx s_{IJ}}}  
 \ar@{->}[u]^{\La_{\rd_{\Tx}\rho_{IJ}}\otimes \La_{\id}}
 &
\det(\rd_\Tx s_{IJ})  \ar@{->}[d]^{\La_{IJ}(\Tx)} 
 \ar@{->}[u]_{\La_{\rd_{\Tx}\rho_{IJ}}\otimes \La_{\id}}
\\
\La^\Kk_J = \lm \rT_y U_J \otimes \bigl( \lm E_J \bigr)^*
 \ar@{->}[r]^{\qquad\quad\quad \mathfrak{C}_{\rd_y s_J}}  
 \ar@{->}[u]^{\mathfrak{C}_{IJ}
 (\Tx)} 
&
\det(\rd_y s_J) .
}
\end{equation}
 The top square has right hand map given by  \eqref{eq:bunrho}, and  commutes because $s_{IJ} = s_I\circ \rho_{IJ}$.  
Hence the maps $\mathfrak{C}_{\rd_\Tx s_{IJ}}$   vary smoothly with $\Tx\in \TU_{IJ}$, and we can define   
preliminary transition maps 
\begin{equation}\label{tiphi}
(\Ti\phi^\La_{IJ})':= \mathfrak{C}_{\rd s_J}^{-1} \circ \La_{IJ} \circ \mathfrak{C}_{\rd s_{IJ}}
\,:\; \rho_{IJ}^*\bigl(\La^\Kk_I|_{\TU_{IJ}}\bigr)\to \La^\Kk_J \qquad\text{for}\; I\subsetneq J \in \Ii_\Kk
\end{equation}
by the transition maps \eqref{eq:bunIJ} of $\det(\s_\Kk)$ and the isomorphisms $\mathfrak{C}_{\rd s}$.
These define a line bundle $\La^\Kk:=\bigl(\La^\Kk_I, \Ti\phi_{IJ} \bigr)_{I,J\in\Ii_\Kk}$ since the weak cocycle condition follows directly from that for the $\La_{IJ}$. Moreover, this automatically makes the family of bundle isomorphisms $\Psi^\Kk:=\bigl(\mathfrak{C}_{\rd s_I}\bigr)_{I\in\Ii_\Kk}$ an isomorphism from $\La^\Kk$ to $\det(\s_\Kk)$. 
It remains to see that $\La^\Kk=\det(\Kk)$ and $\Psi^\Kk=\Psi^{\s_\Kk}$, i.e.\ we claim equality of transition maps $\Ti\phi_{IJ}=\mathfrak{C}_{IJ}^{-1}$. This also shows that $\mathfrak{C}_{IJ}^{-1}$ and thus $\det(\Kk)$ is independent of the choice of 
normal bundle $N_{IJ}$ in \eqref{CIJ}.
So to finish the proof it suffices to show that the lower square commutes.  But this can be proved by slightly modifying the proof
of the corresponding result in the trivial isotropy case.  In fact, 
  since this  statement  can be proved at each point separately, 
we can treat the map $\rho_{IJ}$ as the identity, identifying $x$ with $\Tx$ and  hence reduce to the diagram 
\cite[(8.1.18)]{MW2}
which is commutative by the proof of \cite[Proposition~8.1.12]{MW2}.
 \end{proof}

This completes the construction of the two isomorphic line bundles $\det(\s_\Kk)$ and $\det(\Kk)$ over $\Kk$.
Similar arguments show that if $\Kk$ is a cobordism atlas as in Definition~\ref{def:CKS} the 
corresponding bundles   
$\det (\Kk)$ and $\det(\s_\Kk)$ are well defined and isomorphic. 

\begin{rmk}\label{rmk:preorient}\rm (i)  If  the bundle 
  $
\La^\Kk_I: =   \La^{\max} \rT U_I\otimes (\La^{\max}E_I)^*\to U_I
  $
  has  a nonvanishing $\Ga_I$-equivariant section $\si_I$ for each $I\in \Ii_\Kk$, then  $\det(\Kk)$ is the 
  pullback of a line bundle on the intermediate atlas $\uKk$.
  Indeed, the existence of  $\si_I$  implies that 
 $\La^\Kk_I$ is the pullback via the projection $\pi_I: U_I\to \uU_I$ of some bundle $\uLa_I$ over $\uU_I$
that is trivialized by $\si_I$.  Further, the  fact that the 
transition maps $\Tphi_{IJ}^\La: \rho^*_{IJ}(\La^\Kk_I) \to \La^\Kk_J$   are $\Ga_J$-equivariant implies that they
descend to well defined maps
 $\Ti\uphi_{IJ}^\La: \uLa_I|_{\uU_{IJ}}\to \uLa_J$.  Note that there is no need to assume here that
the local sections $\si_I$ are compatible. A similar remark applies to $\det(\s_\Kk)$.\MS

\NI (ii)
In the case of Gromov--Witten atlases the obstruction 
space is a direct sum $E_I: = \oplus_{\ga\in \Ga_I} E_I^0$ of copies of a 
fixed space $E_I^0$, and $\Ga_I$ acts by permuting its factors.  If we assume that  $\dim E_I^0$ is even then 
this action preserves an orientation of $E_I$ so that the local section $\si_I$ in (i) exists
exactly if the group $\Ga_I$ acts by orientation preserving maps of $U_I$.  $\hfill\er$
\end{rmk}

Here are some basic definitions .  For more detail about the isomorphisms 
$(\Ti\io^{\La,\al}_I)_{I\in\Ii_{\p^\al\Kk}}$ see \cite[equation~(8.1.13)]{MW2}.  

\begin{defn}\label{def:orient} 
A  weak Kuranishi atlas or Kuranishi cobordism $\Kk$ is {\bf orientable} if 
there exists a nonvanishing section $\si$ of the determinant bundle $\det(\s_\Kk)$.
An {\bf orientation} of $\Kk$ is a choice of nonvanishing section $\si$ of $\det(\s_\uKk)$, which pulls back to a section, also denoted $\si$, of $\det(\s_\Kk)$. 
An {\bf oriented Kuranishi atlas or cobordism} is a pair $(\Kk,\si)$ consisting of a  Kuranishi atlas or cobordism and an orientation $\si$ of $\Kk$.

For an oriented Kuranishi cobordism $(\Kk,\si)$, the {\bf induced orientation of the boundary} $\p^\al\Kk$ for $\al=0$ resp.\ $\al=1$ is the orientation of $\p^\al\Kk$,
$$
\p^\al\si \,:=\; \Bigl( \bigl( (\Ti\io^{\La,\al}_I)^{-1} \circ\si_I \circ \io^\al_I \bigr)\big|_{\partial^\al U_I \times\{\al\} } \Bigr)_{I\in\Ii_{\p^\al\Kk}}
$$
induced by the isomorphism $(\Ti\io^\al_I)_{I\in\Ii_{\p^\al\Kk}}$ between a collar neighbourhood of the boundary in $\Kk$ and the product Kuranishi atlas $
A^\al_\eps\times \p^\al\Kk$, followed by restriction to the boundary $\p^\al \Kk=\p^\al\bigl( A^\al_\eps\times \p^\al \Kk\bigr)$, where we identify $\partial^\al U_I \times\{\al\} \cong \partial^\al U_I$.

With that, we say that two oriented weak Kuranishi atlases $(\Kk^0,\si^0)$ and $(\Kk^1,\si^1)$ are {\bf oriented cobordant} 
(resp. {\bf oriented concordant}) if there exists a weak Kuranishi cobordism (resp. concordance) $\Kk$ from $\Kk^0$ to $\Kk^1$ and a section $\si$ of $\det(s_{\Kk})$ such that  $\partial^\al\si=\si^\al$ for $\al=0,1$.
\end{defn}

\begin{rmk}\rm  (i)  As we saw in Remark~\ref{rmk:preorient} the existence of a nonvanishing local section $s_I$  of
 $\La^\Kk_I$ (or $\det(\s_I)$) implies that this bundle is pulled back from a trivial bundle over the intermediate chart.   
 The orientability condition implies that one can choose compatible local orientations for all these charts.\MS

\NI (ii) As in \cite{MW2}, we define the induced orientation on the boundary $\p^\al \Kk$ of a cobordism so that it is completed to an orientation of the collar by adding the  positive unit vector $1$ along $A^\al_\eps\subset \R$  rather than the more usual outward normal vector. $\hfill\er$
\end{rmk}

We show in \cite[Proposition~8.1.13]{MW2} that if  $(\Kk,\si)$ is  an oriented, 
tame Kuranishi atlas  with trivial isotropy and with
reduction $\Vv$,  
then  the zero set $|\bZ^\nu|$  of any  admissible, precompact, transverse perturbation $\nu$ of $\s_\Kk|_\Vv$
is an oriented closed manifold whose oriented cobordism class is independent of the choice of $\nu$.  
The proof is based on the next lemma (a simplified version of \cite[Lemma~3.1.14]{MWiso}) that describes 
how to use the orientation of $\Kk$ to orient the local zero sets $Z_I$.  One then proves  that these local orientations are compatible
with coordinate changes  by adapting the proof of 
 Proposition~\ref{prop:det2} above.

\begin{lemma}  \label{le:locorient1}  Let $\Kk$ be oriented 
 and $F:U_I\to  E_I$ be a smooth map that is transverse to $0$.  Then the  zero set
$Z_I: = F^{-1}(0)$ inherits the structure of a smooth oriented manifold.
\end{lemma}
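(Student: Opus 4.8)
\textbf{Proof proposal for Lemma~\ref{le:locorient1}.}
The plan is to use the orientation section $\si$ of $\det(\s_\Kk)$ (equivalently, via the isomorphism $\Psi^{\s_\Kk}$ of Proposition~\ref{prop:det2}, the nonvanishing section of $\det(\Kk)=\bigl(\La^\Kk_I\bigr)_I$) to trivialize, at each point $z\in Z_I$, the determinant line of $\rd_z s_I$, and then to transport this trivialization across the transversality isomorphism that identifies $\det(\rd_z F)$ with $\lm\rT_z Z_I$. First I would recall that since $F\pitchfork 0$, at every $z\in Z_I$ the derivative $\rd_z F:\rT_z U_I\to E_I$ is surjective, so $\ker\rd_z F=\rT_z Z_I$ and $\qu{E_I}{\im\rd_z F}=\{0\}$; hence the contraction isomorphism of Lemma~\ref{lem:get}, in the form \eqref{eq:CD} applied to $D=\rd_z F$, gives a canonical isomorphism
\begin{equation*}
\mathfrak{C}_{\rd_z F}\,:\; \lm\rT_z U_I\otimes\bigl(\lm E_I\bigr)^*\;\overset{\cong}{\longrightarrow}\;\det(\rd_z F)=\lm\rT_z Z_I\otimes\bigl(\lm\{0\}\bigr)^*\;\cong\;\lm\rT_z Z_I.
\end{equation*}
Thus a nonvanishing value $\si_I(z)\in\La^\Kk_I|_z=\lm\rT_z U_I\otimes(\lm E_I)^*$ is carried to a nonvanishing element of $\lm\rT_z Z_I$, i.e.\ an orientation of $\rT_z Z_I$. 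Because $\si_I$ is a smooth nonvanishing section and $z\mapsto\mathfrak{C}_{\rd_z F}$ is smooth (this is exactly the content that makes $\det(\s_\Kk)$ a smooth line bundle, as in the proof of Lemma~\ref{le:detsK} and \cite[Prop.~8.1.8]{MW2}), these pointwise orientations fit together into a smooth orientation of $Z_I$. This gives $Z_I$ the structure of a smooth oriented manifold.

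The one point requiring a small remark is consistency with the group action, but since in the statement we are only asserting that $Z_I$ itself is an oriented manifold (not yet gluing across charts), and $\si$ is by definition pulled back from a section of $\det(\s_\uKk)$ over the intermediate atlas, the value $\si_I$ is $\Ga_I$-equivariant; combined with the $\Ga_I$-equivariance of $F$ (inherited in the application from the Kuranishi-chart setup) and of $\mathfrak{C}_{\rd F}$, the induced orientation of $Z_I$ is $\Ga_I$-invariant, so it descends appropriately. I would phrase this as a one-sentence observation rather than a separate argument.

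The main obstacle — and the reason the lemma is stated as a separate lemma rather than being obvious — is getting the \emph{sign conventions} to match those used elsewhere, in particular the ordering of basis vectors in the contraction \eqref{Cfrak} so that the induced orientation on $Z_I$ is the one compatible with coordinate changes (the subsequent claim, proved by adapting Proposition~\ref{prop:det2}, that $\rd_z\phi_{IJ}$ carries the orientation at $z$ to the one at $\phi_{IJ}(z)$, and that the induced boundary orientation behaves correctly for cobordisms). For the statement of Lemma~\ref{le:locorient1} alone this is purely bookkeeping: one fixes the convention that a basis $(v_1,\dots,v_k)$ of $\ker\rd_z F=\rT_z Z_I$ extended by $(v_{k+1},\dots,v_n)$ with $\rd_z F(v_{k+1}),\dots,\rd_z F(v_n)$ a chosen basis of $E_I$ represents the orientation determined by $\si_I(z)$ precisely when $(v_1\wedge\cdots\wedge v_n)\otimes(\rd_z F(v_{k+1})\wedge\cdots\wedge\rd_z F(v_n))^*$ agrees with $\si_I(z)$ up to a positive scalar, exactly as in the conventions following \cite[Definition~8.1.6]{MW2}. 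With that convention fixed, smoothness and nonvanishing of $\si_I$ give the result immediately, and the compatibility statements needed later in \S\ref{ss:red} follow by the same diagram-chase as in Proposition~\ref{prop:det2}.
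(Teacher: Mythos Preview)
Your proposal is correct and follows essentially the same approach as the paper: both use transversality to identify $\det(\rd_z F)\cong\lm\rT_z Z_I$ (since the cokernel is trivial), then push the given nonvanishing section of $\La^\Kk_I$ through the contraction isomorphism $\mathfrak{C}_{\rd F}$ of Lemma~\ref{lem:get}, invoking the smoothness of this contraction to obtain a smooth orientation of $Z_I$. Your additional remarks on $\Ga_I$-equivariance and sign conventions are accurate but go beyond what the paper records in its proof of this particular lemma; the paper defers compatibility with coordinate changes to the surrounding discussion.
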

\begin{proof} 
By transversality $Z_I$ is a manifold and $\im (\rd_z F)=E_I$ for each $z\in Z_I$. Hence 
$\lm\bigl( \qu{E_I}{\im (\rd_z s_I + \rd_z\nu_I)}\bigr)= \lm\{0\} = \R$, which gives
 a natural isomorphism between the orientation bundle of $Z_I$ and the restriction of the determinant line bundle of $F$:
\begin{align*}
\lm \rT Z_I &\;=\; {\textstyle \bigcup_{z\in Z_I}} \lm \ker (\rd_z F) \\
&\; \cong\;  {\textstyle \bigcup_{z\in Z_I}} \lm \ker (\rd_z F) \otimes \R 
\;=\; \det(F)|_{Z_I} .
\end{align*}

By hypothesis, we are given a nonvanishing section of the bundle 
$\La^\Kk_I: =\lm\bigl( \TU_I\bigr) \otimes \lm\bigl(E_I\bigr)^*$.
The proof of Proposition~\ref{prop:det2} adapts to show that the contraction
$$
\mathfrak{C}_{\rd F} \,:\;
\La^\Kk_I(z) \;\longrightarrow\; \det(F)|_z = \lm \rT_z Z_I  \qquad \text{for} \; z\in Z_I 
$$
is an isomorphism that varies continuously with $z\in Z_I$.  
Hence the given section of $\La^\Kk_I$ pushes forward to a
 nonvanishing section of $\lm \rT Z_I $, i.e. an orientation of $Z_I$.
\end{proof}

When the isotropy groups of $\Kk$ are nontrivial, the above lemma shows how to  orient the local zero set.  However the global zero set  is now a branched manifold.  All the above work applies in this case, but we postpone further discussion until we have introduced the relevant language.

\subsection{Perturbation sections  and construction of the VFC}\label{ss:zero}
With the notion of orientation in hand, we are in a position to complete the construction of the VFC. 
Again, we represent this class via the zero set of a suitable reduced perturbation $\nu = (\nu_I)$.  
After making the relevant definitions, we explain how to construct a cycle from the local zero sets. 

The notion of reduction is essentially 
the same as before. Recall from Lemma~\ref{le:interm} that the intermediate atlas $\uKk$ is a filtered topological atlas, and 
that we define $\Kk$ to be tame if $\uKk$ is.  Similarly, we define a reduction of 
a tame Kuranishi atlas $\Kk$ to be
the pullback of a reduction $\und{\Vv}$ of the intermediate atlas $\uKk$.   Here is a formal definition.

\begin{defn}[see \ Definition~\ref{def:vicin}]
\label{def:vicin2}  
A {\bf reduction} $\Vv=\bigsqcup_{I\in \Ii_\Kk} V_I \subset \Obj_{\bB_\Kk}$ of a tame Kuranishi atlas $\Kk$ is 
 a tuple of (possibly empty)  precompact  open subsets $V_I\sqsubset U_I$, satisfying the following conditions:
\begin{enumerate}
\item $V_I = \pi_I^{-1}(\uV_I)$  for each $I\in \Ii_\Kk$, i.e.\ $V_I$ is pulled back from the intermediate category and so is $\Ga_I$-invariant;
\item
$V_I\sqsubset U_I $ for all $I\in\Ii_\Kk$, and if $V_I\ne \emptyset$ then $V_I\cap s_I^{-1}(0)\ne \emptyset$;
\item
if $\pi_\Kk(\ov{V_I})\cap \pi_\Kk(\ov{V_J})\ne \emptyset$ then
$I\subset J$ or $J\subset I$;
\item
the zero set $\iota_\Kk(X)=|\s_\Kk|^{-1}(0)$ is contained in 
$
\pi_\Kk(\Vv) \;=\; {\textstyle{\bigcup}_{I\in \Ii_\Kk}  }\;\pi_\Kk(V_I).
$
\end{enumerate}
Given a reduction $\Vv$, we define the {\bf reduced domain category} $\bB_\Kk|_\Vv$ and the {\bf reduced obstruction category} $\bE_\Kk|_\Vv$ to be the full subcategories of $\bB_\Kk$ and $\bE_\Kk$ with objects $\bigsqcup_{I\in \Ii_\Kk} V_I$ resp.\ $\bigsqcup_{I\in \Ii_\Kk} V_I\times E_I$, and denote by $\s|_\Vv:\bB_\Kk|_\Vv\to \bE_\Kk|_\Vv$ the section given by restriction of $\s_\Kk$. 
\end{defn}

It is  crucial  in this context   that the quotient map $\Obj \ \bB_\Kk\to \Obj\ \ubB_\Kk$ is proper (see  Lemma~\ref{le:vep}), so that the pullback $V_I$ of a precompact subset $\uV_I\sqsubset \uU_I$ is still precompact in $U_I$.   Because of this, we can
establish the existence and uniqueness of reductions modulo cobordism  by working in the intermediate category, hence proving the analog of Proposition~\ref{prop:cov2}.
 Further, 
the results on {\bf nested (cobordism) reductions} can  be interpreted  at the intermediate level, and hence go through as before.
Here, we say that two reductions $\Cc, \Vv$ are {\bf nested} (written $\Cc\sqsubset \Vv$) if $C_I$ is a precompact subset of $V_I$ for all $I$.
The only real change needed to the discussion in \S\ref{ss:red} above  is that, to achieve transversality  we should work with \lq\lq multisections" rather than sections.\footnote
{
To see that one cannot always achieve transversality   by equivariant sections of an orbibundle, consider the bundle $\pi: (S^1\times \R, \Z_2)  \to (S^1,{\id})$ where the generator of $\Z_2$ acts on $S^1\times \R$ by $(x,e)\mapsto (x,-e)$.  Then the only equivariant section is the constant section, which is not transverse to $0$.}
In our categorical framework, these can be defined very easily.

\begin{defn}\label{def:sect2} 
A {\bf reduced  perturbation} $\nu$ of $\Kk$ is 
a smooth map 
$$
\nu:\Vv = \bigsqcup_{I\in \Ii_K} V_I \;\longrightarrow\; \Obj_{\bE_\Kk|_\Vv}
$$
between the spaces of objects in the reduced domain and obstruction categories of 
some reduction $\Vv$ of $\Kk$, 
such that $\pr_\Kk\circ\nu$ is the identity. 
Further, we require  that $\nu=(\nu_I)_{I\in\Ii_\Kk}$ is given by a family of smooth maps $\nu_I: V_I\to E_I$ 
that are compatible with coordinate changes in the sense that
\begin{equation}\label{eq:compatc}
\nu_J\big|_{\TV_{IJ}}\  =\ 
 \Hat\phi_{IJ}\circ\nu_I\circ \rho_{IJ}\big|_{\TV_{IJ}}
\quad \mbox{ where }\;\; \TV_{IJ}: = \TU_{IJ}\cap V_J\cap \rho_{IJ}^{-1}(V_I).
%
\end{equation}
We say that  $\nu$ is:
\begin{itemlist}\item {\bf admissible } if
$$ 
\rd_y \nu_J(\rT_y V_J) \subset\im\Hat\phi_{IJ} \qquad \forall \; I\subsetneq J, \;y\in \TU_{IJ}\cap
V_J\cap \rho_{IJ}^{-1}(V_I).
$$ 
\item   {\bf precompact} if there is a nested reduction $\Cc\sqsubset \Vv$ such that 
$$\;\bigcup_{I\in \Ii_\Kk} \pi_\Kk\bigl((s_I|_{V_I} + \nu_I)^{-1}(0)\bigr)
\;\subset\; \pi_\Kk(\Cc);
$$
\item {\bf transverse}  (written $\pitchfork 0$) if $s_I|_{V_I} + \nu_I: V_I\to E_I$ is transverse to $0$  for each $I\in \Ii_\Kk$.
\end{itemlist}
\end{defn}

The above compatibility condition implies that when $I\subset J$ the perturbation  $\nu_J$ is determined by $\nu_I$ on the part 
$\TV_{IJ}$ of $V_J$ that lies over $V_I$.
In particular it takes values in $E_I\subset E_J$ and is invariant under the action of $\Ga_{J\less I}$,
and this means that $\nu$ is compatible with morphisms of type (b) on page~\pageref{b morph}.
However, $\nu$ is {\it not} in  general a functor $\bB_\Kk\big|_\Vv \to \bE_\Kk\big|_\Vv$  since it is not required to be equivariant under the group actions.  Hence it induces a multivalued map $|\nu|$ on the realization $|\Vv|$, given by
$$
|\nu_I([I,x])| = \bigcup_{\ga\in \Ga_I} \pi_{\bE_\Kk}([I, \nu(\ga x)])\in |\pr|^{-1}([I,x]).
$$
Thus one can think of $\nu$ as a \lq\lq multisection", even though it is defined by a single valued function on $\Vv$.

\begin{rmk}\rm  The reduced perturbation $\nu: \bB_\Kk|_\Vv\to \bE_\Kk|_\Vv$ 
is defined in \cite{MWiso} to be  a functor between the  {\bf pruned}  domain and obstruction categories $\bB_\Kk|_\Vv^{\less \Ga}$ 
and  $\bE_\Kk|_\Vv^{\less \Ga}$ of \cite[Lemma~3.2.3]{MWiso}.  These categories have the same object spaces as 
 $\bB_\Kk|_\Vv$ and  $\bE_\Kk|_\Vv$ but fewer morphisms.  For example in  $\bB_\Kk|_\Vv^{\less \Ga}$  one only keeps the morphisms of the form
 $(I,J, y): (I,\rho_{IJ}(y))\mapsto (J,y)$  given by the  projections $\rho_{IJ}$.  
\hfill$\er$
\end{rmk}

Because the perturbation section
$\nu$ satisfies essentially the same compatibility conditions as does a perturbation section of an atlas with trivial isotropy, it may be constructed by 
 the same method.   Notice in particular  that, although $\nu$ must be constructed on the space of objects $\bigsqcup_I V_I$ of $\bB_{\Kk}|_\Vv$, the sets such as  $V^{|J|}_J$ and $B_\de^J(N^{|J|}_{JI})$ used in \S\ref{ss:red} to describe its
 inductive construction can all be pulled back from corresponding subsets of the domains of  the intermediate category
 (which after all is where the metric lives). 
It follows that the construction of $\nu$ goes through without essential change.  

However, because of the presence of nontrivial isotropy, the local zero sets $Z_I: = (s_I|_{V_I} + \nu_I)^{-1}(0)$ no longer fit together to form a manifold.  Rather we must use the notions of  weighted nonsingular branched (wnb) groupoid
(defined below)
 and 
weighted branched manifold developed in  \cite{Mbr}.  Since this requires considerable preparation, we will 
first state the main result and show how to use it to construct the VFC.  

\begin{prop}\label{prop:zero}  Let $(\Kk,\si)$ be a tame, oriented, Kuranishi atlas of dimension $d$ with nested  reduction $\Cc\sqsubset \Vv$, and let  $\nu:  \bB_{\Kk|_{\Vv}}\to 
\bE_{\Kk|_{\Vv}}$ be an admissible reduced perturbation that is  transverse and precompact with local zero sets  
$$
Z_I: = (s_I|_{V_I} + \nu_I)^{-1}(0) \subset 
V_I\cap \pi_\Kk^{-1}(\pi_\Kk(\Cc)).
$$
Let $\bZ^\nu$ be the full subcategory of $\bB_{\Kk|_\Vv}$ with object space $\bigcup_I Z_I$.  
Then $\bZ^\nu$ can be completed to 
a  wnb groupoid  $(\Hat{\bZ}^\nu, \La^\nu)$ with the same realization $|\Hat{\bZ}^\nu| = |\bZ^\nu|$.  Moreover, 
the maximal Hausdorff quotient $ |\Hat\bZ^\nu|_\Hh$ is compact and  inclusion
induces a 
 map $|\io_{\nu}|_\Hh:   |\Hat\bZ^\nu|_\Hh \to |\Kk|$ with image in  $\pi_\Kk(\Cc)$.
\end{prop}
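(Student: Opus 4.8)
The plan is to build $\Hat\bZ^\nu$ by adjoining to the tautological groupoid $\bZ^\nu$ exactly the ``missing'' morphisms coming from the group actions and the inverses of the projection maps $\rho_{IJ}$, weight each resulting local branch by $1/|\Ga_I|$ (suitably corrected by the orders of the relevant kernels $\ker\rho^\Ga_{IJ}$), and then verify the three defining properties of the Proposition: (a) $(\Hat\bZ^\nu,\La^\nu)$ is a weighted nonsingular branched groupoid with $|\Hat\bZ^\nu|=|\bZ^\nu|$; (b) the maximal Hausdorff quotient $|\Hat\bZ^\nu|_\Hh$ is compact; (c) inclusion induces a continuous map $|\io_\nu|_\Hh\to|\Kk|$ landing in $\pi_\Kk(\Cc)$.

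First I would set up the groupoid $\Hat\bZ^\nu$. Its object space is $\bigsqcup_{I\in\Ii_\Kk} Z_I$ where $Z_I=(s_I|_{V_I}+\nu_I)^{-1}(0)$, which is a smooth $d$-manifold by transversality of $\nu$ (Lemma~\ref{le:locorient1}, applied with $F=s_I|_{V_I}+\nu_I$, also gives it an orientation since $\Kk$ is oriented). The morphisms of $\Hat\bZ^\nu$ are those of $\bB_\Kk|_\Vv$ that have both source and target in $\bigsqcup_I Z_I$; explicitly, by Lemma~\ref{le:morph} these are tuples $(I,J,y,\ga)$ with $I\subset J$, $y\in\TU_{IJ}\cap Z_J$, $\ga\in\Ga_I$, and $\ga^{-1}\rho_{IJ}(y)\in Z_I$ — one must check that the compatibility condition \eqref{eq:compatc} for $\nu$ guarantees that such morphisms indeed carry zeros to zeros, which is routine. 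Next, using the étale structure (the source and target maps of type (a) morphisms are local diffeomorphisms, and for type (b) morphisms the target map is a local diffeomorphism onto the submanifold $\TU_{IJ}\cap Z_J$ of $Z_J$ while the source map is a covering $\rho_{IJ}$, both by the index condition and Definition~\ref{def:change2}), one localizes: each point of $Z_J$ has a neighbourhood whose image in $|\bZ^\nu|$ receives finitely many ``branches'' coming from the $Z_I$ with $I\subset J$ that project onto it. One then defines the weighting function $\La^\nu$ on each such branch in terms of $|\Ga_I|$ and the covering degrees $|\ker\rho^\Ga_{IJ}|$, arranged so that the total weight is locally constant along $|\bZ^\nu|$ and is preserved under the morphisms; this is the construction in \cite{Mbr,MWiso} and \eqref{eq:morZ0}, and verifying that the resulting $(\Hat\bZ^\nu,\La^\nu)$ satisfies the axioms of a wnb groupoid (nonsingularity: at most one morphism between given points, which holds because the coordinate changes preserve stabilizers and $\nu$ is single-valued) is the technical heart but follows the trivial-isotropy template of Proposition~\ref{prop:zeroS0}. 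Since adjoining morphisms does not change the equivalence relation on objects, $|\Hat\bZ^\nu|=|\bZ^\nu|$.

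For compactness of $|\Hat\bZ^\nu|_\Hh$, I would follow the sequential-compactness argument of Proposition~\ref{prop:zeroS0} essentially verbatim, passing through the intermediate atlas. Precompactness of $\nu$ gives $\bigcup_I\pi_\Kk(Z_I)\subset\pi_\Kk(\Cc)$ with $\Cc\sqsubset\Vv$; given a sequence in $|\Hat\bZ^\nu|$ one chooses lifts in some fixed $Z_I$, uses $Z_I\subset V_I\cap\pi_\Kk^{-1}(\pi_\Kk(\Cc))= V_I\cap\bigcup_J\eps_I(C_J)$ and the reduction property (ii) of Definition~\ref{def:vicin2} to reduce (after passing to a subsequence) to the case $I\subset J$ or $J\subset I$ with lifts in $V_I\cap\rho_{IJ}(C_J\cap U_{IJ})$ or $V_I\cap\rho_{JI}^{-1}(C_J)$; then precompactness $\ov{C_J}\sqsubset V_J$ together with the relative closedness of $\phi$-images (good atlas, condition (iv)) and continuity of $s_J+\nu_J$ yields a convergent subsequence with limit in $Z_J$. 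Hence $|\bZ^\nu|$ — and a fortiori its maximal Hausdorff quotient — is compact. The inclusion $\bigsqcup_I Z_I\hookrightarrow\bigsqcup_I U_I$ is a functor $\Hat\bZ^\nu\to\bB_\Kk$ (morphisms of $\Hat\bZ^\nu$ are morphisms of $\bB_\Kk$), hence descends to a continuous map $|\Hat\bZ^\nu|\to|\Kk|$; since $|\Kk|$ is Hausdorff (Theorem~\ref{thm:K2}), this map factors through the maximal Hausdorff quotient, giving $|\io_\nu|_\Hh$. Its image is $\bigcup_I\pi_\Kk(Z_I)\subset\pi_\Kk(\Cc)$ by precompactness.

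The main obstacle I anticipate is not any single step above but the bookkeeping in the definition of the weighting $\La^\nu$ and the verification that $(\Hat\bZ^\nu,\La^\nu)$ is genuinely a \emph{wnb} groupoid: one must check that the locally constant total weight, the branch structure, and the nonsingularity (at most one morphism between any two points of $\Hat\bZ^\nu$) are all compatible with the group coverings $\rho_{IJ}$, the splittings $\Ga_J\cong\Ga_I\times\Ga_{J\less I}$, and the possible non-effectiveness of the actions — this is exactly where the tameable/tame hypothesis and the stabilizer-preservation built into Definitions~\ref{def:change2} and \ref{def:inject} are needed, and where one must be careful that $\nu$ being only a ``multisection'' (single-valued on $\Vv$ but not $\Ga_I$-equivariant) does not spoil the branch count. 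I would handle this by reducing, as in \cite{MWiso}, to the pruned category $\bB_\Kk|_\Vv^{\less\Ga}$ on which $\nu$ \emph{is} a functor, constructing the branched structure there, and then re-adjoining the $\Ga_I$-morphisms at the end.
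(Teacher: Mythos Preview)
There is a genuine gap in your construction of $\Hat\bZ^\nu$, and it lies exactly at the point you flag as ``the main obstacle'': nonsingularity.  You take the morphisms of $\Hat\bZ^\nu$ to be \emph{all} morphisms of $\bB_\Kk|_\Vv$ with source and target in $\bigsqcup_I Z_I$.  In particular, for $I=J$ this includes every $(I,I,x,\ga)$ with $\ga\in\Ga_I^x$ the stabilizer of $x$: since $\ga^{-1}x=x\in Z_I$, these are bona fide morphisms of the full subcategory, and they are nonidentity automorphisms whenever $\Ga_I^x\neq\{\id\}$.  So your groupoid is singular precisely at the points where the isotropy is interesting, and the argument you give for nonsingularity (``coordinate changes preserve stabilizers and $\nu$ is single-valued'') does not address this.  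Your final paragraph compounds the problem: you propose to work in the pruned category and then ``re-adjoin the $\Ga_I$-morphisms at the end'', which would put the stabilizer automorphisms right back in.

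The paper's construction is the opposite of what you describe: one does \emph{not} take the full subcategory and complete it, but rather \emph{throws away} most of the group morphisms and keeps only those generated by the projections $(I,J,\id,y)$ for $I\subsetneq J$.  Completing \emph{this} to a groupoid (by adding inverses and composites) forces one to adjoin exactly the morphisms $(I,J,\al,y)$ with $\al\in\Ga_{I\less F}$ acting on $Z_J\cap\TV_{FJ}\cap\TV_{IJ}$ for $\emptyset\ne F\subset I$; see the explicit formula~\eqref{eq:morZ0} and the computation~\eqref{eq:ACinv}.  These are free actions by the very definition of coordinate change (Definition~\ref{def:change2}: $\Ga_{J\less I}$ acts freely on $\TU_{IJ}$), so nonsingularity follows.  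The weighting is then simply $\frac{1}{|\Ga_J|}$ on each local branch $Z_J$, summed over preimages, with no further correction by kernel orders.  Your treatment of compactness and of the map to $|\Kk|$ is essentially correct.
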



\NI{\bf Sketch proof of Theorem B.}  
By definition, the maximal Hausdorff 
quotient $|\Hat \bZ|_\Hh$ (which is defined below) of a wnb groupoid $\Hat\bZ$  is a weighted branched manifold.  
We show in \cite[Proposition~3.25]{Mbr}  that when  $Z: = |\Hat\bZ|_\Hh$ is compact, it carries a fundamental class $[Z]\in H_d(Z;\Q)$, where $d$ is the dimension of $\Hat\bZ$.  (See Examples~\ref{ex:wnb} and~\ref{ex:foot2} below.)
 With this information we can now repeat the arguments sketched at the end of \S2
to establish the existence of the class $[X]^{vir}_\Kk\in \check H_d(X;\Q)$. To prove that this class does not depend on our auxiliary choices,
 we first establish a version of Proposition~\ref{prop:zero} for cobordisms, and then argue as in the case with trivial
isotropy.
\hfill$\Box$
\MS\MS

%
%
%
%
%
%
%
%
%
%
%
%

With this in hand, we now return to the discussion of wnb groupoids $\bG$.
Because these groupoids\footnote{
See the beginning of \S\ref{ss:orb} for the definition of an \'etale groupoid; nonsingular means 
that there is at most one morphism between any two points.}
 are in general not  proper, the realization $|\bG|$ may not be Hausdorff, and we write 
$|\bG|_{\Hh}$ for its {\bf maximal Hausdorff quotient}. Thus  $|\bG|_\Hh$ is a Hausdorff quotient of $|\bG|$ that  satisfies the 
 following universal property: any continuous map from $|\bG|$ to a Hausdorff  space $Y$ factors though 
 the projection
 $|\bG|\to |\bG|_\Hh$.  (The existence of such a quotient for any topological space is proved in \cite[Lemma~3.1]{Mbr}; see also \cite[Lemma~A.2]{MWiso}.) 
We denote the natural maps by
$$
\pi_\bG:\Obj_{\bG}\to |\bG|,\qquad   \pi_{|\bG|}^\Hh: |\bG|\longrightarrow |\bG|_\Hh, \qquad 
\pi^{\Hh}_\bG:= \pi_{|\bG|}^{\Hh}\circ \pi_\bG : \Obj_{\bG}\to |\bG|_\Hh .
 $$
Moreover, for $U\subset \Obj_\bG$ we write $|U| := \pi_\bG(U) \subset |\bG|$ and $|U|_\Hh := \pi_\Hh(U)\subset |\bG|_\Hh$.
The {\bf branch locus} of $\bG$ is defined to be the subset of $|\bG|_\Hh$ consisting of points with more than one preimage in $|\bG|$.

 \begin{defn} 
A   {\bf weighted nonsingular branched groupoid} (or {\bf wnb groupoid} for short) of dimension $d$
is a pair $(\bG,\La)$ consisting of an oriented, nonsingular \'etale  groupoid 
$\bG$ of dimension $d$, together with a rational weighting function $\La:|\bG|_{\Hh}\to \Q^+: = \Q\cap (0,\infty)$ that satisfies the following compatibility conditions.
For each $p\in |\bG|_{\Hh}$ there is an open neighbourhood $N\subset|\bG|_{\Hh}$ of $p$,
a collection  $U_1,\dots,U_\ell$ of disjoint open subsets of $(\pi_{\bG}^{\Hh})^{-1}(N)\subset \Obj_{\bG}$ (called {\bf local branches}), and a set of positive rational weights $m_1,\dots,m_\ell$ such that the following holds: 
\SSS

\NI
{\bf (Covering) } $( \pi_{|\bG|}^{\Hh})^{-1}(N) = |U_1|\cup\dots \cup |U_\ell| \subset |\bG|$;
\SSS

\NI
{\bf (Local Regularity)}  
for each $i=1,\dots,\ell$ the projection 
$\pi_{\bG}^{\Hh}|_{U_i}: U_i\to |\bG|_{\Hh}$ is a homeomorphism onto a relatively closed subset of $N$;
\SSS

\NI
{\bf (Weighting)}  
for all $q\in N$, 
the number 
$\La(q)$ is the sum of the weights of the local
branches whose image contains $q$:
$$
\La(q) = 
\underset{i:q\in |U_i|_{\Hh}}\sum m_i.
$$
\end{defn}

 Further we define a 
{\bf weighted branched manifold} of dimension $d$ to be a pair $(Z, \La_Z)$ consisting of a topological space $Z$ together 
with a function $\La_Z:Z\to \Q^+$ 
and an equivalence class\footnote{
The precise notion of equivalence is given in \cite[Definition~3.12]{Mbr}. In particular it ensures that the induced function $\La_Z: = \La_\bG\circ f^{-1}$ and the dimension of $\Obj_{\bG}$ are the same for equivalent structures $(\bG,\La_\bG, f)$. 
} 
of wnb $d$-dimensional groupoids $(\bG,\La_\bG)$ and homeomorphisms $f:|\bG|_\Hh\to Z$ that induce the function $\La_Z = \La_\bG\circ f^{-1}$.
Analogous definitions of a wnb cobordism groupoid (as always, assumed to have collared boundaries) and of a wnb cobordism 
are spelled out in \cite[Appendix]{MWiso}.

 \begin{example}\label{ex:wnb} \rm 
(i) 
A compact weighted branched manifold
of dimension $0$  consists of
a finite set of points $Z=\{p_1,\ldots,p_k\}$, each with a positive rational weight $m(p_i)\in \Q^+$  and orientation $\mathfrak o(p_i)\in \{\pm\}$.  Its fundamental class pushes forward to the sum   $\sum_i \orr(p_i) m(p_i)\in H_0(pt,\Q) = \Q$ under any map $Z\to \{pt\}$.
Any representing groupoid $\bG$ has as object space $\Obj_{\bG}$  a set with the discrete topology, that is equipped with an orientation function $\orr: \Obj_{\bG}\to \{\pm\}$.  The morphism space $\Mor_{\bG}$ is also a discrete set and, because we assume that $\bG$ is oriented, 
defines an equivalence relation on $\Obj_{\bG}$ 
such that  $x\sim y\Longrightarrow \orr(x) = \orr(y)$.  Moreover, 
because  $|\bG|$ is  Hausdorff, we can identify $ |\bG|= |\bG|_\Hh$ and hence conclude that $\Obj_{\bG}$ has precisely $k$ equivalence classes.
\MS

\NI
(ii)
For the prototypical example of a $1$-dimensional weighted branched cobordism $(|\bG|_\Hh,\La)$, take $\Obj(\bG)=I\sqcup I'$ equal to two copies of the interval $I=I'=[0,1]$ with nonidentity morphisms from $x\in I$ to $x\in I'$ for $x\in [0,\frac 12)$ and their inverses,
 where we suppose that $I$ is oriented in the standard way.
 Then the realization and its Hausdorff quotient are
$$
\begin{array}{cl}
 |\bG| &= \;\qq{I\sqcup I'}{\bigl\{(I,x)\sim (I',x) \; \text{iff}\;  x\in [0,\tfrac 12)\bigr\}},\\
 |\bG|_\Hh& = \;\qq{I\sqcup I'}{\bigl\{(I,x)\sim (I',x) \; \text{iff}\;  x\in [0,\tfrac 12]\bigr\}},
\end{array}
$$
and the branch locus is a single point $\Br(\bG)=
\bigl\{[I,\frac 12]=[I',\frac 12]\bigr\}
\subset |\bG|_\Hh$.
The choice of weights $m, m'>0$ on 
the two local branches
$I$ and $I'$ determines the weighting function $\La: |\bG|_\Hh \to (0,\infty)$ as
$$
\La([I,x])
 = \left\{
 \begin{array}{ll} 
 m+m'  & \mbox{ if }\;\; x\in [0,\frac 12],  \vspace{.1in}\\
m &\mbox{ if }\;\;  x\in (\frac 12,1],
\end{array}\right. 
\qquad
\La([I', x]) 
 = \left\{
 \begin{array}{ll} 
m+m'  & \mbox{ if }\;\; x\in [0,\frac 12],  \vspace{.1in}\\
m'  &\mbox{ if }\;\;  x\in (\frac 12,1].   
\end{array}\right.
$$
For example, if we give each branch $I, I'$ the weight $m=m'=\frac 12$ and choose appropriate collar functors $\io^\al_\bG$, we obtain a weighted branched cobordism $(|\bG|_\Hh, \io_\bG^0, \io_\bG^1, \La)$ with
$|\p^0\bG|_\Hh=\bigl\{[I,0]=[I',0]\bigr\}$,
which is a single point with weight $1$, and $|\p^1\bG|_\Hh=
\bigl\{[I,1],[I',1]\bigr\}$, 
which consists of two points with weight $\tfrac 12$, all with positive orientation.
The fundamental class $[Z]$ of $Z = |\bG|_\Hh$ is then represented in $H_1(Z,\p Z;\Q)$ by the weighted sum of the two branches, thought of as singular $1$-chains.   Note that its image under the boundary map $H_1(Z,\p Z;\Q)\to H_0(\p Z)$
is $[\p^1 Z] - [\p^0 Z]$, where $\p^\al Z: = |\p^\al\bG|$.

Another choice of collar functors for the same weighted groupoid $(\bG,\La)$ 
might give rise to a different partition of the boundary into incoming $\p^0\bG$ and outgoing $\p^1\bG$, for example yielding
a weighted branched cobordism with $|\p^0\bG|_\Hh=\bigl\{[I,0]=[I',0] , [I,1]\bigr\}$, consisting of two points with weights and orientations
 $(1,+)$ and $(\frac 12, -)$ 
and $|\p^1\bG|_\Hh=\bigl\{[I',1]\bigr\}$, consisting of one point with weight $(\tfrac 12,+)$.

See Figure~\ref{fig:7} for an illustration of the realization of a similar example.
\hfill$\er$
\end{example}

\NI{\bf Sketch  proof of Proposition~\ref{prop:zero}.} 
Note first that 
each set $$
\TV_{IJ}: = V_J\cap \rho_{IJ}^{-1}(V_I)\subset \TU_{IJ} = V_J\cap \pi_{\Kk}^{-1}\bigl(\pi_\Kk(V_I)\bigr)
$$ 
is invariant under the action of $\Ga_J$, and 
is an open subset of the closed submanifold  $\TU_{IJ}=s_J^{-1}(E_I)$ of $V_J$, where the last equality holds by
the  tameness condition \eqref{eq:tame2}.
Further
if $F\subset I\subset J$, 
\begin{equation}\label{eq:HIJ}
V_J\cap \rho_{IJ}^{-1}(\TV_{FI}) = \TV_{FJ}\cap \TV_{IJ}= V_J\cap \pi_{\Kk}^{-1}\bigl(\pi_\Kk(V_F)\cap \pi_\Kk(V_I)\bigr)\subset \TU_{FJ}.
\end{equation}
In fact, the second equality holds for any pair of subsets $F,I\subset J$.  However, because $\Vv$ is a reduction, the intersection is empty unless
$F,I$ are nested, i.e. either $F\subset I$ or $I\subset F$.
Finally, the group $\Ga_{I\less F}$ acts freely on $\TU_{FI}$ (by Definition~\ref{def:change2} for a coordinate change)  and hence also on $\TV_{FI}$.

 We begin by establishing the following properties of the local zero sets $Z_I$:
\begin{enumerate}
\item  each  $
Z_{I}: = (s_I|_{V_I}+\nu_I)^{-1}(0)\subset V_I
$
 is a submanifold of dimension $d$; 
\item for all $I\subset J$, the intersection
 $\TV_{IJ}\cap Z_{J}$ is an open subset of $Z_{J}$, and 
each $\al\in \Ga_{J\less I}$ induces a diffeomorphism $Z_{J}\cap \TV_{IJ}\to 
Z_{J}\cap \TV_{IJ}: y\mapsto \al y$;
\item for all $I\subset J$,
 $\rho_{IJ}: \TV_{IJ}\cap Z_{J} \to Z_{I}$ is a submersion onto 
  an open subset of $Z_{I}$;
  \item  if $\Kk$ is oriented and each $Z_I$ is oriented as in Lemma~\ref{le:locorient1}, 
  then the maps in (i) and (ii) preserve orientation.
\end{enumerate}
The proof that these properties hold is straightforward;. In particular, the first claim in (ii) holds by the index condition for the inclusion $\Tphi_{IJ}: \TU_{IJ}\to U_J$, while the second holds by the compatibility condition
\eqref{eq:compatc}. Indeed, if  $x\in \TV_{IJ}$ and $\al\in \Ga_{J\less I}$ then compatibility implies that $\nu_J(\al x) = \nu_J(x) $, while 
equivariance implies that $s_J(\al x) = \al s_J(x) = s_J(x)$ since  $s_J|_{\TV_{IJ}} = \Hat\phi_{IJ}\circ \rho_{IJ}|_{\TV_{IJ}}$ takes values in $E_I$ and
$\Ga_{J\less I}$  acts trivially on $E_I$.  Hence $(s_J + \nu_J)(\al x)=0$ if and only if $(s_J + \nu_J)(x)=0$.

We then define the wnb groupoid $\Hat\bZ: = \Hat\bZ^\nu$ to have objects  $\Obj_{\Hat\bZ} = \bigsqcup_I Z_I$, and  morphisms
that are 
generated by the covering maps $\rho_{IJ}: \TV_{IJ}\to V_I$.  In other words, we take all morphisms of this kind (which are denoted
$(I,J,\id,y)\in \Mor_{\bB_\Kk}$) and then complete to a groupoid by including all composites and inverses. As we sketch below, it turns out that the resulting space of morphisms
with source in $Z_{I}$ and  target in $Z_{J}$
 for $I\subset J$ is given by
\begin{align}\label{eq:morZ0}
&\quad \Mor_{\Hat\bZ}(Z_{I}, Z_{J}) = \bigcup_{\emptyset\ne F\subset I} \bigl(Z_{J}\cap  \TV_{IJ}\cap
\TV_{FJ}\bigr)\times \Ga_{I\less F}, \\ \notag
&\qquad = \bigl\{(I,J,\al, y) \ \big| \  \al \in \Ga_{I\less F}, y\in \TV_{FJ}\cap \TV_{IJ} \cap Z_{J} \mbox{ for some }  F\subset I,  \ F\ne \emptyset \bigr\},
\end{align}
where we interpret  $\Ga_{\emptyset} = {\id}$; in particular  the identity morphisms are given by taking $F=I=J$.
Here we use the same notation as for the category $\bB_\Kk$.  In other words,
source and target of these morphisms are given by
\begin{equation}\label {eq:morZ}
(I,J,\al,y) \in \Mor_{\bG}\bigl( (I,\al^{-1}\rho_{IJ}( y)), \ (J, y)\bigr).
\end{equation}
The category $\Hat\bZ$ also contains the inverses of these morphisms.
Thus we obtain $\Hat\bZ$ from the full subcategory of $\bB_\Kk$ with objects $ \bigsqcup_I Z_I$ by throwing away many of the morphisms, and then adding inverses. 
Note that  a point $(J,x)\in Z_J$ that does not lie in any subset $\TV_{IJ}$ for $I\subsetneq J$ is the target of exactly one morphism, namely the identity morphism, while if it lies in some $\TV_{IJ}$  it is the target of morphisms with source in $Z_I$ as well as morphisms with source in $Z_J$ given by the (free) action of $\Ga_{J\less I}$. 

 It is not too hard to see that this set of morphisms is closed under composition.
For example, one can check that  if $I\subset J\subset K$ and $\rho_{IJ}(y') = \rho_{IK}(y)$ so that $y' = \rho_{JK}(y)$,
 then the  composite\footnote
 {
 as usual written in categorical order} 
on the left of the following identity  is well defined and equals the morphism on the right:
  $$
(I,J,\id,y')^{-1}\circ (I,K,\id,y) = (J,K,\id, y).
$$
On the other hand, 
 if $I\subset K\subset J$ and there are $y'\in \TV_{IJ}\cap V_J,\;  y\in \TV_{IK}\cap V_K$ with
$$
\rho_{IJ}(y') = \rho_{IK}(\rho_{KJ}(y')) = \rho_{IK}(y)\in V_I, 
$$
 then
$\rho_{KJ}(y')$ and $y$ lie in the same $\Ga_{K\less I}$-orbit, so that there is $\be \in \Ga_{K\less I}$ such that
$y = \be\, \rho_{KJ}( y') =  \rho_{KJ}(\be y') \in V_K$, and we have
\begin{align}\label{eq:ACinv}
& (I,J,\id,y')^{-1}\circ (I,K,\id,y) \;  =\; (J,J,\be,\be\,y')\circ (K,J,\id,\be\, y')^{-1}.
\end{align}
This shows that if we  patch the local zero sets together by the projections $\rho_{IJ}$ and then complete to a groupoid by adding inverses
and composites, one necessarily must add the morphisms from $Z_J$ to $Z_J$ given by the actions of the groups $\Ga_{J\less I}$  on the sets $\TV_{IJ}$.
The fact that these actions are free implies that $\Hat\bZ$ is nonsingular. Further the category is \'etale by properties (ii) and (iii) above, and oriented by (iv).  One might informally think that the manifolds $Z_I$ form the local branches.  However, this is not strictly true because the map $Z_J\to  |\Hat\bZ|$ is {\it not} injective but rather quotients out by the
free actions of the subgroups $\Ga_{J\less I}$ on the sets $\TV_{IJ}\cap Z_J$.

Once $\Hat\bZ$ is constructed, the next step is to understand its maximal Hausdorff quotient.   
 By \cite[Lemma~3.2.10]{MWiso}, this is the realization of the groupoid $\Hat\bZ_\Hh$ that is obtained from $\Hat\bZ$ by closing the equivalence relation on $\Obj_{\Hat\bZ}$ generated by $\Mor_{\bZ}$.  Thus $\Hat\bZ_\Hh$ has extra morphisms 
from $Z_J$ to $Z_J$ given by 
 \begin{align*}\label{eq:morZ3}
&\quad \Mor_{\Hat\bZ_\Hh}(Z_{J}, Z_{J}) = \bigcup_{F\subset J} \bigl(Z_{J}\cap
\Fr(\TV_{FJ})\bigr)\times \Ga_{J \less F}, \\ \notag
&\qquad = \bigl\{(J,J,\al, y) \ \big| \ \exists F\subset J \ \al \in \Ga_{J\less F}\ s.t.\  y\in \Fr(\TV_{FJ})\cap Z_{J}\bigr\},
\end{align*}
where ${\rm Fr}(A): = \ov A \less A$ denotes the frontier of the set $A$.
As one can clearly see in the football  example given below, these extra morphisms lie over the branch locus.
Finally, one  checks that the following formula 
gives a well-defined
 weighting function  $\La:|\Hat\bZ_\Hh|\to \Q^+$:
  $$
  \La(p): =  \sum_{ x\in Z_J \cap \pi_{\Hat\bZ_\Hh}^{-1}(p)} \frac {1}{|\Ga_J|}.
  $$
Thus each local zero set $Z_J$ is assigned the weight $\frac {1}{|\Ga_J|}$, and the weight of a point $p\in |\Hat\bZ_\Hh|: = |\Hat\bZ|_\Hh$
   is obtained by multiplying this weight by the number of its inverse images in $Z_I$. 
   This completes the sketch proof. \hfill$\Box$

Here are some examples to illustrate  this construction. 

\begin{example}\label{ex:foot2}\rm  
We begin with the orbifold case, i.e. where the atlas $\Kk$ on $X$ has trivial (i.e. zero dimensional) obstruction bundles so that $\s_\Kk$ and $\nu$ are identically zero and  $\io_\Kk: X \to |\Kk|$ is a homeomorphism.  Recall from Proposition~\ref{prop:zero} that  $\bZ$ is defined as a subcategory of $\bB_\Kk|_\Vv$, while $\Hat \bZ$ is its groupoid completion.  Therefore to avoid confusion below we often designate groupoids with hats.
\smallskip

\NI (i)  First suppose that $X = \qu{M}{\Ga}$ is the quotient of a compact oriented smooth manifold $M$ by the action of a finite group $\Ga$, and that $\Kk$ is the atlas with a
single chart  with domain $M$ and $E = \{0\}$.  Then 
$\bZ$ is the category with objects $M$ and only identity morphisms, because there are no pairs  $I,J\in \Ii_\Kk$ such that $\emptyset\ne I\subsetneq J$.
Therefore
 $\bZ = \Hat\bZ_\Hh$ has realization 
$|\Hat\bZ_\Hh| = M$ and 
  the weighting function $\La:M\to \Q$ is given by $\La(x) = \frac{1}{|\Ga|}$. 
  If the action of $\Ga$ is effective on every open subset of $M$ then
 the pushforward of $\La$ by $\io_{\Hat\bZ_\Hh}: M\to X$, which is defined by
$$
(\io_{\Hat\bZ_\Hh})_*\La(p): = \sum_{x\in  (\io_{\Hat\bZ_\Hh})^{-1}(p)} \La(x)
$$
takes the value $1$ at every smooth point (i.e. point with trivial stabilizer) of the orbifold $\qu{M}{\Ga}$.     On the other hand, if $\Ga$ acts by the identity so that the action is totally noneffective then $\io_{\Hat\bZ_\Hh}: M\stackrel{\cong}\to X$ is the identity map and 
the weighting function $X\to \Q^+$ takes the constant value $\frac 1{|\Ga|}$.  

We claim that this choice of weights gives a  fundamental  class that is consistent with standard conventions.
Recall from \cite{Mbr} that
if the branch locus $\Br(|\Hat\bZ|)$  (i.e. set of points with more than one inverse image in $|\Hat\bZ|$) of a weighted branched manifold $|\Hat\bZ|_\Hh$ is sufficiently nice then there is a triangulation of $|\Hat\bZ|_\Hh$ such that $\Br(|\Hat\bZ|)$ is contained in the codimension $1$-skeleton, and each top dimensional simplex lies in a single branch.
  In this case, the compatibility condition on the weights implies that the  fundamental class is represented by
  the sum of the top dimensional simplices, each provided with the appropriate orientation and weight.
For example, if the action of $\Ga$  on $M$ is effective and  preserves an orientation
then its singular set $M^{sing} = \{x:\Ga^x\ne \id\}$ has codimension at least $2$ 
and  the fundamental class of the weighted branched manifold $(M,\La)$  pushes forward to the \lq\lq usual" fundamental class of $X$, which is represented by the pseudocycle
$X\less X^{sing} $, where $X^{sing}: = \qu{M^{sing}}{\Ga}$.
\MS

\begin{figure}[htbp] 
   \centering
   \includegraphics[width=3in]{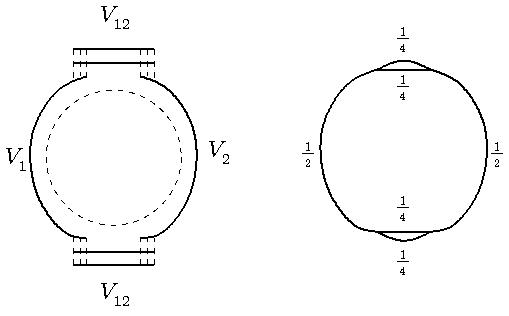} 
   \caption{This illustrates the zero set $\bZ$ in the case of the atlas on $S^1$ constructed in Example~\ref{ex:foot}~(ii).  The left diagram shows $\Obj_{\bZ^\nu} : = \sqcup V_I$, with vertical lines to denote the existence of morphisms, while the right shows the realization and weighting function.}
   \label{fig:7}
\end{figure}
\NI (ii)
If the action is noneffective, then one might wonder about the correct definition for the fundamental class. 
However, in dimension $d=0$ we saw in Example~\ref{ex:wnb}~(i) that a branched manifold  $Z = |\bZ|_\Hh$ is a finite collection of points $\{p_1,\dots, p_k\}$, one for each equivalence class in $\Obj_{\bZ}$, where 
the point $p_i$ corresponding to an equivalence class with stabilizer $\Ga^i$ has weight  $\frac 1{|\Ga^i|}$.  If each point is positively oriented, then our construction counts 
the \lq\lq number of elements" in $|\bZ|_\Hh$ as the sum
$\sum_{i=1}^k \frac 1{|\Ga^i|}$, which is the Euler characteristic of the groupoid; see  Weinstein~\cite{Wein} for  further discussion and references.  Example \ref{ex:zorb} below gives another relevant example.
\MS

\NI (iii)   Again suppose that $X = \qu{M}{\Ga}$ where $\Ga = \Z_2$ acts trivially on $M$, and choose an atlas $\Kk$ with one chart 
$$
\bigl(U = M\times \R^2,\; E=\R^2,\; \Ga: = \Z_2, \; \s: = \pr_E: U\to E ,\; \psi = \pr_M|_{M\times \{0\}}\bigr),
$$
where the nonidentity element $\ga \in \Z_2$ acts by  $(x,e)\mapsto (x,-e)$ on $U$ and by $e\mapsto -e$ on $E$.
The reduction $\Vv$ is given by a choice of any open neighbourhood $V$ of $\s^{-1}(0) = M\times \{0\}\subset U$.
Further, any function $\nu:V\to \R$ of the form $\nu(x,e) = f(x)$ is an admissible transverse perturbation, and 
defines a zero groupoid
$\Hat\bZ^\nu$ whose space of objects $\{(x,e) |  f(x)  +e = 0\}$  can be identified with the graph of $-f:M\to \R^2$. 
As in (i), $\Hat\bZ^\nu$ has only identity morphisms and its
weighting function is the constant $\frac 12$.
\MS

\NI (iv) For a more interesting  example, consider the \lq\lq football" discussed in Example~\ref{ex:foot}, with  reduction $\Vv$  given by two discs
 $V_1\sqsubset U_1, V_2\sqsubset U_2$ with disjoint images in $X$, together with an open annulus $V_{12}\sqsubset U_{12}$.  
 For $j = 1,2$ the sets $\TV_{j,12}\subset V_{12}$ are disjoint open annuli that project  into
$V_j$ by a covering map of degree $3$ for $j=1$ (that quotients out by $\Ga_{(12)\less 1} = \Ga_2 = \Z_3$) and degree $2$ for $j=2$.  
Then $Z_I = V_I$ so that  $\Obj_{\bZ} = V_1\sqcup V_2\sqcup V_{12}$.  For $j = 1,2$ the category $\Hat\bZ$ 
has the following morphisms (besides identities);
\begin{itemize}\item morphisms  $V_j\to V_{12}$ given by the projection $\rho_{j, 12}: \TV_{j,12} \to V_j$, together with their inverses;
\item morphisms $V_{12}\to V_{12}$ given by the action of $\Z_3 = \Ga_{(12)\less 1}$ on $\TV_{1,12}$, resp. of $\Z_2
=\Ga_{(12)\less 2} $ on $\TV_{2,12}$.
\end{itemize}
To obtain $\Hat\bZ_\Hh$ we add the morphisms given by the action of $\Ga_{(12)\less 1}$ on 
the boundary $\Fr(\TV_{1,12}): = V_{12}\less \TV_{1,12}$  and the action of 
$\Ga_{(12)\less 2}$ on $\Fr(\TV_{2,12}): = V_{12}\less \TV_{2,12}$. 
The weighting function $\La$ is given by:
\begin{align*}
\La(p) & \ = \tfrac 12 \mbox { if } p\in \ov{\pi_{\Hat\bZ_\Hh}(V_1)} = \pi_{\Hat\bZ_\Hh}(V_1) \cup \pi_{\Hat\bZ_\Hh}(\cl(\TV_{1,12})) \\
& \ = \tfrac 13 \mbox { if } p\in \ov{\pi_{\Hat\bZ_\Hh}(V_2)}= \pi_{\Hat\bZ_\Hh}(V_2) \cup \pi_{\Hat\bZ_\Hh}(\cl(\TV_{2,12})) \\
& \ = \tfrac 16 \mbox { if } p\in \pi_{\Hat\bZ_\Hh}(V_{12} \less \cl\bigl(\TV_{1,12} \cup \TV_{2,12}))
\end{align*}
Notice that for $j = 1,2$  the weighting function does not change along the boundary of the intersection 
$\pi_{\Hat\bZ_\Hh}(V_j) \cap \pi_{\Hat\bZ_\Hh}(\TV_{j,12})$, i.e. there is no branching there, while it does change along
the internal boundaries $ \pi_{\Hat\bZ_\Hh}(\Fr(\TV_{j,12}))$ in $\pi_{\Hat\bZ_\Hh}(V_2)$.
 Note further  that the pushforward of $\La$ by the map $|\io_{\Hat\bZ_\Hh}|: |\Hat\bZ_\Hh|\to |\bB_\Kk\big|_\Vv|$ takes the value $1$ at all 
 points except the two poles $N,S$:
 $$
  |\io_{\Hat\bZ_\Hh}|_*(\La)(q): = \sum_{p\in  |\io_{\Hat\bZ_\Hh}|^{-1}(q)} \ \La(p) = 1, \quad \forall q\in X\less \{N,S\}. 
  $$

\NI (v)  The above examples all consider the case when the bundle over the orbifold has dimension zero; for further discussion see \S\ref{ss:orb}.  Example~3.2.12 in \cite{MWiso} builds  Kuranishi atlases that model the tangent bundles  of $S^2$ and the football, and then constructs an explicit perturbation that calculates the  Euler class.  Atlases for more general bundles over orbifolds are considered  in \S\ref{ss:nontriv} below. \hfill$\er$
\end{example}

\begin{example}\rm\label{ex:zorb}    Here is an example from Gromov--Witten theory that further illustrates our choice of weighting function in the noneffective case.
Consider the space $X$ of equivalence classes of genus zero stable maps to $S^2$ of degree $2$ with no marked points.  
An element $[\Si,f]\in X$ is  represented either by a degree $2$ map $f:\Si=S^2\to S^2$ (classified  modulo reparametrization by its two (unordered) critical values) 
or by a map $f:\Si=S^2\sqcup S^2\to S^2$  (classified by the image of the nodal point).    Elements of the first kind form a stratum $\Ss_1$
diffeomorphic to $\qu{\{(x,y)\in S^2\times S^2 | x\ne y\}}{\!\sim}$ where $(x,y)\sim (y,x)$, and one 
can show that elements of the second kind are Gromov limits of sequences in $\Ss_1$ in which the two critical value converge 
to points on the diagonal; e.g. see \cite[Lemma~3.7]{Macs}. Thus, as a space we can consider  $X$ to be homeomorphic to $\CP^2 = \qu{S^2\times S^2}{\sim}$.  However, each element $[\Si,f]$ has a nontrivial order two  symmetry $\ga_f:\Si\to \Si$ such that
$f=f\circ \ga_f$, so that in the current context we should consider $X$ as an orbifold equipped with the identity action of the group $\Ga = \Z_2$.  Thus $[X]^{vir} = \frac 12 [X]$.
To justify this factor of $\frac 12$, consider the space 
$\TY$ of equivalence classes of genus zero, degree $2$ stable maps to $S^2$ that are equipped with three marked points $z_0, z_1,z_2$ with the fixed images $0,1,\infty$.  Since the elements of $\TY$ have trivial isotropy, $\TY$ is a manifold so that $[\TY]^{vir}$ is its usual fundamental class $[\TY]$.
As a map of manifolds, the forgetful map $\phi: \TY\to X$ has degree four:  given a generic element $[\Si, f]\in X$ (i.e. one that does not branch at $\{0,1,\infty\}$) there are two choices of position for
each  of the marked points, and hence eight choices overall, and but only four distinct lifts to $\TY$ because of the action 
of the M\"obius group.
(In fact, \cite[Lemma~3.7]{Macs} shows that $\TY\cong \CP^2$ and identifies the forgetful map  with the branched cover
$\CP^2\to \CP^2, [x,y,z]\mapsto [x^2,y^2,z^2]$.)  On the other hand as a map of orbifolds $\phi:(Y, \id)\to (X,\Ga)$ 
we should consider $\phi$ to have degree $8$, and this is what is reflected in the identity
$\phi_*([\TY]^{vir}) = 8 [X]^{vir}$.\hfill$\er$
\end{example}

\subsection{Stratified smooth atlases}\label{ss:SS}

As we will see in \S\ref{s:GW} in order to build a smooth Kuranishi atlas on a GW moduli space such as $X=\oMm_{0,k}(M,J,A)$ 
we need a smooth version of the gluing theorem that builds a family of curves with smooth domain from one with nodal domain. Even if we ask that the structural maps in $\Kk$ are $\Cc^1$-smooth  rather than $\Cc^\infty$-smooth, this is more than is provided by the simplest gluing theorems such as that in \cite{JHOL}.   On the other hand, in order to get a VFC we do not need the domains $U_I$ of the Kuranishi charts to be smooth manifolds: since all we want is a homology class that we define as the zero set of a transverse perturbation $\s+\nu$, it is enough that $U_I$ is stratified, with smooth strata  and lower strata of codimension at least $2$.\footnote
{
Pardon~\cite{Pard} uses a homological way to define $[X]_\Kk^{vir}$ and hence only needs 
the $U_I$ to be topological manifolds.  Thus a gluing theorem such as that  in \cite{JHOL} suffices.
}
  Here we briefly explain some elements of the approach in \cite{MWgw}.  What we describe is enough to define
  $[X]_\Kk^{vir}$ if this is zero dimensional (therefore with one dimensional cobordisms), and hence enough to calculate all numerical GW invariants; see  \S\ref{ss:var} and Castellano~\cite{Castell2}.
  We begin with some basic definitions.
  
  \begin{defn}\label{def:strat}
A pair $(X,\Tt)$ consisting of  a topological space $X$ together with a finite partially ordered set $(\Tt,\le)$  is called a {\bf stratified space with strata} $(X_T)_{T\in \Tt}$ if:
\begin{enumerate}
\item   $X$ is the disjoint union of the strata, i.e.\  $X = \bigcup_{T\in \Tt} X_T$, where $X_T\ne \emptyset$ for all $T\in \Tt$ and  $X_S\cap X_T = \emptyset$ for all $S\neq T$;
\item the closure of each stratum intersects only deeper strata, i.e.\ $cl(X_T) \subset \bigcup_{S\le T} X_S$.
\end{enumerate} 

\noindent
We denote the induced strict order by $S<T$ iff $S\leq T, S\neq T$.
\end{defn}

There are different stratifications that one could put on a Gromov--Witten moduli space.  However, we will use the most simple, namely that with strata labelled by the number of nodes in the domain of the stable maps.
It follows from the Gromov compactness theorem that this stratification satisfies the above conditions.

\begin{defn}\label{def:stratcts}
A {\bf stratified continuous} map $f:(X,\Tt)\to (Y,\Ss)$ between stratified spaces is a continuous map $f:X\to Y$ that induces a map $f_*:\Tt\to \Ss$ which preserves  strict order. More precisely,
\begin{enumerate}
\item $f_*$ preserves  strict order in the sense that $\; T<S \Rightarrow f_*T < f_*S \;$, and
\item $f$ maps strata into strata in the sense that $f(X_T)\subset Y_{f_*T}$ for all $T\in \Tt$.
\end{enumerate} 
A stratified continuous map $f:(X,\Tt)\to (Y,\Ss)$ is called a {\bf stratified homeomorphism}, 
if $f$ is a homeomorphism and  $f_*$ is bijective.  
In this case the spaces $(X,\Tt),(Y,\Ss)$ are called {\bf  stratified homeomorphic}. 
\end{defn}

\begin{example}\label{ex:RC}\rm   Let $M$ be a smooth $k$-dimensional manifold and let $n\in\N_0$.
The {\bf standard SS space} $M\times \C^{\un}$ 
is the topological space $M\times \C^n$ with the following extra structure:
\begin{itemize}
\item
the stratification $M\times \C^\un=\bigcup_{T\in \Tt^{n}} (M\times \C^\un)_T$,
where $\Tt^{n}$ 
is the set of all (possibly empty) subsets $T\subset \{1,\dots,n\}$, partially ordered by the subset relation, and
whose strata are given by
$$
 (M\times \C^\un)_T : = \bigl\{(x ; \ba)\in M\times \C^n\,\big|\, \ba=(a_1,\dots,a_n)\mbox{ with } a_i\ne 0\Leftrightarrow i\in T\bigr\};
$$
\item 
the smooth structure induced on each stratum by the embedding
 $ (M\times \C^\un)_T \hookrightarrow  M\times \C^{|T|}$, $(x;\ba)\mapsto (x; (a_i)_{i\in T})$.
\end{itemize}
Every subset $U\subset M\times \C^{\un}$ inherits a stratification $\bigl(U_T=U\cap (M\times\C^\un)_T \bigr)_{T\in\Tt_U}$ in the sense of Definition~\ref{def:strat}, which is called the {\bf SS stratification} on $U$. Here we denote by $\Tt_U$  the subset of $T\in\Tt$ for which the stratum $U_T$ is nonempty. \hfill$\er$
\end{example}

Thus $\C^{\und 1}$ is the space $\C$ equipped with the two smooth strata 
$\{0\} = (\C^{\und 1})_{\emptyset}$ and $\C\less \{0\} = (\C^{\und 1})_{\{1\}}$, while if $M = \R^k$ we have  
 the {\bf Euclidean SS space} $\R^k\times \C^{\un}$.
 Note that the (real) codimension of the stratum $(M\times \C^\un)_T\subset M\times \C^\un$ is $2(n-|T|)$ and hence is always even.  We think of the components $a_j$ of $\ba\in\C^n$ as {\bf strata variables}, while the components $x_i$ of a local coordinate system near $x\in M$ are called {\bf smooth variables}.  
This is natural in the context of Gromov--Witten invariants, where the strata variables are
 given by complex gluing parameters at nodes. 
It is also convenient notationally since it allows us to distinguish between the two types of variables.  
However, maps between coordinate charts need not be in any sense holomorphic, and 
will not in general  preserve the distinction between these two types of variable.
 
 \begin{defn}\label{def:SS0} 
Let $f:U\to Y\times \C^{\und m}$ be a stratified continuous map defined on an open subset $U \subset M\times \C^{\un}$.
We call $f$ {\bf weakly stratified smooth} (abbreviated {\bf weakly SS}) if it restricts to 
a smooth map $U_T=U\cap  (M\times \C^\un)_T \to  (Y\times \C^{\und m})_{f_*T}$ on each stratum $T\in\Tt^{n}_U$. 
 A {\bf weakly SS diffeomorphism} is an injective, weakly SS map $\phi:U\to \phi(U)$ with open image and a weakly SS inverse. 
\end{defn}

One can  check that
 the composite of two weakly SS maps is weakly SS.
  Hence it is possible to define the notion of a {\bf weakly SS manifold}, namely
 a stratified, second countable, topological manifold whose transition functions are weakly SS diffeomorphisms between open subsets of  Euclidean SS spaces.   
    Such manifolds (if closed and with oriented top stratum) do have a fundamental class, since the singular set has codimension $\ge 2$.  However, as we now explain, 
 this context is not sufficiently rich to support a good theory of transversality.  

 \begin{defn}\label{def:SS1} 
   If  $f:U\to \R^k $ is weakly SS and $f(w)= 0$ for $w\in U_S$, we  say that  {\bf $f$ is transverse to $0$ at $w$} if 
the derivative $\rd_w^S f: \rT_w (U_S)\to \R^k$ is surjective at $w$, where $\rd_w^S$ is the differential of the smooth restriction of $f$ to the stratum $U_S$.
\end{defn}

\begin{example}\label{ex:SS}\rm 
This transversality  condition is not  open.  
    For example, the weakly SS function $$
    f:\R\times \C^{\und{1}} \to \R,\qquad (x;a)\mapsto \left\{ \begin{array}{ll}  
    x + x^{\frac 13} \sin(\frac 1{|a|}) & \mbox{ if } a\ne 0 \\
    x& \mbox{ if } a= 0\end{array}\right.
    $$
    is 
 transverse  to zero at $(0;0)$ in the above sense, since its restriction to the stratum $a=0$ is transverse.  However its zero set is not a manifold near $(0,0)$ since for $a\ne 0$ it sometimes has $1$ and sometimes $3$ solutions.  \hfill$\er$
 \end{example}

The above example shows that we cannot hope to define
 the VFC as the zero set of a weakly  SS function.  
 We  avoid this problem here by restricting consideration to atlases of dimension $d=0$ with cobordisms of dimension $1$.  
 The above transversality condition then implies that  the zero set lies in the top stratum where all functions are smooth.

 We now briefly explain how to modify the theory of atlases to deal with the case when 
$(X,\Tt)$ is stratified. 
 We say that $\bK_I = (U_I, E_I,\Ga_I,s_I, \psi_I)$ is a {\bf weakly SS  Kuranishi chart} on $X$ if  
 the conditions of Definition~\ref{def:chart2} hold  in the category of weakly SS manifolds and 
 weakly SS diffeomorphisms.  Thus
 $U_I$ is an open subset of a weakly SS manifold,  all  maps are weakly SS diffeomorphisms,
 and the footprint map $\psi: s_I^{-1}(0)\to F_I$ is stratified continuous.
Some care is required with the definition of a coordinate change and the index condition.
In the smooth situation, our definitions imply that the domain $\TU_{IJ}$  is a 
submanifold of $U_J$ cut out transversally by the function $pr_{IJ}\circ s_J: U_J\to \qu{E_J}{\Hat\phi_{IJ}(E_I)}$.  
We make a similar requirement here, but stratum by stratum 
as follows.
\begin{itemize}\item [(a)]
$\TU_{IJ}$ is a weakly SS manifold with weak SS action by $\Ga_J$ such that $\Ga_{J\less I}$ acts freely;
\item[(b)]  $\rho_{IJ}:\TU_{IJ}\to U_{IJ}$ is the composite of the quotient map followed 
by a weakly SS diffeomorphism from $\qu{\TU_{IJ}}{\Ga_{J\less I}}$ to the open subset $U_{IJ}$ of $U_I$,
and the inclusion $\Tphi_{IJ}:\TU_{IJ}\to U_J$ is weakly SS; 
\item[(c)]  
For each stratum $(U_J)_S$ the intersection  
$ \Tphi_{IJ}(\TU_{IJ})\cap (U_J)_S$  is  
cut out transversally by the function $\pr_{IJ}\circ s_J: (U_J)_S\to \qu{E_J}{\Hat\phi_{IJ}(E_I)}$. 
\end{itemize}
Finally, we say that $\Kk = (\bK_I, \Hat\Phi_{IJ})_{I\subset J, I,J\in \Ii_\Kk}$  is a {\bf weakly SS (weak) Kuranishi atlas }
 if all the conditions of Definition~\ref{def:Ku2}  hold in the weakly SS category.
Note that $\Kk$ has a  subatlas $\Kk^{top}$ formed by its top strata, and this carries an orientation bundle $\det(\Kk^{top})$ as before; see  \S\ref{ss:orient}.
We will say that $\Kk$ is oriented if $\det(\Kk^{top})$ is provided with a non-vanishing section.

Our aim is to establish the following theorem.

   \begin{prop}\label{prop:VFCstrat} Let $\Kk$ be an oriented,  $0$-dimensional, 
weak, effective,  weakly SS Kuranishi atlas on a compact metrizable stratified space $(X, \Tt)$. Then $\Kk$ 
determines
a class $[X]^{vir}_\Kk \in \check H_0(X;\Q)$
that 
 depends only on the oriented cobordism class of $\Kk$.
 \end{prop}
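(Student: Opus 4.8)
\textbf{Proof strategy for Proposition~\ref{prop:VFCstrat}.}
The plan is to reduce the stratified-smooth case to the machinery already developed in \S\ref{s:noiso}--\S\ref{s:iso} by exploiting the dimensional hypothesis $d=0$, which forces all transverse zero sets into the top stratum where everything is genuinely smooth. First I would run the taming and reduction constructions: since $\Kk$ is a weakly SS atlas, its intermediate atlas $\uKk$ is still a filtered topological atlas (the filtration $\E_{IJ}=\und{U_J\times\Hat\phi_{IJ}(E_I)}$ only uses the topological/linear structure, not smoothness), so Proposition~\ref{prop:proper1} and Theorem~\ref{thm:K2} apply verbatim to produce a metrizable tame shrinking $\Kk'$ with Hausdorff, homeomorphism, and orbibundle-linearity properties for $|\Kk'|$ and $|\bE_{\Kk'}|$. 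By Lemma~\ref{le:good} this shrinking is good, and by Proposition~\ref{prop:cov2} it has a reduction $\Vv$, unique up to cobordism reduction. None of these steps touches the smooth structure of the domains, so they carry over without change.

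Next comes the construction of a perturbation $\nu$. Here I would copy the inductive scheme of Proposition~\ref{prop:ext} (and its isotropy version preceding Proposition~\ref{prop:zero}), working on the smooth \emph{top strata} $(U_I)^{\mathrm{top}}$ where $s_I$ is a genuinely smooth Fredholm-type map. All the auxiliary sets $V_I^k$, $N_{JI}^k$, cutoff functions $\beta$, and the constants $\de_\Vv$, $\si(\de,\Vv,\Cc)$ are defined using the admissible metric on $|\Kk'|$, which lives on the intermediate space and is insensitive to the stratification; so the construction produces a reduced perturbation $\nu=(\nu_I)$ with $\nu_I$ defined and smooth on the top strata, admissible, precompact, and transverse \emph{in the sense of Definition~\ref{def:SS1}}, i.e.\ each $s_I^{\mathrm{top}}+\nu_I$ is transverse to $0$ on $(V_I)^{\mathrm{top}}$. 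Because $\dim\bK_I=0$ means $\dim U_I=\dim E_I$, transversality of $s_I^{\mathrm{top}}+\nu_I$ on the top stratum gives a discrete zero set there; and the index/tameness conditions (conditions~(a),(c) in the SS coordinate-change definition) guarantee that a zero of $s_J+\nu_J$ lying in any lower stratum $(U_J)_S$ would have to be transverse \emph{within} that positive-codimension stratum, which is impossible for a $0$-dimensional target restricted to a space of strictly positive dimension unless that stratum is a point of the deepest type. Closing that gap --- showing the compact zero set $(\s|_\Vv+\nu)^{-1}(0)$ avoids all lower strata entirely --- is the crux; it follows from the observation that along the core $N_J^k$ the perturbation is pulled back from lower-index charts (so zeros there are inductively controlled and sit in the expected strata), combined with the strong admissibility radius condition \eqref{eq:useful} which pins zeros near the core, and the precompactness $\pi_\Kk((\s|_\Vv+\nu)^{-1}(0))\subset\pi_\Kk(\Cc)$. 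Once the zero set is known to lie in the top smooth subatlas $\Kk^{\mathrm{top}}$, Proposition~\ref{prop:zero} applies to it directly and produces a compact oriented $0$-dimensional weighted branched manifold $(|\Hat\bZ^\nu|_\Hh,\La^\nu)$ with a map $|\io_\nu|_\Hh:|\Hat\bZ^\nu|_\Hh\to|\Kk'|$ into $\pi_\Kk(\Cc)$; by Example~\ref{ex:wnb}(i) this carries a fundamental class in $\check H_0$, a finite weighted signed count of points.

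Finally I would assemble the \v Cech class and prove invariance. Since $\pi_\Kk(\Vv)$ lies in a $\de$-neighbourhood of $\io_\Kk(X)$, pushing the fundamental class of $|\Hat\bZ^\nu|_\Hh$ forward by $|\io_\nu|_\Hh$ and taking the inverse limit over shrinking $\de$ yields $[X]^{vir}_\Kk\in\check H_0(X;\Q)$, exactly as in the sketch proof of Theorem~B at the end of \S\ref{ss:zero}. For well-definedness and cobordism invariance, I would establish the SS analogue of Proposition~\ref{prop:ext2}/Corollary~\ref{cor:ext2}: given two strongly adapted perturbations, or two cobordant weakly SS atlases, build a weakly SS Kuranishi \emph{cobordism} of dimension $1$ with a reduction and a relative perturbation interpolating the boundary data; the same dimensional argument now forces its $1$-dimensional zero set into the top strata (which have codimension $\ge 2$, hence a $1$-manifold-with-boundary misses them), so Proposition~\ref{prop:zero} for cobordisms gives a $1$-dimensional weighted branched cobordism (cf.\ Example~\ref{ex:wnb}(ii)) between the two $0$-dimensional zero sets, whence their pushed-forward classes agree in the limit. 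The main obstacle throughout is precisely the non-openness of SS transversality illustrated in Example~\ref{ex:SS}: one must argue, using the index condition (c) and the codimension-$\ge2$ structure of the lower strata together with the strict dimension bound $d=0$ (resp.\ $d=1$ for cobordisms), that the perturbed zero set is forced into the smooth locus and never encounters a stratum boundary, so that the delicate pathologies of the weakly SS category simply do not arise for the objects we actually need.
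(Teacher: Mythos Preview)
Your overall plan matches the paper's: tame and reduce via the intermediate atlas (this is purely topological and goes through unchanged), then construct an adapted perturbation whose zero set lies entirely in the top stratum, then invoke the $d=0$ (and $d=1$ for cobordisms) machinery. The gap is in how you force the zero set off the lower strata.

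You identify this as ``the crux'' but the argument you offer does not close it. You appeal to the SS coordinate-change conditions (a),(c) and to the core/admissibility structure of the adapted construction; neither is relevant here. Conditions (a),(c) concern how $\TU_{IJ}$ sits inside $U_J$ stratum by stratum --- they say nothing about transversality of $s_J+\nu_J$ on a lower stratum of $U_J$. And the core $N_J^k$ together with strong admissibility controls the inter-chart compatibility (across different $I\subset J$), not the inter-stratum behaviour within a single chart. Your dimensional remark (``transverse within a positive-codimension stratum is impossible'') is correct as stated, but it only shows that \emph{if} $s_J+\nu_J$ were transverse on lower strata then it would have no zeros there; you have only arranged transversality on the top stratum, so this is circular. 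Nothing in the standard Proposition~\ref{prop:ext} construction prevents zeros from appearing on, or drifting into, the singular set.

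The paper's fix is constructive rather than a posteriori, and it is exactly the step you are missing. It relaxes the smoothness requirement on $\nu_I$ to: continuous on all of $V_I$, smooth on $(V_I)^{\mathrm{top}}$; and it strengthens the notion of ``transverse'' to require $s_I+\nu_I\neq 0$ on every non-top stratum. To achieve this, at each stage of the induction one first chooses a \emph{continuous} perturbation on (a neighbourhood of) the singular set $V_I^{\mathrm{sing}}$ so that $s_I+\nu_I$ has no zeros there --- this is possible precisely because $\dim V_I^{\mathrm{sing}}\le\dim E_I-2$ when $d=0$, so a generic continuous map to $E_I$ misses $0$. One then extends continuously, smooths outside a slightly smaller neighbourhood of the singular set, and only then applies the usual Sard-type transversality argument on the open smooth locus. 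The same two-step (avoid-then-smooth) modification is inserted into the construction of $\Ti\nu_J$ and of $\nu_\pitchfork$ at every level $|J|>1$. With $\nu$ built this way the zero set is \emph{by construction} contained in the top stratum, and the rest of your outline (Proposition~\ref{prop:zero}, the inverse limit, the $1$-dimensional cobordism argument) goes through as you describe.
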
 

As before,
 $[X]^{vir}_\Kk$ may be  represented by a weighted branched manifold of dimension zero that maps into $|\Kk|$, i.e. by a finite union of oriented, weighted points.
 
 \begin{proof} This result may be proved by 
 making minor changes 
 to the arguments in the smooth case 
 since the strata in $U_I^{sing}: = U_I\less U_I^{top}$ have dimension at least $2$ less than $\dim E_I$.   
Since  the intermediate atlas $\uKk$ is a filtered  topological atlas in the sense of Definitions~\ref{def:tKu}
and~\ref{def:Ku3}, the taming and reduction constructions work as before.  Therefore 
we may assume that $\Kk$ is preshrunk and tame, with reduction $\Vv$.  The only 
question is how to construct a suitable perturbation section $\nu$.

For simplicity, we first go through the construction in the case when
 the isotropy groups are trivial.   In this case, condition (b) above says that the subset $\TU_{IJ} =\phi_{IJ}(U_{IJ})$  of $U_J$ is provided with 
 the structure of a weakly  SS manifold  such that the inclusion $\TU_{IJ}\hookrightarrow U_{J}$ is weakly SS.   In this situation, without further condition on the directions normal to $\TU_{IJ}$,  it may happen that 
%
 weakly SS functions on 
 $\TU_{IJ}$ do not have weakly SS extensions to $U_J$.    Hence we will relax the smoothness assumptions for sections given in Definition~\ref{def:sect} and later requirements on
perturbation sections (both for atlases and cobordism atlases) as follows:
  \begin{itemize}\item each $\nu_I: V_I\to E_I$ is continuous overall, and smooth on the top stratum $V^{top}_I$;
  \item 
   $\nu$  is {\bf admissible} if, for all $I\subsetneq J$, the components $(\nu_J^j)_{j\in J\less I}$ vanish in a neighbourhood of
  $\Tphi_{IJ}(\TU_{IJ})\subset U_J$;
  \item $\nu$  is {\bf transverse}  if $s_I+\nu_I\ne 0$ on $(V_I)_S$ for all strata $S$ except $V_I^{top}$, and if
  $s_I+\nu_I\pitchfork 0$ on $V_I^{top}$.
  \end{itemize}
 The notion of precompactness is unchanged.   With these definitions, 
the statement in Proposition~\ref{prop:zeroS0} that the
realization $|\Hat\bZ^\nu|$ of the zero set of a precompact, admissible, transverse perturbation is a compact 
smooth manifold of dimension $d=0$ makes sense, and can be proved as before.  Moreover, because 
$|\Hat\bZ^\nu|\subset |\Kk^{top}|$, it inherits an orientation from  $ |\Kk^{top}|$. Similarly, a cobordism section with boundary values $\nu^0$ and $\nu^1$
defines an oriented smooth one-dimensional cobordism between $|\Hat\bZ^{\nu^0}|$ and $|\Hat\bZ^{\nu^1}|$.

It remains to construct a suitable perturbation, which we do by following the method of  Proposition~\ref{prop:ext}.
Recall from \eqref{eq:VIk} that we have a family $\bigl(\Vv^k = (V^k_I)\bigr)_{k\ge 0}$ of nested reductions with $\Vv\sqsubset \Vv^k
\sqsubset \Vv^{k-1}$ for all $k$.
At the first step we need to choose a suitable function $\nu_i:V_i^1\to E_i$ for each basic chart $\bK_i$, 
which we may do separately for each $i$.  In this case, because
there is no function $\mu_J$ (since there are no $J$ with $|J|< 1$),  we just need to choose the (suitably small ) perturbation $\nu_i: = \nu_{\pitchfork}$ in such a way  that $s_i + \nu_{\pitchfork} \pitchfork 0$.  
Since $ V_i^{sing}: = V_i^0\less (V_i^0)^{top}$ is a union of manifolds of dimension $\le \dim E_i-2$, we can first choose 
a continuous function 
$\nu_{\pitchfork}:   (V_i^0)^{sing}\to E_i$ so that $s_i + \nu_{\pitchfork}$ does not vanish on this set; then extend 
$\nu_{\pitchfork}$ continuously over $(V_i^0)^{top}$.  Because $V_i^1\sqsubset V_i^0$, there is $\eps>0$ such that
$s_i + \nu_{\pitchfork} \ne 0$  in the neighbourhood 
$B^i_{4\eps}( (V_i^1)^{sing})$ of the singular strata.\footnote
{
Recall that  $B^i_\eps(Q)$ denotes an $\eps$-neighbourhood of the subset $Q\subset U^i$ with respect to the 
metric $d^i$ pulled back from $d$ on $|\Kk|$.}  Now smooth $\nu_{\pitchfork}$ outside 
$ B^i_{\eps}( (V_i^1)^{sing})$ so it still has no zeros in  
$B^i_{3\eps}((V_i^1)^{sing})$.  Finally, use standard transversality theory to perturb $\nu_{\pitchfork}$ smoothly 
outside  $ B^i_{\eps}((V_i^1)^{sing})$
so that all zeros of $s_i + \nu_{\pitchfork}$ both lie outside 
$ B^i_{2\eps}( (V_i^1)^{sing})$  (and hence in the region where $s_i + \nu_{\pitchfork}$ is smooth)
  and  are transverse.   

The rest of the construction proceeds in a similar way.  When $|J|>1$ there are two steps: we must first construct the 
extension $\Tnu_J$ of $\mu_J|_{N_J^{k+1}}$, and then the final perturbation $\nu_{\pitchfork}$ to achieve transversality.
The function $\Tnu_J$  must satisfy conditions (A-D) and also be smooth in a neighbourhood of the part of the zero set
$(s_J + \Tnu_J)^{-1}(0)$ on which transversality is achieved, i.e. on the part of the zero set lying in $B$.
Then we define   $\nu_{\pitchfork}$ to have support in an 
open precompact subset $P\sqsubset W$ where $W$ is a neighbourhood of $\ov{V^{k+1}_J}\less B$. Again we need 
the sum $s + \Tnu_J + \nu_{\pitchfork}$ to be smooth in a neighbourhood of its zero set, which we will arrange as above 
by making it smooth 
outside a very small neighbourhood of the singular set $(\ov{V^{k+1}_J})^{sing}$.

Here are some more details of the construction.
By \eqref{eq:Tmu}, the function $\Tnu_J $ has the form $ \be\cdot \sum_{j\in J} \Tmu^j_k $, where $\be$ is a cut off function
and $\Tmu^j_k$ is a certain extension of the $j$th component of $\mu_J$ that is obtained from local extensions $\Tmu_z$ by 
summing with respect to a partition of unity.  
We can assume inductively  that there are constants $0< \eps'<\eps$ such that for all $I\subset J$ the functions $s_I|_{V_I^{|I|}} + \nu_I$ do not vanish in an $\eps$-neighbourhood of
$(V_I^{|I|})^{sing}$ and that they are smooth outside an $\eps'$-neighbourhood.  Since the coordinate changes are isometries, we
 can therefore choose the local extensions $\Tmu_z$ so that they are smooth  for all $z$ outside a 
 $\eps'$-neighbourhood of $(N^{k+\frac 12}_J)^{sing}$.  Since we can choose the bump function $\be$ and partition of unity to be smooth with respect to some system of local coordinates on $V_J^{k}$, we can construct   $\Tnu_J$ with support in $W_J$ so that,
as well as satisfying conditions (A-D), it  is smooth 
 outside a neighbourhood $B^{\eps''}_J\bigl((N^{k+\frac 12}_J)^{sing}\bigr)$.  Here we may choose   $\eps''$  (with   $\eps'<\eps'' <  \eps$)
sufficiently small that the zeros of  $s_J + \Tnu_J$ in  $B$ (which in fact lie in $N^{k+\frac 12}_J$ by (C)) also lie outside this neighbourhood.
This completes the first step.  

For the second step we need to choose a suitable $\nu_{\pitchfork}$ with support in an 
open precompact subset $P\sqsubset W$.  As before, this set $P$ contains all the zeros of $s_J + \Tnu_J$ that do not lie in $N^{k+\frac 12}_J$.
Moreover we can choose $\nu_{\pitchfork}$ to be any sufficiently small function with support in $P$ 
that achieves the transversality and smoothness conditions.  Hence we can construct $\nu_{\pitchfork}$ by the same method that was used
to construct $\nu_i$.   Thus we first choose a very small continuous function $ \nu_{\pitchfork}$ so that $s_J + \Tnu_J + \nu_{\pitchfork}$ does not vanish
near  $P^{sing} = P\cap V_J^{sing}$, then  smooth $\Tnu_J + \nu_{\pitchfork}$ outside a smaller neighbourhood of $P^{sing}$, and finally perturb $\nu_{\pitchfork}$ to achieve transversality everywhere.

This completes the construction of $\nu$  in the case of trivial isotropy.  One adapts
the construction  in \cite[Proposition~7.3.10]{MW2} of cobordism perturbations in essentially the same way.
When the isotropy is nontrivial,  the argument is also essentially the same; see  the discussion after Definition~\ref{def:sect2}.
For more details, see Castellano~\cite[\S5]{Castell2}.
\end{proof}

%
%


\section{Gromov--Witten atlases}\label{s:GW}

We begin by discussing the proof of Theorem A in the genus zero case.
Consider a closed $2n$-dimensional symplectic manifold $(M,\om)$ with $\om$-tame almost complex structure $J$, and let
 $X=\oMm_{0,k}(M,A,J)$,  the space of
equivalence classes of genus zero, $k$-marked stable maps to $M$ in class $A\in  H_2(M;\Z)$. 
Section~\ref{ss:GW} explains  how to construct a $d$-dimensional  weak Kuranishi atlas on $X$, where 
\begin{equation}\label{eq:indA}
d: = \ind(A) = 2n + 2c_1(A) + 2k-6.
\end{equation}
This atlas is either weakly SS or ($\Cc^1\slash\Cc^\infty$)-smooth, depending on the gluing theorem that we use, see (VII) below, and is unique up to concordance as required by Theorem~A. (In fact, it is unique up to the more restrictive relation of commensurability; see  Definition~\ref{def:commen}.)
Though we describe the construction of a chart  in enough detail to be able to formulate the equation in local coordinates, we do not carry out the necessary analysis; for this, see \cite{Castell1}. 
The construction of  a single chart is straightforward though rather lengthy.   It is trickier to write down exactly 
how to combine the slicing conditions in the construction of a transition chart.  For this it is simplest to adopt the \lq\lq coordinate free" approach of \eqref{eq:coordf1}, but to understand this one does need the notion of an \lq\lq admissible labelling" as in Remark~\ref{rmk:lifts}.  

Section~\S\ref{ss:var} 
 explains variants of the basic method.

We concentrate here on the genus zero case  because the relevant spaces, group actions and equations can be written down very explicitly in terms of the coordinates on the Deligne--Mumford space $\oMm_{0,k}$ provided by cross ratios.   No doubt these arguments can be adapted to the higher genus case, though they would have to be based on a thorough understanding of the relevant
Deligne--Mumford space as in Robbin--Salamon~\cite{RSAL}.
However, one can construct smooth Kuranishi atlases for moduli spaces of $J$-holomorphic curves of any genus by using the ambient polyfold  that is constructed by Hofer--Wysocki--Zehnder in~\cite{HWZ1,HWZ2,HWZ3,HWZ,HWZ:DM}.  This polyfold is an infinite dimensional analog of an ep groupoid, that contains $X=\oMm_{g,k}(M,A,J)$ and is modelled on an sc-Banach space.  It carries a strong polyfold bundle whose fibers are certain $(0,1)$ forms, and one can extract a Kuranishi atlas over $X$ from this groupoid plus bundle by much the same 
construction used in  Proposition~\ref{prop:orb} below  to construct strict atlases for orbifolds.  Full details are worked out in~\cite{MWgw}. 
Of course one might wonder what the advantages of Kuranishi atlases are if one  has to use polyfolds to construct them. 
One point is that one might be able to  get more control over the geometric properties of elements in $|\Kk|$ by making a careful choice of the obstruction bundles.  Another is that the construction of the analog of the perturbation section is very complicated 
in the polyfold world; one certainly has more  control  over its  branching using the techniques described in 
Proposition~\ref{prop:zero} above.  Finally, the existence of this connection between the two approaches should imply that the Gromov--Witten invariants constructed via polyfold theory are the same as those constructed using atlases, and also should serve as a bridge between more traditional geometric methods and the new analytic approach via polyfolds.

\subsection{Construction of charts 
in the genus zero GW setting}\label{ss:GW}

We begin by explaining how to build a basic chart $\bK = (U,E,\Ga,s,\psi)$  near a point $[\Si_0,\bz_0,f_0]
\in X$, where   
 $[\Si_0,\bz_0,f_0]$ denotes the equivalence class of the stable map 
$(\Si_0,\bz_0,f_0)$.   If  the domain $(\Si_0,\bz_0)$  is stable,  the construction is relatively easy since in this case
 $[\Si_0,\bz_0,f_0]$ has no automorphisms and we may take $\Ga = \id$.   As we will see below, we can fix the positions of enough nodal and marked points to determine a parametrization of the domain as a union of spheres (below we call this choice is called a {\bf normalization}), thereby obtaining a trivialization of the universal curve $\Cc\to \oMm_{0,k}$ near   $[\Si_0,\bz_0]$.
   For any smooth map $f:\Si\to M$,  we consider ${\rm graph} f$ to be embedded in the product $\Cc\times M$, and, as in \eqref{eq:linE},  choose  a linear map $\la$
from $E$ to the space of sections of a bundle over $\Cc\times M$ whose fibers consist of appropriate $(0,1)$-forms.
Assuming that the image of $\la$ is sufficiently large to cover the cokernel of the linearized  Cauchy--Riemann operator at $f_0$,
we then take
$U$ to consist of tuples $(e,f,\bz)$ where $e\in E$ is a vector and $f\approx f_0:\Si\to M$ is a solution to the perturbed Cauchy--Riemann equation $\pbar f = \la(e)|_{\rm graph f}$: see \eqref{eq:delbar}.   Even in this simple case there is still the nontrivial question of putting an appropriate smooth structure on $U$.  To do this requires a gluing theorem, as we discuss further in Remark~\ref{rmk:glueXX}.

In general, before starting the above construction we must stabilize the domain of $[\Si_0,\bz_0,f_0]$ by adding some extra marked points, and then investigate the action of the automorphism group $\Ga$ in terms of these marked points.    
In the language developed in  \cite[Appendix~D]{JHOL}, we can think of the
 domain  $\Si_0$ as a connected finite union of standard spheres  whose combinatorics can be described in terms of a labelled tree $(T,\Ee,\al)$ with vertices $T$ and symmetric edge  relation $\Ee\subset T\times T$, and where the labelling map $\al:\{1,\dots,k\}\to T, i\mapsto \al_i$ determines which sphere contains the $i$th marked point.
 Thus $\Si_0$ is the union of spheres 
 $(S^2)_{\al\in T}$ joined at nodal pairs 
$$
(S^2)_\al \ni n^0_{\al\be} = n^0_{\be\al}\in (S^2)_\be, \quad  \al\sim_\Ee \be
$$
   with the marked point $z_0^i$ lying on the component $(S^2)_{\al_i}\less \{z_{\al_i\be}: \al_i\sim_\Ee \be\}$.
Further the map
$$f_0: \Si_0= \bigcup_{\al\in T} (S^2)_{\al}\to M
$$ satisfies  $f_0(n^0_{\al\be}) = f_0(n^0_{\be\al})$.
 For short, the nodal points are denoted $\bn_0 : = (n^0_{\al\be})_{\al \sim_\Ee\be}$, so that we may identify
 $(\Si_0,\bz_0,f_0)$ with the tuple $\bigl(\bn_0,\bz_0, f_0\bigr)$.  Two such tuples $\bigl(\bn_0,\bz_0, f_0\bigr), \bigl(\bn_0',\bz_0', f_0'\bigr)$ modelled respectively on the labelled trees $(T,\Ee,\al), (T',\Ee',\al')$ are called equivalent (and define the same element $[\Si_0,\bz_0,f_0]\in \oMm(M,A,J)$) if there is a tree isomorphism $\io:T\to T'$  and a function $T\to \PSL(2,\C), \al\mapsto \phi_\al$
such that
\begin{equation}\label{eq:treeaut}
f_{\io(\al)} \circ \phi_\al = f_\al,  \quad n'_{\io(\al)\io(\be)} = \phi_\al (n_{\al\be}),\quad z_i' = \phi_{\al_i}(z_i),
\end{equation}
for all indices $\al, \be, i$: see \cite[Definition~5.1.4]{JHOL}.

Quite a few choices are involved in constructing the chart; we list the main ones here. 

\MS

\NI  {\bf (I): The added marked points.}
The chart is  determined by the choice of a  {\bf slicing manifold} $Q$,  which is an orientable  codimension $2$ 
(open, possibly disconnected) submanifold of $M$ that is transversal to  $\im f_0$ and meets it in enough points 
$f_0^{-1}(Q) = 
 \{w_0^1,\dots,w_0^L\} = :\bw_0$  to stabilize its domain, i.e. so that there are at least three special points (nodal or marked) on each component.
 We assume the points $w_0^\ell$ are disjoint from $\bz_0\cup\bn_0$.
(If $[\Si_0,\bz_0]$ is already stable there is no need to add these points.  In this case we allow $\bw_0$ to
be  the empty tuple.)   
 Since $(\Si_0, \bw_0,\bz_0)$ is stable, it is described up to biholomorphism by its tuple of  special points.  Thus we may write  
the element $\de_0\in  \oMm_{0,k+L}$ either as $[\bn_0,\bw_0,\bz_0]$ or as $ [\Si_0,\bw_0,\bz_0], $  and will work over a suitable neighbourhood $\De$ of $\de_0$ in  $  \oMm_{0,k+L}$.

 \MS
 
 \NI
 {\bf (II): The group.}
We take $\Ga$ to be the stabilizer subgroup  of 
 $[\Si_0,\bz_0,f_0]$, so that each $\ga\in \Ga$ acts on $\Si_0$ by a biholomorphism ${\ga}:\Si_0\to \Si_0$ of the form detailed in \eqref{eq:treeaut}, 
 permuting  the points in $\bw_0$ (and hence also  $T$  and $\bn_0$)
 while fixing those in $\bz$ and leaving $f_0$ unchanged:  
 $f_0= f_0\circ \ga$.  
 We  therefore consider $\Ga$ to be a subgroup of $S_L$, the symmetric group on $L$ letters, acting via\footnote
 {
 We could take $\Ga$ to be any subgroup of  $S_L$ that contains the isotropy group, but this complicates the description of the action given in \eqref{eq:phiga} below.}
\begin{equation}\label{eq:action}
 \bw_0 \mapsto \ga\cdot \bw_0 : = (w^{\ga(\ell)})_{1\le \ell\le L}.
\end{equation}
Note that $(\ga_1\ga_2)\cdot \bw = \ga_2\cdot(\ga_1\cdot \bw)$.
Further, the induced action $\al\mapsto \ga(\al)$ on the tree $T$ has the property that
$$
w_0^\ell\in (S^2)_\al\;  \Longrightarrow\;  
 (\ga\cdot \bw_0)^\ell  = w^{\ga(\ell)}\in (S^2)_{\ga(\al)}.
 $$
 Correspondingly we define a $\Ga$ action on $\bn$ by
 $(\ga\cdot\bn)_{\al\be}: = n_{\ga(\al)\ga(\be)}$.
We choose small disjoint neighbourhoods 
 $(D_0^\ell)_{\ell=1,\dots,L}\subset \Si_0\less (\bz_0\cup \nodes)$ of the $(w_0^\ell)$, averaging them over   
the $\Ga$-action, so that $\Ga$ acts on them by permutation.
Later we will use these discs to control  the added marked points, specially those  in a transition chart. 
\MS

\NI {\bf (III): The normalization conditions and  universal curve.}
The above description of $\Si_0$ in terms of its nodal points $\bn_0$  is not  
unique since we have not yet explained how to identify each component of $\Si_0$ with a sphere,
a step that is needed in order to write down the equation satisfied by the elements in the domain  of the chart.
To this end,  we  
fix the positions of three of the special points $\bn_0\cup \bw_0 \cup \bz_0$ on each component to be $0,1,\infty$.
Thus we choose an injective function  
\begin{equation}\label{eq:bP}
\bP: T\times \{0,1,\infty\}\to \;\{(\al,\be)\ |\ {\al E\be}\}) \cup \{1,\dots,L\}\cup \{1,\dots,k\}
\end{equation}
 that takes
$\{\al\}\times \{0,1,\infty\}$ to three labels for  points in $(S^2)_\al$.
We denote the set of points with labels in $\im(\bP)$ by  $(\bn_0)_\bP\cup (\bw_0)_{\bP}\cup (\bz_0)_{\bP}$,
and write $p_n, p_w,p_z$ for the number of points of each type.  Thus  $3|T|  = p_n+p_w+p_z$.  
We then  parametrize $\Si_0$ by identifying the collection of points with labels in $\bP$ with the corresponding fixed positions on the standard sphere $S^2$, 
 denoting this parametrization of $\Si_0$ by $\Si_{\bP,0}$.    Note that this normalization $\bP$ does not
uniquely determine the domain $\Si_0$ up to biholomorphism since the positions of the nodal points in
 $\bn_0\less (\bn_0)_{\bP}$ must still be specified.  
If we want, we can reduce this indeterminacy by putting nodal labels into $\im(\bP)$ wherever possible, but we cannot always eliminate it; see  Figure~\ref{fig:5}.  
Thus as a stable curve, the tuple $(\Si_{\bP,0}, (\bw_0)_{\bP}, (\bz_0)_{\bP})$ represents
the element  $\de_{\bP,0}: = [\bn_0, (\bw_0)_{\bP}, (\bz_0)_{\bP}]\in \oMm_{0,p}$, where $p: = p_w+p_z$.
We denote the 
nearby
elements in $\oMm_{0,p}$ by  
$\de_{\bP}: = [\bn, \bw_{\bP}, \bz_{\bP}]$, reserving the name $\de$ to denote stable curves
$[\bn, \bw, \bz]\in \oMm_{0,k+L}$.

   \begin{figure}[htbp] 
   \centering
   \includegraphics[width=2.5in]{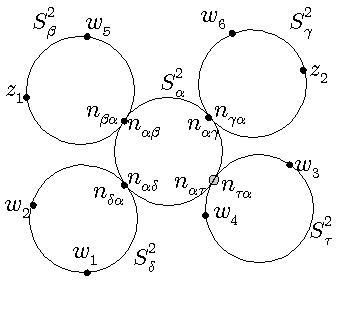} 
   \caption{Here all the nodes and special points are in $\bP$ except for $n_{\al\tau}$.  When the node joining $(S^2)_\al$ to $(S^2)_\de$ is resolved,
   two points of $\bP$ are removed, namely the nodal pair, so that 
   the new glued component contains $4$ points of $\bP$,
 one more  than is needed  stabilize it.  This extra point records the gluing parameter.  
 For example,  if  we  fix the parametrization of the new glued sphere by $n_{\al\be},n_{\al\ga}, w_1$,  the gluing parameter is 
    determined by
   the position of $w_2$, i.e. by the cross ratio $cr(n_{\al\be},n_{\al\ga}, w_1,w_2)$.  On the other hand, if we resolve at $n_{\al\tau}$, then we lose one point of $\bP$, and we can take the cross ratio $cr(n_{\al\be}, n_{\al\ga}, n_{\al\de}, w_3)$ to parametrize the position of the nodal point $n_{\al\tau}$, while
   $cr(n_{\al\be}, n_{\al\ga}, n_{\al\de}, w_4)$ gives the gluing parameter. Similarly, if we resolve at a node with neither point in $\bP$ then, after gluing, the three points in $\bP$ not needed for stability  parametrize the positions of the two nodal points and the gluing parameter.
   }
   \label{fig:5}
\end{figure}

We now discuss the structure of a neighbourhood $\De_\bP$ of $\de_{\bP,0}$ 
 in the Deligne--Mumford space $\oMm_{0,p}$.  We  denote the universal curve  over $\De_\bP$ by $\Cc|_{\De_\bP}$ with fibers $\Si_{\de_\bP}, \de_\bP\in \De_\bP.$  
 A normalized representation of the surface $\Si_{\de_\bP}$ may be obtained from $\Si_{\bP,0}$ by varying the positions of the  nodal points not in $\im(\bP)$ and then gluing. More precisely, if  $\Si_{\bP,0}$ has $K$ nodes, then there are $2K$ nodal points $\bn_0$,
 $2K-p_n$ of which can move,
 and $K$ small complex
 gluing parameters  $\ba: = (a_1,\dots,a_K)$, one at each node, such that all nearby fibers $\Si_{\bP,\ba,\bb}$ may be obtained from
$\Si_{\bP,0}$ by first varying the $2K-p_n$ points in $\bn_0\less \bP$ via  complex parameters denoted $\bb = (b_1,\dots,b_{2K-p_n})$ and then cutting out discs of radius $\sqrt{|a_i|}$ near the $i$th pair of nodal points,  gluing the boundaries of these discs with the twist $\arg(a_i)$.\footnote
{These choices allow one to identify the gluing parameter with a cross-ratio: see \cite[equation~(10.9.3)]{JHOL}.}
  We suppose $|a_i|, |b_j|<\eps$, where
 $\eps>0$ is chosen so that the union  $\Nn^{2\eps}_{\nodes}$ of the  $2\eps$-discs around the nodes of $\Si_{\bP,0}$ does not intersect the discs 
$D_0^\ell$ or the marked  points $\bz_0$.    
Thus, for some small neighborhood $B^{6K-2p_n}$ of $0$  in $\C^{3K-p_n}$, we have a fiberwise embedding
\begin{eqnarray}\label{eq:gluedom}
 \io_\bP &:& 
\bigl(\Si_{\bP,0}\less \Nn^{2\eps}_\nodes\bigr)\times B^{6K-2p_n} \to \Cc|_{\De_{\bP}}, \mbox{ where }  \\ \notag
\io_{\bP,\ba,\bb} &:& 
\bigl(\Si_{\bP,0}\less \Nn^{2\eps}_\nodes 
\bigr)\times \{\ba,\bb\}\; \mapsto\; \Si_{\bP,\ba,\bb}\less \Nn^{2\eps}_\nodes ,
\end{eqnarray}
Thus  $\io_{\bP,\ba,\bb}$  takes the $p=p_w+p_z$ marked points in 
$\Si_{\bP,0}\less \Nn^{2\eps}_\nodes$ to the marked points $\bw_\bP, \bz_\bP$ in 
the fiber $\Si_{\bP,\ba,\bb}$, and the discs  $\bigcup_\ell D_0^\ell\subset \Si_{\bP,0}\less \Nn^\eps_\nodes$ to corresponding discs in 
$\Si_{\bP,\ba,\bb}$.  For each $\ba, \bb$ the injection $\io_{\bP,\ba,\bb}$ is defined on the subset of $\Si_{\bP,0}$ that is not cut out 
by the gluing, i.e. on $\bigcup_\al\bigl((S^2)_\al\less \bigcup_\be D_{n_{\al\be}}(|a_{\al\be}| + |b_{\al\be}|)\bigr)$, where $a_{\al\be}, b_{\al\be}\in \C$ are the relevant small parameters $\ba,\bb$ at the nodal point $n_{\al\be}$.

\begin{rmk}\label{rmk:norm}\rm  \begin{itemlist}\item[(i)]
These coordinates $(\ba,\bb)$ for the neighbourhood $\De_\bP\subset \oMm_{0,p}$ are given by
 the positions  of the free nodes (parametrized by $\bb$) and the gluing parameters $\ba$, and are the most convenient ones in which to write down the equation; cf. (VI). 
In order to understand the group action  it is helpful to note that one can read off the parameters $\ba,\bb$ from the (extended) cross ratios\footnote
{
In this extension we allow at most pairs of points to coincide, so that $cr$ may equal $0,1,\infty$.
The presence of such special values signals the existence of a node, and the resulting  combinatorics gives the tree.}
 of the points 
 $\bw_\bP, \bz_\bP$ in the fiber $\Si_{\bP,\ba,\bb}$; see  Figure~\ref{fig:5} and \cite[Appendix~D]{JHOL}.  Hence 
 we can write down the group action in terms of the induced permutation of the special points as in \eqref{eq:phiga} below.
 \item[(ii)]
We will often denote the normalized domain of  the stable curve
$\de_\bP: = [\bn,  \bw_\bP, \bz_\bP] = [\Si_{\de_\bP}, \bw_\bP, \bz_\bP] $
 as  $\Si_{\bP,\de_\bP}$ instead of $  \Si_{\bP,\ba,\bb}$.  Thus
 $$
 \Si_{\bP,\de_\bP}: =  \Si_{\bP,\ba,\bb}.
 $$
\item[(iii)]   As a check on dimensions, note that
 $\dim_\C(\oMm_{0,p}) = p - 3$, while there are 
 $3K-p_n$ parameters $\ba,\bb$, and $p + p_n = 3K+3$ by definition of $\bP$, so that the total number of parameters $\ba,\bb$ is $p-3$.
 Note also that the normalization $\bP$ of the central fiber $\Si_0$ labels enough points to 
 normalize the nearby fibers, since  one needs three fewer points in $\bP$
 for  
 each node that is glued. 
  As illustrated in Figure~\ref{fig:5}, some points in $\bP$ may be cut out by a 
  gluing, but the extra elements in $\bP$ can always be interpreted  in terms of  gluing parameters $\ba$
 and the parameters $\bb$ pertaining to the nodes that have been glued.  (This point is discussed more fully in (VIII)[b].) $\hfill\er$ 
 \end{itemlist}
  \end{rmk}
%
%

Now consider the stable curves $\de: = [\Si_\de,\bw,\bz]$ with the full set of marked points.
If we  parametrize  the domain  
as $\Si_{\bP,\de_\bP}$, then the points in $ \bw_{\bP}, \bz_{\bP}$ have fixed positions while
the other marked points (as well as the nodes not in $\bn_\bP$) can move.
 The map $\io_\bP$ in \eqref{eq:gluedom} therefore extends to a parametrization of the universal curve $\Cc|_{\De}$
 away from the nodes:  
\begin{eqnarray}\label{eq:gluedom1}
 \io_\bP :
\bigl(\Si_{\bP,0}\less \Nn^\eps_\nodes\bigr)\times B^{6K-2p_n}\times B^{2(k+L-p)}\; \to \;\Cc|_{\De}: 
 \end{eqnarray}
 where the small parameters  $\om^\ell,\zeta^j \in B^{2(k+L-p)}\subset \C^{k+L-p}$ describe the positions of the points in 
 $\io_{\bP,\ba,\bb}^{-1}(\bw\cup\bz)\less (\bw_\bP\cup \bz_\bP)$, taking the value $0$ at $\bw_0,\bz_0$.
 The map 
$\oMm_{0,k+L}\to  \oMm_{0,p}$ that forgets the points in $(\bw\cup\bz)\less (\bw_\bP\cup \bz_\bP)$ lifts to a forgetful map
 $\forget: \Cc|_\De\to \Cc_{\De_\bP}$ that fits into the following commutative diagram
\begin{equation}\label{eq:gluedom2}
\xymatrix{
\bigl(\Si_{\bP,0}\less \Nn^\eps_\nodes\bigr)\times B^{6K-2p_n}\times B^{2(k+L-p)}  \ar@{->}[d]_{\proj} \ar@{->}[r]^{\qquad\qquad\qquad\qquad\io_\bP}    & \Cc|_{\De} \ar@{->}[d]_{\forget}   \\
\bigl(\Si_{\bP,0}\less \Nn^\eps_\nodes\bigr)\times B^{6K-2p_n}\ar@{->}[r]^{\qquad\qquad \io_\bP}   & \Cc|_{\De_\bP}.
}
  \end{equation}
We will denote the element $\de\in \De$ as $\de: = [\Si_\de, \bw, \bz]=[\bn, \bw,\bz]$, with chosen
 representative denoted either  $(\Si_{\bP,\de},\bw,\bz)$ or $(\Si_{\bP,\ba,\bb},\bw,\bz)$.
Here $\bw, \bz$ are tuples of points in the curve $\Si_{\bP,\de}=\Si_{\bP,\ba,\bb}$; their pullbacks by
$\io_{\bP,\ba,\bb}$ to the fixed fiber $\Si_{\bP,0}$ are given by the complex parameters $\vec \om, \vec \zeta$,
 that we assume to vanish at $\bw_0,\bz_0$ and have length $<\eps$ so that  
 \begin{equation}\label{eq:omell}
 \om^\ell: =\io_{\bP,\ba,\bb}^{-1}(w^\ell) \in D^\ell_0.
 \end{equation}

\MS

\NI{\bf (IV): The group action.}  Since $\Ga$ is the stabilizer of $[\Si_0,\bz_0,f_0]$ and acts on the added marked points $\bw_0$  by permutation, with an associated action on the nodes, this  action  extends to a neighbourhood of   $[\Si_0,\bw_0,\bz_0]$.  Hence we
may assume that $\De$ is invariant under this action $\de\mapsto \ga^*(\de)$ of $\Ga$, 
where
 $\ga^*(\de)= [\ga\cdot\bn,\ga\cdot \bw, \bz]=: [\bn', \bw{\hspace{0.002in}}',\bz]$ as 
 in \eqref{eq:action} ff.
 Correspondingly there is an action
 $[\bn,\bw,\bz, f]\mapsto  [\ga\cdot\bn,\ga\cdot \bw, \bz, f] = [\bn',\bw{\hspace{0.002in}}',\bz,f]$ on the space of stable maps.
To obtain an explicit formula for this action, we normalize the domains via the labelling $\bP$.
We may assume that $f$ is defined on the normalized domain $\Si_{\bP,\de}$.  However,
$\Si_{\bP,\de}\ne \Si_{\bP,\ga^*(\de)}$ since in $\Si_{\bP,\ga^*(\de)}$ the points whose {\it new}
 labels are in $\bP$
 are put in standard position.    Therefore the normalized action may be written as
\begin{equation}\label{eq:actionf}
 \bigl(\Si_{\bP,\de},\bw,\bz, f\bigr)\mapsto \bigl(\Si_{\bP,\ga^*(\de)}, \phi_{\ga,\de}^{-1}(\ga\cdot\bw),
 \phi_{\ga,\de}^{-1}(\bz),f\circ \phi_{\ga,\de}\bigr)
\end{equation}
 where 
 \begin{equation}\label{eq:phiga0}
 \phi_{\ga,\de}:\Si_{\bP,\ga^*(\de)}\to \Si_{\bP,\de}
 \end{equation}
  is defined to be 
 the unique  biholomorphic map  
 that takes the special points $\bn',\bw{\hspace{0.002in}}',\bz' $  in $ \Si_{\bP,\ga^*(\de)}$ with labels in $\im(\bP)$ (that are in standard position)  to the corresponding points in $\Si_{\de}$, i.e.   the map
 $ \phi_{\ga,\de}$ takes
\begin{equation}\label{eq:phiga}
 n_{\al\be}'\mapsto n_{\ga(\al)\ga(\be)},\;\;
 (w') ^\ell \mapsto  w^{\ga(\ell)},\;\; (z')^i \mapsto z^i\; \;\mbox{ if }  (\al,\be), \ \ell, \ i\in \im(\bP). 
\end{equation}
The positions of the other special points in $\bn',\bw{\hspace{0.002in}}',\bz' $ in $ \Si_{\bP,\ga^*(\de)}$ are then determined by 
\eqref{eq:actionf}; in particular, $\bw{\hspace{0.002in}}' =  \phi_{\ga,\de}^{-1}(\ga\cdot\bw)$ and $\bz'=
 \phi_{\ga,\de}^{-1}(\bz)$  as claimed in \eqref{eq:actionf}.

 One can also pull these maps $\phi_{\ga,\de}$ back to partially defined maps $\phi_{\bP,\ga,\de}$ on the fixed surface
 $ \Si_{\bP,0}\less \Nn^{2\eps}_{\nodes} $,  as follows:
%
 \begin{eqnarray}\label{eq:gastab}&& \qquad\qquad  \phi_{\bP,\ga,\de}\ :  =   \\ \notag
 && \Si_{\bP,0}\less \Nn^{2\eps}_{\nodes}  \stackrel{ \io_{\bP}(\cdot,\ba',\bb')}\longrightarrow \Si_{\bP,\ba',\bb'} = \Si_{\bP,\ga^*(\de)}
  \stackrel{\phi_{\ga,\de}}\longrightarrow \Si_{\bP,\ba,\bb} = \Si_{\bP,\de}  \stackrel{ \io_{\bP}(\cdot,\ba,\bb)^{-1}}\longrightarrow \Si_{\bP,0}\less \Nn^\eps_{\nodes}.
 \end{eqnarray}
Then $\phi_{\bP,\ga,\de}$ is almost equal to $\ga:  \Si_{\bP,0}\to  \Si_{\bP,0}$, because 
 the inverse image by $\io_{\bP}(\cdot,\ba,\bb)$ of $w^{\ga(\ell)} \in \Si_{\bP,\de}$ is close to $w_0^{\ga(\ell)}$
 while $\io_{\bP}(\cdot,\ba',\bb')^{-1}\circ  \phi_{\ga,\de}^{-1}(w^{\ga(\ell)})$ is close to $w_0^\ell$.

\begin{figure}[htbp] 
   \centering
   \includegraphics[width=3in]{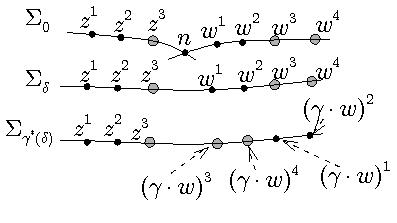} 
   \caption{The points in $\bP$ are the two nodal points plus $z^1,z^2,w^1,w^2$ with solid dots.  These are shown on $\Si_0$ and 
   $\Si_\de$. The group $\Ga = \Z/2\Z$ interchanges $w^1, w^3$ and $w^2, w^4$, so that $\Si_{\ga^*(\de)}$ has
    the same marked points as $\Si_\de$, but with different labels.  Hence it is normalized differently, using the points labelled $w^3, w^4$ on $\Si_\de$ instead of $w^1, w^2$.
  Usually  $cr(z^1,z^2,w^1,w^2)\ne cr(z^1,z^2,(\ga\cdot w)^1,(\ga\cdot w)^2)$, so that  the normalized 
  domains $\Si_{\bP,\de}$ and $\Si_{\bP,\ga^*(\de)}$ 
are obtained from $\Si_0$ by  gluing with  different parameters.  We have drawn the figure so that the map $\phi_{\ga,\bP,\de}:\Si_{\bP,\ga^*(\de)}\to
  \Si_{\bP,\de}$ identifies points vertically, taking $(\ga\cdot w)^1$ to $w^3$ and so on.
 }
   \label{fig:6}
\end{figure}

Because 
the permutation action $\bw\mapsto \ga\cdot \bw$ satisfies
$(\al\ga)\cdot \bw = \ga\cdot(\al\cdot w)$, we have
$(\al\ga)^*(\de) = [(\al\ga)\cdot \bw,\bz] = \ga^*(\al^*(\de))$.
Hence the composite
$ \phi_{\al,\de} \circ  \phi_{\ga,\al^*\de}$ is defined and maps from
$\Si_{\bP,\ga^*(\al^*(\de))}$ through $\Si_{\bP,\al^*(\de)}$ to $\Si_{\bP,\de}$.
It follows easily that
  \begin{equation}\label{eq:phiga2}
\phi_{\al\ga, \de} = \phi_{\al,\de} \circ  \phi_{\ga,\al^*\de}:\Si_{\bP,(\al\ga)^*(\de)}\to \Si_{\bP,\de}.
\end{equation}
Figure~\ref{fig:6} explains this action in a case in which  there is a trivial induced action of $\Ga$ on the set of components of $\Si_0$ and hence on the nodes.

 Using the map $\io_\bP$ in \eqref{eq:gluedom1},
we can push the discs  $(D_0^\ell)_\ell$ in (I) forward to subsets of $\Si_{\bP,\de}$, and then average them over the $\Ga$-action in $\De_\de$
to obtain discs 
\begin{equation}\label{eq:discsde}
(D_\de^\ell)_\ell \subset \Si_{\bP,\de}\;\mbox{ where } \; 
\phi_{\ga,\de}(D_{\ga^*(\de)}^{\ga(\ell)}) = D_\de^\ell \;\;\forall \de,\ell.
\end{equation}

\MS

\NI {\bf (V): The obstruction space.}
Consider the complex vector  bundle $\Hom_J^{0,1}(\Cc|_{\De}\times M)$ whose fiber at $(z, x)$ is $\Hom_J^{0,1}(T_z \Si_{\bP,\de}, T_x M)$ in normalized coordinates. 
%
%
Choose a complex vector space $E_0$ and a (not necessarily injective) complex  linear map $\la: E_0\to \Cc^{\infty}\bigl(\Hom_J^{0,1}(\Cc|_{\De}\times M)\bigr)$ whose image consists of sections that vanish near the nodal points of the fibers.
More precisely, the sections should be supported in the image of the embedding $\io_\bP$ of \eqref{eq:gluedom1}.
Define $E: = \prod_{\ga\in \Ga} E_0$, the product of $|\Ga|$ copies of $E_0$ with elements $\vec e: = (e^\ga)_{\ga\in \Ga}$, on which $\Ga$ acts by permutation so that
$ \bigl(\al\cdot \vec e\bigr)^\ga= e^{\al\ga}$ for $\al\in \Ga$.
Then extend $\la$ equivariantly to a linear map 
\begin{equation}\label{eq:linE}
\la: E\to \Cc^{\infty}\bigl(\Hom_J^{0,1}(\Cc|_{\De}\times M)\bigr), \qquad 
\vec e: = (e^\ga)_{\ga\in \Ga}\mapsto {\textstyle \sum_{\ga\in \Ga}}\; \ga^*(\la(e^\ga)).
\end{equation}
Here we use the fact that the isotropy group $\Ga$ acts fiberwise on $\Cc|_{\De}$ as explained in (IV), taking the fiber  $\Si_{\bP,\de}$ (with relabelled marked points $\bw$)   to the fiber
$\Si_{\bP,\ga^*(\de)}$ by a map that in normalized coordinates is $(\phi_{\ga,\de})^{-1}$; see  \eqref{eq:actionf},  \eqref{eq:phiga}.  The induced action of $\Ga$ on a section $\nu\in   \Cc^{\infty}\bigl(\Hom_J^{0,1}(\Cc|_{\De}\times M)\bigr)$ is by pullback as follows: for $z\in \Si_{\bP,\de}$ we have
\begin{equation}\label{eq:ga*}
\ga^*(\nu)(z,x): =(\phi_{\ga,\de}^{-1})^*(\nu)(z,x) = \nu(\phi_{\ga,\de}^{-1}(z),x) \circ \rd \phi_{\ga,\de} ^{-1}(z) : \rT_z \Si_{\bP,\de}\to \rT_x M.
\end{equation} 
 It follows from \eqref{eq:phiga2} that
$(\ga\al)^*(\nu) = \ga^*(\al^*(\nu))$.

There is quite a bit of choice for the space $E_0$.  For example, we could ask that it is the pullback via \eqref{eq:gluedom2} of a space of sections of $\Hom_J^{0,1}(\Cc|_{\De_\bP}\times M)$.  However, we do need $E$ to 
consist of sections over $\De$ in order for it to support a $\Ga$-action. Later
we will require that $E_0$ is chosen so that  the linearized Cauchy--Riemann operator is surjective; more precisely that  condition (*) in (VI) holds.  

\MS

\NI {\bf (VI): The equation.}
  The elements of the domain $U$ of a basic chart  near the point $[\Si_0, \bz_0, f_0]\in X$
have the form
$(\vec e,\ba,\bb, \vec \om,\vec \zeta, f)$, where:
\begin{itemize}\item[(i)] $\vec e\in E$, for $E$  chosen sufficiently large as specified below;
\item[(ii)]  the (small) parameters $\ba,\bb$ determine the normalized domain $\Si_{\bP,\ba,\bb}$
while the (small) parameters
$ \vec \om, \vec \zeta$ describe the positions  in $\Si_{\bP,0}\less \Nn^\eps_{\nodes}$
of the points  
$\io_{\bP,\ba,\bb}^{-1}\bigl((\bw\less \bw_{\bP})\cup (\bz\less \bz_\bP)\bigr)$ as in the discussion after 
\eqref{eq:gluedom1} and \eqref{eq:omell}; in particular,
\begin{itemize}\item   the tuple  $\ba,\bb, \vec \om,\vec \zeta$ determines a unique
fiber $\de: = [\Si_{\bP,\ba,\bb}, \bw,\bz]$ in $\Cc|_\De$ whose underlying surface
we call  either $\Si_{\bP,\ba,\bb}$ or $\Si_{\bP,\de}$;
\item we suppose $\vec \om$ so small that $w^\ell\in D^\ell_\de$ , where  $ D^\ell_\de$ is as in \eqref{eq:discsde};
\end{itemize}
\item[(iii)]  the map  $f: \Si_{\bP,\ba,\bb}\to M$ represents the class $A\in H_2(M)$ and
  is a solution of the equation
\begin{equation}\label{eq:delbar}
\pbar (f) = \la(\vec e)|_{{\rm graph } f} : = {\textstyle \sum_{\ga\in \Ga}}\; \ga^*\bigl(\la(e^\ga)\bigr)|_{{\rm graph } f}.
\end{equation}
where $\ga^*(\la)$ is defined as in \eqref{eq:ga*},with
$\de$ as in (ii).
\end{itemize}
The solution set of this equation is the zero set of the section
\begin{eqnarray}\label{eq:delbarF}
&& \quad F: E\times  B^{2(L+k-p)}\times W^{1,p}(\Cc|_\De, M) \to L^p \bigl(\Hom^{0,1}_J(\Cc|_\De\times M)\bigr),\\ \notag
&&\qquad\qquad \quad 
(\vec e,\ba,\bb,\vec \om,\vec \zeta, f)\mapsto \pbar f - \la(\vec e)|_{{\rm graph } f} \in L^p \bigl(\Hom^{0,1}_J(\Si_{\bP,\ba,\bb}\times M)\bigr),
\end{eqnarray}
where the domain $W^{1,p}(\Cc|_\De, M)$ is the Sobolev space of $(1,p)$ maps from the fibers of $\Cc|_\De$ to $M$, and 
the range consists of $L^p$ sections of the bundle considered in (V).
If we fix $\ba,\bb$ so that the domain $\Si_{\bP,\ba,\bb}$ of $f$ is fixed,  then the operator $F$ is $\Cc^1$
because  \eqref{eq:ga*} shows that $ \la(\vec e)$  is a sum of terms
\begin{equation}\label{eq:ga1}
z\mapsto \ga^*\bigl(\la(e^\ga)\bigr)|_{(z,f(z))} = \la(e^\ga)(\phi_{\ga,\de}^{-1}(z), f(z)) \circ \rd_z \phi_{\ga,\de}^{-1}
\end{equation}
where  $\phi_{\ga,\de}$ does {\it not} depend explicitly on $f$ but just on the parameters  $\ba,\bb$ (which we have fixed) and on
 $\vec \om,\vec \zeta$.  (We allow the  points  in $(\bw\cup\bz)\less(\bw_\bP\cup\bz_\bP)$  to vary freely until we have solved the equation.)  Hence\footnote
{
See \cite[\S3,4]{MW2} for the analytic details, and also Remark~\ref{rmk:anal}.
}
 the operator has a linearization $\rd F$.
Consider 
   the restriction $F_0$  of $F$
  to a neighbourhood of $\vec 0\times f_0$ in
  the space $E\times  W^{1,p} (\Si_{\bP,0})$ 
 of tuples  with the fixed domain 
  $\Si_{\bP,0}$.  Then $F_0(\vec e,f) = \pbar f - \la(\vec e)|_{{\rm graph } f}$.
 It follows that   
$$
\rd_{(\vec 0,f_0)} F_0 (\xi, \vec e)  = \rd_{f_0} (\pbar)(\xi) - \la(\vec e)|_{{\rm graph }f_0},
$$ where  
\begin{equation}\label{eq:lindbar}
\rd_{f_0}( \pbar):\Dd_0\to \prod_{\al\in T} L^p(\Hom^{0,1}_J((S^2)_{\al}, f_{0,\al}^*(\rT M))
\end{equation}
 has domain\footnote
 {
 Here we assume that the domain $\Si_0$ is connected, i.e. we identify the different components at the nodal points,
so that tangent vectors must satisfy  $\xi_\al(n_{\al\be})= 
\xi_\be(n_{\be\al}) $.  Equivalently, one could set up the equation on the disjoint union of spheres 
$\bigsqcup_{\al} (S^2)_\al$  and require that the evaluation map  $\ev_{node}$ at the nodes  is transverse to the corresponding 
 diagonal $ \bigl\{(x_{\al\be}):  \al E \be \Rightarrow x_{\al\be} = x_{\ba\al} \bigr\}\subset  M^{2K}$, where $K$ is the number of nodes, and hence the number of edges in the tree.
 For variety, we took this second approach in the discussion of condition $(*_c)$ below; see  \eqref{eq:cond*c}.
 }
\begin{equation}\label{eq:Ldelbar}
\Dd_0: = {\textstyle \bigl\{\xi_\al\in \prod_{\al\in T} W^{1,p}((S^2)_\al,f_{0,\al}^*(\rT M)) \ \big| \ \xi_\al(n_{\al\be})= 
\xi_\be(n_{\be\al}) \ \forall \al E \be \bigr\}.
}
\end{equation}  
   Therefore, the requirement on the obstruction space $E: = \prod_{\ga\in \Ga} E_0$ is as follows:
\begin{itemize}\item[$(*)$]  
the elements in the image of
$\la: E\to \C^{\infty}\bigl(\Hom_J^{0,1}(\Cc|_W\times M)\bigr)$ restrict on ${\rm graph }f_0$ to a subspace of 
$\prod_\al L^p(\Hom^{0,1}_J((S^2)_{\al\in T}, f_{0,\al}^*(\rT M))$ that covers the cokernel of 
$\rd_{f_0}(\pbar)$.
\end{itemize}
Since the regularity condition is open, if we allow the nodal parameters $\bb$ and also $\vec \om,\vec \zeta$ 
 to vary (fixing the gluing parameters $\ba=0$) 
we get transversality for each nearby domain in the given stratum of $\oMm_{0,k+L}$.  However, 
as we explain in more detail in Remark~\ref{rmk:glueXX}~(ii),
 in general we need a
 gluing theorem in order to claim that condition (*)  implies
 that the linearization  $\rd F$ is 
 surjective for all sufficiently close tuples $(\vec e,\ba,\bb,\vec \om,\vec \zeta, f)$, and that the space of solutions near the center point 
 $(0, 0,0,0,0, f_0) $ (where $\vec e, \ba,\bb,\vec \om, \vec \zeta$ all vanish) is the product of the space of solutions at $\ba=0$ with 
 a small neighborhood of $0$ in the parameter space $\ba$.
The gluing theorem in \cite{JHOL} suffices for this purpose, but it does not show that the resulting set of  solutions is a smooth manifold: 
although the solution depends smoothly on $\bb$, and on $\ba$ as long as no component goes to zero, 
it does not establish any differentiability with the respect to the gluing parameters $a_i$ as these converge to $0$. Thus 
the solution space has a weakly SS structure as in Definition~\ref{def:SS0}.
Therefore one must either work in a stratified smooth situation or prove a more powerful  gluing theorem.  

We will assume here that we have a more powerful  gluing theorem that gives at least $\Cc^1$ smoothness with respect to (a rescaled) $\ba$.  (See Remark~\ref{rmk:glueXX} below, and \cite{MWgw} for the general case.)
 We then
define the {\bf unsliced domain}
\begin{equation}\label{eq:Hat U} 
\Hat U
\end{equation}
 to be a small $\Cc^1$-open neighbourhood  of $(\vec 0,0,0,0,0,f_0)$ in $F^{-1}(0)$ where $F$
is as in  \eqref{eq:delbarF}.
 Condition (*) on $E$ implies that $\Hat U$ is a manifold of dimension 
 $\dim \Hat U = \dim E + 2L +  \ind(A)$, where $\ind(A) = 2n + 2c_1(A) + 2k -6$ as in \eqref{eq:indA}.

We now show that $\al\in \Ga$ acts on the solutions of \eqref{eq:delbar} by
\begin{equation}\label{eq:GaU} 
\al^*\bigl(\vec e,\ba,\bb,\vec \om,\vec \zeta, f\bigr)_\bP = \bigl(\al\cdot\vec e, {\ba}',\bb',\phi_{\bP,\al,\de}^{-1}(\al\cdot \vec \om), \phi_{\bP, \al,\de}^{-1}(\vec \zeta), 
f\circ \phi_{\al, \de})_\bP.
\end{equation} 
where  we added the subscript $_\bP$ to emphasize that these tuples are $\bP$-normalized,  where $\Si_{\bP,\de}: = \Si_{\bP,\ba,\bb}$,  $\Si_{\bP,\al^*(\de)} = \Si_{\bP,\ba',\bb'}$,  and $\phi_{\al,\de}:
 \Si_{\bP,\ba',\bb'} \to  \Si_{\bP,\ba,\bb}$ is as in 
\eqref{eq:phiga0}, with $\phi_{\bP,\al, \de}:\Si_0\less N^{2\eps}_{\nodes}$ being its normalization defined in \eqref{eq:gastab}, with the obvious induced action on the parameters $\vec \om,\vec \zeta$. 
To simplify the calculation we consider a point 
$v\in \Si_{\bP,\al^*(\de)}$, and write $z: = \phi_{\al,\de}(v)$. 
If $(\vec e,\ba,\bb,\vec \om,\vec \zeta, f)_\bP$ is a solution, then by \eqref{eq:ga1} we 
 have for fixed $\al\in \Ga$ that 
\begin{eqnarray*}
\la  \bigl(\al\cdot\vec e)\big|_{(v, f\circ \phi_{\al,\de}(v))} & =&
{\textstyle \sum_{\ga\in \Ga} }\; \la(e^{\al\ga})\bigl(\phi^{-1}_{\ga, \al^*(\de)}(v), f\circ \phi_{\ga, \de}(v)\bigr) \circ \rd_v\phi_{\ga, \al^*(\de)}^{-1}\\ \vspace{.07in}
&=& 
{\textstyle  \sum_{\ga\in \Ga} }\; \la(e^{\al\ga})\bigl(\phi^{-1}_{\ga,\al^*(\de)}(\phi_{\al,\de}^{-1}(z)), f(z)\bigr) \circ \rd_v\phi_{\ga, \al^*(\de)}^{-1}\\ \vspace{.07in}
&=& 
{\textstyle  \sum_{\al\ga\in \Ga} }\;\la\bigl(e^{\al\ga})(\phi^{-1}_{ \al\ga,\de}(z), f(z)\bigr) \circ \rd_z\phi_{\al\ga,\de}^{-1}\circ \rd _v\phi_{\al,\de}\\ \vspace{.07in}
&=&
\pbar  ( f) (z) \circ \rd _v\phi_{\al,\de} 
\; =\; \pbar(f\circ \phi_{\al,\de})(v).
\end{eqnarray*}
where the third equality uses \eqref{eq:phiga2} twice and the next one uses \eqref{eq:ga1}.
Hence, because $\Ga$ fixes the element $(\vec 0, 0, 0, 0,0,f_0)\in \Hat U$, we may assume that $\Hat U$ is $\Ga$-invariant.
(Replace $\Hat U$ by $\bigcap_{\ga\in \Ga} \ga^*(\Hat U)$.) Notice also that although the $\Ga$-action of \eqref{eq:GaU} looks quite complicated in normalized coordinates, the induced action on the equivalence classes 
$\bigl(\vec e,[\bn, \bw,\bz, f]\bigr)$  of 
the elements in $ \Hat U$ modulo biholomorphic reparametrizations (where we now describe the marked points by their images in the fiber $\Si_\de=[\bn, \bw,\bz]$) may be written in the notation of (II) as
\begin{equation}\label{eq:coordf1}
\ga^*\bigl(\vec e,[\bn, \bw,\bz, f]\bigr)= \bigl(\ga\cdot \vec e, [ \ga\cdot \bn, \ga\cdot \bw, \bz, f]\bigr).
\end{equation}
\MS

\NI {\bf (VII) The basic chart $\bK: = (U,E,\Ga, s,\psi)$:}

To obtain a chart from the solution space $\Hat U$ we impose  {\bf  slicing conditions} on the tuples $(\vec e,
\ba,\bb,\vec \om,\vec \zeta,f)$
in   $\Hat U$.   Because 
$\im f_0|_{D_0^\ell}$ meets $Q$ transversally in a unique point for each $\ell= 1,\dots, L$ and $\Ga = \Stab[\Si_0,\bz,f_0]$,
we may choose the small  $\Ga$-invariant  $\Cc^1$-open neighbourhood $\Hat U$ of (VI) so that it  satisfies the following
condition:
\begin{itemize}\item[(iv)] for
all $(\vec e,\ba,\bb,\vec \om,\vec \zeta,f)\in \Hat U$ and  $1\le \ell\le L$ the image $\im f\circ \io_{\bP,\ba,\bb}|_{D^\ell_0}$ 
meets $Q$ transversally in a single point; moreover, $(f\circ \io_{\bP,\ba,\bb})^{-1}(Q)
\subset \bigcup_{1\le \ell \le L} D^\ell_0$.
\end{itemize}
Now consider the following set $U'$,
\begin{equation*}
U': = \bigl\{(\vec e,\ba,\bb,\vec \om,\vec \zeta,f)_{\bP}\in \Hat U\ \big| \ f\circ  \io_{\bP,\ba,\bb}(\om^\ell)\in Q \ \;  \forall 1\le \ell \le L\bigr\}.
\end{equation*}  
 Note that:
 \begin{itemize}\item [-]  the $\Ga$ action of
 \eqref{eq:GaU}
$$ 
\al^*\bigl((\vec e,\ba,\bb,\vec \om,\vec \zeta, f)_{\bP}\bigr) = \bigl(\al\cdot\vec e, {\ba}',\bb',\phi_{\bP, \al,\de}^{-1}(\al\cdot \vec \om), \phi_{\bP,\al, \de}^{-1}(\vec \zeta), f\circ \phi_{\al, \de})_{\bP}.
$$ 
preserves the slicing conditions because, by \eqref{eq:gastab}, $\phi_{\bP,\al, \de}$ is the pullback of $\phi_{\al, \de}$ to the fixed domain $\Si_{\bP,0}$.  Thus $\phi_{\bP,\al, \de}^{-1}: \Si_{\bP,0}\to  \Si_{\bP,0} $ is close to the group element $\al^{-1}:\Si_{\bP,0}\to  \Si_{\bP,0}$ that 
permutes the components of $\vec w_0$. Hence   $\phi_{\bP, \al,\de}^{-1}\bigl((\al\cdot \vec \om)^\ell\bigr)$ represents a point that is close to $w_0^\ell$.
\item [-] 
 the slicing conditions are transverse (see  \cite[\S4.3]{MW2}) so that
 the dimension of $U'$ is $\dim E +  \ind(A)$ as required. 
 \end{itemize}
  Define
 \begin{equation}\label{eq:spsi} 
s\bigl((\vec e, \ba,\bb,\vec \om,\vec \zeta, f)_{\bP}\bigr) : = \vec e\in E,\quad \psi\bigl((\vec 0,\ba,\bb, \vec \om,\vec \zeta, f)_{\bP}\bigr) = [\Si_{\bP,\ba,\bb}, \bz,f] \in X.
\end{equation} 
This tuple $(U',E,\Ga, s,\psi)$ satisfies all the requirements for a Kuranishi chart, except possibly the footprint condition:
we need $\psi: s^{-1}(0) \to X$ to induce a homeomorphism from  the quotient $\qu{s^{-1}(0)}{\Ga}$ onto an open subset of $X$.
The forgetful map $\psi: s^{-1}(0) \to X$ factors through the quotient $\qu{s^{-1}(0)}{\Ga}$.  
Further, if  $ \psi\bigl((\vec 0,\ba,\bb, \vec \om, \vec \zeta, f)_{\bP}\bigr) = \psi\bigl((\vec 0,\ba',\bb', \vec \om', \vec \zeta', f')_{\bP}\bigr)$ there are biholomorphisms 
 \begin{equation}\label{eq:phi'} 
\phi: \Si_{\bP,\ba',\bb'}\to  
\Si_{\bP,\ba,\bb}, \quad \phi_{\bP}: = \io_{\bP,\ba,\bb}\circ \phi\circ  \io_{\bP,\ba',\bb'}:\Si_{\bP,0}\to \Si_{\bP,0},
\end{equation} 
 such that 
$
f\circ \phi = f', \phi_{\bP}^{-1}(\vec \zeta) = \vec \zeta'$ and, by condition (iv) above, a permutation $\pi:\{1,\dots,L\}\to \{1,\dots,L\}$ such that  
$\phi_{\bP}^{-1}(\om^{\pi(\ell)}) = (\om')^\ell$.  We need to see that $\pi\in \Ga$.  Without further conditions on $U'$ this may not hold.  However, since $\Ga = \Stab([\Si_0, \bz_0, f_0])$,
we can choose $U'$ so that it holds 
at  $(\vec 0,0,0, 0,0, f_0)_{\bP}$ itself and hence also on a sufficiently small neighbourhood of $(\vec 0,0,0,0, 0, f_0)_{\bP}$ by continuity.
  Hence we may put a final condition on the domain $U$.
\begin{itemize}\item[(v)] for all $(\vec e,\ba,\bb,\vec \om,\vec \zeta,f)_{\bP}\in U$ 
and permutations $\pi:\{1,\dots,L\}\to \{1,\dots,L\}$, there is a tuple
$(\vec e,\ba',\bb',\vec \om',\vec \zeta',f')_{\bP}\in U$ and maps $\phi,\phi_{\bP}$ as in \eqref{eq:phi'} such that
$$
f\circ \phi = f', \quad \phi_{\bP}^{-1}(\vec \zeta) = \vec \zeta', \quad \phi_{\bP}^{-1}(\om^{\pi(\ell)}) = (\om')^{\ell}, 1\le\ell\le L,
$$
if and only if $\pi\in \Ga$.
\end{itemize}
With this condition the footprint map is injective.
 It requires somewhat more work to show that its image $F$ is open in $X$.  The proof in the non-nodal situation may be found in 
 \cite[Proposition~4.1.4]{MW2}. 
  In general, this is a consequence of the gluing theorem. 
 
 \begin{defn}  We define the {\bf chart } $\bK: = (U,E,\Ga, s,\psi)$ with $\Ga = \Stab([\Si_0,\bz_0, f_0]$ and $E$ as in (IV) by requiring that $U$, satisfying (v), be constructed as above from a set $\Hat U$ 
 that satisfies (iv), and then defining $s, \psi$ as in \eqref{eq:spsi}.
\end{defn}
This construction 
 depends on the following choices:
\begin{itemize}\item a {\bf center} $\tau: = [\Si_0,\bz_0,f_0]$ used to fix the parametrization;
\item  a  {\bf slicing manifold} $Q$ that is transverse to $\im f_0$,  disjoint from $f_0(\bz_0)$, orientable, and 
chosen so that the $k$ points $\bz_0$ together
 with the $L$ points in $f_0^{-1}(Q)$ stabilize the domain of $f_0$;
 \item  a {\bf normalization}  $\bP$ for $\Si_0$ as in \eqref{eq:bP} which fixes the parametrization of $\Si_{\bP,0}$;
 \item a {\bf disc structure }
$\bigsqcup_{1\le \ell\le L} D^\ell\subset \Si_0\less \Nn^{\eps}_{\nodes}$ consisting of small disjoint neighbourhoods $D^\ell_0\ni w_0^\ell$ of the $L$ points in $f_0^{-1}(Q)$ that are  averaged over $\Ga$ so that the $\Ga$-action permutes them;
 and 
 \item an {\bf obstruction space} $E$ and $\Ga$-invariant map $\la: E\to \Cc^{\infty}\bigl(\Hom^{0,1}_J(\Cc|_\De\times M)\bigr)$ as in (V), where 
 $\De$ is a  small neighbourhood of $[\Si_0, \bw_0, \bz_0]$ in $\oMm_{0,k+L}$.  
  \end{itemize}

The next remark 
explains what part of the data used to define $U$ can be seen in the footprint $F$ and so carries over
 to the coordinates of a different chart.

\begin{rmk}\label{rmk:lifts} \rm The footprint map $\psi: s^{-1}(0)\to F$ has the form
$(0,\ba,\bb,\vec \om,\vec \zeta,f)_{\bP}\mapsto [\Si_{\ba,\bb},\bz,f]$ and, as we saw above, quotients out by the action of $\Ga$.
Conversely, each element $\tau: = [\Si_{\tau},\bz,f] \in F$ has $|\Ga|$ potentially different lifts to $s^{-1}(0)$ that may be described as follows.
Add the set of points $\bw: = f^{-1}(Q)$ to the domain, giving them one of the $|\Ga|$ labellings 
that occur in the elements of $\psi^{-1}(\tau)\subset s^{-1}(0)$.   Note that these labellings are permuted by  the  $\Ga$ action as in \eqref{eq:coordf1}. One can check that
two  labellings give rise to the same element in $s^{-1}(0)$ if, in the notation of \eqref{eq:GaU}, 
we have $f =  f\circ \phi_{\al,\de}$.
For short we will say that these $|\Ga|$ labellings of the elements in $\bw$ are {\bf admissible}.
By condition (v) above, the domains $\de\in \De\subset \oMm_{k+L}$ of the elements in $\psi^{-1}(\tau)$ form a $\Ga$-orbit,
so that we can identify the domain $\Si_{\tau}$ 
 of $\tau = [\Si_{\tau},\bz,f] \in F$
with the quotient of  $\Si_{\bP,\de}$ by the $\Ga$-action.  Therefore, 
 because we chose the discs $(D^\ell_\de)_\ell \subset \Si_{\bP,\de}$ in \eqref{eq:discsde} so that they are permuted by
 the $\Ga$ action,
the  domain $\Si_{\tau}$   contains a set of $|\Ga|$ disjoint discs $\bigcup_\ell D^\ell_\de$
 with a labelling that is well defined modulo the $\Ga$ action.  Thus  the domain $\Si_{\tau}$ of the element $\tau= [\Si_{\tau},\bz,f] $ in $X$ is provided with a well defined set of discs that have a set of $|\Ga|$ admissible labelings.  Since this structure is well defined by the chart $\bK$ for the elements in its footprint $F$, it can (and will) be used when we construct transition charts.
$\hfill\er$  \end{rmk} 

\begin{rmk}\label{rmk:stab1}\rm  In \cite{MW2} we defined charts whose domains 
are sets of normalized maps that satisfy some equation.  This approach meant that we could not define arbitrary transition charts, but had to assume that the obstruction spaces were suitably transverse; see   Sum Condition II$'$ in \cite[Theorem~4.3.1]{MW2}.
However, we later realized that such conditions are unnecessary if, as above,  we define the elements of the domain $U$ to be tuples
that contain the elements $\vec e\in E$ as one component.  This is explained in detail in \cite{MWgw}, where we call equations such as
\eqref{eq:delbar} a {\bf  decoupled Fredholm stabilization}.   (Here \lq\lq decoupled" refers to the fact that all marked points can vary freely since we have not yet imposed slicing conditions.)   The other\footnote
{
In fact, \cite{FT} also adopts this stabilization method to avoid having to choose generic basic charts.}
 main difference with the construction by Fukaya et al.  
is that, via the notion of admissible labellings, we can keep track of the labelling of all the added marked points in a transition chart, which allows us to construct transition charts with large footprints.   Pardon independently uses the same approach as ours, but 
constructs his charts much less explicitly. \hfill$\er$
\end{rmk}

The following remark explains the key analytic issues in a little more detail.

\begin{rmk}\label{rmk:anal} {\bf (Some comments on the analysis)} \rm (i) First note that 
\begin{align} \label{action}
\Theta: \PSL(2,\C) \times W^{k,p}(S^2,M) \to W^{k,p}(S^2,M), \quad
(\gamma,f) \mapsto f\circ\gamma
\end{align}
does not even have directional derivatives at maps $f_0\in W^{k,p}(S^2,M) \less W^{k+1,p}(S^2,M)$ since the differential\footnote{
Here the tangent space to the automorphism group $\rT_{\rm Id}G_\infty \subset\Ga(\rT S^2)$ is the finite dimensional space of holomorphic (and hence smooth) vector fields $X:S^2 \to \rT S^2$ that vanish at $\infty\in S^2$.
}
\begin{align}\label{eq:actiond}
{\rm D}\Theta ({\rm Id},f_0) : \;
\rT_{\rm Id}\PSL(2,\C)
\times W^{k,p}(S^2, f_0^*\rT M) &\;\longrightarrow\; W^{k,p}(S^2, f_0^*\rT M) \\
 {(X,\xi)} \qquad\qquad\qquad\;\;\; &\;\;\mapsto\; \;\;\;\;   \xi + \rd f_0 \circ X \nonumber
\end{align}
is well defined only if $\rd f_0$ is of class $W^{k,p}$.
In fact, even at smooth points $f_0\in{\mathcal C}^\infty(S^2,M)$, this ``differential'' only provides directional derivatives of \eqref{action}, for which the rate of linear approximation depends noncontinuously on the direction. Hence \eqref{action} is not classically differentiable at any point.
Hence if we impose the slicing conditions  in (iv) above {\it before} we solve the equation $F=0$ where $F$ is as in \eqref{eq:delbarF}, the 
expression~\eqref{eq:ga1} for $\la(\vec e)$ will involve a  group element $\phi_f$ that depends explicitly on $f$ and hence will not be differentiable.  
On the other hand the slicing conditions  in (iv) are differentiable.  More precisely,  the implicit function theorem implies that
 the map
$$
 W^{k,p}(S^2,M)\to S^2, \quad f \mapsto f^{-1}(Q) .
$$
is $\Cc^\ell$-differentiable if $k>\ell + 2/p$ such that $W^{k,p}(S^2)\subset \Cc^\ell(S^2)$.  For more detail on these points see \cite[\S3]{MW2}.\MS

\NI (ii)
Similar issues arise when changing coordinates and constructing transition charts, and lead to our choosing obstruction bundles
that are geometrically constructed by pulling back finite rank subspaces
\begin{equation}\label{graphsp}
E\subset \Cc^\infty(\Hom^{0,1}_J(S^2,M))
\end{equation}
of the space of smooth sections of the bundle over $S^2\times M$ of $(j,J)$-antilinear maps $\rT S^2 \to \rT M$.
This point is discussed in detail in \cite[\S4]{MW2}.\MS

\NI (iii)
This choice of obstruction bundle also allows us to apply gluing theorems such as that in \cite{JHOL} via the graph construction. The point here is that
\cite{JHOL} proves a gluing result\footnote
{
\cite{JHOL}  only considers the case when $\Si$  has one node, but since the estimates are local it is easy to generalize to more nodes.}
 for the linearization of the homogeneous operator $f\mapsto \pbar f$ in a neighbourhood of a map 
$f: \Si: = \Si_{\bP,\ba,\bb}\to M$ 
with fixed domain $\Si$ and surjective linearization.  Thus it only applies when we can take  $E = 0$.  However
the form of the inhomogeneous term in \eqref{eq:delbarF} allows us to 
convert this equation  into a homogeneous equation using the graph construction. 
Namely, consider the fibration $\pr: V = E\times \Si \times M \to E$ and provide the fiber $\vec e \times  \Si \times M$ with the almost complex structure $\TJ_{\vec e} : \rT_z \Si \oplus \rT_x M \to \rT_z\Si \oplus \rT_x M$ that restricts  to the standard complex structure on $ \rT_z \Si$ and to $J$ on $ \rT_x M$ but that has an extra component in $\Hom(\rT_z \Si, \rT_x M)$ given by $-2 J \la(\vec e)(z,x)$.
Then one can check that $f: \Si\to M$ satisfies  \eqref{eq:delbarF} for a given $\vec e\in E$  precisely if  ${\rm graph }f: z\mapsto (z,f(z))$ is 
$(\TJ_{\vec e})$-holomorphic. 
Now consider  the  family of almost complex structures $\TJ = (\TJ_{\vec e})$ in the fibers of the bundle  $E\times \Si \times M \to E$,
and look at the corresponding Cauchy-Riemann equation for maps such as ${\rm graph }f$ into these fibers.  
Since  the derivatives 
$\p_{\vec e} \TJ$ of  $\TJ$  with respect to directions in $E$ surject onto $E$, condition (*)  for $f:\Si\to M$ implies that the corresponding linearized operator 
at ${\rm graph }f$ is surjective.  Compare the discussion of $J$-holomorphic spheres in the fiber in  \cite[Ch.~6.7]{JHOL}.
\hfill$\er$
\end{rmk}

\begin{rmk}\label{rmk:glueXX} \rm {\bf(More on gluing)}
(i)  The geometric gluing process for a domain with one nodal point and (small) gluing parameter $\ba\in \C$ involves cutting out discs of radius $\sqrt{|\ba|}$ from each sphere (with respect to the round metric) and identifying their boundaries with a twist.  The resulting sphere has a natural smoothing: see \cite[\S10.2]{JHOL}.  As explained in  \cite[\S10.9]{JHOL}, with this choice the parameter $\ba$ can be interpreted as the cross ratio
of the images in the glued domain of four (appropriately ordered) points in standard position on the original spheres.  (In \cite{JHOL} the nodal points are put at $\infty$ and $0$, and then we consider the images in the glued domain of the points $0,1$ on the first sphere and $1,\infty$ on the second.)  The resulting gluing map (described in more detail in part (ii) below) is weakly stratified smooth in the sense of Definition~\ref{def:SS1}, and varies smoothly with respect to the gluing parameter $\ba$ in the domain $\ba\in \C\less \{0\}$.  However, to get differentiability with respect to $\ba$ at $\ba=0$  one must rescale $\ba$, replacing it by a variable $\Tilde\ba: = 
\phi(|\ba|)\frac{\ba}{|\ba|}$, for an appropriate function $\phi$ called a {\it gluing profile}.  For more details see for example \cite{Castell1}.
\MS

\NI (ii)  The gluing map is a local isomorphism from a neighbourhood of the center point in $U|_{\{\ba = 0\}}\times B(\ba)$ to 
$U$, where $B(\ba)$ is a neighbourhood of $0$ in the space of gluing parameters $\ba$.  We need to verify its smoothness properties when it is written in terms of the product coordinates
$\bigl((\vec e, \bb,\om,\zeta, f)_{\bP},\ba\bigr)$ on its domain and the coordinates $(\vec e,\ba, \bb,\om,\zeta, f)_{\bP}$ on the target $U$.
 We will construct this gluing map
as the composite of the geometric gluing constructed in \cite{JHOL} with a renormalization. In its turn, the geometric gluing map is constructed in \cite[Ch~10]{JHOL} by a 
two step process.  First one considers the pregluing $\oplus_{\ba,\bb}f$ that is  made using cut off functions from the restriction of  $f$ to the domain $\Si_{\bP,\ba,\bb}$ formed as in \eqref{eq:gluedom}, and then one uses a Newton process to adjust the pregluing
to a  solution of equation~\eqref{eq:delbar}  of the form 
$$
gl'_{\vec e,\ba,\bb}(f): = \Exp_{\oplus_{\ba,\bb}f}(\xi),
$$
 where $\xi$ is a vector field along the image of  $\oplus_{\ba,\bb}f$, that lies in some chosen complement  to the kernel of the linearized Cauchy--Riemann operator at
$\oplus_{\ba,\bb}f$.   Finally,  the coordinates $\om': = gl'_{\vec e,\ba,\bb,f}(\om)$ are defined so that the corresponding points $\bw'$ on 
the glued domain $\Si_{\bP,\ba,\bb}$ satisfy the slicing conditions with respect to the map $gl'_{\vec e,\ba,\bb}(f)$.  
Hence the geometric glung map takes the form:
\begin{align*}
\bigl((\vec e, \bb, \vec \om,\vec \zeta, f )_\bP, \ba\bigr) &\ \mapsto \bigl(\vec e, [\Si_{\bP,\ba,\bb}, \bw', \bz':=\oplus_{\ba,\bb} \bz, gl'_{\vec e,\ba,\bb}(f)]\bigr),
\end{align*}
where $\bw'$ depends on all the variables $\vec e,\ba,\bb,f$.
The second step is to normalize the resulting tuple.
  Thus,  in normalized coordinates the gluing map takes the form 
\begin{align}\label{eq:gluemap}
\bigl((\vec e, \bb, \vec \om,\vec \zeta, f )_\bP, \ba\bigr) &\ \mapsto \bigl(\vec e, [\Si_{\bP,\ba,\bb}, \bw', \bz':=\oplus_{\ba,\bb} \bz, gl'_{\vec e,\ba,\bb}(f)]\bigr)\\ \notag
&\ \stackrel{renorm}{ \mapsto} \bigl(\vec e, \ba',\bb',  gl_{\vec e,\ba,\bb,\vec \om, f}(\vec\om), gl_{\vec e,\ba,\bb,\vec \om, f}(\vec\zeta), gl_{\vec e,\ba,\bb}(f)\bigr)_\bP,
\end{align}
where the parameters $\ba',\bb'$ are functions of $\ba$, $\bb$ and the positions of the points $\bw',\bz$ that lie in $\im \bP$. 
\hfill$\er$ \end{rmk}

\NI {\bf (VIII): Change of coordinates:}  Before discussing transition charts, we consider the effect on a single chart  of changing the  normalization, center and slicing conditions.   These formulas will be needed to understand the coordinate changes in the atlas, but are not necessary for the construction of a transition chart.  Hence this subsection can be omitted at first reading.
 \MS
 
 \NI {\bf  [a]  Change of normalization:}   
 
 When defining a chart, the center and slicing manifold are needed to
 set up the framework, i.e. to specify the added marked points $\bw$ and hence the neighborhood $\De$ of the stabilized domain
 $[\Si_0,\bw_0,\bz_0]$ in $\oMm_{0,k+L}$.  
The  normalization is then used
in order  to write down the equation \eqref{eq:delbar} in coordinates so that one can understand its analytic properties.
However, the equation itself makes sense as a section of a bundle over the space 
$\Map^\infty (\Cc|_\De; M)$  of $C^\infty$ maps from the fibers of the universal curve to $M$.  
Therefore the following holds.

  \begin{itemize}\item{\it  
 If we fix $\tau$ and $Q$ and consider two possible normalizations $\bP_1,\bP_2$, then any chart $U_{\bP_1}$ constructed using 
$\bP_1$ is isomorphic to some chart $U_{\bP_2}$ constructed using $\bP_2$. In particular its footprint will not change.}
  \end{itemize}
  
To see this, let $\phi_{\bP_2,\bP_1}: \Si_{\bP_2,0}\to \Si_{\bP_1,0}$ be the unique biholomorphism that 
  takes the points  $\bn_0, \bw_0, \bz_0$ in $\Si_{\bP_2,0}$ with labels in $\im(\bP_1)$  to their standard positions in  
  $\Si_{\bP_1,0}$.  Then for each $\de\in \De$ 
  the fiber $\Si_\de$ has two normalizations $\Si_{\bP_i,\de} = \Si_{\bP_i,\ba_i,\bb_i}, i=1,2$,
where $  \ba_i,\bb_i$ are determined by appropriate cross ratios of the marked points $\bw,\bz$ in $\Si_\de$; cf. Remark~\ref{rmk:norm}~(i).
The change of normalization is given by a biholomorphism $  \phi_{\bP_2,\bP_1, \de}:\Si_{\bP_2,\ba_2,\bb_2}\to \Si_{\bP_1,\ba_1,\bb_1}$
that satisfies the formula
  $$
  \phi_{\bP_2,\bP_1, \de}:= \io_{\bP_1,\ba_1,\bb_1}\circ \phi_{\bP_2,\bP_1}\circ  \io_{\bP_2,\ba_2,\bb_2}^{-1}: \; \Si_{\bP_2,\ba_2,\bb_2}\to \Si_{\bP_1,\ba_1,\bb_1},
  $$
  wherever the RHS is defined, and 
in particular, at the points $\bw$.  Hence 
there is an induced map $U_{\bP_1}\to U_{\bP_2}$ of the form
\begin{eqnarray}\label{eq:changeU} 
U_{\bP_1}\ni  (\vec e,\ba_1,\bb_1, \vec \om,\vec \zeta,f)\;\mapsto \phi^*(\vec e,\ba_1,\bb_1, \vec \om,\vec \zeta,f)
\\ \notag
=\; \bigl(\vec e,\ba_2,\bb_2,\phi^{-1}(\vec \om),\phi^{-1}(\vec \zeta),f\circ  \phi_\de\bigr)\in U_{\bP_2}, 
\end{eqnarray} 
where $\phi: = \phi_{\bP_2,\bP_1}$ and  $ \phi_{\de} =   \phi_{\bP_2,\bP_1, \de}.$  Note that
$\phi_{\bP_2, \bP_1,\de} $ varies as $\de= [\bn, \bw,\bz]$ varies, and that the elements in $\vec e$ are not affected by the action.  
\MS

  \NI {\bf  [b]  Change of center:} 
  
   Now suppose 
  given a chart $U_{\bP_1}$ constructed using  $\tau_1, Q, \bP_1$ and with footprint $F$, and 
  that we change the center from 
  $\tau_1: = [\Si_{01},\bz_{01},f_{01}]$ to $ \tau_2: = [\Si_{02},\bz_{02},f_{02}] \in F$,
  but keep the same slicing manifold and the same normalization (as far as possible).  
    Thus we choose a lift $
  (\vec 0,\ba_{02},\bb_{02},\bw_{02}, \bz_{02}, f_{02})$ of  $[\Si_2,\bz_2,f_2]$ to $U_{\bP_1}$, and
 take a normalization $\bP_2$ of this stable map at  $\de_2: = [\Si_{02},\bw_{02}, \bz_{02}] $
  that includes all the nodal points in $\bP_1$ that have not been glued (suppose there are  $p_n-s$ of these), together with an appropriate subset of the points in $\bw_{\bP_1}, \bz_{\bP_1}$, if necessary assigned to 
different  points $0,1,\infty$ in $S^2$; see  the example in Figure~\ref{fig:5}.  
  If the stratum $X_{S_2}$ containing $\tau_2$ is strictly larger than $X_{S_1}$ (i.e. $\tau_2$ has fewer nodes than $\tau_1$), then we cannot hope to represent the whole footprint $F$ in the coordinates based at $\tau_2$.
  However we claim:
  
  \begin{itemize}\item{\it   there is a $\Ga$-invariant neighbourhood
  $U_{\bP_1}|_{\De_2}$ of $\psi^{-1}(F\cap X_{\ge S_2})$ in $U_{\bP_1}$
  that can be represented in  terms of the normalization $\bP_2$.}
  \end{itemize}

To prove the claim, let us suppose that
 $m$ of the gluing parameters $\ba_{02}$ are nonzero, say $a^{K-m+1}_{02}, \dots, a^{K}_{02}$, so that $\Si_2$ has $K-m$ nodes.
 Then
 the   \lq\lq extra" nodes and marked points in $\bP_1$ (namely those in 
  $(\bn_{\bP_1}\cup \bw_{\bP_1}\cup \bz_{\bP_1}) \less (\bn_{\bP_2}\cup \bw_{\bP_2} \cup  \bz_{\bP_2})$ 
  can 
  now move freely;  cf.   Figure~\ref{fig:5} 
   Consider the parametrization
 $$
 \io_{\bP_1} : 
\bigl(\Si_{(\bP_1,\de_1),0}\less \Nn^\eps_\nodes\bigr)\times B^{6K-2p_n}\times B^{2(k+L-p)}\; \to \;\Cc|_{\De}
$$
of \eqref{eq:gluedom1}  near $\de_{01}: = [\Si_{01},\bw_{01},\bz_{01}]$, 
 and let $\De_2\subset \De_1$ be a neighbourhood of $\de_2$ that contains the domains of the elements in
$\psi^{-1}(F\cap X_{\ge S_2})$.
There is a similar  parametrization
 $$
 \io_{\bP_2} :
\bigl(\Si_{(\bP_2,\de_2),0}\less \Nn^\eps_\nodes\bigr)\times B^{6K-2(p_n-s)}\times B^{2(k+L-p+ p^e)}\; \to \;\Cc|_{\De_2},
$$
and the composite  $ \io_{\bP_2}^{-1}\circ \io_{\bP_1} |_{\De_2}$ on the level of domains has the form
\begin{align}\label{eq:chcenter} 
&\quad \io_{\bP_2}^{-1}\circ \io_{\bP_1}|_{\De_2}: 
(\ba_1,\bb_1,\vec \om_1, \vec \zeta_1)\mapsto (\ba_2,\bb_2,\vec \om_2, \vec \zeta_2).
\end{align}
Notice here that the lengths of the individual tuples $\ba_i, i=1,2$,   $\bb_i, i=1,2$, and so on are different, although the sum of these lengths  are the same on the two sides.
This is well defined  over $\psi^{-1}(F\cap X_{\ge S})$ because the map $\io_{\bP_i,\ba_i,\bb_i}$ for $i=1,2$  takes the points with coordinates
$\bb_i,\vec \om_i, \vec \zeta_i$ to  the same marked points $\bw,\bz$ in the fiber $\Si_{\bP_1,\ba_1,\bb_1} = \Si_{\bP_2,\ba_2,\bb_2}$.
Hence this map is well defined over $\De_2$ for  sufficiently small $\De_2$.

Define
$$
U_{\bP_1}|_{\De_2}:= \bigl\{(\vec e, \ba_{1},\bb_{1}, \vec \om,\vec\zeta, f)_{\bP_1}\in U \ | \ [\Si_{\bP_1,\ba_{1},\bb_{1}},  \bw,\bz]\in \De_2\bigr\},
$$
 and denote by $\io_{\bP_1}|_{\De_2}$ the restriction of $\io_{\bP_1}$ to the domains occuring in $U_{\bP_1}|_{\De_2}$.
Given a formula such as \eqref{eq:chcenter} for
the coordinate change on the parametrization of domains, we can derive a formula analogous to
\eqref{eq:changeU} for the effect on the elements of $U_{\bP_1}|_{\De_2}$ of this change of center, namely
\begin{equation}\label{eq:changeU1} 
U_{\bP_1}|_{\De_2}\ni  (\vec e,\ba_1,\bb_1, \vec \om_1,\vec \zeta_1,f)_{\bP_1}\;\mapsto  \bigl(\vec e,\ba_2,\bb_2, \vec \om_2,\vec \zeta_2,f\circ  \phi_\de\bigr)_{\bP_2}\in U_{\bP_2}, 
\end{equation} 
where $\ba_2,\bb_2, \vec \om_2,\vec \zeta_2,$  are 
as in \eqref{eq:chcenter},
and where 
$
\phi_\de :
\Si_{(\bP_2,\de_{02}); \de}\to \Si_{(\bP_1,\de_{01});\de}
$
is the biholomorphic map that equals  
$ \io_{\bP_1, \ba_1,\bb_1}\circ \bigl( \io_{\bP_2}^{-1}\circ \io_{\bP_1}|_{\De_2}\bigr)^{-1}\circ  \io_{\bP_2, \ba_2,\bb_2}^{-1}$
wherever this is defined.

Note the following:
 \begin{itemlist}\item
 The map in \eqref{eq:changeU1} has the same form as that in  \eqref{eq:changeU} but with different $\phi, \phi_\de$.  Hence a map that changes both the center and normalization also has this form.

 \item
The resulting  chart with domain $U_{\bP_2}$ may not be not minimal in the sense of Definition~\ref{def:chart2}
 since $\Ga$ may now be larger than $\Stab(\tau_2)$.
\item
The composite of two such maps that change the center first from $\de_0$ to $\de_1$ and then from $\de_1$ to $\de_2$ equals the direct coordinate change from $\de_0$ to $\de_2$.
\end{itemlist}

  \MS
 
 \NI {\bf  [c]  Change of slicing manifold:}   Let us return to considering the 
chart $U$ with center $\de_0= [\Si_0,\bw_0,\bz_0]$ as in defined in (VII), and 
suppose that we change the slicing manifold from $Q_1$ to $Q_2$.  Let is first consider the case in which
 $Q_2$ is so close to $Q_1$  that the new set of slicing points $\bw_2$ lies in the same set of discs $(D^\ell)_{\ell}$ as $\bw_1$.  Then there is a natural correspondence between the (ordered) tuples $\bw_1$ and $\bw_2$ so
  that we can use the same normalization $\bP$ for both  $\de_1: = [\bn,\bw_1,\bz]$  and  $\de_2: = [\bn,\bw_2,\bz]$. 
Then if $\de_1$ is sufficiently close to the center $\de_0$  the element $\de_2$ will also lie in $\De$.  Hence  the same obstruction space $E$ 
can be used for both charts, and
the corresponding change of coordinates $U_1\to U_2$ 
  is given by replacing  the map $\phi_{\bP_2,\bP_1}$ in the above formulas by
  the map $\Si_{\bP}\to  \Si_{\bP}$ that fixes the points in $\bn_\bP, \bz_\bP$ (that are in standard positions) and takes the points in $\bw_2\subset \Si_{\bP}$ with labels in $\im(\bP)$ 
  to their standard positions.

However, if the new slicing manifold $Q_2$ is sufficiently different  from $Q_1$, there need be no obvious relation between 
 the tuples $\bw_1$ and $\bw_2$.  For example, suppose that 
 the chart is centered on $[\Si_0,\bz_0,f_0]$ where 
 $\Si_0=S^2$, $\bz$ is the single point $\infty$  and $f_0: S^2\to M$ is a double cover that factors through the map $z\mapsto z^2$.  Then the isotropy group is $\Ga = \Z/2\Z$, and we need to add two points to stabilize the domain.  We might choose 
   $Q_1$ to have two components, one transverse to $\im (f_0)$ at $f_0(1) = f_0(-1)$ and the other transverse at 
  $f_0(2) = f_0(-2)$ so that $\bw_1 = (1,-1,2,-2)$, while 
 $Q_2$ might have a single component that is transverse to $\im (f_0)$ at $f_0(3) = f_0(-3)$, so that $\bw= (3,-3)$. 
   Since the obstruction bundle for $U_1$ might depend on all four entries in $\bw_1$,
  while that for $U_2$ depends only on $\bw_2$ there is no obvious relation between the  obstruction spaces.
  Therefore  there is no direct coordinate change from $U_1$ to $U_2$, and the easiest way to  relate  them is 
  via  transition charts.

\MS
\NI {\bf (IX): Constructing the sum of two charts:}
Suppose that we are given two sets of data $\bigl(\tau_i: = [\Si_i,\bz_i,f_i], Q_i,(D^\ell_i)_{1\le \ell\le L_i}, \bP_i,E_i, \la_i\bigr)_{i=1,2}$ that define charts $\bK_i$  with overlapping footprints $F_i$.
Then we aim to define a transition chart with 
\begin{itemize}\item[-]
footprint $F_{12} = F_1\cap F_2$, 
\item[-] obstruction space $E_{12}: = E_1\times E_2$, 
\item[-] group $\Ga_{12}: = \Ga_1\times \Ga_2$, and 
\item[-] domain $U_{12}$ of dimension $\dim (U_{12}) - \dim(E_{12}) = \dim U_i -\dim E_i = \ind(A)$, 
so that $\dim U_{12} = \dim U_1 + \dim E_2 = \dim U_2 + \dim E_1$.
\end{itemize}
In this paragraph we consider the case when the center of one chart is 
contained in the footprint of the other: say $\tau_2\in F_1$ which implies $\tau_2\in F_{12}$.  Then we set up the 
transition chart
using the coordinates provided by $\tau_2$ and 
$ \bP_2$.  
Notice that  the central fiber $\Si_{2,\bP_2}$ contains two sets of discs,
the standard discs  $(D^\ell_2)_{1\le\ell\le L_2}$  for the chart $U_2$  as well as  the image
 $(D^\ell_{\tau_2,1})_{1\le\ell\le L_1}$ in the fiber over $\tau_2$
 of the standard discs $(D^\ell_1)_{1\le\ell\le L_1}$   for the chart $U_1$.  
 By Remark~\ref{rmk:lifts} the labelling of the latter set of discs $\bigcup_\ell D^\ell_{\tau_2,1}$ is only well defined modulo the action of $\Ga_1$,
 which explains the different conditions on the tuples $\vec \om_1,\vec \om_2$ below.

With $L: = L_1+L_2$, $E_{12}: = E_1\times E_2$, and $p_{2,n}$ equal to the number of nodal points in $\bP_2$,
we set up an equation as in (VI) on tuples of the form
 \begin{align}\label{eq:ww12p}  \Ww_{12,\bP_2}: & = 
\left\{(\vec e_1,\vec e_2,\ba,\bb, \vec \om_1,\vec \om_2,\vec \zeta,f)_{\bP_2} \in \right.\\ \notag
& \qquad \qquad  
 E_{12} \times B^{6K-2p_{2,n}}\times B^{2(k+L)}\times W^{1,p}(\Si_{\bP_2,\ba,\bb}, M) \;  \mbox{ where } \\  \notag
& \qquad  \qquad \begin{array}{lll} &\vec e_i\in E_i,\quad \ba,\bb\in B^{6K-2p_n},& f:\Si_{\bP_2,\ba,\bb}\to M, \; \\
& 
\exists \ga\in \Ga_1 : \om_1^{\ga(\ell)}\in D_{\tau_2,1}^\ell,  \;1\le \ell\le L_1,
& \om_2^\ell\in D_2^\ell, 1\le \ell\le L_2.\end{array} 
\Bigl. \Bigr\}
\end{align} 
 Somewhat hidden in this notation
is the fact that the tuple $\vec \om_1$ contains $L_1$ elements since all the points $\bw_1$ can vary, while the number of varying elements in
the tuples 
$\vec \om_2$ and $\vec \zeta$ is $\#(\bw_2\less \bw_{2,\bP_2})$ and $\#(\bz\less \bz_{\bP_2})$. 

 In this notation, the unsliced domain $\Hat U_{12, \bP_2}$ is a suitable open subset
of the following solution space:
\begin{align}\label{eq:lasumI}
& \ \Hat U_{12, \bP_2} \subset  \Bigl\{ (\vec e_1,\vec e_2,\ba,\bb, \vec \om_1,\vec \om_2, \vec \zeta,f)_{\bP_2} \in \Ww_{12,\bP_2} \ \big| \ \vec e_i\in E_i, \; 
\\ \notag
&\ \qquad\ \pbar \bigl((\vec e_1,\vec e_2,\ba,\bb, \vec \om_1,\vec \om_2, \vec \zeta,f)_{\bP_2}\bigr) =
{\textstyle \sum_{i=1,2}\sum_{\ga\in \Ga_i} \ga^*\bigl(\la_i(e_i^\ga)\bigr)|_{{\rm graph } f}\Bigr\},
}
\end{align}
with
\begin{equation*}
  \ga^*\bigl(\la_i(e_i^\ga)\bigr)|_{(z,f(z))}: = \la_i(e_i^\ga)(\phi_{i,\ga}^{-1}(z), f(z)) \circ \rd_z \phi_{i,\ga}^{-1},\;\ \mbox{ for } \ga \in \Ga_i,
\end{equation*}
where $\phi_{i,\ga}: = \phi_{\ga,\de_i}$ as in  as in \eqref{eq:ga1} and \eqref{eq:phiga0},  for 
$$
\de_i: = [\Si_{\bP_2,\ba,\bb}, \bw_i,\bz_i],\quad i=1,2,
$$
with (as usual) $\bw_i = \io_{\bP_2,\ba,\bb}(\vec \om_i)$,  and $\bz = \io_{\bP_2,\ba,\bb}(\vec \zeta_i)$.
This equation has the same form as \eqref{eq:delbar}.  Therefore because $E_1,E_2$  and hence $E_1\times E_2$ satisfy (*) for all lifts of elements in the footprint $F_{12}\subset F_2$ to $ \Ww_{12,\bP_2}$,  we can choose the open set $ \Hat U_{12, \bP_2} $ so that it is a smooth manifold that contains all such lifts.  We can also choose $\Hat U_{12,\bP_2}$ to be invariant under the action of 
 the group $\Ga_{12}: =\Ga_1\times \Ga_2$.  Here, since we normalize with respect to the chart $\bK_2$, the action of $\ga_1\in \Ga_1$ is simply by permutation:
\begin{equation}\label{eq:perm}
\ga_1^*\bigl((\vec e_1,\vec e_2,\ba,\bb, \vec \om_1,\vec \om_2, \vec \zeta,f)_{\bP_2}\bigr)=  (\ga_1\cdot \vec e_1,\vec e_2,\ba,\bb, \ga_1\cdot\vec \om_1, \vec \om_2, \vec \zeta,f)_{\bP_2}.
\end{equation}
However the elements of $\Ga_2$ act by permutation plus renormalization:
\begin{equation}\label{eq:permn}
\ga_2^*\bigl((\vec e_1,\vec e_2,\ba,\bb,\vec \om_1,\vec \om_2, \vec \zeta,f)_{\bP_2}\bigr)=  \bigl( \vec e_1,\ga_2\cdot\vec e_2,\ba',\bb',\phi_{\ga_2}^{-1}( \vec \om_1),\phi_{\ga_2}^{-1}(\ga_2\cdot\om_2),\phi_{\ga_2}^{-1} \vec\zeta,f\circ \phi_{\ga_2,\de_2}\bigr)_{\bP_2},
\end{equation}
where $\phi_{\ga_2}: = \phi_{\bP_2, \ga_2,\de_2} $ 
as in \eqref{eq:gastab}. 
This difference in action is compatible with the different conditions on $\vec \om_1,\vec \om_2$ in the definition of $\Ww_{12,\bP_2}$.

  We now define $U_{12,\bP_2}$ to be the subset of $\Hat U_{12, \bP_2}$ on which the  slicing conditions are satisfied.
  Thus
  \begin{equation}\label{eq:U12}
 U_{12,\bP_2} \subset  \bigl\{ \bigl(\vec e_1,\vec e_2,\ba,\bb,\vec \om_1,\vec \om_2, \vec\zeta,f\bigr)_{\bP_2}\in \Hat U_{12,\bP_2}\ | \io_{\bP,\ba,\bb}(\vec \om_i)\in f^{-1}(Q_i),\; i=1,2\bigr\}.
\end{equation}
Then
 $U_{12,\bP_2}$ is $\Ga_{12}$-invariant (since the slicing conditions are preserved by this action),
and the zero set of $s_{12}:  (\vec e_1,\vec e_2,\ba,\bb,\vec \om_1,\vec \om_2, \bz,f)_{\bP_2}\mapsto  (\vec e_1,\vec e_2)$ 
is taken by the forgetful map
$\psi: (\vec 0,\vec 0,\ba,\bb,\vec \om_1,\vec \om_2, \bz,f)_{\bP_2}\mapsto [\Si_{\bP_2,\ba,\bb}, \bz,f]$ onto $F_{12}$.  
We claim that
 $\bK_{12}: = \bigl(U_{12;\bP_2}, E_{12},\Ga_{12}, s_{12},\psi_{12}\bigr)$ is the required transition chart.
 This is immediate from the construction, except possibly for the fact that the footprint map $\psi$ induces an injection
 $\qq{s_{12}^{-1}(0)}{\Ga_{12}}\to F_{12}.$
 However this holds because the forgetful map
 $$
 \rho_{2, 12}: U_{12,\bP_2}\cap s_{12}^{-1}(E_1) \to U_2,\quad  
(\vec 0, \vec e_2, \ba,\bb,\vec \om_1,  \vec \om_2, \vec\zeta,f)_{\bP_2}\mapsto (\vec e_2,  \ba,\bb, \vec \om_2,  \vec\zeta,f)_{\bP_2}
$$
 induces an injection into $U_2$ from the quotient of $\TU_{2,12}: = U_{12,\bP_2}\cap s_{12}^{-1}(E_1)$ 
 by a free permutation action of $\Ga_1$
 on $\vec \om_1$, and we have already checked that the footprint map $\psi_2$ induces a homeomorphism
$ \qu{s_{2}^{-1}(0)}{\Ga_{2}}\to F_{2}$.

To complete the construction  we must check that the required coordinate changes $\bK_i\to \bK_{12}$  exist.
The coordinate change $\bK_2\to \bK_{12}$ is induced by the above projection $\rho_{2,12}$.
The coordinate change $\bK_1\to \bK_{12}$ has domain
$\TU_{1,12}: = U_{12,\bP_2}\cap s_{12}^{-1}(E_2)$, and is given by
first
changing the normalization\footnote
{
If $Q_1,Q_2$ are disjoint we can simply apply  (VIII) (a) with  slicing manifold $Q_1\cup Q_2$; 
the general case is similar.
}
 from $\bP_2$ to $\bP_1$, and
then forgetting the components of $\vec \om_2$ to obtain a map
$\rho_{1,12}: \TU_{1,12}\to U_{1,\tau_2,\bP_1}$.  The reader can check that this change of normalization 
reverses the conditions on the tuples $\vec \om_i$. In particular, afterwards $\vec \om_2$ has $L_2$ potentially nonzero components
$(\om_2^\ell)_\ell$
with $\om_2^\ell\in D^{\ga(\ell)}_2$  for some $\ga\in \Ga_2$.  Hence the forgetful map is the quotient by a free action of $\Ga_2$ as required.

\begin{rmk}\label{rmk:coordf}\rm   We constructed this transition chart under a restrictive condition on the footprints.
If this condition is not satisfied we may not be able to find one set of coordinates that 
covers a neighbourhood of the full footprint $F_{12}$.  The difficulty here is that the parametrization maps 
$\io_{\bP_1,\ba_1,\bb_1}$ in \eqref{eq:gluedom1} are not defined near the nodes, so that their image may not contain 
all the points in the relevant inverse images  $f^{-1}(Q_2)$.  Therefore, one might not be able to pull all the points in the tuple $\bw_2$ back to the center point $\tau_1$, and similarly, the points in $\bw_1$ might not all pull back to a center for the second chart.   One could deal with this problem by requiring that if 
$F_{12}\ne \emptyset$,  the corresponding slicing manifolds $Q_1,Q_2$ are not too different, but such conditions are hard to formulate precisely.   Instead (as in \cite{Pard}) we dispense 
with the requirement that the chart have global coordinates.
To prepare for the general definition given in (X) below, we now explain 
the  coordinate free version of the above construction.
 \MS
 
 \NI {\bf The coordinate free transition chart:}\,
We define $U_{12}$ to be the image of $U_{12,\bP_2}$ by the injective map
\begin{equation*}
U_{12,\bP_2}\ni \bigl(\vec e_1,\vec e_2, \ba,\bb,\vec \om_1,\vec \om_2,\vec \zeta, f\bigr)_{\bP_2}\mapsto
\bigl(\ul{\vec e}, [\bn,\bw_1,\bw_2,\bz, f]\bigr)
\end{equation*}
where $\ul{\vec e}: =(\vec e_1,\vec e_2) \in E_{12}$ and $[\bn,\bw_i,\bz]\in \De_i$ is the domain stabilized via $Q_i$ i.e. the stable curve
 $[\Si_{\bP_2,\ba,\bb}, \bw_i,\bz]$.
Thus $U_{12}$ is a subset of the following space
\begin{align}\label{eq:coordf3}
& \left\{\bigl(\ul{\vec e}, [\bn,\bw_1,\bw_2,\bz, f]\bigr)\ \big| \ \begin{array}{ll} \ul{\vec e}\in E_{12},& \de_i: = [\bn,\bw_i,\bz] \in \De_i,\\
f(\bw_i)\in Q_i, &\exists \ga\in \Ga_i, w_i^\ell\in D_{\de_i}^{\ga(\ell)}\\
\pbar f = \la(\ul{\vec e})|_{{\rm graph } f}&
\end{array} \right\}.
\end{align}
By choice of $Q_i$,  
the condition $f(\bw_i)\in Q_i$  implies that there is exactly one element of
$\bw_i$ in each disc $D_{\de_i}^\ell$.  The labels of these discs are well defined modulo the action of $\Ga_i$,
and  the condition
 $\exists \ga\in \Ga_i, w_i^\ell\in D_{\de_i}^{\ga(\ell)}$   implies that the tuple $\bw_i$ has one of its admissible 
 labellings as in Remark~\ref{rmk:lifts}. Hence in this symmetric formulation both groups $(\Ga_i)_{ i=1,2}$ act by permuting the elements in $\vec e_i, \bn,\bw_i$  as in \eqref{eq:coordf1}.  Further, the  transition chart depends only on the footprint $F_{12}$ and the choice of
 $(Q_i, E_i,\la_i)$,  the  center $\tau_i$,  normalization $\bP_i$  and  discs $ (D^\ell_i)_{1\le\ell \le L}$ 
 being
irrelevant
 except insofar as they help guide the construction.
$\hfill\er$  \end{rmk}

\MS

\NI {\bf (X): Completion of the construction:}  
Suppose given a  collection $(\bK_i)_{i\in I}$ of basic charts whose  footprints $(F_i)_{1\le i\le N}$
cover $X$.   
We aim to construct an  atlas in the sense of Definition~\ref{def:Ku2} in which 
the charts are indexed by $I\in \Ii_\Kk$ and have
$E_I: = \prod_{i\in I} E_i$, $\Ga_I: = \prod_{i\in I} \Ga_i$.
The easiest way to do this is in the coordinate free language introduced above in \eqref{eq:coordf3}.
As before,  we denote the elements of the obstruction space $E_I$ by underlined tuples:  
$\ul{\vec e}: = (\vec e_i)_{i\in I}$.  Similarly $\ul{\bw}: = (\bw_i)_{i\in I}$ are the tuples  of added marked points.
We define $U_I$ to be a $\Ga_I$-invariant open subset of
the following space:
\begin{align}\label{eq:coordf4}
U_I\subset & \left\{\bigl(\ul{\vec e}, [\bn,\ul{\bw},\bz, f]\bigr)\ \big| \ \begin{array}{ll} \ul{\vec e}\in E_{I},& \de_i: = [\bn,\bw_i,\bz] \in \De_i, \forall i\in I,\\
f(\bw_i)\in Q_i, &\exists \ga\in \Ga_i, w_i^\ell\in D_{\de_i}^{\ga(\ell)}\\
\pbar f = \la(\ul{\vec e})|_{{\rm graph } f}&
\end{array} \right\},
\end{align}
chosen so that the footprint is $F_I$.  Since we take
$$
s_I\bigl(\ul{\vec e}, [\bn,\ul{\bw},\bz, f]\bigr) = \ul{\vec e},\quad \psi_I\bigl(\ul{\vec 0}, [\bn,\ul{\bw},\bz, f]\bigr) = 
 [\bn,\bz, f],
 $$
 and $\Ga_I$ acts by permutation, this condition can always  be satisfied.
We claim  that if $U_I$ is a sufficiently small neighbourhood of  $\psi_I^{-1}(F_I)$ then  it is a smooth manifold.  
For this it suffices to check that each point $u$ of $\psi_I^{-1}(F_I)$  has such a neighbourhood, which one does by
choosing a normalization $\bP_i$ at $u$ for some $i\in I$, and then
writing the definition of $U_I$ in the corresponding local coordinates as in the explicit construction in (IX).
Details are left to the reader.

In this coordinate free language, the atlas coordinate changes $\bK_I\to \bK_J$ are given by first choosing appropriate domains $\TU_{IJ}\subset U_J$ and then simply forgetting the components  $(\bw_i)_{i\in (J\less I)}$.
To see these forgetful maps have the required properties, one should argue in coordinates as 
in the discussion after \eqref{eq:U12}.

\MS

\NI {\bf Orientations:}
Finally we must check that the resulting weak atlas is oriented.  To this end,
recall that we chose the $E_i, \la_i$ to be complex linear, and the slicing manifolds  $Q_i$ to be orientable. 
According to Definition~\ref{def:orient}, we must construct a nonvanishing section of $\det(\s_\Kk)$, or, equivalently, of the bundle $\det(\Kk)$. For this we need compatible $\Ga_I$ invariant sections of the  local bundles $\lm \rT U_I\otimes (\lm E_i)^*$ over $U_I$.  
Since each $E_I$ has a natural orientation,  it suffices to construct compatible orientations of the domains $U_I$.  
To this end, we  now fix an orientation for each $Q_i$.    It then suffices to orient the unsliced domains $\Hat{U}_{I,\bP_I}$, which as in \eqref{eq:ww12p} and  \eqref{eq:lasumI}  are subsets of $E_I\times B^N\times W^{1,p}(\Si, M)$
cut out by an inhomogeneous  $\pbar$ equation.  Here the ball $B^N\subset \C^N$  
parametrizes the marked points and gluing parameters, both of which have natural orientations.
Further, the  solution spaces in $ W^{1,p}(\Si, M)$ have  natural orientations,
 since the linearized Cauchy--Riemann operator is the sum of a complex linear first order operator  with a compact perturbation and hence 
has a canonical perturbation through Fredholm operators to   a complex linear operator.  (For more details of this last step,  see the proof of \cite[Theorem~3.1.5]{JHOL} and also \cite[Appendix~A]{JHOL}.)  Since the resulting orientation of $U_I\times E_I$ is preserved by the group actions and coordinate changes, we obtain an orientation of $\Kk$.

\begin{rmk}\label{rmk:stratX}\rm
The smoothness of the charts, group actions  and coordinate changes
 depends on the gluing theorem used.  At the minimum (i.e. with the gluing theorem in \cite{JHOL}) we get a weakly SS atlas.
 With more analytic input, we can get a $\Cc^1$-atlas or even a smooth atlas.  
 However the sets $U_I$ still have an underlying stratification (by the number of nodes in the domains $\Si_\de$ of its elements) that is respected by
  all maps and coordinate changes.    Hence, as explained in \S\ref{ss:SS} the resulting zero set $|(\s+\nu)^{-1}(0)|$ has a natural stratification that is sometimes useful.
$\hfill\er$   \end{rmk}
 
\MS

\NI {\bf (XI): Constructing cobordisms:}    To prove that the VFC is independent of choices we need to build cobordisms between any two atlases  on $X$.  We defined Kuranishi cobordisms over $X\times [0,1]$ in Definition~\ref{def:CKS}.
These restricted cobordisms are also called {\bf concordances}.  
 The following notion is also useful.

\begin{defn}\label{def:commen}
Two atlases $\Kk, \Kk'$ on $X$ are said to be {\bf directly commensurate}
if they are subatlases of a common atlas $\Kk''$ on $X$.  They are commensurate if there is a sequence of atlases
$\Kk =:\Kk_{1},\dots,\Kk_{\ell}: = \Kk'$ such that any consecutive pair $\Kk_i, \Kk_{i+1}$ are directly commensurate. 
\end{defn}

One useful result is that any two commensurate atlases are 
concordant; see
\cite[\S6.2]{MW2} and Remark~\ref{rmk:saddatlas}
 below.  Note that 
because we can construct the sum of any number of charts provided only that their footprints have nonempty intersection, any two atlases constructed on $X$ by the  method  described above
with basic charts $(\bK_i)_{1\le i\le N_1}$ and $(\bK_i)_{N_1+1\le i\le N_2}$ are subatlases of a common atlas
with basic charts $(\bK_i)_{1\le i\le N_2}$. Thus  they are {\bf directly commensurate}  and hence 
concordant.

\begin{rmk}\label{rmk:Jindep}\rm
A similar argument shows that the 
cobordism class of the 
VFC is independent of the choice of almost complex structure $J$ in the following sense.
Suppose that $J_0, J_1$ are two $\om$-tame almost complex structures on  the symplectic manifold
$(M,\om)$, join them by a path $(J_t)_{t\in [0,1]}$ of $\om$-tame almost complex structures (where $t\mapsto J_t$ is constant for $t$ near $0,1$), and define $X^{01}: = \bigcup_{t\in [0,1]} \oMm_{0,k}(M,A,J_t)$.
In the same way that we build a cobordism atlas over $X\times [0,1]$, we can build a cobordism atlas $\Kk^{01}$ 
over $X^{01}$ since this has collared boundary.  Moreover, we can arrange that its restrictions $\Kk^\al: = \Kk^{01}|_{\al}$  at  the end points $\al=0,1$
equal any given GW atlases $\Kk^\al$ for $X_{\al}: =  \oMm_{0,k}(M,A,J_\al)$, and then prove that
the two elements $\bigl([X_\al]^{vir}_{\Kk^\al}\bigr)_{ \al=0,1}$ have the same image in $\check H_ d(X^{01};\Q)$.
It follows that all GW invariants calculated using $[X]^{vir}_{\Kk}$ are independent of the choice of $J$.
This argument is not formally written anywhere; however its  details are very similar to those in \cite[\S8.2]{MW2} which deal with 
the case of concordances.
\end{rmk}

\NI {\bf (XII): Proof of Theorem~A:}  The above construction explains the proof of Theorem~A.  
We set up the relevant equation in (VI), but the proof that it has the 
required  properties assumes a gluing theorem that we did not even state precisely.  
The paper
 \cite{Castell1} will complete the proof by providing  the  analytic details of a  $\Cc^1$-gluing theorem, thus
establishing 
 a $\Cc^1$ version  of Theorem~A.

\subsection{Comments on the construction}\label{ss:var}

We first describe some variants of the construction, and then make some general comments about Gromov--Witten atlases.

 There are two common variants of $X$:  we can consider the subset of $X$ formed by  elements $[\Si,\bz,f]$ 
where we constrain either the image of 
 the evaluation map  $\ev_k f: = \bigl(f(z_1),\dots, f(z_k)\bigr)\in M^k$  or  the topological type of the domain.  In both cases, it 
 should be easy to modify the construction.   Here we indicate very briefly what needs to be done. For more details see \cite{Castell2}.
\MS

\NI {\bf Adding homological constraints from $M$.}

Let $Z_c = Z_1\times \cdots\times Z_k\subset M^k$ be a closed product submanifold representing a homology class $c\in H_{\dim c}(M^k)$ and consider
\begin{equation}\label{eq:XZc}
X_{Z_c}: = \oMm_{0,k}(M,J,A; Z_c): = \bigl\{[\Si,\bz,f]\in \oMm_{0,k}(M,J,A)\ \big| \ ev_k(f)\in Z_c\bigr\}.
\end{equation}
Then if $d: = \ind(A)$ is the formal dimension $2n + 2c_1(A) + 2k-6$ of $\oMm_{0,k}(M,J,A)$, its subset $X_{Z_c}$ has formal dimension $d + \dim c-2n = d-\codim c$.    We can form a chart  for $X_{Z_c}$ 
near $\tau_0: = [\Si_0,\bz_0,f_0]\in X_{Z_c}$ by modifying the requirement that $E$ satisfy condition (*) as follows.

If $z_i\in (S^2)_{\al(i)}$,  consider the subspace
$$
\Dd_{c}: = \bigl\{(\xi_\al)_{\al\in T}\in \Dd_0 \ \big| \  \xi_{\al(i)}(z_i)\in  \rT_{f_0(z_i)} Z_i\  \forall i\bigr\},
$$
where $\Dd_0$ is as in \eqref{eq:Ldelbar}.
Replace condition (*)  (stated just after  \eqref{eq:Ldelbar}) by the condition
\begin{itemize}\item[$(*_c)$]
the elements in the image of
$\la: E\to \C^{\infty}\bigl(\Hom_J^{0,1}(\Cc|_W\times M)\bigr)$ restrict on ${\rm graph }f_0$ to a subspace of 
$\prod_\al L^p(\Hom^{0,1}_J((S^2)_{\al\in T}, f_{0,\al}^*(\rT M))$ that covers the cokernel of 
$\rd_{f_0}(\pbar)|_{\Dd_c}$.
\end{itemize}
Now consider the unsliced domain $\Hat U$ defined as in \eqref{eq:Hat U}.
Condition $(*_c)$ on $E$ implies that the  linearization
\begin{align}\label{eq:cond*c}\notag
 \rd_{f_0}( \ev_{node}\times\ev_k\times  \pbar):  & \prod_{\al\in T} W^{1,p}((S^2)_\al,f_{0,\al}^*(\rT M))\\
&\qquad
\longrightarrow
 (\rT M)^{2K +k} \times \prod_{\al\in T} L^p(\Hom^{0,1}_J((S^2)_{\al}, f_{0,\al}^*(\rT M))
\end{align}
is transverse to the product of the appropriate $2K$-dimensional diagonal with $Z_c$.    Hence, there is an open neighbourhood of
$(\vec 0,\vec w_0, \bz_0, f_0)$ in 
$$
\Hat U_c: =\bigl \{(\vec e,\vec w, \bz, f)\in \Hat U \ | \  \ev_k(f)\in Z_c\bigr\}
$$ 
that is a manifold of  dimension $\dim (\Hat U)  - \codim (c)$.
The rest of the construction goes through as before, giving  a $0$-dimensional atlas $\Kk_c$ on $X_{Z_c}$, whose virtual class $[X_{Z_c}]^{vir}_\Kk$ is a rational number.   Note that if we use a weak gluing theorem as in Remark~\ref{rmk:stratX} then this process gives a weakly SS atlas as described in \S\ref{ss:SS}, which has a fundamental class by Proposition~\ref{prop:VFCstrat}.

 If $c= c_1\times\cdots\times  c_k\in H_*(M^k)$,
then this number is just 
the Gromov--Witten invariant $\langle c_1,\dots,c_k\rangle_{0,k,A}\in \Q$.
If one has an appropriate gluing theorem, one can also form $[X]^{vir}_\Kk$, where 
$X: = \oMm_{0,k}(M,J,A)$.  Because   $[X]^{vir}_\Kk\in H_*(X,\Q)$ it pushes forward 
by the evaluation map $\ev: X\to  M^k$  to a homology class and one can also define this invariant using
the intersection product in $M^k$:
$$
\langle c_1,\dots,c_k\rangle_{0,k,A}: = \ev_*([X]^{vir}_\Kk)\cdot Z_c.
$$
It 
is proved in \cite{Castell2} that these two definitions agree.

\MS

\NI  {\bf Restricting the domain of the stable maps.}

The easiest way to restrict the domain of a stable map is to specify a minimum number of nodes.  For example, 
consider the space $X_{\ge s}$ of elements in $\oMm_{0,k}(M,J,A)$ whose (stabilized) domain has at least $s$ nodes, where $s\ge 1$.
In this case, the above construction builds an atlas 
that has all the  required properties except that its domains may no longer 
be smooth manifolds.
Rather they are stratified spaces with local models of the form 
$$
\R^k\times (\C^{\un})_s = \{(x; a_1,\dots,a_n): \#\{i: a_i= 0\}\ge s\}.
$$
(See  Example~\ref{ex:RC}.)   
This is a  stratified space with smooth strata of even codimension that we label by the number of nodes.\footnote
{
We forget the finer stratification $\Tt^n$ on $\R^k\times \C^{\un}$ since this does not extend in any natural way to $X$.}
If we require that the domains $U_I$ of the Kuranishi charts 
are locally of this form, and that all group actions and coordinate changes respect this stratification, 
we can define an atlas  on $X_{\ge s}$ as before.
Further, if we  assume that our gluing theorem provides charts that are at least  $\Cc^1$-smooth,  
we can construct   $\Cc^1$-smooth  perturbations  $\nu$,  so that
the stratawise  transversality condition  considered  in \S\ref{ss:SS} is open.
The zero set of $\s+\nu$  will no longer be a (branched) 
manifold, but rather a  (branched) stratified space with strata of codimension at least $2$.  Such a space (if oriented) still carries a fundamental class.  Hence all the arguments go through as before, and as in  Theorem B one obtains a well defined class $[X_{\ge s}]^{vir}_{\Kk}$ in
$\check {H}_*(X_{\ge s})$.

Another way to calculate an invariant for $X_{\ge s}$ is to build a 
Kuranishi atlas $\Kk$ for the whole of $X$ in the standard way, together with
a reduction $\Vv$ and 
transverse perturbation $\nu:\bB_{\Kk}|_{\Vv}\to \bE_{\Kk}|_{\Vv}$ that is constructed so as to satisfy 
 the  transversality condition in  Definition~\ref{def:SS1}.
Then consider the
part $|\bZ|_{\nu, s}$ of the zero set $\bigl(\s+\nu^{-1}(0)\bigr)|_{{\ge s}}$
consisting of elements  in strata at level at least $s$, 
i.e. the domains of the maps have at least $s$ nodes.
Similarly, $|\Kk|$ is  stratified with strata $|\Kk|_t$ consisting of elements whose domains have $t$ nodes, and by construction
$|\bZ|_{\nu, s}$  is the transverse intersection of $|\bZ|_{\nu}$ with $|\Kk|_{\ge s}$.
Hence 
 $|\bZ|_{\nu, s}$ represents a homology class in $H_*(|\Kk|_{\ge s})$, and 
by taking a suitable inverse limit 
it should follow  that one obtains
%
a well defined
 class in $\check {H}_*(X_{\ge s})$, that agrees with the one 
constructed earlier.

Details of (a generalization of) this argument 
appear in \cite{Castell2}.

\MS

\NI{\bf Gromov--Witten atlases.}

In the construction of  \S\ref{ss:GW} each basic chart $\bK_i$ depends on the choice of the following data
$\bigl(\tau_i: = [\Si_i,\bz_i,f_i], Q_i,(D^\ell_i)_{1\le \ell\le L_i}, \bP_i,E_i, \la_i\bigr)$, while
 the information recorded in the atlas is the tuple $(U_i,E_i, \Ga_i, s_i, \psi_i)$.  
The center $\tau_i$ and normalization $\bP_i$ are used to define coordinates over the domain $U_i$ of the chart, 
and, although needed whenever one wants to argue analytically, are not  essential to the description of the chart 
since one can define a coordinate free version of a chart.
All the other information is needed when constructing compatible transition charts, but appears in the atlas only indirectly:
 the maps $\la_i$ appear via the section $s_i$ while  the slicing manifolds  $Q_i$ and associated disc structures appear via 
the set $\bw_i$ of added marked points and hence are seen in the group actions.

\begin{defn}\label{def:GWA} 
Let $X\subset \oMm_{0,k}(M,A,J)$ be a compact subset.
We will say that an atlas $\Kk$ on $X$ is a {\bf  Gromov--Witten atlas} 
if it has been constructed either
 by the
procedure described in \S\ref{ss:GW}  or by one of the above variants.
\end{defn}

Because the GW construction allows one to construct a sum of any family of charts with intersecting footprints,
any two GW atlases on a given set $X$ are directly commensurate and hence concordant (i.e. cobordant over $X\times [0,1]$); see  Definition~\ref{def:commen}.

However, as we will see in  \S\ref{ss:nontriv} and~\S\ref{ss:hybrid}  it is sometimes  useful
to perform abstract constructions on atlases.   Hence it is useful to develop a notion of 
maps between atlases in order to compare different constructions. 

\begin{defn} \label{def:Kumap}   Let $\Kk, \Kk'$ be smooth weak atlases on $X, X'$ respectively, where $X$ is a compact subset of $X'$.
We say that
 a tuple $\f:= \bigl(\ell, (f_I,\Hat f_I, f^\Ga_I)_{I\in \Ii_\Kk}\bigr) $ 
 is a {\bf map from $\Kk$ to $ \Kk'$} if the following conditions hold:
\begin{itemlist}\item there is an injection $\ell: \{1,\dots,N\}\to \{1,\dots, N'\}$ such that $F_i\subset F'_{\ell(i)}$ for all $1\le i\le N$; the associated
injection 
$\Ii_\Kk\to \Ii_{\Kk'}$ is also denoted $\ell$;
\item
for each $I\in \Ii_\Kk$ the triple $(f_I, \Hat f_I, f^\Ga_I)$ consists of   
\begin{itemize}\item  a linear map   $\Hat f_I: E_I\to E'_{\ell(I)}$;
\item an injective group homomorphism  $f^\Ga_I: \Ga_I\to \Ga'_{\ell(I)}$;
\item a $f^\Ga_I$-equivariant smooth map $f_I: U_I\to U'_{\ell(I)}$ such that 
\begin{itemize}\item[-]   $f^\Ga_I$ maps the stabilizer subgroup $\Stab_x\subset \Ga_I$ onto the
stabilizer $\Stab'_{f_I(x)}\subset \Ga'_{\ell(I)}$ for each $x\in U_I$;
\item[-]  $f_I$ is compatible 
with the section and footprint maps, i.e.
$s'_{\ell(I)}\circ f_I = \Hat f_I\circ s_I$ and $\psi'_{\ell(I)}\circ f_I = \psi_I$ on $s_I^{-1}(0)$;  
\end{itemize}
\end{itemize}
\item  $(f_I, \Hat f_I, f^\Ga_I)$ are compatible with coordinate changes, i.e. for all $I\subset J$, 
\begin{itemize}\item[-]  $f_J(\TU_{IJ}) \subset \TU_{{\ell(I)}{\ell(J)}}$; \vspace{.05in}
\item[-]  $\Hat f_J\circ \Hat\phi_{IJ} = \Hat\phi'_{\ell(I)\ell(J)}\circ \Hat f_I: E_I\to E'_{\ell(J)}$;
  \vspace{.05in}
 \item[-]  $(\rho^\Ga_{\ell(I)\ell(J)})' \circ f^\Ga_J =  f^\Ga_I \circ \rho^\Ga_{JI}: \Ga_J\to \Ga'_{\ell(I)}$;  and  \vspace{.05in}
  \item[-]  $(\rho_{\ell(I)\ell(J)})' \circ f_J =  f_I \circ \rho_{JI}: \TU_{IJ}\to  U'_{\ell(I)\ell(J)}$.
\end{itemize}
\end{itemlist}
Further, we say that $\f$ is an {\bf embedding} if each map $f_I$ is an injection onto a smooth submanifold of $U'_{\ell(I)}$ and each $\Hat f_I$ is an isomorphism, and is an {\bf open embedding} if in addition $f_I(U_I)$ is an open subset of $U'_{\ell(I)}$.  
\end{defn}

For example, each shrinking $\Kk_{sh}$ of $\Kk$ gives rise to an open embedding $\Kk_{sh}\to \Kk$ with the maps $\ell, \Hat f_I, f^\Ga_I$ all equal to the identity.

If   $\Kk, \Kk'$ are atlases (rather than weak atlases), each  map $\f$ gives rise to  functors $\f:\bB_\Kk\to \bB_{\Kk'}$  and $\f_{\bE}:\bE_\Kk\to \bE_{\Kk'}$ 
that commute with the structural functors $\s, \pr$ as well as being compatible with the homeomorphisms $\io_\Kk:X\to |\s_\Kk^{-1}(0)| $ and $\io_{\Kk'}:X'\to |(\s'_{\Kk'})^{-1}(0)| $.

The definition above is very general  in order that it apply in a variety of circumstances.  
However to get a useful notion one should add enough conditions 
so that there is an appropriate relation between $[X]^{vir}_\Kk$ and  $[X']^{vir}_{\Kk'}$.
For example, we have the following easy result.

\begin{lemma}  If 
 $X=X'$ and $\f:\Kk\to \Kk'$ is an open embedding, then $[X]^{vir}_{\Kk} = [X]^{vir}_{\Kk'}$.
 \end{lemma}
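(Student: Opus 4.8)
The plan is to use the open embedding $\f:\Kk\to\Kk'$ to transport all the structures (reduction, perturbation section, zero set, orientation) used to define $[X]^{vir}_\Kk$ over to $\Kk'$, and to check that the resulting data compute $[X]^{vir}_{\Kk'}$ as well. Since $\f$ consists of a family of open embeddings $f_I:U_I\hookrightarrow U'_{\ell(I)}$ that intertwine sections, footprints, coordinate changes and the group actions, it induces an open embedding of topological categories $\f:\bB_\Kk\hookrightarrow\bB_{\Kk'}$, hence (after passing to a tame shrinking, which we may do by Theorem~\ref{thm:K}/Theorem~\ref{thm:K2}, replacing $\Kk$ by a preshrunk tame shrinking and noting $\f$ restricts to an open embedding of the shrinkings) a continuous injection $|\f|:|\Kk|\to|\Kk'|$ onto an open subset of $|\Kk'|$ that is a homeomorphism onto its image, carries $\io_\Kk(X)$ to $\io_{\Kk'}(X')=\io_{\Kk'}(X)$, and is covered by a bundle map $|\f_\bE|$ intertwining $|\s_\Kk|$ with $|\s_{\Kk'}|$ and the orientation bundle $\det(\s_\Kk)$ with $\det(\s_{\Kk'})|_{|\f|(|\Kk|)}$.

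\textbf{Key steps.} First I would fix a nested reduction $\Cc\sqsubset\Vv$ of $\Kk$ (Proposition~\ref{prop:cov2}) and a strongly $(\Vv,\Cc)$-adapted, admissible, precompact, transverse perturbation $\nu$ of $\s_\Kk|_\Vv$ (Proposition~\ref{prop:ext}), with zero set $|\bZ^\nu|$ representing $[X]^{vir}_\Kk$ as in the sketch proof of Theorem~B. Second, push these forward: define $V'_{\ell(I)}:=f_I(V_I)$ and $C'_{\ell(I)}:=f_I(C_I)$ for $I\in\Ii_\Kk$, and $V'_J=C'_J=\emptyset$ for $J\notin\ell(\Ii_\Kk)$. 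Because each $f_I$ is an open embedding compatible with coordinate changes and group actions, $\Vv'=(V'_J)$ is a reduction of $\Kk'$ and $\Cc'\sqsubset\Vv'$ is nested; here one checks conditions (i)--(iv) of Definition~\ref{def:vicin2} directly, using that $|\f|$ is injective so the separation condition (iii) on the $\pi_{\Kk'}(\ov{V'_I})$ follows from that for the $V_I$, and that $\io_{\Kk'}(X)=|\f|(\io_\Kk(X))\subset|\f|(\pi_\Kk(\Vv))=\pi_{\Kk'}(\Vv')$. Third, define $\nu'_{\ell(I)}:=\Hat f_I\circ\nu_I\circ f_I^{-1}$ on $V'_{\ell(I)}$ and $\nu'_J:=0$ otherwise; since $\f$ intertwines the $\rho$'s, $\Hat\phi$'s and sections, $\nu'$ is a reduced perturbation of $\Kk'$ that is again admissible, precompact (with witness $\Cc'$), and transverse, and $s'_{\ell(I)}+\nu'_{\ell(I)}=\Hat f_I\circ(s_I+\nu_I)\circ f_I^{-1}$ shows the local zero sets correspond: $Z'_{\ell(I)}=f_I(Z_I)$, $Z'_J=\emptyset$ else. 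Fourth, the groupoid $\Hat\bZ^{\nu'}$ built from the $Z'_J$ and the coordinate changes of $\Kk'$ is isomorphic to $\Hat\bZ^\nu$ via $\f$ (the morphisms of $\bB_{\Kk'}$ restricted to $\bigsqcup Z'_J$ are exactly the images of those of $\bB_\Kk$ restricted to $\bigsqcup Z_J$, again because $\f$ is an open embedding and $V'_J=\emptyset$ for $J\notin\ell(\Ii_\Kk)$), so $|\bZ^{\nu'}|\cong|\bZ^\nu|$ as (weighted branched, in the isotropy case) manifolds, the isomorphism preserving orientations because $\f$ intertwines $\det(\s_\Kk)$ with $\det(\s_{\Kk'})$ and $\si'$ pulls back to $\si$ along $|\f|$. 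Fifth and last, the inclusions $|\io_Z|:|\bZ^\nu|\to|\Kk|$ and $|\io_{Z'}|:|\bZ^{\nu'}|\to|\Kk'|$ satisfy $|\io_{Z'}|=|\f|\circ|\io_Z|$ (identifying the two zero sets), and since $\Kk$ and $\Kk'$ have the \emph{same} $X$ with $\io_{\Kk'}=|\f|\circ\io_\Kk$, taking the inverse limit over shrinking $\de$-neighbourhoods of $\io_\Kk(X)$ resp.\ $\io_{\Kk'}(X)$ yields the \emph{same} class in $\check H_d(X;\Q)$; that is, $[X]^{vir}_{\Kk'}=[X]^{vir}_\Kk$. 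One must also note that $\nu'$ can be taken strongly $(\Vv',\Cc')$-adapted (the adaptedness conditions of Definition~\ref{def:a-e} are metric and are preserved since, after choosing the admissible metric on $|\Kk'|$ and pulling it back, $|\f|$ is a local isometry on the relevant neighbourhoods, or simply: the bordism class of the zero set is choice-independent by Corollary~\ref{cor:ext2}, so it suffices that \emph{some} admissible transverse precompact perturbation on each side gives the same bordism class).

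\textbf{Main obstacle.} The routine parts are the pushforward constructions; the one point needing genuine care is the compatibility of metrics and of the constants $\de_\Vv,\si(\de,\Vv,\Cc)$ under $\f$, i.e.\ verifying that $\nu'$ really is strongly adapted so that Corollary~\ref{cor:ext2} applies and the bordism class of $|\bZ^{\nu'}|$ is the canonical one on the $\Kk'$ side. I expect the cleanest route around this is to avoid transporting the specific $\nu$ altogether: instead invoke that $[X]^{vir}_{\Kk'}$ is represented by the bordism class of $|\bZ^{\nu'}|$ for \emph{any} admissible, precompact, transverse perturbation $\nu'$ of any reduction of $\Kk'$ that is strongly adapted (by Theorem~B's proof), and separately invoke that the pushforward $\nu'$ of our $\nu$ \emph{is} such a perturbation — then the identification $|\bZ^{\nu'}|\cong|\bZ^\nu|$ from step four, compatibly with the maps to $|\Kk'|\supset|\f|(|\Kk|)$ and with orientations, finishes the proof. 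So the real content is checking that pushing a strongly adapted perturbation forward along an open embedding of atlases preserves strong adaptedness, which reduces to the observation that $f_I$ restricted to the relevant precompact pieces is an isometry for a suitably chosen admissible metric on $|\Kk'|$ extending the pullback of the one on $|\f|(|\Kk|)$.
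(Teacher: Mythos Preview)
Your proposal is correct and follows essentially the same approach as the paper: push the reduction and the perturbation forward along the open embedding and observe that the zero sets are identified. The paper's own proof is extremely brief---it just asserts that a reduction $\Vv$ of $\Kk$ pushes forward to a reduction $\f(\Vv)$ of $\Kk'$ (with $V'_J=\emptyset$ for $J\notin\ell(\Ii_\Kk)$), and that since each $\Hat f_I$ is a linear isomorphism any admissible, transverse, precompact perturbation $\nu$ of $\s_\Kk|_\Vv$ pushes forward to one of $\s_{\Kk'}|_{\f(\Vv)}$, so the two atlases define the same virtual class.

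Your write-up supplies the details the paper omits and correctly singles out the one nontrivial point: whether the pushed-forward $\nu'$ is \emph{strongly adapted} for the admissible metric on $|\Kk'|$. The paper does not address this explicitly; it simply relies on the uniqueness built into Theorem~B (any admissible, transverse, precompact perturbation represents the same class, by the cobordism arguments of Proposition~\ref{prop:ext2} and Corollary~\ref{cor:ext2}). Your final parenthetical---that it suffices to exhibit $\nu'$ as \emph{some} admissible, transverse, precompact perturbation and then invoke choice-independence on the $\Kk'$ side---is exactly the cleanest way to close this, and is what the paper is implicitly doing.
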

 \begin{proof} 
Any reduction $\Vv=(V_I)_{I\in \Ii_\Kk}$  of $\Kk$ pushes forward to a
 reduction $\f(\Vv)$ of $\Kk'$ for which 
 $V'_{\ell(I)} = f_I(V_I)$ for $I\in \Ii_\Kk$, while
 $F_J'=\emptyset=V_J'$ for all $J\not\in \ell(\Ii_{\Kk})$.  
Because the maps $\Hat f_I$  are linear isomorphisms,
any (admissible transverse, precompact)
 perturbation section  $\nu$ of $\Kk|_\Vv$ pushes forward to a similar  perturbation section of $\Kk'$. Thus these two atlases define the same virtual class on $X$.
 \end{proof}
 
We also note the following.  
\begin{itemlist}
\item  Since we do not require the group homomorphisms $f^\Ga_I$  to be surjective,
the image $f_I(U_I)$ need not be the full inverse image $(\pi_I')^{-1}(\ul{f_I(U_I)})$.
If $f_I$ is an embedding then $(f_I,f_I^\Ga): (U_I,\Ga_I)\to (U_I',\Ga_I')$ is a group embedding in the sense of Definition~\ref{def:GWA}.
\item  Consider the inclusion $X_{Z_c}\hookrightarrow X$ in \eqref{eq:XZc} above.  Then one can construct compatible GW atlases  
$\Kk_c$ on $X_{Z_c}$ and $\Kk$ on $X$ with indexing set $\Ii_{\Kk_c}\subset \Ii_\Kk$
so that there is an embedding $\f: \Kk_c\to \Kk$ for which $\ell$ is the inclusion and 
the maps $\Hat f_I, f^\Ga_I$ are   the identity; see Castellano~\cite{Castell2}.
\item   There are analogous definitions if $\Kk, \Kk'$ are (filtered) topological atlases or if they are weakly SS.  
\item In \S\ref{ss:hybrid} we will consider maps between atlases with more general indexing sets for which
$\ell:\Ii_\Kk\to \Ii_{\Kk'}$ and the $\Hat f_I$ are not injective; see  Remark~\ref{rmk:fK}.
 \end{itemlist}

%
%
%
%

\section{Examples}\label{sec:ex}

In this section, we discuss some examples.
We begin with the case of orbifolds, showing that each orbifold has a Kuranishi atlas with trivial obstruction spaces.
We also show that each such atlas has a canonical completion to a groupoid that represents the orbifold.
Hence the atlas is a stripped down model for the groupoid that captures all essential information.
The zero set construction in Proposition~\ref{prop:zero} can then be interpreted as a simpler version of the resolution construction given in \cite{Mbr}.

 We next show how to use atlases to compute Gromov--Witten invariants in some easy cases, for example if the moduli space is an orbifold with cokernels of constant rank so that the obstruction spaces form a bundle of constant rank.
Finally, we revisit an argument in \cite{Mcq} about the Seidel representation for general symplectic manifolds  
The \lq\lq proof" given there assumed the existence of a construction for the VFC
with  slightly different properties from the construction using Kuranishi atlases, and does not work with the new definitions.  
However, it is not hard to give a precise proof using the current definitions.

\subsection{Orbifolds}\label{ss:orb}

The aim of this section is to sketch the proof of Proposition C stated in \S\ref{s:intro}, i.e. to show that 
every compact orbifold $Y$ is the realization of  a Kuranishi atlas with trivial obstruction spaces. 
We will take a naive approach to orbifolds, since that suffices for our current purposes.  Thus, as in 
Moerdijk~\cite{Moe} and~\cite{Mbr},  we consider them as equivalence classes of groupoids rather than 
as stacks as in Lerman~\cite{Ler}.

Recall that an
 {\bf  ep (\'etale proper) groupoid} $\bG$ is  a topological category with smooth
spaces of objects $\Obj_\bG$ and morphisms $\Mor_\bG$, such that
\begin{itemize}\item  all structural maps (i.e. source $s$, target $t$, identity, composition and inverse)
 are {\bf  \'etale} (i.e. local diffeomorphisms);
and \item
 the map $s\times t: \Mor_\bG\to \Obj_\bG\times \Obj_\bG$
given by taking a morphism to its source and target is {\bf proper}.  
\end{itemize}
The realization $|\bG|$ of $\bG$ is the quotient of the space of objects by the equivalence relation 
given by the morphisms: thus $x\sim y \ \Leftrightarrow\ \Mor_\bG(x,y)\ne \emptyset$.
When (as here) the domains are locally compact, the
 properness condition implies that $|\bG|$ is Hausdorff. 


\begin{defn}\label{def:orbstr}
A $d$-dimensional  {\bf orbifold structure} on a 
 paracompact Hausdorff 
 space $Y$ is a pair $(\bG, f)$ consisting of a $d$-dimensional ep (\'etale proper) groupoid $\bG$ together with a 
map $f:\Obj_\bG\to Y$ that factors through a homeomorphism $|f|:|\bG|\to Y$.  Two orbifold structures $(\bG, f)$ and
$(\bG', f')$ are {\bf Morita equivalent} if they have a common refinement, 
i.e. if there is a third structure $(\bG'', f'')$ on $Y$ and functors $F:\bG''\to\bG, F':\bG''\to \bG'$ such that 
$f'' = f\circ F = f'\circ F'$.
  
  An {\bf orbifold} is a second countable
paracompact
Hausdorff space $Y$ equipped 
with an equivalence class of orbifold structures.  We say that $Y$ is oriented if for each representing groupoid $\bG$
 the spaces  $\Obj_\bG$  and  $\Mor_\bG$ have  orientations that are preserved by  all structure maps and by the functors $F:\bG\to \bG'$ considered above.
\end{defn}

\begin{defn}\label{def:orbat}  We say that a compact Hausdorff space $Y$ has a {\bf strict orbifold atlas} $\Kk$ if $Y $ has an open covering $Y= \bigcup_{i=1,\dots,N} F_i$ such that
 the following conditions hold with
$$
\Ii_Y: = \bigl\{\emptyset \ne I\subset  \{1,\dots,N\}:F_I: ={\textstyle  \bigcap_{i\in I} }F_i\ne \emptyset\bigr\}.
$$
\begin{itemlist} 
\item[\rm (i)]
For each $I\in \Ii_Y$  
there is a   manifold $W_I$ on which the finite group $\Ga_I: = \prod_{i\in I} \Ga_i$ acts and 
a map $\psi_I: W_I\to F_I$ that induces a homeomorphism $\upsi_I: \qq{W_I}{\Ga_I}\to F_I$;
\item[\rm (ii)] for all (nonempty) subsets $I\subset J$  
there is a group covering 
$$
(\rho_{IJ},\rho_{IJ}^\Ga): (W_J,\Ga_J)\to \bigl(W_{IJ}: = (\psi_I)^{-1}(F_J), \Ga_I\bigr)
$$
in the sense of Definition~\ref{def:cover}, where $\rho_{IJ}^\Ga: \Ga_J\to \Ga_I$ is the projection  and 
$\rho_{IJ}$ is the composite of the quotient of $W_J$ by a free action of 
$\ker \rho_{IJ}^\Ga = \Ga_{J\less I}$  with a $\Ga^I$-equivariant homeomorphism $\qu{W_J}{\Ga_{J\less I}}\to W_{IJ}\subset U_I$;
\item[\rm (iii)]    $\psi_I\circ \rho_{IJ} = \psi_J$, and $\rho_{IJ}\circ \rho_{JK} = \rho_{IK}$ for all $I\subset J\subset K$.
\end{itemlist}
Thus the charts of this atlas $\Kk$ are the tuples $\bigl(\bK_I: = (W_I,\Ga_I,\psi_I)\bigr)_{I\in \Ii_Y}$ with footprints 
$(F_I)_{I\in \Ii_Y}$, and the coordinate changes are induced by the covering maps $\rho_{IJ}$.
\end{defn}

\begin{rmk}\rm  (i)  
Note that in an atlas with trivial obstruction spaces the section maps are identically zero  so that the footprint maps
$\psi_I:U_I\to F_I$ are defined on the whole domain and
induce  homeomorphisms $\upsi_I: \uU_I\to X$.  In particular,   the lifted domains
$\TU_{IJ}\subset U_J$ of the coordinate changes equal the whole of $U_J$, which explains why we assume above that
 the covering map $\rho_{IJ}$ is defined on the whole of $W_J$.
Thus the above definition is precisely that of a Kuranishi atlas with trivial obstruction spaces.
\MS

\NI (ii)
One could also allow a more general choice of groups $\Ga_I$ as in Remark~\ref{rmk:tamea}. However, the proof of Proposition~\ref{prop:orb} shows that the product groups considered above are natural to the problem. \hfill$\er$
\end{rmk}

Such an atlas satisfies the strong cocycle condition, and is oriented if the domains are equipped with an orientation
that is preserved by the group actions and other structure maps.
The corresponding category $\bB_\Kk$ has realization $Y$, 
since in this case the intermediate category $\ubB_\Kk$ (see   Lemma~\ref{le:und}) is the category defined by the covering $(F_i)_{i=1,\dots,N}$ of $Y$, i.e. its objects 
are $\bigsqcup_{I\in I} F_I$  and there is a morphism $(I,x)\to (J,y)$ iff $I\subset J$ and $\psi_I(x) =\psi_J(y)\in F_J$.
Although $\bB_\Kk$ is not a groupoid since the nonidentity maps are not invertible, 
we now show that this category has a canonical {\bf groupoid completion} $\bG_\Kk$. (This justifies our language since it means that any compact Hausdorff space $Y$ with a strict orbifold atlas is in fact an orbifold.)

\begin{defn}\label{def:groupc}  Let $\bM$ be an \'etale proper  category with objects $\bigsqcup_{I\in \Ii} W_I$ 
and realization $Y : = \Obj_\bM/\!\sim$ such that 
\begin{itemize}\item
for each $I\in \Ii$ the full subcategory of $\bM$ with objects $W_I$ 
can be identified with the group quotient $(W_I,\Ga_I)$ for some group $\Ga_I$;
\item for each $I\in \Ii$ the realization map $\pi_\Mm:\Obj_\bM \to Y$ induces a homeomorphism
  $\qu{W_I}{\Ga_I}\to F_I\subset Y$, where $F_I$ is an open subset of $Y$.
\end{itemize}
 Then we say that an ep groupoid $\bG$ is a {\bf groupoid completion} of $\bM$ if
 there is an injective functor $\io:\bM\to \bG$ that induces a bijection on objects and 
 a homeomorphism on the realizations $Y= |\bM|\to |\bG|$. 
\end{defn}

Thus 
for each component $W_I$ of $\Obj_\bM$ the groupoid completion (if it exists)
has the same morphisms from $W_I$ to $W_I$ but (unless  $\bM$ is already a groupoid) will have more morphisms between the different components of $\Obj_\bM$ that are obtained by adding inverses and composites.

\begin{prop}\label{prop:groupcomp}  Let $\Kk = \bigl(W_I,\Ga_I, \rho_{IJ}\bigr)_{I\subset J, I,J\in \Ii_\Kk}$ be a strict orbifold atlas as above. Then the category $\bB_\Kk$ 
has a canonical completion to an ep groupoid $\bG_\Kk$ with the same objects and realization as $\bB_\Kk$ and morphisms
$$
\Mor_{\bG_\Kk} = \bigsqcup_{I,J\in \Ii_\Kk: I\cup J\in \Ii_\Kk} \Mor_{\bG_\Kk}(W_I, W_J), \quad \Mor_{\bG_\Kk}(W_I, W_J): = W_{I\cup J}\times \Ga_{I\cap J},
$$
where $\Ga_\emptyset: = \id$, and with the following structural maps.
\begin{itemize}\item[\rm (i)]  The source and target maps $s\times t:  W_{I\cup J}\times \Ga_{I\cap J}\to W_I\times W_J$ are
$$
(s\times t) \bigl(I,J,z,\ga \bigr) \;=\;  \Bigl( \bigl(I, \ga^{-1}\rho_{I(I\cup J)}(z)\bigr) \,,\, \bigl( J, \rho_{J(I\cup J)}(z) \bigr) \Bigr).
$$
\item[\rm (ii)] Composition is given by
\begin{align*}
&\ m:  \Mor_{\bG_\Kk}(W_I, W_J) \, _t\times_s \Mor_{\bG_\Kk}(W_J, W_K)\to \Mor_{\bG_\Kk}(W_I, W_K), \\
&\ \bigl((I,J,z,\ga), (J,K,w,\de)\bigr) \mapsto (I,K, v',\al \,\de_{IJK}\ga_{IJK})\in W_{I\cup K}\times \Ga_{I\cap K},
\end{align*}
where 
\begin{align*}
&\ v': = \rho_{I\cup K,I\cup J\cup K}(v)\in W_{I\cup K},\\
&\ \ga=(\ga_{IJK},\ga_{IJ\less K})\in \Ga_{I\cap J\cap K}\times \Ga_{(I\cap J)\less K} = \Ga_{I\cap J},\\ 
&\ \de=(\de_{IJK},\de_{JK\less I})\in \Ga_{I\cap J\cap K}\times \Ga_{(J\cap K)\less I}= \Ga_{J\cap K}, 
\end{align*}
and
$(v,\al)\in W_{I\cup J\cup K}\times \Ga_{(I\cap K)\less J}$ is the unique pair  such that 
$$
\rho_{I\cup J, I\cup J\cup K}(v) = \ga_{IJ\less K}^{-1}\al\,\de z, \;\; \rho_{J\cup K, I\cup J\cup K}(v) =  \ga_{IJ\less K}^{-1} w.
$$
\item[\rm (iii)] The inverse is given by
$$
\io: \Mor_{\bG_\Kk}(W_I, W_J) \to \Mor_{\bG_\Kk}(W_J, W_I), \quad  \bigl(I,J,z,\ga \bigr)\mapsto (J,I,\ga^{-1} z, \ga^{-1}).
$$
\end{itemize}
\end{prop}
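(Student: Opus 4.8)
The plan is to verify in turn that the displayed formulas define a category, that this category is a groupoid, that it is \'etale and proper, and finally that the tautological inclusion exhibits it as a groupoid completion of $\bB_\Kk$ in the sense of Definition~\ref{def:groupc}. The only structural input needed is the cocycle identity $\rho_{IJ}\circ\rho_{JK}=\rho_{IK}$ together with the compatibility $\rho^\Ga_{IJ}\circ\rho^\Ga_{JK}=\rho^\Ga_{IK}$, the fact that $\ker\rho^\Ga_{IJ}=\Ga_{J\less I}$ acts freely on $W_J$, and the homeomorphism property $\urho_{IJ}:\qu{W_J}{\Ga_{J\less I}}\overset{\cong}{\to}W_{IJ}\subset W_I$ of Lemma~\ref{le:vep}~(iii); all of these are part of the definition of a strict orbifold atlas.

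First I would check that the source--target map $s\times t$ on $W_{I\cup J}\times\Ga_{I\cap J}$ is well posed and that this space really parametrizes all the morphisms $W_I\to W_J$ obtained from $\bB_\Kk$ by adjoining inverses and composites. A point $z\in W_{I\cup J}$ projects by $\rho_{I(I\cup J)}$ to $W_I$ and by $\rho_{J(I\cup J)}$ to $W_J$; the residual ambiguity in the first projection is the free action of $\Ga_{(I\cup J)\less I}=\Ga_{J\less I}$, that in the second the action of $\Ga_{I\less J}$, and the factor $\Ga_{I\cap J}$ records the part common to the two product groups. That the resulting morphism set is neither too small nor too large is seen exactly as in the sketch proof of Proposition~\ref{prop:zero}: patching the $W_I$ together by the projections $\rho_{IJ}$ and completing to a groupoid necessarily forces in the morphisms $W_J\to W_J$ given by the actions of the subgroups $\Ga_{J\less I}$, and an identity of the form \eqref{eq:ACinv} shows conversely that any composite of the basic morphisms and their inverses can be rewritten in the form $(I,J,z,\ga)$.

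Next I would verify the category axioms. Identity morphisms at $x\in W_I$ are $(I,I,x,\id)$, using $\rho_{II}=\id$ and $\Ga_{I\cap I}=\Ga_I$; the formula for the inverse is an involution by direct inspection, and $m$ applied to $(I,J,z,\ga)$ and its inverse returns the appropriate identity. The substantive point is associativity of $m$: expanding both bracketings reduces, after the splittings $\ga=(\ga_{IJK},\ga_{IJ\less K})$ etc.\ are propagated, to the cocycle identities among the fourfold overlaps $\rho_{\bullet,\,I\cup J\cup K\cup L}$ together with the fact that the various $\Ga_{\bullet\less\bullet}$ act freely, so that the auxiliary pairs $(v,\al)$ produced at each stage are unique. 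This is the place where the argument is genuinely fiddly, and I expect it to be the main obstacle: one must bookkeep the splittings $\Ga_{I\cap J}\cong\Ga_{I\cap J\cap K}\times\Ga_{(I\cap J)\less K}$ consistently across all the subsets involved and check that the uniqueness clauses defining $m$ are compatible when the order of composition is changed. No new idea is needed, only care.

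Finally I would treat the topology. Every structural map of $\bG_\Kk$ is a composite of the covering maps $\rho_{IJ}$ (which are local diffeomorphisms), of the diffeomorphic group actions, and of the $\Ga_I$-equivariant diffeomorphisms implicit in Definition~\ref{def:cover}, hence is \'etale; in particular $\Obj_{\bG_\Kk}=\bigsqcup_I W_I$ and $\Mor_{\bG_\Kk}$ are smooth manifolds of the common dimension $d$. Properness of $s\times t$ follows because $\Ii_\Kk$ is finite, each $\Ga_I$ is finite, and each $\rho_{IJ}:W_J\to W_{IJ}$ is proper by Lemma~\ref{le:vep}~(i), so the preimage of a compact set is a finite union of compact sets; since the object space is locally compact this makes $|\bG_\Kk|$ Hausdorff. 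The tautological functor $\io:\bB_\Kk\to\bG_\Kk$, which is the identity on objects and sends $(I,J,y,\ga)\in\TU_{IJ}\times\Ga_I=W_J\times\Ga_I$ for $I\subset J$ to $(I,J,y,\ga)\in W_{I\cup J}\times\Ga_{I\cap J}$, is injective and bijective on object spaces; and since adjoining inverses and composites does not change the equivalence relation generated on $\bigsqcup_I W_I$, it induces a homeomorphism $|\bB_\Kk|=Y\overset{\cong}{\to}|\bG_\Kk|$. Thus $\bG_\Kk$ is a groupoid completion of $\bB_\Kk$, and it is canonical because the formulas involve only the given covering and group data; orientability of $\bG_\Kk$ when $\Kk$ is oriented is immediate since all structural maps are built from orientation-preserving pieces.
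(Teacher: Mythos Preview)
Your proposal is correct and follows the same overall architecture as the paper's sketch: check the morphism spaces are right, verify the category axioms with associativity as the crux, then handle \'etaleness, properness, and the completion property.  The one genuine difference is in how you handle associativity.  You propose a direct expansion of both bracketings, reducing to the cocycle identities for the $\rho_{\bullet,\,I\cup J\cup K\cup L}$ and the uniqueness clauses for the auxiliary pairs $(v,\al)$.  The paper instead gives an \emph{indirect} argument in the effective case: if associativity failed, then $f\circ(g\circ h)\circ\bigl((f\circ g)\circ h\bigr)^{-1}$ would be a nonidentity element of $\Ga_I^x$ that, by smoothness of $m$ and discreteness of the morphism sets, stays constant as $x$ varies --- contradicting effectivity.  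The general (noneffective) case is deferred to \cite{Morb}.  Your direct route is more laborious but has the advantage of working uniformly without splitting into cases; the paper's argument is slicker where it applies but does not settle the noneffective case on its own.  One small point: your claim that adjoining inverses and composites does not change the generated equivalence relation is correct but stated rather tersely; the paper argues this by checking directly that $\Mor_{\bG_\Kk}(x,y)\ne\emptyset$ iff $\psi_I(x)=\psi_J(y)$, which also serves to confirm that the morphism space is neither too large nor too small.
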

\begin{proof}[Sketch of proof]  The above formula  for $\Mor_{\bG_\Kk}(W_I,W_J)$ agrees with the definition of 
 $\Mor_{\bB_\Kk}(W_I,W_J)$ when $I\subset J$.  We must extend this definition to all pairs 
 $I,J$ with $F_I\cap F_J\ne \emptyset$ (or equivalently $I\cup J\in \Ii_\Kk$) 
 so as to be consistent with the footprint maps and the local group actions.  In particular, we require
 that for all $x\in W_I, y\in W_J$
\begin{itemize}\item  $\Mor_{\bG_\Kk}(x,y)\ne \emptyset$ iff $(I,x)\sim (J,y)$ in $\Obj_{\bB_\Kk}$
 iff $\psi_I(x) = \psi_J(y)\in F_{I\cup J}$.
\end{itemize}
To check this,
note first that $\Mor_{\bG_\Kk}(x,y)\ne \emptyset$ implies that $x,y$ have the same image in $Y$.  
Conversely,
suppose  given 
 $x\in W_I,y\in W_J$ with  $I\not\subset J$ and such that $\psi_I(x) = \psi_J(y)$.
%
Since $\rho_{J(I\cup J)}: W_{I\cup J}\to W_{J(I\cup J)}: = \psi^{-1}(F_{I\cup J})$ is surjective 
and factors out by the free action of 
$\Ga_{I\less J}$
we may choose $z \in W_{I\cup J}$ so that $\rho_{J(I\cup J)}(z) = y$.  Then  
$\rho_{I(I\cup J)}(z)$ lies in the $\Ga_{I}$-orbit of $x$ because $\psi_{I\cup J}(z) = \psi_I(x)$, so that
by replacing $z$ by $\de z$ for some 
$\de\in \Ga_{I\less J}$ we may arrange  that $\rho_{I(I\cup J)}(z)$ lies in the $\Ga_{I\cap J}$-orbit of $x$,
where $\Ga_{I\cap J}: = \id$ if $I\cap J = \emptyset$.
Therefore there is a pair $(z,\ga)\in (W_{I\cup J}, \Ga_{I\cap J})$ with 
$\rho_{I(I\cup J)}(z) = \ga x, \rho_{J(I\cup J)}(z) =y$ and hence with source $x$ and target $y$, as required.
Hence the equivalence relation on  $\Obj_{\bB_\Kk}$ generated by the morphisms in $\bB_\Kk$ agrees with 
that induced by the morphisms in  $\bG_\Kk$, which implies that the induced map $|\bB_\Kk|\to |\bG_\Kk|$ is a homeomorphism.


To complete the proof that $\bG_\Kk$ is a category we must check that  the composition operation defined on
$\Mor_{\bB_\Kk}$ extends to a well defined and associative multiplication operation.
 Notice that each morphism $(I,J,z,\ga)\in W_{I\cup J}\times W_{I\cap J}$ may be written as a composite  $m_1\circ (m_2)^{-1}$: 
 $$
 (I,\ga^{-1}x)\stackrel{m_1}\mapsto (I\cup J, z)\stackrel{(m_2)^{-1}}\mapsto (J,\rho_{(I\cup J)J}(z)),
 $$
 where $m_1 = (I,I\cup J,\ga,z)$ and 
 $m_2 = 
(J, I\cup J, \id,z)$ are morphisms in $\bB_\Kk$.   Therefore to define the composite of  two such morphisms we must 
find an element $v\in W_{I\cup J\cup K}$ together with an appropriate group element
that completes the  following diagram, where for simplicity we omit the subscripts for 
the projections $\rho$  and label the morphisms  by the relevant group elements:
$$
\xymatrix
{
(I,\ga^{-1} x)\quad  \ar@{->}[r]^{\ga\;\;\;\;}  &\qquad  (I\cup J, z)\qquad \ar@{-->}[r]
  & (I\cup J\cup K, v)\\
  & \bigl(J,\rho(z) = \de^{-1}\rho(w)\bigr) \ar@{->}[u] \ar@{->}[r]^{\de} & \bigl(J\cup K,w=\rho(v))\ar@{->}[u]\\
  & &  (K,\rho (w))\ar@{->}[u]
}
$$
It is not hard to check that the formula for $v$ given in (ii) has the required properties.  Details are in \cite{Morb}.
Moreover, it follows its definition that it is 
  a local diffeomorphism, and is compatible with the inverse operation in (iii).
  
  If the group actions are effective, we can now check indirectly that $m$ is associative as follows.
  (The proof in the general case is given in \cite{Morb}.)
For if not, there are  objects $x,y$ and composable morphisms $f,g,h$ such that $f\circ (g\circ h) \ne (f\circ g)\circ h
  \in \Mor(x,y)$.    
  Since $m$ is a local diffeomorphism and the sets $\Mor(x,y)$ are finite, this identity must continue to hold 
  as $x$ and hence $f,g,h,y$ vary.   Therefore we can identify $f\circ (g\circ h) \circ \bigl((f\circ g)\circ h\bigr)^{-1}\in \Mor(x,x)$ with a group element $\ga\in \Ga_I^x$ that must remain fixed as $x$ varies. 
    Therefore $\ga$ must belong to a component of morphisms 
  that are not equal to identity morphisms but yet have  $s=t$. 
Since the existence of such $\ga$ contradicts the effectivity hypothesis,  it follows that $m$ is associative.
\end{proof}

We now sketch the proof that every orbifold  has a strict orbifold atlas.  Recall from Definition~\ref{def:commen} that two atlases $\Kk, \Kk'$ are said to be 
 {\bf directly commensurate}
if they are subatlases of a common atlas $\Kk''$ on $X$, and {\bf commensurate} if there is a finite 
sequence of pairwise directly commensurate atlases starting with $\Kk$ and ending at $\Kk'$.

\begin{prop}\label{prop:orb}  Every compact orbifold $Y$ has a strict orbifold atlas $\Kk$  whose associated groupoid 
$\bG_{\Kk}$ is an orbifold structure on $Y$.   Moreover, there is a bijective correspondence between commensurability classes of such  atlases and Morita equivalence classes of ep groupoids.
\end{prop}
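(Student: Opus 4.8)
\textbf{Proof plan for Proposition~\ref{prop:orb}.}
The plan is to establish two things: first, that every compact orbifold $Y$ admits a strict orbifold atlas whose completion $\bG_\Kk$ represents $Y$; and second, that the maps $\Kk\mapsto \bG_\Kk$ and $\bG\mapsto$ (a canonically constructed atlas) descend to mutually inverse bijections between commensurability classes of such atlases and Morita equivalence classes of ep groupoids. I would organize the argument around an explicit construction of an atlas from any given groupoid representative, followed by compatibility checks.

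First I would construct an atlas from a groupoid. Let $(\bG,f)$ be an ep groupoid structure on $Y$. Since $Y$ is compact and $\bG$ is \'etale and proper, each point $p\in Y$ has a lift $\orr_p\in\Obj_\bG$ with stabilizer $\Ga_p:=\{g\in\Mor_\bG(\orr_p,\orr_p)\}$ a finite group, and (using properness, as in Lemma~\ref{le:vep}~(ii) applied to the local group quotient near $\orr_p$) a $\Ga_p$-invariant open neighbourhood $W_p\subset\Obj_\bG$ such that $f$ induces a homeomorphism $\qu{W_p}{\Ga_p}\to F_p\subset Y$ onto an open set. By compactness choose finitely many $p_1,\dots,p_N$ so that $Y=\bigcup_i F_i$ with $F_i:=F_{p_i}$, $(W_i,\Ga_i):=(W_{p_i},\Ga_{p_i})$. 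For $I\in\Ii_Y$ (those $I$ with $F_I:=\bigcap_{i\in I}F_i\ne\emptyset$), set $\Ga_I:=\prod_{i\in I}\Ga_i$ and define $W_I$ to be the space of tuples of composable morphisms connecting the various $W_i$ over a common point of $\Obj_\bG$, i.e. the iterated fiber product $\{(g_i)_{i\in I}\}$ with $g_i\in\Mor_\bG$ having a common source $z\in\Obj_\bG$ and targets in $W_i$; this carries the product $\Ga_I$-action (each $\Ga_i$ acting by postcomposition on $g_i$) and maps to $Y$ via $f(z)$. This is the standard construction (used for the football in Example~\ref{ex:foot}~(i) and generalized via morphism spaces to handle nontrivial stabilizers): since $\bG$ is \'etale, $W_I$ is a manifold of dimension $d$; since $\bG$ is proper, the $\Ga_I$-action is proper and $\qu{W_I}{\Ga_I}\xrightarrow{\psi_I} F_I$ is a homeomorphism. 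The covering maps $\rho_{IJ}:W_J\to W_{IJ}$ for $I\subset J$ are simply the projections forgetting the morphisms $(g_j)_{j\in J\less I}$; one checks directly that $\rho_{IJ}$ is the quotient by the free $\Ga_{J\less I}$-action followed by a $\Ga_I$-equivariant homeomorphism onto $W_{IJ}=\psi_I^{-1}(F_J)$, and that $\psi_I\circ\rho_{IJ}=\psi_J$ and $\rho_{IJ}\circ\rho_{JK}=\rho_{IK}$. Thus the tuples $(W_I,\Ga_I,\psi_I,\rho_{IJ})$ form a strict orbifold atlas $\Kk$. By Proposition~\ref{prop:groupcomp} its completion $\bG_\Kk$ is an ep groupoid, and the natural functor $\bG_\Kk\to\bG$ (sending an object $(I,z)$ to $z$ and a morphism to the appropriate composite) together with $\bG\to\bG_\Kk$ can both be refined by a common refinement, exhibiting $\bG_\Kk$ as Morita equivalent to $\bG$; hence $(\bG_\Kk,|f|)$ is an orbifold structure on $Y$.

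Next I would prove the bijection. Given the assignment $\bG\mapsto\Kk_\bG$ above, and the assignment $\Kk\mapsto\bG_\Kk$ of Proposition~\ref{prop:groupcomp}, I would show: (a) if $\bG,\bG'$ are Morita equivalent then $\Kk_\bG$ and $\Kk_{\bG'}$ are commensurate — passing to a common refinement $\bG''$, one checks that $\Kk_{\bG''}$ contains both $\Kk_\bG$ and $\Kk_{\bG'}$ as subatlases after enlarging the covering family, so the atlases are (directly, after intermediate steps) commensurate; (b) conversely, if $\Kk,\Kk'$ are commensurate then $\bG_\Kk$ and $\bG_{\Kk'}$ are Morita equivalent — it suffices to treat direct commensurability, where $\Kk,\Kk'\subset\Kk''$, and to observe that the inclusion of a subatlas induces a functor $\bG_\Kk\to\bG_{\Kk''}$ that is fully faithful and essentially surjective in the appropriate sense, hence exhibits a common refinement after pullback; and (c) the two assignments are mutually inverse up to the equivalence relations, i.e. $\bG_{\Kk_\bG}$ is Morita equivalent to $\bG$ (shown in the previous paragraph) and $\Kk_{\bG_\Kk}$ is commensurate to $\Kk$ (by directly comparing the tuples: $\bG_\Kk$ has the same objects as $\Kk$, so the atlas built from $\bG_\Kk$ using the given covering family recovers $\Kk$ on that family).

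The main obstacle I anticipate is the bookkeeping in step (b)/(c): verifying that inclusion of a subatlas $\Kk\subset\Kk''$ really does induce a functor on completions $\bG_\Kk\to\bG_{\Kk''}$ that is an "equivalence onto its image" in a way compatible with the $\io$-maps and realizations, and that this suffices for Morita equivalence. This requires carefully unwinding the composition formula of Proposition~\ref{prop:groupcomp} and checking it is respected by the change of index sets, and handling the non-effective case (where, as in the proof of Proposition~\ref{prop:groupcomp}, one cannot argue associativity or functoriality purely by genericity but must compute). The geometric input — that $W_I$ is smooth and the group actions proper — is routine given \'etaleness and properness of $\bG$, as is the observation that forgetting morphisms gives the covering maps; the delicate part is the purely categorical verification that these constructions are inverse up to the stated equivalences, and for this I would largely refer to the detailed arguments in \cite{Morb}.
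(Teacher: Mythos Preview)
Your overall strategy matches the paper's: build an atlas from a groupoid representative by using morphism spaces of $\bG$ for the transition charts, exhibit a functor back to $\bG$, and then verify the bijection by reducing to direct commensurability and using that subatlas inclusions induce groupoid equivalences. The bijection argument in your (a)--(c) is essentially identical to the paper's.

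However, your construction of $W_I$ is not quite right as stated, and differs from the paper's. You describe $W_I$ as tuples $(g_i)_{i\in I}$ of morphisms with a \emph{common source} $z\in\Obj_\bG$ and targets in the various $W_i$ (a ``star'' configuration), while simultaneously calling them ``composable'', which such a star is not. Taken literally, the star construction is problematic: since $z$ ranges over all of $\Obj_\bG$ and the $\Ga_I$-action by postcomposition is free and transitive on each fiber over $z$, the quotient $\qu{W_I}{\Ga_I}$ identifies with $f^{-1}(F_I)\subset\Obj_\bG$ rather than with $F_I$ itself, so $\upsi_I$ fails to be a homeomorphism. The paper instead orders $I=\{i_0<\cdots<i_k\}$ and takes $W_I$ to be the space of \emph{composable chains} $(\al_{i_k},\ldots,\al_{i_1})$ with $\al_{i_\ell}\in\Mor_\bG\bigl(\si(W_{i_{\ell-1}}),\si(W_{i_\ell})\bigr)$; the action of $\ga\in\Ga_{i_\ell}$ is by inserting $\Tsi(\ga)$ and $\Tsi(\ga)^{-1}$ on either side of the $\ell$-th slot, and the covering $\rho_{IJ}$ is given by partially composing and forgetting (not simply forgetting components). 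This yields a $d$-manifold with $\qu{W_I}{\Ga_I}\cong F_I$, and the functor $\Ff:\bB_\Kk\to\bG$ sends a chain to the target of its top arrow. With this corrected definition of $W_I$, the rest of your argument goes through as you outline.
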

\begin{proof} Let $\bG$ be an ep groupoid with footprint map $f: \Obj_\bG\to Y$.
Our first aim is to construct an atlas $\Kk$ on $Y$ 
together with a functor $\Ff: \bB_\Kk\to \bG$ that covers the identity map on  $Y$ and hence extends to
an equivalence from the groupoid completion
 $\bG_\Kk$  to $\bG$.

 By Moerdijk~\cite{Moe}, each point in $Y$ is the image of a group quotient that embeds into $\bG$.
 Therefore since $Y$ is compact we can find 
 a finite set of basic charts $\bK_i: = 
\bigl(W_i,\Ga_i,\psi_i\bigr)_{1\le i\le N}$ on $Y$ whose footprints 
$(F_i)_{1\le i\le N}$ cover $Y$,
together with embeddings 
$$
\si:{\textstyle \bigsqcup_iW_i\hookrightarrow \Obj_{\bG},\quad  \Tsi:\bigsqcup_i W_i\times \Ga_i\hookrightarrow \Mor_\bG}
$$
  that are compatible in the sense that the following diagrams commute:
$$
\xymatrix
{
W_i\times \Ga_i  \ar@{->}[d]_{s\times t} \ar@{->}[r]^{\Tsi_i} &  \Mor_{\bG}\ar@{->}[d]_{s\times t}    \\
W_i\times W_i \ar@{->}[r]^{\si_i} & \Obj_{\bG}\times \Obj_{\bG},
} \qquad\qquad 
\xymatrix
{
W_i  \ar@{->}[d]_{\psi_i} \ar@{->}[r]^{\si_i} &\Obj_{\bG}  \ar@{->}[d]_{f}   \\
Y \ar@{->}[r]^{\id} & Y,
}
$$
We claim that there is a Kuranishi atlas $\Kk$ with these basic charts whose footprint maps $\psi_I$ extend 
 $f\circ \si: \bigsqcup_i W_i \to Y$. 
 
  To see this, we first consider the sum of two charts.
Given $I: = \{i_0, i_1\}$ with $F_I\ne \emptyset$, order its elements so that $i_0<i_1$ and
consider the set $$
W_I: = W_{\{i_1,i_0\}}: = \Mor_{\bG}(\si(W_{i_0}),\si(W_{i_1})): = (s\times t)^{-1}\bigl(\si(W_{i_0})\times \si(W_{i_1})\bigr)
$$
of morphisms in $\bG$ from $\si(W_{i_0})$ to $\si(W_{i_1})$. 
Then $W_I$ is the inverse image of an open subset of $\Obj_{\bG}\times \Obj_{\bG}$, hence open in 
$ \Mor_{\bG}$, and thus a smooth manifold. 
 Since the points in $f^{-1}(F_I)\cap \si(W_{i_0})$ are identified with points in $f^{-1}(F_I)\cap \si(W_{i_1})$
 by morphisms in $\bG$,
 the restrictions of $s,t$ to $W_I$ have images
 $$
 s(W_I) = f^{-1}(F_I)\cap \si(W_{i_0}),\quad  t(W_I)= f^{-1}(F_I)\cap \si(W_{i_1}).
 $$
 Moreover, for any $x\in s(W_I)$ and $\al\in \Mor_{\bG}(x,y)\in W_I$, we have
 $$
 s^{-1}(x)\cap W_I \cong \Mor_\bG\bigl(t(\al),\si(W_{i_1})\bigr)\cong \Ga_{i_1},
 $$
 where the second isomorphism holds because  by assumption
$f\circ \si_{i_1}$  is the footprint map $\psi_{i_1}: W_{i_1}\mapsto \qu{W_{i_1}}{\Ga_{i_1}}\cong F_{i_1}$.
Rephrasing this  in terms of the action 
of the group $\Ga_I: = \Ga_{i_1}\times \Ga_{i_0}$  on $\al\in W_{I}$ by 
 $$
(\ga_{i_1},\ga_{i_0})\cdot \al = \Tsi(\ga_{i_1})\circ \al\circ \Tsi(\ga_{i_0}^{-1}),
$$ 
one finds that  $\Ga_{i_1}$ acts freely on $W_I$ and that
the  source map $s:  W_I\to  \si(W_{i_0})$
induces a diffeomorphism $\qu{W_I}{\Ga_{i_1}}\to  \si(W_{i_0})\cap f^{-1}(F_I)$.
Similarly, $\Ga_{i_0}$ acts freely, and the target map
$t:  W_I\to  \si(W_{i_1})$
induces a diffeomorphism $\qu{W_I}{\Ga_{i_0}}\to  \si(W_{i_1})\cap f^{-1}(F_I)$.  Since the footprint map
for the chart  $W_{i}$ factors out by the  action of $\Ga_i$, the same is true for this sum chart:
in other words the footprint map
$\psi_I: W_I\to Y,  \al\mapsto f\bigl( s(\al)\bigr) = f\bigl( t(\al)\bigr)$  induces a homeomorphism $\qu{W_I}{\Ga_I}
\stackrel{\cong}\to F_I$. Therefore $W_I$ satisfies all the requirements of a sum 
of two charts.

  To define a transition chart for general $I \in \Ii_Y$,  enumerate its elements as
$i_0<i_1\cdots<i_k$, where $k+1: = |I|\ge 2$ and define
$W_I$ to be the set of composable $k$-tuples of morphisms $(\al_{i_k},\cdots,  \al_{i_1})$, where 
$(s\times t)(\al_{i_\ell}) \in \bigl(\si(W_{i_{\ell-1}}),\si(W_{i_\ell})\bigr)$. 
Then $W_I$
supports an action of $\Ga_I$ given by $$
\ga\cdot (\al_{i_k},\cdots,  \al_{i_1}) = (\al_{i_k},\cdots,  \al_{i_{\ell+1}}\Tsi( \ga)^{-1}, \Tsi(\ga) \al_{i_{\ell}},\cdots,  \al_{i_1}), \quad \ga\in \Ga _{i_\ell}.
$$
Notice that for any $H\subsetneq I$ the subgroup $\Ga_{I\less H}$ acts freely, and the quotient  can be identified with $W_H$ by means of the appropriate partial compositions and forgetful maps. 
Moreover, we define 
%
%
%
 the functor $\Ff: \bB_\Kk\to \bG$  on objects by
$$
W_I\to \Obj_{\bG}, \quad \left\{\begin{array}{ll}  x\mapsto \si(x), &\mbox{ if } I=\{i_0\}, x\in W_{i_0},\\
(\al_{i_k},\cdots,\al_{i_1}) \mapsto t(\al_{i_k})\in \si(W_{i_k})
&\mbox{ if } |I|>1.
\end{array}\right.
$$
Further details of this construction are given in \cite{Morb}.

This sketches why every orbifold has a Kuranishi atlas of the required type. 
 To see that this atlas is unique up to commensurability, note first that 
 because we can start from any covering set of basic charts
any two atlases constructed in this way from the same  groupoid are directly commensurate. 
Further,
 given  groupoid structures  $(\bG,f),(\bG',f')$ on $Y$ with
 common refinement  $F:(\bG'',f'')\to (\bG,f), F':(\bG'',f'')\to (\bG',f')$, 
 all atlases on $Y$ that are constructed from $\bG$ or from $\bG'$ are commensurate to an atlas 
 whose basic charts are
 pushed forward from $\bG''$, and hence they all belong to the same commensurability class.

Conversely, we must show that if $\Kk, \Kk'$  are commensurate,
 the groupoids $\bG_{\Kk}$ and $\bG_{\Kk'}$ are equivalent.
It suffices to consider the case when $\Kk, \Kk'$ 
are  directly commensurate.  But then they are contained in a common atlas $\Kk''$ on $Y$ that defines a groupoid $\bG_{\Kk''}$
that contains both $\bG_{\Kk}$ and $\bG_{\Kk'}$ as subgroupoids with the same realization $Y$.  Thus the inclusions 
 $\bG_{\Kk}\to \bG_{\Kk''}$ and  $\bG_{\Kk'}\to \bG_{\Kk''}$ are equivalences.  This completes the proof.
\end{proof}

\begin{example}\label{ex:gerbe}\rm  (i)  Example~\ref{ex:foot} explains the \lq\lq football", which is a simple example of this construction.  In \cite[Example~3.6]{Morb} we also discuss in detail the
different possible choices of strict orbifold atlas for the noneffective but nontrivial orbifold structure $\bG_{S^2}$ on $S^2$ with group $\Ga: = \Z/2\Z$:
the nontriviality can appear either in the nontriviality of the individual
covering maps $\rho_{IJ}:W_J\to W_I$ (if, for example, $W_I$ is an annulus), or, if
all components of the $W_I$ are contractible, in the relation between the different free actions  of the groups $\Ga_{J\less I}$ on the sets of these components.
\MS

\NI (ii)
If one instead describes an orbifold via a Kuranishi structure as in Remark~\ref{rmk:FOOO1}, then the
 coordinate changes between 
the local charts $(F_p,\Ga_p)$
are related by injections $\phi_{qp}: (F_q,\Ga_q)\to (F_p,\Ga_p)$ that satisfy a compatibility condition 
of the form $\phi_{qp}\circ \phi_{rq} = \ga_{pqr} \phi_{rp}$ for some locally constant map $\ga_{pqr}: F_{pqr}\to \Ga_p$.
In this language, all the local charts for  the nontrivial groupoid $\bG_{S^2}$ in (i) above have the form $(F,\Ga)$, where $\Ga = \Z/2$ acts trivially on $F\subset S^2$, and the twisting is encoded in a suitable choice for the  $\ga_{pqr}$.
For example, one could describe a structure of this type with three basic charts $(F_i,\Ga_i)$ with  $(F_i)_{i=1,2,3}$ 
as above, where the
 coordinate changes $\phi_{ij}: (W_{ij}, \Ga_i)\to (W_j,\Ga_j)$ satisfy the relation
 $\phi_{13} = \ga \phi_{23}\circ \phi_{12}$ where $\ga = \id$ on $F_S\subset F_{123} $ and 
 $\ga \ne \id$ on $F_N\subset F_{123} $.
 \MS
 
 \NI (iii)
 In our treatment, in order to obtain as simple as structure as possible, we consider only atlas coordinate changes.  However Joyce
 defines a Kuranishi space in \cite[\S4]{Joyce} to consist of charts with more elaborate coordinate changes that include both our atlas coordinate changes as well as ones involving group elements  $\ga_{pqr}$. \hfill$\er$
\end{example}

\begin{rmk}\label{rmk:commen}\rm    (i)
The construction in Proposition~\ref{prop:orb} is reminiscent of 
that given in  \cite[\S4]{Mbr} for
the \lq\lq resolution" of an orbifold.  
However, the two constructions have  different aims: here we want to build a simple model for  the orbifold $Y$,
 while there we wanted to construct a {\it resolution}, i.e.
 a corresponding branched manifold with the same fundamental class.
In fact, we can simplify the construction in \cite{Mbr} of such a resolution by first
constructing the atlas $\Kk_Y$ and its corresponding category $\bB_\Kk$ and then removing morphisms from $\bB_\Kk$ 
by the recipe used in Proposition~\ref{prop:zero}  to construct the zero set.

To this end,
 consider a strict  orbifold atlas   $\Kk$  with realization $Y$
and choose a reduction $\Vv$ of the footprint covering.
Since there is no obstruction bundle, the perturbation $\nu$ is identically zero, so that the \lq\lq local zero set"  $Z_I = V_I$ for all $I$.
Thus the wnb category that we will call $(\bV,\La_V)$ constructed in  Proposition~\ref{prop:zero}
has objects $\sqcup_I V_I$ and morphisms and weighting function as described in \S\ref{ss:zero}.  It is a nonsingular subcategory of the groupoid completion
$\bG_\Kk$ of $\bB_\Kk$, such that inclusion induces a map
$$
|\io_\bV|: |\bV|_\Hh\to |\bG_\Kk| = Y, \qquad \  |\io_\bV|_*(\La_V) = \La_Y, \quad\mbox{ where } \La_Y(y) = \tfrac 1{|\Ga^y|}.
$$
Note that this statement holds also in the noneffective case 
(although \cite{Mbr} mostly considered the effective case).\footnote{
In fact, in the orbifold setting it does suffice for these purposes to restrict to the effective case, since, as  is well known,
each connected component  of $Y$ has an effective quotient (see for example 
 \cite[Lemma~2.15]{Mbr}).  
 However, as explained in Example~\ref{ex:zorb} -- and in contrast to the approach in \cite{Mbr} --
  the weighting function should take into account the number of group elements that act trivially.
  }
By \cite[Proposition~4.19]{Mbr},
the fundamental class of  $( |\bV|_\Hh,\La_V)$ is constructed in such a way that $|\io_\bV|$ pushes it forward to the usual fundamental class of $Y$.
\MS

\NI (ii)  The relations between the notions of directly commensurate, commensurate and concordant are not clear even for atlases 
with trivial obstruction bundles.
For example, the
 concordance relation between atlases $\Kk^0,\Kk^1$ on $X$ requires
 there to be an atlas $\Kk^{01}$ over $[0,1]\times X$ with prescribed collar isomorphisms between the 
  restrictions of $\Kk^{01}$ to the collars $[0,\eps)\times X$ and $(1-\eps,1]\times X$ and the 
  product atlases $[0,\eps)\times \Kk^0, (1-\eps,1]\times   \Kk^1$.  However,  there is no requirement 
  on the interior charts (i.e.
those whose footprint does not intersect $ \{0,1\}\times X$) that they are in any way compatible with the product structure,
for example the local
action of the stabilizer group of a point need not decompose as a product.  Hence 
  even when the obstruction bundles 
  are trivial so that the footprint maps $\psi_I$  are defined over the whole of the domains $U_I$,  it is not immediately clear that 
the relation of commensurability for atlases on $X$ with trivial obstruction spaces 
 is the same as the notion of concordance,  though it could well be true.  Since we are using concordance (or more generally  cobordism) simply for convenience, we will not pursue this question further here.
Moreover, because  orbifolds are most properly described in terms of stacks, the \lq\lq correct" equivalence relation 
would no doubt involve the notion of $2$-category as in the work of Joyce.
$\hfill\er$  \end{rmk}

\subsection{Nontrivial obstruction bundles}\label{ss:nontriv}

When calculating Gromov--Witten invariants one often starts with moduli spaces $X$ that have nice geometric structure, though they are not regular.  For example, $X$ might be a manifold (or more generally orbifold) of solutions to the Cauchy--Riemann equations, such that the cokernels form a bundle over $X$.  In this case the VFC should be the Euler class 
of the (orbi)bundle.  We now explain some simple examples of this type, both in the abstract and as applied in the GW setting.
 
There are two possible ways of incorporating nontrivial obstruction bundles into our framework.
We can trivialize the bundle either by adding a complementary bundle or by using local trivializations.
The first method is simpler, but may not adapt well to more complicated situations.   
The second method abstracts the procedure used to construct GW atlases.

\MS

\NI {\bf Method 1:}  We explain this method in the case when the isotropy is trivial. It generalizes to cases when the obstruction bundle is a global quotient.  With more complicated isotropy, one would need more charts and so should use Method 2.

\begin{lemma}  
Suppose that $\pi_X: T\to X$ is a nontrivial $k$-dimensional oriented dimensional bundle over a  compact $d+k$ dimensional oriented manifold $X$.
Then there is an oriented  Kuranishi atlas $\Kk$  on $X$ whose VMC equals the Euler class $\chi(T)\in H_d(X)$.
\end{lemma}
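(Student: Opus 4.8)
The plan is to build a Kuranishi atlas with exactly one chart (so that all coordinate change issues are trivial) whose obstruction space carries a section that models the given bundle $T\to X$. First I would choose a complementary bundle: since $X$ is compact, standard bundle theory gives an oriented vector bundle $T'\to X$ such that $T\oplus T'$ is isomorphic to the trivial bundle $X\times E$ for some oriented finite dimensional real vector space $E$ with $\dim E = \dim T + \dim T' =: m$. Set $U$ to be the total space of $T'$, with projection $\pi: U\to X$; this is a smooth oriented manifold of dimension $(d+k) + (m-k) = d + m$. Define the section $s: U\to E$ by $s(v') = (\sigma_T(\pi(v')), v')$ under the identification $T\oplus T' \cong X\times E$, where $\sigma_T$ denotes the zero section of $T$; more precisely, at a point $v'\in T'_x$ we let $s(v')\in E$ be the image of $(0_x, v')\in T_x\oplus T'_x$ under the trivialization. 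Then $s^{-1}(0) = \{v'\in T' : v' = 0\} = $ the zero section of $T'$, which we identify with $X$ via $\psi := \pi|_{s^{-1}(0)}$, a homeomorphism onto $F := X$. This gives a single Kuranishi chart $\bK = (U, E, \id, s, \psi)$ of dimension $\dim U - \dim E = (d+m) - m = d$ (in the sense of Definition~\ref{def:chart2} with trivial isotropy), and hence a (weak) Kuranishi atlas $\Kk$ with $\Ii_\Kk = \{\{1\}\}$, which trivially satisfies all cocycle conditions and is tame.

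Next I would check orientability. By Proposition~\ref{prop:det2} the determinant bundle $\det(\Kk)$ is given over the single chart by $\lm \rT U \otimes (\lm E)^*$. Since $U = T'$ is the total space of an oriented bundle over the oriented manifold $X$, we have $\rT U \cong \pi^*\rT X \oplus \pi^* T'$ as oriented bundles, so $\lm \rT U$ is oriented; combined with the chosen orientation of $E$, this gives a nonvanishing section $\sigma$ of $\det(\Kk)$, i.e. an orientation of $\Kk$ in the sense of Definition~\ref{def:orient}. (One should fix sign conventions here so that the induced orientation on the perturbed zero set below agrees with the standard orientation of the Euler class; this is the one place requiring care with signs, but it is routine.)

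Then I would run the general machinery: by Theorem~\ref{thm:K} (or Propositions~\ref{prop:tame0} and~\ref{prop:proper}, trivial here since there is one chart), $\Kk$ has a metrizable tame shrinking with well-behaved realization; by Proposition~\ref{prop:cov2} it has a reduction $\Vv$, which we may take to be a precompact neighbourhood $V_1\sqsubset U$ of the zero section; and by Proposition~\ref{prop:ext} there is an admissible, precompact, transverse reduced perturbation $\nu = (\nu_1)$ with $\nu_1: V_1\to E$. Admissibility is vacuous since there is only one chart, so $\nu_1$ is just any small smooth perturbation making $s + \nu_1 \pitchfork 0$; by Proposition~\ref{prop:zeroS0} the zero set $Z := (s+\nu_1)^{-1}(0)$ is a closed oriented $d$-manifold, and the key point is to identify its homology class. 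Here one chooses $\nu_1$ adapted to the bundle structure: working fiberwise over $X$, $s+\nu_1$ restricted near the zero section is, after trivializing, the map $v'\mapsto (\sigma_T(x) + (\text{small }\nu\text{-term in }T_x), v' + (\text{small }\nu\text{-term in }T'_x))$; the $T'$-component cuts $Z$ down to (the graph of) a small perturbation of $\sigma_T$, i.e. $Z$ is diffeomorphic to the zero set of a generic section of $T$ over $X$, and the inclusion $\iota_Z: Z\to |\Vv|\subset |\Kk|$ composed with $\pi_\Kk$-identifications pushes $[Z]$ forward to the Euler class $\chi(T)\in H_d(X)$; passing to the inverse limit over shrinkings as in the sketch proof of Theorem~B yields the class in $\check H_d(X;\Q)$. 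The main obstacle is precisely this last identification — matching the abstractly-constructed transverse perturbation with a geometric one representing the Euler class, and checking orientations are consistent — but since everything takes place in a single chart with no isotropy, this reduces to standard differential-topology bookkeeping rather than any new difficulty.
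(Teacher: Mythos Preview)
Your proposal is correct and follows essentially the same approach as the paper: both build a single-chart atlas with domain the total space of a complement $T^\perp$ (your $T'$) to $T$ inside a trivial bundle $X\times E$, with section given by the inclusion $T^\perp\hookrightarrow E$. The only difference is that the paper bypasses the abstract perturbation machinery entirely by writing down the explicit perturbation $\nu = \io_T\circ\nu^X\circ\pi_X^\perp$ induced from a chosen section $\nu^X:X\to T$, so that $(s+\nu)(v') = (\nu^X(\pi(v')),\,v')$ vanishes exactly when $v'=0$ and $\nu^X=0$; this makes the identification of the zero set with $(\nu^X)^{-1}(0)$ and the transversality equivalence immediate, avoiding the ``adapted to the bundle structure'' step you sketch at the end.
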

\begin{proof}  Choose 
 an oriented  complementary bundle $\pi_X^\perp:T^\perp\to X$ such that $T\oplus T^{\perp}\cong X\times \R^m$.  Denote by $\Hat\io: T^\perp\to \R^m$ the composite of the inclusion $T^\perp \to X\times \R^m$ with the projection. Then
define an atlas  $\Kk$ with a single chart 
$$
\bK = \bigl(\,U = T^{\perp}, \; E = \R^m,\; s = \Hat\io, \;\psi\,\bigr),
$$
where $\psi$ identifies the zero section of $T^\perp$ with $X$. It has no nontrivial coordinate changes.
Any 
section $\nu^X:X\to T$ 
gives rise to a 
perturbation  section $\io_T\circ \nu^X\circ \pi_X^\perp: U\to \R^m$, where $\io_T: T\to \R^m$ is the inclusion. Then $s+\io_T\circ \nu(u) = 0$  only if $s(u) = 0\in T^\perp$ and $\nu(u) = 0\in E$.  Hence the sections $\nu^X$ and $s+\io_T\circ \nu^X\circ \pi_X^\perp$ have the same zero set; moreover one is transverse to zero if and only if the other is.
\end{proof}

\begin{rmk}\rm  Suppose that $X = \oMm_{0,k}(M,A,J)$ 
 is a manifold consisting of equivalence classes of stable maps with domains of constant topological type.
If the cokernels of the linearized Cauchy--Riemann operator  of \eqref{eq:lindbar} form a bundle over $X$ of constant rank, then one might be able to carry out this construction in the GW setting since, at least locally, one can always find a suitable 
embedding $\la$ of $E^\perp$; see \cite[Example~7.2.4]{JHOL}.  However there may be no a global stabilization for the domains of the
curves.  The next method is more local, and so more adaptable.
$\hfill\er$  \end{rmk}

\NI  {\bf Method 2:}    We begin with the case of trivial isotropy.  
The first step is to  build an oriented (standard) Kuranishi atlas that models the nontrivial bundle $T\to X$.
To this end,
choose a finite open cover $(F_i)_{i=1,\dots, N}$ of $X$ together with trivializations $\tau_i: T_0\times F_i\to T|_{F_i}$, where $T_0$ is the fiber of $T$.  We will define an atlas with indexing set $\Ii_\Kk= \bigl\{I\subset \{1,\dots, N\}: F_I\ne \emptyset\bigr\}$, 
and basic charts
\begin{equation}\label{eq:basic}
\bK_i: = \bigl(F_i, E_i: = T_0, s_i \equiv 0, \psi_i = \id\bigr).
\end{equation}  
The transition charts are:
$
\bK_I: = \bigl(U_I, E_I: = \prod_{i\in I} E_i,  s_I , \psi_I \bigr)$ where
\begin{equation}\label{eq:sum}
U_I = \Bigl\{ (\vec e, x)\in E_I\times F_I\ | \ {\textstyle \sum _{i\in I} }\tau_i(e_i,x)  = 0\Bigr\},\quad s_I(\vec e, x) = \vec e,\quad \psi_I(\vec 0,x) = x\in F_I.
\end{equation}
The coordinate changes have domains $U_{IJ}: = (E_I\times F_J)\cap U_I$ and are induced by the obvious inclusions
\begin{align*}
 & \Hat\phi_{IJ}: E_I\to E_J, \quad (e_i)_{i\in I}\mapsto \bigl((e_i)_{i\in I}, (0_j)_{j\in J\less I}\bigr),\\
& \phi_{IJ}(\vec e, x)  = \bigl(\Hat\phi_{IJ}(\vec e), x\bigr).
\end{align*}
Thus each chart has dimension $\dim X - \dim T_0$.   The cocycle condition is immediate.  
Moreover the index condition holds
because  the inclusion $\Hat\phi_{IJ}: E_I|_0\to E_J|_0$ and $\Hat\phi_{IJ}: E_I\to E_J$ have
isomorphic  cokernels, where  $E|_0 = \{\vec e\in E:\sum \vec e = 0\}$.
   Therefore this set  $\Kk = \bigl(\bK_I, \Hat\Phi_{IJ})_{I\subset J, I,J \in \Ii_\Kk}$ of charts and coordinate changes
defines an additive atlas, which is tame by construction.
Note the commutative diagram
\begin{equation}\label{eq:diagr}
\xymatrix{
\bE_{\Kk}  \ar@{->}[d]_{\pr} \ar@{->}[r]^{\pi_\Kk} &  |\bE_{\Kk}|   \ar@{->}[d]_{\pr} \ar@{->}[r]^{\tau} & T\ar@{->}[d]_{\pi_X}    \\
\bB_\Kk \ar@{->}[r]^{\pi_\Kk} & |\Kk| \ar@{->}[r]^{\pi}& X,
}
\end{equation}
where  $\tau:  |\bE_{\Kk}|\to T$ is induced by  $\tau\bigl((e_i)_{i\in I}\bigr) = \sum_{i\in I} \tau_i(e_i)$ and
$\pi: |\Kk|\to X$  by the projection $(\vec e,x)\mapsto x$.

By Lemma~\ref{le:cov0}, the footprint covering $(F_i)_{i=1,\dots,N}$ has a reduction $(Z_I)_{I\in \Ii_\Kk}$.
This defines a reduction    $(V_I)_{I\in \Ii_\Kk}$ of $\Kk$ with 
 $V_I: = 
\bigl\{ (\vec e, x)\in U_I \ | x\in  Z_I\bigr\}$.
We now show that 
each section $\nu^X:X\to T$ that is transverse to the zero section lifts to 
 an admissible perturbation section of
$\pr: \bE_{\Kk}|_\Vv\to  \bB_{\Kk}|_\Vv$.

\begin{prop} \label{prop:Euler} Let $\pi_X:T\to X$ be a nontrivial bundle over a compact manifold $X$ with atlas 
 $\Kk$ as above.  Let $(Z_I^X\sqsubset F_I)_{I\in \Ii_\Kk}$ be any reduction of the footprint cover and 
 $\Vv$  the associated reduction of $\Kk$ as above.  
Then any  section $\nu^X:X\to T$ of $\pi_X$ lifts to  a functor $\nu: \bB_{\Kk}|_\Vv\to 
\bE_{\Kk}|_\Vv$ with local zero sets $(\s|_{V_I} + \nu_I)^{-1}(0) = \psi_I^{-1}\bigl(F_I\cap (\nu^X)^{-1}(0)\bigr)$.
  Moreover, if $\nu^X$ is transverse to $0$, so is its lift.  Therefore $[X]^{vir}_\Kk = \chi(T)$.
\end{prop}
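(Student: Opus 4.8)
\textbf{Proof plan for Proposition~\ref{prop:Euler}.}
The plan is to verify directly that the section $\nu^X:X\to T$ pulls back through the commutative diagram \eqref{eq:diagr} to a genuine functor $\nu$ on $\bB_\Kk|_\Vv$, to identify the local zero sets, and then to invoke the general VMC construction of \S\ref{ss:red} together with Proposition~\ref{prop:zeroS0} to conclude. First I would make the lift explicit. Over $F_i$ we have the trivialization $\tau_i:T_0\times F_i\to T|_{F_i}$, and each transition chart domain $V_I$ consists of tuples $(\vec e,x)$ with $x\in Z_I^X$ and $\sum_{i\in I}\tau_i(e_i,x)=0$; so the projection $\pi:|\Kk|\to X$ restricts on $\pi_\Kk(V_I)$ to a map $(\vec e,x)\mapsto x$ onto $Z_I^X$. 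Pulling $\nu^X$ back by $\pi$ and then expressing the result in the fiber coordinates of $\bE_\Kk|_\Vv$ via the trivializations $\tau_i$ gives, for each $I$, a map $\nu_I:V_I\to E_I$; concretely, if $\nu^X(x)=\tau_i(c_i(x),x)$ in the $i$th trivialization then $\nu_I(\vec e,x)$ should be the tuple whose $i$th component records $c_i(x)$ for $i\in I$ (one must check this is well defined, i.e.\ independent of which trivialization is used to read off components, using the transition functions of $T$; this is where the product structure $E_I=\prod_{i\in I}E_i$ is used). The compatibility square \eqref{eq:comp} for $\nu$ then holds because $\Hat\phi_{IJ}$ is the obvious inclusion and $\phi_{IJ}$ is the identity on the $x$-coordinate, so $\nu$ is a functor $\bB_\Kk|_\Vv\to\bE_\Kk|_\Vv$ with $\pr_\Kk\circ\nu=\id$.

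Next I would compute the local zero sets. Since $s_I(\vec e,x)=\vec e$ and $\nu_I(\vec e,x)$ records the components $(c_i(x))_{i\in I}$, we have $(s_I|_{V_I}+\nu_I)(\vec e,x)=0$ iff $e_i=-c_i(x)$ for all $i\in I$ \emph{and} the defining constraint $\sum_{i\in I}\tau_i(e_i,x)=0$ holds; substituting, the constraint becomes $\sum_{i\in I}\tau_i(-c_i(x),x)=-\sum_i\tau_i(c_i(x),x)=-\,|I|\cdot\nu^X(x)$ when all trivializations agree, or more carefully $-\nu^X(x)$ after accounting for the transition functions, so in any case it forces $\nu^X(x)=0$. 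Conversely if $\nu^X(x)=0$ with $x\in Z_I^X$ then $(\vec 0,x)$ is the unique zero in $V_I$ over $x$. Hence $(s_I|_{V_I}+\nu_I)^{-1}(0)=\psi_I^{-1}\bigl(Z_I^X\cap(\nu^X)^{-1}(0)\bigr)$ — I would restate Proposition~\ref{prop:Euler} with $Z_I^X$ in place of $F_I$, since the reduced domain only sees $Z_I^X$; the realization of this collection of local zero sets is the full zero set $(\nu^X)^{-1}(0)\subset X$, because $(Z_I^X)$ covers $X$. A short computation with $\rd(s_I+\nu_I)$ — whose $E_I$-component is $\rd s_I=\id$ on the $\vec e$-directions plus $\rd\nu_I$ on the $x$-directions, modulo the tangent space to the constraint manifold $V_I$ — shows that transversality of $\nu^X$ to the zero section of $T$ is equivalent to transversality of each $s_I+\nu_I$ to $0$; this uses precisely the index condition recorded in \eqref{eq:nuindex}, namely that $\mathrm{pr}_{E_I}^\perp\circ\rd s_J$ is an isomorphism on the normal directions, so that the only genuine transversality content is along the zero set of $s_I$, which is exactly where $\nu_I$ is the pullback of $\nu^X$.

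Finally, admissibility \eqref{eq:admiss}: on the overlap $V_J\cap\phi_{IJ}(V_I\cap U_{IJ})$ the components of $\nu_J$ indexed by $J\setminus I$ are pulled back from trivializations $\tau_j$, $j\in J\setminus I$, whose values at points of $\im\phi_{IJ}$ depend only on $x$; but on $\im\phi_{IJ}$ the $x$-coordinate is constrained by $s_J\in\Hat\phi_{IJ}(E_I)$, which after unravelling shows $\rd\nu_J$ lands in $\im\Hat\phi_{IJ}$ there (alternatively, one simply arranges the trivializations and the lift so that the $J\setminus I$ components vanish near $\im\phi_{IJ}$, as in the GW construction). With $\nu$ admissible, transverse, and precompact (shrink $\Vv$ slightly; precompactness holds since $X$ is compact and $(\nu^X)^{-1}(0)$ is closed), Proposition~\ref{prop:zeroS0} identifies $|\bZ^\nu|$ with the closed oriented manifold $(\nu^X)^{-1}(0)$ — oriented because $\Kk$ is oriented, via the comparison of orientations along \eqref{eq:diagr} — and the VMC construction at the end of \S\ref{ss:red} then yields $[X]^{vir}_\Kk=\bigl[(\nu^X)^{-1}(0)\bigr]\in H_d(X;\Q)$. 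Choosing $\nu^X$ transverse (which exists since $T$ is a smooth bundle), $[(\nu^X)^{-1}(0)]$ is by definition the Euler class $\chi(T)$, and independence of $[X]^{vir}_\Kk$ from $\nu$ (Corollary~\ref{cor:ext2}) makes this unambiguous. The main obstacle I anticipate is the bookkeeping in the previous paragraph: checking that the pullback of $\nu^X$ is \emph{well defined} as a map into $E_I=\prod_{i\in I}E_i$ (not just into the fiber $T_0$) and simultaneously \emph{admissible}, since these two requirements pull in slightly opposite directions and both are phrased in terms of the transition functions of $T$ read against the inclusions $\Hat\phi_{IJ}$; everything else is a routine application of the machinery already set up.
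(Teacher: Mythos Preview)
Your lift $\nu_I(\vec e,x)=(c_i(x))_{i\in I}$ with $c_i(x)=\tau_i^{-1}(\nu^X(x))$ is \emph{not} a functor: the compatibility square \eqref{eq:comp} fails, not just admissibility. For $I\subsetneq J$ and $(\vec e,x)$ in the overlap, the $j$th component of $\nu_J(\phi_{IJ}(\vec e,x))$ is $c_j(x)=\tau_j^{-1}(\nu^X(x))$, generically nonzero, whereas the $j$th component of $\Hat\phi_{IJ}(\nu_I(\vec e,x))$ is $0$ for every $j\in J\setminus I$. Your sentence ``The compatibility square \eqref{eq:comp} for $\nu$ then holds because $\Hat\phi_{IJ}$ is the obvious inclusion and $\phi_{IJ}$ is the identity on the $x$-coordinate'' is simply wrong: those facts show only that the \emph{$I$-components} agree, not that the $J\setminus I$-components vanish. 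You partially see this in your admissibility paragraph, but you are checking the derivative condition \eqref{eq:admiss} when what fails is the more basic value condition \eqref{eq:comp}. Your ``alternative'' of arranging the $J\setminus I$-components to vanish near $\im\phi_{IJ}$ is the right instinct, but you have not said how; and your zero-set computation, producing $-|I|\cdot\nu^X(x)$, already signals that your raw lift does not interact correctly with the constraint.

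The paper fixes exactly this by first constructing a partition of unity $(\be_i)$ on $X$ subordinate to the reduction $(Z_I^X)$, with the key property that $\be_j(x)=0$ whenever $x\in Z_I^X$ and $j\notin I$ (equivalently $\sum_{i\in I}\be_i(x)=1$ on $Z_I^X$). One then sets $\nu_I^i(x)=\be_i(x)\,\tau_i^{-1}(\nu^X(x))$. The vanishing property forces the $J\setminus I$-components of $\nu_J$ to be zero on the overlap $V_I\cap\phi_{IJ}^{-1}(V_J)$ (where $x\in Z_I^X$), so \eqref{eq:comp} holds; and the normalization $\sum_{i\in I}\be_i=1$ on $Z_I^X$ gives $\sum_{i\in I}\tau_i(\nu_I^i(x),x)=\nu^X(x)$, whence $(s_I+\nu_I)(\vec e,x)=0$ iff $\vec e=0$ and $\nu^X(x)=0$, with the correct multiplicity. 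This partition-of-unity step is the missing idea in your proposal; once you insert it, the rest of your outline (transversality, orientation, identification with the Euler class) goes through essentially as you describe.
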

\begin{proof}  The first step is to choose a smooth partition of unity $(\be_i)_{i = 1,\dots,N}$ 
on $X$ such that
\begin{equation}\label{eq:partu}
x\in Z_J^X\Longrightarrow  \sum_{i\in J} \be_i(x) = 1.
\end{equation}
To do this, fix a metric $d$ on $X$, and define
$$
\rho_i(x) : = d(x, {\textstyle \bigcup_{J:i\notin J} {\ov Z}\,\!^X_J).}
$$
Thus $\rho_i(x)=0$ if  and only if $x\in \bigcup_{J:i\notin J} {\ov Z}\,\!^X_J$.
We claim that for all $x$ there is $i$ such that $\rho_i(x)\ne 0$.  To see this,
note that for each $x$ the set of $J$ such that 
 $x\in {\ov Z}\,\!_J^X$ is totally ordered and so can be written 
as a chain $J_1^x\subsetneq J_2^x\subsetneq\dots \subsetneq J_q^x$ for some $q\ge 1$.
Therefore if $i\in J_1^x$ and $x\in {\ov Z}\,\!^X_J$ we must have $i\in J$.  Hence $\rho_i(x)\ne 0$ for all $i\in J_1^x$. 
%
Thus the function $\sum_{i=1}^N \rho_i$ does not vanish anywhere on $X$, which implies that
 the function
$\be_i(x): = \frac 1{\sum_j \rho_j(x)} \rho_i(x)$ is well defined, and gives a partition of unity on $X$.
Moreover \eqref{eq:partu} holds because 
$$
x\in Z_J^X,\; i\notin J\;\; \Longrightarrow\;\; \be_i(x) = 0.
$$

Next define $V_I: = \bigl\{(\vec e, x)\in U_I \ | \ x\in Z_I^X\bigr\}$. These sets $(V_I)_{I\in \Ii_\Kk}$ form a reduction of $\Kk$.   
Further, given a section $\nu^X: X\to T$ there is an associated functor
$\nu: \bB_{\Kk}|_\Vv\to 
\bE_{\Kk}|_\Vv$ defined by
$$
\nu_I(x):  =  \bigl(\nu^i_I(x)\bigr)_{i\in I} \; = \;  \Bigl(\be_i(x)\tau_i^{-1}\bigl(\nu^X(x)\bigr)\Bigr)_{i\in I} \in E_I.
$$
These perturbations are compatible with the coordinate transformations and have the property that 
for each $x\in Z_I^X$ we have $\sum_i \tau_i(\nu^i_I(x))  = \nu^X(x)$.  On the other hand, by definition of $U_I$ and $s_I$ 
the elements $\vec e = (e_i)_{i\in I}$  in $\im s_I(x)\subset E_I$ have the property that $\sum_i \tau_i(e_i,x) = 0$.
Therefore $(s_I|_{V_I} + \nu_I)(x) = 0$ precisely if  $ \nu^X(x) = 0$.  Further, the fact that $\nu^X$ is transverse to the zero section easily implies that $s_I|_{V_I} + \nu_I$ is also transverse to zero.  
Similarly, one can check that the orientation of $T\to X$ induces an orientation of $\Kk$ and that the induced orientations (defined as in Lemma~\ref{le:locorient1}) on the zero sets agree. 
Since the zero set is compact, this completes the proof.
\end{proof}  

\begin{rmk}\label{rmk:E}\rm 
  If the manifold $X$ is a GW moduli space (with trivial isotropy) and $T$ is isomorphic to the bundle of cokernels of the linearized Cauchy--Riemann operator,
then it is not hard to build a GW atlas with basic charts  $(\bK_i)_{1,\dots,N}$ as above, provided that we choose
suitably small footprints $F_i$ whose elements can be stabilized by a slicing manifold $Q_i$. Note that the section $s_i$ as geometrically defined in \eqref{eq:spsi} is zero since there are no solutions of the equation
$\pbar f = \la(\vec e)|_{{\rm graph} f}$ with the element $\vec e = e_i \in E_i\less \{0\}$. To see that 
 the transition charts provided by the GW construction have the form described above, we must compare the defining equation
 $\sum_{i\in I}\tau_i(e_i,x) = 0$ for the subset $U_I\subset X\times E_I$ with the equation 
 $\pbar f = \la(e_i)|_{{\rm graph} f}$ that defines 
 the transition chart in  \eqref{eq:coordf4}.   Here the linear map $\la$ (initially defined in  \eqref{eq:linE}) plays the role of the local trivialization and is summed over $i\in I$ as in \eqref{eq:lasumI}, though this is not explicit  in the notation.
 Therefore these equations are the same; the only  difference in the construction is that in equation~\eqref{eq:sum} we take $U_I$ to consist of {\it all }solutions to the equation  $\sum_{i\in I}\tau_i(e_i,x) = 0$ while in the GW construction we restrict to a neighbourhood of
 the zero set of $s_I$; see  \eqref{eq:Hat U}.
Hence, in this situation there is an open embedding in the sense of 
Definition~\ref{def:Kumap} of 
the GW atlas  into the  atlas constructed in 
Proposition~\ref{prop:Euler}.
$\hfill\er$  \end{rmk}

\NI {\bf The case with isotropy.} 
Now suppose given an oriented orbibundle $\pi:T\to X$ over a smooth compact orbifold $X$ with fiber $T_0$.  
We first explain the standard way of defining its Euler class, using the strict orbifold atlases for $X,T$ constructed in \S\ref{ss:orb}.
  
Choose a covering of $X$ by group quotients $\bigl((W_i,\Ga_i)\bigr)$  that is compatible with the bundle 
in the sense that
 for each $i$ the orbibundle $T|_{F_i}$ pulls back to a trivial  bundle $(\psi^X_i)^*(T|_{F_i})$ on which $\Gamma_i$ acts
 by a product action.  
By Proposition~\ref{prop:orb} we may extend this family of basic charts to an orbifold atlas, $\Kk_X^{orb}$ on $X$ with  charts $(W_I,\Ga_I,\psi^X_I)$ and
 footprint cover $(F_I)$.  
 Moreover the orbifold $T$  has a corresponding atlas
 $\Kk_T^{orb}$  with  charts $(T|_{W_I},\Ga_I,\psi^T_I)$ and
 footprint cover $(T|_{F_I})\subset T$, where for simplicity  we denote the pullback 
 $(\psi^X_I)^*(T|_{F_I})$ of $T$ to $W_I$ simply by $T|_{W_I}$.
 By Proposition~\ref{prop:groupcomp} the categories 
 $$
 \bB_X^{orb}: = \bB_{\Kk_X^{orb}},\quad  \bB_T^{orb}: = \bB_{\Kk_T^{orb}},
 $$
 corresponding to these orbifold atlases have completions to 
 ep groupoids  that we will denote  $\bG_T^{orb}, \bG_X^{orb}$ and the projection $\pi$ induces a functor
 $\pi:\bG_T^{orb}\to \bG_X^{orb}$ that restricts on the object spaces to the bundle projection $
 \bigsqcup_I T|_{W_I}\to  \bigsqcup_I {W_I}$.
%
%
%
%
%

 One way to define the Euler class of $\pi: T\to X$ is to consider a \lq\lq nonsingular  resolution" of the groupoid $\bG_X^{orb}$, 
 pull the bundle $T\to X$ back to this resolution and then push forward to $X$ the (weighted) zero set of a generic section $\nu^X$ of this bundle. 
 As we explained in Remark~\ref{rmk:commen}~(i),  
we can take the resolution  of  $\bG_X^{orb}$ to be the wnb groupoid $\bV_X^{orb}$ formed as in Proposition~\ref{prop:zero}
from a reduction of $\Kk_X^{orb}$.
Indeed,if  $(G_I^X)$ is
 a reduction  of the footprint cover $(F_I = \psi^X_I(W_I))$ of  $\Kk^{orb}_X$,
the sets   $\bigl(V_I^{orb} =(\psi^X_I)^{-1}(Z_I^X)\bigr)\sqsubset W_I$ form a reduction of
$\Kk_X^{orb}$. The corresponding wnb groupoid  $\bV_X^{orb}$ 
has 
 \begin{itemize}\item objects
 $\Obj_{\bV^{orb}} = \bigsqcup_I V_I^{orb}$, 
 \item   morphisms 
  generated by the projections $\rho_{IJ}^X:\TV^{orb}_{IJ}\to V_I^{orb}$ for $I\subsetneq J$.
  \item weighting function given by assigning the weight $\frac 1{|\Ga_I|}$ to each component 
  $V_I^{orb}$.
  \end{itemize}
The pullback of $T$ to $\bV_X^{orb}$ is the corresponding wnb groupoid with objects 
$ \bigsqcup_I T|_{V_I^{orb}}$.  Therefore 
the section $\nu^X = (\nu^X_I)$ is given by a compatible family of sections 
\begin{equation}\label{eq:nuXE}
\nu^X_I: V_I^{orb}\to T|_{V_I^{orb}},\quad 
\nu^X_J|_{\TV_{IJ}^{orb}} = \nu^X_I\circ \rho^X_{IJ}.
\end{equation}
If $\nu\pitchfork 0$, there is a full subcategory $\bZ_X^{orb}$ of $\bV_X^{orb}$ 
with objects $\nu_I^{-1}(0)\subset V_I^{orb}$.  Again this forms a wnb groupoid, with the induced weighting function.
It follows from  \cite[Proposition~4.19]{Mbr}
\footnote
{
Warning: In \cite{Mbr} the word \lq\lq atlas" has a  different meaning from our current usage; 
see  \cite[Definition~2.11]{Mbr}.} 
  that its pushforward to $X$ represents the Euler class of $T\to X$.

Let us now compute this Euler class by
using a Kuranishi atlas.  
The first step is to build a Kuranishi atlas $\Kk$ such that the bundle $\pr:\bE_\Kk\to \bB_\Kk$ is a model for the original bundle $T\to X$, in the sense that there is an analog of the  commutative diagram~\eqref{eq:diagr}.
To this end,  choose  $\Gamma_i$-equivariant trivializations 
 $$
 \tau_i: T_i\times W_i\stackrel{\cong}\to (\psi^X_i)^*(T|_{F_i}),
 $$
 where we denote the fiber (which is isomorphic to $T_0$) by $T_i$  to emphasize that it supports an action of $\Ga_i$.
We then define a Kuranishi atlas essentially as before, incorporating the groups in the natural way.
 Thus 
\begin{equation}\label{eq:orbib}
{\textstyle \bK_I: = \bigl(U_{I}, \;\; E_I: = \prod_{i\in I} T_i,\;\;   \Ga_I: = \prod_{i\in I} \Ga_i, \; s_I ,\; \psi_I \bigr)}
\end{equation}
 where
\begin{align*}\label{eq:sum}
U_{I} & = \Bigl\{ (\vec e, x)\in E_I\times W_I\ | \ \sum _{i\in I} \tau_i(e_i,x)  = 0\Bigr\},\quad s_I(\vec e, x) = \vec e,\quad \psi_I(\vec 0,x) = \psi^X_I(x)\in F_I.
\end{align*}
The coordinate changes have lifted domains $\TU_{IJ}: = \Hat\phi_{IJ}(E_I)\times W_J\subset U_J$ and are induced by the obvious projections
$$
\rho_{IJ}: 
\bigl((e_i)_{i\in I}, (0_j)_{j\in J\less I}, x\bigr) \mapsto \bigl((e_i)_{i\in I},\rho^X_{IJ}(x)\bigr) \in U_I,
$$
where $\rho_{IJ}^X: W_J\to W_I$ is part of the corresponding coordinate change in the 
orbifold atlas $\Kk^{orb}_X$.

 As before, $\Kk$ is tame
so that the theory developed in \S\ref{s:iso} applies.  In particular, the reduction $(Z_I^X)$ of the footprint cover
 chosen above induces a compatible reduction $\Vv\subset \Kk$ with $V_I: = U_I\cap (E_I\times V^{orb}_I)$. Further,
the proof of Proposition~\ref{prop:Euler}  carries through to show that every section $\bigl(\nu^X_I\bigr)
$ as in \eqref{eq:nuXE}  may be lifted via a partition of unity to a perturbation  $\nu: \bB_{\Kk}|_{\Vv}\to 
\bE_{\Kk}|_{\Vv}$  in the sense of Definition~\ref{def:sect2}.   Hence, as before,  the VFC defined by the Kuranishi atlas $\Kk$ is the Euler class of $T\to X$.  Further details are left to the interested reader.
Thus we conclude:
\begin{itemize}\item {\it  The analog of Proposition~\ref{prop:Euler} holds for general oriented orbibundles.}
\end{itemize}

\begin{rmk}\rm  If $X$ is more complicated, for example a union of strata each of which has  fixed dimension and 
cokernel bundle of constant rank, then one should 
first build local atlases that model each stratum separately, and then put them together via the gluing parameters.
   Suppose, for example, that $X$ is a compact $2k$-dimensional manifold that contains a codimension $2$ submanifold $Y$ consisting of curves with one node, and that the cokernels have constant rank $2r$ so that they form bundles $T^Y\to Y$ and $T^X\to X\less Y$.  
  The
 complex line bundle $L_Y$ over $Y$ formed by the gluing parameter at the node is the normal bundle to $Y$ in $X$.
 Let $\Nn(Y)\subset X$ be a neighbourhood of $Y$ that forms a disc bundle $\pi_Y: \Nn(Y)\to Y$.  
 Notice that the restriction of $T^X$ to $\Nn(Y)$ may not simply  be the pullback  $\pi_Y^*(T^Y)$  because when one glues with the family of parameters $a=\eps e^{2\pi i\theta}, \theta\in S^1$, one of the components twists by $2\pi \theta$ relative to the other.
In many situations the bundles
$T^X, T^Y$ have a natural complex structure that is preserved by the $S^1$ action in the fibers of
$\p\Nn(Y)\to Y$, and
$T^Y$ decomposes into a finite sum $\oplus_k T^Y_k$ so that
 $T^X|_{\Nn(Y)\less Y} = \oplus_k \pi_Y^*\bigl(T^Y_k\otimes _\C (L_Y)^{\otimes k}\bigr)$.
 One should then build the atlas over $X$
 in stages, with one atlas over a neighbourhood $\Nn_1(Y)$ of $Y$ with obstruction bundle $T^Y$ and gluing parameters in $L_Y$, another over $X\less 
\Nn_2(Y)$ with obstruction bundle $T^X$, and appropriate transition charts over the deleted neighbourhood
$\Nn_1(Y)\less \Nn_2(Y)$.  The interaction of  the gluing parameters and the obstruction bundles will  be seen in the
structure of these transition charts. Of course, if one really wanted to carry out such computations, rather than using the rather rigid notion of atlases one probably should first
 generalize the previous discussion to show that the fundamental cycle can be calculated using more general atlases,
in which, for example,  the bundles $\pr:U_I\times E_I\to U_I$ are allowed to be nontrivial.
$\hfill\er$  \end{rmk}

\subsection{$S^1$ actions}\label{ss:S1}

Finally we reprove a  result from \cite{Mcq} concerning $2$-point Gromov--Witten invariants in the product manifold 
$ (S^2\times M_1, \om_0\times \om_1)$.

\begin{prop}\label{prop:S1}  Let  $M = (S^2\times M_1, \om_0\times \om_1)$, and let  $A = [S^2\times pt] + B$, where $B\in H_2(M_1)$. Then
\begin{equation}
\langle pt, c\rangle_{0,2,A} = 0,\quad \forall B\ne 0, c\in H_*(M_1).
\end{equation}
\end{prop}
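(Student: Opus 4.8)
The plan is to exploit the circle action on $M$ given by rotation of the $S^2$-factor. By Remark~\ref{rmk:Jindep} the cobordism class of $[X]^{vir}_\Kk$, and hence $\langle pt,c\rangle_{0,2,A}$, is independent of the $\om$-tame $J$, so I would take $J=j\oplus J_1$ split, with $j$ the standard structure on $S^2$; then the rotation action $\rho$ of $S^1$ on $S^2$ fixing the two poles $0,\infty$, extended to act trivially on $M_1$, preserves $J$. Following \S\ref{ss:var}, represent the invariant by a $0$-dimensional GW atlas on $X':=\oMm_{0,2}(M,J,A;Z)$ with $Z=\{(0,q)\}\times Z_c$, where $(0,q)$ lies over a fixed pole and $Z_c$ is an $S^1$-invariant cycle representing the image of $c$ in $H_*(M)$. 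Since $\rho$ preserves $J$, $Z$ and all auxiliary data, post-composition $f\mapsto\rho_\theta\circ f$ defines an $S^1$-action on $X'$ preserving the entire GW structure, and the GW atlas $\Kk$, its reduction $\Vv$ and the data of \S\ref{ss:GW} can be constructed $S^1$-equivariantly, choosing centres, slicing manifolds and discs in $S^1$-orbits and averaging over $S^1$ exactly as one averages over the finite groups $\Ga$ there.

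The hypothesis $B\neq0$ enters through the $S^1$-fixed locus of $X'$. An irreducible $J$-sphere in class $A$ is a graph $z\mapsto(f_1(z),f_2(z))$ with $f_1$ of degree $1$ (hence biholomorphic) and $f_2$ of class $B$; it is $S^1$-fixed only if $f_2$ is constant on the orbits of a nontrivial one-parameter subgroup of $\mathrm{Aut}(\CP^1)$, forcing $f_2$ constant, contrary to $B\neq0$. Hence every $S^1$-fixed element of $X'$ is reducible: exactly one component $\Si_0$ covers $S^2$ with degree $1$ and carries a constant value in $M_1$, and it is attached along the poles to a nontrivial forest of vertical $J_1$-holomorphic curves in $\{0\}\times M_1$ and $\{\infty\}\times M_1$ carrying $B$ (nontrivial over $0$, since $z_1\mapsto(0,q)$ and marked points are not nodes). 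The key local fact is that at each such $f$ the $S^1$-fixed part of the linearized operator $D_f$ has strictly negative index: the reparametrization action on $\Si_0$ is by rotation, so the $M_1$-directions and the ``Cartan'' rotation direction in $H^0(\Si_0,f^*TM)/\mathrm{aut}$ are $S^1$-fixed while the remaining $H^0(TS^2)$-directions $\C E_\pm$ are $S^1$-moving (equivalently: the moving automorphism $z^2\partial_z$ of $\Si_0$ when it meets only one pole, and the moving gluing parameters at the nodes otherwise), whereas all obstructions are $S^1$-fixed because $S^1$ acts trivially on $M_1$ and the relevant higher cohomology on the sphere vanishes. A short bookkeeping gives $\mathrm{ind}\,D_f-\mathrm{ind}(D_f)^{S^1}\ge 4>0$, so $\mathrm{ind}(D_f)^{S^1}<0$ since $\mathrm{ind}\,D_f=\dim X'=0$.

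Building on this, I would produce an $S^1$-equivariant admissible, precompact, transverse perturbation $\nu$ of $\s_\Kk|_\Vv$: away from the $S^1$-fixed locus the action is locally free, so one perturbs on the quotient and pulls back; near a fixed curve the obstruction to equivariant transversality lies in the $S^1$-fixed Kuranishi model, which by the previous step has negative virtual dimension and hence admits the zero perturbation as a (vacuously) transverse one. The perturbed zero set $|\bZ^\nu|$ is then a compact $S^1$-invariant $0$-dimensional weighted branched manifold; as $S^1$ is connected it acts trivially, so $|\bZ^\nu|$ lies in the $S^1$-fixed locus of $|\Kk|$, which near $\io_\Kk(X')$ is covered by the $S^1$-fixed Kuranishi models above, all of negative virtual dimension, and is therefore empty. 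Hence $[X']^{vir}_\Kk=0$ and $\langle pt,c\rangle_{0,2,A}=0$; independence of $J$, of $q$ and $Z_c$, and of the chosen equivariant atlas follows from the cobordism theory of Theorem~B and \S\ref{s:GW}.

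\emph{Main obstacle.} Since the general theory in these notes handles only \emph{finite} isotropy, the technical heart is the last step: one must set up an $S^1$-equivariant version of the atlas and, more delicately, of the perturbation construction of Proposition~\ref{prop:ext}. As the footnote to \S\ref{ss:zero} warns, equivariant perturbations of orbibundles need not be transverse, and it is precisely the negative-index computation of the second step — which genuinely uses $B\neq0$ — that removes the obstruction along the $S^1$-fixed locus; making this interpolation between the locally free region and the fixed region precise within the present framework is the part that requires real work.
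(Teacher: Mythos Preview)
Your setup---split $J=j\times J_1$, $S^1$ action by rotation of $S^2$, constraints at the fixed poles, and an $S^1$-equivariant atlas---is the same as the paper's. The difference lies in how you extract the vanishing.

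The paper's argument is more elementary and avoids your ``main obstacle'' entirely. It observes two things: (a) the $S^1$-fixed stable maps all lie in nodal strata of codimension $\ge 2$ (since the section component must be constant, forcing a bubble), and (b) the obstruction spaces $E_I$ can be chosen so that $S^1$ acts \emph{trivially} on them (take them as pullbacks of $M_1$-valued $(0,1)$-forms along $S^2\times M_1\to M_1$). Then any $S^1$-invariant $\nu_I:V_I\to E_I$ factors through $V_I/S^1$, which is a CW complex of dimension $\dim E_I-1$ since $\dim V_I=\dim E_I$ and the action is free on the top stratum. Hence one can inductively choose $\nu_I$ so that $s_I+\nu_I$ is \emph{nowhere zero}: no transversality, no index computation, no interpolation between free and fixed regions. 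This is Lemma~\ref{le:S1}.

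Your route---an equivariant \emph{transverse} perturbation via a negative-index argument on the $S^1$-fixed Kuranishi model---is a legitimate alternative (it is essentially a localization-style argument), but it is harder and your sketch is incomplete. The weight decomposition of $\mathrm{aut}(\Si_0)$, the nodal gluing parameters, and the matching conditions at the poles need careful bookkeeping before one can assert $\mathrm{ind}-\mathrm{ind}^{S^1}\ge 4$; and even granting this, the claim that ``the zero perturbation is (vacuously) transverse'' on the fixed model is not right---negative virtual dimension makes a \emph{generic} invariant perturbation have empty fixed zero set, not the zero one. The paper sidesteps all of this by trading ``transverse'' for ``nonvanishing''.

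Finally, one point you gloss over that the paper handles carefully: building the equivariant atlas requires $S^1$-invariant slicing manifolds, which the paper takes as products $\Uu\times Q'_i\subset S^2\times M_1$; at the fixed curves (constant section component with bubbles only over $0,\infty$) this is impossible, and the paper uses the non-invariant slice $\{1\}\times M_1$ but checks that after normalization the induced $S^1$-action on the chart is trivial. ``Averaging over $S^1$'' does not address this.
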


This statement about $2$-point Gromov--Witten invariants immediately implies that the Seidel element corresponding to the trivial loop in $\Ham(M_1,\om_1)$ is the identity.  (see  \cite{Mcq} or \cite[Chapter~12.5]{JHOL} for information on the Seidel representation.)  The key idea of the proof is that the manifold $M$ supports an $S^1$ action that rotates the $S^2$ factor with fixed points $0,\infty$.  If $B\ne 0$ and we choose $J = j\times J_1$ to be a product, then 
the
elements in the top stratum of the moduli space $\oMm_{0,2}(M,A,J)$ are simply graphs of nonconstant $J_1$-holomorphic maps to $M_1$. Therefore the action of  $S^1$ on this stratum
is free.  Since we can place the constraints in the fixed fibers over $0$ and $\infty$, it should be impossible to find isolated regular solutions of the equation.  The difficulty with this argument is  that the $S^1$ action does have fixed points on the 
compactified moduli space $\oMm_{0,2}(M,A,J)$, and it is not clear what effect these might have on the invariant.
On the other hand, because the ambient manifold is a product we can construct the
atlas so that it carries an $S^1$ action that is free on the top stratum.   This is the key reason why the invariant vanishes. 
Also our proof assumes known that GW invariants can be constructed by
considering cut down moduli spaces as in \S\ref{ss:var}.

We begin by discussing the abstract situation.  We will work with weakly SS atlases as in \S\ref{ss:SS}.

\begin{defn}  Suppose that $S^1$ acts on $X$.  Then we say that a weakly 
SS (weak) Kuranishi atlas on $X$ {\bf supports an $S^1$ action} if
the following conditions hold:
\begin{itemize}\item   The action of
$S^1$ on each domain $U_I$ is weakly SS smooth and commutes with the  action of $\Ga_I$;
\item $S^1$ acts  trivially on $E_I$;
\item the maps $s_I$ and $\psi_I$ are $S^1$ equivariant.  
\item the subsets $U_{IJ}\subset U_I, \TU_{IJ}\subset U_J$ are $S^1$-invariant and the covering map $\rho_{IJ}: 
 \TU_{IJ}\to  U_{IJ}$ commutes with the $S^1$-action.
 \end{itemize}
 Further we say that the action has {\bf fixed points of codimension at least $2$} if  both the domains $U_I$ and $X$ have a collection of strata that is  respected by the footprint maps,  contains all fixed points of the action, and of codimension $\ge 2$ in $U$.
  \end{defn}

\begin{lemma}\label{le:S1}  
Let $\Kk$ be a a $0$-dimensional weakly SS and weak
 Kuranishi atlas that supports an $S^1$ action.    Suppose further that 
this action has fixed points of codimension at least $2$.
 Then $[X]^{vir}_\Kk = 0$.
\end{lemma}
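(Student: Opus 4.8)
The plan is to show that an $S^1$-supporting atlas admits a perturbation section $\nu$ that is itself $S^1$-invariant, and then argue that the zero set of $\s|_\Vv + \nu$ is an $S^1$-space of formal dimension $0$ whose fixed locus is empty (by the codimension hypothesis). Since a compact $0$-dimensional manifold with a free $S^1$-action must be empty, the zero set is empty, hence $[X]^{vir}_\Kk = 0$. The first step is to record the structure we inherit: by Theorem~\ref{thm:K2} (applied to the intermediate atlas, which is filtered by Lemma~\ref{le:interm}) we may pass to a preshrunk tame shrinking $\Kk_{sh}$ of $\Kk$; I would first check that the shrinking and reduction constructions of \S\ref{ss:tatlas} and Proposition~\ref{prop:cov2} can be carried out $S^1$-equivariantly, i.e. that the footprint cover $(F_i)$ has an $S^1$-invariant shrinking (average the defining balls over the compact group $S^1$, using $S^1$-invariance of $X$ and of the $\psi_I$) and that the reduction $\Vv$ and nested reduction $\Cc\sqsubset\Vv$ can likewise be chosen $S^1$-invariant. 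This uses only that $S^1$ is compact, that the action on each $U_I$ is continuous, and that all structural maps are equivariant, so it is routine but should be stated carefully.

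Next I would run the inductive construction of the adapted perturbation from Proposition~\ref{prop:ext} (equivalently Proposition~\ref{prop:VFCstrat} in the weakly SS setting, cf. \S\ref{ss:SS}) in an $S^1$-equivariant fashion. Since $S^1$ acts trivially on each $E_I$, an $S^1$-equivariant perturbation $\nu_I : V_I \to E_I$ is simply an $S^1$-invariant smooth map; and the admissibility/compatibility conditions \eqref{eq:compatc} are preserved under averaging over $S^1$ because $\rho_{IJ}$, $\Hat\phi_{IJ}$, $\Tphi_{IJ}$ are all $S^1$-equivariant. The one genuine subtlety is transversality: averaging an arbitrary transverse perturbation over $S^1$ need not keep it transverse (indeed this fails along fixed points, which is exactly why the codimension hypothesis is needed). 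Here I would exploit that $\Kk$ is $0$-dimensional and weakly SS, so by the transversality convention of \S\ref{ss:SS} the zero set of $\s|_\Vv+\nu$ is required to lie in the top stratum $U_I^{top}$, which is disjoint from the $S^1$-fixed locus by hypothesis; on the free part of the top stratum the $S^1$-quotient is again a manifold of dimension one less, i.e. dimension $-1$, so a transverse zero set in the quotient is empty, and hence the equivariant transverse zero set upstairs is empty. Concretely: construct $\nu$ equivariantly so that $s_I + \nu_I$ has no zeros on the non-top strata (exactly as in the proof of Proposition~\ref{prop:VFCstrat}, which only needs perturbing within strata of codimension $\ge 2$, and can be done $S^1$-equivariantly since $S^1$ acts freely near the top stratum by the codimension hypothesis — wait, more precisely: $S^1$ acts with fixed points only in the lower strata, so on a neighbourhood of $(V_I^{top})$ the action is free, and there equivariant transversality for the quotient problem of dimension $-1$ forces emptiness), then note $(s_I+\nu_I)^{-1}(0) = \emptyset$ for all $I$.

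Finally, with $(\s|_\Vv+\nu)^{-1}(0) = \emptyset$, the realization $|\Hat\bZ^\nu|$ and its maximal Hausdorff quotient $|\Hat\bZ^\nu|_\Hh$ are empty, so by the construction of the virtual class in the Sketch proof of Theorem~B (via Proposition~\ref{prop:zero} and \cite[Prop.~3.25]{Mbr}, taking the appropriate inverse limit as $\delta\to 0$) the resulting class in $\check H_0(X;\Q)$ is zero; this class is $[X]^{vir}_\Kk$ by definition. I expect the main obstacle to be the equivariant transversality step: one must be careful that the $S^1$-averaging is compatible with the stratawise transversality notion of \S\ref{ss:SS} and with the inductive enlargement-of-core machinery of Definition~\ref{def:a-e}, i.e. that at each inductive stage the already-constructed $S^1$-invariant $\nu_I$ ($|I|<k$) can be extended to an $S^1$-invariant $\nu_J$ ($|J|=k$) that is transverse on the top stratum; but since transversality on the top stratum is automatic once one knows the zero set is empty there (by the dimension count $\dim U_J^{top}/S^1 = -1$), this reduces to the purely non-equivariant, codimension-$\ge 2$ perturbation argument already carried out in \S\ref{ss:SS}, done equivariantly near the free locus. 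Everything else is a formal check that the compact-group averaging respects the structures in play.
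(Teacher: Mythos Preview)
Your proposal is correct and follows essentially the same approach as the paper: make the taming and reduction $S^1$-invariant, then use that $S^1$ acts trivially on $E_I$ so an invariant perturbation factors through the quotient $V_I/S^1$, whose top stratum has dimension $\dim E_I - 1$ and hence admits a nowhere-zero section. The paper phrases the key step more directly as an extension problem on the $(\dim E_I - 1)$-dimensional CW complex $V_I/S^1$ (avoiding your slightly awkward ``dimension $-1$ transversality'' formulation and the worry about averaging destroying transversality), but the content is the same.
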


\begin{proof}  If $\Kk$ supports an $S^1$ action then  the quotient $\uKk/S^1$ of the intermediate atlas
 by the $S^1$ action is a filtered topological atlas on $X/S^1$ in the sense of \S\ref{ss:tatlas} since its domains  $\uU_I/S^1$   
 are locally compact, separable metric spaces and all structural maps descend to the quotient.
Hence we may tame the quotient, and then use the inverse image of these sets to tame $\Kk$.
 For a similar reason, we may suppose that the 
reduction consists of $S^1$ invariant sets; see  Lemma~\ref{le:cov0}.
It remains to construct $\nu_I$ inductively over $I$ (by the method explained in Proposition~\ref{prop:ext})
so that $s_I|_{V_I} + \nu_I$ has no zeros.  Because $S^1$ acts trivially on the obstruction spaces, 
we may assume that $\nu_I: V_I\to E_I$ factors through $V_I/S^1$, which is the quotient of a $k$-dimensional SS manifold by an action of $S^1$
which is free on the top dimensional strata.  Hence $V_I/S^1$ is a CW complex of dimension $k-1$.  Since $E_I$ has dimension $k$, we can extend any nonzero perturbation that is defined on  a closed subset of $V_I/S^1$ to a  perturbation that is nonzero everywhere.
This completes the proof.
\end{proof}

\begin{proof}[Proof of Proposition~\ref{prop:S1}]
It remains to construct an appropriate Kuranishi atlas.  This requires some care.
To reduce the dimension to $0$ we consider the cut down moduli space
 $$
X_{Z_c} = \bigl\{ [\Si, \bz, f]\in \oMm_{0,2}(A, S^2\times M_1, j\times J_1): 
f(z_0)\in \{0\}\times M, f(z_\infty)\in \{\infty\}\times Z_c\bigr\}
$$
as in \S\ref{ss:var},  where  $Z_c$ is a manifold with  $\dim Z_c + 2c_1(B) = \dim M_1$.
We consider $X_{Z_c}$ to be a stratified space as in Remark~\ref{rmk:stratX}.
It supports an $S^1$ action that is free on the top stratum.  Indeed, each stratum of $X_{Z_c}$ has exactly one component that is a graph over $S^2$, and the stratum contains a fixed point only if this component is the constant map. 
 In order that $S^1$ act on each basic chart with free action on the top stratum, we must choose both the obstruction spaces $E_i$ and
  the slicing conditions to be $S^1$ invariant. 
  This argument  crucially uses the fact the ambient manifold is a product.
  
  We first choose suitable obstruction spaces  for the basic charts.
To this end,  we choose the linear map $\la_i: E_i \to
  \Cc^{\infty}\bigl(\Hom_J^{0,1}(\Cc|_{\De}\times S^2\times  M_1) $
  of \eqref{eq:linE} to take values in the pullback of the bundle
  $\Cc^{\infty}\bigl(\Hom_J^{0,1}(\Cc|_{\De}\times  M_1)$ by the projection $S^2\times M_1\to M_1$.
  Thus each element in the image $\la_i(E_i)$ is fixed by the $S^1$ action.
  Note that these obstruction spaces do suffice for regularity because the section component is regular if and only if its projection to $M_1$ is regular,  
  while all fiber components 
   are spheres so that the trivial horizontal bundle does not contribute to the cokernel; see  \cite[Proposition~6.7.9]{JHOL}.
  Further, by choosing the support of the sections in  $\la_i(E_i)$  to be disjoint from the projections to $M_1$ of the nodes
  of the center point of a chart, we can ensure that the elements $\la(e)|_{{\rm graph} f}$ vanish near the nodes,
  as required by the construction  in \S\ref{ss:GW}~(V).  (Here we use the fact that  if the section component
   is constant then, because of the triviality of the bundle, it  is regular, and so has no need of any perturbations to achieve the transversality condition ($*_c$) of \S\ref{ss:var}.)

The next step is to choose suitable slicing manifolds.  We may  use 
 slicing manifolds $Q_i$ of the form $\Uu\times Q'_i$, where $Q_i'\subset M_1$ has codimension $2$ and $\Uu\subset S^2$ is  open,
  to stabilize all fiberwise components of the domain $\Si$ of $[\Si,f]$, 
 and also  the section component provided that this is not constant.
If the section component is constant and if there is a bubble component in some fiber other than $0,\infty$, then the section component is stable, since we already have marked points at $0, \infty$, and it has at least one other nodal point.  Hence the only case when we need to use a non $S^1$-invariant slicing manifold is when  
 $[\Si,f]$ is a fixed point of the action, consisting of the graph of a constant function together with some bubbles in the fibers over $0,\infty$.    The domains of these graph components can be stabilized by slicing with the 
 fiber $Q_F: = \{1\}\times M_1$.
 Even though $Q_F$ is not itself $S^1$-invariant, we can build an $S^1$ invariant chart with center  $[\Si,\bz,f]$ 
 using this slicing manifold 
 as well as the invariant manifolds $\Uu\times Q'_i$, because
 the 
 induced action of $S^1$ on the normalized and stabilized map  $(\Si_{0,\bP}, \bw, \bz, f)$ involves  renormalization (as in 
 \eqref{eq:actionf})
 and so with appropriate choice of normalization $\bP$ will be trivial.
 Here we must choose $\bP$ to contain the three points $0,1,\infty$ on the constant graph, where  at least one of $0,\infty$  is a node (the other might be a marked point), while the point $w_1$ at $1$  maps to $Q_F$.
Since the center point of this chart is fixed by the $S^1$ action, it is possible to build the chart to be $S^1$ invariant by appropriate choice of the domain $U_i$. 
 Thus all the basic charts can be constructed to support an $S^1$ action that is free on the top stratum.
 It follows that one can choose the domains of the transition charts to be $S^1$-invariant.  As before there are no  fixed points
in the top stratum.    Hence the result follows from Lemma~\ref{le:S1}.
  \end{proof}

\begin{rmk}\rm Pardon uses a similar approach in \cite{Pard} to show that the  Floer complex of 
a small time independent Hamiltonian $H$ equals the Morse complex.   The key idea is to replace a Floer trajectory $u: \R\times S^1\to M$ 
of $H$ that depends on the variable $t\in S^1$ by its graph in $\R\times S^1 \times M$, and then build an appropriate atlas
that is invariant under the  action induced by the rotation of the $S^1$ factor in  $\R\times S^1 \times M$.  Of course, in this situation
one needs to build a coherent family of atlases, with boundary and corners.
A few of the issues that come up  in this context
 are discussed at the end of \S\ref{ss:hybrid}. \hfill$\er$
\end{rmk}

\section {Order structures and products}\label{s:order}

Atlases are defined by a rather rigid process from a set of basic charts. In particular, by \eqref{eq:sum0} our definition requires an atlas to be {\bf additive} in the sense that it satisfies the following condition for the obstruction spaces of the transition charts:
\begin{equation}\label{eq:sum1}
E_I = {\textstyle \prod_{i\in I }}E_i.
\end{equation}
On the one hand, this condition is essential for the taming process; on the other it does not hold for product atlases which are 
relevant to various geometrically interesting situations.
In this section, we 
 define a notion of semi-additive atlas, which
 does apply to products.  
 \footnote
 {
  This turns out to be  fairly close to the notion of a \lq\lq MW weak 
good coordinate system" in Joyce~\cite[Definition~A.18]{Joyce}.  
Since Joyce keeps only infinitesimal information along the zero set of a chart, 
the charts in his atlases  inherit enough compatibility  from the zero sets that they 
 do not need to be tamed in our sense.  However, the taming process is essential 
in our approach, since we use open neighbourhoods of the zero set.}
Since these atlases are not filtered, they cannot be tamed; see  Remark~\ref{rmk:notame}.
However, if one builds them starting from tame or good atlases (as in Definition~\ref{def:good}), then they have nice properties.

Here are our main results, stated somewhat informally.

\begin{itemlist}\item A good semi-additive atlas 
has a well defined VFC; see  Proposition~\ref{prop:semivir}.
\item 
A product of good atlases $\Kk_1\times \Kk_2$ is semi-additive and good; see  Lemma~\ref{le:Kprod}. 
By Proposition~\ref{prop:prod}, the corresponding virtual class
$[X_1\times X_2]_{\Kk_1\times \Kk_2}^{vir}$ equals the product $[X_1]_{\Kk_1}^{vir} \times [X_2]_{ \Kk_2}^{vir}$ in 
$\check H_*(X_1\times X_2)$.  
\item   We show that in Proposition~\ref{prop:hybrid2} that
every semi-additive atlas is concordant to an  atlas that can be tamed.  We then sketch   
how this might be applied  in
Hamiltonian Floer theory, where one needs a coherent system of atlases  for a family of spaces $X$ 
whose boundary is a union of products $X_1\times \dots \times X_k$.  
\end{itemlist}

We begin by with a general discussion of posets (i.e. partially ordered sets) since the category $\bB_\Kk$ determined by an
atlas with trivial isotropy (or, more generally, by a topological atlas) is a poset; see  Remark~\ref{rmk:poset}~(ii).

\subsection{Semi-additive atlases}\label{ss:index}

In our approach, an atlas (of whatever kind) on $X$ always consists of a family of basic charts $(\bK_i)_{i=1,\dots, N}$ with footprints 
$F_i\subset X$,  together with transition charts
$(\bK_I)_{I\in \Ii_\Kk}$ that are indexed by a poset  $\Ii_\Kk\subset \Pp^*\bigl(\{1,\dots,N\}\bigr)$ that is determined by
 the intersection pattern 
of the footprints $F_i\subset X$, where $\Pp^*(\Aa)$ denotes the poset  of nonempty subsets of the set $\Aa$.
Indeed there is a transition chart with footprint $F_I = \cap _{i\in I} F_i$ for every (nonempty) collection 
of charts whose footprints have nonempty intersection.
%
 In some cases (such as the  product of two atlases) {\it different} sets of basic charts have the {\it same} footprint intersection. One might therefore consider permitting an atlas to have just one transition chart 
 for each such intersection. The corresponding indexing set $\Ii$ would  no longer be a subset of
 $\Pp^*\bigl(\{1,\dots,N\}\bigr)$.
  Nevertheless, it  is minimally generated
 in the following sense.

\begin{defn}\label{def:minimal}
The finite poset $\Ii$ 
is said to be  {\bf minimally generated } if it satisfies the following assumptions:
\begin{itemize}\item[(a)] every subset $\{I_1,\dots, I_k\}$ of $\Ii$ with an upper  bound 
has a unique
least upper bound $I_1\vee\cdots\vee I_k\in \Ii$;
\item[(b)]   if $\Mm(\Ii)$ denotes the set of minimal elements in $\Ii$, then each $J\in \Ii$ is the 
least upper bound of the set $m(J) = \{H\in \Mm(\Ii)\ | \ H\le J\}$
of minimal elements it dominates.
\end{itemize}
Further, we say that $\Ii$ is {\bf minimally $(\Aa,\tau)$-generated} if in addition there is an order preserving  injection
$\tau:\Ii\to \Pp^*(\Aa)$ such that
\begin{itemize}\item[(c)] for all $I,J\in \Ii$,   $\tau(I\vee J) = \tau(I)\cup \tau(J)$,
\item[(d)] if $A\in \im(\tau), H\in \Mm(\Ii)$ then
$$ 
 \bigl\{B\in \Pp^*(\Aa): \tau(H)\subset B\subset A\bigr\}\;\subset\;\im(\tau).
$$ 
\end{itemize}

\end{defn}

\begin{lemma}  If $\Ii$ is minimally $(\Aa,\tau)$-generated,  then $\tau(J) = \bigcup_{H\in m(J)} \tau(H)$.
\end{lemma}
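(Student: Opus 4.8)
The plan is to prove this as an immediate consequence of property (b) in Definition~\ref{def:minimal} (minimal generation) together with properties (a), (c). First I would recall the setup: by (b), each $J\in\Ii$ is the least upper bound of the set $m(J)=\{H\in\Mm(\Ii)\mid H\le J\}$ of minimal elements it dominates. By (a), this least upper bound exists and is unique, so we may write $J=\bigvee_{H\in m(J)}H$. The goal is to apply $\tau$ to this identity and push it through the union, using (c).

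The first step is to enumerate $m(J)=\{H_1,\dots,H_k\}$ (a finite set, since $\Ii$ is finite) so that $J=H_1\vee\cdots\vee H_k$. The second step is to apply (c) inductively: property (c) states $\tau(I\vee I')=\tau(I)\cup\tau(I')$ for any two elements $I,I'\in\Ii$ whose join exists. Since all partial joins $H_1\vee\cdots\vee H_j$ for $j\le k$ exist (they are dominated by $J$, which is an upper bound, and use (a) for existence of the intermediate joins), a straightforward induction on $j$ gives
$$
\tau(H_1\vee\cdots\vee H_j)=\tau(H_1)\cup\cdots\cup\tau(H_j)
$$
for all $j\le k$. Taking $j=k$ yields $\tau(J)=\tau(H_1)\cup\cdots\cup\tau(H_k)=\bigcup_{H\in m(J)}\tau(H)$, which is exactly the claimed formula.

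There is essentially no obstacle here; the only minor point to be careful about is verifying that each intermediate join $H_1\vee\cdots\vee H_j$ lies in $\Ii$ (so that $\tau$ is defined on it and (c) applies). This is handled by noting that $J$ is an upper bound for $\{H_1,\dots,H_j\}$, hence by (a) the least upper bound $H_1\vee\cdots\vee H_j$ exists in $\Ii$; moreover one should check associativity of $\vee$, namely that $(H_1\vee\cdots\vee H_{j-1})\vee H_j$ equals the least upper bound of $\{H_1,\dots,H_j\}$, which follows from the uniqueness clause in (a) since both are least upper bounds of the same finite set. With this the induction goes through cleanly, and the lemma is proved.
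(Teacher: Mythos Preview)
Your proof is correct and follows exactly the approach the paper indicates: the paper's proof is the single line ``This follows from conditions (b) and (c),'' and your argument spells out precisely how, using (a) to justify the existence of the intermediate joins so that (c) can be applied inductively.
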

\begin{proof}  This follows from conditions (b) and (c).
\end{proof}

Condition (d)  is a completeness condition that asserts the existence of \lq\lq enough" elements in $\Ii$.

\begin{example}\label{ex:prod1}\rm (i) A subset $\Ii$ of $\Pp^*(\{1,\dots,N\})$ (with the induced partial order) 
is minimally generated if
\begin{itemize}\item[-]
it  contains all the one point subsets, 
and 
\item[-]  $A\in \Ii,\;\; \emptyset\ne B\subset A \Longrightarrow B\in \Ii$. 
\end{itemize}
In fact, in this case it is minimally $(\Aa,\tau)$-generated, with $\Aa = \{1,\dots,N\}$ and $\tau = \id$.
The index sets of a Kuranishi atlas have this form.
\MS

\NI (ii) Let $\Aa: =  \Aa_1 \sqcup  \Aa_2$ where $\Aa_t: = \{1,\dots, N_t\}$ for $t= 1,2.$
The set of subsets $\Pp(\Aa)$ of $\Aa$ 
consists of all pairs $(I_1,I_2)\in \Pp(\Aa_1)\times  \Pp(\Aa_2)$, where one or both of $I_1,I_2$ may be empty.
The set $\Ii$ of all pairs $(I_1,I_2)$ with $I_t\ne \emptyset$ for $t=1,2$
 is minimally $(\Aa,\tau)$-generated, where
$\tau: \Ii\to \Pp^*(\Aa)$ is given by $\tau(I_1,I_2) = I_1\cup I_2\subset \Aa$.
Note that the minimal subsets of $\Ii$ are the  pairs of one point subsets $(\{i_1\}, \{i_2\})$.  
\MS

\NI (iii) In our examples of minimally $(\Aa,\tau)$-generated posets, the elements of  $\Aa$ 
have the following interpretation.
Each $\al\in \Aa$ indexes a subset $F_\al\subset X$, and we associate to each $A\in \Pp^*(\Aa)$  the set $F_A: = \bigcap_{\al\in A} F_\al$.  Further, the image of the map $\tau:\Ii\to \Pp^*(\Aa)$ 
is contained  in $\{A\in \Pp^*(\Aa): F_A\ne \emptyset\}$, and typically consists of
all such sets $A$ that contain the image $\tau(H)$ of a minimal element in $\Ii$. 
 For example,  with $\Aa = \Aa_1\sqcup\Aa_2$ as in (ii), suppose 
 that $X = X_1\times X_2$, and that, for $t = 1,2$,   $(F_{t\,i})_{i\in \Ii_t}$ is 
 an open covering 
of $X_t$.
For each $I\subset \{1,\dots, N_t\}$ define 
$$
F_{t\,I}: = {\textstyle\bigcap_{i\in I}} F_{t\,i},\quad F_{t \, \emptyset}: = X_t.
$$
Then  the product covering $\bigl(F_{1 I_1}\times F_{2 I_2}\bigr)$ of $X_1\times X_2$ is indexed by the
pairs $(I_1,I_2)\in \Ii_{1}\times \Ii_{2})$ with the product order
$$
(I_1,I_2)\le (J_1,J_2)\;\Longleftrightarrow I_1\le J_1, \; I_2\le J_2.
$$
The injection $\tau: \Ii_{1}\times \Ii_{2}\to \Pp^*(\Aa)$ 
is given by $(I_1,I_2)\mapsto (I_1\cup I_2)\subset \Pp^*(\Aa)$. It clearly satisfies conditions (c),(d) above.
Note that 
 $\Ii_{1}\times \Ii_{2}$  contains no elements 
of the form $(\emptyset, I_2)$ or $(I_1,\emptyset)$,\footnote
{
With our definitions,  if $(F_{t\ i})_{i\in \Aa_t}$ is the footprint covering of an atlas on $X_t$, then
 the products $(F_{1 I_1}\times F_{2 I_2})_{(I_1, I_2)\in \Ii_{1}\times \Ii_{2}}$
form the  footprint covering  of the  product  atlas, see   Definition~\ref{def:Kprod} below.
Pardon~\cite{Pard} works with a more general definition of atlas
that allows charts with  footprints such as $F_{I\emptyset}$ even though such charts may contain no smooth points.
Hence his treatment of products does not meet the particular difficulties discussed here, though it has some technicalities
 of its own.}
and hence its image by $\tau$ contains none of the minimal elements of $\Pp^*(\Aa)$. 
\MS

\NI (iv)  In the Gromov--Witten context it is appropriate to use semi-additive atlases in situations where different 
collections  of basic charts have the {\it same} footprint intersection. For example, in the case of products the two pairs    
$\bK_{1i_1}\times \bK_{2j_3}, \bK_{1i_2}\times \bK_{2j_4}$ and $\bK_{1i_1}\times \bK_{2j_4}, \bK_{1i_2}\times \bK_{2j_3}$ 
have the same footprint intersection, namely
$$
F_{1(i_1i_2)}\times F_{2(j_3j_4)} := \bigl(F_{1i_1}\cap F_{1i_2} \bigr) \times \bigl(F_{2j_3}\cap F_{2j_4}\bigr).
$$
The product atlas has a {\it single} transition chart $\bK_{1(i_1i_2)}\times \bK_{2(j_3j_4)}$ with  this footprint  rather than many different ones. 
In Remark~\ref{rmk:GWext} and the subsequent discussion we indicate how one might show that in the GW context
the product atlas is commensurate
(in an appropriate sense)  to  the additive atlas with the same basic charts. 
$\hfill\er$
\end{example}

\begin{rmk} \rm Property (ii) in Definition~\ref{def:minimal} implies that the map 
$$
\io_m: \Ii\mapsto \Pp^*(\Mm(\Ii)), \quad J\mapsto m(J),
$$ 
is  injective. Further,  the  greatest lower bound   
$I\wedge J$ exists iff and only if there is $H\in \Mm(\Ii)$ such that $H\le I$ and $H\le J$,
and in this case
$H\le I\wedge J$ iff $H\le I$ and $H\le J$.  Hence
we have
\begin{equation}\label{eq:wedge}
 \io_m(I\wedge J) = \io_m(I)\cap \io_m(J)\quad \forall I,J, \; I\wedge J \in \Ii.
\end{equation}
However,  as in the case of products,  $\io_m$ may not satisfy the condition
$
\io_m(I\vee J) = \io_m(I)\cup \io_m(J).
$
\hfill$\er$
%
%
\end{rmk} 

We now define the notion of semi-additive atlas.  
This involves generalizing the definition of  coordinate change $\Hat\Phi_{IJ}$, since the groups $\Ga_I,\Ga_J$ will in general  no longer be products of groups indexed by the elements of $I,J$.
 However, their main characteristic remains unchanged, namely, they are defined by group covering maps $(\rho_{IJ}, \rho_{IJ}^\Ga): (\TU_{IJ},\Ga_J)\to (U_{IJ},\Ga_I)$ as in Definition~\ref{def:cover}, 
 that are compatible 
in the sense of Remark~\ref{rmk:tamea}.  

\begin{rmk}\rm 
Unless explicitly mentioned otherwise, we will assume throughout this section that all charts are smooth.  With obvious changes, one could deal with SS atlases (as in \S\ref{ss:SS}.)     Also, one could attempt
similar definitions in the topological case.  
However, when it comes to the notion of tameable  atlas (see  \S\ref{ss:hybrid}), one would need to check that 
the corresponding filtration satisfies all the required  properties.  (The potential problem here is that we use the index condition for smooth coordinate changes in order to establish the filtration condition (iv) in Definition~\ref{def:Ku3}.
To deal with this, one might need to rephrase/strengthen condition (iv) in  Definition~\ref{def:good} of good atlas.)\hfill$\er$
\end{rmk}

\begin{defn}\label{def:sadd} Consider a family $\Kk: = (\bK_I, \Hat\Phi_{IJ})_{I\le J, I,J\in \Ii}$ of (smooth) Kuranishi charts  and coordinate changes on $X$ whose charts are 
 indexed by a set $\Ii$ that is minimally $(\Aa,\tau)$-generated,
 where the minimal elements in $\Ii$ correspond to the basic charts, and 
 $\Aa$  is related to the footprint covering as follows:
  \begin{itemlist} 
 \item for each $\al \in \Aa$ there is a subset $F_\al\subset X$ such that
  the chart $\bK_I$ has footprint  $F_I: = F_{\tau(I)}$, where 
 for $A \subset  \Aa$ we define
\begin{equation}\label{eq:foot}
   F_A: = {\textstyle \bigcap}_{\al\in A} F_\al,\quad A\subset \Pp^*(\Aa).
\end{equation}
 \end{itemlist}

We say that $\Kk$ is {\bf semi-additive} if in addition the following conditions hold on the charts and coordinate changes.
 \begin{itemlist}  
 \item
There are  tuples $(E_\al)_{\al\in \Aa}$, $ (\Ga_\al)_{\al\in \Aa}$ where 
$E_\al$ is a finite dimensional vector space and 
 $\Ga_\al$ is a finite group that  acts on  $E_{\al}$.
  For each  subset 
$A\subset \Aa$ we define $E_A: = \prod_{\al\in A} E_\al$, so that $\Ga_A: = \prod_{\al\in A} \Ga_\al$  acts on $E_A $ by the product action.
\item The  map  $\tau: \Ii \to  \Pp^*(\Aa)$ 
satisfies the  following conditions
with respect to  the groups $\Ga_I$ and obstruction spaces $E_I$ of the transition charts $\bK_I$.
 \begin{itemize}
 \item[-]  
   $\Ga_I: = \Ga_{\tau(I)}$ for each $I$, and the surjections $\rho_{IJ}^\Ga: \Ga_J\to \Ga_I$ in the coordinate changes are given by the projections $\prod_{\al\in \tau(J)} \Ga_\al  \to \prod_{\al\in \tau(I)} \Ga_\al$ and hence have kernel $\prod_{\al\in \tau(J)\less \tau(I)} \Ga_\al$; in particular, by definition of coordinate change each group 
$\prod_{\al\in \tau(J)\less \tau(I)} \Ga_\al$ acts freely on the set $\TU_{IJ}$ with quotient 
$\Ga_I$-equivariantly isomorphic to $(U_{IJ}, \Ga_I)$.
 \item[-] 
$E_I=\prod_{\al\in \tau(I)} E_\al$  and $\Hat\phi_{IJ} :E_I\to E_J$ is the natural inclusion for all $I\le  J$.
%
\end{itemize}
\end{itemlist}
We say that $\Kk$ is a  {\bf (weak) semi-additive atlas} if in addition the tangent bundle condition and (weak) cocycle condition hold.   Such an atlas is {\bf good} if in addition the conditions of Definition~\ref{def:good} hold.

\NI
For short we will denote a semi-additive atlas by the tuple $(\bK_I, \Hat\Phi_{IJ})_{\Ii, \Aa,\tau}$.
\end{defn}

\begin{example}\label{ex:add}\rm If $\Kk$ is an atlas in which $\tau$ is induced by
an isomorphism
 $ \Mm(\Ii) \cong \Aa$, 
then the above notion of semi-additivity reduces to the notion of additivity in \cite{MW2},
and is already assumed in the definitions of atlas given here.   
In this case we  say that $\Kk$ is  {\bf standard}.   
$\hfill\er$
\end{example}

Our main example is that of products where   
$\Aa = \Aa_1\sqcup \Aa_2$ as in Example~\ref{ex:prod1}.


\begin{defn}\label{def:Kprod}
Let $(\Kk_t)_{ t=1,2}$  be semi-additive atlases $\bigl((\bK_{I}, \Hat\Phi_{IJ})_{\Ii_t,\Aa_t,\tau_t}\bigr)_{ t=1,2}$.
The product atlas $\Kk_1\times \Kk_2$ 
is an atlas on $X_1\times X_2$ whose basic charts are products $\bK_{i_1,i_2}$ of the basic charts in 
$\Kk_1,\Kk_2$, where for $I_t\in \Ii_t$ we define
$$
\bK_{I_1,I_2}: = \bigl( U_{I_1}\times U_{I_2}, E_{I_1}\times E_{I_2}, \Ga_{I_1}\times \Ga_{I_2}, s_{I_1}\times s_{I_2}, \psi_{I_1}\times \psi_{I_2}\bigr).
$$
The  indexing set  of $\Kk_1\times \Kk_2$  is $\Ii = \Ii_1\times \Ii_2$  with the product order 
$$
(I_1,I_2)\le (J_1,J_2)\Longleftrightarrow I_t\le J_t, \;\;t=1,2.
$$
Further $\Aa: = \Aa_1\sqcup \Aa_2$ and $\tau$ is given by
\begin{align*}
 \tau: \; \Mm(\Ii_1\times \Ii_2) & = \Mm(\Ii_1)\sqcup \Mm(\Ii_2)\to \Pp^*(\Aa) = \Pp^*(\Aa_1\sqcup \Aa_2),\\
 \tau ((i_1,i_2)) & : = \tau_{1}(i_1)\sqcup \tau_{2}(i_2),  \quad \mbox{ where } i_t\in \Mm(\Ii_t).
\end{align*}
Thus its transition charts are the products $\bK_{I_1,I_2}$, and we require that
 the coordinate changes are also given by product maps.
\end{defn}

\begin{lemma}\label{le:Kprod}  The product  $\Kk_1\times \Kk_2$ of two semi-additive (weak) atlases $\Kk_1,\Kk_2$  is a semi-additive (weak) atlas.
Further if $\Kk_1,\Kk_2$ are  good atlases (see Definition~\ref{def:good}), then so is $\Kk_1\times \Kk_2$.
\end{lemma}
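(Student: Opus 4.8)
The plan is to verify the defining conditions of a semi-additive atlas (Definition~\ref{def:sadd}) for $\Kk_1\times\Kk_2$ in order, then the tangent bundle and cocycle conditions, and finally the four conditions of Definition~\ref{def:good}; all but the last are routine from the product structure, and the genuinely substantive work lies in condition (iv) of Definition~\ref{def:good}, the closedness of the images of the transition maps, together with checking that $\Ii_1\times\Ii_2$ with the product map $\tau$ really is minimally $(\Aa,\tau)$-generated.

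First I would record that $\Ii=\Ii_1\times\Ii_2$ with the product order is minimally $(\Aa,\tau)$-generated, where $\Aa=\Aa_1\sqcup\Aa_2$ and $\tau(I_1,I_2)=\tau_1(I_1)\cup\tau_2(I_2)$. Condition (a) of Definition~\ref{def:minimal} holds because least upper bounds are computed coordinatewise: $(I_1,I_2)\vee(J_1,J_2)=(I_1\vee J_1, I_2\vee J_2)$, which exists precisely when both coordinate joins exist in $\Ii_1,\Ii_2$. Condition (b) holds since $\Mm(\Ii_1\times\Ii_2)=\Mm(\Ii_1)\sqcup\Mm(\Ii_2)$ (identifying $i_1$ with $(i_1,\cdot)$ suitably) and each $(J_1,J_2)$ is the join of the minimal elements it dominates because each $J_t$ is. Conditions (c) and (d) for $\tau$ follow from the corresponding conditions for $\tau_1,\tau_2$ and the fact that $\Pp^*(\Aa_1\sqcup\Aa_2)$ decomposes compatibly; here I would use that $\tau(I\vee J)=\tau_1(I_1\vee J_1)\cup\tau_2(I_2\vee J_2)=\tau_1(I_1)\cup\tau_1(J_1)\cup\tau_2(I_2)\cup\tau_2(J_2)=\tau(I)\cup\tau(J)$, and that a set $B$ with $\tau(H)\subset B\subset A$, $A\in\im\tau$, $H\in\Mm(\Ii)$, splits as $B=B_1\sqcup B_2$ with $B_t$ squeezed between $\tau_t$ of appropriate elements, hence in $\im\tau_t$. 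Next, the footprint identity $F_{(I_1,I_2)}=F_{I_1}\times F_{I_2}$ together with $E_{(I_1,I_2)}=E_{I_1}\times E_{I_2}$, $\Ga_{(I_1,I_2)}=\Ga_{I_1}\times\Ga_{I_2}$, and the product structure of all coordinate-change data, gives immediately that the surjections $\rho^\Ga$, the free actions of $\prod_{\al\in\tau(J)\setminus\tau(I)}\Ga_\al$, and the linear inclusions $\Hat\phi_{IJ}$ all have the required form, since a product of the relevant maps for $\Kk_1,\Kk_2$ inherits each property factorwise. The tangent bundle condition \eqref{tbc} for a product coordinate change is the direct sum of the tangent bundle conditions for the factors, and the (weak) cocycle condition holds because composition of product coordinate changes is the product of the composites, so equality (resp.\ equality on overlaps) is inherited.

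Then I would turn to Definition~\ref{def:good}. Condition (i): $|\Kk_1\times\Kk_2|$ is Hausdorff because the realization of a product atlas is the product of the realizations (the equivalence relation generated by product morphisms is the product of the two equivalence relations, using that each $\Ii_t$ is finite and the morphism sets are products), and a product of Hausdorff spaces is Hausdorff. Condition (ii): $\pi_{\Kk_1\times\Kk_2}|_{U_{I_1}\times U_{I_2}}=\pi_{\Kk_1}|_{U_{I_1}}\times\pi_{\Kk_2}|_{U_{I_2}}$ under the identification above, and a product of homeomorphisms-onto-their-images is again one. Condition (iii): if $d_t$ is an admissible metric on $|\Kk_t|$, then $d:=d_1+d_2$ (or $\max(d_1,d_2)$) on $|\Kk_1|\times|\Kk_2|\cong|\Kk_1\times\Kk_2|$ is bounded and pulls back on each $U_{I_1}\times U_{I_2}$ to a metric inducing the product topology, so $\Kk_1\times\Kk_2$ is metrizable. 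Condition (iv): the transition map from $\bK_{I_1,I_2}$ to $\bK_{J_1,J_2}$ is $\phi_{I_1J_1}\times\phi_{I_2J_2}$, whose image $\phi_{I_1J_1}(U_{I_1J_1})\times\phi_{I_2J_2}(U_{I_2J_2})$ is a product of closed subsets of $U_{J_1},U_{J_2}$ (closed by condition (iv) for $\Kk_1,\Kk_2$), hence closed in $U_{J_1}\times U_{J_2}$.

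The main obstacle I anticipate is not any single one of these verifications but the bookkeeping around the realization: proving carefully that $|\Kk_1\times\Kk_2|\cong|\Kk_1|\times|\Kk_2|$ as topological spaces with the quotient topologies (so that Hausdorffness, the homeomorphism property of $\pi$, and metrizability can all be read off factorwise). This requires showing that the equivalence relation on $\bigsqcup_{(I_1,I_2)}U_{I_1}\times U_{I_2}$ generated by the product morphisms coincides with the product of the two equivalence relations, and that the product of two quotient maps of locally compact second countable spaces is again a quotient map onto the product with the quotient topology. The first point uses the product structure of $\Mor_{\bB_{\Kk_1\times\Kk_2}}$ together with the two-step description of the equivalence relation for good atlases (Remark~\ref{rmk:Kk}~(ii) or Lemma~\ref{le:Ku2}); the second is a standard but slightly delicate point-set fact which I would either cite or verify using local compactness. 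Once this identification is in hand, conditions (i)--(iii) of Definition~\ref{def:good} are immediate, and (iv) together with all the semi-additivity data are purely formal products, completing the proof.
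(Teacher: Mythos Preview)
Your overall strategy mirrors the paper's: verify the semi-additivity axioms factorwise, then check the four conditions of Definition~\ref{def:good}. The first part is fine. But there is a genuine gap in your treatment of the ``good'' conditions.

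You aim to prove that $|\Kk_1\times\Kk_2|\cong|\Kk_1|\times|\Kk_2|$ as \emph{topological} spaces, and you correctly flag this as the main obstacle, planning to resolve it via ``a standard but slightly delicate point-set fact'' that a product of quotient maps of locally compact second countable spaces is again a quotient map. This is not a standard fact in the form you need. The usual route (factor $q_1\times q_2=(\id\times q_2)\circ(q_1\times\id)$ and apply Whitehead's theorem twice) requires local compactness of one of the \emph{quotients} $|\Kk_t|$, not merely of the object spaces $\Obj_{\bB_{\Kk_t}}$. As Example~\ref{ex:Khomeo} shows, the realizations $|\Kk_t|$ are typically \emph{not} locally compact, even in the tame case. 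So your proposed argument does not go through. (A minor side point: your appeal to Lemma~\ref{le:Ku2} is also imprecise, since that lemma is stated for tame atlases whereas the $\Kk_t$ here are only assumed good; but the identification of the product equivalence relation is elementary from the product structure of the morphisms and does not require it.)

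The paper avoids this difficulty by proving less and using it more cleverly. It only observes that the natural map $|\Kk_1\times\Kk_2|\to|\Kk_1|\times|\Kk_2|$ is a continuous \emph{bijection}: continuity is automatic from the quotient topology on the source, and bijectivity is the product-equivalence-relation statement. This already yields Hausdorffness (a continuous injection into a Hausdorff space has Hausdorff domain). For condition~(ii), given a basic open $S_1\times S_2\subset U_{I_1}\times U_{I_2}$, one uses goodness of $\Kk_t$ to find open $\Ww_t\subset|\Kk_t|$ with $\Ww_t\cap\pi_{\Kk_t}(U_{I_t})=\pi_{\Kk_t}(S_t)$, and takes $\Ww$ to be the \emph{preimage} of $\Ww_1\times\Ww_2$ under the continuous bijection; this is open in the quotient topology on $|\Kk_1\times\Kk_2|$ and has the correct intersection with $\pi_{\Kk_1\times\Kk_2}(U_{I_1}\times U_{I_2})$. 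Metrizability follows by pulling the product metric $d_1+d_2$ back through the bijection (bijectivity ensures this is a genuine metric, and the pullback to each $U_{I_1}\times U_{I_2}$ factors through $\pi_{\Kk_1}\times\pi_{\Kk_2}$, hence induces the product topology). Condition~(iv) is exactly as you wrote. So the fix to your argument is simply to weaken the target from ``homeomorphism'' to ``continuous bijection'' and reorganize conditions (i)--(iii) around that.
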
 
\begin{proof} 
 As  in Example~\ref{ex:prod1}, one readily checks  that the given tuple $\Ii, \tau,\Aa$  and footprints
satisfy the  conditions  in Definition~\ref{def:minimal} as well as
 \eqref{eq:foot}, while the product charts and coordinate changes have the form required by 
 the semi-additivity conditions.  
 Further, if $\Kk_1,\Kk_2$ satisfy the cocycle condition, so does the product $\Kk_1\times \Kk_2$.
 This proves the first statement.
 
 If $\Kk_1, \Kk_2$ are atlases, then the  quotient space $|\Kk_1\times \Kk_2|$ equals the product  $|\Kk_1|\times |\Kk_2| $ 
 as a point set because 
 $$
 (I_1,I_2, (x_1,x_2))\sim (J_1,J_2, (y_1,y_2)) \Longleftrightarrow
 (I_t, x_t)\sim (J_t, y_t), \;\; t=1,2.
 $$
   Therefore the map 
 $$
 \Obj_{\Kk_1\times \Kk_2} = \bigsqcup U_{I_1}\times U_{I_2}\to |\Kk_1|\times |\Kk_2|
 $$ 
 descends to a 
continuous injection $|\Kk_1\times \Kk_2|\to |\Kk_1|\times |\Kk_2|$.  If the $\Kk_t$ are good, the spaces $ |\Kk_t|$ are Hausdorff, which implies  that  $|\Kk_1\times \Kk_2|$ is also.
We must check that  $\pi_{\Kk_1\times \Kk_2}$ is a homeomorphism when restricted to $U_{I_1}\times U_{I_2}$.
%
Because the topology on  $U_{I_1}\times U_{I_2}$ has 
a basis consisting of product sets $S_1\times S_2$, it suffices to check that for each such set there is an open set $\Ww\subset 
|\Kk_1\times \Kk_2|$ that intersects $\pi_{\Kk_1\times \Kk_2} (U_{I_1}\times U_{I_2})$ in 
$\pi_{\Kk_1\times \Kk_2} (S_{1}\times S_{2})$. 
But since  for $t=1,2$ the atlas $\Kk_t$  is good, there is  an open subset $\Ww_t\subset |\Kk_t|$ such that $\Ww_t\cap \pi_{\Kk_t}(U_{I_t}) = S_t$.  Hence we may  take $\Ww$ to be the  pullback  of $\Ww_1\times \Ww_2$ to $|\Kk_1\times \Kk_2|$.
%
%
%
%
%
The other two conditions in the definition of \lq\lq good", namely the existence of a suitable metric, and the closedness of the sets 
$\phi_{IJ}(U_{IJ})$,
 also are preserved under taking products.    This completes the proof.
 \end{proof}

\begin{rmk}\label{rmk:notame}\rm
We will see in the next section that for many purposes 
one can work just as well with semi-additive atlases as with additive atlases.  
The main difference is that in the semi-additive case the taming construction does not work
 because these atlases do not have a 
filtration satisfying condition (iii) in Definition~\ref{def:Ku3}.   This implies that
the taming construction in Proposition~\ref{prop:proper1} does not go through.
(See also Remarks~\ref{rmk:tamef} and \ref{rmk:add1}.) 
 Moreover, even if the atlas were tame, the proof in Proposition~\ref{prop:Khomeo}
  that the atlas is therefore good does not go through 
  because the key identity \eqref{eq:tamer}  in Lemma~\ref{le:Ku2} also fails. 
  However, if we know for other reasons (e.g. from its construction)
   that the atlas is good, then we show in the next section that it defines a VFC as before. \hfill$\er$
\end{rmk}

\subsection{From a good semi-additive atlas to the VFC}\label{ss:stame}

We now show that every cobordism class of good, oriented,  semi-additive atlases $\Kk$ on $X$ defines a unique
homology class $[X]_{\Kk}^{vir}\in H_*(X;\Q)$.   
As an  illustration, we  prove the following result.  Note that the statement makes sense because   the product atlas of  Definition~\ref{def:Kprod} is good by Lemma~\ref{le:Kprod}.

\begin{prop}\label{prop:prod}    If $\Kk_t$ is a good oriented, semi-additive Kuranishi atlas on $X_t$
of dimension $d_t$ for $t=1,2$, then
the fundamental class  $[ X_1\times X_2]^{vir}_{\Kk_1\times \Kk_2} $ of the product atlas 
 $\Kk_1\times \Kk_2$ on  $X = X_1\times X_2$ is the product 
$$
[X_1]^{vir}_{\Kk_1}\times [X_2]^{vir}_{\Kk_2} \in \check H_{d_1+d_2}(X_1\times X_2;\Q).
$$
\end{prop}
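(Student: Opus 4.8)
The plan is to construct compatible choices on the product atlas from choices made separately on each factor, and then identify the resulting zero set (as a weighted branched manifold) with the product of the two individual zero sets. First I would invoke the results of \S\ref{ss:stame} (which, by hypothesis, establish that a good oriented semi-additive atlas has a well-defined VFC obtained via reductions and adapted perturbations exactly as in the additive case). So it suffices to show that for a suitable choice of reduction and perturbation on $\Kk_1\times\Kk_2$, the perturbed zero set is the product of perturbed zero sets for $\Kk_1$ and $\Kk_2$, compatibly with orientations and with the maps to $|\Kk_1\times\Kk_2| = |\Kk_1|\times|\Kk_2|$.

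The key steps, in order. \textbf{(1) Products of reductions are reductions.} Given reductions $\Vv^t = (V^t_{I_t})_{I_t\in\Ii_{\Kk_t}}$ of $\Kk_t$ for $t=1,2$, set $V_{(I_1,I_2)} := V^1_{I_1}\times V^2_{I_2}$. I would check conditions (i)--(iv) of Definition~\ref{def:vicin2}: precompactness is clear; the intersection condition $\pi_\Kk(\ov{V_{(I_1,I_2)}})\cap\pi_\Kk(\ov{V_{(J_1,J_2)}})\ne\emptyset \Rightarrow (I_1,I_2)\le(J_1,J_2)$ or $\ge$ follows from the corresponding property for each factor together with the identification $|\Kk_1\times\Kk_2|=|\Kk_1|\times|\Kk_2|$ of Lemma~\ref{le:Kprod}, using that the product order on $\Ii_1\times\Ii_2$ has $(I_1,I_2)$ and $(J_1,J_2)$ comparable iff $I_1,J_1$ are comparable \emph{and} $I_2,J_2$ are comparable (here one uses that each $\Vv^t$ is a reduction, so $\pi_{\Kk_t}(\ov{V^t_{I_t}})\cap\pi_{\Kk_t}(\ov{V^t_{J_t}})\ne\emptyset$ forces comparability); and the zero-set covering condition follows since $\iota_{\Kk_1\times\Kk_2}(X_1\times X_2)=\iota_{\Kk_1}(X_1)\times\iota_{\Kk_2}(X_2)$. \textbf{(2) Products of adapted perturbations are adapted.} Given $(\Vv^t,\Cc^t,\de_t,\si_t)$-adapted perturbations $\nu^t = (\nu^t_{I_t})$ of $\s_{\Kk_t}|_{\Vv^t}$ as in Definition~\ref{def:a-e}, define $\nu_{(I_1,I_2)}(x_1,x_2) := (\nu^1_{I_1}(x_1),\nu^2_{I_2}(x_2))\in E_{I_1}\times E_{I_2}$. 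I would verify: compatibility with the product coordinate changes (immediate from \eqref{eq:compatc} for each factor, since the product covering maps and product inclusions $\Hat\phi$ act coordinatewise); admissibility (the derivative splits as a direct sum, so $\rd\nu_{(I_1,I_2)}(\rT V_{(I_1,I_2)})\subset\im\Hat\phi^1_{I_1J_1}\times\im\Hat\phi^2_{I_2J_2} = \im\Hat\phi_{(I_1,I_2)(J_1,J_2)}$); transversality ($\rd(s_{(I_1,I_2)}+\nu_{(I_1,I_2)})$ is the direct sum of the two transverse maps, hence surjective); and precompactness with nested reduction $\Cc := \Cc^1\times\Cc^2$ (the zero set is $Z^1_{I_1}\times Z^2_{I_2}$ and its $\pi_\Kk$-image lands in $\pi_{\Kk_1}(\Cc^1)\times\pi_{\Kk_2}(\Cc^2)=\pi_{\Kk_1\times\Kk_2}(\Cc)$). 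One small point to address: the constants $\de,\si$ for the product; one takes $\de := \min(\de_1,\de_2)$ and $\si$ small enough, and checks that the disjointness/neighbourhood inequalities defining $\de_\Vv$ and $\si(\de,\Vv,\Cc)$ hold for the product because the relevant metric on $|\Kk_1\times\Kk_2|$ can be taken to be (say) the sum or max of admissible metrics on the factors. \textbf{(3) Identify the zero set.} The local zero sets of $\s|_\Vv+\nu$ on $\Kk_1\times\Kk_2$ are $Z_{(I_1,I_2)} = Z^1_{I_1}\times Z^2_{I_2}$, and the wnb groupoid $\Hat\bZ^\nu$ of Proposition~\ref{prop:zero} is the product groupoid $\Hat\bZ^{\nu^1}\times\Hat\bZ^{\nu^2}$: its objects, morphisms (built from covering maps and the free group actions), and weighting function $\La = \La_1\cdot\La_2$ (since $|\Ga_{(I_1,I_2)}| = |\Ga_{I_1}||\Ga_{I_2}|$) all factor. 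Hence the maximal Hausdorff quotient is $|\Hat\bZ^{\nu^1}|_\Hh\times|\Hat\bZ^{\nu^2}|_\Hh$ as a weighted branched manifold. \textbf{(4) Orientations.} The determinant line bundle satisfies $\det(\s_{\Kk_1\times\Kk_2}) \cong \det(\s_{\Kk_1})\boxtimes\det(\s_{\Kk_2})$ (from $\rT(U_{I_1}\times U_{I_2}) = \rT U_{I_1}\oplus\rT U_{I_2}$ and $E_{I_1}\times E_{I_2}$), so the product orientation $\si_1\times\si_2$ is an orientation of $\Kk_1\times\Kk_2$, and Lemma~\ref{le:locorient1} gives the product orientation on $Z^1_{I_1}\times Z^2_{I_2}$. \textbf{(5) Pass to \v{C}ech homology.} The fundamental class $[|\Hat\bZ^{\nu^1}\times\Hat\bZ^{\nu^2}|_\Hh] = [|\Hat\bZ^{\nu^1}|_\Hh]\times[|\Hat\bZ^{\nu^2}|_\Hh]$ by the K\"unneth formula for weighted branched manifolds (cf.\ \cite{Mbr}), and its image under the inclusion into $|\Kk_1\times\Kk_2|=|\Kk_1|\times|\Kk_2|$ is the product of the images. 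Taking the inverse limit over shrinking $\de$-neighbourhoods of $\iota(X_1\times X_2) = \iota(X_1)\times\iota(X_2)$ and using that \v{C}ech homology of compact metrizable spaces is multiplicative under products (with $\Q$ coefficients, so no Tor terms), we conclude $[X_1\times X_2]^{vir}_{\Kk_1\times\Kk_2} = [X_1]^{vir}_{\Kk_1}\times[X_2]^{vir}_{\Kk_2}$ in $\check H_{d_1+d_2}(X_1\times X_2;\Q)$.

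The main obstacle I anticipate is \textbf{Step (2)}, specifically the bookkeeping of the adaptedness constants and the interplay of the two metrics. The definition of a $(\Vv,\Cc,\de,\si)$-adapted perturbation involves a carefully calibrated family of nested enlargements $V^k_I$, cores $N^k_{IJ}$, and constants $\eta_k$, $\de_\Vv$, $\si(\de,\Vv,\Cc)$, all defined in terms of a fixed admissible metric on $|\Kk|$; one must check that the product perturbation, built from product data, satisfies these with respect to \emph{some} admissible metric on the product and \emph{some} choice of product constants. This is not deep but requires care: in particular, the enlarged cores of the product do not split naively as products of cores (a neighbourhood of $\bigcup_{(H_1,H_2)\subsetneq(I_1,I_2)} N_{(I_1,I_2)(H_1,H_2)}$ is larger than the product of the factor neighbourhoods), so one should either work with a metric for which the relevant inclusions still hold, or observe that all one really needs downstream (by the Remark following Lemma~\ref{le:good}) is that the product perturbation is admissible, transverse, and precompact — properties which, as noted in Step (2), do split cleanly. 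Taking this lighter route (verify admissible/transverse/precompact directly, bypassing the full adapted machinery) is what I expect to make the argument go through smoothly; a parallel argument for product cobordisms then gives well-definedness, completing the proof.
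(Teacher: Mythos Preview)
Your Step~(1) contains a genuine error: the product of two reductions is \emph{not} a reduction of the product atlas. You assert that $(I_1,I_2)$ and $(J_1,J_2)$ are comparable in the product order iff $I_1,J_1$ are comparable \emph{and} $I_2,J_2$ are comparable, but the reverse implication fails: if $I_1\lneq J_1$ while $I_2\gneq J_2$, each factor pair is comparable yet $(I_1,I_2)$ and $(J_1,J_2)$ are incomparable in the product order, and $\pi_\Kk(\ov{V^1_{I_1}\times V^2_{I_2}})\cap\pi_\Kk(\ov{V^1_{J_1}\times V^2_{J_2}})\supset \pi_\Kk(V^1_{J_1}\times V^2_{I_2})$ is typically nonempty. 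The paper flags precisely this point in a footnote.

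The paper's fix is to invoke Lemma~\ref{le:redprod} (built on the cover-reduction Lemma~\ref{le:cov0}): one sets $P_{I_1,I_2}:=V^1_{I_1}\times V^2_{I_2}$, checks that $P_{I_1,I_2}\cap P_{J_1,J_2}\subset P_{(I_1\vee J_1,\,I_2\vee J_2)}$, and then constructs a genuine reduction $\Vv=(V_{I_1,I_2})$ of $\Kk_1\times\Kk_2$ with $V_{I_1,I_2}\subset P_{I_1,I_2}$. The product perturbation $\nu_{(I_1,I_2)}:=(\nu^1_{I_1},\nu^2_{I_2})$ is then restricted to this smaller $V_{I_1,I_2}$. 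Once this repair is made, your Steps~(2)--(5) are broadly on the right track, and indeed the paper follows your ``lighter route'' of checking compatibility, transversality, and precompactness directly rather than chasing the full adapted constants. One further discrepancy in the nontrivial-isotropy case: the paper does not claim that $\Hat\bZ^\nu$ equals the product groupoid $\Hat\bZ^{\nu^1}\times\Hat\bZ^{\nu^2}$ (since the local zero sets $Z_{I_1,I_2}$ are only the restrictions of $Z^1_{I_1}\times Z^2_{I_2}$ to the smaller $V_{I_1,I_2}$), but rather exhibits a weight-preserving functor $\Hat\bZ^\nu\to\Hat\bZ^{\nu^1}\times\Hat\bZ^{\nu^2}$ that is a layered covering in the sense of~\cite{Mbr} and hence pushes the fundamental class forward to the product class.
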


We construct the VFC by the method explained in \S\ref{ss:red}.  Thus the first step is to construct a reduction as in Definition~\ref{def:vicin}.
First we prove a general result about coverings. 

\begin{lemma}\label{le:cov0} Let $\Ii$ be a poset that is minimally generated in the sense of
 Definition~\ref{def:minimal}, and
suppose given a finite open cover of a compact metrizable Hausdorff
space $X=\bigcup_{I\in \Ii} P_I$  such that
$$
P_I\cap P_J\subset P_{I\vee J}\quad \forall I,J\in \Ii.
$$
Then
there exists a {\bf cover reduction} $\bigl(Z_I\bigr)_{I\in \Ii}$ where
 $Z_I\subset X$ is a (possibly empty) open subset satisfying
\begin{enumerate}
\item
$Z_I\sqsubset P_I$
for all $I$;
\item
if $\ov{Z_I}\cap \ov{Z_J}\ne \emptyset$ then $I\le J$ or $J\le  I$;
\item
$X\,=\, \bigcup_{I} Z_I$.
\end{enumerate}
\end{lemma}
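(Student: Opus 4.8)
The plan is to reduce the cover in two stages, working from the bottom of the poset $\Ii$ upward so that the separation condition (ii) is built in incrementally. The key structural fact I would exploit is that, by Definition~\ref{def:minimal}, any two elements $I,J\in\Ii$ with an upper bound have a unique least upper bound $I\vee J$, and the hypothesis $P_I\cap P_J\subset P_{I\vee J}$; together these say that the \emph{nerve} of the cover $(P_I)$ is already controlled by the order relation on $\Ii$. In particular, whenever $I,J$ are \emph{not} comparable but $P_I\cap P_J\neq\emptyset$, then $I\vee J$ exists and strictly dominates both; so if I can shrink the $P_I$ to sets $Z_I$ whose closures avoid the ``already covered'' region $\bigcup_{K>I}Z_K$ except where forced, I will automatically achieve (ii).

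Concretely, first I would fix a total order on $\Ii$ refining the partial order (list the elements as $I_1,\dots,I_M$ with $I_a<I_b\Rightarrow a<b$, and in fact process them from the \emph{top} down, i.e. from maximal to minimal). Since $X$ is compact metrizable and the $P_I$ form a finite open cover, there is a shrinking $(P_I')$ with $P_I'\sqsubset P_I$ and $X=\bigcup_I P_I'$ (this is the standard shrinking lemma, used already in the proof of Proposition~\ref{prop:proper1}). I would then inductively define closed sets $C_I$ and open sets $Z_I$ with $\ov{Z_I}\subset P_I$, processing maximal elements first: for a maximal $I$ just take $Z_I$ to be a slightly shrunk copy of $P_I$; for a general $I$, having already chosen $Z_K$ for all $K$ with $K>I$ or $K$ incomparable to $I$ but processed earlier, set
$$
Y_I \,:=\; \ov{P_I'} \;\bigcap\; {\textstyle\bigcup}\{\, \ov{Z_J} \ \big|\ J \text{ already processed},\ J\not\le I,\ I\not\le J\,\},
$$
a compact subset of $\ov{P_I'}$, and observe that the hypothesis forces $Y_I$ to avoid the part of $X$ that only $P_I$ and its comparables can cover. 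Using compactness of $Y_I$ and metrizability, choose an open neighbourhood $\Nn(Y_I)\sqsubset P_I$ of $Y_I$ that still omits a prescribed compact core of $P_I'$ needed for (iii), and put $Z_I := P_I'\less \ov{\Nn(Y_I)}$. This is exactly the construction pattern of equation~\eqref{eq:QIVI} in the proof of Proposition~\ref{prop:cov2}, now carried out purely at the level of footprints.

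Conditions (i) and (iii) are then routine: (i) holds because $Z_I\subset P_I'\sqsubset P_I$, and (iii) holds because at each stage the sets removed from $P_I'$ were chosen to miss a compact core, and the cores over all $I$ still cover $X$ (one checks this by a finite induction, using that $(P_I')$ already covers $X$ and that a point $x$ lies in $Z_I$ for the $\le$-maximal $I$ among those with $x\in P_I'$, since then $x$ is incomparable-free at the top). Condition (ii) is immediate from the construction: if $I,J$ are incomparable and (say) $J$ was processed before $I$, then $\ov{Z_I}\cap\ov{Z_J}\subset Y_I\subset\Nn(Y_I)$, while $Z_I$ was defined to be disjoint from $\ov{\Nn(Y_I)}$, a contradiction; and symmetrically if $I$ precedes $J$.

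\textbf{Main obstacle.} The delicate point is the simultaneous fulfilment of (ii) and (iii): shrinking $Z_I$ enough to separate it from all incomparable $\ov{Z_J}$ must not shrink it so much that the cores no longer cover $X$. This is why the hypothesis $P_I\cap P_J\subset P_{I\vee J}$ is essential — it guarantees that the ``forbidden'' region $Y_I$ is, up to closure, contained in $\bigcup_{K>I}P_K$, so that removing a neighbourhood of $Y_I$ from $P_I'$ costs nothing that is not recovered higher up in the poset. Making this precise requires a careful bookkeeping of which points are ``claimed'' by which chain in $\Ii$, and choosing the shrinking $(P_I')$ and the neighbourhoods $\Nn(Y_I)$ in a coherent order; I expect this inductive interleaving to be the only part of the argument needing real care, the rest being the standard compactness-and-metrizability manipulations already used repeatedly in \cite{MW1}.
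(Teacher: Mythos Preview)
Your sketch has a genuine gap precisely where you flag the ``main obstacle''. The coverage claim --- ``a point $x$ lies in $Z_I$ for the $\le$-maximal $I$ among those with $x\in P_I'$'' --- presupposes that $\{I : x\in P_I'\}$ has a greatest element, but an arbitrary shrinking $(P_I')$ need not satisfy $P_I'\cap P_J'\subset P_{I\vee J}'$, so there may be several incomparable maximal such $I$. Your observation that $Y_I\subset P_{I\vee J}$ for each incomparable $J$ is correct but lives at the \emph{unshrunk} level; for coverage you would need a neighbourhood of $Y_I$ to lie in $\bigcup_{K\neq I}Z_K$, and the $Z_K$ are shrunk. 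A concrete failure: take $X=[0,3]$, three pairwise-incomparable minimal elements $A,B,C$ under a single top $D$, with $P_A=[0,1.1)$, $P_B=(0.9,2.1)$, $P_C=(1.9,3]$, $P_D=(0.9,1.1)\cup(1.9,2.1)$, and shrink every endpoint inward by $0.02$. Processing $D,C,B,A$ gives $\ov{Z_B}\supset[0.92,\ldots]$, hence $0.92\in Y_A$ and so $0.92\notin Z_A$; but $0.92\notin P_B',P_C',P_D'$ either, so $0.92$ is uncovered no matter how small $\Nn(Y_A)$ is chosen. One can rescue this example by shrinking $P_D$ strictly less than $P_B$, but that is a nontrivial coherence condition on the initial shrinking which your proposal does not impose, and making it systematic is the whole content of the lemma.

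The paper proceeds quite differently. After an arbitrary shrinking $Q_I'\sqsubset P_I$ it \emph{re-enlarges}, by induction on $|I|:=|m(I)|$, to open sets $Q_I$ with $Q_I'\subset Q_I\sqsubset P_I$ that again satisfy $Q_I\cap Q_J\subset Q_{I\vee J}$ (set $Q_I=Q_I'\cup\bigcup_S\bigcap_{j}Q_{I_j}$ over all collections $S=(I_j)$ with $\bigvee_j I_j=I$, $I_j\ne I$). Iterating $2M$ times yields a nested chain $Q_I^0\sqsubset P_I^1\sqsubset Q_I^1\sqsubset\cdots\sqsubset P_I$ with the intersection property at every level. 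One then sets $Z_I:=P_I^{|I|}\less\bigcup_{|J|>|I|}\ov{Q_J^{|I|}}$, removing by \emph{level} rather than by incomparability. Coverage holds because $I_x:=\bigvee\{I:x\in P_I^{|I|}\}$ satisfies $x\in P_{I_x}^{|I_x|}$ (this uses the restored intersection property), and separation holds because for $|I|\le|J|$ one has $\ov{Z_I}\cap\ov{Z_J}\subset Q_I^{|J|}\cap Q_J^{|J|}\subset Q_{I\vee J}^{|J|}$, which was removed from $Z_J$. Note finally that the $\Nn(Y_I)$-removal you invoke is imported from the proof of Proposition~\ref{prop:cov2}, but that argument takes a footprint cover reduction $(Z_I)$ as \emph{input} and so cannot be used to produce one.
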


\begin{proof}
Since $X$ is compact Hausdorff, we may choose precompact open subsets $Q_I'\sqsubset P_I$ 
that still cover $X$.  
We first claim we may enlarge these sets to $Q_I$ with $Q_I'\subset Q_I\sqsubset  P_I$ so that $Q_I\cap Q_J\subset Q_{I\vee J}$ for all $I,J$.  For this, we define $Q_I=Q_I'$ if $I$ is minimal, and then define $Q_I$ by induction over $|I|: = |m(I)|$ 
by setting 
$$
Q_I = Q_I'\cup \bigcup_{S \in \Ss(I)}\;
{\textstyle \bigcap_{j\in S} Q_{I_j}}, 
$$ 
where $\Ss(I)$ is the set of all collections $S: = (I_j)_{j\in S}$ such that 
$
\vee_{j\in S} I_j=I$ and $I_j\ne I.
$
Note that for each such collection $S$, the induction hypothesis implies that 
 $$ {\textstyle \bigcap_{j\in S} Q_{I_j}\;\; \sqsubset\;\; \bigcap_{j\in S} P_{I_j} \subset P_I.}
 $$
Therefore $Q_I\sqsubset P_I$ since it is a finite union of precompact subsets of $P_I$.  

Repeating this procedure $2M$ times where $M = \max_{I\in \Ii} |m(I)|$, we obtain 
 families of nested sets 
\begin{equation}\label{eq:FGI}
Q_I^0\,\sqsubset\, P_I^1 \,\sqsubset\, Q_I^1
\,\sqsubset\, P_I^2 \,\sqsubset\,\ldots \,\sqsubset\,
Q_I^{M}: = P_I,
\end{equation}
such that $P_I^k\cap P_J^k\subset P_{I\vee J}^k,$ and $ Q_I^k\cap Q_J^k\subset Q_{I\vee J}^k$ for all $k$.
For $I\in \Ii$, define
\begin{equation}\label{eq:ZGI}
Z_I \,: =\;P_I^{|I|} \;\less\; {\textstyle \bigcup_{|J|> |I|}} \; \ov{Q^{|I|}_J} ,\qquad |I|: = |m(I)|.
\end{equation}
These sets are open since they are the complement of a finite union of closed sets in the open set $P_I^{|I|}$. The precompact inclusion $Z_I\sqsubset P_I$ in (i) holds since $P_I^{|I|}\sqsubset P_I$.

To prove the covering property (iii),  let $x\in X$ be given. Then we claim that $x \in Z_{I_x}$ for
$$
I_x :=  \underset{I: x \in P^{|I|}_I}
{\textstyle  \bigvee} 
  I  \;\;\;\subset\;\;\; \{1,\ldots, N\} .
$$
To see this, denote by $S_x$ the collection of $I\in \Ii$ such that $x\in P^{|I|}_I$.
Then $I_x=\vee_{I\in S_x} I$, which implies $|I|\le |I_x|$ for all $I\in S_x$.
Hence 
$$
{\textstyle x\in \bigcap_{I\in S_x}P^{|I|}_{I}\subset \bigcap_{I\in S_x}P^{|I_x|}_{I}   \subset P^{|I_x|}_{I_x},}
$$
where the last step holds by assumption on the covering $(P^k_I)_{I\in \Ii}$.
On the other hand, 
if $x\in \ov{Q^{|I_x|}_J}$ for some $ J$ with $|J|> |I_x|$, then $x\in \ov{Q^{|I_x|}_J}\subset P^{|J|}_J$, which contradicts the definition of $I_x$.  Hence $x\in Z_{I_x}$ as claimed.

To prove the intersection property (ii), suppose to the contrary that $x\in \ov{Z_I}\cap\ov{Z_J}$ where $|I|\le |J|$ but $m(I)\less m(J)\ne \emptyset$ so that $I\not\le J$.  Then $I\vee J> J$ and 
we have 
$x\in \ov{Z_I}\cap \ov{Z_J} \subset Q^{|I|}_I\cap Q^{|J|}_J \subset Q^{|J|}_{J\vee I}$,
which is impossible because $ Q^{|J|}_{J\vee I}$ has been removed from $ \ov{Z_J} $.
Thus the sets $Z_I$ form a cover reduction.
\end{proof}

\begin{rmk}\rm  This is a  version of  \cite[Lemma~5.3.1]{MW1}, modified to apply to
a covering that is indexed by a minimally generated poset $\Ii$,
 rather than just 
by the poset $\Pp^*(\Mm(\Ii))$. \hfill$\er$
\end{rmk}

We define the {\bf reduction} of a good semi-positive atlas just as in Definition~\ref{def:vicin}.
Thus  it is  a collection of open sets $V_I\sqsubset U_I$ such that 
\begin{align*} &\
\io_\Kk(X)\;\subset\;{\textstyle \bigcup_{I\in \Ii}}\, \pi_\Kk(V_I\cap s_I^{-1}(0));\;\; \mbox{ and }\\
& \ \pi_\Kk(\ov{V_I})\cap 
\pi_\Kk(\ov{V_J}) \ne \emptyset\;\;\Longrightarrow \;\; I\le J \mbox{ or } J\le I.
\end{align*}
The next result is the semi-additive analog of Proposition~\ref{prop:cov2}.  

\begin{lemma}\label{le:redprod}  Suppose that $\Kk$ is a good semi-additive atlas with footprint cover $(F_I)_{I\in \Ii}$, and let $P_I\subset F_I$ be any open sets that cover $X$ and are such that $P_I\cap P_J\subset  P_{I\vee J}$.  Suppose further that  $W_I\subset U_I$ are $\Ga_I$-invariant sets such that $W_I\cap s_I^{-1}(0) = \psi_I^{-1}(P_I)$.  Then
$\Kk$ has a reduction $\Vv: = (V_I)_{I\in \Ii}$ such that $V_I\subset W_I$ 
for all $I$.   It is unique up to concordance.
\end{lemma}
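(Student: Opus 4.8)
\textbf{Proof strategy for Lemma~\ref{le:redprod}.}
The plan is to reduce the statement to Lemma~\ref{le:cov0} applied on the base space $X$, and then lift the resulting cover reduction $(Z_I)_{I\in\Ii}$ to subsets $V_I\subset W_I\subset U_I$, exactly as in the proof of Proposition~\ref{prop:cov2} but now carried out over the minimally generated index poset $\Ii$. Note first that the hypotheses are precisely what Lemma~\ref{le:cov0} needs: $\Ii$ is minimally generated (since $\Kk$ is semi-additive, its index set is by definition minimally $(\Aa,\tau)$-generated, hence in particular minimally generated), $X$ is compact metrizable Hausdorff, and the open sets $P_I\subset F_I$ cover $X$ with $P_I\cap P_J\subset P_{I\vee J}$. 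Thus Lemma~\ref{le:cov0} yields a cover reduction $(Z_I)_{I\in\Ii}$ with $Z_I\sqsubset P_I$, $\ov{Z_I}\cap\ov{Z_J}\ne\emptyset\Rightarrow I\le J$ or $J\le I$, and $X=\bigcup_I Z_I$.

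Next I would lift this cover reduction to the atlas. For each $I\in\Ii$, since $\Kk$ is good, the map $\pi_\Kk|_{U_I}:U_I\to|\Kk|$ is a homeomorphism onto its image, and $\io_\Kk:X\to|\s_\Kk|^{-1}(0)$ is a homeomorphism onto its image (analog of Lemma~\ref{le:ioK}); so $W_I\cap s_I^{-1}(0)=\psi_I^{-1}(P_I)$ together with $Z_I\sqsubset P_I$ lets us choose a $\Ga_I$-invariant open subset $V_I\sqsubset W_I$ with $V_I\cap s_I^{-1}(0)=\psi_I^{-1}(Z_I)$ and $\ov{V_I}\cap s_I^{-1}(0)=\psi_I^{-1}(\ov{Z_I})$ (using properness of $\pi_I:U_I\to\uU_I$ so that $\Ga_I$-invariant precompact sets exist; cf. Lemma~\ref{le:restr0}). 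Then I would verify the three conditions defining a reduction: the covering condition $\io_\Kk(X)\subset\bigcup_I\pi_\Kk(V_I\cap s_I^{-1}(0))$ follows from $X=\bigcup_I Z_I$ and $V_I\cap s_I^{-1}(0)=\psi_I^{-1}(Z_I)$; the precompactness $V_I\sqsubset U_I$ is immediate from $V_I\sqsubset W_I\subset U_I$; and the disjointness condition $\pi_\Kk(\ov{V_I})\cap\pi_\Kk(\ov{V_J})\ne\emptyset\Rightarrow I\le J$ or $J\le I$ is proved by shrinking the $V_I$ further if necessary, exactly as in the trivial-isotropy case: after an initial choice one compactly encloses the sets $Y_{IJ}:=\ov{V_I}\cap\pi_\Kk^{-1}(\pi_\Kk(\ov{V_J}))$ (compact by the homeomorphism property of $\pi_\Kk$) for non-nested $I,J$, checks using compatibility of $\s$ with coordinate changes that $\psi_I^{-1}(\ov{Z_I})\cap Y_{IJ}=\emptyset$ (here one uses the already-established intersection property of the cover reduction $(Z_I)$), chooses closed neighbourhoods $\Nn(Y_{IJ})$ disjoint from $\psi_I^{-1}(\ov{Z_I})$, and replaces $V_I$ by $V_I\less\bigcup_{I,J\text{ non-nested}}\Nn(Y_{IJ})$. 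This is the same argument as the sketch proof of Proposition~\ref{prop:cov2}(a), with the only change that the condition "$J\in\Cc(I)$" is replaced by "$I\le J$ or $J\le I$" in the poset $\Ii$; nothing in that argument used that $\Ii\subset\Pp^*(\{1,\dots,N\})$ beyond the nestedness relation.

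Finally, uniqueness up to concordance: given two reductions $\Vv^0,\Vv^1$ obtained this way (or any two reductions of $\Kk$), one runs the same construction over $[0,1]\times X$ with the product good semi-additive atlas $[0,1]\times\Kk$ (which is good, with collared boundary), using a cover reduction of $[0,1]\times X$ that restricts to the given ones at the endpoints — the cobordism-reduction analog of Lemma~\ref{le:cov0}, proved by the identical nested-shrinking argument with collar conditions added. I expect the main obstacle to be purely bookkeeping: checking that the disjointness-shrinking step genuinely only uses the nestedness relation on $\Ii$ and the homeomorphism property of $\pi_\Kk$ — both of which hold for good atlases by hypothesis — rather than any additivity/filtration property (which semi-additive atlases lack, cf. Remark~\ref{rmk:notame}); once one is confident of this, the proof is a routine transcription of the trivial-isotropy argument with the poset $\Pp^*(\{1,\dots,N\})$ replaced throughout by the minimally generated poset $\Ii$, together with the now-standard passage to the intermediate category for the isotropy aspects.
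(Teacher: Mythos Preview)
Your proposal is correct and follows essentially the same approach as the paper's proof: apply Lemma~\ref{le:cov0} to the cover $(P_I)$ to obtain a cover reduction $(Z_I)$, lift to precompact $\Ga_I$-invariant sets $W_I'\sqsubset W_I$ with the right zero sets, then excise neighbourhoods of the non-nested overlaps $Y_{IJ}$ (closed/compact by the homeomorphism property of $\pi_\Kk$, disjoint from the zero set by the intersection property of the $Z_I$), handling nontrivial isotropy via the intermediate category and uniqueness via the cobordism version. Your emphasis that the shrinking step uses only the nestedness relation on $\Ii$ and the goodness hypothesis (not additivity or filtration) is exactly the point.
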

\begin{proof}  Suppose first that the isotropy is trivial.  Choose a cover reduction $(Z_I\sqsubset P_I)_{I\in \Ii}$ of $X$ as in the previous lemma, and then for each $I$ choose an open set $W_I'\sqsubset W_I$  such that $W_I'\cap s_I^{-1}(0) = \psi_I^{-1}(Z_I)$.
For each $I$, let $\Cc(I) = \{J\in \Ii: I\le J, \mbox{ or } J\le I\}$ and then define
$Y_I: = \bigcup_{J\notin \Cc(I)} \ov {W_I'}\cap \pi_\Kk^{-1}(\pi_\Kk(\ov{W_J'}))$. Then $Y_I$ is closed because $\pi_\Kk: U_J\to |\Kk|$ is  homeomorphism for each $J$ by goodness.  Further, by construction  $$
Y_I\cap s_I^{-1}(0) \subset \bigcup_{J \notin\Cc(I)} \psi_I^{-1}(F_I\cap F_J) =  \emptyset.
$$
 Hence we may choose an open neighbourhood $\Nn(Y_I)$ of $Y_I$ in $U_I\less s_I^{-1}(0)$,
and then set $V_I: = W_I'\less \Nn(Y_I)$.   This has the same zero set of $W_I'$ 
and satisfies the other conditions by construction.
The statement about concordance follows as in \cite{MW1}.

If the isotropy is nontrivial, the argument is the essentially  same except that we construct the corresponding sets $\uW_I\subset\uU_I$ on the intermediate level and then define $W_I: = \pi_I^{-1}(\uW_I)$,
\end{proof}

The notions of   Kuranishi concordance, cobordism
and of orientation bundle extend immediately
to the semi-additive case.   The only difference is that because the atlases that we now consider cannot be tamed, we must assume 
from the outset that all  atlases under consideration are good.  Thus the relevant relation is that of  {\bf good cobordism}.
 We can now proceed to construct representatives for the fundamental class $[X]^{vir}_\Kk$ as in \S\ref{ss:zero}.
Thus, given a reduction $\Vv$ of a  good atlas, we  use the method of Proposition~\ref{prop:ext} to 
  construct a reduced perturbation $\nu: = \bigl(\nu_I:V_I\to E_I\bigr)_{I\in \Ii}$ that satisfies 
  the conditions of Definition~\ref{def:sect2}
  whose zero set
$$
\bZ^\nu: = {\textstyle \bigcup_{I\in \Ii, \ga\in \Ga_I}}  (\s_I|_{V_I} + \nu)^{-1}(0)
$$
can be completed to a wnb groupoid $\Hat\bZ^\nu$ as in Proposition~\ref{prop:zero}.  
This construction is done by induction over $|m(I)|$, which is permissible because $I < J \Longrightarrow |m(I)| < |m(J)|$. 
It follows that  $\Hat\bZ^\nu$ has a fundamental class that is represented in the rational singular homology of a small neighbourhood  in $|\Kk|$ of the zero set $\io_\Kk(X)$.  
Taking a sequence $\nu_k$ of admissible perturbations with norm converging to $0$, one obtains   an element in
the \v Cech homology of $X$.  
It is unique because, as in Proposition~\ref{prop:ext2},
 it is possible to join any two admissible, precompact, transverse  perturbations of $\Kk$ by an admissible, precompact,
 transverse 
 perturbation over the product $[0,1]\times \Kk$.
Therefore, finally we obtain the following result.
  
\begin{prop}\label{prop:semivir}   Every oriented, good, semi-additive atlas $\Kk$ on $X$ of dimension $d$  determines a \v Cech homology class $[X]^{vir}_\Kk\in \check H_d(X;\Q)$  that depends only on the good oriented cobordism class of $\Kk$.
\end{prop}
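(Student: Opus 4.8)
The plan is to follow the template of \S\ref{ss:red} and \S\ref{ss:zero} almost verbatim, replacing the poset $\Pp^*(\Mm(\Ii))$ by the minimally generated poset $\Ii$ throughout and carrying out every induction over the integer $|m(I)|$ in place of $|I|$. This is legitimate because $I<J$ in $\Ii$ implies $m(I)\subsetneq m(J)$, so $|m(\cdot)|$ is a strictly order-preserving length function on $\Ii$. Since by Remark~\ref{rmk:notame} a semi-additive atlas cannot be tamed, the key preliminary point — already flagged in the remark following Lemma~\ref{le:good} — is that once one \emph{starts} from a good atlas (Definition~\ref{def:good}), the only property of the filtration/tameness structure that enters the construction of the VFC is condition~(iv) there, namely that $\phi_{IJ}(U_{IJ})$ is a relatively closed subset of $U_J$, and this is built into the definition of goodness. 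So I would first record that the notions of good cobordism, the orientation line bundles $\det(\Kk)$ and $\det(\s_\Kk)$ of \S\ref{ss:orient}, and the notion of reduction of Definition~\ref{def:vicin2}, all carry over unchanged to good semi-additive atlases, with coordinate changes now given by the covering maps $\rho_{IJ}$ and the groups $\Ga_I=\Ga_{\tau(I)}$ as in Definition~\ref{def:sadd}.

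Next I would handle reductions. By Lemma~\ref{le:cov0} the footprint cover $(F_I)_{I\in\Ii}$ of a good semi-additive atlas admits a cover reduction indexed by $\Ii$, and by Lemma~\ref{le:redprod} this lifts to a reduction $\Vv=(V_I)_{I\in\Ii}$ of $\Kk$, unique up to concordance; the cobordism version of Lemma~\ref{le:redprod} joins any two reductions of $\Kk$ by a cobordism reduction of $[0,1]\times\Kk$. Then, given such a $\Vv$, I would construct a reduced perturbation $\nu=(\nu_I\colon V_I\to E_I)_{I\in\Ii}$ satisfying the compatibility condition \eqref{eq:compatc} (phrased via $\rho_{IJ}$), admissibility, transversality and precompactness, by the inductive procedure of Proposition~\ref{prop:ext}, organized over the values of $|m(I)|$. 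The nested neighbourhoods $V^k_I\sqsubset U_I$, the cores $N^k_J$ and their enlargements, and the constants $\de_\Vv$, $\eta_0$, $\si(\de,\Vv,\Cc)$ are defined exactly as before, using an admissible metric on $|\Kk|$ (which exists since $\Kk$ is good), with the relation $I\subsetneq J$ everywhere replaced by the partial order $I<J$ in $\Ii$; the disjointness of the sets $V^{|m(I)|}_I$ over $I$ with fixed $|m(I)|$, which makes the $k$th inductive step decouple, holds because the reduction separates $\pi_\Kk(\ov{V_I})$ from $\pi_\Kk(\ov{V_J})$ whenever $I,J$ are incomparable. Transversality is preserved under the $\rho_{IJ}$ because the index/tangent bundle condition still holds by hypothesis, and the orientation of $\Kk$ induces compatible orientations on the local zero sets $Z_I=(s_I|_{V_I}+\nu_I)^{-1}(0)$ by Lemma~\ref{le:locorient1}. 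The proof of Proposition~\ref{prop:zero} then applies: the $Z_I$, together with the morphisms generated by the $\rho_{IJ}$ and the free actions of the kernels $\ker\rho^\Ga_{IJ}=\Ga_{\tau(J)\less\tau(I)}$, complete to a wnb groupoid $\Hat\bZ^\nu$ with weighting $\La(p)=\sum_{x\in Z_I\cap(\pi_{\Hat\bZ^\nu}^\Hh)^{-1}(p)}1/|\Ga_I|$, whose maximal Hausdorff quotient $|\Hat\bZ^\nu|_\Hh$ is a compact oriented $d$-dimensional weighted branched manifold mapping into $\pi_\Kk(\Vv)\subset|\Kk|$; precompactness of $\nu$ together with relative closedness of $\phi_{IJ}(U_{IJ})$ yields its compactness as in Proposition~\ref{prop:zeroS0}.

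Then I would extract the Čech class and prove its invariance. A compact oriented weighted branched $d$-manifold carries a fundamental class in rational singular homology by \cite[Proposition~3.25]{Mbr}; pushing it forward under $|\io_\nu|_\Hh$ gives a class in $H_d$ of a neighbourhood of $\io_\Kk(X)$ in $|\Kk|$, and since $\pi_\Kk(\Vv)$ lies in an arbitrarily small neighbourhood of $\io_\Kk(X)$ once $\Vv$ is chosen sufficiently small (reductions being cofinal under shrinking), the inverse-limit argument at the end of \S\ref{s:noiso} produces a well-defined element $[X]^{vir}_\Kk\in\check H_d(X;\Q)$. Independence of the choices of reduction and perturbation, and of the good oriented cobordism class of $\Kk$, follows from the relative version of the construction: exactly as in Proposition~\ref{prop:ext2} and Corollary~\ref{cor:ext2}, any two admissible, precompact, transverse, strongly adapted perturbations — for possibly different reductions, living on the two ends of a good semi-additive Kuranishi cobordism — extend to an admissible, precompact, transverse cobordism perturbation whose zero set is a weighted branched cobordism between the two zero sets, and this identifies the corresponding fundamental classes in the limit. (Proposition~\ref{prop:prod} is then the special case of the product atlas.)

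The main obstacle, as I see it, is not the topology of $|\Kk|$ — that is secured by fiat through the goodness hypothesis — but the bookkeeping needed to verify that the intricate inductive construction of $\nu$ in Proposition~\ref{prop:ext} genuinely uses \emph{only} (a) the order structure on $\Ii$ with its length function $|m(\cdot)|$, (b) the index/tangent bundle condition, (c) the disjointness of $\pi_\Kk(\ov{V_I})$ and $\pi_\Kk(\ov{V_J})$ for incomparable $I,J$, and (d) relative closedness of $\im\phi_{IJ}$, and never the tameness identities \eqref{eq:tame1}, \eqref{eq:tame2} or filtration conditions (i)--(iii). Several auxiliary statements in \S\ref{ss:red} — e.g.\ the inclusion \eqref{eq:useful}, the identity \eqref{eq:tamer}, and point~(A) in the proof of Proposition~\ref{prop:ext} — are phrased in terms of tameness, and for each of these I would have to re-derive the consequence actually used (essentially: near the enlarged core the components of $s_J+\Tnu_J$ in directions "new" relative to $I$ coincide with those of $s_J$, hence vanish only on the core itself) directly from relative closedness of $\phi_{IJ}(U_{IJ})$ together with the index condition. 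Once this reduction of the hypotheses is carried out, the semi-additive statement is a formal consequence of the trivial-isotropy and nontrivial-isotropy theory already developed.
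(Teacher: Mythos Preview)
Your proposal is correct and follows essentially the same approach as the paper's, which is simply the paragraph preceding the statement: construct a reduction via Lemmas~\ref{le:cov0} and~\ref{le:redprod}, build the perturbation by the method of Proposition~\ref{prop:ext} inducting over $|m(I)|$, assemble the wnb groupoid as in Proposition~\ref{prop:zero}, and deduce uniqueness via Proposition~\ref{prop:ext2}. Your caution about whether point~(A) of Proposition~\ref{prop:ext} secretly uses the tameness identity $\im\phi_{IJ}=s_J^{-1}(E_I)$ is well-placed and the paper does not spell this out, but your proposed resolution is the right one: since $\im\phi_{IJ}$ is both open (index condition) and closed (goodness condition~(iv)) in $s_J^{-1}(E_I)$, it is a union of components, and a compactness argument on $\ov{N^{k+\frac34}_{JI}}\sqsubset\im\phi_{IJ}$ shows that for sufficiently small $\eta$ the ball $B^J_\eta(N^{k+\frac34}_{JI})$ meets $s_J^{-1}(E_I)$ only inside $\im\phi_{IJ}$, which is all that~(A) actually needs.
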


\begin{proof}[{\bf Proof of Proposition~\ref{prop:prod}}]  
Again, let us first suppose that the isotropy is trivial. Since  each $\Kk_i$ is good, we may choose reductions $\Vv_i$ of $\Kk_i$ and admissible perturbations
 $\nu_i:\bB_{\Kk_i}\big|_{\Vv_i}\to 
\bE_{\Kk_i}\big|_{\Vv_i}$.
For each chart $\bK_{I_i}\times \bK_{I_2}$ of the product atlas, define $P_{I_1,I_2} = V_{I_1}\times V_{I_2}\subset 
U_{I_1}\times U_{I_2}$.  Since $P_{I_1,I_2}\cap P_{J_1,J_2} = P_{I_1\vee J_1,I_2\vee J_2}$, we can apply 
Lemma~\ref{le:redprod} to construct a reduction  $\Vv = (V_{I_1,I_2})_{(I_1,I_2)\in \Ii_{\Kk_1}\times \Ii_{\Kk_2}}$ with $V_{I_1,I_2}\subset P_{I_1,I_2}  : = V_{I_1}\times V_{I_2} $.\footnote
{
Note that the sets $P_{I_1,I_2}$ themselves do {\it not} satisfy the requirements of a reduction: for example if $I_1 \lneq J_1$ and $I_2 \gneq J_2$ then $(V_{I_1}\times V_{I_2}) \cap (V_{J_1}\times V_{J_2}) = 
V_{J_1}\times V_{I_2}$ but $(I_1,I_2)$ is not comparable to $(J_1,J_2)$ in the product order. }

Now define
$$
\nu: \bB_{\Kk_1\times \Kk_2}\big|_{\Vv}\to 
\bE_{\Kk_1\times \Kk_2}\big|_{\Vv},
$$
by setting
$$
 \nu_{I_1,I_2}: V_{I_1,I_2}\to E_{I_1}\times E_{I_2},\quad     (u_1,u_2) = \bigl(\nu_1(u_1), \nu_2(u_2)\bigr).
 $$ 
 It is easy to check that $\nu$ satisfies the necessary conditions. In particular,
 $\s|_\Vv+\nu$ is transverse to $0$ on each $V_{I_1,I_2}$  with zero set equal to the restriction to 
 $V_{I_1,I_2}$ of the product  of the zero sets of
$\nu_{I_1}$ and $\nu_{I_2}$.  It follows that the realization $|\bZ^\nu|$  of the zero set is the product 
$|\bZ^{\nu_1}|\times |\bZ^{\nu_2}|.$    Hence the resulting homology class $[X_1\times X_2]^{vir}_{\Kk_1\times \Kk_2}$ in $\check H_{d_1+d_2}(X_1\times X_2)$ is the product $[X_1]^{vir}_{\Kk_1} \times [X_2]^{vir}_{\Kk_2}$.

If the isotropy is nontrivial, the argument is again the essentially  same. By  Proposition~\ref{prop:zero}, the zero set
can now be constructed as
a wnb groupoid $\Hat\bZ^\nu$. The inclusions $V_{I_1,I_2}\to V_{I_1}\times V_{I_2}$  induce a functor 
from $\Hat\bZ^\nu$ to the product wnb groupoid
 $\Hat\bZ^{\nu_1}\times \Hat\bZ^{\nu_2}$ that takes each local zero set $Z_{I_1,I_2}\subset V_{I_1,I_2}$ diffeomorphically 
 onto the subset $(Z_{I_1}\times Z_{I_2})\cap   V_{I_1,I_2}$ of $V_{I_1}\times V_{I_2}$, preserving its weight
 $\frac 1{|\Ga_{I_1}|\ |\Ga_{I_2}|}$.  
 Because the weighting function on $|\Hat\bZ^\nu|_\Hh$ is induced by the weights of the components $Z_{I_1,I_2}$,
it follows that  this functor is a layered covering in the sense of \cite{Mbr}, and hence
 pushes the fundamental class of $\Hat\bZ^\nu$ forward to that of the product.
\end{proof}

\subsection{From semi-additive to tameable atlases}  \label{ss:hybrid}
In this section we first show that every semi-additive atlas is cobordant to a tameable atlas, and then discuss the relevance of this result to
Hamiltonian Floer theory.

Remark~\ref{rmk:tamea} defines the notion of a (smooth) {\bf tameable} atlas that we will call $\Kk'$ to avoid confusion.  
The transition charts of such an atlas  have the usual indexing set $\Ii_{\Kk'}\subset \Pp^*(\Mm(\Ii_{\Kk'}))$ (where $\Mm(\Ii_{\Kk'})$ is the set of  minimal elements in $\Ii_{\Kk'}$), and the obstruction spaces 
$E_I' = \prod_{i\in I} E_i'$
are products.  However the isotropy groups $(\Ga_I')_{I\in \Ii_{\Kk'}}$ are no longer required to be the products
of the corresponding basic groups.  For example, 
suppose given a tuple $(\Ii, \Aa,\tau)$ with associated spaces and groups  $E_\al,\Ga_\al$ as in Definition \ref{def:sadd},
where $\Ii\subset  \Pp^*( \Mm(\Ii))$.  Then
the transition charts might be indexed by the subsets $K\in \Pp^*( \Mm(\Ii))$ and have obstruction spaces
$E_K': = \prod_{i\in K} \prod_{\al\in \tau(i)} E_\al$ as usual, but the groups $\Ga_K'$ could
have the form $ \prod_{\al\in \tau(K)} \Ga_\al$.
Then $E_K'$ might contain several factors of $E_\al$, while $\Ga_K'$ contains only one, acting diagonally on the $E_\al$ factors in $E_K'$, and trivially on the rest.
(For an explicit example, see Proposition~\ref{prop:hybrid1} below.)

Since the corresponding intermediate atlas with domains $\qu{U_K'}{\Ga_K'}$ is a filtered topological atlas, all taming results apply to these atlases.

Here is the first main result of this section.

\begin{prop} \label{prop:hybrid1} 
Every semi-additive (weak) atlas  
$\Kk = (\bK_I, \Hat\Phi_{IJ})_{\Ii,\Aa,\tau}$ 
has a canonical extension to a tameable (weak) atlas $\Kk'$ with the same basic charts   as $\Kk$.  
\end{prop}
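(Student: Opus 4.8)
The plan is to construct $\Kk'$ explicitly from the combinatorial data $(\Ii,\Aa,\tau)$ underlying $\Kk$, and then to verify that it is a tameable atlas in the sense of Remark~\ref{rmk:tamea}. First I would set the indexing poset of $\Kk'$ to be the standard one, namely $\Ii_{\Kk'} := \{\emptyset\ne K\subset \Mm(\Ii) : F_K := \bigcap_{i\in K}F_i \ne\emptyset\}\subset \Pp^*(\Mm(\Ii))$, with the obvious inclusion order; by property (b) in Definition~\ref{def:minimal} each $K\in\Ii_{\Kk'}$ satisfies $\bigvee_{i\in K} i\in\Ii$ (using that $F_K\ne\emptyset$ implies the $i\in K$ have an upper bound in $\Ii$), so there is a well-defined order-preserving surjection $q:\Ii_{\Kk'}\to\Ii$, $q(K) := \bigvee_{i\in K}i$. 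Note $q$ is \emph{not} injective: several distinct $K$ may have the same join, exactly as in the product situation of Example~\ref{ex:prod1}(iv). The basic charts of $\Kk'$ are those of $\Kk$, indexed now by the one-point subsets $\{i\}\subset\Mm(\Ii)$. For a general $K\in\Ii_{\Kk'}$ I would define the transition chart $\bK_K' := (U_K', E_K',\Ga_K',s_K',\psi_K')$ by setting $E_K' := \prod_{i\in K} E_{q(\{i\})} = \prod_{i\in K}\prod_{\al\in\tau(i)}E_\al$ (the genuinely additive obstruction space, possibly containing repeated $E_\al$ factors), $\Ga_K' := \Ga_{q(K)} = \prod_{\al\in\tau(K)}\Ga_\al$ acting diagonally on the repeated $E_\al$-summands of $E_K'$ and trivially on the complement, and $U_K'$ a suitable $\Ga_K'$-invariant open subset of $U_{q(K)}\times E_K'/E_{q(K)}$ — more precisely, since $E_{q(K)} = \prod_{\al\in\tau(K)}E_\al$ is a quotient of $E_K'$ by collapsing repeated factors via the diagonal, I would take $U_K'$ to be the preimage of $U_{q(K)}$ under $\pi_K: U_{q(K)}\times (E_K')^{\mathrm{extra}} \to U_{q(K)}$ exactly as in method 2 of \S\ref{ss:nontriv} and the proof sketch of Proposition~\ref{prop:Euler}, with $s_K'$ and $\psi_K'$ the induced section and footprint map. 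The coordinate changes $\Hat\Phi_{KL}'$ for $K\subsetneq L$ are then the natural composites: project off the extra $E_\al$-coordinates (a group covering in the sense of Definition~\ref{def:cover}, since $\ker(\rho^\Ga_{KL})$ acts trivially on the extra factors) followed by the coordinate change $\Hat\Phi_{q(K)q(L)}$ of $\Kk$ pulled back appropriately.

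The key steps, in order, are: (1) check that the $(\bK_K')_{K\in\Ii_{\Kk'}}$ are genuine Kuranishi charts with $\dim\bK_K' = \dim\bK_{q(K)}$ and footprint $F_K$ — this is immediate from the construction since adding a trivial-section factor $(E_K')^{\mathrm{extra}}$ shifts $\dim U$ and $\dim E$ by the same amount; (2) verify the index condition for the coordinate changes $\Hat\Phi_{KL}'$, which follows because the cokernel of $\Hat\phi_{KL}':E_K'\to E_L'$ is canonically isomorphic to the cokernel of $\Hat\phi_{q(K)q(L)}:E_{q(K)}\to E_{q(L)}$ (the extra factors contribute nothing since they appear on both sides in matching pairs), and the tangent bundle condition for $\Hat\Phi_{q(K)q(L)}$ then gives it for $\Hat\Phi_{KL}'$; (3) check the group-theoretic compatibility conditions of Remark~\ref{rmk:tamea}: the $\rho^\Ga_{KL}:\Ga_L'\to\Ga_K'$ are split surjections (they are the projections $\Ga_{q(L)}\to\Ga_{q(K)}$, which split since $\tau(q(K))\subset\tau(q(L))$ and everything is a product of the $\Ga_\al$), they compose correctly by functoriality of $q$ and of $\tau$, $\Hat\phi_{KL}'$ is $\io^\Ga_{KL}$-equivariant by construction, and $\ker\rho^\Ga_{KL}$ acts trivially on $\im\Hat\phi_{KL}'$ by our choice of diagonal action; (4) check that the (weak) cocycle condition for $\Kk$ transfers to $\Kk'$ — for any triple $K\subsetneq L\subsetneq M$ both composites reduce, after projecting off extra factors, to the corresponding composite in $\Kk$ over the charts indexed by $q(K)\subsetneq q(L)$ or $q(K)=q(L)$ (the latter case being trivial), and the weak cocycle condition of $\Kk$ (together with the fact that projections off trivial factors compose strictly) handles it. Finally (5) verify that if $\Kk$ is a weak atlas then so is $\Kk'$, that the basic charts of $\Kk'$ literally coincide with those of $\Kk$, and that the construction is canonical in the sense that it uses no choices beyond the $U_K'$, which are determined up to shrinking by the zero-set condition $U_K'\cap (s_K')^{-1}(0) = (\psi_K')^{-1}(F_K)$.

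The main obstacle, I expect, will be step (4): keeping careful track of the identifications when $q$ is not injective, so that the various composites of coordinate changes in $\Kk'$ really do project down to \emph{the same} composite in $\Kk$. The subtlety is that $\Kk'$ has more charts than $\Kk$ — for instance if $K\ne L$ but $q(K)=q(L)=:I$, there is no coordinate change between $\bK_K'$ and $\bK_L'$ in $\Kk'$ (they are incomparable in $\Pp^*(\Mm(\Ii))$) even though both sit over $\bK_I$ — so one must be careful that the cocycle condition is only being asserted for nested triples $K\subsetneq L\subsetneq M$, and for such triples $q(K)\le q(L)\le q(M)$ in $\Ii$ with at most the failure of strictness, which $\Kk$'s weak cocycle condition tolerates. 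A related bookkeeping point is the precise description of the extra obstruction factors $(E_K')^{\mathrm{extra}}$: since $E_K' = \prod_{i\in K}E_{q(\{i\})}$ and the minimal elements $i$ of $K$ may have overlapping images $\tau(\{i\})$ under $\tau$, the ``multiplicity'' with which a given $E_\al$ appears in $E_K'$ is $|\{i\in K:\al\in\tau(\{i\})\}|$, and one must confirm this is compatible, under the inclusions $\Hat\phi_{KL}'$, with the corresponding multiplicities for $L$ — which it is, since $\tau(\{i\})$ does not depend on whether $i$ is viewed inside $K$ or $L$. Once these bookkeeping issues are pinned down the verification is routine, and I would relegate the details to a reference along the lines of the analogous arguments in \S\ref{ss:nontriv}.
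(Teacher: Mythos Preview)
Your proposal is correct and takes essentially the same approach as the paper: the same standard indexing set $\Ii_{\Kk'}\subset\Pp^*(\Mm(\Ii))$, the same join map $q=\ell$ to $\Ii$, the same additive obstruction spaces $E_K'=\prod_{i\in K}E_i$ with the multiplicities $m_{\al,K}$ you identify, and the same groups $\Ga_K'=\Ga_{\ell(K)}$ acting diagonally on the repeated $E_\al$-factors. The one refinement in the paper's version is the domain: rather than writing $U_K'$ as a product $U_{\ell(K)}\times(E_K')^{\mathrm{extra}}$ (which tacitly requires a splitting of $E_K'$ to define $s_K'$), the paper sets $U_K':=\{(e_K',u)\in E_K'\times U_{\ell(K)}:\si_K(e_K')=s_{\ell(K)}(u)\}$ with section $s_K'(e_K',u):=e_K'$, where $\si_K:E_K'\to E_{\ell(K)}$ sums the repeated $E_\al$-factors --- this makes the coordinate changes $\phi_{KL}'=\Hat\phi_{KL}'\times\phi_{\ell(K)\ell(L)}$ (and in the isotropy case $\rho_{KL}'=\id_{E_K'}\times\rho_{\ell(K)\ell(L)}$) manifestly canonical and the fibration $f_K:U_K'\to U_{\ell(K)}$ explicit.
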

\begin{proof}  
Define 
$$
\Ii_{\Kk'}: = \{K\subset \Pp^*(\Mm(\Ii))\ \big| \ F_K: ={\textstyle \bigcap_{i\in K}} F_{\tau(i)}\ne \emptyset\},
$$
where $\tau:\Mm(\Ii)\to \Aa$ is part of the defining structure for  $\Kk$.
By assumption on $\Ii$, the least upper bound function  
\begin{equation}\label{eq:ell}
\ell: \Pp^*(\Mm(\Ii))\to \Ii,\quad 
K \mapsto \ell(K): = \lub(K) = \vee_{i\in K}\;  i 
\end{equation}
 defines a map $\ell:\Ii_{\Kk'}\to \Ii$ such that
$F_K = F_{\ell(K)}$ for all $K\in \Ii_{\Kk'}$.  
The weak atlas $\Kk'$  has  charts indexed by $K\in \Ii_{\Kk'}$. 

For simplicity, let us first consider the case when all isotropy groups are trivial. 
For $i\in  \Mm(\Ii)$ define $E_i': =E_i =   \prod_{\al\in \tau(i)} E_\al$. 
Then the obstruction space $E_K'$ for the chart $\bK_K'$ is  $E_K'= \prod_{i\in K}E_i'$.
This may also be written as the product
$$
E_K': = \prod_{\al\in \tau(\ell(K))} E_{\al}^{\ m_{\al,K}},
$$
 where the multiplicities $m_{\al,K}\ge 1$ are defined as follows:
$$
m_{\al,K} = \big| \{i\in K\ | \ \al\in \tau(i)\}\big |.
$$
Therefore we can write the elements $e_K'$ of $E_K' $ as tuples $\bigl(\vec e_\al\bigr)_{\al\in\tau(\ell(K))}$ where
$\vec e_\al = (e^k_\al)_{1\le k\le m_{\al,K}}$ is an $ m_{\al,K}$-tuple of vectors in $E_\al$.
With this notation the map  $\Hat\phi_{KL}':  E_K'\to E_L'$ is the obvious inclusion with image equal to
\begin{equation}\label{eq:hphiI}
\Hat\phi_{KL}'(E_K') = \Bigl\{\bigl( \vec e_\al\bigr)_{\al\in\tau(\ell(L))}\ \big | \ e_\al^k= 0,\
m_{\al,K} < k\le m_{\al,L}
 \Bigr\}\subset E_L'.
\end{equation}
Further, there is a projection:
\begin{equation}\label{eq:siI}
\si_K: E_K'\to E_{\ell(K)},  \quad \bigl(\vec e_\al\bigr)_{\al\in\tau(\ell(K))} \mapsto  \bigl(\si_\al(\vec e_\al)\bigr)_{\al\in\tau(\ell(K))},
\end{equation}
where we define $\si_\al(\vec e_\al): = \sum_{k=1}^{m_{\al,K}} e^k_\al \in E_\al$.
This map 
satisfies  the compatibility condition:
$$
\si_L\circ \Hat\phi_{KL}' = \Hat\phi_{KL}\circ \si_K, \quad E_K'\to E_L.
$$
Now define the domains $U_K'$ of the charts of $\Kk'$ and the section $s_K'$ by setting:
\begin{equation}\label{eq:Uadd}
U_K': = \Bigl\{ (e_K',u)\in E_K' \times U_{\ell(K)} \ | \  s_{\ell(K)}(u) = \si_K( {e_K'})\Bigr\}, \quad s_K'({e_K'},u): = {e_K'}.
\end{equation}
Thus, since  $\ell(i) = i$, the basic chart $\bK_i= (U_i',E_i'=E_i,  s_i', \psi_i')$ 
has  domain $U_i'$ 
consisting of all pairs $ (e_i,u)\in E_i \times U_i $ with $s_i(u) = e_i\in  E_i$.
Thus we can identify $U_i'$ with $U_i$, and take the footprint map $\psi_i' : = \psi_i$ so that  $\bK_i'\cong \bK_i$.
  On the other hand, if  $|K|>1$ there is a fibration
$$
f_K: U_K'\to U_{\ell(K)}, \quad ({e_K'},u)\mapsto (\si_K'({e_K'}),u) \mapsto u\in U_{\ell(K)},
$$
where the second map is a diffeomorphism since $\si_K'({e_K'})\in E_{\ell(K)} = s_{\ell(K)}(u)$ is uniquely determined by $u$.  Thus the fiber of $f_K$ at $u\in U_K$ is the affine  space $\{e\in E_K' \ \big| \ \si_K(e) = s_{\ell(K)}(u)\}$.
Further $f_K$ restricts to a diffeomorphism from $(s_K')^{-1}(0) = \{ ({e_K'},u)\in U_K'\, | \  e_K' = 0\}$
to $s_{\ell(K)}^{-1}(0)$.  Therefore the footprint map 
$$
\psi_K': = \psi_{\ell(K)}\circ f_K: (s_K')^{-1}(0)\to F_K
$$ 
can be identified with $\psi_{\ell(K)}: (s_{\ell(K)})^{-1}(0)\to F_{\ell(K)} = F_K$ and so induces a  homeomorphism 
$(s_K')^{-1}(0)\to F_K$.
Therefore the chart  
$$
\bK_K': = \bigl(U_K',   E_K', s_K', \psi_K'\bigr),
$$
has the footprint $F_K = F_{\ell(K)}$. 
If $K\subset L$  define  
\begin{equation}\label{eq:UKL'}
U_{KL}': =\bigl \{(e_K',u)\in U'_K\ | \ \  u\in  U_{\ell(K),\ell(L)}
\bigr\} = f_K^{-1}(U_{\ell(K),\ell(L)}),
\end{equation}
and define the coordinate change $\phi_{KL}': U_{KL}\to U_L$ to be 
the product  $\Hat\phi'_{KL}\times \phi_{KL}$.
This is compatible with the section $s_{K}'$ and footprint maps, and satisfies the weak cocycle condition.
Since $\Kk'$ is tameable by construction, this completes the proof of (i) in the case when the isotropy is trivial.

In the general case, we use the same definition of $E_K'$ and $U_K'$, and 
take $\Ga_K': = \Ga_{\ell(K)} = \prod_{\al\in \ell(K)} \Ga_\al$.
It acts on $E_K'$ in the obvious way, with each factor $\Ga_\al$ acting diagonally on the tuple
$\vec e_\al \in E_\al^{m_\al}$, and trivially on the other components. By taking the product of this action with its action on $u\in U_{\ell(K)}$,
we obtain an action of $\Ga_K' = \Ga_{\ell(K)}$ on 
 the domain $U_K'$ in \eqref{eq:Uadd} that lifts its action on $U_K$.  Hence it is compatible with the footprint maps, 
 the projections $\si_K$ 
 and the fibration $f_K$.  Moreover, because $(s_K')^{-1}(0)$ can again be identified with $s_{\ell(K)}^{-1}(0)$,
 the fact that the footprint map induces a homeomorphism $\qu{(s_K')^{-1}(0)}{\Ga_K}\to F_K = F_{\ell(K)}$ can be deduced from the corresponding result for the chart $\bK_{\ell(K)}$.
 As for coordinate changes, we define $\rho_{KL}^\Ga: \Ga_L'\to \Ga_K'$ to equal $\rho_{\ell(K)\ell(L)}^\Ga$, and define
\begin{equation}\label{eq:TUKL'}
 \TU_{KL} = \bigl\{(\Hat\phi_{JK}(e_K'), u) \in U_L' \ \big| \ e_K'\in E_K', u\in  \TU_{\ell(K)\ell(L)}\bigr\}
\end{equation}
Then $f_L: \TU_{KL}\to \TU_{\ell(K)\ell(L)}$ is surjective, so that $\ker(\rho_{KL}^\Ga)$    acts freely on 
$ \TU_{KL}$ with quotient $U_{KL}'$ as defined in \eqref{eq:UKL'}.
Thus we may take $$
\rho'_{KL} = \id_{E_K'}\times \rho_{\ell(K)\ell(L)}:\TU_{KL}\to U_{KL}.
$$
It is then easy to see that all the compatibility requirements in Remark~\ref{rmk:tamea} are satisfied.

It remains to  check that if $\Kk$ is an atlas (rather than a weak atlas), then so is $\Kk'$.
This follows  
  because the domains $U_{KL}'$ of the coordinate changes 
in $\Kk'$ fiber over the corresponding domains $U_{KL}$ in $\Kk$.   Hence  if  
 $\Kk$ satisfies the  cocycle condition then $\Kk'$ does as well.
Hence there is a well defined category $\bB_{\Kk'}$. Further the fibrations $(f_K)$ fit together to provide a functor  
$\f: \bB_{\Kk'}\to \bB_{\Kk}$
 that intertwines the section and footprint functors.
\end{proof}

\begin{rmk}\label{rmk:fK}  \rm Definition~\ref{def:Kumap} spells out a notion of map between 
(weak) atlases that readily extends to the semi-additive atlases considered here; all we need is a 
suitable map $\ell$ between the indexing sets.  In particular, the
 maps $f_K: U_K'\to U_{\ell(K)}$ defined above extend to a map $\f: \Kk'\to \Kk$  with $\ell:\Ii_{\Kk'}\to \Ii$
 given by \eqref{eq:ell}, 
 $f_K^\Ga: \Ga_K'\to \Ga_{\ell(K)}$ equal to the identity, and $\Hat f_K: E_K'\to E_{\ell(K)}$ equal to the projection $\si_K$ 
 of \eqref{eq:siI}. \hfill$\er$
\end{rmk}

We next show that the atlas $\Kk'$ constructed above is concordant to $\Kk$.

\begin{prop}\label{prop:hybrid2} Let  $\Kk$ and $\Kk'$ be as in Proposition~\ref{prop:hybrid1}.  Then
there is a (weak) cobordism atlas $\Kk^{01}$ on $[0,1]\times X$ from $\p^0 \Kk^{01} =\Kk$ to
$\p^1 \Kk^{01} =\Kk'$ and with  basic charts 
$$
[0,\tfrac 23)\times \bK_i,\;\; (\tfrac13,1]\times \bK_i,\quad i\in \Mm(\Ii).
$$
\end{prop}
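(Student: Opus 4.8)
The plan is to build the cobordism atlas $\Kk^{01}$ on $[0,1]\times X$ by interpolating, chart by chart, between the domains $U_{\ell(K)}$ of $\Kk$ and the domains $U_K'$ of $\Kk'$. Recall from the proof of Proposition~\ref{prop:hybrid1} that $U_K'$ fibers over $U_{\ell(K)}$ via $f_K$, with fiber the affine space $\{e_K'\in E_K' \mid \si_K(e_K') = s_{\ell(K)}(u)\}$, and that the whole structure of $\Kk'$ (groups $\Ga_K' = \Ga_{\ell(K)}$, obstruction spaces $E_K'$, coordinate changes) is pulled back from $\Kk$ via the maps $\si_K$ and $f_K$. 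The key observation is that for each $K$ the fiber of $f_K$ is convex, so the passage from $\bK_{\ell(K)}$ (fiber a point) to $\bK_K'$ (fiber $\si_K^{-1}(s_{\ell(K)}(u))$) can be realized as a linear deformation, and this is exactly what one needs for a collared cobordism.

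First I would set up the indexing. The cobordism atlas should have basic charts indexed by $\Mm(\Ii)$; over the collar $[0,\tfrac23)\times X$ I take the product charts $[0,\tfrac 23)\times\bK_i$ (so $\p^0\Kk^{01} = \Kk$), over the collar $(\tfrac13,1]\times X$ I take $(\tfrac13,1]\times\bK_i$ realized inside $\Kk'$ via the identification $\bK_i'\cong\bK_i$ noted in the proof of Proposition~\ref{prop:hybrid1} (so $\p^1\Kk^{01}=\Kk'$), and over the overlap $(\tfrac13,\tfrac23)$ the two must agree — which they do, since $\bK_i'\cong\bK_i$. For a transition index $K\in\Ii_{\Kk'}$ with $|K|\ge2$ I would define the domain of the transition chart $\bK_K^{01}$ over $[0,1]\times X$ to be
\[
U_K^{01} := \Bigl\{ (t, e_K', u)\in [0,1]\times E_K'\times U_{\ell(K)} \ \Big| \ s_{\ell(K)}(u) = \si_K(e_K'),\ \  e_K'\in \tfrac{t}{1}\cdot(\text{allowed range at }t)\Bigr\},
\]
with the intended collar structure: near $t=0$ one forces $e_K'$ to lie in the diagonal image of $E_{\ell(K)}$ (i.e. only one of the $m_{\al,K}$ components of each $\vec e_\al$ is nonzero), recovering $[0,\tfrac23)\times\bK_{\ell(K)}$; near $t=1$ all components are free, recovering $(\tfrac13,1]\times\bK_K'$. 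The cleanest way to make this precise is to choose, for each $\al$ and each $K$, a smooth family of linear projections $\Pi_{\al,t}: E_\al^{m_{\al,K}}\to E_\al^{m_{\al,K}}$ with $\Pi_{\al,0}$ the "sum into first slot" map and $\Pi_{\al,1}=\id$, all compatible under the inclusions $\Hat\phi'_{KL}$, and then set the fiber at $t$ to be the image of $\Pi_t := \prod_\al \Pi_{\al,t}$ intersected with the level set $\si_K(\cdot) = s_{\ell(K)}(u)$; the affine/convex structure guarantees this is a smooth manifold of the correct dimension and that $\p^0$, $\p^1$ come out as required. The group $\Ga_K^{01} := \Ga_{\ell(K)}$ acts diagonally as before (the $\Pi_{\al,t}$ should be chosen $\Ga_\al$-equivariantly, which is automatic since they are built from the diagonal and identity maps), the section is $s_K^{01}(t,e_K',u) = e_K'$, and the footprint map is $\psi_{\ell(K)}\circ(\text{projection to }u)$ restricted to the zero set, which is $t$-independent and hence automatically collared.

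Next I would define the coordinate changes: for $K\subset L$ in $\Ii_{\Kk'}$ the lifted domain $\TU_{KL}^{01}$ and covering $(\rho_{KL}^{01},\rho_{KL}^{\Ga,01})$ are the obvious $t$-families built from $\Hat\phi'_{KL}$, $\rho_{\ell(K)\ell(L)}$ and $\rho^\Ga_{\ell(K)\ell(L)}$, exactly as in the proof of Proposition~\ref{prop:hybrid1} but carrying the extra variable $t$ along. The index condition and the tangent bundle condition hold because at each fixed $t$ the fiber directions contribute to the kernel of $d s_K^{01}$ and the quotient $E_L^{01}/\Hat\phi'_{KL}(E_K^{01})$ matches $E_{\ell(L)}/\Hat\phi_{\ell(K)\ell(L)}(E_{\ell(K)})$ just as in $\Kk$; the $t$-derivative contributes nothing new to the normal data since $s_K^{01}$ is independent of $t$. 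The weak cocycle condition is inherited from that of $\Kk$ (respectively $\Kk'$) because all the new data fibers over the corresponding data downstairs, and if $\Kk$ is an honest atlas so is $\Kk^{01}$. Finally one checks that $\Kk^{01}$ satisfies the collar-form requirements of Definition~\ref{def:CKS2} and (a suitably adapted) Definition~\ref{def:Ccc}: this amounts to choosing the families $\Pi_{\al,t}$ to be locally constant (equal to $\Pi_0$ resp.\ $\Pi_1$) near $t=0$ resp.\ $t=1$, which we are free to do.

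\textbf{Main obstacle.} The genuine work is bookkeeping the multiplicities $m_{\al,K}$ and the projections $\Pi_{\al,t}$ coherently across all of $\Ii_{\Kk'}$ so that they are compatible with \emph{every} inclusion $\Hat\phi'_{KL}$ simultaneously — i.e.\ that $\Pi_{\al,t}$ for the pair $(K,L)$ restricts correctly under the embedding $E_K'\hookrightarrow E_L'$ of \eqref{eq:hphiI} — and checking that the resulting collared coordinate changes genuinely satisfy the index/tangent-bundle conditions at the endpoints $t=0,1$ where the fiber dimension jumps. This is where one must be careful that the deformation stays transverse; but since the deformation is through affine subspaces of a fixed vector space and the section $s_K^{01}$ never involves $t$, there is no real analytic difficulty, only a moderate amount of notation. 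I would also note that this interpolation has the same flavor as the passage from an atlas to its shrinking, and one can cite the collar-construction machinery of \cite[\S4]{MW1} and \cite[\S6.2]{MW2} for the routine verifications that metrics, orientations, and the taming-compatible structures extend over the cobordism.
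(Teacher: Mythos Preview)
Your interpolation scheme has a genuine gap: the family of projections $\Pi_{\al,t}$ you propose has $\Pi_{\al,0}$ of rank $1$ (per $E_\al$-factor) and $\Pi_{\al,1}$ of rank $m_{\al,K}$, so the image of $\Pi_t$ is not a smooth family of subspaces and the resulting ``domain'' $U_K^{01}$ is not a manifold---its fiber dimension jumps. You cannot repair this by keeping the full $E_K'$ throughout either, because then the obstruction space of the boundary chart at $t=0$ would be $E_K'$ rather than $E_{\ell(K)}$, so $\p^0\Kk^{01}\ne\Kk$. More fundamentally, any attempt to index the whole cobordism by $\Ii_{\Kk'}$ alone runs into the mismatch $\dim E_{\ell(K)}\ne\dim E_K'$ (and the non-injectivity of $\ell$) which is exactly what distinguishes the two atlases.

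The paper's construction avoids this by \emph{doubling} the basic charts rather than interpolating them: it takes $\Mm(\Ii^{01})$ to be two disjoint copies of $\Mm(\Ii)$, so that $[0,\tfrac23)\times\bK_i$ and $(\tfrac13,1]\times\bK_i$ are \emph{different} basic charts. The transition data then has three types: product charts $[0,\tfrac23)\times\bK_I$ from $\Kk$, product charts $(\tfrac13,1]\times\bK_K'$ from $\Kk'$, and new ``mixed'' transition charts $(\tfrac13,\tfrac23)\times\bK_{(I,K)}$ over the overlap, with obstruction space $E_I\times E_K'$ and domain sitting inside $E_I\times E_K'\times U_{I\vee\ell(K)}$ cut out by the single linear equation $\Hat\phi_{I(I\vee\ell(K))}(e_I)+\si_K(e_K')=s_{I\vee\ell(K)}(u)$. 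Every chart has constant dimension over its $t$-interval, and the collar conditions hold trivially since the charts of types 1 and 2 are honest products. This is the standard ``directly commensurate $\Rightarrow$ concordant'' construction (cf.\ Remark~\ref{rmk:saddatlas}(i)): the atlas over $\{\tfrac12\}\times X$ contains both $\Kk$ and $\Kk'$ as subatlases.
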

\begin{proof}  
We define  the cobordism (weak) atlas to have an indexing set $\Ii^{01}$ that is a hybrid of the given order set $\Ii$ with the
standard order set $\Ii'$ of the atlas $\Kk'$.  Thus 
 \begin{itemlist}
\item $\Mm(\Ii^{01})$ is the union of two copies of $\Mm(\Ii)$ that for clarity we denote $\Mm(\Ii^0): = \Mm(\Ii)$ and $\Mm(\Ii^1): = \Mm(\Ii')$; 
\item we take $\Aa^{01} = \Aa \sqcup \Mm(\Ii^1)$, and 
\item we define 
\begin{align*}
 \tau^{01}: \Mm(\Ii^0)\sqcup \Mm(\Ii^1)&\  \to\  \Pp^*(\Aa^{01})\subset \Pp(\Aa)\times \Pp(\Mm(\Ii^1)),\\
 (I,K) &\ \mapsto\ (\tau(I), m(K))
\end{align*}
\end{itemlist}
The transition charts are of three types: 
 \begin{itemlist}
\item  $[0,\frac 23)\times \bK_{(I,\emptyset)}$ where 
$\bK_{(I,\emptyset)} = \bK_I $ is a transition chart in $\Kk$;
\item  $ (\frac 13,1]\times \bK_{(\emptyset,K)}$ where 
$\bK_{(\emptyset, K)} = \bK_K $ is a transition chart in $\Kk'$;
\item \lq\lq transition" charts $(\frac 13,\frac 23)\times \bK_{(I,K)}$ where $\bK_{(I,K)}$ is a chart on $X$ with footprint $F_{I\vee \ell(K)} $, obstruction space $E_I\times E'_K$, group $\Ga_{I,K} =  \Ga_{I\vee\ell(K)}$, and domain
$$
U_{(I,K)} = \bigl\{(e_I,e_K',u) \ \big| \ u\in U_{I\vee\ell(K)}, \Hat\phi_{I (I\vee\ell(K))}(e_I) + \si_K(e'_K) = s_{I\vee\ell(K)}(u)\bigr\}.
$$
 The section $s_{(I,K)}$ is given by $(e_I,e_K',u)\mapsto (e_I,e_K')$.
  \end{itemlist}
 If the isotropy groups are trivial, then
the coordinate changes for the cobordism charts involve the obvious inclusions on
 the obstruction spaces together with the following maps on the domains:
\begin{align*}
 (\tfrac 13,\tfrac 23) \times  U_{(I,\emptyset)} \to  (\tfrac 13,\tfrac 23) \times U_{(J,K)}:&\  (t,u)\mapsto \bigl(t,\Hat\phi_{IJ}(s_I(u)), 0, \phi_{I(J\vee\ell(K))}(u)\bigr),\\
 (\tfrac 13,\tfrac 23) \times  U_{(\emptyset, K)}  \to  (\tfrac 13,\tfrac 23) \times U_{(J,L)}: &\  (t,0,e_K',u)\mapsto \bigl(t,0,\Hat\phi_{KL}(e_K'), \phi_{\ell(K)(J\vee\ell(L))}(u)\bigr),\\
 (\tfrac 13,\tfrac 23) \times U_{(I, K)} \to  (\tfrac 13,\tfrac 23) \times U_{(J,L)} : &\  (t,e_I,e_K',u)\mapsto\\
 & \qquad  \bigl(t,\Hat\phi_{IJ}(e_I),\Hat\phi'_{KL}(e_K'), \phi_{(I\vee \ell(K))\ (J\vee\ell(L))}(u)\bigr).
\end{align*}
 It is straightforward to check that the cocycle condition holds, so that this is a cobordism atlas.  
 
If the isotropy is nontrivial, then we define the domains $$
 (\tfrac 13,\tfrac 23) \times \TU_{(I,K) (J,L)}\subset  (\tfrac 13,\tfrac 23) \times U_{(J,L)}
$$ 
of the coordinate changes as in \eqref{eq:TUKL'}, with  elements
$$
\Bigl\{\bigl(t,\Hat\phi_{IJ}(e_I),\Hat\phi_{KL}'(e_K'), u\bigr) \ \big| \  e_I\in E_I, e_K'\in E_K', u\in \TU_{(I\vee \ell(K))\,(J\vee\ell(L))}, t\in (\tfrac 13,\tfrac 23) \Bigr\},
$$
where $\TU_{(I\vee \ell(K))\, (J\vee\ell(L))}$ is the corresponding domain in the atlas $\Kk$.
We must check that $\TU_{(I,K) (J,L)}$  supports a free action of the kernel of the group homomorphism $\Ga_{(J,L)}\to \Ga_{(I,K)}$. 
But $\Ga_{(J,L)} = \Ga_{J\cup\ell(L)} $ is the isotropy group for the chart $\bK_{J\vee\ell(L)}$ in $\Kk$.  Hence this kernel acts freely on
$\TU_{(I,K) (J,L)}$ because it does on the corresponding set $\TU_{(I\vee \ell(K))\, (J\vee\ell(L))}$ in $\Kk$.  The rest of the details are straightforward, and
 are left to the interested reader.  
\end{proof}

\begin{cor} If both $\Kk$ and the cobordism $\Kk^{01}$ constructed above are good, oriented  atlases,
then  $[X]^{vir}_\Kk= [X]^{vir}_{\Kk'}$.
\end{cor}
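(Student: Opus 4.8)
The plan is to deduce the corollary directly from the two inputs already assembled: Proposition~\ref{prop:hybrid2}, which produces the cobordism atlas $\Kk^{01}$ over $[0,1]\times X$ interpolating between $\Kk$ and $\Kk'$, and the cobordism--invariance half of Proposition~\ref{prop:semivir}, which asserts that the \v Cech fundamental class of a good, oriented, semi-additive atlas depends only on its good oriented cobordism class. The one thing to check carefully first is that the hypotheses of the corollary put us in the situation where that invariance applies.

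First I would observe that, since $\Kk^{01}$ is assumed good, it is in particular an atlas (the cocycle condition holds) whose realization is Hausdorff, whose projections $\pi_{\Kk^{01}}|_{U_K}$ are homeomorphisms onto their images, which is metrizable, and in which the images of the coordinate changes are closed. The charts and coordinate changes displayed in Proposition~\ref{prop:hybrid2} have product form on the collars $[0,\tfrac23)\times\bK_i$ and $(\tfrac13,1]\times\bK_i$; consequently the boundary restrictions $\p^0\Kk^{01}=\Kk$ and $\p^1\Kk^{01}=\Kk'$ inherit goodness (each boundary slice $\{\al\}\times\p^\al\Kk^{01}$ injects into $|\Kk^{01}|$ and carries the restricted metric), and near the boundary the collar structure is isometric to a product. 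Next, since $\Kk^{01}$ is oriented, its orientation restricts as in Definition~\ref{def:orient} to orientations $\p^0\si$ of $\Kk$ and $\p^1\si$ of $\Kk'$; here we take these to be, respectively, the given orientation of $\Kk$ and the orientation used to define $[X]^{vir}_{\Kk'}$, the latter obtained by transporting the orientation of $\Kk$ through $\Kk^{01}$ and, as in the proof of Proposition~\ref{prop:hybrid1}, through the fibrations $f_K$ that intertwine the section and determinant--line data of $\Kk'$ with those of $\Kk$. Thus $(\Kk,\p^0\si)$ and $(\Kk',\p^1\si)$ are oriented concordant, with concordance $(\Kk^{01},\si)$.

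The remainder is the cobordism version of the VFC construction of \S\ref{ss:stame}, which (as noted in the excerpt) extends verbatim to good semi-additive atlases. Concretely: choose a cobordism reduction $\Vv^{01}$ of $\Kk^{01}$ restricting to reductions $\Vv^0,\Vv^1$ of $\Kk,\Kk'$ (Lemma~\ref{le:redprod} in its cobordism form); by the semi-additive analogue of Proposition~\ref{prop:ext2} choose, for each sufficiently small size parameter, an admissible, precompact, transverse reduced cobordism perturbation $\nu$ of $\s_{\Kk^{01}}|_{\Vv^{01}}$ with prescribed boundary values $\nu|_{\p^0\Vv^{01}}=\nu^0$ and $\nu|_{\p^1\Vv^{01}}=\nu^1$ adapted perturbations of $\Kk$ and $\Kk'$. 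By the cobordism version of Proposition~\ref{prop:zero} the zero set completes to a wnb cobordism groupoid $\Hat\bZ^\nu$ whose maximal Hausdorff quotient $|\Hat\bZ^\nu|_\Hh$ is a compact, oriented weighted branched cobordism with $\p^0|\Hat\bZ^\nu|_\Hh=|\Hat\bZ^{\nu^0}|_\Hh$ and $\p^1|\Hat\bZ^\nu|_\Hh=|\Hat\bZ^{\nu^1}|_\Hh$, whose fundamental class maps under $|\io_\nu|_\Hh$ into an arbitrarily small neighbourhood of $\io_{\Kk^{01}}([0,1]\times X)$ in $|\Kk^{01}|$. Passing to the inverse limit over shrinking perturbations and identifying each end of $[0,1]\times X$ with $X$, the induced boundary homomorphism on \v Cech homology gives $[X]^{vir}_{\Kk'}-[X]^{vir}_{\Kk}=0$.

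The step I expect to need the most care is not any new analysis but the bookkeeping in the first two paragraphs: matching the orientation that $\Kk^{01}$ induces on $\p^1=\Kk'$ with the orientation implicit in the symbol $[X]^{vir}_{\Kk'}$, which forces one to trace the explicit coordinate changes of Proposition~\ref{prop:hybrid2} and the fibrations $f_K$ of Proposition~\ref{prop:hybrid1} and to check that these are orientation preserving for the chosen determinant--line sections. A secondary point is checking that the relative perturbation construction of Proposition~\ref{prop:ext2} --- whose constants $\de_\Vv$ and $\si_{\rm rel}$ must be chosen compatibly with the collar width of $(\Kk^{01},d)$ --- goes through for a cobordism atlas of this hybrid indexing type; but this follows because all the sets entering the construction, and the filtration on the associated intermediate atlas, restrict to products on the collars exactly as in the additive case treated in \cite{MW2}.
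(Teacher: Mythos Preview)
Your proposal is correct and follows essentially the same route as the paper, which disposes of the corollary in one line by appeal to Proposition~\ref{prop:semivir}; you have simply unpacked what that appeal entails (reduction, cobordism perturbation, wnb cobordism of zero sets, inverse limit). One small correction to your closing paragraph: you invoke ``the filtration on the associated intermediate atlas,'' but semi-additive atlases---and hence the hybrid cobordism $\Kk^{01}$---are explicitly \emph{not} filtered (Remark~\ref{rmk:notame}); the perturbation extension of Proposition~\ref{prop:ext2} adapts here because $\Kk^{01}$ is assumed \emph{good} (Definition~\ref{def:good}), which is all that the inductive construction in \S\ref{ss:stame} uses, not because of any filtration.
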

\begin{proof}  This follows by the arguments that establish  Proposition~\ref{prop:semivir}.
\end{proof}

\begin{rmk}\label{rmk:saddatlas}\rm 
%
(i)   The proof of Proposition~\ref{prop:hybrid2} builds a cobordism atlas over $[0,1]\times X$ that has a natural restriction over $ \{\frac 12\}\times X$ to an atlas that contains both $\Kk$ and $\Kk'$.
   Hence  these two atlases are directly commensurate.  Conversely, given 
 two standard atlases that are directly commensurate (or, more generally, commensurate) in the sense of Definition~\ref{def:commen}, one can build a cobordism atlas on $[0,1]\times X$  as above and hence deduce that the two atlases are concordant.\MS
 
 \NI (ii)  Formally, when the isotropy is trivial  the construction of $\Kk'$ from $\Kk$ given above is very similar to the construction in Proposition~\ref{prop:Euler}
 that builds an atlas from a nontrivial bundle over a manifold.  
 Indeed, if the basic charts of $\Kk$ are  GW charts, the corresponding standard GW atlas with these basic charts can be mapped into $\Kk'$ by an open embedding in the sense of Definition~\ref{def:Kumap}; the only real difference between the constructions is that in the present context we include 
 the full fiber of $f_I$ into the domain $U_I$.  
 Just after Remark~\ref{rmk:E} we sketched how to 
 obtain a (standard) atlas in the corresponding orbifold case.  This construction was possible because in this case the isotropy is determined by its action on the  orbifold $E$ so that we could use the results of \S\ref{ss:orb} to build the needed transition charts.  
 We explain briefly below how  in the Gromov--Witten situation it is also possible to promote 
  the tameable atlas $\Kk'$ to a standard atlas.
For general atlases, 
 it is not immediately clear how we might do this.   However,  this does not matter since
 all our constructions work for  tameable atlases  just as well as they do for standard atlases.   
 $\hfill\er$ 
\end{rmk}

\begin{lemma}\label{le:good3}  Suppose that $\Kk$ is a good semi-additive atlas  with trivial isotropy.
Then the atlas $\Kk'$ constructed  in Proposition~\ref{prop:hybrid1} and the cobordism atlas $\Kk^{01}$ 
 constructed  in Proposition~\ref{prop:hybrid2} are also good.
\end{lemma}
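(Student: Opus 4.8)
The goal is to verify that the four conditions defining a good atlas (Definition~\ref{def:good}) are inherited by $\Kk'$ and by the cobordism atlas $\Kk^{01}$ from the given good semi-additive atlas $\Kk$ (in the trivial-isotropy case). The key structural fact is that, by construction, each domain $U_K'$ of $\Kk'$ comes with a fibration $f_K: U_K' \to U_{\ell(K)}$ whose fibers are affine spaces, and these fibrations are intertwined with the coordinate changes: $f_L \circ \phi_{KL}' = \phi_{\ell(K)\ell(L)} \circ f_K$ on $U_{KL}'$, and similarly $f_L \circ \phi_{KL}'$ takes the zero section of $\Kk'$ to that of $\Kk$. Thus the $(f_K)_{K\in\Ii_{\Kk'}}$ assemble into a functor $\f:\bB_{\Kk'}\to\bB_{\Kk}$ that intertwines section, projection, and footprint functors, and $\f$ is an affine-bundle map on objects. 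Everything then follows by pulling back the good structure on $|\Kk|$ along $|\f|$.

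\textbf{Key steps for $\Kk'$.} First I would observe that $\f$ induces a continuous map $|\f|:|\Kk'|\to|\Kk|$, and — crucially — that this map is injective: indeed $(K,x)\sim_{\Kk'}(L,y)$ iff there is a chain of coordinate changes relating them, and because the coordinate change domains $U_{KL}'$ are precisely the $f_K$-preimages of the $U_{\ell(K)\ell(L)}$, applying $f_K$ to such a chain shows $(\ell(K),f_K(x))\sim_\Kk(\ell(L),f_L(y))$; conversely the fiberwise affine structure and the compatibility of $\si_K$ with $\Hat\phi_{KL}'$ recover the $\Kk'$-equivalence from the $\Kk$-equivalence and the fiber coordinates. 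Combined with the fact that $\pi_{\Kk'}|_{U_K'}$ is continuous and $f_K$ is a submersion with a section given by the zero-set identification, one deduces that $|\f|$ is a homeomorphism onto its image once one knows $|\Kk|$ is Hausdorff; hence (i) $|\Kk'|$ is Hausdorff. For (ii), $\pi_{\Kk'}|_{U_K'}$ factors as $U_K' \xrightarrow{f_K\times \mathrm{pr}_{E_K'}} $ (a product with the affine fiber) followed by $\pi_\Kk|_{U_{\ell(K)}}\times\mathrm{id}$ and then $|\f|^{-1}$; since $\pi_\Kk|_{U_{\ell(K)}}$ is a homeomorphism onto its image by goodness of $\Kk$, so is $\pi_{\Kk'}|_{U_K'}$. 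For (iii), pull back an admissible metric $d$ on $|\Kk|$ by $|\f|$ and add to it a metric coming from the fiber directions (e.g. choose, $\Ga$-invariantly, a Euclidean metric on each $E_i'$ and transport; these fit together because the $\Hat\phi_{KL}'$ are linear isometric inclusions), obtaining an admissible metric on $|\Kk'|$ whose pullback to each $U_K'$ induces the given topology. For (iv), note $\phi_{KL}'(U_{KL}') = \Hat\phi_{KL}'(E_K')\times_{\,U_{\ell(L)}}\phi_{\ell(K)\ell(L)}(U_{\ell(K)\ell(L)})$ inside $U_L'$; since $\phi_{\ell(K)\ell(L)}(U_{\ell(K)\ell(L)})$ is closed in $U_{\ell(L)}$ by goodness of $\Kk$ and $\Hat\phi_{KL}'(E_K')$ is a closed linear subspace of $E_L'$, the image is closed in $U_L'$.

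\textbf{The cobordism atlas $\Kk^{01}$ and the main obstacle.} For $\Kk^{01}$ the same strategy applies: there is a fibration-type functor from $\bB_{\Kk^{01}}$ onto a product-like atlas built from $\Kk$ over $[0,1]\times X$ (the charts $[0,\frac23)\times\bK_{(I,\emptyset)}$ and $(\frac13,1]\times\bK_{(\emptyset,K)}$ map to product charts over $\Kk$, and the interior \lq\lq hybrid\rq\rq\ charts $(\frac13,\frac23)\times\bK_{(I,K)}$ fiber over $[0,1]\times U_{I\vee\ell(K)}$ via the evident map dropping the $e_K'$-coordinate). One then pulls back the good structure from $[0,1]\times|\Kk|$, which is good because products of good atlases are good (Lemma~\ref{le:Kprod}), being careful about the collar structure near $\{0,1\}\times X$ where the atlas restricts to $\Kk$ and $\Kk'$ respectively. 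The step I expect to be the real obstacle is verifying that $|\f|$ (both for $\Kk'$ and for $\Kk^{01}$) is genuinely injective and a homeomorphism onto its image — i.e.\ that the two-step equivalence relation on $\Obj_{\bB_{\Kk'}}$ is exactly the $f_K$-pullback of that on $\Obj_{\bB_\Kk}$, with no extra identifications and no loss. This requires a careful chase through the definitions of $U_{KL}'$, $\Hat\phi_{KL}'$, $\si_K$ and the affine fiber structure, analogous to (but more delicate than) the argument in Lemma~\ref{le:metriz}, because unlike in a genuine shrinking the fibers of $f_K$ are positive-dimensional and the map $\ell:\Ii_{\Kk'}\to\Ii$ is not injective. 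Once injectivity of $|\f|$ is established, the remaining verifications are routine, using goodness of $\Kk$ (and of $[0,1]\times\Kk$) exactly as above.
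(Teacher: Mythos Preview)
Your central claim --- that $|\f|:|\Kk'|\to|\Kk|$ is injective --- is false, and this undermines the whole strategy. The map $f_K:U_K'\to U_{\ell(K)}$ sends $(e_K',u)\mapsto u$, so it collapses the entire affine fiber $\{e\in E_K':\si_K(e)=s_{\ell(K)}(u)\}$, which has positive dimension as soon as some multiplicity $m_{\al,K}>1$. Two points $(e_1,u),(e_2,u)\in U_K'$ with $e_1\ne e_2$ are \emph{not} $\Kk'$-equivalent (the coordinate changes $\phi_{KL}'=\Hat\phi_{KL}'\times\phi_{\ell(K)\ell(L)}$ preserve the $E'$-coordinate up to linear inclusion), yet they have the same image under $|\f|$. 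You seem to half-recognize this when you write ``recover the $\Kk'$-equivalence from the $\Kk$-equivalence \emph{and the fiber coordinates}'' and when in step~(ii) you invoke $f_K\times\mathrm{pr}_{E_K'}$; but you then appeal to $|\f|^{-1}$, which does not exist.

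The paper's fix is exactly to keep the fiber coordinate: set $E'_\Mm:=\prod_{i\in\Mm(\Ii)}E_i'$ and define a functor $\io_\Mm:\bB_{\Kk'}\to E'_\Mm\times\bB_\Kk$ on objects by $(K,(e_K',u))\mapsto(\io_K^E(e_K'),\,f_K(e_K',u))$, where $\io_K^E:E_K'\hookrightarrow E'_\Mm$ is the natural inclusion. This \emph{is} injective on realizations, and the composite $|\io_\Mm|\circ\pi_{\Kk'}|_{U_K'}:U_K'\to E'_\Mm\times|\Kk|$ is a homeomorphism onto its image (since $\io_K^E$ is a linear inclusion and $\pi_\Kk|_{U_{\ell(K)}}$ is a homeomorphism onto its image by goodness of $\Kk$). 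All four conditions for $\Kk'$ now drop out: Hausdorffness and metrizability are inherited from the metrizable product $E'_\Mm\times|\Kk|$, the homeomorphism property of $\pi_{\Kk'}|_{U_K'}$ follows from that of the composite, and $\im\phi_{KL}'$ is closed because it fibers over the closed set $\im\phi_{\ell(K)\ell(L)}$ with fibers that are closed linear subspaces of $E_L'$. The argument for $\Kk^{01}$ is analogous.
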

\begin{proof}
Define the vector space $E'_\Mm: = \prod_{i\in \Mm(\Ii)} E_i'$, and consider the product category
$E'_\Mm\times \bB_\Kk$ with objects $\bigsqcup_K E_\Mm'\times U_K'$ and morphisms 
$\id_{E_\Mm'}\times \Mor_{\Bb_\Kk'}$.
Then  there is an injective functor $\io_\Mm: {\Kk'}\to E'_\Mm \times \Kk$ defined on objects by
$$
\io_\Mm: \;\; (K,(e'_K,u))) \mapsto \bigl(\io_K^E(e'_K),\ f_K(K,(e'_K,u))\bigr),
$$
that intertwines the 
coordinate change maps $\phi_{KL}'$ of $\Kk'$ with the products $\id \times \phi_{\ell(K)\ell(L)}$.
It follows that $\io_\Mm$ descends to give continuous injections  
$$
|\io_\Mm|:  |\Kk'|\to |E_{\Mm}' \times \bB_{\Kk}| \to  E'_\Mm \times |\Kk|,
$$
 where the last space has the product topology.
Then the composite $$
|\io_\Mm|\circ \pi_{\Kk'}: U_I'\to E'_\Mm \times |\Kk|
$$ is a homeomorphism to its image, which implies that
$\pi_\Kk'$ is.  The Hausdorff and metrizability conditions  in Definition~\ref{def:good}  follow similarly, while $\im \phi_{IJ}'$ is closed in $U_J'$ since it fibers over $\im\phi_{IJ}$ with fibers that are closed in $E'_J$.
The proof that $\Kk^{01}$ is good is similar.
\end{proof}

\begin{rmk}\label{rmk:good3}\rm
 If $\Kk$ has nontrivial isotropy, then the above argument extends to show there is a continuous map
$|\Kk'|\to \qu{E'_\Mm}{\Ga'_\Mm} \times |\Kk|$, where
$\Ga_\Mm' = \prod_{\al\in \Ga}\Ga_\al$ acts diagonally on the factors of $E_\al$ in $E'_\Mm$.  However, in general this map
is no longer injective and so does not quite give what we need.  When $\Kk$ is good, one should nevertheless be able to
prove directly that the atlases $\Kk'$ and $\Kk^{01}$ 
constructed above satisfies all the conditions in Definition~\ref{def:good} 
except perhaps for metrizability.  The problem here is that we assumed so little about 
the metric topology  $(|\Kk|,d)$  that it is not clear how to proceed.  However, the original proof of
 metrizability  in Lemma~\ref{le:metriz} uses the fact that the atlas under consideration is \lq\lq preshrunk".  
 This implies that
 its domains can be coherently compactified,  so that $|\Kk|$ injects into a compact space $|\ov\Kk|$ 
 that can be directly shown to be metrizable.
The good semi-additive atlases that we encounter  in practice 
are constructed from preshrunk tame atlases, for example, by taking products.  Hence this argument should adapt 
to prove the analog of Lemma~\ref{le:good3} in all cases of interest.  
Since we do not fully develop any such applications here, we will leave this point for now. \hfill$\er$
 \end{rmk}

Propositions~\ref{prop:hybrid1} and ~\ref{prop:hybrid2} describe abstract constructions that apply to all semi-additive atlases.  We now explain how one can do better  in the Gromov--Witten context, promoting the atlas $\Kk'$ from a tameable atlas to a standard atlas $\Kk''$, because  in this case  the isotropy groups are controlled   explicitly via the added marked points $\bw$.

\begin{rmk}\label{rmk:GWext}  \rm 
 In \S\ref{ss:GW}  we gave a  recipe for constructing  basic and transition charts using  stabilizations that are determined by the choice of a slicing manifold $Q_i\subset M$ for each basic chart $\bK_i$.
The isotropy group $\Ga_i$ acts on the tuples  $(\vec e_i, [\bn,\bw,\bz,f])$ in $U_i$  by permuting  the corresponding admissibly ordered set $\bw_i$ of marked points and also the components $\vec e_i$ of the obstruction space $E_i: = \prod_{\ga\in \Ga_i} E_i^0$. (This structure is easiest to see when one uses the coordinate free definitions in  \eqref{eq:coordf1} (for a basic chart) and \eqref{eq:coordf3},
\eqref{eq:coordf4} for transition charts.  Note also that
even though the elements in $\bw$ are distinct, the isotropy action may not be free
because of the effect of renormalization.)
The elements of the transition chart $\bK_K$ contain tuples $\vec{\ul{e}}: = (\vec e_i)_{i\in K}, \ul{\bw}: = (\bw_i)_{i\in K}$ of such elements that are again permuted by the components of the groups $\Ga_{K}: = \prod _{i\in K} \Ga_i$.
   The construction of a standard atlas treats  the extra data $E,Q$ for each basic chart as distinct. For example,  if 
we choose the same obstruction space $E_1=E_2$ and slicing manifold $Q$ for 
two different basic charts $\bK_1,\bK_2$ and if the two groups $\Ga_1,\Ga_2$ 
are isomorphic, then the standard construction
adds two copies of the marked points $\bw_1,\bw_2$ (agreeing modulo admissible labellings; cf. Remark~\ref{rmk:lifts})  to the elements in the domain 
of the transition chart $U_{12}$ as well as two copies $E_{12} = E_1\times E_2$ of the obstruction space 
thus giving rise to an  action of $\Ga_1\times \Ga_2$ on the domain.
However,  we could instead simply add one set of marked points and take the group to be $\Ga_1$ with the diagonal action on $E_{12} = E_1\times E_2$, thereby getting a simplified chart whose domain  embeds as an open subset of  the usual transition chart $\bK_{12}$.   Note that this simplification of the structure of the chart does not affect which stable maps 
$[\bn,\bz,f]$ lie in the domain of this chart: the map $f$ satisfies the equation $\pbar f = \la(\ul{\vec e})|_{{\rm graph} f}$  
where the map $\la$ sums over  the action of the relevant group $\Ga_I$ over all components of $E_I$, and one can check that having repeated copies of one of the components $E_i, \Ga_i$ does not increase the image of $\la$.  
 
We explain below how the tameable atlas $\Kk'$ constructed in Proposition~\ref{prop:hybrid1} can similarly be considered as a simplified version of the standard GW atlas with the same basic charts and obstruction spaces.
$\hfill\er$
\end{rmk}

The  variations on the construction described in Remark~\ref{rmk:GWext} above, though possible, are not very useful when the moduli spaces $X$ consist of closed curves, since in this case one can always build  a standard atlas.    However, as we now explain  they are relevant in the context of {\bf Hamiltonian Floer theory}.\footnote
{
The general set-up  is described and analyzed in Pardon~\cite{Pard}, and  we do not attempt a complete treatment here.  However, 
 what we have done above is enough  to deal formally with trajectories that are once broken; to deal with more breaks one 
would need to introduce a (collared) corner structure. 
}

\begin{rmk}\label{rmk:HamFT}\rm
  In this situation we have a family of (noncompact)  moduli spaces $X(p,r)$ consisting of unbroken trajectories from $p$ to $r$ (where $p,r$  are periodic orbits of the Hamiltonian flow) 
whose boundary decomposes into products of the form $X(p,q_1)\times \dots\times X(q_k,r)$ consisting of trajectories that are broken at $k$ interior orbits $q_1,\dots,q_k$. For example if the spaces ${X}(p,q), X(q,r)$ are compact, then $X(p,r)$ will have a boundary component given by the product $X(p,q)\times X(q,r)$. 
To be consistent with our approach to cobordism, we will thicken up $X(p,r)$ near its boundary by adding appropriate collars (in general, cornered).
In particular, we will assume  that this component of the boundary of $X(p,r)$ has 
 a neighborhood that can be  identified with $[0,\eps)\times X(p,q)\times X(q,r)$.
In order to define a chain complex one needs to build a coherent family of atlases, reductions, and sections.    By induction, one can assume that atlases $\Kk_{p,q}$ for $X(p,q)$ and  $\Kk_{q,r}$ for $X(q,r)$ are already constructed, and need  to extend the product
 $\Kk_{p,q}\times  \Kk_{q,r}$   to a good  atlas over 
  $X(p,r)$.   Once this is done, the arguments in Proposition~\ref{prop:prod} would allow us to extend a product  perturbation section
  $\nu_{p,q}\times \nu_{q,r}$ over the interior of the atlas.

Given a basic chart $\bK_1$ at $\tau_1\in X(p,q)$ and $\bK_2$ at $\tau_2\in X(q,r)$, the  elements of the domain $U_1\times U_2$ of the product chart $\bK_1\times \bK_2$ are tuples whose map component $f=(f_1,f_2)$ is a broken trajectory from $p$ to $r$ with first component $f_1$ (going from $p$ to $q$)   stabilized by points in $f_1^{-1}(Q_1)$ and second component 
$f_2$(going from $q$ to $r$)   stabilized by points in $f_2^{-1}(Q_2)$.  By gluing at $q$, we can  build a chart with domain
 $[0,\eps)\times  U_1\times U_2$ that models a neighbourhood of $(\tau_1,\tau_2)$ in $\ov X(p,r)$.
Note that this chart has the essentially the same form  as a basic GW chart that would be constructed for trajectories going straight from $p$ to $r$; the only difference is that  the slicing manifold is the union $Q_1\cup Q_2$ of two submanifolds of the ambient manifold $M$ which we do not assume to be disjoint.  However, because the associated marked points $\bw_t: = f^{-1}(Q_t)$ belong to distinct parts of the domain of the curve it is not hard to extend the definition to allow this variation.  Therefore we  can use the GW construction to build a standard atlas $\Kk_{p,r}$  over
$X(p,r)$ that restricts over 
$[0,\eps)\times  X(p,q)\times X(q,r) $ to  $[0,\eps)\times  \Kk''$, where $\Kk''$ is the standard atlas over $X(p,q)\times X(q,r)$
 with these product basic charts.
On the other hand, the product $[0,\eps)\times  \Kk_{p,q}\times \Kk_{q,r}$
is another (semi-additive) atlas over the neighbourhood  $[0,\eps)\times  X(p,q)\times X(q,r)$, which, by the inductive hypothesis 
and the construction in Proposition~\ref{prop:prod} already has a reduction and a perturbation section.
To be able to extend these structures over $\Kk_{p,r}$ it suffices to check that the product atlas $\Kk_{p,q}\times \Kk_{q,r}$ is concordant to the standard atlas $\Kk''$.  Note that we do need the concordance to be good (in particular metrizable)
in order to be able to apply 
the results about the extension of perturbation sections; see  Proposition~\ref{prop:ext2} and Remark~\ref{rmk:good3}.
 
 If the isotropy is trivial,
 the arguments we gave above prove what we need.
    Indeed the construction   of 
 $\Kk'$ from the product semi-additive $\Kk$ given in Proposition~\ref{prop:hybrid1} 
is mirrored in the GW set up:   
the only difference in the constructions is that we usually do not include the whole fiber of $f_K$ into the chart $U_K'$.
 Thus the category of the GW atlas embeds as an open subcategory of $\bB_{\Kk'}$.  Hence, if we similarly restrict the domains of the transition charts over $(\frac 13,\frac 23)$ we can define the cobordism as in   
 Proposition~\ref{prop:hybrid2}.  Moreover, it is good by Lemma~\ref{le:good3}. 

   In the case of nontrivial isotropy,  the two atlases $\Kk'$ and $\Kk''$  are different because the 
   chart with label $K$ has isotropy group   $\Ga_K': =  \prod_{\al\in \ell(K)} \Ga_\al$ in the first case and
    $\Ga_K'': = \prod_{i\in K}\Ga_i = \prod_{\al\in \ell(K)} \Ga_\al^{m_{\al,K}}$ in the second.
As in Remark~\ref{rmk:GWext} 
   one can obtain $\Kk'$ from $\Kk''$ by
 simplifying the charts in $\Kk''$. 
 (This can be accomplished by either by forgetting all repetitions of the added marked points $\bw_\al$, so that the points in the domain only contain one such set instead of $m_{\al,K}$ of them,  or, equivalently, by restricting to those elements in the domain $U_K''$ of the standard GW chart for which the repetitions of $\bw_\al$ are identical (rather than being identical modulo order).)
However, more relevant  here is the fact  that one can build a concordance from the product to the standard atlas $\Kk''$ much as in  
Proposition~\ref{prop:hybrid2}, using the indexing set $\Ii^{01}$ and function $\tau^{01}$ described there, but 
this time using the set $\tau^{01}\bigl((I,K)\bigr)$ to define the group as well as the obstruction space for
the transition chart $\bK_{(I,K)}$.  Thus
 $\Ga''_{(I,K)} = \Ga_I\times \Ga_K''$ instead of $\Ga_{I\vee \ell(K)}$.
To make this possible  the tuples of added marked points in the elements of the domain  $U''_{(I,K)}$ should also be indexed by 
the elements of   $\tau^{01}\bigl((I,K)\bigr)$.   One would then have to check directly that the concordance  can be constructed  to be good.
  As suggested in Remark~\ref{rmk:good3}, this should be possible if one assumes that the initial atlases $\Kk_{p,q}, \Kk_{q,r}$ are preshrunk.  Further details are left for later.
\hfill$\er$
\end{rmk}

\NI {\bf Acknowledgements}  
I wish to thank Jingchen Niu, Robert Castellano and Yasha Savelyev for making some useful comments on earlier drafts of these notes, and the referee for a careful reading.
I also benefitted by conversations with John Pardon, Dominic Joyce and Eugene Lerman about orbifolds. Finally I wish to thank Katrin 
Wehrheim for contributing so many ideas to this collaboration and patiently helping to clarify many arguments.
Sections \S\ref{s:noiso} and ~\S\ref{s:iso} of these notes are taken very directly from the joint papers with Katrin Wehrheim.  However I am solely responsible for the exposition of these results as well as for the rest of these  notes. 
I wish to thank the Simons Center for Geometry and Physics for its hospitality 
during Spring 2014 while these notes were being prepared.

\end{document}